\newtheorem{proposition}{Proposition}[section]
\newtheorem{theorem}[proposition]{Theorem}
\newtheorem{assumption}[proposition]{Assumption}%
\newtheorem{corollary}[proposition]{Corollary}
\newtheorem{lemma}[proposition]{Lemma}
\theoremstyle{definition}
\newtheorem{remark}[proposition]{Remark}
\newtheorem{constr}[proposition]{Construction}
\newcommand{\N}{\mathbb{N}}
\newcommand{\bZ}{\mathbb{Z}}
\newcommand{\Z}{\mathbb{Z}}
\newcommand{\R}{\mathbb{R}}
\renewcommand{\Pr}{\mathbb{P}}
\newcommand{\bE}{\mathbb{E}}
\newcommand{\abs}[1]{\lvert#1\rvert} 
\newcommand{\Abs}[1]{\left\lvert#1\right\rvert}
\newcommand{\norm}[1]{\lVert#1\rVert} 
\newcommand{\Norm}[1]{\big\lVert#1\big\rVert}
\newcommand{\ind}[1]{\mathbbm{1}_{\{#1\}}} 
\newcommand{\indset}[1]{\mathbbm{1}_{#1}} 
\newcommand\restr[2]{\ensuremath{\left.#1\right|_{#2}}} 
\newcommand{\cs}{{\mathsf{cs}}}
\newcommand{\dcs}{{\mathsf{dcs}}}
\newcommand{\cone}{{\mathsf{cone}}}
\newcommand{\base}{{\mathsf{bas}}}
\newcommand{\inn}{{\mathrm{inn}}}
\newcommand{\out}{{\mathrm{out}}}
\newcommand{\rf}{{\mathrm{ref}}}
\newcommand{\loc}{{\mathrm{loc}}}
\newcommand{\coupl}{{\mathrm{coupl}}}
\newcommand{\wt}{\widetilde}
\newcommand{\joint}{{\mathrm{joint}}}
\newcommand{\tube}{{\mathrm{tube}}}
\newcommand{\dtube}{{\mathrm{dtube}}}
\newcommand{\indi}{{\mathrm{ind}}}
\newcommand{\simi}{{\mathrm{sim}}}
\newcommand{\bP}{{\mathbb{P}}}
\newcommand{\compl}{\mathsf{C}}
\newcommand{\block}{{\mathsf{block}}}
\newcommand{\DC}{{\mathrm{DC}}}
\newcommand{\DTube}{{\mathrm{DTube}}}
\newcommand{\Tube}{{\mathrm{Tube}}}
\newcommand{\Lt}{L_{\mathrm{t}}}
\newcommand{\Ls}{L_{\mathrm{s}}}
\numberwithin{equation}{section}
\title{Quenched CLT for ancestral lineages of logistic branching
  random walks}
\author{M. Birkner, A. Depperschmidt, T. Schlüter}
\date{\today}
\begin{document}
\maketitle

\begin{abstract}
  We consider random walks in dynamic random environments which arise
  naturally as spatial embeddings of ancestral lineages in spatial
  locally regulated population models. In particular, as the main
  result, we prove the quenched central limit theorem for a random
  walk in dynamic random environment generated by time reversal of
  logistic branching random walks in a regime where the population
  density is sufficiently high.

  As an important tool we consider as auxiliary models random
  walks in dynamic random environments defined in terms of the
  time-reversal of oriented percolation. We show that the quenched
  central limit theorem holds if the influence of the random medium on
  the walks is suitably weak. The proofs of the quenched central limit
  theorems in these models rely on coarse-graining arguments and a
  construction of regeneration times for a pair of conditionally
  independent random walks in the same medium, combined with a
  coupling that relates them to a pair of independent random walks in
  two independent copies of the medium.
\end{abstract}


\section{Introduction}
\label{sec:introduction}

Ancestral lineages of individuals in many spatial population models
exhibiting some kind of non-trivial stable behaviour are known to
perform random walks whose increments of course depend on the
particular model. In the case of stepping stone models or (multi-type)
voter models on $\Z^d$ the local population size is constant and the
ancestral lineages are given by random walks which are homogeneous in
space and time. In spatial models with interactions and fluctuating
local population size the ancestral lineages are random walks in
rather complicated dynamic random environments. The investigation of
random walks in static and dynamic random environments under different
conditions on the environments is an active research area with a
lot of recent progress. In many cases limit theorems, such as laws of
large numbers, annealed and quenched (functional) central limit
theorems and others have been obtained.

In the present paper our main goal is to prove the quenched central
limit theorem for the \emph{directed} random walk on $\Z^d$ which
arises as an ancestral lineage in a particular spatial population
model, namely the \emph{logistic branching random walk} from
\cite{BirknerDepperschmidt2007}. For this random walk, considering
first as an auxiliary model a particular random walk on the backbone
of the cluster of supercritical oriented percolation on $\Z^d \times
\Z,$ the annealed central limit theorem was obtained in
\cite{BirknerCernyDepperschmidt2016}. Here we combine methods
from \cite{BirknerCernyDepperschmidtGantert2013} and
\cite{BirknerCernyDepperschmidt2016} to prove the corresponding
quenched central limit theorem.

As mentioned above, the random walk that we consider here is a
random walk in a dynamic random environment which itself is generated
by an individual based population model. In the literature there are
many examples of random walks in dynamic random environments generated
by various interacting particle systems; see for example
\cite{AvenaDosSantosVoellering:2013,AvenaJaraVoellering:2014,BethuelsenHeydenreich2015,HKS14,MV15,
  RedigVoellering:2013} and more recently \cite{BlondelHilarioetal2019,MiltonMenezes2020,BlondelHilarioetal2020,AvenaChinodaCostadenHollander2022}. In these models typically the random walk and
the particle system generating the dynamic environment have the same
`natural' forwards time direction. The main difference to the random walks we
consider here and which were also considered in
\cite{BirknerCernyDepperschmidt2016} and
\cite{BirknerCernyDepperschmidtGantert2013} is that in our case, due
to the interpretation of the walks as the ancestral lineages, the time
direction of the walk is the backwards direction of the population
model which generates the random environment. We refer to
\cite[Remark~1.7]{BirknerCernyDepperschmidtGantert2013} for a more detailed
discussion and further references.

\section{Models and main results}
\label{sec:main-results}

In this paper we are mainly interested in the behaviour of ancestral
lineages of a population model, namely the version of \emph{logistic
  branching random walk} introduced in
\cite{BirknerDepperschmidt2007}. In that model individuals live on the
integer lattice $\bZ^d$. In each generation (the Poissonian) branching
is subcritical in crowded regions and supercritical in not so densely
populated regions. After birth the offspring migrate in some
finite neighbourhood of their birth place. In a certain parameter
regime (we will be more precise in
Section~\ref{sec:logist-branch-rand} below) the logistic branching
random walk has a unique nontrivial ergodic invariant distribution and 
in particular there is a stationary version. For a realisation of this stationary
version of the process we can consider ancestral lineages which track
back the position of the ancestors of an individual sampled at random
from the present generation. In \cite{BirknerCernyDepperschmidt2016}
it was proven that such an ancestral lineage satisfies a law of large
numbers and an annealed central limit theorem. Here our goal is to
show that it also satisfies the quenched central limit theorem.

\medskip

In the following three subsections we briefly describe three models
and state the assumptions and the quenched central limit theorems in
the respective models.
\begin{itemize}
\item We start with the with the logistic branching random walk, our main
  model of interest.
\item The second model can be seen as an auxiliary toy model and is
  concerned with random walk in random environments induced by the
  (backbone of the) oriented percolation cluster.
\item The third model is a more abstract model which covers the
  logistic branching random walk. The assumptions for this model allow
  a comparison with oriented percolation which in turn allows us to
  use results that we obtain on the second model.
\end{itemize}

Our approach follows the one from
\cite{BirknerCernyDepperschmidt2016} where these three models were considered as well (in a
different order). We give a more detailed
outline of the proof strategy at the end of
Section~\ref{sec:model_in_oriented_percolation}.

In the following, we write $\norm{\cdot}$ for the $\sup$-norm on $\R^d$
or $\Z^d$, and $\norm{\cdot}_2$ denotes the Euclidean norm.

\subsection{Logistic branching random walk}
\label{sec:logist-branch-rand}

Here we give a formal description of the model \emph{logistic
  branching random walk}. Let
$p=(p_{xy})_{x,y \in \bZ^d} =(p_{y-x})_{x,y \in \bZ^d}$ be a symmetric
aperiodic stochastic kernel with a finite range $R_p \ge 1$.
Furthermore let $\lambda=(\lambda_{xy})_{x,y \in \bZ^d}$ be a
non-negative symmetric kernel satisfying
$0 \le \lambda_{xy} =\lambda_{0,y-x}$ and $\lambda_{00}>0$ and having
a finite range $R_\lambda$. We set $\lambda_0 \coloneqq \lambda_{00}$
and for a configuration $\zeta \in \R_+^{\bZ^d}$ and $x \in \bZ^d$ we
define
\begin{align}
  \label{def:f}
  f(x; \zeta) \coloneqq \zeta(x) \Big( m - \lambda_0 \zeta(x) -
  \sum_{z \ne x} \lambda_{xz} \zeta(z)  \Big)^+,
\end{align}
with $m>1$. \eqref{def:f} can be interpreted as the (conditional) mean
number of offspring produce at size $x$ given that the present
population configuration is $\zeta$. We consider a population process
$\eta\coloneqq (\eta_n)_{n \in \bZ}$ with values in $\bZ_+^{\bZ^d}$,
where $\eta_n(x)$ is the number of individuals at time $n \in \bZ$ at
site $x \in \bZ^d$. For a formal definition of $\eta$ let
\begin{align}
  \label{eq:Uxy}
  U \coloneqq \{U^{(y,x)}_n: n \in \Z,\; x, y \in \Z^d, \norm{x-y} \leq R_p\}
\end{align}
be a collection of independent Poisson processes on $[0,\infty)$ with
intensity measures of $U^{(y,x)}_n$ given by $p_{yx}\, dt$. For given
$\eta_n \in \Z_+^{\Z^d}$, we define $\eta_{n+1} \in \Z_+^{\Z^d}$ via
\begin{align}
  \label{eq:flowdef}
  \eta_{n+1}(x) \coloneqq \sum_{y \, : \, \norm{x-y} \leq R_p}
  U^{(y,x)}_n\big(f(y;\eta_n)\big),  \quad x\in \Z^d.
\end{align}
Considering the definition of $f$ in \eqref{def:f}, we see that in the
absence of competition an individual has on average $m$ offspring. For
obvious reasons we call $\lambda$ the \emph{competition} and $p$ the
\emph{migration kernel}. Note that for each $x$, the right-hand side
of \eqref{eq:flowdef} is a finite sum of (conditionally) Poisson
random variables $U^{(y,x)}_n\big(f(y;\eta_n)\big)$ with finite means
$p_{yx}f(y;\eta_n)$ which are bounded by $\norm{f}_\infty$. Using
standard superposition properties of Poisson random variables it
follows that
\begin{align}
  \label{eq:law of eta in log branching}
  \eta_{n+1} (x)  \sim  \mathrm{Pois}\Bigl( \sum_{y \in \Z^d}
  p_{yx} f(y;\eta_{n}) \Bigr), \quad x \in \Z^d
\end{align}
and that the $\eta_{n+1}(x)$ are conditionally independent given
$\eta_n$.

For $-\infty < k < n$ set
\begin{align}
  \label{eq:Gmn}
  \mathcal{G}_{k,n} \coloneqq \sigma(U_\ell^{(x,y)} : k \leq \ell < n,
  \, x, y \in \Z^d).
\end{align}
By iterating \eqref{eq:flowdef}, we can define a random family of
$\mathcal{G}_{k,n}$-measurable mappings
\begin{align}
  \label{eq:Phimn}
  \Phi_{k,n} : \Z_+^{\Z^d} \to \Z_+^{\Z^d}, \; -\infty < k<n \quad
  \text{such that} \quad \eta_n = \Phi_{k,n}(\eta_k).
\end{align}
To this end define $\Phi_{k,k+1}$ as in
\eqref{eq:flowdef} via
\begin{align}
  \label{eq:flowdef2}
  (\Phi_{k,k+1}(\zeta))(x)
  & \coloneqq \sum_{y \, : \, \norm{x-y} \leq R_p} \hspace{-0.8em}
    U^{(y,x)}_k\big(f(y;\zeta)\big) \quad \mbox{for $y \in \Z^d$ and
    $\zeta\in \Z_+^{\Z^d}$} \\ \intertext{and then put}
  \label{eq:flowdef2a}
  \Phi_{k,n}
  & \coloneqq \Phi_{n-1,n} \circ \cdots \circ \Phi_{k,k+1}.
\end{align}
Using these mappings we can define the dynamics of
$(\eta_n)_{n=k,k+1,\dots}$ simultaneously for all initial conditions
$\eta_k \in \Z_+^{\Z^d}$ for any $k \in \Z$. In
\cite{BirknerDepperschmidt2007} it is shown that for $m \in (1,3)$ and
suitable $\lambda$ there exists a unique non-trivial invariant
extremal distribution $\bar\nu$. Furthermore, conditioned on
non-extinction, the distribution of $\eta_n$ converges in the vague
topology to $\bar\nu$. The relevant conditions are the assumptions in
Theorem~3 from \cite{BirknerDepperschmidt2007} which we recall for
future reference.

\begin{assumption}[Sufficient conditions for survival and complete
  convergence of $\eta$]
  \label{ass:log_branch_walk}
  We assume
  \begin{enumerate}[(i)]
  \item $m\in(1,3)$,
  \item $\sum_x\lambda_{0x}$ is sufficiently small and
    $\sum_{x\neq 0} \lambda_{0x}\le a^*\lambda_0$ for $a^*=a^*(m,p)>0$
    as required in Theorem~3 from \cite{BirknerDepperschmidt2007}.
  \end{enumerate}
\end{assumption}
Under this assumption we can consider the stationary process
$\eta=(\eta_n)_{n\in\bZ}$ where $\eta_n$ is distributed according to
$\bar\nu$. One can enrich this model with genealogical information,
see Chapter~4 in \cite{Depperschmidt08} or Section~4 in
\cite{BirknerCernyDepperschmidt2016}. We set
\begin{align}
  \label{eq:anclinquedyn}
  p_{\eta}(k; x,y) \coloneqq
  \frac{p_{yx}f(y;\eta_{-k-1})}{\sum_z p_{zx}f(z;\eta_{-k-1})}, \quad x,y \in
  \Z^d, \, k \in \Z_+
\end{align}
with some arbitrary convention if the denominator is $0$. For a given
configuration $\eta$, conditioned on $\eta_0(0)>0$, let
$X\coloneqq (X_k)_{k=0,1,\dots}$ be a time-inhomogeneous Markov chain
with $X_0=0$ and
\begin{align}
  \label{eq:defX-real}
  \Pr(X_{k+1}=y \, | \, X_k=x,\eta) = p_{ \eta}(k; x,y).
\end{align}
With these dynamics the path $(X_k,-k)_{k =0,1,\dots}$ is the
space-time embedding of an ancestral lineage of an individual sampled
at the space-time origin, conditioned on the configuration $\eta$.
For fixed $\eta$, we write $P_\eta(\cdot)$ for the law of this process $X$
in the environment $\eta$ and $E_\eta[\cdot]$ for the corresponding expectations.

In \cite{BirknerCernyDepperschmidt2016} it was proven that under
Assumption~\ref{ass:log_branch_walk} the ancestral lineage satisfies a
law of large numbers and an annealed central limit theorem, see
Theorem~4.3 therein. We strengthen this result in the following
theorem.
\begin{theorem}[Quenched CLT]
  \label{thm:quenched_clt_log_branching}
  For $d\ge 1$, if the competition kernel $\lambda$ and the
  migration kernel $p$ satisfy Assumption~\ref{ass:log_branch_walk},
  the ancestral lineage $X$ satisfies a quenched central limit
  theorem, i.e.\ for any continuous and bounded function $f$ on
  $\R^d$, we have
  \begin{align}
    E_\eta[f(X_n/\sqrt{n})] \xrightarrow[]{n\to\infty} \Phi(f)
    \quad \text{for almost all }\eta,
  \end{align}
  where $\Phi(f)=\int_{\R^d}f(x)\Phi(dx)$ and $\Phi$ is a nontrivial
  centred $d$-dimensional normal distribution.
\end{theorem}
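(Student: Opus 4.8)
The plan is to reduce the statement to a quenched CLT for the abstract model (the third model announced in the introduction), which in turn is handled by comparison with the oriented-percolation model. First I would recall from \cite{BirknerCernyDepperschmidt2016} the comparison construction: under Assumption~\ref{ass:log_branch_walk} the stationary logistic branching random walk $\eta$ can be coupled with a supercritical oriented percolation on $\Z^d\times\Z$ so that, roughly, sites $x$ with $\eta_{-k-1}(x)$ bounded away from $0$ and $\infty$ (call these ``good'' sites) dominate an open-cluster event, and on good sites the transition kernel $p_\eta(k;x,\cdot)$ is close to the free migration kernel $p$ in total variation, with the discrepancy controlled by $\sum_{x\neq 0}\lambda_{0x}$, which is small by the assumption. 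Concretely, the environment seen by the walk away from the percolation back-front is a small perturbation of the homogeneous kernel $p$, which is exactly the ``suitably weak influence of the random medium'' hypothesis needed for the abstract model. So Step~1 is: verify that the pair $(\eta, p_\eta)$ satisfies the assumptions of the abstract model stated later in Section~\ref{sec:model_in_oriented_percolation}, using the survival/complete-convergence estimates of \cite{BirknerDepperschmidt2007,BirknerCernyDepperschmidt2016} to get exponential mixing and stretched-exponential tails for the ``bad region'' around any space-time point.

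Step~2 is the core probabilistic argument, carried out at the level of the abstract/oriented-percolation model, following \cite{BirknerCernyDepperschmidtGantert2013,BirknerCernyDepperschmidt2016}. I would run a coarse-graining scheme on space-time blocks of side lengths $\Ls, \Lt$: a block is declared ``good'' if the percolation cluster restricted to it is suitably dense and connected and the local environment has the regularity needed to control one step of the coarse-grained walk; a Liggett--Schonmann--Stacey type bound plus the exponential estimates from Step~1 make the good-block field dominate a supercritical i.i.d.\ field, with $1-$(density) as small as we like by taking $\Ls,\Lt$ large. On the good-block field one constructs \emph{regeneration times} for a \emph{pair} of conditionally independent walks $X, X'$ evolving in the \emph{same} environment $\eta$: a regeneration time is a space-time point where both walks are localised in a single good block, the past and future environments decouple, and (crucially) the joint increment law of the pair over the next regeneration epoch has a well-defined, environment-independent-enough law with good tail bounds. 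The key coupling — relating the pair $(X,X')$ in one medium to a pair of \emph{independent} walks in two \emph{independent} copies of the medium — is what upgrades the annealed CLT (already known, Theorem~4.3 of \cite{BirknerCernyDepperschmidt2016}) to the quenched one: writing $g_n(\eta)\coloneqq E_\eta[f(X_n/\sqrt n)]$, one estimates the quenched variance $\mathrm{Var}(g_n(\eta)) = E_\eta[f(X_n/\sqrt n) f(X'_n/\sqrt n)] - (\text{annealed mean})^2 + o(1)$ and shows the two-walk expectation factorises asymptotically via the independent-copies coupling, so $\mathrm{Var}(g_n(\eta))\to 0$; combined with the annealed CLT this gives $g_n(\eta)\to\Phi(f)$ in $L^2(\Pr)$, and then a subsequence-plus-monotonicity (or a Borel--Cantelli along $n_k = k^2$ together with an interpolation bound $|g_{n+1}(\eta)-g_n(\eta)|$ small, using continuity of $f$ and a one-step estimate) upgrades this to almost-sure convergence. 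Non-degeneracy of $\Phi$ is inherited from the annealed statement.

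The main obstacle — and the place where genuinely new work beyond \cite{BirknerCernyDepperschmidt2016} is needed — is the \emph{simultaneous regeneration construction for the pair of walks together with the decoupling coupling}. In the one-walk annealed setting one only needs the walk to regenerate; here one must arrange that \emph{both} coupled lineages hit a common good block, that they have not interacted with the parts of the medium each other will use in the future (so that conditionally on the regeneration structure their continuations are driven by effectively disjoint randomness), and that the error in replacing ``same medium'' by ``two independent media'' is summably small over the $\sim n/\bE[\text{regen gap}]$ regeneration epochs. Handling this requires careful bookkeeping of which Poisson clocks $U^{(y,x)}_\ell$ (equivalently, which percolation edges) have been explored by time $k$, a ``tube'' construction around each walk's past trajectory, and tail estimates (stretched-exponential in the block scale) on regeneration-gap lengths and on the size of the explored region, so that the coupling cost per epoch is, say, $O(e^{-c\Ls})$ and the number of epochs is $O(n)$ while the target precision is fixed — forcing the usual two-scale argument (first send $n\to\infty$ with $\Ls,\Lt$ fixed, bounding the limsup of the variance by $C e^{-c\Ls}$, then send $\Ls,\Lt\to\infty$). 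Verifying that the logistic branching random walk's environment, being itself dynamically generated and correlated in space-time, actually supplies all the mixing and monotone-domination inputs this construction consumes is the other substantial check, and is where Assumption~\ref{ass:log_branch_walk} (small total competition) enters quantitatively.
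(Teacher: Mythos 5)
Your overall route is the same as the paper's: reduce Theorem~\ref{thm:quenched_clt_log_branching} to the abstract model (the paper does this in Remark~\ref{rem:log_branch_walk_part_of_abstract}, invoking Proposition~4.7 of \cite{BirknerCernyDepperschmidt2016}, so your ``Step 1'' is essentially already on record), then prove the abstract quenched CLT by coarse-graining, comparison with supercritical oriented percolation, a joint regeneration construction for two conditionally independent walks in the same medium, a coupling with two walks in independent media, a second-moment (variance) estimate for $E_\eta[f(X_n/\sqrt n)]$, and Borel--Cantelli along a polynomial subsequence followed by control between subsequence times and between regenerations. Up to that level of description your sketch matches Sections~3 and~4 of the paper.

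There are, however, two concrete problems. First, your quantitative picture of the coupling is off: in this model the regeneration times have only algebraic tails (Proposition~\ref{prop:JointRegTimesBound}), so the cost of replacing the common medium by independent copies over one regeneration epoch is not $O(e^{-cL_{\mathrm{s}}})$ but only polynomial in the \emph{current separation} of the two walks, $C\norm{x-x'}^{-\beta}$ (Proposition~\ref{prop:TVdistance-joint-ind-1step}). Consequently the error is not uniformly small per epoch; one must first prove that the walks separate fast (the separation lemma, Lemma~\ref{lem:separ}) and control the number of uncoupled steps (Lemma~\ref{lem:coupl}), and the conclusion is reached by proving a polynomial decay of the quenched variance at \emph{fixed} parameters (Proposition~\ref{prop:2nd-mom}), not by the two-scale limit ``$n\to\infty$ then $L_{\mathrm{s}}\to\infty$'' you propose, which does not yield almost-sure convergence for the fixed environment law. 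Second, and more seriously, the factorisation-via-coupling argument you describe only works for $d\ge 2$: in $d=1$ the two lineages are close to each other on order $\sqrt n$ of the first $n$ regeneration epochs, so the coupling failures are not negligible and no analogue of Lemma~\ref{lem:coupl} can hold. Since the theorem is stated for $d\ge 1$, your proposal has a genuine gap there; the paper closes it in Section~\ref{sec:dimension1} by a different mechanism, namely a martingale decomposition of $(\widehat{X},\widehat{X}')$ along joint regeneration times, bounds showing the predictable parts are $o(\sqrt n)$ in $L^1$ (Lemma~\ref{lem:bound_for_A1_and_A2}, via black-box intervals and mixingale estimates), and a quantitative martingale CLT (Lemma~\ref{lem:key lemma for d=1}) feeding into the same second-moment estimate.
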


As mentioned above the proof of the theorem relies on a quenched CLT
for a more abstract class of models studied in
\cite{BirknerCernyDepperschmidt2016}. In particular,
Theorem~\ref{thm:quenched_clt_log_branching} is a corollary of the
quenched CLT for these abstract models; see
Remark~\ref{rem:log_branch_walk_part_of_abstract}.

\subsection{Random walks in oriented percolation: auxiliary model}
\label{sec:model_in_oriented_percolation}

Let us recall the `auxiliary model' from
\cite[Section~2]{BirknerCernyDepperschmidt2016}, which is built using
supercritical oriented percolation as a main ingredient. Let thus
$\omega \coloneqq \{\omega(x,n): (x,n) \in \bZ^d \times \bZ\}$ be a
family of i.i.d.\ Bernoulli random variables with parameter $p>0$. We
call a site $(x,n)$ \emph{open (with respect to $\omega$)} if
$\omega(x,n)=1$ and \emph{closed} if $\omega(x,n)=0$. If not stated
otherwise, in the following $\norm{\, \cdot\,}$ denotes the sup-norm.

For $m \le n$, we say that there is an \emph{open path} from $(x,m)$
to $(y,n)$ if there is a sequence $x_m,\dots, x_n\in \bZ^d$ such that
$x_m=x$, $x_n=y$, $\norm{x_k-x_{k-1}} \le 1$ for $k=m+1, \dots, n$ and
$\omega(x_k,k)=1$ for all $k=m,\dots,n$. In this case we write
$(x,m) \to^\omega (y,n)$, otherwise we write
$(x,m) \not\to^\omega (y,n)$. For sets $A ,B \subseteq \bZ^d$ and
$m \le n$ we write $A\times \{m\} \to^\omega B \times\{n\}$, if there
exist $x \in A$ and $y\in B$ so that $(x,m) \to^\omega (y,n)$. Here,
slightly abusing the notation, we use the convention that
$\omega(x,m)=\indset{A}(x)$ while for $k>m$ the $\omega(x,k)$ are
i.i.d.\ Bernoulli random variables as above. With this convention for
$A \subset \bZ^d$, $m \in \bZ$ we define the \emph{discrete time
  contact process} $\eta^A \coloneqq (\eta_n^A)_{n=m,m+1,\dots}$
driven by $\omega$ as
\begin{align}
  \label{eq:CPA}
  \eta_{m}^A = \indset{A} \quad \text{and} \quad  \eta_n^A(x)
  \coloneqq \ind{A \times \{m\} \to^\omega (x,n)},\; \; n > m.
\end{align}
Alternatively $\eta^A = (\eta_n^A)_{n=m,m+1,\dots}$ can be viewed as a
Markov chain on $\{0,1\}^{\bZ^d}$ with $\eta^A_m = \indset{A}$ and the
following local dynamics:
\begin{align}
  \label{eq:DCP-dyn}
  \eta_{n+1}^A(x) =
  \begin{cases}
    1 & \text{if $\omega(x,n+1)=1$ and $\eta_n^A(y)=1$ for some
      $y \in \bZ^d$ with $\norm{x-y} \le 1$}, \\
    0 & \text{otherwise}.
  \end{cases}
\end{align}

We say that the process \emph{dies out} if it ever hits the (unique)
absorbing state consisting only of zeros. Otherwise we say that the
process \emph{survives}. It is well known that there is a critical
percolation parameter $p_c\in (0,1)$ such that for $p>p_c$ and any
non-empty $A \subset \bZ^d$ the process $\eta^A$ survives with
positive probability. Furthermore, in this case there is a unique
non-trivial extremal invariant measure $\nu$, referred to as the
\emph{upper invariant measure}, such that, starting at any time
$m \in \bZ$ the distribution of $\eta^{\bZ^d}_n$ converges to $\nu$ as
$n\to \infty$. We assume $p>p_c$ throughout the following. Given a
configuration $\omega \in \{0,1\}^{\bZ^d \times \bZ}$, we define the
\emph{stationary discrete time contact process} driven by $\omega$ as
\begin{align}
  \label{eq:CP}
  \eta \coloneqq (\eta_n)_{n \in \bZ}
  \coloneqq \{\eta_n(x): x\in \bZ^d, n\in \bZ \}
  \quad \text{with} \quad \eta_n(x)
  \coloneqq \ind{\bZ^d \times \{-\infty\} \to^\omega (x,n)}.
\end{align}
The event on the right hand side should be understood as
$\cap_{m\le n} \big\{\bZ^d \times \{m\} \to^\omega (x,n)\big\}$.

To define a random walk in the random environment generated by
$\eta$, or more precisely by its time-reversal, let
\begin{align}
  \label{eq:Def.kappa}
  \kappa \coloneqq \bigl\{\kappa_n(x,y) : {n \in \bZ, \,x,y \in
  \bZ^d}\bigr\}
\end{align}
be a family of random transition kernels defined on the same
probability space as $\eta$, in particular, with probability one
$\kappa_n(x, \,\cdot\, ) \ge 0$ and
$\sum_{y \in \bZ^d} \kappa_n(x,y)=1$ holds for all $n\in \bZ$ and
$x \in \bZ^d$. Given $\kappa$, we consider a $\bZ^d$-valued random
walk, i.e.\ a time-inhomogeneous Markov chain
$X\coloneqq (X_n)_{n =0,1,\dots }$ with $X_0=0$ and transition
probabilities given by
\begin{align}
  \label{eq:defX}
  \Pr\big(X_{n+1}=y \, \big| \, X_n=x, \kappa \big) = \kappa_n(x,y),
\end{align}
that is, the random walk at time $n$ takes a step according to the
kernel $\kappa_n(x,\,\cdot\,)$ if $x$ is its position at time $n$. We
recall from \cite{BirknerCernyDepperschmidt2016} the following four
assumptions on the distribution of $\kappa$.

\begin{assumption}[Locality]
  The \label{ass:local} transition kernels in the family $\kappa $
  depend locally on the time-reversal of $\eta$, that is for some
  fixed $R_\loc \in \N$
  \begin{align}
    \label{eq:ass:local}
    \kappa_n(x,\cdot) \; \text{ depends only on } \; \big\{\omega(y,-n),
    \eta_{-n}(y) : \norm{x-y} \leq R_\loc\big\}.
  \end{align}
\end{assumption}
Note that \eqref{eq:ass:local} is in keeping with the interpretation of
$X_n$ as the spatial embedding of an ancestral lineage: Forwards time for the
walk corresponds to backwards time for the underlying population process,
i.e.\ backwards time in the space-time environment.

\begin{assumption}[Closeness to a symmetric reference measure on
  $\eta_{-n}(x)=1$]
  There \label{ass:appr-sym} is a deterministic symmetric probability
  measure $\kappa_\rf$ on $\bZ^d$ with finite range $R_\rf \in \N$,
  that is $\kappa_\rf(x)=0$ if $\norm{x} > R_\rf$, and a suitably
  small $\varepsilon_\rf >0$ such that
  \begin{align}
    \label{eq:ass:appr-sym}
    \norm{ \kappa_n(x,x+\,\cdot\,) -
    \kappa_\rf(\,\cdot\,) }_{\mathrm{TV}} <
    \varepsilon_\rf   \quad \text{whenever} \quad \eta_{-n}(x)=1.
  \end{align}
  Here $\norm{\, \cdot \, }_{\mathrm{TV}}$ denotes the total variation
  norm.
\end{assumption}

\begin{assumption}[Space-time shift invariance and spatial point
  reflection invariance]
  The \label{ass:distr-sym} kernels in the family $\kappa$ are
  shift-invariant on $\bZ^d \times \bZ$, that is, using the notation
  \begin{align*}
    \theta^{z,m} \omega (\,\cdot\,,\,\cdot\,)=\omega
    (z+\,\cdot\,,m+\,\cdot\,),
  \end{align*}
  we have
  \begin{align}
    \label{eq:sp-time-stat}
    \kappa_n(x,y)(\omega ) = \kappa_{n+m}
    (x+z,y+z)(\theta^{z,m}\omega).
  \end{align}
  Moreover, if $\varrho$ is the spatial point reflection operator
  acting on $\omega$, i.e., $\varrho \omega (x,n) = \omega(-x,n)$ for
  any $n \in \bZ$ and $x \in \bZ^d$, then
  \begin{align}
    \label{eq:point-refl-inv}
    \kappa_n(0,y)(\omega )=\kappa_n(0,-y)(\varrho \omega).
  \end{align}
\end{assumption}

\begin{assumption}[Finite range]
  There \label{ass:finite-range} is $R_\kappa \in \N$ such that a.s.\
  \begin{align}
    \label{eq:ass:kappa2}
    \kappa_n(x,y) = 0 \quad \text{whenever \;} \norm{y-x}>R_\kappa.
  \end{align}
\end{assumption}

\medskip

We write $P_\omega$ for the conditional law of $\Pr$, given $\omega$
and $E_\omega$ for the corresponding expectation. In particular, $X$
is a time-inhomogeneous Markov chain under $P_\omega$ with transition
probabilities $P_\omega(X_{n+1}=y| X_n =x) = \kappa_n(x,y)$. Note that
by \eqref{eq:ass:local}, $\kappa_n$ is a function of $\omega$, but is
in principle non-local (through its dependence on $\eta$). Our main
result for this class of models is the following theorem which (in a
part of the parameter region) strengthens the corresponding annealed
CLT from \cite{BirknerCernyDepperschmidt2016}.

\begin{theorem}[Quenched CLT for the auxiliary model]
  For any $d\ge 1$ one \label{thm:LLNuCLTmodel1} can choose
  $0 < \varepsilon_\rf$ sufficiently small and $p$ sufficiently close
  to $1$, so that if $\kappa$ satisfies
  Assumptions~\ref{ass:local}--\ref{ass:finite-range} then $X$
  satisfies a quenched central limit theorem with non-trivial
  covariance matrix, that is, for any continuous and bounded function
  $f$ on $\R^d$ we have
  \begin{align}
    \label{eq:32}
    E_\omega\Bigl[f(X_n/\sqrt{n})\Bigr] \xrightarrow{n \to\infty}
    \Phi(f) \quad \text{for almost all $\omega$},
  \end{align}
  where $\Phi(f) = \int_{\R^d} f(x) \, \Phi(dx)$ and $\Phi$ is a
  nontrivial centred $d$-dimensional normal distribution.
\end{theorem}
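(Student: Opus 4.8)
The plan is to follow the regeneration-structure strategy that underlies quenched CLTs for ballistic and diffusive random walks in random environments, adapted to the two-walk / two-environment coupling announced in the abstract. Fix the environment $\omega$ and, alongside the single walk $X$, consider a pair $(X^{(1)},X^{(2)})$ of walks that are conditionally independent given $\omega$ (the same medium), both started at the origin. The standard route to a quenched CLT is: (i) show that for $P$-a.e.\ $\omega$ the law of $X_n/\sqrt n$ under $P_\omega$ is tight and any subexponential deviation from the annealed behaviour is negligible; (ii) identify the limit by the ``two-walk'' second-moment method---i.e.\ prove that $E_\omega[f(X^{(1)}_n/\sqrt n)]$ and $E_\omega[f(X^{(2)}_n/\sqrt n)]$ decorrelate in $L^1(d\Pr)$, so that $E_\omega[f(X_n/\sqrt n)]$ concentrates around its $\omega$-average $\bE[E_\omega[f(X_n/\sqrt n)]]$, which by the annealed CLT of \cite{BirknerCernyDepperschmidt2016} converges to $\Phi(f)$. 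The heart of the matter is therefore to produce enough independence to run step (ii).

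To that end I would first build, on a coarse-grained space-time lattice, a notion of ``good blocks'': boxes of side $L_{\mathrm s}\times L_{\mathrm t}$ in which the oriented percolation cluster is well-behaved (crossing clusters of density close to the upper invariant measure, a deterministic coupling of $\eta$ inside the block to a stationary block that is independent of a prescribed past, etc.). Since $p$ is taken close to $1$, a Liggett--Schonmann--Stacey / Peierls-type argument gives that the good blocks dominate a highly supercritical site percolation field, so that along the walk's trajectory one sees good blocks with overwhelming frequency, and the environment restricted to a good block is essentially a fresh, independent copy. Next, exploiting Assumption~\ref{ass:appr-sym} (closeness of $\kappa_n(x,x+\cdot)$ to the symmetric reference measure $\kappa_{\mathrm{ref}}$ on $\eta_{-n}(x)=1$) together with $\varepsilon_{\mathrm{ref}}$ small, I construct \emph{joint regeneration times}: random times $\tau_1<\tau_2<\dots$ at which (a) both walks $X^{(1)},X^{(2)}$ have separated spatially by more than the relevant ranges $R_{\mathrm{loc}},R_\kappa,R_{\mathrm{ref}}$ and sit in disjoint good blocks, and (b) a coin flip using the $\varepsilon_{\mathrm{ref}}$-``symmetric part'' of $\kappa$ succeeds, forcing the increment to be distributed according to $\kappa_{\mathrm{ref}}$ and erasing the dependence on the past. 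The crucial structural statement---this is where I would spend most of the work---is that after such a joint regeneration the two walks, reflected/translated appropriately, evolve in what can be coupled to \emph{two independent copies} of the medium, so that the joint regeneration increments $(\tau_{k+1}-\tau_k,\,X^{(1)}_{\tau_{k+1}}-X^{(1)}_{\tau_k},\,X^{(2)}_{\tau_{k+1}}-X^{(2)}_{\tau_k})$ form, up to an innocuous initial segment, an i.i.d.\ sequence with good tail bounds (stretched-exponential moments, from the high-supercriticality of the good-block field). The point-reflection invariance in Assumption~\ref{ass:distr-sym} is what makes the ``$\varrho\omega$ vs.\ $\omega$'' identification in this coupling legitimate, and finite range (Assumption~\ref{ass:finite-range}) keeps all the local modifications genuinely local.

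Granting the i.i.d.\ regeneration structure, the rest is routine: a renewal CLT (Anscombe-type) gives the annealed CLT along regeneration times, hence for all $n$ by interpolation; more importantly, the \emph{two-walk} version gives that $X^{(1)}_n/\sqrt n$ and $X^{(2)}_n/\sqrt n$ become asymptotically independent after averaging over the part of the medium that is common to them, which yields
\begin{align*}
  \bE\Bigl[\bigl(E_\omega[f(X_n/\sqrt n)] - \bE[E_\omega[f(X_n/\sqrt n)]]\bigr)^2\Bigr]
  = \bE\bigl[E_\omega[f(X^{(1)}_n/\sqrt n)]\,E_\omega[f(X^{(2)}_n/\sqrt n)]\bigr]
    - \bigl(\bE[E_\omega[f(X_n/\sqrt n)]]\bigr)^2 \to 0 .
\end{align*}
Since the annealed mean converges to $\Phi(f)$, this gives $L^2(d\Pr)$-convergence of $E_\omega[f(X_n/\sqrt n)]$ to $\Phi(f)$; upgrading to almost-sure convergence is done by the usual subsequence trick along $n_k=k^2$ (using a variance bound summable along that subsequence, available from the stretched-exponential regeneration tails) together with a uniform-continuity/interpolation estimate to fill the gaps between $n_k$ and $n_{k+1}$. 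Non-degeneracy of the limiting covariance is inherited from the annealed statement in \cite{BirknerCernyDepperschmidt2016}. The main obstacle, as indicated, is the construction and the coupling-to-two-independent-media property of the joint regeneration times: one must simultaneously control (i) that the two walks separate often enough despite living in the \emph{same} correlated medium (here the supercritical percolation structure and the locality range $R_{\mathrm{loc}}$ interact delicately), and (ii) that the regeneration really decouples the future from the past of \emph{both} walks at once, which forces a careful bookkeeping of which percolation sites have been ``used'' by each walk. This is precisely the point where the methods of \cite{BirknerCernyDepperschmidtGantert2013} (regeneration for a single walk, reversed-time oriented percolation) and \cite{BirknerCernyDepperschmidt2016} (the coarse-graining and the abstract assumptions) have to be merged.
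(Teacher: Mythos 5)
Your overall skeleton is the same as the paper's: estimate the variance of $E_\omega[f(X_n/\sqrt n)]$ via two conditionally independent walks in the same medium, build joint regeneration times, couple the pair with a pair of walks in two independent copies of the medium, get $L^2$-convergence, and upgrade to almost sure convergence along a polynomially growing subsequence plus interpolation. However, there are two genuine gaps. First, you assert that after a joint regeneration the increments $(\tau_{k+1}-\tau_k, X^{(1)}_{\tau_{k+1}}-X^{(1)}_{\tau_k}, X^{(2)}_{\tau_{k+1}}-X^{(2)}_{\tau_k})$ are i.i.d.\ with stretched-exponential tails. For two walks in the \emph{same} environment this is false: the law of the next joint-regeneration increment depends on the current displacement $X^{(1)}_{\tau_k}-X^{(2)}_{\tau_k}$ (the pair is only a Markov chain), and the coupling to two independent copies of the medium succeeds only up to an error that decays in the current separation (in the paper, like $C\|x-x'\|^{-\beta}$, because the regeneration attempts are made along an exponentially growing sequence of times and hence the tails are only algebraic, not stretched-exponential). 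Making the decorrelation quantitative therefore requires (i) a separation lemma showing the two walks move apart fast enough, and (ii) a count showing that the uncoupled steps up to time $n$ are $o(\sqrt n)$ with high probability; these are the technical heart of the argument and cannot be replaced by the bare assertion of an i.i.d.\ structure. Relatedly, a coin-flip regeneration using the $\varepsilon_\rf$-symmetric part of $\kappa$ does not by itself decouple past from future here: the walk runs backwards in the time of the percolation medium, and the information collected about zeros of $\eta$ (the determining triangles) must be isolated by a cone/double-cone construction before any renewal structure is available.

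Second, your argument is silent about $d=1$, while the theorem claims all $d\ge 1$. In $d=1$ two centred walks in the same medium are within bounded distance of each other on the order of $\sqrt n$ of the regeneration steps, so the separation-based decorrelation (uncoupled steps being $o(\sqrt n)$) fails and the variance bound cannot be obtained this way. The paper handles $d=1$ by a different route: a martingale decomposition of the pair along joint regeneration times, bounds showing the predictable parts vanish on the diffusive scale, and a martingale CLT with quantitative error. Without either this or some substitute, your proof covers at best $d\ge 2$.
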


\noindent
The limit law $\Phi$ in \eqref{eq:32} necessarily agrees with the
limit law from Theorem~2.6 in \cite{BirknerCernyDepperschmidt2016}.
The proof of Theorem \ref{thm:LLNuCLTmodel1} is given in
Section~\ref{sect:qCLTd>=2-1} for the case $d\ge 2$ and in
Section~\ref{sec:dimension1} we complete the proof for the case $d=1$.

\subsection{General model}
\label{sec:mgenermod}

As in the case of logistic branching random walk, our general
motivation is to study the dynamics of `ancestral lineages' of spatial
populations evolving on $\bZ^d$ which we will also denote by
$\eta =(\eta_n)_{n\in \bZ}$ slightly abusing the notation. In the
abstract setting we need to make a few assumptions on the environment
and the random walk itself to get some control on the behaviour of the
`ancestral lineages'.

We start by defining the environment. Let
\begin{align}
  \label{eq:defn_U}
  U\coloneqq \{U(x,n): x\in\Z^d, n\in\Z\}
\end{align}
be an i.i.d.\ random field, $U(0,0)$ taking values in some Polish
space $E_U$ ($E_U$ could be $\{-1,+1\}$, $[0,1]$, a path space, etc.).
Furthermore for $R_{ \eta} \in \N$ let
$B_{R_{ \eta}} = B_{R_{ \eta}}(0) \subset \Z^d$ be the ball of radius
$R_{ \eta}$ around $0$ with respect to $\sup$-norm. Let
\begin{align*}
  \varphi : \Z_+^{B_{R_{  \eta}}} \times E_U^{B_{R_{\eta}}} \to \Z_+
\end{align*}
be a measurable function, which describes the random evolution of the
population process (i.e., the environment for the walk), as detailed below.
We make the following assumption on the environment.

\begin{assumption}[Markovian, local dynamics, flow construction]
  \label{ass:markovdyn}
  We assume that $\eta \coloneqq (\eta_n)_{n \in \Z}$ is a Markov
  chain with values in $\Z_+^{\Z^d}$ whose evolution is local in the
  sense that $\eta_{n+1}(x)$ depends only on $\eta_n(y)$ for $y$ in a
  finite ball around $x$. In particular we assume that $\eta$ can be
  realised using the `driving noise' $U$ as
  \begin{align}
    \label{eq:etadynabstr}
    \eta_{n+1}(x) = \varphi\big( \restr{\theta^x \eta_n}{B_{R_{\eta}}},
    \restr{\theta^x U(\,\cdot\,,n+1)}{B_{R_{\eta}}} \big), \quad
    x\in\Z^d, \; n \in \Z.
  \end{align}
  Here $\theta^x$ denotes the spatial shift by $x$, i.e.,
  $\theta^x \eta_n(\,\cdot\,) = \eta_n(\,\cdot\,+x)$ and
  $\theta^x U(\,\cdot\,, n+1) = U(\,\cdot\,+x,n+1)$. Furthermore
  $\restr{\theta^x \eta_n}{B_{R_{ \eta}}}$ and
  $\restr{\theta^x U(\,\cdot\,,n+1)}{B_{R_{\eta}}}$ are the
  corresponding restrictions to the ball $B_{R_{\eta}}$.
\end{assumption}

Note that \eqref{eq:etadynabstr} defines a flow, in the sense that
given a realisation of $U$ we can construct $\eta$ simultaneously for
all starting configurations. In most situations we have in mind the
constant zero configuration $\underline{0} \in \Z_+^{\Z^d}$ is an
equilibrium for $\eta$, that is,
\begin{align*}
  \varphi\big(\restr{\underline{0}}{B_{R_{\eta}}}, \, \cdot\, \big)
  \equiv 0,
\end{align*}
and there is another non-trivial equilibrium. It will be a consequence
of our assumptions that the latter is in fact the unique non-trivial
ergodic equilibrium.

Moreover we make an assumption that allows us to compare the
environment to oriented percolation in a suitable way. Let
$L_{\mathrm{s}} \in \N$ and $L_{\mathrm{t}}\in\N$ be a spatial and a temporal
scale, respectivly. We consider space-time boxes
whose `bottom parts' are centred at points in the coarse-grained grid
$L_{\mathrm{s}} \Z^d \times L_{\mathrm{t}}\Z$. We typically think of
$L_{\mathrm{t}} > L_{\mathrm{s}} \gg R_\eta$ and for
$(\tilde{x}, \tilde{n}) \in \Z^d \times \Z$ we set
\begin{align}
  \mathsf{block}_m(\tilde{x},\tilde{n}) \coloneqq \big\{ (y,k)
  \in \Z^d \times \Z \, : \,
  \norm{y-L_{\mathrm{s}} \tilde{x}} \le
  m L_{\mathrm{s}}, \tilde{n} L_{\mathrm{t}} < k \le
  (\tilde{n}+1) L_{\mathrm{t}}\big\}.
\end{align}
We abbreviate $\mathsf{block}(\tilde{x},\tilde{n}) \coloneqq
\mathsf{block}_1(\tilde{x},\tilde{n})$ and for a set $A \subset \Z^d
\times \Z$, slightly abusing the notation, we denote by $\restr{U}{A}$
the restriction of the random field $U$ to $A$.

In Sections~\ref{sect:qCLTd>=2-1} and \ref{sec:dimension1} we will
prove a quenched CLT for a random walk on oriented percolation and
transfer the results therein via the following assumption to the
abstract setting.

\begin{assumption}[`Good' noise configurations and propagation of
  coupling]
  There \label{ass:coupling} exist a finite set of `good' local
  configurations $G_{\eta} \subset \Z_+^{B_{2 L_{\mathrm{s}}}(0)}$ and
  a set of `good' local realisations of the driving noise
  $G_U \subset E_U^{B_{4 L_{\mathrm{s}}}(0) \times
    \{1,2,\dots,L_{\mathrm{t}}\}}$ with the following properties:
  \begin{itemize}
  \item For a suitably small $\varepsilon_U$,
    \begin{align}
      \label{eq:goodblockprob}
      \Pr\big( \restr{U}{\mathsf{block}_4(0,0)} \in G_U \big)
      \ge 1-\varepsilon_U.
    \end{align}
  \item For any $(\tilde{x}, \tilde{n}) \in \Z^d \times \Z$ and any
    configurations
    $\eta_{\tilde{n} L_{\mathrm{t}}}, \eta'_{\tilde{n} L_{\mathrm{t}}} \in
    \Z_+^{\Z^d}$ at time $\tilde{n} L_{\mathrm{t}}$,
    \begin{align}
      \label{eq:contraction}
      \begin{split}
        & \restr{\eta_{\tilde{n} L_{\mathrm{t}}}}{B_{2 L_{\mathrm{s}}}
          (L_{\mathrm{s}} \tilde{x})}, \, \restr{\eta'_{\tilde{n}
            L_{\mathrm{t}}}}{B_{2 L_{\mathrm{s}}} (L_{\mathrm{s}}
          \tilde{x})} \in G_{\eta} \quad \text{and} \quad
        \restr{U}{\mathsf{block}_4(\tilde{x},\tilde{n})} \in G_U  \\[+0.5ex]
        & \Rightarrow \;\; \eta_{(\tilde{n}+1) L_{\mathrm{t}}}(y) =
        \eta'_{(\tilde{n}+1) L_{\mathrm{t}}}(y) \quad \text{for all $y$
          with} \; \norm{y-L_{\mathrm{s}}
          \tilde{x}} \le 3 L_{\mathrm{s}}  \\
        & \qquad \text{and} \quad \restr{\eta_{(\tilde{n}+1)
            L_{\mathrm{t}}}}{B_{2 L_{\mathrm{s}}} (L_{\mathrm{s}}
          (\tilde{x}+\wt{e}))} \in G_{\eta} \; \text{for all $\wt{e}$
          with} \; \norm{\wt{e}} \le 1,
      \end{split}
      \\ \intertext{and}
      \label{eq:propagation.coupling}
        & \restr{\eta_{\tilde{n} L_{\mathrm{t}}}}{B_{2 L_{\mathrm{s}}}(L_{\mathrm{s}} \tilde{x})}
          =  \restr{\eta'_{\tilde{n} L_{\mathrm{t}}}}{B_{2 L_{\mathrm{s}}}(L_{\mathrm{s}} \tilde{x})}
          \quad \Rightarrow \quad
          \eta_k(y) = \eta'_k(y) \;\; \text{for all} \; (y,k) \in
          \mathsf{block}(\tilde{x},\tilde{n}),
    \end{align}
    where $\eta=(\eta_n)$ and $\eta'=(\eta'_n)$ are given by
    \eqref{eq:etadynabstr} with the same $U$ but possibly different
    initial conditions.
  \item There is a fixed (e.g., $L_{\mathrm{s}}$-periodic or even
    constant in space) reference configuration
    $\eta^{\mathrm{ref}} \in \Z_+^{\Z^d}$ such that
    $\restr{\eta^{\mathrm{ref}}}{B_{2 L_{\mathrm{s}}}(L_{\mathrm{s}}
      \tilde{x})} \in G_{\eta}$ for all $\tilde{x} \in \Z^d$.
        \end{itemize}
\end{assumption}

\begin{remark}
  \label{rem:eta-top_eta-bot_independent}
  One consequence of the above assumption is that the configurations
  of $\eta$ at the bottom of a space-time block
  $\block(\tilde{x},\tilde{n})$ is independent of the configuration of
  $\eta$ at the top of the same space-time block, conditioned on both
  being a good configuration, i.e.\ in $G_\eta$, and
  $U\vert_{\block(\tilde{x},\tilde{n})}\in G_U$. The proof can be
  found in Section~\ref{sec:abstract_proofs} below,
  see Equations~\eqref{eq:eta_rem_1} and \eqref{eq:eta_rem_3}.
\end{remark}

In \cite{BirknerCernyDepperschmidt2016}, Birkner et al.\ show the
connection between $\eta$ and oriented percolation which we recall
here; see Lemma~3.5 in \cite{BirknerCernyDepperschmidt2016}.

\begin{lemma}[Coupling with oriented percolation]
  \label{lem:abstr-OCcoupl}
  Put
  \begin{align}
    \wt{U}(\tilde{x}, \tilde{n}) \coloneqq
    \ind{\restr{U}{\textstyle
    \mathsf{block}_4(\tilde{x},\tilde{n})} \in G_U},
    \quad (\tilde{x}, \tilde{n}) \in \Z^d \times \Z.
  \end{align}
  If $\varepsilon_U$ is sufficiently small, we can couple
  $\wt U(\tilde{x}, \tilde{n})$ to an i.i.d.\ Bernoulli random field
  $\wt{\omega}(\tilde{x}, \tilde{n})$ with
  $\Pr(\wt{\omega}(\tilde{x}, \tilde{n})=1) \ge 1 -
  \varepsilon_{\wt{\omega}}$ such that $\wt{U} \ge \wt{\omega}$, and
  $\varepsilon_{\wt{\omega}}$ can be chosen small (how small depends
  on $\varepsilon_U$, of course).

  \noindent
  Moreover, the process $\eta$ then has a unique non-trivial ergodic
  equilibrium and one can couple a stationary process
  $\eta = (\eta_n)_{n\in\Z}$ with $\eta_0$ distributed according to
  that equilibrium with $\wt \omega $ so that
  \begin{align}
    \label{e:tildeG}
    \wt{G}(\tilde{x}, \tilde{n}) \coloneqq
    \wt U(\tilde{x},\tilde{n})
    \ind{\restr{\eta_{\tilde{n} L_{\mathrm{t}}}}{B_{2
    L_{\mathrm{s}}}(L_{\mathrm{s}} \tilde{x})}\in G_\eta}
    \ge \tilde{\xi}(\tilde{x}, \tilde{n}),
    \quad (\tilde{x}, \tilde{n}) \in \Z^d \times \Z
  \end{align}
  where
  $\tilde{\xi}\coloneqq \{\tilde{\xi}(\tilde{x}, \tilde{n}): \tilde{x}
  \in \Z^d, \tilde{n} \in \Z\}$ is the discrete time contact process
  driven by the random field $\wt \omega$, i.e.\ $\wt\xi$ is defined
  by
  \begin{align}
    \label{eq:CP-coarse-grained}
    \tilde{\xi}(\tilde{x}, \tilde{n}) \coloneqq  \ind{\Z^d
    \times \{-\infty\} \to^{\wt \omega} (x,n)}.
  \end{align}
\end{lemma}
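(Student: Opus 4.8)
The plan is to prove Lemma~\ref{lem:abstr-OCcoupl} in two stages, matching its two assertions. The first assertion is a standard domination statement. Since $\wt U(\tilde x,\tilde n) = \ind{\restr{U}{\block_4(\tilde x,\tilde n)}\in G_U}$ is a function of the i.i.d.\ field $U$ restricted to $\block_4(\tilde x,\tilde n)$, and since two such blocks $\block_4(\tilde x,\tilde n)$ and $\block_4(\tilde x',\tilde n')$ are disjoint once $\|\tilde x-\tilde x'\|$ or $|\tilde n-\tilde n'|$ exceeds some fixed constant $c$ (determined by the ratio of the block size $4L_{\mathrm s}$, $L_{\mathrm t}$ to the coarse-grid spacing $L_{\mathrm s}$, $L_{\mathrm t}$), the field $\wt U$ is a finite-range dependent field of Bernoulli variables with $\Pr(\wt U=1)\ge 1-\varepsilon_U$ by \eqref{eq:goodblockprob}. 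I would then invoke the Liggett--Schonmann--Stacey domination theorem: a $k$-dependent Bernoulli field with marginal success probability close enough to $1$ stochastically dominates an i.i.d.\ Bernoulli field $\wt\omega$ with parameter $1-\varepsilon_{\wt\omega}$, where $\varepsilon_{\wt\omega}\to 0$ as $\varepsilon_U\to 0$. This gives the coupling with $\wt U\ge\wt\omega$; the dependence range $c$ is fixed (it does not grow), so the theorem applies uniformly. (If one prefers to keep the coarse-grid cells genuinely independent, one can instead thin out to a sublattice of spacing $c$ on which $\wt U$ is already independent, but the LSS route is cleaner and is what \cite{BirknerCernyDepperschmidt2016} uses.)

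For the second assertion I would first establish existence and uniqueness of the non-trivial ergodic equilibrium for $\eta$. Existence: start $\eta$ from the reference configuration $\eta^{\mathrm{ref}}$ (or from the full configuration) and run the dynamics \eqref{eq:etadynabstr}; using \eqref{eq:contraction} one shows that on the event that sufficiently many surrounding blocks are $U$-good, the relevant window of $\eta$ lands in $G_\eta$, and a standard compactness/stationarity argument (Cesàro limit of the laws, as in \cite{BirknerDepperschmidt2007}) produces a shift-invariant equilibrium; it is non-trivial because, by the coupling with supercritical oriented percolation established below, the set of coordinates where $\eta$ differs from $\underline 0$ is non-empty with positive density. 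Uniqueness of the ergodic equilibrium will follow from the coupling in \eqref{e:tildeG}: \eqref{eq:contraction} forces agreement of two copies of $\eta$ (with the same $U$) on the central part of any block whose surrounding region is good, so on the infinite supercritical oriented-percolation cluster driven by $\wt\omega$ the two copies coalesce, and disagreement can survive only on the complement of that cluster, which has zero density in equilibrium; hence any two equilibria with the same $U$-marginal coincide, and ergodicity is inherited from that of $U$.

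Finally I would verify the domination \eqref{e:tildeG}. Couple the stationary $\eta$ (now fixed to the equilibrium just constructed) with the field $\wt\omega$ from the first part, and let $\wt\xi$ be the discrete-time contact process driven by $\wt\omega$ as in \eqref{eq:CP-coarse-grained}. The claim $\wt G(\tilde x,\tilde n)\ge\wt\xi(\tilde x,\tilde n)$ is proved by induction along the temporal coarse-grid direction: if $\wt\xi(\tilde x,\tilde n)=1$ then by definition there is some neighbour $\tilde x'$ with $\|\tilde x'-\tilde x\|\le 1$ such that $\wt\xi(\tilde x',\tilde n-1)=1$ and $\wt\omega(\tilde x,\tilde n)=1$; the inductive hypothesis gives $\wt G(\tilde x',\tilde n-1)=1$, i.e.\ $\restr{\eta_{(\tilde n-1)L_{\mathrm t}}}{B_{2L_{\mathrm s}}(L_{\mathrm s}\tilde x')}\in G_\eta$ and $\restr{U}{\block_4(\tilde x',\tilde n-1)}\in G_U$, and $\wt\omega(\tilde x,\tilde n)=1$ forces $\restr{U}{\block_4(\tilde x,\tilde n)}\in G_U$ (since $\wt U\ge\wt\omega$). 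Now applying the second half of \eqref{eq:contraction} to the block at $(\tilde x',\tilde n-1)$ — comparing $\eta$ with the reference configuration $\eta^{\mathrm{ref}}$, which lies in $G_\eta$ on every relevant window by the third bullet of Assumption~\ref{ass:coupling} — yields $\restr{\eta_{\tilde n L_{\mathrm t}}}{B_{2L_{\mathrm s}}(L_{\mathrm s}\tilde x)}\in G_\eta$ because $\|\tilde x-\tilde x'\|\le 1$. Together with $\restr{U}{\block_4(\tilde x,\tilde n)}\in G_U$ this is exactly $\wt G(\tilde x,\tilde n)=1$, closing the induction; the base case is handled by sending the initial time to $-\infty$ and using stationarity, exactly as the event in \eqref{eq:CP} is interpreted as a decreasing intersection.

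\medskip
The main obstacle I anticipate is not any single step but the bookkeeping of spatial scales: one must check that the radii $2L_{\mathrm s}$, $3L_{\mathrm s}$, $4L_{\mathrm s}$ appearing in Assumption~\ref{ass:coupling} are arranged so that a good block at a coarse-grid neighbour $\tilde x'$ of $\tilde x$ genuinely controls the window $B_{2L_{\mathrm s}}(L_{\mathrm s}\tilde x)$ needed at the next time level — this is precisely why \eqref{eq:contraction} asserts agreement on the larger ball of radius $3L_{\mathrm s}$ and membership in $G_\eta$ for all unit-shifted windows. Making the induction in \eqref{e:tildeG} go through is a matter of tracking these inclusions carefully; once the scales are matched, the rest is the LSS domination theorem plus the standard equilibrium construction.
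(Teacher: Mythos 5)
You should first note that the paper does not actually prove Lemma~\ref{lem:abstr-OCcoupl}: it is recalled verbatim from Lemma~3.5 of \cite{BirknerCernyDepperschmidt2016}, so there is no in-paper proof to compare with. Measured against the standard argument (which is the one in that reference), your two main components are sound. For the first assertion, $\wt U(\tilde x,\tilde n)$ is indeed a function of $U$ on $\block_4(\tilde x,\tilde n)$ only; blocks with different $\tilde n$ occupy disjoint time slices and the spatial dependence range is a fixed multiple of the grid spacing, so $\wt U$ is a $k$-dependent Bernoulli field with marginal at least $1-\varepsilon_U$ by \eqref{eq:goodblockprob}, and Liggett--Schonmann--Stacey gives the dominated i.i.d.\ field $\wt\omega$. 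For \eqref{e:tildeG}, your induction along open $\wt\omega$-paths is correct: if $\wt G(\tilde x',\tilde n-1)=1$ then the second conclusion of \eqref{eq:contraction} (applied with $\eta'=\eta$; the comparison with $\eta^{\mathrm{ref}}$ is not even needed there) yields $\restr{\eta_{\tilde n L_{\mathrm t}}}{B_{2L_{\mathrm s}}(L_{\mathrm s}\tilde x)}\in G_\eta$ for any $\|\tilde x-\tilde x'\|\le 1$, and $\wt U\ge\wt\omega$ supplies the noise part. The anchoring of this induction is where you are loose: since $\tilde\xi$ is defined via paths from $-\infty$, you must construct the stationary $\eta$ as a limit of processes started at times $-N\to-\infty$ from $\eta^{\mathrm{ref}}$ (whose windows are all in $G_\eta$ by the third bullet of Assumption~\ref{ass:coupling}), run the induction for each finite $N$, and then use that goodness of a fixed window is a local event to pass to the limit. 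You gesture at this, and it can be made rigorous, but it is the step that actually ties the equilibrium construction to the coupling and should be written out.

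The genuine gap is in your uniqueness argument. Your claim that disagreement between two equilibria coupled to the same $U$ ``can survive only on the complement of that cluster, which has zero density in equilibrium'' is not correct: the complement of the supercritical cluster $\tilde\xi$ has small but strictly positive density, so coalescence on the cluster alone does not identify the two processes. The missing ingredient is the determining-cluster mechanism: on a site with $\tilde\xi=1$, \eqref{eq:contraction} shows that the configuration at the top of the block is the \emph{same} for every good bottom configuration, hence a function of the noise alone; and a site with $\tilde\xi=0$ sits inside a determining cluster $\DC(\tilde x,\tilde n)$ which is a.s.\ finite with exponentially bounded height, whose lower boundary consists of $\tilde\xi=1$ blocks, so that iterating \eqref{eq:etadynabstr} (together with \eqref{eq:propagation.coupling}) through the cluster reconstructs $\eta$ there from $U$ and the already-agreed boundary values. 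Only with this step does one conclude that any equilibrium coupled as in \eqref{e:tildeG} is a.s.\ a measurable function of $U$, which is what gives uniqueness (and ergodicity) in \cite{BirknerCernyDepperschmidt2016}; as written, your argument does not deliver this part of the lemma.
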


Finally we need to make a technical assumption on the environment.

\begin{assumption}[Irreducibility on $G_{\eta}$]
  On \label{ass:irred} the event $\{ \wt{G}(\tilde{x}, \tilde{n}) = 1 \}$, conditioned on
  seeing a particular local configuration $\chi \in \Z_+^{B_{2
      L_{\mathrm{s}}}(L_{\mathrm{s}} \tilde{x})} \cap G_{\eta}$ at the
  bottom of the space-time box [time coordinate $\tilde{n}
  L_{\mathrm{t}}$], every configuration $\chi' \in G_{\eta}$ has a
  uniformly positive chance of appearing at the top of the space-time
  box [time coordinate $(\tilde{n}+1) L_{\mathrm{t}}$].
\end{assumption}

Next we describe our assumptions for the random walk itself. We start by
defining the random walk $X=(X_k)_{k\in\N_0}$ in the random
environment generated by $\eta$. Let
$\widehat U \coloneqq (\widehat{U}(x,k) : x \in \Z^d, k \in \Z_+)$ be
an independent space-time i.i.d.\ field of random variables uniformly
distributed on $(0,1)$. Furthermore let
\begin{align}
  \label{eq:phix}
  \varphi_{ X} : \Z_+^{B_{R_{ X}}} \times \Z_+^{B_{R_{ X}}}
  \times [0,1] \to B_{R_{ X}}
\end{align}
a measurable function, where $R_{ X} \in \N$ is an upper bound on
the jump size as well as on the dependence range. Given $\eta$, let
$X_0=0$ and put
\begin{align}
  \label{eq:abstr-walk}
  X_{k+1} \coloneqq X_k
  + \varphi_{ X}\big(\restr{\theta^{X_k} \eta_{-k}}{B_{R_{ X}}},
  \restr{\theta^{X_k} \eta_{-k-1}}{B_{R_{ X}}} , \widehat{U}(X_k,k) \big),
  \quad k =0,1,\dots.
\end{align}
Note that again forwards time direction for $X$ is backwards time
direction for $\eta$. This is consistent with the interpretation of
$X$ as an ancestral lineage.

The first assumption on the random walk is useful to control the
behaviour of $X$ along the coarse-grained space-time grid
$L_{\mathrm{s}}\bZ^d\times L_{\mathrm{t}}\bZ$. This assumption
corresponds to Assumption~3.9 in \cite{BirknerCernyDepperschmidt2016}
but we require here more precise control of the quenched law on
``good'' space time boxes. In this form the assumption is well aligned
with Assumption~\ref{ass:appr-sym} from the toy model above.

\begin{assumption}[Closeness to symmetric random walk on $\wt{G}=1$]
  \label{ass:abstract_closeness_to_symmetric_transition_kernel}
  Let $\kappa_{L_{\mathrm{t}}}$ be a deterministic, symmetric,
  non-degenerate transition kernel with finite range
  $R_{L_{\mathrm{t}}}\le RL_{\mathrm{t}}$ for some $R >0$. There exist
  $\varepsilon_{\mathrm{symm}}>0$ such that for all
  $(\tilde{x},\tilde{n})\in\bZ^d\times\bZ$ and all
  $z\in B_{L_{\mathrm{s}}}(\tilde{x}L_{\mathrm{s}})$
  \begin{align}
    \label{eq:assumptions_close_to_symmetric_kernel}
    \norm{\bP(X_{\tilde{n}L_{\mathrm{t}}}=\cdot + z
    \,\vert\, X_{(\tilde{n}-1)L_{\mathrm{t}}}=z,
    \wt{G}(\tilde{x},-\tilde{n})=1,\eta)
    - \kappa_{L_{\mathrm{t}}}(\cdot)}_{\mathrm{TV}}\le
    \varepsilon_{\mathrm{symm}}.
  \end{align}
\end{assumption}
Note that the condition on $R_{L_{\mathrm{t}}}$ easily holds whenever
$X$ itself has finite range.

Finally, we make an assumption that ensures that the random walk has
$0$ speed with respect to the annealed law.
\begin{assumption}[Symmetry of $\varphi_{ X}$ w.r.t.\ point
  reflection]
  \label{ass:abstract_symmetry_of_point_reflection}
  Let $\varrho$ be the (spatial) point reflection operator acting on
  $\eta$, i.e., $\varrho \eta_k(x) = \eta_k(-x)$ for any $k \in \Z$
  and $x \in \Z^d$. We assume
  \begin{align}
    \label{eq:abstr-symm}
    \varphi_{ X}\bigl(\restr{\varrho
    \eta_0}{B_{R_{ X}}},\restr{\varrho \eta_{-1}}{B_{R_{ X}}},
    \widehat{U}(0,0)\bigr) = -
    \varphi_{ X} \bigl(\restr{\eta_0}{B_{R_{X}}},\restr{\eta_{-1}}{B_{R_{ X}}},
    \widehat{U}(0,0)\bigr).
  \end{align}
\end{assumption}

\medskip

Under these assumptions we obtain a quenched CLT for any dimension
$d\ge 1$. Its proof is given in Section~\ref{sec:abstract_proofs}.

\begin{theorem}[Quenched CLT for a more general class of environments]
  \label{thm:abstract_quenched_clt}
  Let the random environment $\eta$ and random walks in it satisfy
  Assumption~\ref{ass:markovdyn} through
  \ref{ass:abstract_symmetry_of_point_reflection} with $\varepsilon_U$
  from \eqref{eq:goodblockprob} and $\varepsilon_{\mathrm{symm}}$ in
  \eqref{eq:assumptions_close_to_symmetric_kernel} sufficiently small.
  Then the random walk $X$ satisfies a quenched central limit theorem
  with non-trivial covariance matrix for any $d\ge 1$.
\end{theorem}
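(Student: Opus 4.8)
The strategy is to reduce the abstract model to the oriented-percolation auxiliary model of Section~\ref{sec:model_in_oriented_percolation} and then invoke Theorem~\ref{thm:LLNuCLTmodel1}. Concretely, I would first pass to the coarse-grained picture: using Lemma~\ref{lem:abstr-OCcoupl}, couple the driving noise $U$ and the stationary environment $\eta$ to a supercritical oriented-percolation field $\wt\omega$ on the grid $L_{\mathrm{s}}\Z^d\times L_{\mathrm{t}}\Z$, with $\Pr(\wt\omega(\tilde x,\tilde n)=1)\ge 1-\varepsilon_{\wt\omega}$ and $\varepsilon_{\wt\omega}$ as small as we like provided $\varepsilon_U$ is small. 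On this coarse scale, define the observed walk $\wt X_{\tilde n}\coloneqq X_{\tilde n L_{\mathrm{t}}}$ (reading one coarse spatial unit as $L_{\mathrm{s}}$). The plan is to show that $\wt X$, viewed in the coarse-grained oriented percolation $\wt\xi$ built from $\wt\omega$, satisfies the hypotheses of the auxiliary-model theorem, and then transfer the CLT from $\wt X$ back to $X$ since $\sup_{0\le k<L_{\mathrm{t}}}\norm{X_{\tilde n L_{\mathrm{t}}+k}-X_{\tilde n L_{\mathrm{t}}}}\le R_X L_{\mathrm{t}}$ is a bounded correction that is immaterial after dividing by $\sqrt n$.

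The bulk of the work is verifying Assumptions~\ref{ass:local}--\ref{ass:finite-range} for the coarse-grained kernel. Locality (Assumption~\ref{ass:local}): by Assumption~\ref{ass:markovdyn} and \eqref{eq:propagation.coupling}, the configuration of $\eta$ inside $\block(\tilde x,\tilde n)$ — hence the conditional law of the increment $\wt X_{\tilde n+1}-\wt X_{\tilde n}$ — depends only on $\restr{\eta_{\tilde n L_{\mathrm t}}}{B_{2L_{\mathrm s}}(L_{\mathrm s}\tilde x)}$ and $\restr{U}{\block(\tilde x,\tilde n)}$, both of which are measurable with respect to a bounded space-time neighbourhood of $(\tilde x,\tilde n)$ in the $\wt\omega$-scale; the time-reversal bookkeeping matches because forwards time for $X$ is backwards time for $\eta$, exactly as in \eqref{eq:ass:local}. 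Finite range (Assumption~\ref{ass:finite-range}): immediate from $R_X$ and the bound $R_X L_{\mathrm t}$, giving $R_\kappa$ of order $R L_{\mathrm t}/L_{\mathrm s}$. Closeness to a symmetric reference measure (Assumption~\ref{ass:appr-sym}): take $\kappa_{\rf}$ to be the law of $\kappa_{L_{\mathrm t}}$ rescaled to the coarse lattice; on $\{\wt G(\tilde x,-\tilde n)=1\}$ — which in particular forces $\restr{\eta_{\tilde n L_{\mathrm t}}}{\cdot}\in G_\eta$ and $\restr{U}{\cdot}\in G_U$, hence sits inside the event in \eqref{eq:assumptions_close_to_symmetric_kernel} — Assumption~\ref{ass:abstract_closeness_to_symmetric_transition_kernel} gives total-variation distance $\le\varepsilon_{\mathrm{symm}}$, so we set $\varepsilon_{\rf}=\varepsilon_{\mathrm{symm}}$. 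Space-time shift invariance and point-reflection invariance (Assumption~\ref{ass:distr-sym}): shift invariance follows from the i.i.d.\ structure of $U$ together with stationarity of $\eta$ and the translation covariance of the flow \eqref{eq:etadynabstr} and of \eqref{eq:abstr-walk}; the reflection identity comes from Assumption~\ref{ass:abstract_symmetry_of_point_reflection}, which makes $\varphi_X$ anti-symmetric under $\varrho$ step by step and hence makes the coarse increment law satisfy \eqref{eq:point-refl-inv}. One then checks that all of this can be arranged with $\varepsilon_{\rf}$ and the percolation slack $\varepsilon_{\wt\omega}$ simultaneously below the thresholds required by Theorem~\ref{thm:LLNuCLTmodel1}, which is where the smallness hypotheses on $\varepsilon_U$ and $\varepsilon_{\mathrm{symm}}$ enter.

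The step I expect to be the main obstacle is handling the walk's behaviour on the \emph{bad} coarse boxes, i.e.\ where $\wt G=0$, so that the abstract walk genuinely fits the auxiliary framework rather than merely its ``good-box'' skeleton. In the auxiliary model the kernel $\kappa_n(x,\cdot)$ is defined for all $(x,n)$ and only constrained on $\{\eta_{-n}(x)=1\}$; here, off the good set, \eqref{eq:assumptions_close_to_symmetric_kernel} gives no control, and one must argue — as in \cite{BirknerCernyDepperschmidt2016} — that the oriented-percolation backbone is visited with overwhelming frequency so that excursions off it are controlled by large-deviation estimates for supercritical oriented percolation (a Borel--Cantelli / regeneration argument). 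Relatedly, Assumptions~\ref{ass:coupling}, \ref{ass:irred} and Remark~\ref{rem:eta-top_eta-bot_independent} are what guarantee that conditioning on $\wt G=1$ does not destroy the i.i.d.-like renewal structure (top of a good box independent of its bottom given both good), which is exactly the input the regeneration construction behind Theorem~\ref{thm:LLNuCLTmodel1} needs. Once the reduction is in place, non-triviality of the limiting covariance is inherited from Theorem~\ref{thm:LLNuCLTmodel1}: the reference kernel $\kappa_{L_{\mathrm t}}$ is non-degenerate and $\varepsilon_{\mathrm{symm}}$ is small, so the effective diffusivity is bounded below, and the identification of $\Phi$ with the annealed limit is automatic. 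The details of the bad-box control and of checking that the several smallness parameters can be chosen consistently are the only non-routine parts; everything else is a matter of unwinding definitions and transcribing the arguments of \cite{BirknerCernyDepperschmidt2016} into the present quenched setting.
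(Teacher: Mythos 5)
Your overall plan --- coarse-grain on the $L_{\mathrm{s}}\Z^d\times L_{\mathrm{t}}\Z$ grid, use Lemma~\ref{lem:abstr-OCcoupl}, and transfer a CLT for $\wt X$ back to $X$ since the within-box correction is $O(L_{\mathrm{t}})$ --- matches the paper's starting point. But the core of your argument, namely verifying Assumptions~\ref{ass:local}--\ref{ass:finite-range} for a coarse-grained kernel and then invoking Theorem~\ref{thm:LLNuCLTmodel1} as a black box, has a genuine gap. The coarse-grained walk is \emph{not} a random walk in the coarse random environment in the sense of the auxiliary model: the law of the increment $\wt X_{\tilde n+1}-\wt X_{\tilde n}$ depends on the fine-scale configuration $\restr{\eta_{\tilde n L_{\mathrm{t}}}}{B_{2L_{\mathrm{s}}}(L_{\mathrm{s}}\wt X_{\tilde n})}$ (a $\Z_+$-valued object carrying strictly more information than the Bernoulli fields $\wt U$, $\wt\omega$, $\wt\xi$) and on the displacement $\wt Y_{\tilde n}$ of $X$ inside its coarse box. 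Hence there is no kernel $\kappa_{\tilde n}(\tilde x,\cdot)$ measurable with respect to the coarse field as required by \eqref{eq:ass:local}, and the quenched law of $\wt X$ given $\wt\omega$ is not even a Markov chain of the coarse position alone; in particular Assumption~\ref{ass:appr-sym} cannot be read off from \eqref{eq:assumptions_close_to_symmetric_kernel} by setting $\varepsilon_\rf=\varepsilon_{\mathrm{symm}}$, since the latter is a statement conditioned on $\wt G=1$ about a kernel that still varies with the local $\eta$-configuration and the displacement. The obstacle you single out (bad boxes where $\wt G=0$) is handled by the regeneration construction in both models and is not the real difficulty.

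Because of this, the paper does not reduce to Theorem~\ref{thm:LLNuCLTmodel1}; it reruns the entire regeneration and coupling machinery in the abstract setting (Construction~\ref{constr:reg-time abstract}, Lemmas~\ref{lem:abstract_JointRegTimesBound}--\ref{lemma:abstract_convergence_for_subsequence_to_full_sequence}), augmenting the state space along regenerations by the displacements $\widehat Y_i,\widehat Y'_i$, the local configurations $\hat\eta_i,\hat\eta'_i$ and the current separation, see \eqref{eq:43}. A further consequence your reduction misses: even for two walks in independent environments the regeneration increments are not i.i.d.\ (the walk must evaluate the environment one step into its future, see \eqref{eq:abstr-walk}), only finite-range correlated (Remarks~\ref{rem:eta_hat_independent_sequence}--\ref{rem:regen_times_almost_independent}); this forces replacing the i.i.d.-based invariance principles and LILs by mixing versions (e.g.\ \cite{DoukhanMassartRio1994}, \cite{OodairaYoshihara1971}) and even/odd splitting of sums, none of which is available if one tries to quote Theorem~\ref{thm:LLNuCLTmodel1} verbatim. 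To repair your proposal you would either have to prove a strengthened auxiliary theorem whose environment includes the fine-scale data and whose walk carries the displacement as an internal state, or follow the paper in redoing the regeneration/coupling estimates directly for the augmented chain.
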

How small exactly $\varepsilon_U$ and $\varepsilon_{\mathrm{symm}}$
need to be chosen will depend on the other parameters of the model and
we do not strive here to work out explicit bounds.

\begin{remark}[Theorem~\ref{thm:quenched_clt_log_branching} as a corollary]
  In \label{rem:log_branch_walk_part_of_abstract}
  \cite{BirknerCernyDepperschmidt2016} 
  it is proved that under Assumption~\ref{ass:log_branch_walk} the
  model of the logistic branching random walks are part of the more
  abstract class of models; see Proposition~4.7 therein. In particular
  in \emph{Step 5} in the proof of that proposition it is in fact
  verified that the property
  \eqref{eq:assumptions_close_to_symmetric_kernel} from
  Assumption~\ref{ass:abstract_closeness_to_symmetric_transition_kernel}
  of the present paper (which is formally stronger than Assumption~3.9
  in \cite{BirknerCernyDepperschmidt2016}) is satisfied by the
  logistic branching random walk. Thus,
  Theorem~\ref{thm:quenched_clt_log_branching} follows once we prove
  Theorem~\ref{thm:abstract_quenched_clt}.
\end{remark}

\begin{remark}
  We use the techniques developed here in a companion paper to study a
  pair of coalescing ancestral lineages in the logistic branching
  random walk model for $d=2$. More specifically we study the
  asymptotic behaviour of the time of coalescence for two ancestral
  lineages and show that
  \begin{equation*}
    \lim\limits_{N\to \infty} \bP_{Nx}(\tau_{\mathrm{coal}}>N^{2\gamma}) =\frac{1}{\gamma},
  \end{equation*}
  for $\gamma>1$, where we interpret $\bP_{Nx}$ as the distribution
  where one lineage is started at the origin and the other at site
  $Nx$ for some $x\in \mathbb{R}^2\setminus \{0\}$.
\end{remark}

\paragraph{Outline and proof strategies}

In Section~\ref{sec:preparations} we provide a framework and the
necessary tools to prove the quenched CLT
Theorem~\ref{thm:LLNuCLTmodel1} for the auxiliary model from
Section~\ref{sec:model_in_oriented_percolation}.
The proof is long and quite technical. We give a detailed `roadmap'
of its steps at the beginning of Section~\ref{sec:preparations}.
In the following, we provide a brief overview of the main arguments.

We follow ideas used in \cite{BirknerCernyDepperschmidtGantert2013}
and extend the regeneration construction introduced in
\cite{BirknerCernyDepperschmidt2016} to two random walks. An essential
tool in the construction of regeneration times of \emph{one} random
walk in \cite{BirknerCernyDepperschmidt2016} was a cone based at the
current position of the random walk. We extent this construction to a
double cone to encompass a setting with two random walks (see
Figure~\ref{fig:dcones}), with the goal to isolate the interior of the
double cone from the exterior via the double cone shell.
Furthermore we show that the basic properties
of the regeneration construction from
\cite{BirknerCernyDepperschmidt2016} for a single walker, and thus a
single cone, translate to our two walker setting and the double cone.

In Construction~\ref{constr:reg-time} we detail how to find
regeneration times for a pair of random walks $(X,X')$ evolving in the
same environment and the conditions that have to be met at those
times. We call this pair the $\joint$-pair and want to compare its
behaviour with another pair $(X,X'')$ evolving in two independent
environments, called $\indi$-pair. A more detailed description and how
we think of those two pairs can be found above
Proposition~\ref{prop:TVdistance-joint-ind-1step}.
In Propositions~\ref{prop:JointRegTimesBound} and
\ref{prop:IndRegTimesBound} we obtain tail bounds for the regeneration
times of both pairs which allow us to couple the $\joint$-pair with
the $\indi$-pair with a probability depending on the starting
distance, see Proposition~\ref{prop:TVdistance-joint-ind-1step}.

\medskip

\noindent
In Sections~\ref{sect:qCLTd>=2-1}--\ref{sec:dimension1} we prove the
quenched CLT for the auxiliary model, i.e.\
Theorem~\ref{thm:LLNuCLTmodel1}. The main tool is
Proposition~\ref{prop:2nd-mom}, which, in combination with Markov's
inequality and Lemma~\ref{lemma:convergence for subsequence to full
  sequence}, yields a quenched CLT along the regeneration times; see
display \eqref{eq:52} and below. To prove
Proposition~\ref{prop:2nd-mom} in $d\ge 2$ we use the fact that, by
Proposition~\ref{prop:TVdistance-joint-ind-1step}, the $\joint$-pair
of random walks behaves similarly to the $\indi$-pair whenever
the two walks are far enough apart from each other.

This requires different approaches for dimensions $d\ge 2$ and $d=1$,
due to the fact that in the case $d=1$ the random walks spend more
time close to each other and we have to do more careful calculations.

Starting with $d\ge 2$ we prove in Lemma~\ref{lem:coupl} that the
number of steps where a coupling of $(X,X')$ and $(X,X'')$ fails is
suitably small with high probability. Since the success of the coupling
depends on the starting distance, we make use of the Separation
lemma~\ref{lem:separ} to see that a fast separation leads to a high
proportion of coupled steps.

To finish the proof of Theorem~\ref{thm:LLNuCLTmodel1} for $d\ge 2$ it
remains to show that the convergence along the regeneration times
transfers to the original random walk, i.e.\ we need to show that the
random walk is well behaved between two consecutive regeneration
times.

\medskip

\noindent
In Section~\ref{sec:dimension1} we provide the necessary results to
prove Proposition~\ref{prop:2nd-mom} for $d=1$. As mentioned above in
the one dimensional case the random walkers are close to each other
too often which requires another approach. We again study a pair of
random walks evolving in the same environment along their joint
regeneration times. By
Proposition~\ref{prop:TVdistance-joint-ind-1step} we can couple this
pair to another where the walkers evolve in independent copies of the
environment. Here the number of steps at which the random walkers are
too close to use the coupling argument is of order $\sqrt{n}$, that
means the error accrued during those steps doesn't vanish on the
diffusive scale without further calculations. This results in a
``weaker'' version of a coupling lemma, see Lemma~\ref{lem:coupl_d=1}.
Note that this weaker version of coupling seems to present a problem
in proving the properties of the processes we needed to prove
Proposition~\ref{prop:2nd-mom} and Theorem~\ref{thm:LLNuCLTmodel1} for
$d\ge 2$. However, since we can choose $\varepsilon_\rf$ small it turns
out that the proofs are robust enough and we can transfer all results
to $d=1$. The consequence is that we have to choose $\beta$ from
Proposition~\ref{prop:TVdistance-joint-ind-1step} very large. We show
that the random walks along the joint regeneration times are close to
martingales, that is the previsible part of these processes vanishes in
expectation on the diffusive scale, see
Lemma~\ref{lem:bound_for_A1_and_A2}. We use a martingale CLT to prove
Lemma~\ref{lem:key lemma for d=1}, which is the key lemma to prove
Proposition~\ref{prop:2nd-mom} for $d=1$.

\medskip

In Section~\ref{sec:abstract_proofs} we tackle the proofs of the
general model. This class of random environments, due to the
assumptions we make, see Assumption~\ref{ass:markovdyn} and
Assumption~\ref{ass:coupling}, behaves similarly to the oriented
percolation model when viewed over large scales.
More specifically the environment is Markovian,
evolves in the positive time direction and has local dynamics.
Furthermore, Assumption~\ref{ass:coupling} allows us to couple the
environment on a coarse-grained level of space-time boxes with side
lengths $L_s$ and $L_t$ to supercritical oriented percolation, see
Lemma~\ref{lem:abstr-OCcoupl}. We use a coarse-graining function to
define a coarse-grained random walk, see \eqref{eq:coarse-graining
  function} and \eqref{eq:defn coarse-grained rw}. In addition to that
we also want to carry the displacements inside the box to have a
direct relation between the coarse-grained and original versions of
the random walk, see \eqref{eq:defn coarse-grained rw} and the display
below.

\smallskip

The coupling to supercritical oriented percolation allows us to define
regeneration times in a similar way to the regeneration times obtained
in Section~\ref{sec:preparations-1}, see
Construction~\ref{constr:reg-time abstract}.

\smallskip

The central object in Section~\ref{sec:abstract_proofs} is the
coarse-grained random walk. As described above the coarse-graining
frames the abstract model in a similar context we studied in
Section~\ref{sec:preparations} via the comparison with oriented
percolation. The general strategy to prove
Theorem~\ref{thm:abstract_quenched_clt} is then to transfer the
results obtained in Section~\ref{sec:preparations} to the the
coarse-grained random walk. However, the ``coarse-grained random
walk'' viewed along its regeneration times is only Markovian if we
track the displacements inside the space-time boxes, the local
configuration of the environment and the current distance of the two
walkers, see \eqref{eq:43} and comments below. Furthermore, even in
the case where we observe two random walks in independent copies of
the environment, say $(X,X^{''})$ (in the notation introduced above),
the increments between regenerations are not independent. This is due
to the fact that, by definition, the random walk has to evaluate the
environment one step into its future, see \eqref{eq:abstr-walk}.
However this only results in finite range correlations of the
increments. We argue this by observing that the regeneration
construction and assumptions on the environment, specifically
Assumption~\ref{ass:coupling}, directly result in the sequence of
local configurations of the environment
$(\widehat{\eta},\widehat{\eta}')$, observed at the time of the
regenerations, is an independent sequence; see
Remark~\ref{rem:eta_hat_independent_sequence}. This fact allows us to
conclude that the correlations of the regeneration increments for the
random walks and regeneration times have finite range, see
Remarks~\ref{rem:ind_increments_almost_independent} and
\ref{rem:regen_times_almost_independent}.

\smallskip

We then follow the same steps as in Section~\ref{sec:preparations} and
prove a similar ``one-step'' coupling result; see
Lemma~\ref{lem:abstract_TVdistance-joint-ind}. Here, the only
difference in the proof is the scaling by $L_s$ and $L_t$ to account
for the coarse-graining. Next we provide a separation lemma; see
Lemma~\ref{lem:abstract separ}. Here a new difficulty arises from the
application of the invariance principle since the sequence of the
random walk increments along the regeneration times in the case of two
independent environments is not independent itself. Otherwise the
proof follows along the same lines as for its version in
Section~\ref{sec:preparations}. The remaining steps towards the
quenched CLT~\ref{thm:abstract_quenched_clt} then follow by the same
arguments we used in Section~\ref{sec:preparations} to prove
Theorem~\ref{thm:LLNuCLTmodel1}.

\section{Random walks in oriented percolation: Proof of
  Theorem~\ref{thm:LLNuCLTmodel1}}
\label{sec:preparations}
In this section we provide a framework and the necessary tools to
prove the quenched CLT for the auxiliary model introduced in
Section~\ref{sec:model_in_oriented_percolation}. We follow ideas from
\cite{BirknerCernyDepperschmidtGantert2013} and extend the
regeneration construction introduced in
\cite{BirknerCernyDepperschmidt2016} to two (conditionally
independent) random walks in the same environment.

Here is a roadmap for proof of Theorem~\ref{thm:LLNuCLTmodel1}:
\begin{enumerate}
\item By the annealed CLT from \cite{BirknerCernyDepperschmidt2016},
  the mean of the quenched expectation $E_\omega[f(X_n/\sqrt{n})]$,
  which is the annealed expectation, converges (for Lipschitz test
  functions $f$, say) to $\int f \, d\Phi$, where $\Phi$ is a normal
  law.  For the quenched CLT we want to show that the variance of
  $E_\omega[f(X_n/\sqrt{n})]$ tends to $0$.

  The general idea is to compute and estimate this variance by
  considering two conditionally independent copies $X$ and $X'$ of the
  walk in the same environment $\omega$, noting that
  \begin{equation}
    \label{eq:roadmap1-1}
    \bE\left[\left(E_\omega[f(X_n/\sqrt{n})]\right)^2 \right]
    =  \bE\left[ E_\omega[f(X_n/\sqrt{n})] E_\omega[f(X'_n/\sqrt{n})] \right]
    = \bE\left[ f(X_n/\sqrt{n}) f(X'_n/\sqrt{n}) \right].
  \end{equation}
\item In order to work with the latter term, we develop in
  Section~\ref{sec:preparations-1} a joint regeneration construction
  for the pair of walks $(X,X')$ in the same environment under the
  annealed law, see Construction~\ref{constr:reg-time}. This extends
  the construction for a single walk from
  \cite{BirknerCernyDepperschmidt2016}, where the central tool was a
  cone based at the current position of the random walk. This cone
  consisted of an inner and an outer cone; the region between these,
  the `cone shell', was used to shell separate the information
  collected by the random walk on the random environment inside the
  inner cone from any information about the environment outside the
  outer cone.

  We replace this by a double cone to encompass the setting with two
  random walks (see Figure~\ref{fig:dcones}), again with the goal to
  isolate the interior of the double cone from the environment in the
  exterior via the double cone shell. The construction guarantees that
  the inter-regeneration times have algebraic tails with decay
  exponent $-\beta$, where $\beta$ can be chosen very large when $p$
  is close to $1$ and $\varepsilon_\rf$ from
  Assumption~\ref{ass:appr-sym} is small, see
  Proposition~\ref{prop:JointRegTimesBound}. This in particular
  entails that the spatial increments between regeneration times have
  finite $\gamma$-th moments for any $0 \le \gamma < \beta$.

  Let $T_n^\joint$ be the $n$-th (joint) regeneration time and write
  $\widehat{X}^\joint_n \coloneqq X_{T^\joint_n},
  \widehat{X}'_n{}^{\!\joint} \coloneqq X'_{T_n^\joint}$ for the walks
  observed along these regeneration times. By the regeneration
  construction, $(\widehat{X}^\joint_n,
  \widehat{X}'_n{}^{\!\joint})_n$ is a Markov chain under the annealed
  law.

\item In Section~\ref{sect:coupling-1} we provide a coupling between
  the pair of walks $(\widehat{X}^\joint_n,
  \widehat{X}'_n{}^{\!\joint})_n$ and an auxiliary pair of walks
  $(\widehat{X}^\indi_n, \widehat{X}'_n{}^{\!\indi})_n$, where
  $\widehat{X}^\indi$ and $\widehat{X}'_n{}^{\!\indi}$ use the same
  regeneration construction as $(\widehat{X}^\joint,
  \widehat{X}'{}^{\joint})$ but $\widehat{X}'_n{}^{\!\indi}$ moves in
  an independent copy $\omega'$ of the environment. When
  $\widehat{X}^\joint_n$ and $\widehat{X}'_n{}^{\!\joint}$ are far
  apart, their transitions are based on well-separated parts of the
  environment; in view of the space-time mixing properties of
  supercritical oriented percolation it is thus reasonable to expect
  that one can then replace the medium for one of the walkers by an
  independent copy without changing the law of the next
  inter-regeneration increment very much.

  We quantify this idea in
  Proposition~\ref{prop:TVdistance-joint-ind-1step} where we show that
  \begin{align}
    \label{eq:roadmap1-2}
    \bP\Big( (\widehat{X}^\joint_{n+1}, \widehat{X}'_{n+1}{}^{\hspace{-1.1em}\joint}) \neq
    (\widehat{X}^\indi_{n+1}, \widehat{X}'_{n+1}{}^{\hspace{-1.1em}\indi})\,\Big|\,
    (\widehat{X}^\joint_n, \widehat{X}'_n{}^{\!\joint}) = (\widehat{X}^\indi_n, \widehat{X}'_n{}^{\!\indi})
    =(x,x') \Big) \le \frac{C}{\norm{x-x'}^\beta}.
  \end{align}
  Intuitively, this comes from the fact that the coupling can
  essentially only fail if the next inter-regeneration time is so
  large that the two cones based at $\widehat{X}^\joint_n$ and at
  $\widehat{X}'^{\,\joint}_n$ respectively begin to overlap, and this
  is unlikely by Proposition~\ref{prop:JointRegTimesBound}.

  Note that by construction, $(\widehat{X}^\indi_n,
  \widehat{X}'_n{}^{\!\indi})_n$ is not only a Markov chain but in
  fact a $\Z^d \times \Z^d$-valued random walk, i.e.\ its increments
  are i.i.d. Furthermore, its increments are centered and also have
  finite $\gamma$-th moments for $0 \le \gamma<\beta$, see
  Proposition~\ref{prop:IndRegTimesBound} and Remark~\ref{rem:indMC}.
  Thus, $(\widehat{X}^\indi, \widehat{X}'{}^{\indi})$ fulfills a
  (joint) CLT.

  Equipped with all this, the approach to compute the right-hand side
  of \eqref{eq:roadmap1-1}, first along joint regeneration times,
  follows to some extent the template provided by
  \cite{BirknerCernyDepperschmidtGantert2013}: The general strategy is
  to replace the $\joint$-walks in \eqref{eq:roadmap1-1} by
  $\indi$-walks with quantitatively controlled error. However, there
  are some additional technical difficulties because unlike the
  situation in \cite{BirknerCernyDepperschmidtGantert2013}, our
  regeneration times do not have exponential tails.

\item As in \cite{BirknerCernyDepperschmidtGantert2013}, the rest of
  the proof is split into the case $d \ge 2$ and the case $d=1$.

  We begin with the (easier) case $d \ge 2$ in
  Section~\ref{sect:aux-1}: We show in Lemma~\ref{lem:coupl} that
  under our coupling of $(\widehat{X}^\joint,
  \widehat{X}'{}^{\joint})$ and $(\widehat{X}^\indi,
  \widehat{X}'{}^{\indi})$ there are with high probability at most
  $n^{b_1}$ uncoupled steps until time $n$, for some small $b_1>0$ (in
  particular, $b_1<1/2$). The central ingredient here is the idea,
  formalised in Lemma~\ref{lem:separ}, that $\widehat{X}^\joint$ and
  $\widehat{X}'{}^{\joint}$ separate sufficiently quickly in $d \ge 2$
  so that the control on the coupling error provided by
  \eqref{eq:roadmap1-2} becomes effective.

  \begin{enumerate}
  \item Armed with this, we can show in
    Section~\ref{sect:qCLT-along-1} that
    $E_\omega[f(\widehat{X}^\joint_m/\sqrt{m})]$ converges to $\int f
    \, d\wt\Phi$ in $L^2$, where $\wt\Phi$ is a suitable normal law,
    see Proposition~\ref{prop:2nd-mom}. This immediately gives the
    same almost sure limit along an algebraically growing growing
    subsequence of the regeneration times. We strengthen this a a full
    quenched CLT along the regeneration times by controlling the
    fluctuations inside the increments of the subsequence, see
    Lemma~\ref{lemma:convergence for subsequence to full sequence} and
    Proposition~\ref{prop:qCLT.regen.1}.

    The technical tools for this are the following: In
    Lemma~\ref{lemma:joint-ind-comparison} we replace the
    $\joint$-pair by an $\indi$-pair along regeneration times for
    Lipschitz test functions with controlled error; this is the main
    ``workhorse'' here and it relies crucially on the fact that by
    time $n$ we have $o(\sqrt{n})$ uncoupled steps.
    Lemma~\ref{lem:ind-to-joint} then shows ``essentially'' a law of
    large numbers and CLT fluctuations for $T^\joint_n$, which allows
    to replace the random times $T^\joint_n$ by their means with
    controlled error; Lemma~\ref{lem:joint_fluctuations} controls that
    the fluctuations of $\widehat{X}^\joint$ cannot be much larger
    than those of $\widehat{X}^\indi$ (not more than
    $O(n^{1/2+\varepsilon})$ up to time $n$).
  \item In Section~\ref{sect:qCLTd>=2-1} we complete the proof of
    Theorem~\ref{thm:LLNuCLTmodel1} for the case $d \ge 2$: We
    transfer the quenched CLT along the regeneration times to the full
    quenched CLT by suitably controlling the fluctuations of $X$
    between regeneration times, based on the estimates from
    Section~\ref{sect:qCLT-along-1}.
  \end{enumerate}
\item In Section~\ref{sec:dimension1} we provide the proof of
  Theorem~\ref{thm:LLNuCLTmodel1} for the case $d=1$. The difficulty
  compared to the case $d \ge 2$ lies in the fact that we cannot hope
  to have at most $o(\sqrt{n})$ uncoupled steps where
  $\widehat{X}^\joint$ and $\widehat{X}'^\joint$ behave differently
  because generically, centred one-dimensional random walks with
  finite variance will have on the order of $\sqrt{n}$ visits to the
  origin. In particular, an analogue of Lemma~\ref{lem:coupl} cannot
  hold in $d=1$. We circumvent this problem, following the template
  from \cite[Section~3.4]{BirknerCernyDepperschmidtGantert2013} with
  suitable changes, by using a martingale decomposition of
  $(\widehat{X}^\joint,\widehat{X}'^\joint)$ and then a CLT for
  martingales with quantitatively controlled error.
\end{enumerate}

\subsection{Construction of simultaneous regeneration times for two
  walks}
\label{sec:preparations-1}

In the setting of Subsection~\ref{sec:model_in_oriented_percolation}
let $X\coloneqq (X_n)_n$ and $X' \coloneqq (X'_n)_n$ be two
(conditionally independent) random walks in the same environment. We
enrich the underlying probability space in a straightforward way so
that the space-time Bernoulli field $\omega(\cdot,\cdot)$ (and
$(\eta_n(\cdot))_n$ and $(\kappa_n(\cdot,\cdot))_n$, which are
functions of $\omega(\cdot,\cdot)$) as well as the two conditionally
independent walks $X$ and $X'$ are defined on the same space. In the
following, we will work on this enriched space and write
$\bP^\joint_{x_0,x_0'}$ to denote the probability measure on that
space when the initial positions of the two walks are given by
$X_0=x_0, X'_0=x'_0$ with $x_0, x'_0 \in \Z^d$.

Our first goal here is to introduce a sequence of regeneration times
$T_1< T_2 < \cdots$, at which both $X$ and $X'$ regenerate jointly.
The construction of the regeneration times extends the corresponding
construction from \cite{BirknerCernyDepperschmidt2016} for a single
random walk.

We need to isolate the part of the environment that the two random
walks explore (this is finite in finitely many steps due to our
assumptions) from the rest of the environment until they regenerate.
This isolation will be achieved by certain space-time double cones
(each one for one random walk) with shells in which the two random
walks will move and the shells of certain growing thickness separate
the information from outside the cone. In particular the randomness
driving each of the random walks is contained inside the corresponding
cone.

Let us recall the definitions of cones and cone shells from equations
(2.25) and (2.27) in \cite{BirknerCernyDepperschmidt2016}. For
positive $b,s,h$ and $x_\base \in \bZ^d$ we set
\begin{align}
  \label{def:cone}
  \cone(x_\base;b,s,h) \coloneqq \big\{ (x,n) \in \bZ^d
  \times \bZ_+ \, \colon \, \norm{x_\base - x}_2 \leq b + s
  n, 0 \le n \le h \big\}
\end{align}
for a (truncated upside-down) \emph{cone} with base radius $b$, slope
$s$, height $h$ and base point $(x_\base,0)$. Furthermore for
\begin{align}
  \label{eq:bs-inn-out}
  b_\inn < b_\out \quad \text{and} \quad s_\inn < s_\out,
\end{align}
we define the \emph{conical shell} with inner base radius $b_\inn$,
inner slope $s_\inn$, outer base radius $b_\out$, outer slope
$s_\out$, and height $h \in \N \cup \{\infty\}$ by
\begin{multline}
  \label{eq:coneshell}
  \cs(x_\base;b_\inn , b_\out , s_\inn , s_\out ,h)
  \\ \coloneqq \big\{ (x,n) \in \bZ^d \times \bZ  :
  b_\inn  + s_\inn n \le \norm{x_\base - x}_2 \le
  b_\out  + s_\out n, \, 0 < n \le h \big\}.
\end{multline}
The conical shell can be thought of as a difference of the \emph{outer
  cone} $\cone(x_\base; b_\out,s_\out,h)$ and the \emph{inner cone}
$\cone(x_\base;b_\inn,s_\inn,h)$ with all boundaries except the bottom
boundary of that difference included. To shorten notation for fixed
parameters for radii and slopes as in \eqref{eq:bs-inn-out}, we write
$\cs(x_\base;h)$ for the cone shell as defined in
\eqref{eq:coneshell}.

For the proof of Theorem~\ref{thm:LLNuCLTmodel1} we will follow ideas
from \cite{BirknerCernyDepperschmidtGantert2013}. As mentioned before
we extend the cone construction for the regeneration times from
\cite{BirknerCernyDepperschmidt2016} to two cones and we define the so
called \emph{double cone shell} to isolate the area of the environment
that the two random walks have explored from the rest.

Consider two random walks $X$ and $X'$ located at time $n$ at
positions $x_\base$ and $x'_\base$ respectively, i.e.\ $X_n=x_\base$
and $X'_n=x'_\base$. A first attempt would be to just take the union
of both cone shells $\cs(x_\base;h)$ and $\cs(x'_\base;h)$ with base
points $x_\base$ and $x'_\base$. The problem with this attempt is that
the cone shell $\cs(x_\base;h)$ would grow into the interior
$\cone(x_\base';b_\inn,s_\inn,h)$ of $\cs(x'_\base;h)$ and vice versa
and in particular into the region which we want to isolate. Instead we
take the union of the cone shells without the elements of the inner
cones and define the double cone shell
\begin{multline}
  \label{eq:defn_double_cone_shell}
  \dcs(x_\base,x'_\base;h) \\ \coloneqq \left(\cs(x_\base;h)\cup
  \cs(x'_\base;h)\right)\setminus\left(\cone(x_\base;b_\inn,s_\inn,h)\cup
  \cone(x_\base';b_\inn,s_\inn,h) \right).
\end{multline}
Note that we again omitted the base radii $b_\inn$, $b_\out$ and
slopes $s_\inn$, $s_\out$ in this notation. Of course the double cone shell
$\dcs(x_\base,x'_\base;h)=\dcs(x_\base,x'_\base;b_\inn,b_\out,s_\inn,s_\out,h)$
depends on these parameters as well. We also write
$\cone(x_\base) = \cone(x_\base;\infty)$ and
$\dcs(x_\base,x'_\base)=\dcs(x_\base,x'_\base;\infty)$ if we consider
the cone or cone shell with infinite height.

For the case $d=2$ the double cone with the double cone shell is
illustrated in Figure~\ref{fig:dcones}. For $d=1$ we will use a
slightly different double cone shell definition. It will make the
arguments in the proof of Lemma~\ref{th:lemma2.13Analog} below more
streamlined and the difference of the definitions is explained
in that proof. For the case $d=1$ the double cone with the double cone
shell is illustrated in Figure~\ref{fig:coneshells_in_d=1}.

\begin{figure}
  \centering
  \begin{tikzpicture}[yscale=0.9,xscale=0.9]
    \definecolor{drawColor}{RGB}{0,0,0}


    \draw[->] (-6,-2) -- (-2,2);
    \draw (-6,0) -- (-1,0);
    \draw[dashed] (-1,0) -- (1,0);
    \draw (1,0) -- (3,0);
    \draw[dashed] (3,0) -- (5,0);
    \draw[->] (5,0) -- (6,0);
    \draw[->] (-4,-2) -- (-4,8);


    \node[text=drawColor,anchor=base,inner sep=0pt, outer sep=0pt, scale=  1.00] at (-4.2,7.7) {$n$};
    \node[text=drawColor,anchor=base,inner sep=0pt, outer sep=0pt, scale=  1.00] at (-2.5,1.8) {$x_2$};
    \node[text=drawColor,anchor=base,inner sep=0pt, outer sep=0pt, scale=  1.00] at (5.75,-.3) {$x_1$};

    \draw[dashed] (1,0) arc (0:180: 1cm and .5cm);
    \draw (-1,0) arc (180:360: 1cm and .5cm);
    \draw (-3.5,6) arc (180:55: 3.5cm and 1.75cm);
    \draw (-3.5,6) arc (180:305: 3.5cm and 1.75cm);
    \path[draw=drawColor,line width= 0.5pt,line join=round] (-1,0) -- (-3.35882,5.50116);
    \path[draw=drawColor,line width= 0.5pt,line join=round] (1,0) -- (2,2.4);

    \draw[dashed] (0.5,0) arc (0:180: 0.5cm and .25cm);
    \draw (-0.5,0) arc (180:360: 0.5cm and .25cm);
    \draw (-2.5,6) arc (180:37: 2.5cm and 1.25cm);
    \draw (-2.5,6) arc (180:323: 2.5cm and 1.25cm);
    \path[draw=drawColor,line width= 0.5pt,line join=round] (-0.5,0) -- (-2.428649,5.6945946);
    \path[draw=drawColor,line width= 0.5pt,line join=round] (0.5,0) -- (2,4.5);

    \draw[dashed] (5,0) arc (0:180: 1cm and .5cm);
    \draw (3,0) arc (180:360: 1cm and .5cm);
    \draw (7.5,6) arc (0:125: 3.5cm and 1.75cm);
    \draw (7.5,6) arc (360:235: 3.5cm and 1.75cm);
    \path[draw=drawColor,line width= 0.5pt,line join=round] (3,0) -- (2,2.4);
    \path[draw=drawColor,line width= 0.5pt,line join=round] (5,0) -- (7.35882,5.50116);

    \draw[dashed] (4.5,0) arc (0:180: 0.5cm and .25cm);
    \draw (3.5,0) arc (180:360: 0.5cm and .25cm);
    \draw (6.5,6) arc (0:143: 2.5cm and 1.25cm);
    \draw (6.5,6) arc (360:217: 2.5cm and 1.25cm);
    \path[draw=drawColor,line width= 0.5pt,line join=round] (3.5,0) -- (2,4.5);
    \path[draw=drawColor,line width= 0.5pt,line join=round] (4.5,0) -- (6.428649,5.6945946);


    \draw[blue,line width= 1.5pt] (-1.6875,2.5) arc (180:360:1.6875cm and 0.84375cm);
    \draw[dashed,line width= 1.5pt,blue] (1.6875,2.5) arc (0:180:1.6875cm and 0.84375cm);

    \draw[red] (2.5,0) -- (2.3959730,0.25) -- (2.6913578,0.5) -- (2.3182013,0.75) -- (2.4424640,1) -- (2.1008534,1.25) -- (2.0371980,1.5) -- (2.1338517,1.75) -- (1.7472884,2) -- (1.6095703,2.25) -- (1.5,2.5) -- (1.2227309,2.75) -- (0.9823701,3);

    \filldraw[red] (2.5,0) circle (1pt);
    \filldraw[red] (0.9823701,3) circle (1pt);


    \draw[fill=green, opacity=0.2] (0.5,0) arc (360:180:0.5cm and .25cm) -- (-2.428649,5.6945946) arc (194:323:2.5cm and 1.25cm) arc (217:346:2.5cm and 1.25cm) -- (4.5,0) arc (360:180:0.5cm and .25cm) -- (2,4.5) --(0.5,0);


    \draw[fill=green, opacity=0.4] (2,6.75) arc (37:323:2.5cm and 1.25cm) -- (2,5.25)
    arc (217:503:2.5cm and 1.25cm);


    \draw[fill=drawColor, opacity=0.2] (-1,0) -- (-3.35882,5.50116) arc (196:305:3.5cm and 1.75cm) -- (2,4.563859) arc (235:344:3.5cm and 1.75cm) -- (5,0) arc (360:180:1cm and .5cm) -- (3,0) -- (2,2.4) -- (1,0) arc (360:180:1cm and .5cm);


    \draw[fill=drawColor, opacity=0.4] (2,6.75) arc (143:-12:2.5cm and 1.25cm) -- (6.44120,5.73049) -- (6.05228,4.58241735)  arc (-54:125:3.5cm and 1.75cm)  -- (2,7.436141) -- (2,7.436141)  arc (55:234:3.5cm and 1.75cm) -- (-2.05228,4.58241735) -- (-2.44120,5.73049) arc (192:37:2.5cm and 1.25cm);

  \end{tikzpicture}
  \caption{Double cone with a double cone shell (grey), a time slice
    of the middle tube (blue), and a path of a random walk crossing
    the double cone shell from outside to inside (red).}
  \label{fig:dcones}
\end{figure}
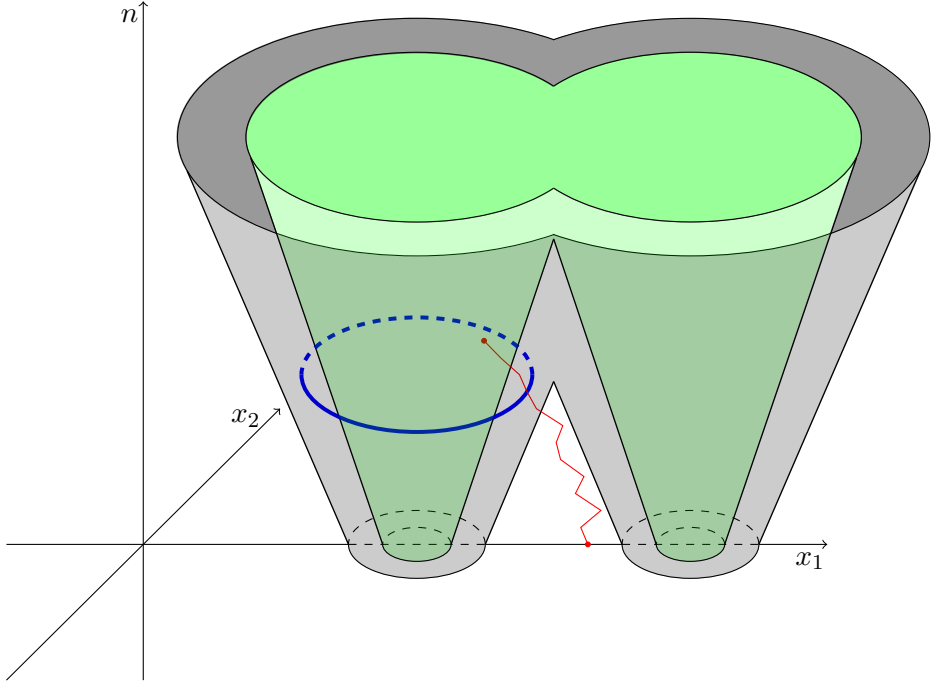

We follow the ideas of the proof of Lemma~2.13 from
\cite{BirknerCernyDepperschmidt2016}. For $d\ge 2$ we define a subset
$\mathcal{M}\subset \bZ^d\times\bZ$ of the double cone shell with the
following two properties:
\begin{enumerate}[1.]
\item Every path crossing from the outside to the inside has to hit a
  point in $\mathcal{M}$.
\item There exist small constants $\delta>0$ and $\tilde{\delta}>0$
  such that for every $(x,n)\in \mathcal{M}$ we have
  $B_{\tilde{\delta}n}(x)\times\{n-\delta n\}\subset
  \dcs(x_\base,x'_\base;\infty)$, where $B_r(y)$ is the ball of radius
  $r$ centred around $y$.
\end{enumerate}
Note that the number of elements in
$\mathcal{M}\cap\bZ^d\times[0,n] \subset \dcs(x_\base,x'_\base;n)$
grows polynomially in $n$. Such a set is given, for instance, by the
``middle tube'' of the double cone shell which we will now define more
precisely. Let
\begin{align*}
  d(n)\coloneqq \frac{1}{2}\big(n(s_\out+s_\inn)+b_\out+b_\inn \big)
\end{align*}
be the distance between the spatial origin and the middle of the cone
shell at time $n$ and define
\begin{align}
  \label{eq:defn_middle_tube}
  \begin{split}
    M_n & \coloneqq \{ x \in\bZ^d  :  \norm{x-x_\base}_2\in[d(n),d(n)+2d] \},\\
    M'_n & \coloneqq \{ x\in \bZ^d : \norm{x-x'_\base}_2\in[d(n),d(n)+2d] \}
  \end{split}
\end{align}
the middle tubes for the single cones at time $n$. We define the
middle tube in this a way, ``thickening'' it by $2d$, so that we can
ensure that a nearest neighbour path crossing the cone shell has to
hit a site in the middle tube and cannot jump over it. Note that we
define nearest neighbours for the jumps according to the sup-norm,
that is $y$ is a nearest neighbour of $x$ if $\norm{x-y} =
\norm{x-y}_\infty\le 1$, whereas the middle tubes in
\eqref{eq:defn_middle_tube} are based on the Euclidean norm.

Furthermore we define the middle tube for the double cone shell at
time $n$ by
\begin{align}
  \label{eq:defn_middle_tube_dcs}
  M^\dcs_n \coloneqq  \big(M_n\cup M'_n\big)\setminus \big\{
  x\in\bZ^d\colon \min\{\norm{x-x_\base}_2,\norm{x-x'_\base}_2 \} <
  d(n) \big\}
\end{align}
and set $\mathcal{M}= \bigcup_n (M^\dcs_n\times\{n\})$; see
Figure~\ref{fig:crossing path} for an illustration.
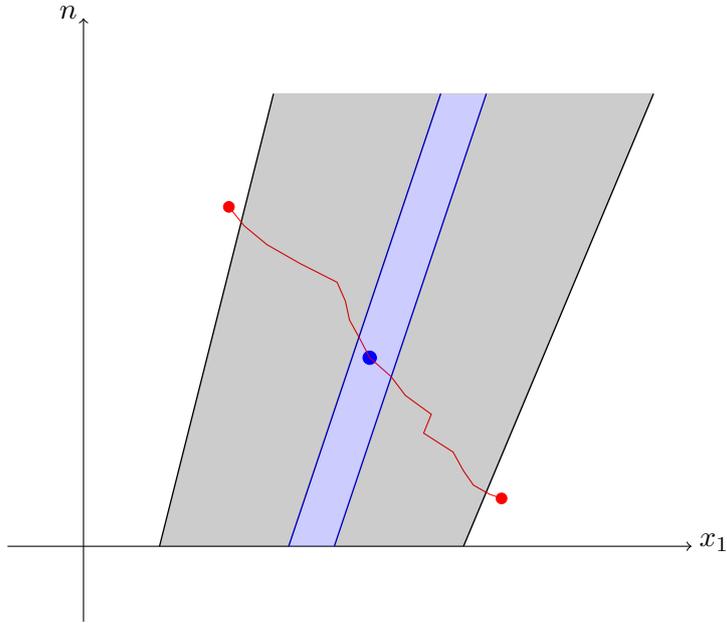
\begin{figure}
  \centering
  \begin{tikzpicture}
    \definecolor{drawColor}{RGB}{0,0,0}

    \draw[->] (-5,0) -- (4,0);
    \draw[->] (-4,-1) -- (-4,7);

    \node[text=drawColor,anchor=base,inner sep=0pt, outer sep=0pt, scale=  1.00] at (-4.2,7) {$n$};
    \node[text=drawColor,anchor=base,inner sep=0pt, outer sep=0pt, scale=  1.00] at (4.3,0) {$x_1$};

    \path[draw=drawColor,line width= 0.5pt,line join=round] (-3,0) -- (-1.5,6);
    \path[draw=drawColor,line width= 0.5pt,line join=round] (1,0) -- (3.5,6);


    \path[draw=drawColor,line width= 0.5pt,line join=round,blue] (-1.3,0) -- (0.7,6);
    \path[draw=drawColor,line width= 0.5pt,line join=round,blue] (-0.7,0) -- (1.3,6);


    \filldraw[blue] (-0.23422853,2.5) circle (2.5pt);


    \draw[red] (1.5, 0.6345) -- (1.341,0.6905) -- (1.13,0.8112) -- (1,1) -- (0.86228191,1.25) -- (0.47544254,1.5) -- (0.57600252,1.75) -- (0.23508174,2) -- (0.04721539,2.25) -- (-0.23422853,2.5) -- (-0.36288245,2.75) -- (-0.5,3) -- (-0.55415799,3.25) -- (-0.66275908,3.5) -- (-1.15305242,3.75) -- (-1.58709454,4) -- (-1.88709454,4.25) -- (-2.08709454,4.5);


    \filldraw[red] (1.5, 0.6345) circle (2pt);
    \filldraw[red] (-2.08709454,4.5) circle (2pt);


    \draw[fill=drawColor, opacity=0.2] (-3,0) -- (-1.3,0) -- (0.7,6) -- (-1.5,6);
    \draw[fill=drawColor, opacity=0.2] (-0.7,0) -- (1,0) -- (3.5,6) -- (1.3,6);


    \draw[fill=blue, opacity=0.2] (-1.3,0) -- (0.7,6) -- (1.3,6) -- (-0.7,0);

  \end{tikzpicture}
  \caption{A cross section of the cone shell including the middle tube
    $\mathcal M$ (blue) and a path $\gamma$ (red) crossing the cone
    shell from the outside to the inside of the double cone and
    hitting at least one point in $\mathcal M$ (blue dot).}
  \label{fig:crossing path}
\end{figure}

Let $\eta ^{\dcs}\coloneqq (\eta^{\dcs}_n)_{n=0,1,\dots}$ be the
contact process restricted to the infinite double cone shell
$\dcs(x_\base,x_\base';\infty)$ with initial condition
$\eta^{\dcs}_0(x) = \indset{\dcs(x_\base,x_\base';0)}((x,0))$ and
\begin{align*}
  \eta^{\dcs}_{n+1}(x)=
  \begin{cases}
    1 & \text{ if } (x,n+1) \in \dcs(x_\base,x_\base';\infty),\text{ }
    \omega(x,n+1)=1\\[-4pt]
    & \text{ and } \eta^{\dcs}_n(y) =1 \text{ for some } y\in
    \mathbb{Z}^d \text{ with } \norm{x-y}\leq 1,\\
    0 & \text{ otherwise.}
  \end{cases}
\end{align*}
We think of $\eta ^{\dcs}$ as a version of the contact process
where all $\omega$'s outside $\dcs(x_\base,x_\base';\infty)$ have been
set to $0$. For a directed nearest neighbour path
\begin{align}
  \label{eq:directedpath}
  \gamma = \left( ( x_m,m ), (x_{m+1},m+1),\dots,(x_n,n) \right),
  \quad m\leq n, \, x_i \in  \mathbb{Z}^d \text{ with }
  \norm{x_{i-1}-x_i} \leq 1
\end{align}
with starting position $x_m$ at time $m$ and final position $x_n$ at
time $n$ we say that $\gamma$ crosses the double cone shell
$\dcs(x_\base,x_\base';\infty)$ from the outside to the inside if the
following three conditions are fulfilled:
\begin{enumerate}[(i)]
\item the starting position lies outside the double cone shell, i.e.,
  $\norm{x_m - x_\base}_2 > b_\out + ms_\out$ and
  $\norm{x_m - x_\base'}_2 > b_\out + ms_\out$,
\item the terminal point lies inside one (or both) of the inner cones, i.e.,
  $\norm{x_n -x_\base}_2 < b_\inn + ns_\inn$ or
  $\norm{ x_n - x_\base'}_2 < b_\inn + ns_\inn$,
\item all the remaining points lie inside the shell, i.e.,
  $(x_i, i) \in \dcs(x_\base,x_\base';\infty)$ for $i=m+1,\dots,n-1$.
\end{enumerate}
We say that a path $\gamma$ (as in \eqref{eq:directedpath}) intersects
$\eta^{\dcs}$ if there exists $i \in \{ m+1,\dots,n-1 \}$ with
$\eta^{\dcs}_i(x_i)=1$. Finally we say that $\gamma$ is open in
$\dcs(x_\base,x_\base';\infty)$ if $\omega(x_i,i) = 1$ for all
$i=m+1,\dots,n-1$.

\begin{lemma}[2-cone analogue of Lemma~2.13 from
  \cite{BirknerCernyDepperschmidt2016}]
  Assume \label{th:lemma2.13Analog} that the relations in
  \eqref{eq:bs-inn-out} hold and consider the events
  \begin{align*}
    G_1 & \coloneqq \{ \eta^{\dcs} \text{ survives} \},\\
    G_2 & \coloneqq \{ \text{every open path } \gamma \text{ that crosses }
          \dcs(x_\base,x_\base';\infty) \text{ intersects }
          \eta^{\dcs} \}.
  \end{align*}
  For any $\varepsilon > 0$ and $0 \leq s_\inn < s_\out < 1$ one can
  choose $p$ sufficiently close to 1 and $b_\inn < b_\out$
  sufficiently large so that
  \begin{align*}
    \mathbb{P}(G_1 \cap G_2) \geq 1-\varepsilon.
  \end{align*}
\end{lemma}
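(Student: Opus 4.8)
The plan is to follow the proof of Lemma~2.13 in \cite{BirknerCernyDepperschmidt2016} for a single cone and to show that the argument is robust under passing to the double cone, once the geometry has been set up correctly. It suffices to bound $\Pr(G_1^c)$ and $\Pr(G_2^c)$ separately, each by $\varepsilon/2$, and then use $\Pr(G_1\cap G_2)\ge 1-\Pr(G_1^c)-\Pr(G_2^c)$.

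For $G_1$ I would argue that $\eta^{\dcs}$ survives with probability at least $1-\varepsilon/2$. Because $0\le s_\inn<s_\out<1$ and $b_\inn<b_\out$, each of the two conical shells contains, starting at its base, an annular region whose radial width is bounded below by $b_\out-b_\inn$ and grows linearly in $n$ at rate $s_\out-s_\inn$; the initial configuration $\eta^{\dcs}_0$ occupies all of this region apart from the inner-cone ``notches'', which are far from the outer parts of the shells. For $p$ close to $1$ a standard contour estimate, or the comparison with high-density oriented percolation used in \cite{BirknerCernyDepperschmidt2016}, shows that the contact process restricted to such a region survives with probability tending to $1$ as $p\to1$; taking $b_\out-b_\inn$ large makes the initial occupied set large and the estimate robust. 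Since $\{\eta^{\dcs}\text{ survives}\}$ only requires one component of the double cone shell to survive, this gives $\Pr(G_1^c)<\varepsilon/2$.

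The heart of the argument is $G_2$. Given an open path $\gamma$ crossing $\dcs(x_\base,x_\base';\infty)$ from the outside to the inside, by Property~1 of $\mathcal M$ it hits some $(x^*,n^*)\in\mathcal M$, and by Property~2 the slab $B_{\tilde\delta n^*}(x^*)\times\{n^*-\delta n^*\}$ lies inside the double cone shell. I would introduce the ``bad'' event $A(x^*,n^*)$ that $\eta^{\dcs}$ restricted to $B_{\tilde\delta n^*}(x^*)$ at time $n^*-\delta n^*$ fails to have density at least a fixed threshold $\rho$ (in particular this includes the event that $\eta^{\dcs}$ has locally died out there). On the complement of $A(x^*,n^*)$ one uses that for $p$ close to $1$ the restricted contact process spreads within the shell at speed close to $1$, so that a $\rho$-dense set in $B_{\tilde\delta n^*}(x^*)$ at time $n^*-\delta n^*$ forces $\gamma$ to intersect $\eta^{\dcs}$ (concretely, $\eta^{\dcs}_{n^*}(x^*)=1$) once $\tilde\delta$ and $\delta$ are chosen small enough relative to $s_\out-s_\inn$; here we also use that $\gamma$ is open and stays in $\dcs$ between its endpoints and that, $\gamma$ heading into one of the two inner cones, the portion of the shell around $\gamma$'s trajectory near that inner cone is unaffected by the removal of the \emph{other} inner cone (the case where $\gamma$ ends in the other inner cone is symmetric). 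Standard exponential estimates for supercritical oriented percolation — exponential decay of the probability of a large local ``hole'', large deviations for the shape — give $\Pr(A(x^*,n^*))\le C e^{-c\,n^*}$ with $c=c(p)\to\infty$ as $p\to1$, uniformly in $(x^*,n^*)\in\mathcal M$; it is for this uniformity down to small $n^*$, and for Property~2 to hold for small $n^*$, that we need $b_\out-b_\inn$ large. A union bound then gives
\[
  \Pr(G_2^c)\;\le\;\sum_{(x^*,n^*)\in\mathcal M}\Pr\big(A(x^*,n^*)\big)
  \;\le\;\sum_{n\ge1} C'\,n^{d-1}\,e^{-c\,n},
\]
using that $\mathcal M\cap(\Z^d\times\{n\})$ has only polynomially many sites; for $p$ close enough to $1$ this is $<\varepsilon/2$, and combining with the bound on $\Pr(G_1^c)$ yields $\Pr(G_1\cap G_2)\ge1-\varepsilon$.

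I expect the main obstacle to be the geometric bookkeeping specific to the double cone rather than the probabilistic input, which is essentially imported from \cite{BirknerCernyDepperschmidt2016} and from standard oriented-percolation theory: one must check carefully that removing the two inner cones to form $\dcs$ creates no ``shortcut'' allowing a crossing path to avoid $\mathcal M$ or to violate the slab property of Property~2, and that the single-cone survival and hole estimates localise correctly near each base point, whether the two base points are close together (in which case the object is essentially one fatter cone) or far apart. A secondary point is the coherent simultaneous choice of the constants $\delta,\tilde\delta$ (in terms of $s_\out-s_\inn$) and of how close $p$ must be to $1$ and how large $b_\out-b_\inn$; as in \cite{BirknerCernyDepperschmidt2016} we will not attempt to optimise these.
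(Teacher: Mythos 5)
For $d\ge 2$ your architecture is essentially the paper's: every crossing path must hit the middle tube $\mathcal M$, and one excludes ``dry'' regions of $\eta^{\dcs}$ near each middle-tube point; the paper organises this by covering $\mathcal M$ with finitely many contact processes restricted to line-like wedges in directions $\hat v^i$ inside the shell, requiring all of them to survive (with $\varepsilon^*$ chosen so that $(1-\varepsilon^*)^{N^*}\ge 1-\varepsilon/2$) and then invoking the coupling result (Lemma~2.9 of \cite{BirknerCernyDepperschmidt2016}), rather than your per-site union bound over $\mathcal M$. One step of yours fails as written: the unconditional bound $\Pr(A(x^*,n^*))\le C e^{-c n^*}$ cannot hold, because $A(x^*,n^*)$ contains the event that $\eta^{\dcs}$ has died out, whose probability is bounded below uniformly in $n^*$ for any fixed $p<1$; you must intersect the bad event with survival of the relevant restricted processes (equivalently, bound $\Pr(G_1\cap G_2^c)$ or work conditionally on the event that the covering wedge processes survive), or absorb extinction into the $G_1^c$ estimate. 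This is fixable and is in effect how the single-cone argument you are importing is structured.

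The genuine gap is $d=1$, which the lemma also covers and which the paper treats separately. The middle-tube construction you rely on (Properties 1 and 2 of $\mathcal M$) is only set up for $d\ge 2$; for $d=1$ the paper uses a modified double cone shell consisting of four wedges, in which the two inner wedges $c^1_r$ and $c^2_\ell$ must be truncated at the height $t^*$ where the inner cone based at one walker meets the outer cone based at the other, since beyond that height they would grow into the very region to be isolated. After this truncation the region between the two cones is no longer shielded, so it is not enough that the contact processes in the two inner wedges survive: if their occupied clusters failed to meet by time $t^*$, an open path could cross near the junction of the truncated wedges without intersecting $\eta^{\dcs}$, and your slab property is simply unavailable there. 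The paper closes this with Theorem~2 of \cite{CoxMaricSchinazi2010}: conditioned on survival, the rightmost (respectively leftmost) particle of a wedge-restricted contact process stays close to the wedge boundary, so the two wedge processes do meet before $t^*$ once $b_\out$ (and hence $t^*$) is taken large; when $\norm{x_\base-x'_\base}_2\le 2b_\out$ the two cones are instead merged into a single fatter cone and the one-cone argument applies verbatim. Your proposal contains no substitute for this ingredient, so as it stands it only yields the lemma for $d\ge 2$.
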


\begin{remark}
  \label{rem:coneshifting}
  Note that in this preparation section we base the cones at time $0$.
  Later on they will be based at the current space-time positions of
  the random walks but all results here hold for the shifted
  constructions as well due to translation invariance. Furthermore,
  with the properties we impose on $\mathcal{M}$ on the event
  $G_1\cap G_2$ any site inside the inner cones which is connected to
  $\bZ^d\times \{0\}$ via an open path that crosses the cone shell also has
  a connection to $\bZ^d\times \{0\}$ inside the double cone shell.
  Thus, on the event $G_1\cap G_2$ we isolate all the sites in the
  inner cone from the information on the environment outside the outer
  cone in the sense that the value of $\eta$ inside the double cone
  can be determined using only the values of $\omega$'s inside the double
  cone.
\end{remark}

\begin{proof}[Proof of Lemma~\ref{th:lemma2.13Analog}]
  The central ideas, as in
  \cite[Lemma~2.13]{BirknerCernyDepperschmidt2016}, are as follows:
  The fact that $G_1$ is likely is inherited from the fact that
  supercritical oriented percolation survives in certain wedges, as
  proved by \cite{CoxMaricSchinazi2010} (for the contact process).
  Furthermore, on the complement of $G_2$, there would have to be a
  (suitably quantified) large `dry' cluster in supercritical oriented
  percolation, which is unlikely. We provide the details of the proof
  in Subsection~\ref{sect:proof.th:lemma2.13Analog}.
  \begin{figure}
    \centering
    \begin{tikzpicture}[x=1pt,y=1pt,yscale=0.8,xscale=0.8]]

      \definecolor{drawColor}{RGB}{0,0,0}

      \path[draw=drawColor,line width= 1.2pt,line join=round] (0,0) -- (100,0);
      \path[draw=drawColor,line width= 1.2pt,line join=round] (0,0) -- (-50,200);
      \path[draw=drawColor,line width= 1.2pt,line join=round] (100,0) -- (119.2308,76.923);


      \path[draw=drawColor,line width= 1.2pt,line join=round] (-20,0) -- (120,0);
      \path[draw=drawColor,line width= 1.2pt,line join=round] (-20,0) -- (-100,200);
      \path[draw=drawColor,line width= 1.2pt,line join=round] (120,0) -- (150.76923,76.923);


      \path[draw=drawColor,line width= 1.2pt,line join=round] (170,0) -- (270,0);
      \path[draw=drawColor,line width= 1.2pt,line join=round] (170,0) -- (150.76923,76.923);
      \path[draw=drawColor,line width= 1.2pt,line join=round] (270,0) -- (320,200);


      \path[draw=drawColor,line width= 1.2pt,line join=round] (150,0) -- (290,0);
      \path[draw=drawColor,line width= 1.2pt,line join=round] (150,0) -- (119.2308,76.923);
      \path[draw=drawColor,line width= 1.2pt,line join=round] (290,0) -- (370,200);


      \path[draw=drawColor,line width= 1.2pt,line join=round, dashed] (-100,200) -- (370,200);
      \path[draw=drawColor,line width= 1.2pt,line join=round] (119.2308,76.923) -- (150.76923,76.923);


      \path[draw=drawColor,line width= 1.2pt,line join=round,->] (-110,-55) -- (-110,250);
      \path[draw=drawColor,line width= 1.2pt,line join=round,->] (-115,-50) -- (380,-50);

      \node[text=drawColor,anchor=base,inner sep=0pt, outer sep=0pt, scale=  1.00] at (-124,240) {$n$};
      \node[text=drawColor,anchor=base,inner sep=0pt, outer sep=0pt, scale=  1.00] at (380,-70) {$x$};

      \path[draw=drawColor,line width= 1.2pt,line join=round, dashed] (50,0) -- (50,-5);
      \node[text=drawColor,anchor=base,inner sep=0pt, outer sep=0pt, scale=  1.00] at (50,-18) {$x_\base$};
      \path[draw=drawColor,line width= 1.2pt,line join=round, dashed] (220,0) -- (220,-5);
      \node[text=drawColor,anchor=base,inner sep=0pt, outer sep=0pt, scale=  1.00] at (220,-18) {$x'_\base$};

      \node[text=drawColor,anchor=base,inner sep=0pt, outer sep=0pt, scale=  1.00] at (-125,75.923) {$t^*$};
      \path[draw=drawColor,line width= 1.2pt,line join=round, dashed] (-110,76.923) -- (119.2308,76.923);

      \draw[fill=drawColor, opacity=0.4] (-20,0) -- (-100,200) -- (-50,200) -- (0,0);
      \draw[fill=drawColor, opacity=0.4] (120,0) -- (135,37.5) -- (150,0) -- (170,0) --  (150.76923,76.923) -- (119.2308,76.923) --(100,0);
      \draw[fill=drawColor, opacity=0.4] (270,0) -- (320,200) -- (370,200) -- (290,0);
      \draw[fill=green, opacity=0.4] (0,0) -- (-50,200) -- (320,200) -- (270,0) -- (170,0) -- (150.76923,76.923) -- (119.2308,76.923) -- (100,0) -- (0,0);
    \end{tikzpicture}
    \caption{Double cone and double cone shell in case $d=1$}
    \label{fig:coneshells_in_d=1}
  \end{figure}
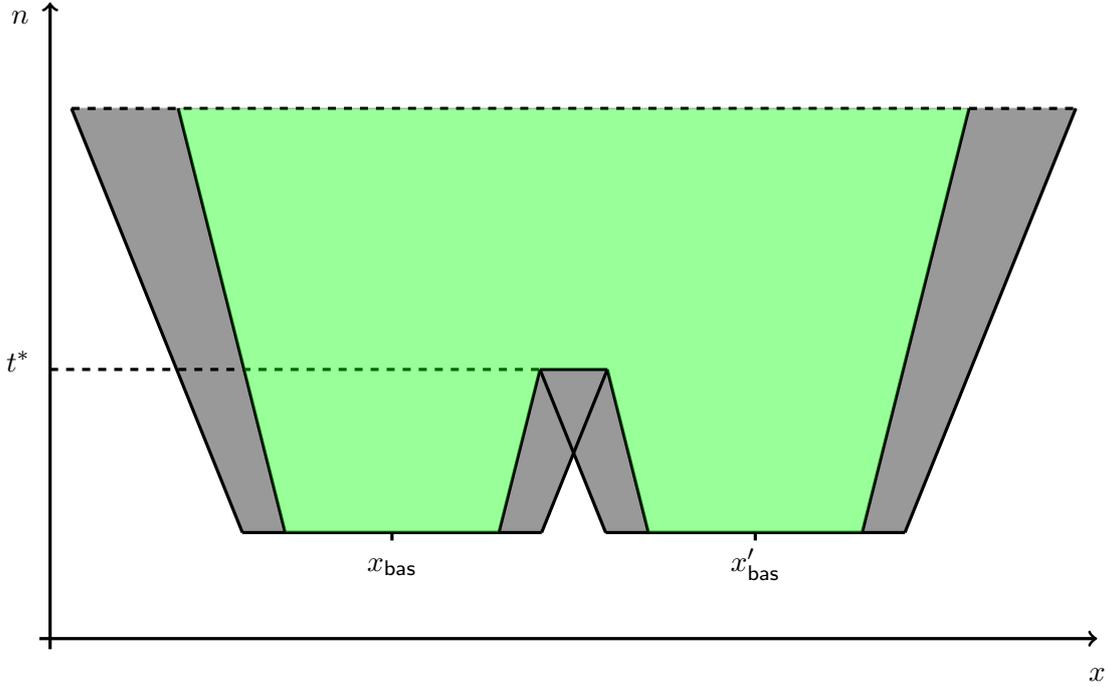
\end{proof}

Lemma~\ref{th:lemma2.13Analog} is the key tool to separate the
information inside the inner cone from the outside of the outer cone,
which provides a certain space-time isolation of the environment
around the random walk. Another important ingredient for the renewal
times is to \emph{locally} explore the reasons for $0$'s of $\eta$
along the paths of the random walks. This allows to stop in such a way
that the distribution of the environment viewed relative to the
stopped particle dominates the a priori law of the environment; see
equation \eqref{eq:stochDomSigmaJoint} in
Lemma~\ref{lemma:expTailsSigmaJoint} below.

We define stopping times at which the reasons for negative
information, i.e.\ reasons for $\eta=0$ at certain space-time points,
for both walkers $X$ and $X'$ are explored. For
$(x,n)\in\bZ^d\times\bZ$, let
\begin{displaymath}
  \ell(x,n) \coloneqq \max \left\{ m \ge 0 : \,
    \begin{array}{l}\exists \, x=x_0,x_1,\dots,x_m \text{ with } \norm{x_i-x_{i-1}} \le 1
      \text{ for } 1 \le i \le m \\
      \text{ and } \omega(x_i,n-i) = 1 \text{ for } 0 \le i \le m
    \end{array}\right\}
\end{displaymath}
be length of the longest directed open path (going in the negative
time direction) starting in $(x,n)$ with the convention $\ell(x,n)=-1$
if $\omega(x,n)=0$ and $\ell(x,n)=\infty$ if $\eta_n(x)=1$. As in
\cite{BirknerCernyDepperschmidt2016}, see equation~(2.45) there, we
define
\begin{align*}
  D_n \coloneqq n + \max \{\ell(y,-n) +2 : \norm{X_n -y} \leq R_\loc,\,
  \ell(y,-n) < \infty\}.
\end{align*}
Note that $D_n$ is the time, for the walk, at which the reasons for
$\eta_{-n}(y)=0$ for all $y$ from the $R_\loc$-neighbourhood of $X_n$
are explored by inspecting all the determining triangles
\begin{align}
  \label{eq:defn_determining_triangles}
  D(x,n) \coloneqq
  \begin{cases}
    \emptyset, & \text{if } \eta_n(x)=1,\\[0.5ex]
    \{ (y,m) : \norm{y-x}\leq n-m, \, n-\ell(x,n)-1 \leq m \leq n \}, &
    \text{if } \eta_n(x)=0,
  \end{cases}
\end{align}
with base points in $B_{R_\loc}(X_n)$. We aim to build the
regeneration times on exactly the stopping times at which we explored
all reasons for $0$'s of $\eta$ along the paths and define
\begin{align*}
  \sigma_0 \coloneqq 0, \quad \sigma_i \coloneqq \min \left\{ m >
  \sigma_{i-1} : \max_{\sigma_{i-1}\leq n \leq m } D_n \leq m \right\}, \; i \geq 1.
\end{align*}
Let $(\sigma'_i)_{i\geq 0}$ be defined analogously for $X'$.

The times at which we jointly explored the reasons for $0$'s of $\eta$
along the paths of $X$ and $X'$ are given by the sequence
$(\sigma^\simi_\ell)_{\ell \ge 0}$ defined by
\begin{align}
  \label{eq:sigmaJoint}
  \sigma^\simi_0 =0, \quad \sigma^{\simi}_\ell = \sigma_i = \sigma'_j,
  \; \ell \ge 1,
\end{align}
where $i$ and $j$ are the first times with respect to the
corresponding sequences so that
$|\{\sigma_0,\dots,\sigma_i\} \cap \{\sigma_0',\dots,\sigma_j'\}| =
\ell$.

Note that $\sigma^{\simi}$ are exactly the times when we have no
``negative'' influence on the environment in the future of the paths
of both random walks: The family $\{ \eta(\cdot, k), k >
\sigma^{\simi}_\ell\}$ is not independent of the information gathered
in the construction up to time $\sigma^{\simi}_\ell$ (for the walks).
However, the information concerning $\{ \eta(\cdot, k), k >
\sigma^{\simi}_\ell\}$ that we have at time $\sigma^{\simi}_\ell$ is
only of the form that we know that there must be $1$'s at certain
space-time positions (which lie in the future of the path). At time
$\sigma^{\simi}_\ell$, we do not have ``negative'' information in the
sense that we know that there must be $0$'s at certain space-time
positions. Thus, we can invoke the FKG inequality at such times, see
Lemma~\ref{lemma:expTailsSigmaJoint} and its proof below.

Again as in \cite{BirknerCernyDepperschmidt2016} we define
\begin{align}
  \label{eq:Rloc_tube}
  \tube_n \coloneqq \{ (y,-k): 0 \leq k \leq n, \norm{y-X_k} \leq R_\loc \}
\end{align}
and
\begin{align}
  \label{eq:det_triangle_Rloc_tube}
  \dtube_n = \bigcup_{(y,k) \in \tube_n} D(y,k)
\end{align}
(this is $\tube_n$ ``decorated'' with all the determining triangles
emanating from it). Similarly we define $\tube'_n$ and $\dtube'_n$ by
using $X'$ instead of $X$.
(These are random subsets of $\Z^d \times \Z_-$. Formally, we equip
$\{ A : A \subset \Z^d \times \Z_-\}$ with the $\sigma$-algebra generated by the
mappings $A \mapsto \ind{A}((y,m))$ for $(y,m) \in \Z^d \times \Z_-$. Thus,
$\tube_n$ and $\dtube_n$ are bona fide random variables.)

Let the filtration $\mathcal{F}^\simi =
(\mathcal{F}^\simi_n)_n$ be defined by
\begin{align*}
  \mathcal{F}^{\simi}_n
  & \coloneqq \sigma(X_j: 0 \leq j \leq n) \vee
    \sigma(\eta_j(y),\omega(y,j): (y,j) \in \tube_n)\vee
    \sigma(\omega(y,j):(y,j) \in \dtube_n)\\
  & \qquad \vee \sigma(X'_j: 0 \leq j \leq n) \vee
    \sigma(\eta_j(y),\omega(y,j): (y,j) \in \tube'_n)\vee
    \sigma(\omega(y,j):(y,j) \in \dtube'_n).
\end{align*}
In particular $\mathcal{F}^\simi_{\sigma^\simi_n}$ contains
all the information about the environment that the random walks gather
until time $\sigma^\simi_n$. This information includes the values of
$\omega$ and $\eta$ in their $R_\loc$-vicinity and the determining
triangles. Note that the $\sigma^{\simi}_n, n \in \N$ are stopping
times w.r.t.\ $\mathcal{F}^\simi$.

\begin{lemma}[Analogue of Lemma~2.17 from
  \cite{BirknerCernyDepperschmidt2016}]
  If \label{lemma:expTailsSigmaJoint} $p$ is sufficiently close to
  $1$, then there exist finite positive constants $c$ and $C$ so that
  uniformly in all pairs of starting points $x_0, x_0' \in \Z^d$,
  \begin{align}
    \label{eq:expTailsSigmaJoint}
    \mathbb{P}^\joint_{x_0,x'_0}(\sigma^{\simi}_{i+1} - \sigma^{\simi}_i > n \, | \,
    \mathcal{F}^{\simi}_{\sigma^{\simi}_i}) \leq Ce^{-cn} \quad
    \text{for all } n =1,2,\dots, i =0,1,\dots \, \mathbb{P}^\joint_{x_0,x'_0}\text{-a.s.},
  \end{align}
  in particular, all $\sigma^{\simi}_i$ are a.s.\ finite. Furthermore,
  we have
  \begin{align}
    \label{eq:stochDomSigmaJoint}
    \mathbb{P}^\joint_{x_0,x'_0}(\omega(\cdot,-j-\sigma^{\simi}_i)_{j=0,1,\dots} \in \cdot \, | \,
    \mathcal{F}^{\simi}_{\sigma^{\simi}_i} ) \succcurlyeq \mathscr{L}(
    \omega(\cdot,-j)_{j=0,1,\dots} ) \quad
    \text{for all } i=0,1,\dots \, \mathbb{P}^\joint_{x_0,x'_0}\text{-a.s.},
  \end{align}
  where `$\succcurlyeq$' denotes stochastic domination.
\end{lemma}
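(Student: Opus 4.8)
The plan is to reduce \eqref{eq:expTailsSigmaJoint} to a one-walk tail estimate of the type proved in \cite[Lemma~2.17]{BirknerCernyDepperschmidt2016} by rewriting the joint construction times through the quantities $D_n,D'_n$, and then to obtain \eqref{eq:stochDomSigmaJoint} from the same FKG argument used there. For the reduction, write $\sigma^\simi_i=\sigma_k=\sigma'_{k'}$; a short induction on the defining minimality of the $\sigma$'s shows $\{m>\sigma^\simi_i:\max_{\sigma^\simi_i\le n'\le m}D_{n'}\le m\}=\{\sigma_{k+1},\sigma_{k+2},\dots\}$, and the analogous identity holds for $X'$ with $D'$. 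Hence a time $m>\sigma^\simi_i$ is again a joint construction time iff $\max_{\sigma^\simi_i\le n'\le m}(D_{n'}\vee D'_{n'})\le m$; setting $\Delta_t\coloneqq D_{\sigma^\simi_i+t}-(\sigma^\simi_i+t)\ge0$ and $\Delta'_t$ analogously (so that $\Delta_0=\Delta'_0=0$, as $\sigma^\simi_i$ is a construction time), this reads
\begin{align*}
  \{\sigma^\simi_{i+1}-\sigma^\simi_i>n\}=\bigcap_{s=1}^{n}\Big\{\max_{0\le t\le s}\big(t+(\Delta_t\vee\Delta'_t)\big)>s\Big\}.
\end{align*}
On this event every $s\in\{1,\dots,n\}$ satisfies $t\le s<t+(\Delta_t\vee\Delta'_t)$ for some $t\in\{1,\dots,s\}$, so the integer intervals $[t,\,t+(\Delta_t\vee\Delta'_t))$, $t=1,\dots,n$, cover $\{1,\dots,n\}$ and hence $\sum_{t=1}^n(\Delta_t\vee\Delta'_t)\ge n$. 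Thus $\{\sigma^\simi_{i+1}-\sigma^\simi_i>n\}\subseteq\{\sum_{t=1}^n\Delta_t\ge n/2\}\cup\{\sum_{t=1}^n\Delta'_t\ge n/2\}$, and it suffices to bound each of these probabilities, conditionally on $\mathcal F^\simi_{\sigma^\simi_i}$, by $Ce^{-cn}$, uniformly in the starting points (the case $i=0$ being identical with the sums ranging over $t\ge0$).

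To estimate $\bP^\joint_{x_0,x_0'}(\sum_{t=1}^n\Delta_t\ge n/2\mid\mathcal F^\simi_{\sigma^\simi_i})$, note that $\Delta_t=0$ unless some $y$ with $\norm{X_{\sigma^\simi_i+t}-y}\le R_\loc$ has $\eta_{-(\sigma^\simi_i+t)}(y)=0$, and then $\Delta_t-2$ is the maximal length of a \emph{finite} backward open path rooted at such a $y$. For $p$ close to $1$ the length of a finite backward open path in supercritical oriented percolation has exponential tails, and --- this is the crux --- this persists after conditioning on $\mathcal F^\simi_{\sigma^\simi_i}$ and on the walks up to time $\sigma^\simi_i+t$: the information that $\sigma$-algebra carries about $\omega$ at time-levels $\le-(\sigma^\simi_i+t)$ consists only of increasing events (``certain space-time points connect to $-\infty$''), so the conditional law of $\omega$ on those levels dominates i.i.d.\ Bernoulli$(p)$, and the non-monotone event ``deep but finite cluster'' is handled by writing it as the intersection of the increasing event ``a backward open path of length $>x$ exists'' with the decreasing event ``the backward cluster is finite'' and applying FKG. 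One obtains $\bP(\Delta_t\ge x\mid\cdots)\le\varepsilon_0\,Ce^{-cx}$ uniformly, where $\varepsilon_0=(2R_\loc+1)^d(1-\theta(p))\to0$ as $p\to1$; hence for a fixed small $\lambda>0$ the conditional moment generating functions $\bE[e^{\lambda\Delta_t}\mid\cdots]$ drop below $e^{\lambda/2}$ once $p$ is close enough to $1$, and a Chernoff bound then yields the required $Ce^{-cn}$, from which a.s.\ finiteness of all $\sigma^\simi_i$ follows. The weak dependence among the $\Delta_t$ --- only through occasionally long determining triangles --- is absorbed as in \cite{BirknerCernyDepperschmidt2016}, e.g.\ by domination by a finite-range-dependent field. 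I expect this robustness of the exponential decay under the conditioning to be the main obstacle: plain monotonicity is too weak, since the dominating conditional law can only lengthen open paths, so one must genuinely use that the finite cluster in question is sealed off from the (possibly numerous) far-away connection events; this is precisely the heart of the one-walk estimate in \cite{BirknerCernyDepperschmidt2016}, and passing to two walks adds only the second tube together with the elementary reduction above.

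For \eqref{eq:stochDomSigmaJoint} I would argue exactly as for a single walk. At $\sigma^\simi_i$, the inequalities $D_{n'}\le\sigma^\simi_i$ and $D'_{n'}\le\sigma^\simi_i$ for $n'\le\sigma^\simi_i$ force every determining triangle emanating from the tubes of $X$ and $X'$ to reach down only to level $-\sigma^\simi_i+1$, and force $\omega\equiv1$ at level $-\sigma^\simi_i$ on the $R_\loc$-neighbourhoods of $X_{\sigma^\simi_i}$ and $X'_{\sigma^\simi_i}$. Consequently the only information in $\mathcal F^\simi_{\sigma^\simi_i}$ about $(\omega(\cdot,-j-\sigma^\simi_i))_{j\ge0}$ is a family of increasing events --- the ``$\eta=1$'' statements at the tube points, rewritten as connections to $-\infty$ routed through the already-determined top part of the environment, together with those forced open sites. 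Since conditioning a product measure on an increasing event produces a stochastically larger measure (Holley's inequality, equivalently the FKG lattice condition), \eqref{eq:stochDomSigmaJoint} follows; the presence of two tubes merely enlarges the family of conditioning events and does not change the argument.
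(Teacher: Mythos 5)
Your reduction of $\{\sigma^{\simi}_{i+1}-\sigma^{\simi}_i>n\}$ to $\{\sum_{t\le n}\Delta_t\ge n/2\}\cup\{\sum_{t\le n}\Delta'_t\ge n/2\}$ is correct, and your treatment of \eqref{eq:stochDomSigmaJoint} (only positive, i.e.\ increasing, information about the environment below level $-\sigma^{\simi}_i$ survives at a construction time, then Holley/FKG) is exactly the paper's argument. The gap is in the Chernoff step for the first claim. You need a bound on $\bE[e^{\lambda\Delta_t}\mid\cdot]$ where the conditioning carries \emph{all} information gathered up to time $\sigma^{\simi}_i+t$, and at such intermediate (non-construction) times this information is not a family of increasing events: it contains the $\omega$-values of the determining triangles explored since $\sigma^{\simi}_i$, which is negative information and, worse, can reach below level $-(\sigma^{\simi}_i+t)$ --- that is precisely the regime in which the excursion has not yet ended. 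On such a conditioning the tail of $\Delta_t$ is not uniformly exponential with a small prefactor: a previously explored triangle may contain a long finite open path that passes through the $R_\loc$-neighbourhood of $X_{\sigma^{\simi}_i+t}$ at level $-(\sigma^{\simi}_i+t)$, in which case $\Delta_t$ is large with conditional probability one, so $\bE[e^{\lambda\Delta_t}\mid\cdot]$ cannot be pushed below $e^{\lambda/2}$ uniformly. The FKG/domination argument you invoke is available only at the times $\sigma^{\simi}$ themselves, and the closing remark that the dependence is ``absorbed as in \cite{BirknerCernyDepperschmidt2016} by domination by a finite-range-dependent field'' has no counterpart there: the $\Delta_t$'s are functions of overlapping backward clusters and are not finite-range dependent, and no such domination is used in that paper. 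As written, the heart of your tail estimate is therefore missing.

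For comparison, the paper avoids conditioning inside the excursion altogether. It observes that on $\{\sigma^{\simi}_1>n\}$ the overlapping determining triangles force a \emph{chain} of space-time points $(y_j,-j)$, $j=0,\dots,n$, with $\eta_{-j}(y_j)=0$, where $y_j$ lies in the $R_\loc$-neighbourhood of one of the two walks at the times a fresh zero is seen and otherwise follows the (finite) open path inside the current triangle. A union bound over the at most $\widehat{R}_\kappa^{2n}$ two-walk paths and the at most $(2\widehat{R}_\loc+\widehat{R}_\kappa)^n$ admissible chains, combined with the bound $\varepsilon(p)^{n+1}$ for the probability that a fixed chain consists of $\eta$-zeros (Lemma~2.11 of \cite{BirknerCernyDepperschmidt2016}), yields $\bP(\sigma^{\simi}_1>n)\le(\widehat{R}_\kappa^2(2\widehat{R}_\loc+\widehat{R}_\kappa)\varepsilon(p))^n$, which is exponentially small once $p$ is close enough to $1$; the case $i>0$ is then handled by \eqref{eq:stochDomSigmaJoint}, i.e.\ by the very stochastic domination you proved, applied at the construction time where the conditioning is indeed purely positive. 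If you want to salvage your route, you would have to decompose according to the depth of previously explored triangles and show that the ``bad'' conditionings only occur on events whose probability was already exponentially small --- which essentially reproduces the paper's chaining/counting argument.
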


\begin{proof}
  The proof is basically the same as for Lemma~2.17 of
  \cite{BirknerCernyDepperschmidt2016} with only a few minor
  differences, using the fact that a large value of
  $\sigma^{\simi}_{i+1} - \sigma^{\simi}_i$ enforces the existence of
  a long open path in oriented percolation which is not connected to
  infinity. Furthermore, by construction at times $\sigma^{\simi}_i$
  the regeneration construction only retains information about certain
  $1$'s in $\eta$, i.e.\ about existence of infinite open paths in
  $\omega(\cdot,\cdot)$, so that at least intuitively
  \eqref{eq:stochDomSigmaJoint} follows from the FKG inequality.

  The details can be found in
  Section~\ref{proof:lemma:expTailsSigmaJoint}.
\end{proof}

\begin{corollary}[Analogue of Corollary~2.18 from
  \cite{BirknerCernyDepperschmidt2016}]
  For \label{corollary:dryPoints} $p$ large enough there exists
  $\varepsilon(p) \in (0,1]$ satisfying $\lim_{p\uparrow 1}
  \varepsilon(p) =0$ such that for all
  $V=\{ (x_1,t_1),\dots,(x_k,t_k) \}$ and
  $V'=\{ (x'_1,t'_1),\dots,(x'_\ell,t'_\ell) \}$, subsets of
  $\bZ^d\times \N$ with $t_1<\dots < t_k$, $t'_1<\dots < t'_\ell$, we
  have uniformly in all $x_0, x_0' \in \Z^d$
  \begin{align}
    \bP^\joint_{x_0,x_0'}\bigl(\eta_{-t-\sigma^{\simi}_i}(x+X_{\sigma^{\simi}_i})=0 \text{
    for all } (x,t) \in V \, \vert \,
    \mathcal{F}^{\simi}_{\sigma^{\simi}_i}\bigr) \leq \varepsilon(p)^k
  \end{align}
  and
  \begin{align}
    \bP^\joint_{x_0,x_0'}\bigl(\eta_{-t'-\sigma^{\simi}_i}(x'+X'_{\sigma^{\simi}_i})=0 \text{
    for all } (x',t') \in V' \, \vert \,
    \mathcal{F}^{\simi}_{\sigma^{\simi}_i}\bigr) \leq \varepsilon(p)^\ell.
  \end{align}
  a.s.\ under $\bP^\joint_{x_0,x_0'}$.
\end{corollary}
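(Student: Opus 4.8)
The plan is to derive the corollary from Lemma~\ref{lemma:expTailsSigmaJoint}, using the stochastic domination \eqref{eq:stochDomSigmaJoint} to transfer the question to the a priori i.i.d.\ Bernoulli law, and there to reduce it to the corresponding statement for the stationary discrete time contact process, which is precisely the content of Corollary~2.18 in \cite{BirknerCernyDepperschmidt2016}. The key observation that makes this work is that the event whose probability we wish to bound is a \emph{decreasing} event in $\omega$: adding open sites to $\omega$ only creates open paths, hence can only turn $0$'s of $\eta$ into $1$'s. So the whole two-walk regeneration structure enters only through the single ingredient \eqref{eq:stochDomSigmaJoint}, and the remaining estimate is the same ``static'' statement about the upper invariant measure already established in \cite{BirknerCernyDepperschmidt2016}.

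Concretely, fix $i$ and work on $\mathcal{F}^{\simi}_{\sigma^{\simi}_i}$, on which $\sigma^{\simi}_i$ and $X_{\sigma^{\simi}_i}$ are determined. Since every $(x,t)\in V$ has $t\ge 1$, the connection event defining $\eta_{-t-\sigma^{\simi}_i}(x+X_{\sigma^{\simi}_i})$ lives at a time $\le -1-\sigma^{\simi}_i<-\sigma^{\simi}_i$, so this random variable is a function of $\omega(\cdot,-j-\sigma^{\simi}_i)_{j\ge 0}$ only. Hence
\[
  E\coloneqq\bigl\{\eta_{-t-\sigma^{\simi}_i}(x+X_{\sigma^{\simi}_i})=0\text{ for all }(x,t)\in V\bigr\}
\]
is a decreasing event in the shifted noise $\omega(\cdot,-j-\sigma^{\simi}_i)_{j\ge 0}$. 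By \eqref{eq:stochDomSigmaJoint} the conditional law of that noise given $\mathcal{F}^{\simi}_{\sigma^{\simi}_i}$ dominates the i.i.d.\ Bernoulli$(p)$ law, and domination of the noise law reverses the inequality for decreasing events, so
\[
  \bP^\joint_{x_0,x_0'}\bigl(E\,\big|\,\mathcal{F}^{\simi}_{\sigma^{\simi}_i}\bigr)\le
  \bP_{\mathrm{Bern}(p)}\bigl(\eta_{-t}(x+X_{\sigma^{\simi}_i})=0\text{ for all }(x,t)\in V\bigr)
  =\bP_{\mathrm{Bern}(p)}\bigl(\eta_{-t}(x)=0\text{ for all }(x,t)\in V\bigr),
\]
the last equality by spatial translation invariance of the i.i.d.\ field; here $\eta$ on the right is the stationary contact process built from an i.i.d.\ Bernoulli$(p)$ field as in \eqref{eq:CP}, and the conditioning has disappeared (the right-hand side is deterministic).

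It then remains to show $\bP_{\mathrm{Bern}(p)}(\eta_{-t_j}(x_j)=0,\ j=1,\dots,k)\le\varepsilon(p)^k$ with $\varepsilon(p)\downarrow 0$ as $p\uparrow 1$, for distinct $t_1<\dots<t_k$. This no longer involves the random walks: for $p$ close to $1$ a ``dead'' space-time point of the upper invariant measure forces a large dry cluster in supercritical oriented percolation, which is exponentially unlikely, and processing the points in order of their (distinct) time coordinates extracts one factor $\varepsilon(p)$ per point. One can simply invoke Corollary~2.18 of \cite{BirknerCernyDepperschmidt2016} at this point, or repeat the short Peierls-type argument there. The bound for $V'$ is obtained by the identical chain of estimates with $X'$, $\tube'$, $\dtube'$ and $V'$ replacing $X$, $\tube$, $\dtube$ and $V$, noting that $X'_{\sigma^{\simi}_i}$ is likewise $\mathcal{F}^{\simi}_{\sigma^{\simi}_i}$-measurable.

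I do not expect a genuine obstacle here: the heavy lifting is in Lemma~\ref{lemma:expTailsSigmaJoint} (the analogue of Lemma~2.17) and in the stationary-process estimate, both available. The only points needing care are (i) verifying that $E$ depends on the ``future'' noise $\omega(\cdot,-j-\sigma^{\simi}_i)_{j\ge 0}$ alone — which is exactly where $t\ge 1$ is used and which is needed for \eqref{eq:stochDomSigmaJoint} to apply — and (ii) the elementary fact that stochastic domination of the driving noise reverses probabilities of decreasing events. Neither introduces new difficulty from the two-walk setting, since \eqref{eq:stochDomSigmaJoint} already packages the FKG/regeneration input uniformly in the pair of starting points.
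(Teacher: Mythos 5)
Your proposal is correct and follows essentially the same route as the paper: the paper's proof likewise combines the stochastic domination \eqref{eq:stochDomSigmaJoint} from Lemma~\ref{lemma:expTailsSigmaJoint} with the dry-point estimate $\varepsilon(p)^k$ from Lemma~2.11 (cf.\ Corollary~2.18) of \cite{BirknerCernyDepperschmidt2016}. Your spelling out that the event is decreasing in $\omega$ and thus domination reverses the inequality is exactly the implicit content of the paper's one-line argument.
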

\begin{proof}
  With $\varepsilon(p)$ as in Lemma~2.11 of
  \cite{BirknerCernyDepperschmidt2016} the assertion follows from that
  lemma and \eqref{eq:stochDomSigmaJoint}; cf also Corollary~2.18 in
  \cite{BirknerCernyDepperschmidt2016}.
\end{proof}
For $t \in \N$ we define
$R_t \coloneqq \inf \{ i \in \bZ_+ : \sigma^{\simi}_i \geq t \}$ and
for $m=1,2,\dots$ we put
\begin{align}
  \wt{\tau}^{\simi}_m(t) \coloneqq
  \begin{cases}
    \sigma^{\simi}_{R_t-m+1} - \sigma^{\simi}_{R_t-m}  \quad &
    \text{if } m\leq R_t,\\
    0 & \text{else.}
  \end{cases}
\end{align}
In particular $\wt\tau^\simi_1(t)$ is the length of the interval
$(\sigma^\simi_{i-1},\sigma^\simi_{i}]$ which contains $t$.  (For
$m \ge 2$, $\wt\tau^\simi_m(t)$ is the length of the $(m-1)$-th
interval before it.)

\begin{lemma}[Analogue of Lemma~2.19 from \cite{BirknerCernyDepperschmidt2016}]
  When \label{lemma:expTailsTau} $p$ is sufficiently close to $1$ then
  there exist finite positive constants $c$ and $C$ so that for all
  starting points $x_0$, $x_0'$ and all $i,n =0,1,\dots$
  \begin{align}
    \label{eq:expTailsTau1}
    \bP^\joint_{x_0,x'_0}\bigl(\wt{\tau}^{\simi}_1(t) \geq n \,\vert \,
    \mathcal{F}^{\simi}_{\sigma^{\simi}_i}\bigr) \leq Ce^{-cn} \quad
    \text{a.s.\ on } \{ \sigma^{\simi}_i <t \},
  \end{align}
  and generally
  \begin{align}
    \label{eq:expTailsTaum}
    \bP^\joint_{x_0,x'_0}(R_t \geq i + m, \wt{\tau}^{\simi}_m(t) \geq n \, \vert \,
    \mathcal{F}^{\simi}_{\sigma^{\simi}_i}) \leq Cm^2e^{-cn} \quad
    \text{for } m=1,2,\dots \text{ a.s.\ on } \{
    \sigma^{\simi}_i < t \}.
  \end{align}
  under $\bP^\joint_{x_0,x'_0}$.
\end{lemma}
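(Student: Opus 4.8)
The plan is to deduce both estimates from the one-step exponential tail bound \eqref{eq:expTailsSigmaJoint} of Lemma~\ref{lemma:expTailsSigmaJoint}, exactly in the spirit of the proof of Lemma~2.19 in \cite{BirknerCernyDepperschmidt2016}, by summing over the possible left endpoint of the inter-regeneration interval under consideration. Write $\Delta_\ell \coloneqq \sigma^\simi_\ell - \sigma^\simi_{\ell-1}$; these are positive integers, and by \eqref{eq:expTailsSigmaJoint} applied at the stopping time $\sigma^\simi_\ell$ (which is licit since the $\sigma^\simi_\ell$ are $\mathcal F^\simi$-stopping times) one has $\bP^\joint_{x_0,x'_0}(\Delta_{\ell+1} \ge M \mid \mathcal F^\simi_{\sigma^\simi_\ell}) \le C e^{-cM}$ for every $\ell,M$, uniformly. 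By iterating this and a standard sequential coupling, the conditional law of $(\Delta_{\ell+1},\dots,\Delta_{\ell+m})$ given $\mathcal F^\simi_{\sigma^\simi_\ell}$ is dominated coordinatewise by that of $m$ i.i.d.\ positive integer variables $\xi_1,\dots,\xi_m$ with $\bP(\xi_1\ge M)=\min(1,Ce^{-cM})$, so that $\mu\coloneqq\bE[\xi_1]<\infty$ and any increasing event in the $\Delta$'s is controlled by the corresponding event in the $\xi$'s.

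For \eqref{eq:expTailsTau1}, on $\{\sigma^\simi_i<t\}$ the interval straddling $t$ is $(\sigma^\simi_{R_t-1},\sigma^\simi_{R_t}]$; its left endpoint $v\coloneqq\sigma^\simi_{R_t-1}$ is an integer in $[\sigma^\simi_i,t)$, and since $\sigma^\simi_{R_t}\ge t$ we have $\wt\tau^\simi_1(t)=\sigma^\simi_{R_t}-v\ge t-v$. Hence $\{\wt\tau^\simi_1(t)\ge n\}\cap\{\sigma^\simi_i<t\}$ is the disjoint union, over integers $v<t$ and indices $\ell\ge i$ with $\sigma^\simi_\ell=v$, of the events $\{\sigma^\simi_\ell=v\}\cap\{\Delta_{\ell+1}\ge\max(n,t-v)\}$. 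Since $\{\sigma^\simi_\ell=v\}\in\mathcal F^\simi_{\sigma^\simi_\ell}$, applying the one-step bound and then summing over $\ell$ (using $\sum_\ell\bP(\sigma^\simi_\ell=v\mid\cdot)\le1$) and over $v<t$ gives
\[
  \bP^\joint_{x_0,x'_0}\bigl(\wt\tau^\simi_1(t)\ge n\,\bigm|\,\mathcal F^\simi_{\sigma^\simi_i}\bigr)\le\sum_{v<t}Ce^{-c\max(n,t-v)}\le Ce^{-cn/2}\sum_{r\ge1}e^{-cr/2},
\]
and the last series is finite and independent of $t$, which is \eqref{eq:expTailsTau1} with $c$ replaced by $c/2$.

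For \eqref{eq:expTailsTaum} with $m\ge2$, set $a\coloneqq R_t-m$; on $\{R_t\ge i+m,\ \wt\tau^\simi_m(t)\ge n\}$ one has $a\ge i$, $\sigma^\simi_a<t$, $\Delta_{a+1}\ge n$, and $\Delta_{a+1}+\dots+\Delta_{a+m}=\sigma^\simi_{a+m}-\sigma^\simi_a\ge t-\sigma^\simi_a$, so the event is contained in $\bigcup_{a\ge i}\{\sigma^\simi_a<t\}\cap\{\Delta_{a+1}\ge n\}\cap\{\Delta_{a+1}+\dots+\Delta_{a+m}\ge t-\sigma^\simi_a\}$. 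Conditioning the $a$-th term on $\mathcal F^\simi_{\sigma^\simi_a}$ and using the domination above, its conditional probability given $\mathcal F^\simi_{\sigma^\simi_i}$ is at most $\bE[\ind{\sigma^\simi_a<t}\,g_m(t-\sigma^\simi_a)\mid\mathcal F^\simi_{\sigma^\simi_i}]$, where $g_m(L)\coloneqq\bP(\xi_1\ge n,\ \xi_1+\dots+\xi_m\ge L)$, taken nonincreasing in $L$. As the $\sigma^\simi_a$ with $a\ge i$ and $\sigma^\simi_a<t$ are distinct integers $<t$, the sum over $a$ is at most $\sum_{L\ge1}g_m(L)=\bE[\ind{\xi_1\ge n}(\xi_1+\dots+\xi_m)]=\bE[\xi_1\ind{\xi_1\ge n}]+(m-1)\mu\,\bP(\xi_1\ge n)\le C'm\,e^{-cn}$, giving \eqref{eq:expTailsTaum} (even with $m$ in place of $m^2$); the case $m=1$ is \eqref{eq:expTailsTau1}.

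The one genuinely delicate point — the main obstacle — is obtaining bounds uniform in the deterministic time $t$, since a priori there are of order $t$ candidate left endpoints for the relevant interval and the interval straddling a fixed time is size-biased. This is resolved by the elementary observation that such an interval must run from strictly below $t$ to at least $t$, so its length dominates $t-v$ (respectively $\Delta_{a+1}+\dots+\Delta_{a+m}\ge t-\sigma^\simi_a$ in the second case); the resulting factor $e^{-c(t-v)}$ is summable over the left endpoint and absorbs the linear-in-$t$ count. The remaining work is bookkeeping: justifying the uniform one-step domination from \eqref{eq:expTailsSigmaJoint} on the (conditionally non-independent) increments, and keeping the $m$-dependence of $\sum_L g_m(L)$ at most polynomial — the stated $m^2$ being a comfortably safe bound even if one prefers to split the level constraint and invoke a Cramér-type estimate for the conditionally exponential increments rather than passing to the i.i.d.\ majorant.
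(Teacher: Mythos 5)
Your proposal is correct, and for \eqref{eq:expTailsTau1} it is essentially the paper's argument: both decompose over the index and the integer value $v=\sigma^{\simi}_{R_t-1}<t$ of the left endpoint of the straddling interval, apply the one-step bound \eqref{eq:expTailsSigmaJoint} at that stopping time to get $Ce^{-c\max(n,t-v)}$, and sum over $v$ (the paper splits the sum at $v=t-n$ and picks up a harmless factor $n$, while your $\max(n,r)\ge (n+r)/2$ trick is marginally cleaner). For \eqref{eq:expTailsTaum} your route differs in the key device. The paper fixes $j=R_t-m$ together with the values $k=\sigma^{\simi}_j$ and $\ell=\sigma^{\simi}_{j+1}$, and handles the constraint $\sigma^{\simi}_{j+m}\ge t$ by a pigeonhole union bound (some one of the $m-1$ later increments exceeds $(t-\ell)/(m-1)$), which after two geometric sums produces the factor $m^2$ in \eqref{eq:expTailsTaum}. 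You instead only fix $a=R_t-m$, replace the conditionally dependent increments $(\Delta_{a+1},\dots,\Delta_{a+m})$ by an i.i.d.\ exponential-tailed majorant via the uniform one-step domination from \eqref{eq:expTailsSigmaJoint} (legitimate, since the event $\{\Delta_{a+1}\ge n,\ \sum_j\Delta_{a+j}\ge t-\sigma^{\simi}_a\}$ is increasing and the domination holds a.s.\ at every step of the filtration), and then exploit the exact identity $\sum_{L\ge 1}\bP(\xi_1\ge n,\ S_m\ge L)=\bE[S_m\ind{\xi_1\ge n}]$ together with the distinctness of the integers $t-\sigma^{\simi}_a$ to absorb the sum over left endpoints. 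This buys a cleaner computation and the sharper bound $Cme^{-cn}$ in place of the stated $Cm^2e^{-cn}$ (so in particular it implies the lemma); what it costs is the need to justify the sequential coupling/domination step explicitly, which the paper's more hands-on pigeonhole-plus-geometric-sums computation avoids.
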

This lemma is direct consequence of some calculations used in the
proof of Lemma~2.19 in \cite{BirknerCernyDepperschmidt2016} combined
with Lemma~\ref{lemma:expTailsSigmaJoint}. The details can be found in
Section~\ref{sec:proof:lemma:expTailsTau}.

\begin{lemma}[Analogue of Lemma~2.20 in
  \cite{BirknerCernyDepperschmidt2016}]
  \label{lem:apriori speed conditioned on F}
  When $p$ is sufficiently close to $1$, then for all $\varepsilon>0$
  there exist finite positive constants $c=c(\varepsilon)$ and
  $C=C(\varepsilon)$ so that for any pair $x_0,x'_0\in\bZ^d$
  of starting positions and for all finite
  $\mathcal{F}^{\simi}$-stopping times $T$ with the property that
  almost surely $T \in \{ \sigma^{\simi}_i : i\in \N \}$ and for all
  $k,\ell \in \N$ and
  \begin{align}
    \label{eq:aprioriAlongStoppingTimesX}
    & \bP^\joint_{x_0,x'_0}(\norm{X_k - X_T} > s_{\mathrm{max}}(k-T) \, \vert \,
      \mathcal{F}^{\simi}_T) \leq Ce^{-c(k-T)} & & \text{a.s.\ on } \{T < k \},\\
    \label{eq:aprioriAlongStoppingTimesX'}
    & \bP^\joint_{x_0,x'_0}(\norm{X'_\ell - X'_T} > s_{\mathrm{max}}(\ell-T) \, \vert \,
      \mathcal{F}^{\simi}_T) \leq Ce^{-c(\ell-T)} & & \text{a.s.\ on } \{T < \ell \}
  \end{align}
  and for $j < k,\ell$
  \begin{align}
    \label{eq:aprioriAlongStoppingTimesX2}
    & \bP^\joint_{x_0,x'_0}(\norm{X_k -X_j} > (1+\varepsilon)s_{\mathrm{max}}(k-j) \, \vert \,
      \mathcal{F}^{\simi}_T) \leq Ce^{-c(k-j)} & & \text{a.s.\ on } \{T
                                                   \leq j\},\\
    \label{eq:aprioriAlongStoppingTimesX'2}
    & \bP^\joint_{x_0,x'_0}(\norm{X'_\ell -X'_j} > (1+\varepsilon)s_{\mathrm{max}}(\ell-j) \, \vert
      \, \mathcal{F}^{\simi}_T) \leq Ce^{-c(\ell-j)} & & \text{a.s.\ on }
                                                      \{T \leq j\}
  \end{align}
  under $\bP^\joint_{x_0,x'_0}$, where $s_{\mathrm{max}}$ can be
  chosen arbitrarily small by taking $\varepsilon_{\rf}\ll 1$ (see
  \eqref{eq:ass:appr-sym} from Assumption~\ref{ass:appr-sym}) and
  $1-p\ll 1$; see Lemma~2.16 in \cite{BirknerCernyDepperschmidt2016}.
\end{lemma}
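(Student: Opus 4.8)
The plan is to follow the proof of Lemma~2.20 in \cite{BirknerCernyDepperschmidt2016} with essentially cosmetic changes; the only genuinely new points are the use of the \emph{joint} filtration $\mathcal{F}^{\simi}$, the passage from a fixed index to an arbitrary $\mathcal{F}^{\simi}$-stopping time $T$ valued in $\{\sigma^{\simi}_i : i\in\N\}$, and the $(1+\varepsilon)$-loss in \eqref{eq:aprioriAlongStoppingTimesX2}--\eqref{eq:aprioriAlongStoppingTimesX'2}. All four estimates concern the displacements of $X$ and of $X'$ \emph{separately}, and $\mathcal{F}^{\simi}$ records the information gathered by \emph{both} walks, so it suffices to prove \eqref{eq:aprioriAlongStoppingTimesX} and \eqref{eq:aprioriAlongStoppingTimesX2} for $X$; the primed versions are obtained by running the identical argument with $X'$ (Corollary~\ref{corollary:dryPoints} already provides the needed environment control for either walk).

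First recall the mechanism behind the unconditional single-walk estimate, Lemma~2.16 in \cite{BirknerCernyDepperschmidt2016}. Decompose $X_k - X_T = \sum_{n=T}^{k-1}(X_{n+1}-X_n)$ and call the step at time $n$ \emph{good} if $\eta_{-n}(X_n)=1$ and \emph{dry} otherwise. By Assumption~\ref{ass:appr-sym} the one-step kernel at a good step lies within total-variation distance $\varepsilon_\rf$ of the symmetric, hence centred, reference law $\kappa_\rf$, so a good increment has conditional mean of norm at most $\varepsilon_\rf R_\kappa$, while every increment is bounded by $R_\kappa$ (Assumption~\ref{ass:finite-range}). Thus the increments recentred by their conditional means are bounded martingale differences and Azuma--Hoeffding bounds the norm of their partial sum by $\varepsilon'(k-T)$, for any prescribed $\varepsilon'>0$, outside an event of probability $\le Ce^{-c(k-T)}$; moreover, by Corollary~\ref{corollary:dryPoints} the number of dry steps among $\{T,\dots,k-1\}$ exceeds $\delta(p)(k-T)$ only with probability $\le Ce^{-c(k-T)}$, where $\delta(p)\downarrow 0$ as $p\uparrow1$. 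Adding the two contributions gives $\norm{X_k-X_T}\le s_{\mathrm{max}}(k-T)$ outside an event of probability $\le Ce^{-c(k-T)}$, with $s_{\mathrm{max}}\le\varepsilon'+R_\kappa(\varepsilon_\rf+\delta(p))$, which is as small as desired once $\varepsilon_\rf\ll1$ and $1-p\ll1$.

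To make this conditional and valid for a stopping time, I would first observe that on $\{T=\sigma^{\simi}_i\}$ the $\mathcal{F}^{\simi}_T$- and $\mathcal{F}^{\simi}_{\sigma^{\simi}_i}$-conditional laws agree, which reduces everything to the case $T=\sigma^{\simi}_i$ (with constants uniform in $i$). There, \eqref{eq:stochDomSigmaJoint} of Lemma~\ref{lemma:expTailsSigmaJoint} says that, conditionally on $\mathcal{F}^{\simi}_{\sigma^{\simi}_i}$, the environment seen by the walk after time $\sigma^{\simi}_i$ dominates a fresh copy of $\omega$, and Corollary~\ref{corollary:dryPoints} supplies exactly the conditional dry-point estimate needed to rerun the counting argument of the previous paragraph from the base point $X_{\sigma^{\simi}_i}$; the good-step martingale bound is unaffected, since the post-$\sigma^{\simi}_i$ increments are driven by $\kappa$ and by randomness of the walk independent of $\omega$. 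This proves \eqref{eq:aprioriAlongStoppingTimesX}, and verbatim \eqref{eq:aprioriAlongStoppingTimesX'}. For \eqref{eq:aprioriAlongStoppingTimesX2}, given $T\le j<k$ I would set $\tau_j\coloneqq\sigma^{\simi}_{R_j}$, the first time in $\{\sigma^{\simi}_i\}$ that is $\ge j$; this is again an $\mathcal{F}^{\simi}$-stopping time valued in $\{\sigma^{\simi}_i\}$, and $0\le\tau_j-j\le\wt{\tau}^{\simi}_1(j)$ has, by Lemma~\ref{lemma:expTailsTau}, an exponential tail under $\bP^\joint_{x_0,x'_0}(\,\cdot\mid\mathcal{F}^{\simi}_T)$ on $\{T\le j\}$. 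Applying the already-proven \eqref{eq:aprioriAlongStoppingTimesX} with $\tau_j$ in place of $T$ (conditioning on $\mathcal{F}^{\simi}_{\tau_j}\supseteq\mathcal{F}^{\simi}_T$ and projecting back) and splitting $X_k-X_j=(X_k-X_{\tau_j})+(X_{\tau_j}-X_j)$, one has $\norm{X_k-X_{\tau_j}}\le s_{\mathrm{max}}(k-\tau_j)\le s_{\mathrm{max}}(k-j)$ and $\norm{X_{\tau_j}-X_j}\le R_\kappa(\tau_j-j)\le\varepsilon s_{\mathrm{max}}(k-j)$, each failing only on an $\mathcal{F}^{\simi}_T$-conditional event of probability $\le Ce^{-c(k-j)}$; here the shift factor $e^{c(\tau_j-j)}$ incurred by moving the exponent from $k-\tau_j$ to $k-j$, as well as the event $\{\tau_j\ge k\}$, are absorbed using the exponential tail of $\tau_j-j$ after decreasing $c$. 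The triangle inequality then gives \eqref{eq:aprioriAlongStoppingTimesX2}, and likewise \eqref{eq:aprioriAlongStoppingTimesX'2}.

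The genuinely delicate point, inherited from the proof of Lemma~2.16 in \cite{BirknerCernyDepperschmidt2016}, is the dry-step count: the path of the walk is correlated with the environment, so Corollary~\ref{corollary:dryPoints} cannot be applied directly to a deterministic space-time sequence. The resolution, which I would import rather than redo, is to reveal the environment along the walk step by step, using the locality of the kernels (Assumption~\ref{ass:local}) so that each newly encountered dry site contributes a factor $\varepsilon(p)$ via \eqref{eq:stochDomSigmaJoint} and Corollary~\ref{corollary:dryPoints}; the present lemma then only adds the bookkeeping for the joint filtration, the passage to the stopping time $T$, and the $(1+\varepsilon)$-refinement.
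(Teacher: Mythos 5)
Your proposal is correct and follows essentially the same route as the paper: reduce to $T=\sigma^{\simi}_i$, control the dry-step count by revealing the environment along the walk and summing over paths with Corollary~\ref{corollary:dryPoints} and \eqref{eq:stochDomSigmaJoint} (exactly the argument the paper imports from Lemma~2.16 of \cite{BirknerCernyDepperschmidt2016}), and obtain \eqref{eq:aprioriAlongStoppingTimesX2}--\eqref{eq:aprioriAlongStoppingTimesX'2} by splitting at the first $\sigma^{\simi}_i\ge j$ (the paper's $T'$, your $\tau_j$), whose overshoot has exponential tails by \eqref{eq:expTailsTau1}, and absorbing the resulting shift in the exponent. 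The only cosmetic deviation is that you treat the wet steps via an Azuma bound on the recentred increments instead of the paper's comparison with the reference walk, which is an equally valid way to finish.
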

The proof follows the same lines as in
\cite{BirknerCernyDepperschmidt2016} and can be found in
Section~\ref{sec:proof:lem:apriori speed conditioned on F}.

\medskip

\noindent
Next recall from \cite{BirknerCernyDepperschmidt2016} the definition
of a cone time point for the decorated path; see Equation~(2.56) and
also Figure~4 there. For $m < n$ we say that $n$ is a
$(b,s)$-\emph{cone time point for the decorated path of} $X$
\emph{beyond} $m$ if, recall definitions \eqref{eq:Rloc_tube} and
\eqref{eq:det_triangle_Rloc_tube},
\begin{align}
  \label{eq:TimePointDecoratedPathX}
  \begin{split}
    (\tube_n \, \cup \,
    & \dtube_n) \cap (\bZ^d \times \{ -n,-n+1,\dots,-m \})\\
    & \subset \{ (x,-j): m \leq j \leq n, \norm{x-X_n}\leq b +s(n-j) \}
  \end{split}
\end{align}
and for $X'$ if
\begin{align}
  \label{eq:TimePointDecoratedPathX'}
  \begin{split}
    (\tube'_n \,\cup \,
    & \dtube'_n) \cap (\bZ^d \times \{ -n,-n+1,\dots,-m \})\\
    & \subset \{ (x,-j): m \leq j \leq n, \norm{x-X'_n}\leq b +s(n-j) \}.
  \end{split}
\end{align}
Thus, $n$ is a cone time point for the decorated path of $X$ beyond
$m$ if the space-time path $(X_j,-j)_{j=m,\dots,n}$ together with its
$R_\loc$-tube and determining triangles is contained in
$\cone(b,s,n-m)$ shifted to the base point $(X_n,-n)$.

\begin{lemma}[Analogue to \cite{BirknerCernyDepperschmidt2016} Lemma~2.21]
  For \label{lem:conepoint} $\varepsilon>0$, when $p$ is sufficiently
  close to $1$, there exist $b>0$ and $s > s_{\mathrm{max}}$ (with $s_{\mathrm{max}}$
  from Lemma~\ref{lem:apriori speed conditioned on F}) such that
  for all $x_0, x_0' \in \Z^d $ and
  for all finite $\mathcal{F}^{\simi}$-stopping times $T$ with
  $T\in\{ \sigma^{\simi}_i : i \in \N \}$ a.s.\ and all $k \in \N$,
  with
  $T' \coloneqq \inf \{ \sigma^{\simi}_i : \sigma^{\simi}_i \geq k \}$
  \begin{multline}
    \label{eq:35}
    \bP^\joint_{x_0,x'_0}(T' \text{ is a } (b,s)\text{-cone time point for the decorated
      path of } X \text{ and } X'\text{ beyond } T \, \vert \,
    \mathcal{F}^{\simi}_T)\\
    \geq 1-\varepsilon,
  \end{multline}
  a.s.\ on $\{ T < k \}$ under $\bP^\joint_{x_0,x'_0}$. Furthermore,
  $0 < s - s_{\mathrm{max}} \ll 1$ can be chosen small.
\end{lemma}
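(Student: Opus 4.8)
The plan is to mimic the proof of Lemma~2.21 in \cite{BirknerCernyDepperschmidt2016}: condition on $\mathcal{F}^{\simi}_T$, work on $\{T<k\}$ (so that $T<k\le T'$ and, since $T,T'\in\{\sigma^{\simi}_i\}$, both $\max_{0\le i\le T}D_i\le T$ and $\max_{0\le i\le T'}D_i\le T'$ hold by the defining recursion of the $\sigma_i$'s), and show that the event in \eqref{eq:35} fails with conditional probability at most $\varepsilon$. The first observation is that, because $\max_{0\le i\le T}D_i\le T$, every determining triangle emanating from a point of $\tube_T$ stays strictly above environment-time $-T$; since the cone-time-point condition \eqref{eq:TimePointDecoratedPathX} at $n=T'$, $m=T$, only concerns cross-sections of $\tube_{T'}\cup\dtube_{T'}$ at environment-times in $[-T',-T]$, the only relevant objects are the $R_\loc$-tube of $X$ over walk-times $[T,T']$ and the determining triangles based at those walk-times (likewise for $X'$). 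It therefore suffices to bound the distance from $X_{T'}$ of each such cross-section.

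For the tube, a point $(x,-j)\in\tube_{T'}$ with $j\in[T,T']$ satisfies $\norm{x-X_{T'}}\le R_\loc+\norm{X_j-X_{T'}}$. By Lemma~\ref{lem:apriori speed conditioned on F} (estimate \eqref{eq:aprioriAlongStoppingTimesX2}, applied with the stopping time $T$; the randomness of $T'$ is handled by summing over its possible values, whose distance to $k$ has exponential tails by Lemma~\ref{lemma:expTailsSigmaJoint}) together with the deterministic bound $\norm{X_j-X_{T'}}\le R_\kappa(T'-j)$ useful for small $T'-j$, for any $\varepsilon'>0$ and $b_0$ one has, with conditional probability at least $1-Ce^{-cb_0}$, $\norm{X_j-X_{T'}}\le(1+\varepsilon')s_{\max}(T'-j)+b_0$ for all $j\in[T,T']$ simultaneously; on this event the tube cross-section at level $-j$ lies within $R_\loc+b_0+(1+\varepsilon')s_{\max}(T'-j)$ of $X_{T'}$.

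For the determining triangles, one based at $(y,-i)$ with $i\in[T,T']$, $\norm{y-X_i}\le R_\loc$, contributes at level $-j$ (for $i\le j\le i+\ell(y,-i)+1$) a set of spatial radius at most $j-i$ about $y$, hence at distance at most $(j-i)+R_\loc+b_0+(1+\varepsilon')s_{\max}(T'-i)$ from $X_{T'}$ on the good-speed event. Imposing $\le b+s(T'-j)$ and substituting $s(T'-j)=s(T'-i)-s(j-i)$, the triangle is harmless as soon as $(1+s)(j-i)\le b-R_\loc-b_0+\delta(T'-i)$ with $\delta:=s-(1+\varepsilon')s_{\max}$; since $j-i\le\ell(y,-i)+1$ it suffices that $\ell(y,-i)+1\le\big(b-R_\loc-b_0+\delta(T'-i)\big)/(1+s)$. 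Choose $s\in(s_{\max},1)$ and $\varepsilon'$ so small that $\delta>0$: then the admissible depth is bounded below by $(b-R_\loc-b_0)/(1+s)$ and grows linearly in $T'-i$. By Corollary~\ref{corollary:dryPoints} and the estimates behind it (exponential decay of finite open clusters in supercritical oriented percolation, transported below environment-time $-T$ via the stochastic domination \eqref{eq:stochDomSigmaJoint}), the conditional probability that some determining triangle based at $(y,-i)$ with $i\in[T,T']$ exceeds its admissible depth is bounded by $\sum_{i\ge T}|B_{R_\loc}|\,\varepsilon(p)^{c_1(b-R_\loc-b_0)+c_2\delta(T'-i)}\le C\varepsilon(p)^{c_1(b-R_\loc-b_0)}$; the linear growth of the tolerance in $T'-i$ makes the sum over $i$ converge uniformly, and the bound is small once $b$ is large.

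Combining: fix $s\in(s_{\max},1)$ with $0<s-s_{\max}\ll1$, take $\varepsilon'$ small, then $b_0$ and $b$ large; on the intersection of the two good events — of conditional probability at least $1-\varepsilon/2$ once $p$ is close enough to $1$ and $b$ large — every cross-section of $\tube_{T'}\cup\dtube_{T'}$ at level $-j$, $j\in[T,T']$, lies within $b+s(T'-j)$ of $X_{T'}$, i.e.\ $T'$ is a $(b,s)$-cone time point for the decorated path of $X$ beyond $T$. Running the identical argument for $X'$ with \eqref{eq:aprioriAlongStoppingTimesX'2} and the $X'$-part of Corollary~\ref{corollary:dryPoints} gives the analogous event for $X'$, and a union bound yields \eqref{eq:35}; all estimates are uniform in $x_0,x_0'$ since their inputs are. \emph{The main obstacle} is the determining triangles: because their cross-sections grow linearly with their depth, they cannot be dismissed by a crude union bound alone, but one must exploit that a triangle based far (in walk-time) from $T'$ has proportionally more room before it can break the cone — precisely what renders summable the union bound over the (random number of) base points. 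The displacement of the walks themselves is routine given the a priori speed bounds, and the passage from one walk to two costs only a factor of two in the union bound.
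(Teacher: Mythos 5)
Your proposal is correct and, in substance, the same as the paper's: the only content the paper adds at this point is to invoke the single-walk cone-point lemma (Lemma~2.21 of \cite{BirknerCernyDepperschmidt2016}) for $X$ and for $X'$ separately with error $\varepsilon/2$ each and conclude by a union bound, which is exactly your final step. The bulk of your write-up re-derives that cited single-walk estimate along the standard lines (a priori speed bounds, dry-path/determining-triangle tails via Corollary~\ref{corollary:dryPoints} and the stochastic domination \eqref{eq:stochDomSigmaJoint}, with the randomness of $T'$ controlled by the tail bound of Lemma~\ref{lemma:expTailsTau} rather than Lemma~\ref{lemma:expTailsSigmaJoint}), which is sound but not needed given the citation.
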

\begin{proof}
  By Lemma~2.21 from \cite{BirknerCernyDepperschmidt2016} one can tune
  the parameters such that a.s.\ on $\{ T < k \}$ we have
  \begin{align*}
    \Pr(T' \text{ is a } (b,s)\text{-cone time point for the decorated
      path of } X \text{ beyond } T \, \vert \, \mathcal{F}^{\simi}_T)
    \geq 1-\varepsilon/2,
  \end{align*}
  and the analogous inequality with $X'$ instead of $X$ holds as well.
  From these estimates \eqref{eq:35} follows easily. The assertion
  that $0 < s - s_{\mathrm{max}} \ll 1$ can be chosen small is also a
  direct consequence of Lemma~2.21 from
  \cite{BirknerCernyDepperschmidt2016}.
\end{proof}

\begin{remark}
  Note that, since both inner cones are subsets of the double inner
  cone, $T'$ from Lemma~\ref{lem:conepoint} is a $(b,s)$-``double cone
  time point'' for the decorated path of $(X,X')$ beyond $T$ if it is
  a $(b,s)$-cone time point for $X$ and $X'$ beyond $T$ which is in
  accordance to our setting up the simultaneous regeneration times.
\end{remark}

As a part of the construction of the regeneration times we define the
set of `good' $\omega$-configurations in the double cone shell. Let
\begin{align}
  \label{eq:36}
  G_{x,x'}(b_\inn,b_\out,s_\inn,s_\out,h) \subset
  \{0,1\}^{\dcs(x,x',b_\inn,b_\out,s_\inn,s_\out,h)}
\end{align}
be the set of all $\omega$-configurations with the property
\begin{align}
  \label{eq:GoodOmegaSet}
  \begin{split}
    \forall \eta_0,\eta'_0 & \in \{0,1\}^{\bZ^d} \text{ with }
    \eta_0\vert_{B_{b_\out}(x) \cup B_{b_\out}(x')}=
    \eta'_0\vert_{B_{b_\out}(x) \cup B_{b_\out}(x')} \equiv 1 \quad
    \text{and} \\
    \omega & \in \{ 0,1 \}^{\bZ^d \times\{1,\dots,h \}} \text{ with }
    \omega\vert_{\dcs(x,x',b_\inn,b_\out,s_\inn,s_\out,h)} \in
    G_{x,x'}(b_\inn,b_\out,s_\inn,s_
    \out,h) :\\
    \eta_n(y) & = \eta'_n(y) \text{ for all } (y,n) \in
    \cone(x;b_\inn,s_\inn,h) \cup \cone(x';b_\inn,s_\inn,h),
  \end{split}
\end{align}
where $\eta$ and $\eta'$ are both constructed from \eqref{eq:DCP-dyn}
with the same $\omega$ but using possibly different initial
conditions. In words this means when there are $1$'s at the bottom of
the outer cones, i.e.\ $\cone(x;b_\out,s_\out,h)$ and
$\cone(x';b_\out,s_\out,h)$, a configuration from
$G_{x,x'}(b_\inn,b_\out,s_\inn,s_\out,h)$ guarantees successful
coupling inside the inner double cone irrespective of what happens
outside the outer cones.

\begin{lemma}
  For \label{lemma:GoodOmegaSet} parameters
  $p,b_\inn,b_\out,s_\inn,s_\out$ as in Lemma~\ref{th:lemma2.13Analog},
  \begin{align}
    \Pr(\omega\vert_{\dcs(x,x',b_\inn,b_\out,s_\inn,s_\out,h)} \in
    G_{x,x'}(b_\inn,b_\out,s_\inn,s_\out,h)) \geq 1 - \varepsilon,
  \end{align}
  uniformly in $h \in \N$ and $x,x' \in \bZ^d$.
\end{lemma}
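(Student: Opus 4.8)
The plan is to derive the bound from Lemma~\ref{th:lemma2.13Analog} via the deterministic inclusion
\[
G_1\cap G_2\ \subseteq\ \bigl\{\omega\vert_{\dcs(x,x',b_\inn,b_\out,s_\inn,s_\out,h)}\in G_{x,x'}(b_\inn,b_\out,s_\inn,s_\out,h)\bigr\},
\]
where $G_1,G_2$ are the events from Lemma~\ref{th:lemma2.13Analog} with the cones based at $x,x'$ (legitimate by translation invariance, cf.\ Remark~\ref{rem:coneshifting}). Granting this, the assertion is immediate: $\Pr(G_1\cap G_2)\ge1-\varepsilon$ once $p$ is close to $1$ and $b_\inn<b_\out$ large, uniformly in $x,x'$ by translation invariance and in $h$ because restricting to height $h$ only shrinks both the family of crossing paths and the process $\eta^\dcs$, so that $G_1\cap G_2$ for the infinite double cone already forces the corresponding statement at finite height $h$.

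So the work is the deterministic implication. Suppose $\omega\in G_1\cap G_2$; I must show that for \emph{any} $\omega'$ agreeing with $\omega$ on $\dcs(x,x';\infty)$ and any initial conditions $\eta_0,\eta_0'$ with $\eta_0\vert_{B_{b_\out}(x)\cup B_{b_\out}(x')}=\eta_0'\vert_{B_{b_\out}(x)\cup B_{b_\out}(x')}\equiv1$, the contact processes built from $\omega'$ via \eqref{eq:DCP-dyn} satisfy $\eta_n(y)=\eta_n'(y)$ for all $(y,n)$ in the two inner cones. Since any directed nearest-neighbour path whose interior lies in $\dcs(x,x';\infty)$ is open for $\omega'$ exactly when it is open for $\omega$, and since $\eta^\dcs$ depends on $\omega$ only through $\omega\vert_\dcs$, it suffices to argue with $\omega$ itself. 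Fix $(y,n)$ in, say, $\cone(x;b_\inn,s_\inn,h)$ with $\eta_n(y)=1$; by symmetry it is enough to produce an open path (w.r.t.\ $\omega$) from the support of $\eta_0'$ to $(y,n)$, which forces $\eta_n'(y)=1$. There is an open path $\gamma$ from some $(z,0)$ with $\eta_0(z)=1$ to $(y,n)$. If $z\in B_{b_\out}(x)\cup B_{b_\out}(x')$ then $\eta_0'(z)=1$ as well and $\gamma$ itself works. Otherwise $z$ lies outside both outer cones at time $0$, and the key step is to take the segment of $\gamma$ between its last visit to the exterior of both outer cones and its first subsequent entrance into one of the inner cones: this is an open path crossing $\dcs(x,x';\infty)$ from outside to inside in the sense of conditions (i)--(iii) above, its interior points lying in the double cone shell because they are inside some outer cone yet outside both inner cones (here one uses that each inner cone is contained in its outer cone).

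By $G_2$ this crossing segment meets $\eta^\dcs$ at some $(x_i,i)$ with $\eta^\dcs_i(x_i)=1$ and $0<i<n$ (it is $G_1$ that keeps $\eta^\dcs$ alive, so that $G_2$ is not vacuous). Unwinding $\eta^\dcs_i(x_i)=1$ gives an open path contained in $\dcs(x,x';\infty)$ — hence reading off only coordinates of $\omega\vert_\dcs$ — from the time-$0$ support of $\eta^\dcs$, which is contained in $B_{b_\out}(x)\cup B_{b_\out}(x')$, to $(x_i,i)$; let $w$ be its starting point, so $\eta_0'(w)=1$. Concatenating this $\dcs$-path from $(w,0)$ to $(x_i,i)$ with the portion of $\gamma$ from $(x_i,i)$ to $(y,n)$ yields an open path (w.r.t.\ $\omega$) from $(w,0)$ to $(y,n)$, whence $\eta_n'(y)=1$. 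This establishes the implication and hence the lemma. The only genuine obstacle is the geometric bookkeeping of the previous paragraph — extracting an honest crossing segment from $\gamma$ and checking that its interior stays in the shell — which is the two-cone analogue of the corresponding step in \cite{BirknerCernyDepperschmidt2016} and causes no real trouble; all the quantitative content is already packaged in Lemma~\ref{th:lemma2.13Analog}.
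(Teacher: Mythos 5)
Your proposal is correct and takes essentially the same approach as the paper: the paper's proof consists precisely of the observation that $\omega \in G_1\cap G_2$ forces (after a suitable space-time shift) $\omega\vert_{\dcs(x,x',b_\inn,b_\out,s_\inn,s_\out,h)} \in G_{x,x'}(b_\inn,b_\out,s_\inn,s_\out,h)$, so the bound is inherited directly from Lemma~\ref{th:lemma2.13Analog}. You merely spell out in detail the deterministic crossing-and-concatenation argument that the paper leaves implicit (cf.\ Remark~\ref{rem:coneshifting}), and that elaboration is sound up to the paper's own boundary conventions for the inner cones.
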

\begin{proof}
  This lemma is a direct consequence of Lemma~\ref{th:lemma2.13Analog}
  because for $\omega \in G_1\cap G_2$ we have
  (after a suitable space-time shift)
  $\omega\vert_{\dcs(x,x',b_\inn,b_\out,s_\inn,s_\out,h)} \in
  G_{x,x'}(b_\inn,b_\out,s_\inn,s_\out,h)$.
\end{proof}
We denote a space-time shift of subsets of $\bZ^d \times \bZ$ by
$\Theta^{(x,n)}$, i.e.\
\begin{align}
  \label{eq:space-time-shift}
  \Theta^{(x,n)}(A) \coloneqq \{ (y+x,m+n) : (y,m) \in  A \} \quad
  \text{ for } A \subset \bZ^d \times \bZ.
\end{align}
From \cite{BirknerCernyDepperschmidt2016}, see there the discussion
around (2.62), we know that there exists a deterministic sequence
$t_\ell \nearrow \infty$ with the property that for $\ell \in \N$ and
$\norm{x_\base - y} \leq s_{\max}t_{\ell+1}$
\begin{align}
  \label{eq:space-time-shift.used}
  \Theta^{(0, -t_\ell)}\bigl( \cone (x_\base,t_\ell s_{\max} +
  b_\out, s_\out,t_\ell)\bigr) \subset \Theta^{(y,-t_{\ell+1})}
  \bigl(\cone(x_\base, b_\inn, s_\inn, t_{\ell+1}) \bigr).
\end{align}
We describe a possible choice in \eqref{eq:SequenceConditiontl}
below. The same sequence can be used for the double cone, since the
larger cones will overlap with each other before they can hit the
smaller cones. Thus, for $\norm{x_\base - y} \leq s_{\max}t_{\ell+1}$
and $\norm{x_\base' - y'} \leq s_{\max}t_{\ell+1}$ we have
\begin{align}
  \begin{split}
    & \Theta^{(0, -t_\ell)}\bigl( \cone (x_\base,t_\ell s_{\max} + b_\out,
      s_\out,t_\ell)\bigr) \cup \Theta^{(0, -t_\ell)}\bigl( \cone
      (x'_\base,t_\ell s_{\max} + b_\out, s_\out,t_\ell)\bigr)\\
    & \subset \Theta^{(y,-t_{\ell+1})}\bigl( \cone(x_\base, b_\inn,
      s_\inn, t_{\ell+1}) \bigr) \cup \Theta^{(y',-t_{\ell+1})}\bigl(
      \cone(x'_\base, b_\inn, s_\inn, t_{\ell+1}) \bigr)
  \end{split}
\end{align}
and the sequence satisfies
\begin{align}
  \label{eq:SequenceCondition}
  t_{\ell+1}s_\inn + b_\inn - t_{\ell+1}s_{\max} > t_\ell s_{\max}+b_\out + t_\ell
  s_\out \quad \text{for all } \ell=1,2,\dots.
\end{align}
Note that necessarily the sequence $(t_\ell)$ must grow exponentially
in $\ell$; cf.\ the discussion around \eqref{eq:RhoSoftCondition}.
For a concrete choice, put $t_1 =1$ and define
\begin{align}
  \label{eq:SequenceConditiontl}
  t_{\ell+1} \coloneqq \Big\lceil \frac{t_\ell s_{\max}+b_\out + t_\ell
  s_\out - b_\inn}{s_\inn - s_{\max}} \Big\rceil +1 \qquad \text{
  for } \ell=1,2,\dots.
\end{align}
This sequence $(t_\ell)_{\ell=1,2,\dots}$ satisfies
Condition \eqref{eq:SequenceCondition}.

\begin{constr}[Regeneration times]
  The \label{constr:reg-time} regeneration times will be constructed
  analogously to the construction in
  \cite{BirknerCernyDepperschmidt2016}. Let $X_0=x$ and $X'_0=x'$ be
  the starting positions of the two random walks. For the sequence
  $(\sigma_n^\simi)_{n=1,2,\dots}$ from \eqref{eq:sigmaJoint} and a
  sequence $(t_\ell)_{\ell=1,2,\dots}$ satisfying
  \eqref{eq:space-time-shift.used} we define the sequence $(\wt
  \sigma_n^\simi)_{n=1,2,\dots}$ by
  \begin{align}
    \label{eq:39}
      \wt{\sigma}^\simi_\ell \coloneqq \inf\{\sigma^\simi_i \in
  \{\sigma^\simi\} : \sigma^\simi_i \geq t_\ell \},
  \end{align}
  i.e.\ $\wt{\sigma}^\simi_\ell$ is the first element in the
  sequence $\sigma^\simi$ after time $t_\ell$. The sequence
  $(\wt \sigma_n^\simi)_{n=1,2,\dots}$ is the sequence along which
  we look for regeneration times.

  For the first regeneration time we check the following criteria:
  \begin{enumerate}[(i)]
  \item Go to $\wt\sigma^\simi_1$ and check if
    $\wt \sigma^\simi_1<t_2$, $\eta$ in the $b_\out$-neighbourhoods
    of $(X_{\wt\sigma^\simi_1},-\wt\sigma^\simi_1)$ and of
    $(X'_{\wt\sigma^\simi_1},-\wt\sigma^\simi_1)$ equals
    $\equiv 1$, the paths (together with their tubes and decorations)
    stayed inside the interior of the corresponding double cone based
    at the current space-time positions of the two random walks and
    $\omega$ in the respective conical shells is in the good set
    defined in \eqref{eq:36} and \eqref{eq:GoodOmegaSet}. If these
    events occur, we have found the first regeneration time
    $T_1 =\wt\sigma_1^\simi$.
  \item If the above attempt in (i) fails, we must try again. We
    successively check at times $t_2,t_3,$ etc.: If not previously
    successful up to time $t_{\ell -1}$, at the $\ell$-th step we
    check if $\wt{\sigma}^\simi_\ell <t_{\ell+1}$, if
    $\wt{\sigma}^\simi_\ell$ is a cone point for the decorated path
    beyond $t_{\ell-1}$ with
    \begin{align*}
      \max\{\norm{X_{\wt{\sigma}^\simi_\ell}-x},\norm{X'_{\wt{\sigma}^\simi_\ell}-x'}
      \} \leq s_{\mathrm{max}} \wt{\sigma}^\simi_\ell,
    \end{align*}
    if $\eta \equiv 1$ in the $b_\out$-neighbourhoods of
    $(X_{\wt{\sigma}^\simi_\ell},-\wt{\sigma}^\simi_\ell)$ and of
    $(X'_{\wt{\sigma}^\simi_\ell},-\wt{\sigma}^\simi_\ell)$, if
    $\omega$'s in the corresponding conical shells are in the good set
    defined in \eqref{eq:36} and \eqref{eq:GoodOmegaSet} and if the
    paths (with tubes and decorations) up to time $t_{\ell-1}$ are
    each contained in a box of diameter $s_\out t_{\ell-1}+b_\out$ and
    height $t_{\ell-1}$. If this all holds, we have found the first
    regeneration time $T_1$.

    Otherwise, we repeat Step~(ii).
  \end{enumerate}
\end{constr}

\begin{proposition}
  \label{prop:JointRegTimesBound}
  There are positive constants $C$ and $\beta$ so that
  \begin{align}
    \label{eq:37}
    \sup_{x_0,x_0'} \Pr_{x_0,x_0'}^{\joint} (T_1 >t) \le
    Ct^{-\beta}, \quad t \ge 1.
   \end{align}
  Furthermore, $\beta$ can be chosen arbitrarily large if $p$ is
  suitably close to $1$ and $\varepsilon_\rf$ is sufficiently small
  (see \eqref{eq:ass:appr-sym} in Assumption~\ref{ass:appr-sym}).
\end{proposition}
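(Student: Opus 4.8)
The plan is to transfer the proof of the single-walk regeneration-time tail bound in \cite{BirknerCernyDepperschmidt2016} essentially verbatim, with the double cone and the double cone shell playing the role of the single cone; the two-walk analogues of all the ingredient estimates have been assembled above: Lemma~\ref{th:lemma2.13Analog} (isolation of the double-cone interior), Corollary~\ref{corollary:dryPoints} (sparsity of the dry points seen from the two walks), Lemma~\ref{lemma:expTailsTau} (exponential tails for the length of the $\sigma^\simi$-block containing a given deterministic time), Lemma~\ref{lem:apriori speed conditioned on F} (a priori speed bounds with exponential corrections), Lemma~\ref{lem:conepoint} (cone time points are likely for the decorated paths of both walks), and Lemma~\ref{lemma:GoodOmegaSet} (high probability of the good configuration $G_{x,x'}$ for $\omega$ on the double cone shell). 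As a first step I fix the slopes $0<s_\inn<s_\out<1$ and then, using Lemmas~\ref{lem:apriori speed conditioned on F} and \ref{lem:conepoint}, the parameters $s_{\mathrm{max}}<s<s_\inn$ (all available once $1-p$ and $\varepsilon_\rf$ are small). This in turn fixes the deterministic scale sequence $(t_\ell)$ of \eqref{eq:SequenceConditiontl}, which by its very definition grows geometrically, $t_{\ell+1}\le\rho\,t_\ell+\rho$ and hence $t_\ell\le C_0\rho^\ell$, for a constant $\rho=\rho(s_\inn,s_\out,s_{\mathrm{max}})>1$ that stays bounded as $1-p,\varepsilon_\rf\to0$ (indeed $\rho\to s_\out/s_\inn$ when $s_{\mathrm{max}}\to0$). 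Let $N$ denote the index of the first successful attempt in Construction~\ref{constr:reg-time}; then $T_1=\wt\sigma^\simi_N<t_{N+1}$, so it suffices to bound $\Pr^\joint_{x_0,x_0'}(N\ge n)$ geometrically in $n$, uniformly in $x_0,x_0'$.

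The core of the argument is the one-step estimate: conditionally on $\mathcal{F}^\simi_{\wt\sigma^\simi_{\ell-1}}$ and on the event that the first $\ell-1$ attempts have failed, the $\ell$-th attempt in Construction~\ref{constr:reg-time} succeeds with probability at least $1-K\varepsilon$, where $K$ is the fixed finite number of conditions being checked and $\varepsilon$ can be made arbitrarily small. This follows from a union bound over the ways the attempt can fail: (i) $\wt\sigma^\simi_\ell\ge t_{\ell+1}$, which by Lemma~\ref{lemma:expTailsTau} has conditional probability at most $Ce^{-c(t_{\ell+1}-t_\ell)}$; (ii) $\wt\sigma^\simi_\ell$ failing to be a $(b,s)$-cone time point for the decorated paths of both $X$ and $X'$ beyond $t_{\ell-1}$, of conditional probability at most $\varepsilon$ by Lemma~\ref{lem:conepoint}; (iii) one of the walks violating a speed bound with constant $s_{\mathrm{max}}$, or leaving the required box of diameter $s_\out t_{\ell-1}+b_\out$ up to time $t_{\ell-1}$, of conditional probability tending to $0$ as $\ell\to\infty$ by Lemma~\ref{lem:apriori speed conditioned on F} (using also the bounded step size of the walks for the box event); (iv) $\eta\not\equiv1$ on the $b_\out$-neighbourhood of $(X_{\wt\sigma^\simi_\ell},-\wt\sigma^\simi_\ell)$ or of $(X'_{\wt\sigma^\simi_\ell},-\wt\sigma^\simi_\ell)$, of conditional probability at most $\varepsilon(p)^{c\,b_\out^{d}}$ by Corollary~\ref{corollary:dryPoints}; and (v) $\omega$ on the double cone shell based at the current positions of the walks not lying in the good set of \eqref{eq:GoodOmegaSet}, of conditional probability at most $\varepsilon$ by Lemma~\ref{lemma:GoodOmegaSet}. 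Terms (ii), (iv), (v) are made $\le\varepsilon$ by fixing the slopes, then taking $b_\inn<b_\out$ large, $p$ close to $1$ and $\varepsilon_\rf$ small; terms (i) and (iii) tend to $0$ as $\ell\to\infty$, hence are $\le\varepsilon$ for all $\ell$ beyond a fixed threshold $L$ not depending on $t$. Since all of these bounds are uniform in the current positions of the two walks (the ingredient lemmas being uniform in the pair of starting points) and in the conditioning $\sigma$-algebra, iterating the one-step estimate over $\ell$ yields $\Pr^\joint_{x_0,x_0'}(N\ge n)\le C_1(K\varepsilon)^{n}$ uniformly in $x_0,x_0'$, where $C_1=(K\varepsilon)^{-L}$ absorbs the finitely many small-$\ell$ attempts at which (i) or (iii) is not yet small.

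Finally, from $T_1<t_{N+1}\le C_0\rho^{N+1}$ the event $\{T_1>t\}$ forces $N\ge\log_\rho(t/C_0)-1$, so that for every $t\ge1$
\[
  \Pr^\joint_{x_0,x_0'}(T_1>t)\ \le\ \Pr^\joint_{x_0,x_0'}\!\bigl(N\ge\log_\rho(t/C_0)-1\bigr)\ \le\ C_1(K\varepsilon)^{\log_\rho(t/C_0)-1}\ =\ C\,t^{-\beta}
\]
with $\beta=\log\bigl(1/(K\varepsilon)\bigr)/\log\rho>0$ and $C$ depending only on the model parameters, uniformly in $x_0,x_0'$. Since $K$ is a fixed integer, $\rho$ stays bounded while $p\to1$ and $\varepsilon_\rf\to0$, and $\varepsilon\to0$ in that same limit, the exponent $\beta$ can be taken arbitrarily large, which is the second assertion.

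The step I expect to be the main obstacle is the bookkeeping behind the one-step estimate: one must match the random times $\wt\sigma^\simi_\ell$ to the filtration $(\mathcal{F}^\simi_n)$, decide precisely which of the conditions in Construction~\ref{constr:reg-time} are already $\mathcal{F}^\simi_{\wt\sigma^\simi_{\ell-1}}$-measurable and which probe genuinely fresh parts of the environment, and reduce the cumulative box-containment condition to the per-step speed bounds via a maximal inequality. This is exactly the calculation carried out for a single walk in \cite{BirknerCernyDepperschmidt2016}; it goes through with no new idea once the single cone is replaced by the double cone, because the one genuinely two-walk feature --- the two cones beginning to overlap when the walks come close --- has already been incorporated into Lemmas~\ref{th:lemma2.13Analog} and \ref{lemma:GoodOmegaSet}.
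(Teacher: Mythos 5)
Your proposal is correct in outline and follows essentially the same route as the paper's proof: a uniform lower bound on the success probability of each attempt of Construction~\ref{constr:reg-time} (assembled from Lemmas~\ref{lemma:expTailsTau}, \ref{lem:apriori speed conditioned on F}, \ref{lem:conepoint}, \ref{lemma:GoodOmegaSet} and the stochastic domination \eqref{eq:stochDomSigmaJoint}) gives geometric decay in the number of attempts, and the geometric growth of $(t_\ell)$ converts this into the polynomial tail with $\beta$ made large by tuning; your shortcut that a success at attempt $N$ forces $T_1<t_{N+1}$, so the paper's separate bound on $\sum_{\ell<m}\Pr^{\joint}_{x_0,x_0'}(\wt\sigma^\simi_\ell>t)$ is not needed, is valid. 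Two small repairs: first, your one-step estimate cannot hold conditionally on an arbitrary past, since the box/path-containment and $s_{\max}$-speed requirements up to time $t_{\ell-1}$ are already determined by $\mathcal{F}^\simi_{\wt\sigma^\simi_{\ell-1}}$ (on bad atoms the conditional success probability is $0$), so, as in the paper, these must be controlled globally (containment fails for some $\ell\ge\ell'$ with probability at most $Ce^{-ct_{\ell'}}$) and the iteration run only over the genuinely fresh conditions ($\omega$ in the new, disjoint cone shell, the cone-point event, $\wt\sigma^\simi_\ell<t_{\ell+1}$, and $\eta\equiv1$ via \eqref{eq:stochDomSigmaJoint}) — exactly the bookkeeping you flag. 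Second, Corollary~\ref{corollary:dryPoints} bounds the probability that $\eta$ vanishes on \emph{all} of a prescribed set, so for your item (iv) the failure probability is of order $b_\out^{d}\varepsilon(p)$ by a union bound (or is handled via \eqref{eq:stochDomSigmaJoint} as in the paper), not $\varepsilon(p)^{c\,b_\out^{d}}$; it is still small as $p\to1$, so the conclusion is unaffected.
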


\begin{remark}
  In contrast to the setting in
  \cite{BirknerCernyDepperschmidtGantert2013} we do not have a literal
  connection (in the sense of being a subsequence) between the
  simultaneous regeneration times defined here and the regeneration
  times for a single random walk defined in
  \cite{BirknerCernyDepperschmidt2016}. For that reason we
  work here directly with the simultaneous regeneration times.
\end{remark}

\begin{proof}[Proof of Proposition~\ref{prop:JointRegTimesBound}]
  Fix $x_0,x'_0$ and write $\Pr$ for $\Pr^{\joint}_{x_0,x'_0}$ for the
  rest of the proof to shorten the notation. For $m \ge 2$, to be
  chosen appropriately later, we have
  \begin{align}
    \label{eq:40}
      \Pr(T_1 >t) = \sum_{\ell=1}^{\infty} \Pr(T_1 =
      \wt{\sigma}^\simi_\ell,
      \wt{\sigma}^\simi_\ell > t)
       \leq \sum_{\ell=1}^{m-1} \Pr(\wt{\sigma}^\simi_\ell > t) +
      \sum_{\ell=m}^{\infty} \Pr(T_1 = \wt{\sigma}^\simi_\ell).
  \end{align}

  Recall the definition of the sequence $(t_\ell)$ from
  \eqref{eq:SequenceConditiontl}.  For any $\rho$ with
  \begin{align}
    \label{eq:RhoSoftCondition}
    \rho > \frac{s_\out + s_{\max}}{s_\inn - s_{\max}}
  \end{align}
  we have $t_\ell \le \lceil \rho^\ell \rceil$ for all $\ell \ge
  \ell^*$, where $\ell^*=\ell^*(\rho) <\infty$ is the smallest index
  such that $t_\ell \ge \lceil\rho^\ell\rceil$ (see also
  Remark~\ref{rem:RhoHardCondition} below). Since we need to check the
  statement of the lemma only for large $t$, we may assume that $t \ge
  \lceil\rho^{\ell^*} \rceil^2$ (smaller $t$'s are covered by
  enlarging the constant $C$ in \eqref{eq:37} appropriately).

  For $t \ge \lceil\rho^{\ell^*} \rceil^2$ we have, choosing $m=
  \lceil \log(t)/(2\log(\rho))\rceil$ and using $\wt\sigma_0^\simi=0$
  \begin{align}
    \label{eq:UpperBoundFirstPartialSum}
    \begin{split}
      \sum_{i=1}^{m-1} \Pr(\tilde{\sigma}^\simi_i > t)
      & \leq \sum_{i=1}^{m-1} \Pr(\tilde{\sigma}^\simi_i -
      \tilde{\sigma}^\simi_{i-1}  > t - t_i)\\
      & \leq C(m-1)e^{-c(t-t_{m-1})} \leq C(m-1)e^{-c(t-\sqrt{t})}.
    \end{split}
  \end{align}
  (since $t_{m-1} \le \lceil \rho^{m-1} \rceil \le \sqrt{t}$).
  Obviously we have $m < \sqrt{t}$. Thus, the right hand side is
  bounded by $Ct^{-\beta}$ for any $\beta>0$.

  \medskip

  For the second sum on the right hand side of \eqref{eq:40} we first
  show that there is a uniform, positive lower bound for the
  probability of a successful regeneration (i.e.\ the conditions in
  Construction~\ref{constr:reg-time}~(ii) do hold) at the $\ell$-th
  attempt for all $\ell \geq 2$.

  By Lemma~\ref{lemma:expTailsTau} we know that for large $\ell$ the
  probability of $\wt{\sigma}_{\ell} > t_{\ell+1}$ is very small. The
  condition for the environment $\omega$, restricted to the respective
  cone shell, to be a good configuration is independent for different
  $\ell$'s since we check disjoint space-time regions of $\omega$ for
  each $\ell$.

  For the path containment we can argue as follows: by Lemma~2.16 in
  \cite{BirknerCernyDepperschmidt2016} we have
  \begin{align*}
    & \Pr\left(\exists \,n \leq t_\ell: \norm{X_n - x_0}>\frac{1}{2}s_{\max}t_\ell +
      n s_\out \: \text{ or } \: \exists \,n \leq t_\ell: \norm{X'_n -
      x'_0}>\frac{1}{2}s_{\max}t_\ell +n s_\out\right)\\
    & \quad \leq \Pr\left(\exists \,n \leq t_\ell: \norm{X_n -
      x_0}>\frac{1}{2}s_{\max}t_\ell + n s_\out\right)
      + \Pr\left(\exists \,n \leq
      t_\ell: \norm{X'_n - x'_0}>\frac{1}{2}s_{\max}t_\ell + n s_\out\right)\\
    & \quad \leq \sum_{n = \lceil \frac{t_\ell s_{\max}}{2R_\kappa} \rceil}^{t_\ell}
      \Pr(\norm{X_n - x_0}>s_{\max}n) + \sum_{n = \lceil \frac{t_\ell
      s_{\max}}{2R_\kappa} \rceil}^{t_\ell}  \Pr(\norm{X'_n -
      x'_0}>s_{\max}n)\\
    & \quad \leq Ce^{-ct_\ell}.
  \end{align*}
  Since $t_\ell$ grows exponentially in $\ell$, the right hand side is
  summable in $\ell$. Thus, from some time $\ell_0$ on, we have
  $\sup_{n\leq t_\ell}\norm{X_n - x_0} \leq s_{\max}t_\ell$ and
  $\sup_{n\leq t_\ell}\norm{X'_n - x'_0} \leq s_{\max}t_\ell$ for all
  $\ell \geq \ell_0$ a.s. In fact, we see from the argument above
  that
  \begin{align*}
    \bP\left(\exists \, \ell \ge \ell_0 : \, \max_{n\leq t_\ell}\norm{X_n - x_0}
    > s_{\max}t_\ell \text{ or } \max_{n\leq t_\ell}\norm{X'_n - x'_0} >
    s_{\max}t_\ell\right) \le C \sum_{\ell= \ell_0}^\infty e^{-ct_\ell}
    \le C' e^{-ct_\ell}.
  \end{align*}

  Next we bound the size of the decorations by showing that we only
  have finitely many large increments
  $\sigma^\simi_{i+1}-\sigma^\simi_i$ in the sequence $\sigma^\simi$.
  For $I_\ell \coloneqq \inf \{i : \sigma^\simi_i \geq t_\ell \}$ we
  have by Lemma~\ref{lemma:expTailsSigmaJoint}
  \begin{align*}
    \Pr(\exists \, i \leq I_\ell: \sigma^\simi_i - \sigma^\simi_{i-1} > k)
    & \leq \Pr(\exists \, i \leq t_\ell :\sigma^\simi_i - \sigma^\simi_{i-1} > k) \\
    & \leq \sum_{i=1}^{t_\ell} \Pr(\sigma^\simi_i - \sigma^\simi_{i-1} > k)
      \leq Ct_\ell e^{-ck}.
  \end{align*}
  Now if $\norm{X_n-x}\leq t_\ell s_{\max}$ for all $n\leq t_\ell$ and
  $\sigma^\simi_i - \sigma^\simi_{i-1} \leq t_\ell(s_\out-s_{\max})/2$
  for all $i\leq i_\ell$, then the path including the decorations is
  contained in the box $(x,0)+[-b_\out-t_\ell s_\out,b_\out+t_\ell
  s_\out]^d\times[0,t_\ell]$. Note that $ C t_\ell e^{-ck}$ is
  summable for the choice $k=t_\ell(s_\out-s_{\max})/2$. Hence there
  are only have finitely many times where one of the random walks
  moves ``too fast'' or the decorations are ``too large''. Combining
  the above it follows that a.s.\ there exists $\ell'$ so that the
  path containment property holds for all $\ell\geq \ell'$ and in fact
  \begin{displaymath}
    \bP(\text{the path containment
      property holds for all $\ell\geq \ell'$}) \ge 1 - C \exp(-c t_{\ell'}).
  \end{displaymath}
  for some $c, C \in (0,\infty)$.

  Assertion \eqref{eq:stochDomSigmaJoint} from
  Lemma~\ref{lemma:expTailsSigmaJoint} guarantees the existence of
  open paths in the future direction of the random walks (with
  uniformly positive probability) and thus yields a uniform lower
  bound on the event that $\eta=1$ in the $R_\loc$-neighbourhood of
  the random walks at $\wt{\sigma}^\simi_\ell$. Therefore there exists
  a uniform lower bound in $\ell$ on the probability of a successful
  regeneration at the $\ell$th attempt. Let $\delta_0$ be that uniform
  lower bound. Note that $\delta_0\to 1$ for $p\to 1$ since the
  probability for all three conditions in
  Construction~\ref{constr:reg-time}~(ii) to hold tends to $1$ for
  $p\to 1$.

  Plugging in the definition of $m$, the second partial sum on the
  right hand side of \eqref{eq:40} has the upper bound
  \begin{align}
    \label{eq:UpperBoundSecondPartialSum}
    \sum_{i=m}^{\infty} \Pr(T_1 = \wt{\sigma}^\simi_i) \leq
    \sum_{i=m}^\infty \delta_0(1-\delta_0)^i = (1-\delta_0)^{m+1} \le
    t^{\frac{\ln(1-\delta_0)}{2\ln(\rho)}+2}.
  \end{align}

  We see from Condition~\eqref{eq:RhoSoftCondition} that we can choose
  $\rho$ close to $1$ if $s_{\max}$ is close to $0$ and
  $s_\out-s_\inn$ is also close to $0$. The former requirement can be
  achieved for $p$ close to $1$, as observed in
  \cite[Lemma~2.16]{BirknerCernyDepperschmidt2016}, the latter
  requirement through Lemma~\ref{th:lemma2.13Analog} if $p$ is close
  enough to $1$. In this way we obtain
  $-\frac{\ln(1-\delta_0)}{2\ln(\rho)} > \beta+2$ and that concludes
  the proof. Note that in the proof we have a dependence between $t$
  and $\rho$, since we choose $t$ large enough such that
  $\rho^{\ell^*} \leq \sqrt{t}$, this results in the constant $C$
  being dependent on the choice of $\rho$, and thus $\beta$, as well.
  However this is not a problem since the condition $\rho^{\ell^*}
  \leq \sqrt{t}$ holds for smaller $t$ if we choose $\rho$ closer to
  $1$.
\end{proof}

\begin{remark}
  \label{rem:RhoHardCondition}
  One can be a bit more precise for which $\ell$ the relation $t_\ell
  \le \rho^\ell$ is true. This obviously depends on the choice of
  $\rho$ and yields that there exists $\ell^*=\ell^*(\rho)$ such that
  \begin{align}
    \label{eq:RhoHardCondition}
    \rho > \left( \frac{b_\out - b_\inn}{2s_{\max}+s_\out -s_\inn}
    \right)^{\frac{1}{\ell^*}} \frac{s_\out + s_{\max}}{s_\inn-s_{\max}},
  \end{align}
  and we have $t_\ell \leq \rho^\ell$ for all $\ell \geq \ell^*$.
\end{remark}

Write $\widehat{X}_1 = X_{T_1}, \widehat{X}'_1 = X'_{T_1}$. As a
direct consequence of Proposition~\ref{prop:JointRegTimesBound} we
obtain
\begin{corollary}
  \label{cor:tail bound random walks at sim regnerations}
  For $\beta$ from Proposition~\ref{prop:JointRegTimesBound} there
  exists a constant $C < \infty$ such that
  \begin{equation}
    \sup_{x_0,x_0'} \bP^\joint_{x_0,x_0'}\left(\norm{\widehat{X}_1 - x_0} + \norm{\hat
    X_1'-x_0'} >m \right) \le C m^{-\beta}
  \end{equation}
  and for any $0 \le \gamma < \beta$ we have
  \begin{equation}
    \label{eq:joint.gamma.moments}
    \sup_{x_0,x'_0} \bE^\joint_{x_0,x'_0}\left[ \norm{\widehat{X}_1 - x_0}^\gamma +
      \norm{\hat X_1'-x_0'}^\gamma + T_1^\gamma \right] < \infty.
  \end{equation}
 \end{corollary}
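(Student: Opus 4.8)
The plan is to deduce both parts of the corollary directly from the tail bound on $T_1$ established in Proposition~\ref{prop:JointRegTimesBound}, using only the trivial observation that each walk can travel at most a fixed multiple of $T_1$ before time $T_1$. Concretely, by the finite-range Assumption~\ref{ass:finite-range} every increment of $X$ and of $X'$ has sup-norm at most $R_\kappa$, so since $T_1$ is an a.s.\ finite integer time,
\begin{align*}
  \norm{\widehat{X}_1 - x_0} \le R_\kappa T_1 \quad\text{and}\quad \norm{\widehat{X}'_1 - x'_0} \le R_\kappa T_1 \qquad \bP^\joint_{x_0,x'_0}\text{-a.s.}
\end{align*}
(One could instead invoke the explicit bound $\norm{X_{T_1}-x_0}\le s_{\mathrm{max}}T_1$ built into Construction~\ref{constr:reg-time}(ii), but the crude bound above holds unconditionally and avoids any case distinction for the first regeneration attempt.) Hence $\norm{\widehat{X}_1 - x_0} + \norm{\widehat{X}'_1 - x'_0} \le 2R_\kappa T_1$, so that for $m \ge 2R_\kappa$,
\begin{align*}
  \bP^\joint_{x_0,x'_0}\!\bigl(\norm{\widehat{X}_1 - x_0} + \norm{\widehat{X}'_1 - x'_0} > m\bigr) \le \bP^\joint_{x_0,x'_0}\!\bigl(T_1 > m/(2R_\kappa)\bigr) \le C\,(2R_\kappa)^\beta\, m^{-\beta},
\end{align*}
while for $1 \le m < 2R_\kappa$ the probability is at most $1 \le (2R_\kappa)^\beta m^{-\beta}$; enlarging the constant gives the first assertion, uniformly in $x_0, x'_0$.

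For the moment bound I would use the layer-cake representation: if $Y \ge 0$ satisfies $\bP(Y > u) \le C' u^{-\beta}$ for all $u \ge 1$, then for any $0 \le \gamma < \beta$,
\begin{align*}
  \bE[Y^\gamma] = \gamma\!\int_0^\infty u^{\gamma-1}\,\bP(Y > u)\,du \le 1 + C'\gamma\!\int_1^\infty u^{\gamma-1-\beta}\,du = 1 + \frac{C'\gamma}{\beta-\gamma} < \infty,
\end{align*}
a bound depending only on $C'$, $\beta$ and $\gamma$. Applying this to $Y = \norm{\widehat{X}_1 - x_0}$ and $Y = \norm{\widehat{X}'_1 - x'_0}$ (each of which has a tail $\le C'' u^{-\beta}$ by the displays above, being bounded by $2R_\kappa T_1$) and to $Y = T_1$ (using Proposition~\ref{prop:JointRegTimesBound} directly), then summing the three resulting finite bounds, yields \eqref{eq:joint.gamma.moments}; all constants are uniform in $x_0, x'_0$ because the supremum over starting points already appears in Proposition~\ref{prop:JointRegTimesBound} and $R_\kappa$ is a fixed model parameter.

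There is essentially no substantive obstacle: the corollary is a soft consequence of Proposition~\ref{prop:JointRegTimesBound}. The only minor points requiring care are checking that the constants produced in each step genuinely do not depend on the pair $(x_0,x'_0)$ — which is immediate here — and, if one wishes to track it, absorbing the trivial bound for $m$ below the threshold $2R_\kappa$ into the constant rather than appealing to the polynomial decay there.
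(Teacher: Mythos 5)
Your proposal is correct and follows essentially the same route as the paper: bound both displacements by $R_\kappa T_1$ via the finite-range Assumption~\ref{ass:finite-range}, apply the tail bound of Proposition~\ref{prop:JointRegTimesBound}, and obtain the moment bound \eqref{eq:joint.gamma.moments} from the layer-cake formula $\bE[Y^\gamma]=\gamma\int_0^\infty u^{\gamma-1}\bP(Y>u)\,du$. The extra bookkeeping for small $m$ and uniformity in $(x_0,x_0')$ is fine but not needed beyond what the paper's one-line argument already gives.
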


\begin{proof}
  By Assumption~\ref{ass:finite-range} the random walks $X$ and $X'$
  both have finite range for their transitions, hence
  \[
    \norm{\widehat{X}_1 - x_0} \le R_\kappa T_1, \qquad
    \norm{\hat X_1'-x_0'} \le R_\kappa T_1
  \]
  and thus the claim follows from
  Proposition~\ref{prop:JointRegTimesBound}. (For
  \eqref{eq:joint.gamma.moments} recall that $\bE[Y^\gamma] =
  \int_0^\infty \gamma y^{\gamma-1} \bP(Y > y) \,dy$ for any
  non-negative random variable $Y$.)
\end{proof}

A time $T_1$ for the walk, i.e.\ in time-slice $-T_1$ of the
environment $\eta = \eta(\omega)$, we have by construction that
$\eta_{-T_1}(\cdot)$ is $\equiv 1$ in the $b_\out$-neighbourhoods of
$X_{T_1}$ and of $X'_{T_1}$. Furthermore, the construction guarantees
that this is the only information we have about $\eta_u(\cdot), u \le
-T_1$. We can thus imagine that we re-draw all $\omega(x,u)$'s with $x
\in \Z^d, u \le -T_1$ from the original law conditioned on having all
$y$'s in the union of the $b_\out$-neighbourhoods of $X_{T_1}$ and of
$X'_{T_1}$ connected to $-\infty$ via open paths. Then we re-start at
time $T_1$ the construction of the regeneration time with the new
starting points $X_{T_1}$ and $X'_{T_1}$, etc. In this way we define
the sequence $0=T_0 < T_1 < T_2 < \cdots$ of regeneration times. We
abbreviate
\begin{equation}
  \label{def:Xhatn}
  \widehat{X}_n \coloneqq X_{T_n}, \quad \widehat{X}'_n \coloneqq X'_{T_n}, \qquad n \in \N.
\end{equation}
\begin{remark}
  \label{rem:jointMC}
  Note that the construction shows that under $\bP^\joint_{x_0,x_0'}$,
  the sequence $(\widehat{X}_n, \widehat{X}'_n)_n$ forms a Markov chain.
  (Strictly speaking, this holds only for $n \ge 2$ because we did not
  require that $\eta_0(\cdot)$ is $\equiv 1$ in the
  $b_\out$-neighbourhoods of $x_0$ and of $x_0'$, whereas this holds by
  construction at all later steps. Since the slightly special role of
  the first regeneration interval will be irrelevant for our limiting
  results, we silently drop this --mostly notational-- problem from now
  on.)  In fact, $(\widehat{X}_n, \widehat{X}'_n,T_n-T_{n-1})_n$ also
  forms a Markov chain and we could even enrich the state space to keep
  track of the path pieces in-between the regeneration times as well,
  analogous to \cite[Lemma~3.2]{BirknerCernyDepperschmidtGantert2013}.

  Furthermore, by shift-invariance of the whole construction,
  \[
    \bP^\joint_{x_0,x_0'}\big( (\widehat{X}_{n+1}-x, \widehat{X}'_{n+1}-x', T_{n+1}-T_n) \in \cdot
    \,\big|\, \widehat{X}_n = x, \widehat{X}'_n = x'\big)
  \]
  depends only on $x-x'$.
\end{remark}

\subsection{Coupling pairs of walks with copies in independent environments}
\label{sect:coupling-1}

By its very nature, the law of $(\widehat{X},\widehat{X}'{})$ under
$\bP^\joint_{x_0,x'_0}$ is quite complex and this makes explicit
computations or estimates for it a priori difficult. We address this
problem by comparing $\bP^\joint_{x_0,x'_0}$ with a new law
$\bP^\indi_{x_0,x'_0}$ under which the two walks evolve in independent
copies of the environment.

More precisely, we enrich the underlying probability space by an
independent copy $\omega'(\cdot,\cdot)$ of the space-time Bernoulli
field (and read off $(\eta'_n(\cdot))_n$ and
$(\kappa'_n(\cdot,\cdot))_n$ as functions of $\omega'(\cdot,\cdot)$,
analogous to $\eta$ and $\kappa$). Then we run the walk $X$ using the
kernels $\kappa_n$ and independently the walk $X'$ using the kernels
$\kappa'_n$. In this setting, we also implement a joint regeneration
construction for $X$ and for $X'$ analogous to
Construction~\ref{constr:reg-time} from
Section~\ref{sec:preparations-1}: Here, we we use the same sequences
$(t_\ell)$ as in Construction~\ref{constr:reg-time}. The deciding
difference is that we check the values of $\omega$ and $\eta$ in the
$R_\loc$-neighbourhood of $X$ along the $X$-path but the values of
$\omega'$ and $\eta'$ in the $R_\loc$-neighbourhood of $X'$ along the
$X'$-path. Since $\omega$ and $\omega'$ are independent we do not use
the double cone but instead two single cones (with the same geometry
as the constituent cones in the double cone construction) and each of
them has to satisfy the conditions of the construction. We denote the
resulting probability law on the enlarged space by
$\bP^\indi_{x_0,x'_0}$ when the initial positions of the two walks are
given by $X_0=x_0, X'_0=x'_0$ with $x_0, x'_0 \in \Z^d$. We write
$\bE^\indi_{x_0,x'_0}$ for corresponding expectations under this law.
\smallskip

The arguments from Section~\ref{sec:preparations-1} apply to this
construction, too. Thus we obtain the following analogue of
Proposition~\ref{prop:JointRegTimesBound} and Corollary~\ref{cor:tail
  bound random walks at sim regnerations} (using the notation
$(\widehat{X},\widehat{X}')$ as in \eqref{def:Xhatn} for the two walks
observed along joint regeneration times).

\begin{proposition}
  \label{prop:IndRegTimesBound}
    \begin{align}
    \label{eq:ind-tails}
    \sup_{x_0,x_0'} \Pr_{x_0,x_0'}^{\indi} (T_1 >t) \le
    Ct^{-\beta}, \quad t \ge 1.
   \end{align}
  (Recall that $\beta$ can be chosen arbitrarily large if $p$ is
  suitably close to $1$ and $\varepsilon_\rf$ from Assumption~\ref{ass:appr-sym}
  is sufficiently small.)

  Furthermore, there exists a constant $C < \infty$ such that
  \begin{equation}
    \sup_{x_0,x_0'} \bP^\indi_{x_0,x_0'}\left(\norm{\widehat{X}_1 - x_0} + \norm{\hat
    X_1'-x_0'} >m \right) \le C m^{-\beta}, \quad m \ge 0
  \end{equation}
  and for any $0 \le \gamma < \beta$ we have
  \begin{equation}
    \label{eq:ind.gamma.moments}
    \sup_{x_0,x'_0} \bE^\indi_{x_0,x'_0}\left[ \norm{\widehat{X}_1 - x_0}^\gamma +
      \norm{\hat X_1'-x_0'}^\gamma + T_1^\gamma \right] < \infty.
  \end{equation}
\end{proposition}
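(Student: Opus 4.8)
The plan is to mimic the proof of Proposition~\ref{prop:JointRegTimesBound} essentially verbatim, exploiting the fact that in the independent setting the whole construction decouples along the two environments. Under $\bP^\indi_{x_0,x'_0}$ the walk $X$ moves in $\omega$ and $X'$ in an independent copy $\omega'$, and a joint regeneration attempt at level $\ell$ now requires, \emph{separately for each of the two single cones} (which have the same geometry as the constituent cones of the double cone in Construction~\ref{constr:reg-time}), the same list of conditions as before: that the path together with its $R_\loc$-tube and determining triangles stays inside the relevant box, that $\wt\sigma^\simi_\ell$ is a cone time point beyond $t_{\ell-1}$ with the walk not having moved faster than $s_{\max}$, that $\eta$ (resp.\ $\eta'$) is $\equiv 1$ in the $b_\out$-neighbourhood of the walk's position, and that $\omega$ (resp.\ $\omega'$) restricted to the corresponding conical shell lies in the good set of \eqref{eq:36}--\eqref{eq:GoodOmegaSet}. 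The point to record at the outset is that all the auxiliary lemmas of Section~\ref{sec:preparations-1} hold here too, in fact more easily: Lemma~\ref{lemma:expTailsSigmaJoint} (exponential tails of $\sigma^\simi$-increments and the stochastic domination \eqref{eq:stochDomSigmaJoint}), Lemma~\ref{lemma:expTailsTau}, Lemma~\ref{lem:apriori speed conditioned on F}, Lemma~\ref{lem:conepoint} and Lemma~\ref{lemma:GoodOmegaSet} all have proofs that treat $X$ and $X'$ separately and combine them only by a union bound, and since here the two walks see independent noise these combinations are, if anything, simpler; in particular the single-cone version of Lemma~\ref{th:lemma2.13Analog}/Lemma~\ref{lemma:GoodOmegaSet} is literally Lemma~2.13 of \cite{BirknerCernyDepperschmidt2016} applied to each cone, and the good-configuration events for the two cones are independent.

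Granting this, I would reproduce the decomposition \eqref{eq:40}: with $m=\lceil \log t/(2\log\rho)\rceil$,
$\Pr^\indi(T_1>t)\le \sum_{\ell=1}^{m-1}\Pr^\indi(\wt\sigma^\simi_\ell>t)+\sum_{\ell=m}^{\infty}\Pr^\indi(T_1=\wt\sigma^\simi_\ell)$. The first sum is bounded exactly as in \eqref{eq:UpperBoundFirstPartialSum} by $C(m-1)e^{-c(t-\sqrt t)}$, using the exponential tail of $\sigma^\simi$-increments, hence is $o(t^{-\beta})$ for every $\beta>0$. For the second sum one establishes a uniform-in-$\ell$ lower bound $\delta_0>0$ on the probability of a successful regeneration at the $\ell$-th attempt; path containment, smallness of the decorations, the cone-point property, goodness of $\omega$ \emph{and} of $\omega'$ in the respective shells, and $\eta\equiv 1$ (resp.\ $\eta'\equiv 1$) in the $b_\out$-neighbourhoods are handled by the same estimates (a priori speed bounds, summability of the probabilities of large $\sigma^\simi$-increments, Lemma~\ref{lem:conepoint}, Lemma~\ref{lemma:GoodOmegaSet} applied to each cone, and \eqref{eq:stochDomSigmaJoint}). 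Since $\delta_0\to 1$ as $p\to 1$, the geometric sum is $(1-\delta_0)^{m+1}\le t^{\ln(1-\delta_0)/(2\ln\rho)+2}$ as in \eqref{eq:UpperBoundSecondPartialSum}, and choosing $\rho$ close to $1$ via \eqref{eq:RhoSoftCondition} — possible because $s_{\max}$ and $s_\out-s_\inn$ can both be taken small when $p\to 1$ and $\varepsilon_\rf\to 0$ — makes $-\ln(1-\delta_0)/(2\ln\rho)$ exceed $\beta+2$. This yields \eqref{eq:ind-tails} with $\beta$ arbitrarily large.

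Finally, the tail and moment bounds for $\widehat X_1,\widehat X_1'$ and $T_1$ follow from \eqref{eq:ind-tails} exactly as in the proof of Corollary~\ref{cor:tail bound random walks at sim regnerations}: by Assumption~\ref{ass:finite-range} the walks have jump size at most $R_\kappa$, so $\norm{\widehat X_1-x_0}\le R_\kappa T_1$ and $\norm{\widehat X_1'-x_0'}\le R_\kappa T_1$, which gives the stated tail bound on $\norm{\widehat X_1-x_0}+\norm{\widehat X_1'-x_0'}$ and, via the layer-cake formula $\bE[Y^\gamma]=\int_0^\infty \gamma y^{\gamma-1}\Pr(Y>y)\,dy$, the $\gamma$-moment bounds \eqref{eq:ind.gamma.moments} for all $0\le\gamma<\beta$.

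I do not expect a genuine obstacle: the content is a bookkeeping check that nothing in the argument of Proposition~\ref{prop:JointRegTimesBound} used the coupling of the two environments essentially. The one subtlety worth stating explicitly is \emph{why} two ordinary single cones suffice here: the double cone shell was introduced in Section~\ref{sec:preparations-1} precisely because the cone shell of one walk could grow into the inner cone of the other and so spoil the isolation; in the independent construction this problem is absent since the two walks live in independent noise, so the good-set lemma (Lemma~\ref{lemma:GoodOmegaSet}) is applied to each single cone in isolation, and the two good events are independent. The remaining details — which are minor modifications of the arguments already given — can be deferred to the relevant section, in the same spirit as the discussion preceding the statement.
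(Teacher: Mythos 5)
Your proposal is correct and follows essentially the same route as the paper, which gives no separate proof but simply observes that the arguments of Section~\ref{sec:preparations-1} (i.e.\ the proof of Proposition~\ref{prop:JointRegTimesBound} and Corollary~\ref{cor:tail bound random walks at sim regnerations}) carry over to the independent-environment construction with two single cones replacing the double cone. Your explicit remark on why single cones suffice — the isolation problem that motivated the double cone disappears when the two walks read off independent noise fields — is exactly the point the paper makes when introducing $\bP^\indi_{x_0,x_0'}$, so nothing is missing.
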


\begin{remark}
  \label{rem:indMC}
  Analogously to Remark~\ref{rem:jointMC}, the construction shows that
  under $\bP^\indi_{x_0,x_0'}$, the sequence $(\widehat{X}_n,
  \widehat{X}'_n)_n=(X_{T_n},X'_{T_n})_n$ also forms a Markov chain.
  (Again, strictly speaking this holds only for $n \ge 2$ because we
  did not require that $\eta_0(\cdot)$ is $\equiv 1$ in the
  $b_\out$-neighbourhoods of $x_0$ and of $x_0'$, whereas this holds
  by construction at all later steps. Since the slightly special role
  of the first regeneration interval will be irrelevant for our
  limiting results, we silently drop also this --mostly notational--
  problem from now on.) In fact, $(\widehat{X}_n,
  \widehat{X}'_n,T_n-T_{n-1})_n$ also forms a Markov chain under
  $\bP^\indi_{x_0,x_0'}$ and we could even enrich the state space to
  keep track of the path pieces in-between the regeneration times as
  well, analogous to
  \cite[Lemma~3.2]{BirknerCernyDepperschmidtGantert2013}.

  Furthermore, by shift-invariance of the whole construction and the
  fact that the two walks use independent copies of the environment,
  we in fact have that law of the increments
  \[
    \bP^\indi_{x_0,x_0'}\big( (\widehat{X}_{n+1}-x, \widehat{X}'_{n+1}-x', T_{n+1}-T_n) \in \cdot
    \,\big|\, \widehat{X}_n = x, \widehat{X}'_n = x'\big)
  \]
  does not depend on $x, x' \in \Z^d$ at all (unlike the situation in
  Remark~\ref{rem:jointMC}). In particular, the increments
  $(\widehat{X}_{n+1}-\widehat{X}_n,
  \widehat{X}'_{n+1}-\widehat{X}'_n)_{n \in \N}$ are i.i.d., i.e.\ the
  pair $(\widehat{X}, \widehat{X'})$ is a $\Z^d \times \Z^d$-valued
  random walk under $\bP^\indi_{x_0,x_0'}$. In addition, by symmetry
  of the roles of $\widehat{X}$ and $\widehat{X}'$, the difference
  random walk $\widehat{Z}_n \coloneqq \widehat{X}_n-\widehat{X}'_n$
  then has centred increments, i.e.\
  \begin{equation}
    \label{eq:ind.centred}
    \bE^\indi_{x_0,x_0'}\left[ \widehat{Z}_{n+1}-\widehat{Z}_n\right] = 0.
  \end{equation}
\end{remark}
\smallskip

Next we wish to introduce a coupling of $\bP^\joint_{x_0,x_0'}$ and
$\bP^\indi_{x_0,x_0'}$ in order to quantify the intuitive idea that
when $\norm{x_0-x'_0} \gg 1$, the two walks will (at least until their
first joint regeneration time) evolve under $\bP^\joint_{x_0,x_0'}$ in
`almost' independent parts of the environment. This can be achieved as
follows: In the setting of the product space for the independent
copies $\omega=\omega(\cdot,\cdot)$ and $\omega'=\omega'(\cdot,\cdot)$
we consider three random walks $X$, $X'$ and $X''$ with $X_0=x_0$,
$X'_0=X''_0=x'_0$. These processes exist on the product space
(enriched with sufficient extra randomness to run three random walks
in their respective environments). Furthermore, we then construct the
processes $X(\omega,\omega')=X(\omega)$ and
$X'(\omega,\omega')=X'(\omega)$ as evolving in the random environment
$\omega$ and $X''(\omega,\omega')=X''(\omega')$ as evolving in the
independent environment $\omega'$. This definition allows to compare
the pair $(X,X')$ with $(X,X'')$ realisation-wise.

Now we implement the first joint regeneration from
Construction~\ref{constr:reg-time} for $X$ and $X'$ in the environment
created by $\omega$ and at the same time its analogue discussed at the
beginning of this section for $X$ and $X''$, where $X''$ evolves in
the independent environment created by $\omega'$. We write
$T^\joint_1$ for the first joint regeneration time of $X$ and $X'$ and
$T^\indi_1$ for the first joint regeneration time of $X$ and $X''$
thus constructed.
\begin{proposition}
  \label{prop:TVdistance-joint-ind-1step}
  With $\beta$ from Proposition~\ref{prop:JointRegTimesBound} (and
  from Proposition~\ref{prop:IndRegTimesBound}) there exists a
  constant $C<\infty$
  \begin{align}
    & \sum_{x \in \Z^d,x'\in \Z^d,m\in \N} \left|
      \bP\left( (X_{T^\joint_1}, X'_{T^\joint_1},T^\joint_1)=(x,x',m)\right)
      - \bP\left( (X_{T^\indi_1}, X''_{T^\indi_1},T^\indi_1)=(x,x',m)\right)
      \right| \notag \\
      \label{eq:TVdistance-joint-ind-1step in prop}
    & \hspace{2em} \le C\norm{x_0-x'_0}^{-\beta},
  \end{align}
  where $x_0 \in \Z^d$ and $x_0' \in \Z^d$ are the initial positions
  of the two pairs (which are chosen identical for both pairs).
\end{proposition}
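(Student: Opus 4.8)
The plan is to build a coupling of $\bP^\joint_{x_0,x_0'}$ and $\bP^\indi_{x_0,x_0'}$ under which the triples $(X_{T^\joint_1},X'_{T^\joint_1},T^\joint_1)$ and $(X_{T^\indi_1},X''_{T^\indi_1},T^\indi_1)$ coincide except on an event that forces one of the two first joint regeneration times to be of order $\norm{x_0-x_0'}$. Concretely, I would realise both constructions on one space by revealing the environment sequentially as it is queried. The walk $X$ is run in a single environment $\omega$ shared by both constructions (so that the first coordinate of the two pairs is literally the same process), while the second walk uses, at each space-time point queried along its path, its $R_\loc$-tube, its determining triangles and its conical shell, a freshly sampled Bernoulli value whenever that point has not yet been revealed on behalf of $X$, and either the already-revealed $\omega$-value (in the $\joint$-version) or a further independent fresh value (in the $\indi$-version) whenever it has. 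Call the latter situation a \emph{conflict}. Since in the $\indi$-version the bits seen by $X$ and those seen by the second walk are then drawn from disjoint families of i.i.d.\ Bernoulli variables, the two marginals of this coupling are indeed $\bP^\joint_{x_0,x_0'}$ and $\bP^\indi_{x_0,x_0'}$.

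The second step is to show that \emph{in the absence of a conflict the two constructions produce identical output}. On that event the second walks of the two versions query exactly the same environment bits, hence coincide as paths, and the space-time region explored on behalf of $X$ is disjoint from the region explored on behalf of the second walk. When these two regions are disjoint the double cone shell of Construction~\ref{constr:reg-time} splits into the disjoint union of the two single cone shells used in the $\indi$-construction, and by Remark~\ref{rem:coneshifting} and Lemma~\ref{lemma:GoodOmegaSet} the good set $G_{x,x'}$ of \eqref{eq:36}--\eqref{eq:GoodOmegaSet} for the double cone then factorises into the corresponding two single-cone good sets. All the other checks in Construction~\ref{constr:reg-time}~(i)--(ii) --- the values of $\eta$ and $\omega$ in the $R_\loc$-vicinity of the walks, the cone-time-point property from Lemma~\ref{lem:conepoint}, the box containment of the decorated paths --- are measurable with respect to the queried bits and therefore coincide for the two versions. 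Hence on the ``no conflict'' event we have $T^\joint_1=T^\indi_1$ and $(X_{T^\joint_1},X'_{T^\joint_1})=(X_{T^\indi_1},X''_{T^\indi_1})$, so the left-hand side of \eqref{eq:TVdistance-joint-ind-1step in prop} is at most twice the probability of a conflict.

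It then remains to bound the conflict probability, and this is where the regeneration estimates come in. A conflict requires the region explored on behalf of one walk to reach the region explored on behalf of the other. By the box-containment and cone-point requirements built into Construction~\ref{constr:reg-time}, together with Lemma~\ref{lem:conepoint} and the a priori speed bound of Lemma~\ref{lem:apriori speed conditioned on F}, the region explored on behalf of $X$ up to its first joint regeneration is contained in a cone based at $(x_0,0)$ with base radius $b_\out$ and a fixed slope $s<1$, truncated at height of order $T^\joint_1$, up to determining triangles whose individual sizes have uniform exponential tails by Lemma~\ref{lemma:expTailsSigmaJoint} and Corollary~\ref{corollary:dryPoints}; the analogous statements hold for $X'$ in the $\joint$-version and for $X$ and $X''$ in the $\indi$-version. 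Two such cones based at $x_0$ and at $x_0'$ remain disjoint up to a height $c_1\norm{x_0-x_0'}$. Hence a conflict forces $T^\joint_1\ge c_2\norm{x_0-x_0'}$ or $T^\indi_1\ge c_2\norm{x_0-x_0'}$, or an oversized determining triangle among the $O(\norm{x_0-x_0'}^{d+1})$ relevant space-time sites; the first two events have probability at most $C\norm{x_0-x_0'}^{-\beta}$ by Propositions~\ref{prop:JointRegTimesBound} and \ref{prop:IndRegTimesBound}, uniformly in $x_0,x_0'$, and the last event has probability at most $C\norm{x_0-x_0'}^{d+1}e^{-c\norm{x_0-x_0'}}$, which is negligible. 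Combining these estimates yields \eqref{eq:TVdistance-joint-ind-1step in prop}.

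I expect the main obstacle to be the second step: setting up the sequential exploration rigorously and verifying that ``no conflict'' really does force identical output. In particular one must handle the fact that the determining triangles are random --- although a.s.\ finite with exponential tails --- and that the regeneration construction retains only the information that certain $\eta$-values equal $1$, i.e.\ that certain space-time points are connected to $-\infty$, without ever revealing the connecting open paths; and one must check that the double-cone good set genuinely decouples on disjoint cones. This parallels the corresponding single-walk argument in \cite{BirknerCernyDepperschmidtGantert2013} with the double cone of Construction~\ref{constr:reg-time} in place of the single-walk cone there.
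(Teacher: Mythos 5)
Your overall architecture (couple the two constructions, show they produce identical output unless the regions relevant to the two walks interact, then bound the interaction probability by the tail bounds of Propositions~\ref{prop:JointRegTimesBound} and \ref{prop:IndRegTimesBound}) matches the paper's strategy, and your third step is essentially the paper's bound on the event that a regeneration time exceeds a constant multiple of $\norm{x_0-x_0'}$. But the central second step has a genuine gap that your sequential-revelation scheme cannot close as stated. The transition kernels depend, via Assumption~\ref{ass:local}, on $\eta_{-n}(y)$ in the $R_\loc$-vicinity of the walk, and $\eta_{-n}(y)=\ind{\Z^d\times\{-\infty\}\to^\omega(y,-n)}$ is \emph{not} measurable with respect to any finite set of queried $\omega$-bits: the event $\eta=1$ depends on the environment arbitrarily far in the past and arbitrarily far away in space. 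Consequently ``no conflict'' in your sense --- disjointness of the finitely many revealed $\omega$-values (tubes, determining triangles, cone shells) --- does not imply that the second walk sees the same $\eta$-values in the $\joint$- and $\indi$-versions, so it does not imply identical output; your assertion that all checks in Construction~\ref{constr:reg-time} ``are measurable with respect to the queried bits'' is false precisely for the positive information $\eta\equiv 1$ in the $b_\out$-neighbourhoods (and for every step the walk takes from a site with $\eta=1$). You flag this as ``the main obstacle'' but offer no mechanism to resolve it, and resolving it is exactly the content of the paper's proof.

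The paper's way around this is to avoid sequential revelation altogether: it introduces independent fields $\Omega_1,\Omega_2$ and the hybrid field $\Omega_3$ glued along the hyperplane midway between $x_0$ and $x_0'$, so that the $\joint$-pair is realised in $(\Omega_3,\Omega_3)$ and the $\indi$-pair in $(\Omega_1,\Omega_2)$, with the walks and their cone constructions confined (on the event $D_2$ that both first regeneration times are at most $m/(10R_\kappa)$, $m=\norm{x_0-x_0'}$) to boxes $B,B'$ lying in the two half-spaces. The non-locality of $\eta$ is then handled quantitatively: on the complement of the event $D_1=\{\eta(\cdot,\Omega_1)=\eta(\cdot,\Omega_3)\text{ on }B\}\cap\{\eta(\cdot,\Omega_2)=\eta(\cdot,\Omega_3)\text{ on }B'\}$ some site in $B\cup B'$ changes its ``connected to $-\infty$'' status when the far half-space is modified, which forces an open cluster reaching distance of order $m$ that nevertheless dies out; by the cited estimate (Lemma~1.3 in \cite{Steiber2017}) this has probability $Ce^{-cm}$ per site, and a union bound over the polynomially many sites of $B\cup B'$ gives $\bP(D_1^\compl)\le Ce^{-cm}$. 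Adding $\bP(D_2^\compl)\le Cm^{-\beta}$ yields \eqref{eq:TVdistance-joint-ind-1step in prop}. To repair your proof you would need to insert exactly this ingredient: a comparison of the $\eta$-fields computed from the two (or the hybrid) environments on the boxes containing the explored regions, with the exponential bound on discrepancies; disjointness of the explored $\omega$-bits alone is not enough.
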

\begin{proof}[Proof idea]
  The general idea is a coupling argument similar to
  \cite[Lemma~3.4]{BirknerCernyDepperschmidtGantert2013}. Let $H
  \subset \R^d$ be the hyperplane perpendicular to $x'_0-x_0$ which
  passes through the mid-point $(x'_0-x_0)/2$ and define an auxiliary
  copy $\omega''$ of the medium which agrees with $\omega$ on one side
  of $H$ and with $\omega'$ on the other side. We then run copies
  $\overline{X}$ and $\overline{X}'$ of the walk in (the same)
  environment $\omega''$, starting from $\overline{X}_0=x_0$ and
  $\overline{X}'_0=x'_0$, respectively. By the space-time mixing
  properties of the environment, the dynamics of $\overline{X}$ will
  (at least initially) be with high probability identical to that of
  $X$ and the behaviour of $\overline{X}'$ identical to that of $X''$.
  Moreover, by construction the pairs $(X,X')$ and
  $(\overline{X},\overline{X}')$ are identical in law. Based on this,
  we implement yet another joint regeneration construction as in
  Construction~\ref{constr:reg-time}, this time for $\overline{X}$ and
  $\overline{X}'$ until the their first joint regeneration time
  $\overline{T}_1$.

  For large initial distance of the starting positions the cones from
  the regeneration construction will only overlap if the regeneration
  time itself is large, which has low probability by
  Propositions~\ref{prop:JointRegTimesBound} and
  \ref{prop:IndRegTimesBound}. Furthermore, as long the positions of
  the two walks are well separated, the double cone from
  Construction~\ref{constr:reg-time} is the same geometric object as
  the two single cones and thus the regeneration constructions from
  Section~\ref{sec:preparations-1} (for two walks in the same
  environment) and from the beginning of this section (for two walks
  in independent environments) involve identical steps. This yields
  that $T^\joint_1 = T^\indi_1 =\overline{T}_1$,
  $X_{T^\joint_1}=\overline{X}_{\overline{T}_1}$ and
  $X''_{T^\indi_1}=\overline{X}''_{\overline{T}_1}$ all occur with
  suitably high probability.

  The details can be found in
  Section~\ref{sec:proof:lem:TVdistance-joint-ind}.
\end{proof}

\begin{remark}
  \label{rem:TVdistance-joint-ind-1step}
  The total variation control provided by
  Proposition~\ref{prop:TVdistance-joint-ind-1step} together with the
  homogeneity properties of the walks under $\bP^\indi$ (see
  Remark~\ref{rem:indMC}) show that (with $\beta$ from
  Propositions~\ref{prop:JointRegTimesBound},
  \ref{prop:IndRegTimesBound}, \ref{prop:TVdistance-joint-ind-1step})
  there is a constant such that for all $x,x',\wt{x},\wt{x} \in \Z^d$
  \begin{align}
    \label{eq:TVdistance-joint-ind-1step}
    \Norm{\Pr^{\joint}_{x,x'}\big((\widehat{X}_1-x, \widehat{X}_1'-x',T_1)
    \in \cdot\big) - \Pr^{\indi}_{\wt{x},\wt{x}'}\big((\widehat{X}_1-\wt{x}, \widehat{X}_1'-\wt{x}',T_1) \in
    \cdot\big)}_{\mathrm{TV}} \le C \norm{x-x'}^{-\beta} .
  \end{align}
\end{remark}

Since $(\widehat{X}_n,\widehat{X}'_n,T_n)$ is a Markov chain under
$\bP^\joint_{x_0,x_0'}$ as well as under $\bP^\indi_{x_0,x_0'}$
(Remarks~\ref{rem:jointMC} and \ref{rem:indMC}), we can and shall in
the following iterate the coupling of their transition dynamics
provided by Remark~\ref{rem:TVdistance-joint-ind-1step} to produce a
Markov chain
\begin{equation}
  \label{eq:bigchain}
  \big( \widehat{X}^\joint_n, \widehat{X}'^{\,\joint}_n, T^\joint_n,
  \widehat{X}^\indi_n, \widehat{X}'^{\,\indi}_n, T^\indi_n \big)_{n \in \N_0}
\end{equation}
on $(\Z^d\times\Z^d\times\Z_+)^2$ such that $(\widehat{X}^\joint_n,
\widehat{X}'^{\,\joint}_n, T^\joint_n)$ evolves as under
$\bP^\joint_{x_0,x_0'}$ and $(\widehat{X}^\indi_n,
\widehat{X}'^{\,\indi}_n, T^\indi_n)$ evolves as under
$\bP^\indi_{x_0,x_0'}$ with the following property: We say that $m$ is
an \emph{uncoupled step} if
\begin{equation}
  \label{eq:uncoupledstep}
  \big( \widehat{X}^\joint_m-\widehat{X}^\joint_{m-1}, \widehat{X}'^{\,\joint}_m-\widehat{X}'^{\,\joint}_{m-1},
  T^\joint_m-T^\joint_{m-1}\big)
  \neq
  \big( \widehat{X}^\indi_m-\widehat{X}^\indi_{m-1}, \widehat{X}'^{\,\indi}_m-\widehat{X}'^{\,\indi}_{m-1},
  T^\indi_m-T^\indi_{m-1}\big).
\end{equation}
Then
\begin{align}
  \label{eq:uncoupledstep.prob}
  & \bP\Big( m\text{ is an uncoupled step} \,\Big|\,
  \big( \widehat{X}^\joint_{m-1}, \widehat{X}'^{\,\joint}_{m-1}, T^\joint_{m-1},
    \widehat{X}^\indi_{m-1}, \widehat{X}'^{\,\indi}_{m-1}, T^\indi_{m-1} \big)=(x,x',t,\wt{x},\wt{x}',\wt{t}) \Big) \notag \\
  & \qquad \le C \norm{x-x'}^{-\beta}
\end{align}
uniformly in $(x,x',t,\wt{x},\wt{x}',\wt{t})$ with $\beta$ as in
\eqref{eq:TVdistance-joint-ind-1step}.

We work in the following with the chain \eqref{eq:bigchain}. We will
see below that uncoupled steps are sufficiently rare to transfer many
properties from $\bP^\indi_{x_0,x_0'}$ to $\bP^\joint_{x_0,x_0'}$ (see
Lemma~\ref{lem:coupl} for $d \ge 2$, for $d=1$ see
Lemma~\ref{lem:bound_for_A1_and_A2} and the discussion around it).

\subsubsection{Auxiliary results}
\label{sect:aux-1}

  Write for $r>0$
  \begin{align}
    \label{eq:hittingtimes}
    \begin{split}
      h(r) & \coloneqq \inf\{ k \in \bZ_{+} : \norm{\widehat{X}_k -
        \widehat{X}_k'}_2 \leq r \}\\
      H(r) & \coloneqq \inf \{ k \in \bZ_{+} : \norm{\widehat{X}_k -
        \widehat{X}_k'}_2 \geq r \}
    \end{split}
  \end{align}
  where $\norm{ \cdot }_2$ denotes the Euclidean norm on $\bZ^d$.

\begin{lemma}[Hitting probabilities for spheres, analogous to
  \cite{BirknerCernyDepperschmidtGantert2013} Lemma~3.6]
  \label{lemma:exitAnnulus}
  Put for $r_1<r<r_2$
  \begin{align}
    f_d(r;r_1,r_2)= \begin{cases}
      \frac{r-r_1}{r_2-r_1}, & \text{when }d=1,\\
      \frac{\log r - \log r_1}{\log r_2 - \log r_1}, \quad & \text{when }d=2,\\
      \frac{r_1^{2-d}-r^{2-d}}{r_1^{2-d}-r_2^{2-d}}, \quad & \text{when }d\geq 3.
    \end{cases}
  \end{align}
  For every $\varepsilon >0$ there are (large) $R$ and $\wt{R}$ such
  that for all $r_2>r_1>R$ with $r_2-r_1>\wt{R}$ and
  $x,y \in \bZ^d$ satisfying $r_1 < r=\norm{x-y}_2<r_2$
  \begin{align}
    (1-\varepsilon)f_d(r;r_1,r_2) \leq \Pr^{\indi}_{x,y}(H(r_2)<h(r_1))
    \leq (1+\varepsilon)f_d(r;r_1,r_2).
  \end{align}
\end{lemma}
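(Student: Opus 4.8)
The plan is to reduce the statement to the corresponding exit estimate for Brownian motion. By Remark~\ref{rem:indMC} the increments of $(\widehat X_n,\widehat X'_n)$ under $\bP^\indi_{x,y}$ are i.i.d., hence the difference process $\widehat Z_n \coloneqq \widehat X_n - \widehat X'_n$ is a genuine random walk on $\Z^d$; it is centred by \eqref{eq:ind.centred}, and by Proposition~\ref{prop:IndRegTimesBound} its increments have finite moments of every order $\gamma<\beta$, so in particular more than two moments (since $\beta$ may be taken large). Its covariance is non-degenerate because $\kappa_\rf$ is non-degenerate and the annealed CLT from \cite{BirknerCernyDepperschmidt2016} is non-trivial. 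Since $h(r)$ and $H(r)$ are exactly the entrance time into, resp.\ exit time out of, the Euclidean ball of radius $r$, the claim is a quantitative annulus-exit estimate for $\widehat Z$ whose leading order is that of the analogous estimate for Brownian motion; this is the content of Lemma~3.6 in \cite{BirknerCernyDepperschmidtGantert2013}, and the proof is the same.

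The argument has two ingredients. First, on the Brownian side: for Brownian motion $B$ with generator a positive multiple of the Laplacian — to which $\widehat Z_{\lfloor n\cdot\rfloor}/\sqrt n$ converges, the isotropic reduction being handled as in \cite{BirknerCernyDepperschmidtGantert2013} so that only the radial structure matters — optional stopping of the radially harmonic function $h_d$, namely $h_1(\rho)=\rho$, $h_2(\rho)=\log\rho$, $h_d(\rho)=\rho^{2-d}$ for $d\ge3$, between the two spheres gives $\Pr^B_x(\tau_{r_2}<\tau_{r_1}) = f_d(\norm{x}_2;r_1,r_2)$ exactly, $f_d$ being the affine normalisation of $h_d$ vanishing on the inner and equal to $1$ on the outer sphere; note that $f_d$ is scale invariant, $f_d(\lambda r;\lambda r_1,\lambda r_2)=f_d(r;r_1,r_2)$. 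Second, transfer to $\widehat Z$: one may either invoke Donsker's invariance principle together with continuity of the functional ``exit the annulus through the outer sphere'' at $\Pr^B$-a.e.\ path (Brownian motion does not linger on a sphere), or — preferably, as it yields uniformity directly — run the discrete martingale $h_d(\widehat Z_{n\wedge H(r_2)\wedge h(r_1)})$ and evaluate the resulting optional-stopping identity using the known potential-kernel asymptotics for mean-zero finite-variance walks ($a(z)\sim c\log\norm z_2$ in $d=2$, Green function $G(z)\sim c\norm z_2^{2-d}$ in $d\ge3$, the walk itself in $d=1$), up to boundary errors.

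Controlling those boundary errors is where the main difficulty lies, because the estimate is required in the multiplicative $(1\pm\varepsilon)$ form. Since $\widehat Z$ jumps, it enters the inner ball, resp.\ leaves the outer ball, by an overshoot bounded by the largest increment occurring before that time; as traversing the annulus takes of order $r_2^2$ steps and the increments have more than two moments (indeed arbitrarily many, by Proposition~\ref{prop:IndRegTimesBound}), this maximal increment is $o(r_2)$ with high probability. The hypotheses $r_1>R$ and $r_2-r_1>\wt R$ are calibrated precisely so that the overshoot is a negligible fraction of the relevant spatial scales, so that shrinking and fattening the two spheres by a factor $1\pm\delta$ sandwiches $\Pr^\indi_{x,y}(H(r_2)<h(r_1))$ between $(1-\varepsilon)f_d(r;r_1,r_2)$ and $(1+\varepsilon)f_d(r;r_1,r_2)$, uniformly in admissible $x,y,r_1,r_2$. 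The delicate regime is $r_2\gg r_1$ (and, for $d=2$, $\log r_2\gg\log r_1$), where $f_d$ is itself small and an additive error would be useless; there the martingale/potential-kernel route is essential, exactly as in \cite[Lemma~3.6]{BirknerCernyDepperschmidtGantert2013}, since it produces the leading constant of the exit probability uniformly rather than only up to a vanishing additive term.
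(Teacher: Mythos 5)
Your proposal follows essentially the same route as the paper: the paper's proof sketch likewise reduces the claim to an annealed invariance principle for the difference walk $\widehat{X}_n-\widehat{X}'_n$, using that its increments are i.i.d.\ and centred under $\bP^\indi_{x,y}$ (Remark~\ref{rem:indMC}) with finite second moments by Proposition~\ref{prop:IndRegTimesBound} for $\beta>2$, and defers the remaining annulus-exit transfer to the template of \cite[Lemma~3.6]{BirknerCernyDepperschmidtGantert2013}. Your additional detail on the harmonic-function/overshoot bookkeeping is a correct elaboration of exactly that deferred step.
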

\begin{proof}[Proof sketch]
  It is sufficient to show that $(\widehat{X}_n-\widehat{X}_n')_n$
  satisfies an annealed invariance principle. Since
  $(\widehat{X}_n-\widehat{X}_n')_n$ has independent increments under
  $\Pr^{\indi}_{x,y}$, this follows from the observation that
  $\bE^\indi_{x,y}[(\widehat{X}_1-x)-(\widehat{X}_1'-y)] = 0$ and
  $\bE^\indi_{x,y}[\norm{\widehat{X}_1-x-\widehat{X}_1'+y}^2] <
  \infty$ by Remark~\ref{rem:indMC} and
  Proposition~\ref{prop:IndRegTimesBound} (assuming that $\beta>2$).
\end{proof}

The rest of this section is concerned with the case $d \ge 2$. We
first provide a separation lemma similar to Lemma~3.8 in
\cite{BirknerCernyDepperschmidtGantert2013} for the model from
\cite{BirknerCernyDepperschmidt2016}. Recall the definition of the
hitting times $h(\cdot)$ and $H(\cdot)$ from \eqref{eq:hittingtimes}.
\begin{lemma}[Separation lemma]\label{lem:separ}
  Let $d\geq 2$. For all small enough $\delta >0$ and
  $\varepsilon^*>0$ there exist $C,c>0$ so that
  \begin{align}
    \label{eq:2}
    \sup_{x_0, x_0' \in \Z^d} \Pr^{\joint}_{x_0, x_0'}\left( H(n^{\delta}) >
    n^{2\delta + \varepsilon^*} \right) \leq \exp(-C n^c).
  \end{align}
\end{lemma}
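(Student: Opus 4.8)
The plan is to reduce the statement to a comparison with the independent pair $(\widehat X^\indi,\widehat X'^{\,\indi})$ and then exploit the diffusive separation of the latter together with the algebraic coupling control from \eqref{eq:uncoupledstep.prob}. Work with the joint chain \eqref{eq:bigchain}. The key point is that on time scales $n^{2\delta+\varepsilon^*}$ the difference walk $\widehat Z^\indi_k = \widehat X^\indi_k - \widehat X'^{\,\indi}_k$ is a centred $\Z^d$-valued random walk with finite $\gamma$-th moments for any $\gamma<\beta$ (Remark~\ref{rem:indMC}, Proposition~\ref{prop:IndRegTimesBound}); since $\beta$ can be chosen large, standard moderate-deviation estimates (e.g.\ via the Fuk--Nagaev inequality, or a block decomposition combined with Azuma after truncation at level $n^{\delta/2}$, say) give that for $d\ge 2$ the independent difference walk started anywhere exits the ball of radius $n^\delta$ before time $n^{2\delta+\varepsilon^*}$ except on an event of probability at most $\exp(-Cn^c)$. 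Concretely, after $m = n^{2\delta+\varepsilon^*/2}$ regeneration steps the independent difference walk has root-mean-square displacement of order $n^{\delta+\varepsilon^*/4}\gg n^\delta$, and the probability that it never reached distance $n^\delta$ in those $m$ steps is stretched-exponentially small (the recurrent obstruction present in $d=1$ is absent here). This handles $H^\indi(n^\delta)$, the analogue of \eqref{eq:2} under $\Pr^\indi$.

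Next I would transfer this bound from $\Pr^\indi$ to $\Pr^\joint$ using the coupling. Run the joint chain \eqref{eq:bigchain} from $(x_0,x_0')$. As long as no uncoupled step has occurred, the two pairs agree, so $\{H^\joint(n^\delta)>n^{2\delta+\varepsilon^*}\}$ forces either $\{H^\indi(n^\delta)>n^{2\delta+\varepsilon^*}\}$ or that an uncoupled step happened before time $n^{2\delta+\varepsilon^*}$ while the two walks were still within distance $2n^\delta$ of each other (once they are at distance $\ge n^\delta$ we have already succeeded, and the coupling only needs to hold up to the first such time). By \eqref{eq:uncoupledstep.prob}, each uncoupled step costs at most $C\norm{x-x'}^{-\beta}$, which is useless while $\norm{x-x'}$ is small; the resolution, exactly as in \cite{BirknerCernyDepperschmidtGantert2013}, is to first let the walks \emph{separate} to a modest distance. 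So I would run a preliminary phase: using the lower-dimensional simple-random-walk-type lower bound on the independent difference walk hitting distance, say, $n^{\delta/2}$ quickly (a constant or poly-log number of regeneration steps with positive probability, hence distance $n^{\delta/2}$ is reached within $n^{\varepsilon^*/4}$ steps except on a stretched-exponentially small event), one arranges that with overwhelming probability after a negligible number of steps the walks are at distance $\ge n^{\delta/2}$; from then on the per-step uncoupling probability is $\le C n^{-\beta\delta/2}$, and over the at most $n^{2\delta+\varepsilon^*}$ steps of interest the total uncoupling probability is $\le C n^{2\delta+\varepsilon^*-\beta\delta/2}$, which is $\le n^{-100}$ (say) once $\beta$ is large enough — and in fact one can bootstrap the distance lower bound along the way (each time the walks are within $r$ of each other they re-separate quickly, so the "bad" small-distance regime is visited only $\exp(-Cn^c)$-rarely), pushing the bound all the way down to stretched-exponential.

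I would then assemble the estimate: on the complement of the stretched-exponentially small events (the $\Pr^\indi$ separation estimate, the fast-re-separation events, and the total-uncoupling event), the joint pair also reaches distance $n^\delta$ before time $n^{2\delta+\varepsilon^*}$, giving \eqref{eq:2} with the supremum over starting points coming for free since all the input estimates (Proposition~\ref{prop:IndRegTimesBound}, \eqref{eq:uncoupledstep.prob}, the homogeneity in Remark~\ref{rem:indMC}) are uniform in the initial positions. The main obstacle is the bootstrapping argument controlling the number of visits to small inter-walk distances: one must show that in $d\ge 2$ the (coupled) difference walk spends only a stretched-exponentially unlikely amount of time near the diagonal on the relevant scale, so that the weak (algebraically small but not summable-on-its-own) per-step coupling bound can be iterated; this is where the restriction $d\ge 2$ is genuinely used, via transience-type estimates for the independent difference walk, and where the interplay between the choice of $\delta$, $\varepsilon^*$, and the size of $\beta$ must be made precise.
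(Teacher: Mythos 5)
There is a genuine gap, and it sits exactly where the difficulty of this lemma lies: the behaviour of the \emph{joint} pair at small separations. Your transfer argument gets the walks from small to moderate distance by invoking ``the independent difference walk hitting distance $n^{\delta/2}$ quickly'' and concludes that ``with overwhelming probability after a negligible number of steps the walks are at distance $\ge n^{\delta/2}$''. But the coupling bound \eqref{eq:uncoupledstep.prob} is vacuous when $\norm{x-x'}$ is of order one, so near the diagonal you have no access to $\bP^\indi$-estimates at all; the only tool there is a direct environment construction (the ``corridor'' argument of Step~1 in the paper's proof), and that yields a per-attempt success probability that is only \emph{polynomially} small in $n$ (reaching distance $\varepsilon_1\log n$ costs $\hat\delta^{\varepsilon_1\log n}=n^{-\varepsilon_1\log(1/\hat\delta)}$ per try), not an overwhelming-probability event. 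Also, by the finite range $R_\kappa$, reaching distance $n^{\delta/2}$ in ``a constant or poly-log number of regeneration steps'' is impossible. So the preliminary phase as you describe it cannot be justified; it is circular (it uses the coupling precisely in the regime where the coupling gives nothing).

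The second problem is the mechanism by which you claim the final stretched-exponential bound. Since $\beta$ is a fixed (large) constant, maintaining the coupling from separation $n^{\delta/2}$ over a window of length $n^{2\delta+\varepsilon^*}$ fails with probability of order $n^{2\delta+\varepsilon^*-\beta\delta/2}$ — polynomially, never stretched-exponentially, small — and the ``bootstrap'' claim that the small-distance regime is visited only $\exp(-Cn^c)$-rarely has no justification (near the diagonal you again lose all control except the polynomially-small corridor bound). This is why the paper cannot jump from distance $\sim\log n$ to $n^\delta$ in one coupled stretch: it iterates through the scales $\log^{2^{j}}n$, $j\le j_n^*\approx c\log\log n$, accepts that the whole cascade succeeds ``in one go'' only with probability $\ge(\log n)^{-c\varepsilon'}$, and then obtains $\exp(-Cn^c)$ not from rarity of bad events but from performing $n^{\varepsilon^*-2\alpha}$ independent restarts of the entire scheme (Markov property), each taking $\le n^{2\delta+2\alpha}$ steps. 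Your $\bP^\indi$-confinement estimate for the difference walk (first paragraph) is fine and is consistent with the paper's use of the invariance principle and the annulus-exit Lemma~\ref{lemma:exitAnnulus}, but without the multi-scale iteration plus restart structure the transfer to $\bP^\joint$ does not close.
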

In fact, this should hold for any $\delta>0$, not just small
ones. However, the current statement suffices for our purposes.
\begin{proof}
  We will adapt the idea and the structure of the proof of Lemma~3.8
  in \cite{BirknerCernyDepperschmidtGantert2013}, dividing the
  argument into 5 steps. Most adaptations are relatively
  straightforward but Steps~4 and 5 will require additional work and
  some new ideas because unlike the situation in
  \cite{BirknerCernyDepperschmidtGantert2013}, the coupling error
  guaranteed by Proposition~\ref{prop:TVdistance-joint-ind-1step} does
  not decay exponentially in the initial separation of the two walks.
  We can hence, unlike in \cite{BirknerCernyDepperschmidtGantert2013},
  not immediately move from a distance which is a large multiple of
  $\log n$ to a distance $n^\delta$ without occasionally breaking the
  coupling of increments.

  \medskip
  \noindent
  \emph{Step 1.} We want to show that there exists a small
  $\varepsilon_1 >0$ and $b_4 \in (0,1/2)$, $b_5>0$ such that
  \begin{align}
    \label{eq:Step1}
    \Pr^{\joint}_{x,y}\left( H(\varepsilon_1 \log n) > n^{b_4}
    \right) \leq c n^{-b_5},
  \end{align}
  uniformly in $x,y \in \bZ^d$.

  To that end we construct suitable ``corridors'' to guide the random
  walks to a certain distance within the first steps.

  First have a closer look at a possible configuration of the
  environment that yields a suitable lower bound for the probability
  to reach a distance of $\varepsilon_1\log n$ in $\varepsilon_1 \log
  n$ steps. Recall the dynamics of the random walks whenever they are
  on a site with $\eta=1$, see Assumption~\ref{ass:appr-sym}. Since
  the reference transition kernel $\kappa_\rf$ is non degenerate there
  exist a possible steps $k\in\bZ^d$ whose first coordinate $k_1$ is
  strictly positive and we set $\delta_1 \coloneqq \kappa_\rf(\{k :
  k_1 \ge 1\})$. Then on any site $(y,-m)\in\bZ^d\times\bZ$ with
  $\eta_{-m}(y)=1$ the random walk can jump a distance of at least
  $+1$ in the first coordinate (and similarly the first coordinate of
  $X'$ can jump at least one position to the left). Thus, if we ensure
  that the random walks hit only sites where $\eta=1$, the distance
  increases by at least $2$ with probability at least $\delta_2
  \coloneqq \delta_1-\varepsilon_\rf>0$. (Here, we implicitly assume
  that $\varepsilon_\rf<\delta_1$, which is compatible with
  Assumption~\ref{ass:appr-sym}.)

  Recall Construction~\ref{constr:reg-time}. If the two random walks
  just regenerated they can do so at the next step if the values of
  $\eta$ in the $b_\out$-vicinity of their next positions are only
  $1$'s. This happens with a uniform lower bound (since a successful
  attempt at regenerating has a uniform lower bound as shown in the
  proof of Proposition~\ref{prop:JointRegTimesBound}) which we denote
  by $\delta_3>0$. Therefore, if $\eta=1$ in the $b_\out$-vicinity of
  both random walks along the path, they will regenerate at every such
  step.

  Consequently, for every step, the probability that the distance between
  the random walks increases by at least $2$ is
  \begin{align*}
        \bP^\joint_{x,y}(\norm{\widehat{X}_i-\widehat{X}'_i}\ge 2
    +\norm{\widehat{X}_{i-1}-\widehat{X}'_{i-1}}) > \delta_2^2\delta_3>0
  \end{align*}
  and iteratively
  \begin{align}
    \Pr^{\joint}_{x,y}\left(\norm{\widehat{X}_j -
    \widehat{X}'_j} \ge  2j \right) \geq \hat\delta^j,
  \end{align}
  for some $\hat\delta >0$. As in the proof in
  \cite[Lemma~3.8]{BirknerCernyDepperschmidtGantert2013} we conclude
  by a ``restart'' argument that
  \begin{align}
    \Pr^{\joint}_{x,y}\left( H(\varepsilon_1 \log n) > m
    \varepsilon_1 \log n \right) \leq \left( 1 - n^{-\varepsilon_1
    \log(1/\hat\delta)} \right)^m \leq \exp\left( -mn^{-\varepsilon_1
    \log(1/\hat\delta)} \right).
  \end{align}
  If we choose $\varepsilon_1$ so small that
  $-\varepsilon_1\log(\hat\delta) \in (0,1/2)$, and choose $b_4 \in
  (-\varepsilon_1 \log(\hat\delta),1/2)$, $b_5 >0$ and set $m=b_5
  n^{\varepsilon_1 \log(1/\hat\delta)}\log n$ we have shown
  \eqref{eq:Step1} since $m \varepsilon_1 \log n \le n^{b_4}$ if $n$
  is large. Furthermore note that $b_4$ can be chosen arbitrarily
  small by choosing $\varepsilon_1$ small. (Note that by choosing $m$
  somewhat larger we could arrive at superalgebraic decay in
  \eqref{eq:Step1}.) \medskip

  \noindent
  \emph{Step 2.} Next we show that for any $K_2>0$ there exists
  $\delta_2\in (0,1)$ such that for all $x,y \in \bZ^d$ with
  $\varepsilon_1 \log n \leq \norm{x-y} < K_2 \log n$ and $n$ large
  enough
  \begin{align}
    \label{eq:Step2}
    \Pr^{\joint}_{x,y}\Bigl( H(K_2 \log n) <
    h(\frac{1}{2}\varepsilon_1 \log n) \wedge (K_2 \log n)^{2+\alpha} \Bigr)
    \geq \delta_2,
  \end{align}
  where $h(m) \coloneqq \inf \{k : \norm{\widehat{X}_k -
    \widehat{X}'_k} \leq m\}$ and $\alpha>0$ is a small constant to be
  tuned later. To this end we couple $\Pr^{\indi}_{x,y}$ with
  $\Pr^{\joint}_{x,y}$. As mentioned above the probability that the
  coupling fails will decay algebraically in the distance of the
  starting positions of the random walks. Thus, the left-hand side of
  \eqref{eq:Step2} is bounded from below by
  \begin{align}
    \label{eq:AH2.1}
    \Pr^{\indi}_{x,y}\Bigl(H(K_2\log n) <
    h(\frac{1}{2}\varepsilon_1 \log n) \wedge (K_2 \log n)^{2+\alpha}\Bigr) -
    C(K_2 \log n)^{2+\alpha} (\frac{1}{2}\varepsilon_1 \log n)^{-\beta},
  \end{align}
  where $\beta > 0$ is from
  Remark~\ref{rem:TVdistance-joint-ind-1step} and can be chosen
  arbitrarily large so that the second term will go to 0 as $n$ tends
  to infinity. Under $\bP^\indi$ we can use
  Lemma~\ref{lemma:exitAnnulus}. Therefore we can obtain a lower bound
  for $d\ge 3$ for the left term in \eqref{eq:AH2.1} by combining
  \begin{align}
    \label{eq:34}
    \begin{split}
      \Pr^{\indi}_{x,y}\Bigl(H(K_2\log n) < h(\frac{1}{2}\varepsilon_1
      \log n) \Bigr)
      & \ge (1-\varepsilon) \frac{(\frac{\varepsilon_1}{2} \log n)^{2-d}-(\varepsilon_1
        \log n)^{2-d}}{(\frac{\varepsilon_1}{2} \log n)^{2-d} - (K_2 \log n)^{2-d}}\\
      &= (1-\varepsilon) \frac{(2^{d-2}-1)\varepsilon_1^{2-d}}{2^{d-2}\varepsilon_1^{2-d}
        - K_2^{2-d}}
    \end{split}
  \end{align}
  and
  \begin{align}
    \widetilde{\varepsilon}(n)
    & \coloneqq \bP^\indi_{x,y}\Bigl(H(K_2\log n) > (K_2\log n)^{2+\alpha} \Bigr) \notag \\
    & \, \le \bP^\indi_{x,y}\Bigl( \norm{ \widehat{X}_{(K_2\log n)^{2+\alpha}} - \widehat{X}'_{(K_2\log n)^{2+\alpha}}}
      \le K_2\log n\Bigr)
      \mathop{\longrightarrow}_{n\to\infty} 0
      \label{eq:HK2logn.too.big}
  \end{align}
  by the CLT for $\widehat{X}-\widehat{X}'$ under $\bP^\indi_{x,y}$.

  Thus, we have
  \begin{align*}
    \Pr^{\indi}_{x,y}\Bigl(H(K_2\log n)
    & < h(\frac{1}{2}\varepsilon_1 \log n) \wedge (K_2 \log n)^{2+\alpha}\Bigr)\\
    & \ge (1-\varepsilon)
      \frac{(2^{d-2}-1)\varepsilon_1^{2-d}}{2^{d-2}\varepsilon_1^{2-d} - K_2^{2-d}}
      - \widetilde{\varepsilon}(n)
  \end{align*}
  and the right hand side is bounded away from $0$.

  For $d=2$ we can use the same arguments with a slightly different
  lower bound when using Lemma~\ref{lemma:exitAnnulus} similar to
  \eqref{eq:34}. Here we have
  \begin{align*}
    \Pr^{\indi}_{x,y}\Bigl(H(K_2\log n) < h(\frac{1}{2}\varepsilon_1 \log n) \Bigr)
    & \ge (1-\varepsilon)\frac{\log (\varepsilon_1 \log n) - \log
      (\frac{1}{2}\varepsilon_1 \log n)}{\log (K_2 \log n)-\log
      (\frac{1}{2}\varepsilon_1 \log n)}\\
    & = (1-\varepsilon)\frac{\log 2}{\log K_2 - \log (\varepsilon_1/2)}.
  \end{align*}
  Combining the above estimates for $d=2$ and $d \ge 3$ with
  \eqref{eq:AH2.1} we obtain a uniform lower bound in \eqref{eq:Step2}
  for all $d\ge 2$.

  \medskip
  \noindent
  \emph{Step 3.} Combining the previous steps we see that we can
  choose a large $K_3$ and $b_6 \in (b_4, 1/2)$ such that uniformly
  in $x,y \in \bZ^d$ we have
  \begin{align}
    \Pr^{\joint}_{x,y}\left( H(K_3 \log n) \leq n^{b_6}
    \right) \geq \delta_3 > 0 \quad \text{ for } n \text{ large
    enough.}
  \end{align}

  At this point we will divert from the corresponding proof in
  \cite{BirknerCernyDepperschmidtGantert2013} since Step~4 there does
  not hold for our model. They were able, due to the exponential decay
  of the total variation distance of $\bP^\indi_{x,y}$ and
  $\bP^\joint_{x,y}$, to jump from a distance of $\log n$ directly to
  $n^{\delta}$, whereas we need to iterate through smaller distances.
  We first go to a distance of $\log^2 n$. For that we use the same
  arguments we used in Step~2 with starting positions $x,y\in\bZ^d$
  such that $K_3\log n \leq \norm{x-y} \leq \log^2 n$. Our next aim is
  to obtain a lower bound for
  \begin{align}
    \Pr^{\joint}_{x,y} \left( H(\log^2 n) < h(\varepsilon_1 \log
    n) \wedge \log^{4+\alpha} n \right).
  \end{align}
  We can bound that probability from below by using the arguments from
  Step~2 and get the lower bound
  \begin{align}
    \label{eq:53}
    \Pr^{\indi}_{x,y} \left( H(\log^2 n) < h(\varepsilon_1 \log n)
    \wedge \log^{4+\alpha} n \right) - \big(\log^{4+\alpha} n \big) (\varepsilon_1 \log n)^{-\beta}.
  \end{align}
  Since we can choose $\beta>8 $ the right-hand side will go to 0 as
  $n$ tends to infinity. Again we use Lemma~\ref{lemma:exitAnnulus}.
  For dimension $d\geq 3$
  \begin{align}
    \label{eq:30}
    \begin{split}
      \Pr^{\indi}_{x,y}\bigl(H(\log^2 n) < h(\varepsilon_1 \log n) \bigr)
      & \geq (1-\varepsilon)
        \frac{\varepsilon_1^{2-d}-K_3^{2-d}}{\varepsilon_1^{2-d}-(\log^2 n)^{2-d}}\\
      & = (1-\varepsilon) \Bigl(1 - \frac{K_3^{2-d}-(\log
          n)^{2-d}}{\varepsilon_1^{2-d}-(\log n)^{2-d}}\Bigr)
    \end{split}
  \end{align}
  and for $d=2$ note that $\log^{1/2}n < \varepsilon_1\log n$ holds for
  large $n$ and we get
  \begin{align}
    \Pr^{\indi}_{x,y}(H(\log^2 n) < h(\log^{1/2} n)) \geq
    (1-\varepsilon)\frac{\log(K_3\log n)-\log(\log^{1/2} n)}{\log(\log^2
    n)-\log(\log^{1/2} n)}\geq\frac{1-\varepsilon}{3}
  \end{align}
  which are both bounded from below for $n$ large enough. For the
  lower bound in $d=2$ we need to switch from $\varepsilon_1 \log n$
  to $\log^{1/2}n$ since the lower bound estimate with the
  $\varepsilon_1 \log n$ will tend to 0 for $n\to \infty$. The
  estimate for a failed coupling will still tend to 0 since we can
  choose $\beta>0$ large. If we do the same for the step from $\log^2
  n$ to $\log^4 n$ we get
  \begin{align}
    \Pr^{\joint}_{x,y}\left( H(\log^4 n) < h(K_3\log n) \wedge
    \log^{8+\alpha} n \right) \geq \delta_4 > 0.
  \end{align}
  with the coupling we can use the same arguments by choosing $\beta$
  accordingly. The invariance principle will yield a bound that goes
  to 0 if $n$ tends to infinity and we have for $d=3$
  \begin{align}
    \label{eq:31}
    \begin{split}
      \Pr^{\indi}_{x,y}(H(\log^4 n) < h(K_3\log n))
      & \geq (1-\varepsilon)
        \frac{(K_3 \log n)^{2-d}-(\log^2 n)^{2-d}}{(K_3 \log n)^{2-d} -
        (\log^4 n)^{2-d}} \\
      & = (1-\varepsilon)\frac{K_3^{2-d} - (\log n)^{2-d}}{K_3^{2-d} -
        (\log^3 n)^{2-d}},
    \end{split}
  \end{align}
  and for $d=2$
  \begin{align}
    \Pr^{\indi}_{x,y}(H(\log^4 n) < h(K_3\log n))\geq (1-\varepsilon)
    \frac{\log\log n - \log K_3}{3\log\log n - \log
    K_3}\geq\frac{(1-\varepsilon)}{4}
  \end{align}
  which as well are bounded from below by a positive constant. Thus we
  have for $d\geq2$ and $n$ large enough
  \begin{align}
    \Pr^{\joint}_{x,y} \left( H(\log^4 n) < h(\varepsilon_1 \log n)
    \wedge \log^{8+\alpha} n \right) \geq \delta_5 >0
  \end{align}
  Now we can combine this with Step~1 and Step~2 to get, in a similar
  way to Step~3,
  \begin{align}\label{eq:NewStep3}
    \Pr^{\joint}_{x,y}\left( H(\log^4n) \leq n^{b_6} \right)
    \geq \delta_6 >0 \qquad \text{ for } n \text{ large enough.}
  \end{align}

  Next we will go from $\log^4 \! n$ to $\log^8 \!n$ and then to
  $\log^{16} \!n$ and so on. Iterating, we go from $\log^{2^i} \!n$ to
  $\log^{2^{i+1}} \!n$ until we reach $n^{\delta}$. If we could do
  that in a number of times that does not depend on $n$ the proof
  would be completed. Instead we have to show that if $n$ tends to
  infinity, the product over the probabilities for all those steps is
  still bounded from below by a small constant away from $0$.

  From here on we can formulate the steps in more generally and handle
  the remaining part of the proof in a unified way. Say we are at a
  distance of $\log^{2^j} \! n$ and want to reach distance
  $\log^{2^{j+1}} \! n$. We will do that in the same way as in Step 2.
  For that we split the next step in cases $d\geq 3$ and $d=2$

  \medskip
  \noindent
  \emph{Step 4 $(d\geq 3)$.} We start with $d\geq 3$ and want to
  bound
  \begin{align}
    \label{eq:52}
    \Pr^{\joint}_{x,y} \left( H(\log^{2^{j+1}}\! n) <
    h(\log^{2^{j-1}}\! n) \wedge \log^{2^{j+2}+\alpha}\! n \right),
  \end{align}
  where $\log^{2^j} n \leq \norm{x-y} < \log^{2^{j+1}}\! n$. If we
  couple with $\Pr^{\indi}_{x,y}$ we get the lower bound
  \begin{align}
    \label{eq:100}
    \Pr^{\indi}_{x,y} \left(H(\log^{2^{j+1}}\! n) < h(\log^{2^{j-1}}\! n)
    \wedge \log^{2^{j+2}+\alpha}\! n \right)
    - \log^{2^{j+2}+\alpha}\! n \cdot (\log n)^{-2^{j-1}\beta}.
  \end{align}
  If $\beta > 8$ the second term will go to zero. For the first term
  we get a lower bound in a similar way to Step 2. We first observe
  \begin{multline}
    \Pr^{\indi}_{x,y} \left( H(\log^{2^{j+1}}\! n) <
      h(\log^{2^{j-1}}\! n) \wedge \log^{2^{j+2}+\alpha}\! n \right) \\
    \geq \Pr^{\indi}_{x,y} \left( H(\log^{2^{j+1}}\! n) <
      h(\log^{2^{j-1}}\! n) \right) - \Pr^{\indi}_{x,y} \left(
      H(\log^{2^{j+1}}\! n) \geq \log^{2^{j+2}+\alpha}\! n \right)
  \end{multline}
  Arguing as in \eqref{eq:HK2logn.too.big} we obtain
  \begin{align}
    \label{eq:101b}
    \widetilde{\varepsilon}'(n) := \Pr^{\indi}_{x,y} \left(
    H(\log^{2^{j+1}}\! n) \geq \log^{2^{j+2}+\alpha}\! n \right)
    \mathop{\longrightarrow}_{n\to\infty} 0
  \end{align}
  and with Lemma~\ref{lemma:exitAnnulus}
  \begin{align}
    \label{eq:102}
    \begin{split}
      \Pr^{\indi}_{x,y}\left( H(\log^{2^{j+1}}\! n) < h(\log^{2^{j-1}}\! n) \right)
      & \geq (1-\varepsilon)\frac{(\log^{2^{j-1}}\! n)^{2-d} -
        (\log^{2^{j}} n)^{2-d}}{(\log^{2^{j-1}}\! n)^{2-d} -
        (\log^{2^{j+1}}\! n)^{2-d}}\\
      & = (1-\varepsilon) \frac{(\log n)^{-2^{j-1}(2-d)} - 1}{(\log n)^{-2^{j-1}(2-d)}
        - (\log n)^{2^j (2-d)}}\\
      & = (1-\varepsilon) (1 - \frac{1 - (\log n)^{2^j (2-d)}}{(\log
        n)^{-2^{j-1}(2-d)} - (\log n)^{2^j (2-d)}}\\
      & \geq (1-\varepsilon)(1 - \frac{2}{(\log n)^{-2^{j-1}(2-d)}})
    \end{split}
  \end{align}
  for $n$ large enough, because for $d \ge 3$
  \begin{align*}
    (\log n)^{2^j (2-d)} \xrightarrow{n \to \infty} 0\quad
    \text{and} \quad (\log n)^{-2^{j-1}(2-d)} \xrightarrow{n \to \infty} \infty.
  \end{align*}
  Combining equations \eqref{eq:100}, \eqref{eq:101b}
  and \eqref{eq:102} we conclude
  \begin{multline}
    \label{eq:lowerBoundForIterationDim3}
    \Pr^{\joint}_{x,y}
    \left(H(\log^{2^{j+1}}\! n) < h(\log^{2^{j-1}}\! n) \wedge
      \log^{2^{j+2}+\alpha}\! n \right)\\
    \geq 1 - \varepsilon - (\log n)^{2^{j+2}+\alpha}
    \cdot (\log n)^{-2^{j-1}\beta} - 2(\log n)^{2^{j-1}(2-d)}
    - \widetilde{\varepsilon}'(n)
  \end{multline}

  \medskip
  \noindent
  \emph{Step 4 $(d=2)$.} Next we want to get a lower bound
  for the case $d=2$. Again assume
  $\log^{2^j}\!n\leq \norm{x-y} \leq \log^{2^{j+1}}\! n$ and we want to bound
    \begin{align}
      \Pr^{\joint}_{x,y} \left( H(\log^{2^{j+1}}\! n) <
      h(\log^{2^{j-1}}\! n) \wedge \log^{2^{j+2}+\alpha}\! n \right).
    \end{align}
    The difference to the case $d\geq 3$ lies in the application of
    Lemma~\ref{lemma:exitAnnulus} so we will concentrate on those
    differences.
    \begin{align}
      \label{eq:lowerBoundForIterationDim2}
      \begin{split}
        \Pr^{\indi}_{x,y}\left(H(\log^{2^{j+1}}\! n) < h(\log^{2^{j-1}}\! n)\right)
        & \geq (1-\varepsilon)\frac{\log(\log^{2^j}\!n) -
          \log(\log^{2^{j-1}}n)}{\log(\log^{2^{j+1}}\! n) -
          \log(\log^{2^{j-1}}n)}\\
        & =(1-\varepsilon) \frac{2^j-2^{j-1}}{2^{j+1}-2^{j-1}} =
        \frac{(1-\varepsilon)}{3}
      \end{split}
    \end{align}
    With this we get a lower bound for the left hand side of
    \eqref{eq:lowerBoundForIterationDim3} in $d=2$ that is bounded
    away from $0$.

    \medskip
    \noindent
    \emph{Step 5.} Now we bring the previous steps together. By
    \eqref{eq:NewStep3} we can get to a distance of $\log^4 n$ in at
    most $n^{b_6}$ steps, where $b_6>0$ is small. From there we can
    iterate Step~4. We want to get to a distance of $n^{\delta}$.
    Since
    \begin{align*}
       \log^{2^j}\!n = n^{\delta} &
      \; \iff \; 2^j \log\log n = \delta \log n\\
      & \; \iff \; j \log(2) + \log\log\log n = \log(\delta) + \log\log n ,
    \end{align*}
    we need $j_n^* \coloneqq \frac{1}{\log 2}(\log \delta + \log\log n
    -\log\log\log n)$ many iterations of Step~4. Following this
    construction the number of steps we need to get to $n^{\delta}$ is
    the number of steps to get to $\log^4 n$ and then the iterations
    of Step~2. Thus, the total number is
    \begin{align}
      \label{eq:1}
      \begin{split}
        n^{b_6} + \sum_{i=1}^{j_n^*-1} \log^{2^{i+2+\alpha}} n
        & = n^{b_6} + \sum_{i=1}^{j_n^*-1} \log^{\alpha}(n)(\log^{ 2^{i+1}} n)^{2}\\
        & \leq  n^{b_6} + c\log(\log n)\cdot\log^{\alpha}(n) n^{2 \delta}\\
        & \leq n^{b_6} + c n^{2\delta+\alpha}\\
        & \leq n^{2\delta+2\alpha}
      \end{split}
    \end{align}
    for $n$ large enough, where we used $j_n^*\leq c\log\log n$ for a
    positive constant $c>0$ and that $b_6$ can be chosen smaller than
    $\alpha$. Obviously since $\alpha$ can be chosen small we can
    assume that $2\alpha<\varepsilon^*$. We only need to show now that
    the probability to make it to a distance of $n^{\delta}$ in that
    time is positive. Then we can use the Markov property and have
    shown the claim. To show that we just multiply the above estimated
    probabilities. That gives us the following product for $d=3$
    \begin{align}
      \begin{split}
        \Pr^{\joint}_{x_0,x'_0}
        & \left(\norm{\widehat{X}_{n^{2\delta+2\alpha}}
          -\widehat{X}'_{n^{2\delta+2\alpha}}} \geq n^{\delta}
        \right)\\
        & \geq \delta_6 \cdot \prod_{i=2}^{c \log\log n} \left( 1 -
          \varepsilon - (\log n)^{2^{i+1}-2^{i-1}\beta} - 2(\log
          n)^{2^{j-1}(2-d)} -c\log^{-\alpha} n \right).
      \end{split}
    \end{align}
    Hence the probability that all this works in ``one go'' is at least
    \begin{align*}
    (1-\varepsilon')^{c\log\log n}\geq (\log
      n)^{-c\varepsilon'}
    \end{align*}
    for some small positive $\varepsilon'$.

    For $d=2$ it is easy to see that we also get a lower bound of the
    same sort by taking the product over the right hand side of
    \eqref{eq:lowerBoundForIterationDim2}. Thus, we will need
    approximately $(\log n)^{c\varepsilon'}$ many attempts. By
    \eqref{eq:1}, each of the attempts takes $n^{2\delta+2\alpha}$ steps for
    some constant $\alpha< \varepsilon^*/2$. We have $n^{\varepsilon^*-2\alpha}$
    attempts and thus
    \begin{align*}
      \Pr^\joint_{x_0,x'_0}(H(n^\delta)>n^{2\delta+\varepsilon^*}) \leq \Big(1-(\log
      n)^{c\log(1-\varepsilon')}\Big)^{n^{\varepsilon^*-2\alpha}} \leq \exp\Big(
      -c\frac{n^{\varepsilon^*-2\alpha}}{\log^{\tilde{c}}n} \Big)
    \end{align*}
    which shows \eqref{eq:2} and concludes the proof of the lemma.
\end{proof}

Recall
\begin{lemma}[Analogue to \cite{BirknerCernyDepperschmidtGantert2013}
  Lemma~3.9]
  \label{lem:coupl} Let $d\geq 2$. The Markov chain\
  \eqref{eq:bigchain} which describes the coupling of a $\joint$-pair
  an $\indi$-pair of walks has the following property: For any
  $0<b_1<1/2$ there is a constants $C>0$ and $c>0$ so that for
  arbitrary $x_0,x_0'$
  \begin{align}
    \label{eq:5}
    \Pr_{x_0, x_0'} (\text{at most $N^{b_1}$ uncoupled
    steps before time $N$}) \ge 1-CN^{-c},
  \end{align}
  where uncoupled steps are defined in \eqref{eq:uncoupledstep}.
\end{lemma}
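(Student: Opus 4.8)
The plan is to adapt the argument of \cite[Lemma~3.9]{BirknerCernyDepperschmidtGantert2013}, the key inputs being the separation estimate of Lemma~\ref{lem:separ} for the $\joint$-pair and the per-step uncoupling bound \eqref{eq:uncoupledstep.prob}. Fix $\delta>0$ and $\varepsilon^*>0$ small enough that $2\delta+\varepsilon^*<b_1$, and recall that $\beta$ can be taken as large as we wish; we will use that $\delta\beta$ is large. Throughout one works with the chain \eqref{eq:bigchain} under $\Pr_{x_0,x_0'}$, whose $\joint$-component evolves as under $\bP^\joint_{x_0,x_0'}$, so that Lemma~\ref{lem:separ} applies to it.

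First I would decompose $[0,N]$ along the pair $(\widehat{X}^\joint,\widehat{X}'^{\,\joint})$ into alternating \emph{separation} and \emph{protected} phases: set $\theta_0:=0$ and, for $k\ge1$, $\sigma_k:=\inf\{m\ge\theta_{k-1}:\norm{\widehat{X}^\joint_m-\widehat{X}'^{\,\joint}_m}_2\ge N^\delta\}$, $\theta_k:=\inf\{m>\sigma_k:\norm{\widehat{X}^\joint_m-\widehat{X}'^{\,\joint}_m}_2<N^\delta/2\}$. On the separation phases $[\theta_{k-1},\sigma_k]$ one makes no use of the coupling, but by Lemma~\ref{lem:separ} and the Markov property (Remark~\ref{rem:jointMC}) applied at $\theta_{k-1}$, each has length at most $N^{2\delta+\varepsilon^*}$ with probability $1-\exp(-CN^c)$; since the total number of phases up to time $N$ is at most $N$, a union bound shows all separation phases before time $N$ are this short off an event of probability $N\exp(-CN^c)\to0$. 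On a protected phase $[\sigma_k,\theta_k]$ the separation stays $\ge N^\delta/2$, so by \eqref{eq:uncoupledstep.prob} the conditional probability of an uncoupled step there is at most $C'N^{-\delta\beta}$; summing over the at most $N$ steps inside protected phases and applying Markov's inequality, the number of uncoupled steps falling inside protected phases before time $N$ is at most $\tfrac12N^{b_1}$ with probability $1-C''N^{1-\delta\beta-b_1}\to1$ once $\delta\beta>1+b_1$.

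It therefore remains to bound $K_N:=\#\{k\ge1:\sigma_k\le N\}$, the number of separation phases, since then on the good events above the number of uncoupled steps inside separation phases is at most $K_N\cdot N^{2\delta+\varepsilon^*}$, and \eqref{eq:5} follows as soon as $K_N\le\tfrac12N^{b_1-2\delta-\varepsilon^*}$ with high probability (note $b_1-2\delta-\varepsilon^*>0$). To this end I would observe that a protected phase starts at distance $\ge N^\delta$ and has probability bounded below --- by a positive constant when $d\ge3$, and of order $1/\log N$ when $d=2$ --- of reaching distance $N^{1/2}/\log N$ before dropping below $N^\delta/2$; this uses Lemma~\ref{lemma:exitAnnulus} for the $\indi$ difference walk together with the fact that, by \eqref{eq:uncoupledstep.prob} with $\beta$ large, throughout a protected phase the $\joint$-pair is coupled to the $\indi$-pair except on an event of probability $O(N^{-\delta\beta})$, which is summable over the at most $N$ phases. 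Once the distance reaches $N^{1/2}/\log N$, the central limit theorem for the $\indi$ difference walk (Remark~\ref{rem:indMC}, Proposition~\ref{prop:IndRegTimesBound} with $\beta>2$) shows that it does not return below $N^\delta$ before time $N$ with probability tending to $1$. Hence $K_N$ is stochastically dominated, up to an event of vanishing probability, by a geometric random variable with success probability of order $1/\log N$, so $K_N\le\log^2N$ with high probability, which is far below $N^{b_1-2\delta-\varepsilon^*}$. Assembling the three bounds yields \eqref{eq:5}.

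The main obstacle is this last step: the recurrence/transience behaviour one wants to exploit is transparent only for the $\indi$ difference walk, which has i.i.d.\ centred increments and hence an invariance principle (Remark~\ref{rem:indMC}), whereas the phases are defined through the $\joint$-pair, and after a --- rare --- uncoupled step the two pairs genuinely diverge. The way around this is the interplay already visible above: on protected phases the separation is $\gtrsim N^\delta$, so for $\beta$ large the coupling survives each phase except with probability $O(N^{-\delta\beta})$, and the $\indi$ hitting and CLT estimates then transfer; meanwhile Lemma~\ref{lem:separ} is invoked only from the configurations reached at the stopping times $\theta_{k-1}$, which is legitimate by its uniformity over starting points together with the strong Markov property of the chain \eqref{eq:bigchain}. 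The restriction $d\ge2$ is essential precisely here, since for $d=1$ the $\indi$ difference walk is recurrent at the native scale and $K_N$ cannot be controlled in this way; that case is treated separately in Section~\ref{sec:dimension1}.
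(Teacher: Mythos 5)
The proposal is correct and follows essentially the same strategy as the paper's own proof: decompose the trajectory of the $\joint$-pair into excursions between two distance thresholds, bound the length of each ``close'' excursion by the separation lemma (Lemma~\ref{lem:separ}) uniformly in the starting configuration, control the number of excursions geometrically via the annulus-exit estimates of Lemma~\ref{lemma:exitAnnulus} transferred through the coupling, and use \eqref{eq:uncoupledstep.prob} for steps taken at separation at least a power of $N$. The remaining differences are cosmetic: you let the pair escape to distance $\sqrt{N}/\log N$ and argue no return before time $N$ (which indeed holds, in $d=2$ via the annulus estimate with outer radius of order $\sqrt{N}\log N$ combined with a maximal inequality rather than literally ``by the CLT''), whereas the paper escapes to $K'N$ with constant per-excursion probability and shows that distance $K'N$ is not reached before time $N$; and you tolerate up to $\tfrac12 N^{b_1}$ uncoupled steps at large separation via a first-moment bound, where the paper excludes them all at cost $N^{1-\alpha\beta}$.
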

Note that the constant $c$ in Lemma~\ref{lem:coupl} can be chosen
arbitrarily large if $\beta$ is large enough. This will be clear from
the proof.

\begin{proof}
  In order to make use of Remark~\ref{rem:TVdistance-joint-ind-1step}
  we need to ensure a minimal distance between the random walkers. To
  that end we decompose the trajectory of $(\widehat{X}^\joint_n,
  \widehat{X}'^\joint_n)_n$ into excursions. For a large constant
  $K'$, to be tuned later, and $\delta>0$ small enough to be
  admissible in Lemma~\ref{lem:separ} we define stopping times
  $\mathcal{R}_i,\mathcal{D}_i$ and $\mathcal{U}$ (for the canonical
  filtration of the coupled process \eqref{eq:bigchain}) by
  $\mathcal{R}_0=0$ and, for $i\ge 1$ and small $0<\alpha<\delta$,
  \begin{align*}
    \mathcal{D}_i
    & = \inf\{ k\ge \mathcal{R}_{i-1} \colon
      \norm{\widehat{X}^\joint_k -\widehat{X}'^\joint_k} \ge N^{\delta} \},\\
    \mathcal{R}_i
    & = \inf\{k\ge \mathcal{D}_i \colon
      \norm{\widehat{X}^\joint_k -\widehat{X}'^\joint_k} \le N^{\alpha} \},\\
    \mathcal{U}
    &= \inf\{k\ge 0 \colon
      \norm{\widehat{X}^\joint_k -\widehat{X}'^\joint_k} \ge K' N \}.
  \end{align*}
  We want to ensure that the time spent in the intervals
  $\mathcal{D}_i-\mathcal{R}_{i-1}$ is suitably short with high
  probability. We find an upper bound for this time if
  $\mathcal{D}_1-\mathcal{R}_{0}$, i.e.\ the initial time spent to
  separate to a distance of $N^\delta$, is as large as possible. This
  is the case when both random walks start on the same position. Let
  $x_0=x'_0=0$ and, to avoid too much notation, we write
  $\bP^\joint_{0,0} = \bP^\joint$. Furthermore let $J$ be the unique
  integer such that $\mathcal{D}_J \le \mathcal{U} \le \mathcal{R}_J$
  and set $Y^\joint_i=\norm{\widehat{X}^\joint_i
    -\widehat{X}'^\joint_i}$, the distance of the two walkers at the
  $i$-th regeneration time. In the following, for the sake of
  readability, we pretend that the transitions only depend on the
  distance and we write $\bP^\joint_r$ for the distribution of
  $Y^\joint$ starting from $Y^\joint_0=r$. (This is of course not
  literally true since the transitions depend on the displacement
  $\widehat{X}^\joint_i -\widehat{X}'^\joint_i$ itself, see
  Remark~\ref{rem:jointMC}. However, in view of
  Remark~\ref{rem:TVdistance-joint-ind-1step} and the invariance
  principle under $\bP^\indi$, it is `almost' true. The computation
  could be written completely rigorously but we think it would be less
  readable.)

  Adapting arguments from Step~2 in the proof of Lemma~\ref{lem:separ}
  we obtain, using the invariance principle for $d=2$,
  \begin{align*}
    \bP^\joint_{N^\delta}(H(K'N)< h(N^\alpha))
    & \ge \bP^\joint_{N^\delta}(H(K'N)< h(N^\alpha)\wedge (K'N)^3)\\
    & \ge \bP^\indi_{N^\delta}(H(K'N)< h(N^\alpha)\wedge (K'N)^3) - (K'N)^3N^{-\alpha\beta}\\
    & \ge\bP^\indi_{N^\delta}(H(K'N)< h(N^\alpha))
      - \bP^\indi_{N^\delta}(H(K'N) \ge (K'N)^3) \\
    & \hspace{9cm}- (K'N)^3N^{-\alpha\beta}\\
    & \ge \bP^\indi_{N^\delta}(H(K'N)< h(N^\alpha))
      - \widetilde{\varepsilon}(N) -(K'N)^3N^{-\alpha\beta}\\
    & \ge (1-\varepsilon)\frac{\log N^\delta - \log N^\alpha}{\log(K'N) -
      \log N^\alpha}
      - \widetilde{\varepsilon}(N) - CN^{3-\alpha\beta}\\
    & \ge (1-\varepsilon)\frac{\delta - \alpha}{1-\alpha - \frac{\log
      K'}{\log N}}
      - \widetilde{\varepsilon}(N)- CN^{3-\alpha\beta}\\
    & \ge \frac{1}{2}\frac{\delta - \alpha}{1-\alpha}
  \end{align*}
  for $N$ large enough and $3/\alpha < \beta$, with
  $\widetilde{\varepsilon}(N) \to 0 $ as $N\to\infty$.  For $d\ge 3$
  the only change is for the lower bound
  \begin{equation*}
    \bP^\indi_{N^\delta}\big( H(K'N) < h(N^\alpha) \big) \ge
    (1-\varepsilon) \frac{N^{\alpha(2-d)}-N^{\delta(2-d)}}{N^{\alpha(2-d)}-(K'N)^{2-d}}
  \end{equation*}
  which also is larger than
  $\frac{1}{2}\frac{\delta-\alpha}{1-\alpha}$. Thus $J$ is dominated by a geometric
  distribution with success parameter $\frac{1}{2}\frac{\delta -
    \alpha}{1-\alpha}$ as $N\to \infty$, for any $d\ge 2$. Therefore
  there is a constant $K(\alpha,\delta)>0$,
  \begin{equation*}
    \bP^\joint(J\ge K(\alpha,\delta)\log N) \le
    N^{K(\alpha,\delta)\log(1-(\delta-\alpha)/(2-2\alpha))}.
  \end{equation*}
  Note that we can make the exponent above arbitrarily small by
  choosing $K(\alpha,\delta)$ large. Applying the separation lemma,
  Lemma~\ref{lem:separ}, we get
  \begin{align}
    \bP^\joint(\mathcal{D}_i - \mathcal{R}_{i-1} \ge
    N^{2\delta+\varepsilon^*}) \le \exp(-b_3N^{b_4})
  \end{align}
  for constants $\varepsilon^*>0$ and $b_3,b_4>0$, since
  $N^\delta-N^\alpha\le N^{\delta+\varepsilon^*/2}$. Combining these
  we obtain
  \begin{align}
    \label{eq:9}
    \bP^\joint\Big( \sum_{i=1}^J \mathcal{D}_i - \mathcal{R}_{i-1}\ge
    K(\alpha,\delta)N^{2\delta+\varepsilon^*}\log N \Big) \le
    CN^{K(\alpha,\delta)\log(1-\frac{\delta-\alpha}{2-2\alpha})}.
  \end{align}
  Using
  Remark~\ref{rem:TVdistance-joint-ind-1step}
  to compare $\bP^\joint$ and $\bP^\indi$ and large deviation
  estimates for sums of independent, heavy tailed random variables
  (using the tail bounds from Proposition~\ref{prop:IndRegTimesBound}
  and, for example, \cite{Nagaev82}, Thm.~2)
  \begin{align*}
    & \bP^\indi\Big( \exists
      k\in \bigcup_{i=1}^J\{\mathcal{D}_i,\dots,\mathcal{R}_i
      \}\cap\{1,\dots,N \} \colon \norm{Y_k}\ge K'N\Big)\\
    & \le \bP^\indi\Big(\exists k\le N \colon T_k \cdot 2R_\kappa \ge K'N \Big)\\
    & \le \bP^\indi\Big( T_N \ge \frac{K'N}{2R_\kappa} \Big)\\
    & \le CN^{1-\beta}
  \end{align*}
  for $K'$ large enough, e.g. $K'>8R_\kappa \bE[\tau^\indi_2]$. Using
  that
  \begin{align*}
    \bP^\joint(\mathcal{U} \le N)
    & \le \bP^\joint\Big(\exists k\le N \colon \norm{Y_k}\ge K'N\Big) \\
    & \le \bP^\joint\Big(\exists k\in
      \bigcup_{i=1}^J\{\mathcal{D}_i,\dots,\mathcal{R}_i
      \}\cap\{1,\dots,N \} \colon \norm{Y_k}\ge K'N\Big) \\
    & \le \bP^\indi\Big( \exists k\in
      \bigcup_{i=1}^J\{\mathcal{D}_i,\dots,\mathcal{R}_i
      \}\cap\{1,\dots,N \} \colon \norm{Y_k}\ge K'N\Big)\\
    & \hspace{3cm} + N\cdot N^{-\alpha \beta}\\
    & \le CN^{1-\beta} + N^{1-\alpha\beta}
  \end{align*}
  and thus $\bP^\joint(\mathcal{U}\le N) \le CN^{1-\alpha\beta}$,
  since $\alpha$ is small. Combining this and \eqref{eq:9} gives
  \begin{align*}
    \bP^\joint
    & \Big( \# \{ k\le N \colon \norm{\widehat{X}^\joint_k -\widehat{X}'^\joint_k}
      \le N^\alpha \}\ge
      K(\alpha,\delta)N^{2\delta+\varepsilon^*}\log N \Big)\\
    & \le \bP^\joint\Big( \sum_{i\colon \mathcal{R}_{i-1}\le N}
      \mathcal{D}_i - \mathcal{R}_{i-1}\ge
      K(\alpha,\delta)N^{2\delta+\varepsilon^*}\log N \Big)\\
    &\le CN^{1-\alpha\beta} + CN^{K(\alpha,\delta)
      \log(1-\frac{\delta-\alpha}{2-2\alpha})}.
  \end{align*}
  If the event
  \[
    \# \{ k\le N \colon \norm{\widehat{X}^\joint_k -\widehat{X}'^\joint_k} \le N^\alpha \}\ge K(\alpha,\delta)
    N^{2\delta+\varepsilon^*}\log N
  \]
  does not occur, we can with
  probability at least $1-N^{1-\alpha\beta}$ couple $\bP^\indi$ and
  $\bP^\joint$ for all $k$ satisfying $\mathcal{D}_i\le k \le
  \mathcal{R}_{i-1}$ for some $i$. Taking $2\delta+\varepsilon^*<b_1$
  we proved \eqref{eq:5} for $d\ge2$, since
  $K(\alpha,\delta)N^{2\delta+\varepsilon^*}\log N \le N^{b_1}$ for
  large $N$ and we can tune $\beta$ and $K(\alpha,\delta)$ large
  enough such that all exponents, i.e.\
  $K(\alpha,\delta)\log(1-\frac{\delta-\alpha}{2-2\alpha})$ and
  $1-\alpha\beta$, are arbitrarily small. Note the tuning of the exponents
  even allows us to get $c>1$ which is relevant in the proof of
  Lemma~\ref{lem:ind-to-joint} below.
  \end{proof}

\subsection{Quenched CLT along regeneration times}
\label{sect:qCLT-along-1}

The main result of this section, Proposition~\ref{prop:qCLT.regen.1} below,
is the quenched CLT for a walk along the (joint) regeneration times.
The following proposition gives the key technical estimate for its proof.

\begin{proposition}
  There \label{prop:2nd-mom} exists $c>0$ and a non-trivial centered
  $d$-dimensional normal law $\wt{\Phi}$ such that for
  $f: \R^d \to \R$ bounded and Lipschitz we have
  \begin{align}
    \label{eq:2nd-mom-estimate}
    \bE\left[ \left( E_\omega[f(\widehat{X}^\joint_m/\sqrt{m})]-\wt{\Phi}(f)
    \right)^2 \right] \leq C_f m^{-c}, \quad m \in \N
  \end{align}
  where the prefactor $C_f$ depends on $f$ through its Lipschitz and
  supremum norms.
\end{proposition}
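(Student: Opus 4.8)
The plan is to exploit the identity \eqref{eq:roadmap1-1}, which says
that $\bE[(E_\omega[f(\widehat X^\joint_m/\sqrt m)])^2]$ equals the
annealed expectation $\bE^\joint[f(\widehat X^\joint_m/\sqrt m)
f(\widehat X'^{\,\joint}_m/\sqrt m)]$, where $(\widehat X^\joint,
\widehat X'^{\,\joint})$ is a pair of walks in the \emph{same}
environment observed along joint regeneration times. The target
$\wt\Phi$ will be the law of $\sqrt{\Sigma}\,\mathcal N$ where $\Sigma$
is the (per-regeneration-step) covariance of $\widehat X^\indi$ from
Proposition~\ref{prop:IndRegTimesBound} and Remark~\ref{rem:indMC},
suitably rescaled by the mean inter-regeneration time (so that the
diffusive normalisation $\sqrt m$ rather than $\sqrt{T^\joint_m}$ is
the correct one — this is where Remark~\ref{rem:jointMC} and the
finite-$\gamma$-moment bounds \eqref{eq:joint.gamma.moments},
\eqref{eq:ind.gamma.moments} are used). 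Expanding the square as
$\bE[(E_\omega[f]-\wt\Phi(f))^2] = \bE^\joint[f f'] - 2\wt\Phi(f)
\bE^\joint[f] + \wt\Phi(f)^2$, it suffices to show (i) $\bE^\joint[f(\widehat X^\joint_m/\sqrt m)]
\to \wt\Phi(f)$ with polynomial rate, which is essentially the annealed
CLT for a single walk along regeneration times (a single copy of the
construction), and (ii) the key estimate
\[
\bE^\joint\!\left[f(\widehat X^\joint_m/\sqrt m)\, f(\widehat X'^{\,\joint}_m/\sqrt m)\right]
\;\longrightarrow\; \wt\Phi(f)^2
\]
at polynomial rate. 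Then \eqref{eq:2nd-mom-estimate} follows by
combining (i), (ii) and algebra.

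To prove (ii) the strategy is: compare the $\joint$-pair with the
$\indi$-pair using the coupling chain \eqref{eq:bigchain}. Since
$(\widehat X^\indi, \widehat X'^{\,\indi})$ is a genuine random walk
with i.i.d.\ centred increments of finite $\gamma$-th moments
($2<\gamma<\beta$), a joint CLT gives $\bE^\indi[f(\widehat X^\indi_m/\sqrt m)
f(\widehat X'^{\,\indi}_m/\sqrt m)] \to \int f\,d\wt\Phi \int f\,d\wt\Phi$ — here the product
structure is exact because the two walks live in \emph{independent}
environments, so their joint law factorises in the limit; this already
yields $\wt\Phi(f)^2$. It remains to bound the difference
$\bE[f(\widehat X^\joint_m/\sqrt m)f(\widehat X'^{\,\joint}_m/\sqrt m) -
f(\widehat X^\indi_m/\sqrt m)f(\widehat X'^{\,\indi}_m/\sqrt m)]$. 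Since
$f$ is bounded and Lipschitz, this is controlled by $\bE[\min(1,
(\|\widehat X^\joint_m - \widehat X^\indi_m\| + \|\widehat
X'^{\,\joint}_m - \widehat X'^{\,\indi}_m\|)/\sqrt m)]$. The coupling
is broken only at uncoupled steps, of which by Lemma~\ref{lem:coupl}
there are at most $m^{b_1}$ with $b_1<1/2$ except on an event of
probability $Cm^{-c}$; on the complement, each uncoupled step
contributes a displacement with finite $\gamma$-th moment (via
\eqref{eq:joint.gamma.moments}, \eqref{eq:ind.gamma.moments}), so the
accumulated discrepancy is $O(m^{b_1} \cdot m^{1/\gamma})$ in a suitable
$L^p$ sense, which is $o(\sqrt m)$ once $\beta$ (hence the admissible
moment order) is taken large enough and $b_1$ small enough. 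Dividing by
$\sqrt m$ and using boundedness of $f$ on the bad event gives a
polynomial bound $Cm^{-c}$; this is the role of "$o(\sqrt n)$ uncoupled
steps" flagged in the roadmap.

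The main obstacle is the quantitative control of the discrepancy
$\|\widehat X^\joint_m - \widehat X^\indi_m\|$ (and the primed version)
on the diffusive scale, i.e.\ turning the step-count bound of
Lemma~\ref{lem:coupl} into a size bound. Because the
inter-regeneration increments have only polynomial tails (decay
exponent $-\beta$, not exponential as in
\cite{BirknerCernyDepperschmidtGantert2013}), a single large increment
at one of the $\le m^{b_1}$ uncoupled steps could in principle be of
order $m^{1/\beta}$, so one must take $\beta$ large (as the paper
repeatedly insists) and argue carefully with truncation plus a union
bound / maximal inequality for the heavy-tailed increments (cf.\ the
Nagaev-type estimates already invoked in the proof of
Lemma~\ref{lem:coupl}) rather than a naive second-moment computation.
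A secondary technical point is the replacement of the random time
$T^\joint_m$ by its deterministic mean to legitimately use the
normalisation $\sqrt m$; this requires a law of large numbers with CLT
fluctuations for $T^\joint_m$, which follows from
Proposition~\ref{prop:JointRegTimesBound} and the near-independence of
successive inter-regeneration times, and should be handled before or in
parallel with the coupling estimate. Once these pieces are in place the
assembly into \eqref{eq:2nd-mom-estimate} is routine, with the exponent
$c>0$ determined by the minimum of the exponents appearing in
Lemma~\ref{lem:coupl}, the CLT rate for $\widehat X^\indi$, and the
moment order $\gamma$.
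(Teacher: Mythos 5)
There is a genuine gap at the very first step. You assert, appealing to \eqref{eq:roadmap1-1}, that $\bE[(E_\omega[f(\widehat{X}^\joint_m/\sqrt m)])^2]$ equals $\bE^\joint[f(\widehat{X}^\joint_m/\sqrt m)\, f(\widehat{X}'^{\,\joint}_m/\sqrt m)]$. This identity is false for the walks observed along \emph{joint} regeneration times: $\widehat{X}^\joint_m = X_{T^\joint_m}$, and $T^\joint_m$ is a functional of \emph{both} walks (and of $\omega$), so conditionally on $\omega$ the variables $f(\widehat{X}^\joint_m/\sqrt m)$ and $f(\widehat{X}'^{\,\joint}_m/\sqrt m)$ are not independent, and the square $(E_\omega[f(\widehat{X}^\joint_m/\sqrt m)])^2$ corresponds to two independent copies of the \emph{entire pair construction} in the same environment, not to the single joint pair. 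The identity \eqref{eq:roadmap1-1} is valid only for the original walks at a deterministic time, and the paper's proof flags exactly this point. Its repair is the essential content of the argument: one first shows, via \eqref{eq:Matthias4} of Lemma~\ref{lem:ind-to-joint}, \eqref{eq:Matthias5} of Lemma~\ref{lem:joint_fluctuations} and the inter-regeneration tail bounds, that $\norm{X_{m\bE[\tau_2^{\indi}]}-\widehat{X}^\joint_{m}} \le m^{1/4+\varepsilon}$ outside an event of probability $O(m^{-\gamma})$ (display \eqref{eq:Matthias6}), transfers this to a quenched statement by Markov's inequality as in \eqref{eq:Matthias8a}, and only then applies the factorisation $(E_\omega[f(X_N/\sqrt m)])^2 = E_\omega[f(X_N/\sqrt m)]\,E_\omega[f(X'_N/\sqrt m)]$ at the deterministic time $N=m\bE[\tau_2^{\indi}]$, before converting back to the regeneration-time pair as in \eqref{eq:Matthias8c}. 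In your write-up the replacement of $T^\joint_m$ by its mean appears only as a normalisation issue, not as the step that restores conditional independence; without it, the quantity you subsequently estimate, $\bE^\joint[ff']-\wt{\Phi}(f)^2$, is not linked to the variance you must bound.

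Once that reduction is in place, the rest of your plan coincides with the paper's: the comparison of $\bE^\joint[ff']$ with $\bE^\indi[ff']$ is exactly Lemma~\ref{lemma:joint-ind-comparison} (which you essentially re-derive from Lemma~\ref{lem:coupl}, the uncoupled-step count and the moment bounds), and the passage from the $\indi$-pair to $\wt{\Phi}(f)^2$, respectively $\wt{\Phi}(f)$, needs a \emph{quantitative} CLT (Berry--Esseen, using the finite third moments from Proposition~\ref{prop:IndRegTimesBound} for $\beta$ large), since a qualitative joint CLT as you state it does not by itself deliver the polynomial rate required in \eqref{eq:2nd-mom-estimate}.
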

For the proof we will need some auxiliary results and it will be given
on page~\pageref{proof-prop:2nd-mom}.

\begin{lemma}[Analogue to Lemma~3.10 in
  \cite{BirknerCernyDepperschmidtGantert2013}]
  \label{lemma:joint-ind-comparison}
  Let $d\geq 2$. Then, there exist
  constants $0<a<1/2, C < \infty$ such that for every pair of bounded Lipschitz functions
  $f,g:\R^d\to \R$
  \begin{align}
    \begin{split}
      \abs{& \bE^{\joint}_{0,0}[f(\widehat{X}_n/\sqrt{n})g(\widehat{X}^{\prime}_n/\sqrt{n})]
             - \bE^{\indi}_{0,0}[f(\widehat{X}_n/\sqrt{n})g(\widehat{X}^{\prime}_n/\sqrt{n})]}\\
           & \leq C(1 + \norm{f}_\infty L_f)(1+\norm{g}_\infty L_g) n^{-a},
    \end{split}
  \end{align}
  where $L_f \coloneqq\sup_{x\neq y}\abs{f(y) - f(x)}/\norm{x-y}$ and
  analogously $L_g$ are the Lipschitz constants of $f$ and $g$.
\end{lemma}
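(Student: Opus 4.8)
The plan is to run the two pairs of walks simultaneously inside the coupled Markov chain \eqref{eq:bigchain}, both started from the diagonal $\widehat{X}^\joint_0=\widehat{X}^\indi_0=0$ and ${\widehat{X}'}^\joint_0={\widehat{X}'}^\indi_0=0$, and to show that at the $n$-th regeneration the rescaled endpoints of the $\joint$-pair and the $\indi$-pair are so close that the bounded Lipschitz functions $f,g$ essentially cannot distinguish them, except on an event of polynomially small probability. The starting observation is that on a \emph{coupled} step the regeneration increments of the two pairs coincide by definition (cf.\ \eqref{eq:uncoupledstep}), so that
\[
  \widehat{X}^\joint_n-\widehat{X}^\indi_n=\sum_{\substack{1\le m\le n\\ m\text{ uncoupled}}}\Big[(\widehat{X}^\joint_m-\widehat{X}^\joint_{m-1})-(\widehat{X}^\indi_m-\widehat{X}^\indi_{m-1})\Big],
\]
and the analogous identity holds for ${\widehat{X}'}^\joint_n-{\widehat{X}'}^\indi_n$: the displacement between the two pairs is a sum over the (few) uncoupled steps of increment differences.

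First I would fix $0<b_1<\varepsilon<1/2$ (say $b_1=1/8$, $\varepsilon=1/4$) and introduce the good event $A_n$ on which (i) among the first $n$ regeneration steps at most $n^{b_1}$ are uncoupled, and (ii) each of the single-step increments $\widehat{X}^\joint_m-\widehat{X}^\joint_{m-1}$, ${\widehat{X}'}^\joint_m-{\widehat{X}'}^\joint_{m-1}$, $\widehat{X}^\indi_m-\widehat{X}^\indi_{m-1}$, ${\widehat{X}'}^\indi_m-{\widehat{X}'}^\indi_{m-1}$ with $1\le m\le n$ has $\sup$-norm at most $n^{1/2-\varepsilon}$. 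By Lemma~\ref{lem:coupl} (recall $d\ge2$) the event in (i) fails with probability at most $Cn^{-c}$, and $c$ can be taken as large as we wish once $\beta$ is large enough. For (ii) the one-step tail bounds of Corollary~\ref{cor:tail bound random walks at sim regnerations} and Proposition~\ref{prop:IndRegTimesBound} hold uniformly in the current state of the chain by the Markov property (Remarks~\ref{rem:jointMC} and \ref{rem:indMC}), so a union bound over the at most $n$ steps and the four walks bounds the failure probability of (ii) by $C\,n\cdot n^{-\beta(1/2-\varepsilon)}$, again $\le Cn^{-c}$ for $\beta$ large enough. Hence $\bP(A_n^c)\le Cn^{-c}$.

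On $A_n$ the displacement identity yields $\norm{\widehat{X}^\joint_n-\widehat{X}^\indi_n}\le 2n^{b_1}n^{1/2-\varepsilon}$, hence $\norm{\widehat{X}^\joint_n/\sqrt n-\widehat{X}^\indi_n/\sqrt n}\le 2n^{b_1-\varepsilon}$, and likewise for the primed walks. Combining this with $\abs{f(u)g(v)-f(u')g(v')}\le\norm{f}_\infty L_g\norm{v-v'}+\norm{g}_\infty L_f\norm{u-u'}$ gives, on $A_n$,
\[
  \Abs{f(\widehat{X}^\joint_n/\sqrt n)\,g({\widehat{X}'}^\joint_n/\sqrt n)-f(\widehat{X}^\indi_n/\sqrt n)\,g({\widehat{X}'}^\indi_n/\sqrt n)}\le C\big(\norm{f}_\infty L_g+\norm{g}_\infty L_f\big)n^{b_1-\varepsilon},
\]
while on $A_n^c$ the left-hand side is trivially at most $2\norm{f}_\infty\norm{g}_\infty$. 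Taking expectations under the coupled measure, whose $\joint$- and $\indi$-marginals are $\bP^\joint_{0,0}$ and $\bP^\indi_{0,0}$, and inserting $\bP(A_n^c)\le Cn^{-c}$, one obtains
\[
  \Abs{\bE^\joint_{0,0}\big[f(\widehat{X}_n/\sqrt n)g(\widehat{X}'_n/\sqrt n)\big]-\bE^\indi_{0,0}\big[f(\widehat{X}_n/\sqrt n)g(\widehat{X}'_n/\sqrt n)\big]}\le C\big(\norm{f}_\infty L_g+\norm{g}_\infty L_f+\norm{f}_\infty\norm{g}_\infty\big)n^{-a}
\]
with $a\coloneqq\min\{\varepsilon-b_1,c\}\in(0,1/2)$, which is of the stated form after bounding the prefactor by $C(1+\norm{f}_\infty L_f)(1+\norm{g}_\infty L_g)$.

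The main obstacle, and the only point where the argument genuinely departs from \cite{BirknerCernyDepperschmidtGantert2013}, is that here the inter-regeneration times and increments have merely polynomial moments rather than exponential ones. Hence the uniform bound $n^{1/2-\varepsilon}$ on a single increment used in (ii) holds only with a polynomially small error, and one must take the tail exponent $\beta$ from Propositions~\ref{prop:JointRegTimesBound} and \ref{prop:IndRegTimesBound} large enough — which is allowed by choosing $p$ close to $1$ and $\varepsilon_\rf$ small — so that both the union bound in (ii) and the coupling error from Lemma~\ref{lem:coupl} beat any prescribed power $n^{-c}$. Everything else (the coupled/uncoupled decomposition and the Lipschitz estimate) is routine. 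Finally, the argument uses $d\ge2$ only via Lemma~\ref{lem:coupl}, which genuinely fails in $d=1$; that case is treated separately through a martingale decomposition.
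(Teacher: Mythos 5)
Your proposal is correct and follows essentially the same route as the paper: the same coupled chain, the same decomposition of $\widehat{X}^\joint_n-\widehat{X}^\indi_n$ into contributions from the at most $n^{b_1}$ uncoupled steps (Lemma~\ref{lem:coupl}), a truncation of the step sizes via the polynomial tail bounds, the bilinear Lipschitz estimate, and boundedness on the bad event. The only (cosmetic) difference is the truncation level: you bound each spatial regeneration increment by $n^{1/2-\varepsilon}$ directly, whereas the paper bounds the maximal inter-regeneration time by $n^{2/\beta}$ and converts to displacements via the finite range $R_\kappa$, giving the exponent $b_1+2/\beta-1/2$ instead of your $b_1-\varepsilon$.
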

Inspection of the proof shows that by increasing $\beta$, one can choose $1/2-a>0$ arbitrarily small.
\begin{proof}
  We work with the coupling of $\bP^\indi_{0,0}$ and $\bP^\joint_{0,0}$ from Lemma~\ref{lem:coupl}.
  Let us define the (random) set of all steps before $n$ at which the coupling
  between $\bP^\indi_{0,0}$ and $\bP^\joint_{0,0}$ is successful
  \begin{align}
    \label{eq:59}
    \mathcal{I}_n \coloneqq \{ k \leq n:\text{the coupling is successful
    for the } k\text{-th step}\}
  \end{align}
  (recall \eqref{eq:uncoupledstep} and the discussion around it),
  its complement
  $\mathcal{I}_n^\compl \coloneqq \{1,\dots,n \}\setminus\mathcal{I}_n$
  and the events
  \begin{align*}
    B_\joint
    & \coloneqq \Bigl\{ \sum_{k\in\mathcal{I}_n^\compl} \bigl(T^\joint_k -
      T^\joint_{k-1} \bigr) \le n^{b_1+2/\beta} \Bigr\},\\
    B_\indi
    & \coloneqq \Bigl\{ \sum_{k\in\mathcal{I}_n^\compl} \bigl (T^\indi_k -
      T^\indi_{k-1} \bigr) \le n^{b_1+2/\beta} \Bigr\},
  \end{align*}
  where $b_1$ is from Lemma~\ref{lem:coupl}. By Lemma~\ref{lem:coupl},
  $\Pr(\abs{\mathcal{I}_n^\compl} > n^{b_1}) \leq n^{-c}$.

  Set $M^\joint_n \coloneqq \max_{k \le n} \{ T^\joint_k-T^\joint_{k-1} \}$,
  $M^\indi_n \coloneqq \max_{k \le n} \{ T^\indi_k-T^\indi_{k-1} \}$ and
  \begin{align*}
    \widetilde{B}_\joint \coloneqq \{ M^\joint_n \le n^{2/\beta} \},
    \quad \widetilde{B}_\indi \coloneqq \{ M^\indi_n \le n^{2/\beta} \}.
  \end{align*}
  By Proposition~\ref{prop:JointRegTimesBound}, we have
  \begin{align*}
    \bP(\widetilde{B}_\joint^\compl)
    & = \bP\left(\exists  \, k \le n \, : T^\joint_k-T^\joint_{k-1} > n^{2/\beta} \right) \\
    & \le \sum_{k=1}^n \bP\left( T^\joint_k-T^\joint_{k-1} > n^{2/\beta} \right) \le C n \cdot (n^{2/\beta})^{-\beta} = \frac{C}{n}
  \end{align*}
  and analogously $\bP(\widetilde{B}_\indi^\compl) \le C/n$ by Proposition~\ref{prop:IndRegTimesBound}.
  Note that
  \begin{align*}
    \{ \abs{\mathcal{I}_n^\compl} \le n^{b_1} \} \cap \widetilde{B}_\joint \subset B_\joint
    \quad \text{and} \quad
    \{ \abs{\mathcal{I}_n^\compl} \le n^{b_1} \} \cap \widetilde{B}_\indi \subset B_\indi .
  \end{align*}

  On the event
  \begin{equation}
    \label{def:eventA}
    A=\{\abs{\mathcal{I}_n^\compl} \le n^{b_1}\} \cap B_\joint\cap B_\indi
  \end{equation}
  we have, due to Assumption \ref{ass:finite-range},
  \begin{align}
    \label{eq:proof lemma 3.10 analogue 1}
    \begin{split}
      & \norm{X_{T^\joint_n} - X_{T^\indi_n}} \le C n^{b_1+2/\beta},\\
      & \norm{X'_{T^\joint_n} - X'_{T^\indi_n}} \le C n^{b_1+2/\beta}.
    \end{split}
  \end{align}
  Thus
  \begin{align*}
    & \Abs{\bE[f(X_{T^\joint_n}/\sqrt{n})g(X'_{T^\joint_n}/\sqrt{n})] -
      \bE[f(X_{T^\indi_n}/\sqrt{n})g(X'_{T^\indi_n}/\sqrt{n})]}\\
    & \le\Big\vert\bE[f(X_{T^\joint_n}/\sqrt{n})g(X'_{T^\joint_n}/\sqrt{n})\mathbbm{1}_A]
      - \bE[f(X_{T^\indi_n}/\sqrt{n})g(X'_{T^\indi_n}/\sqrt{n})\mathbbm{1}_A]\Big\vert\\
    & \qquad+2\norm{f}_\infty\norm{g}_\infty \Bigl(\bP(\abs{\mathcal{I}_n^\compl}
    > n^{b_1}) + \bP\Big(\widetilde{B}_\joint^\compl \Big) + \bP\Big(\widetilde{B}_\indi^\compl \Big) \Bigr).
  \end{align*}

  Lastly, observe that
  \begin{align*}
    \abs{f(x)g(y)-f(x')g(y')} \le \norm{g}_\infty L_f\norm{x-x'} +
    \norm{f}_\infty L_g\norm{y-y'}
  \end{align*}
  and therefore, combining this with \eqref{eq:proof lemma 3.10
    analogue 1} and the last equation yields
  \begin{align*}
    & \Abs{\bE[f(X_{T^\joint_n}/\sqrt{n})g(X'_{T^\joint_n}/\sqrt{n})] -
      \bE[f(X_{T^\indi_n}/\sqrt{n})g(X'_{T^\indi_n}/\sqrt{n})]}\\
    & \le C\norm{g}_\infty L_f
      n^{b_1+2/\beta-1/2} + C\norm{f}_\infty L_g
      n^{b_1+2/\beta-1/2}+C \norm{f}_\infty\norm{g}_\infty (n^{-c}+n^{-1}).
  \end{align*}
  Note that can choose $\beta > 4$ and $b_1$ from
  Lemma~\ref{lem:coupl} so small that $b_1+2/\beta < 1/2$. This
  concludes the proof.
\end{proof}

\begin{remark}
  In \label{rem:cij} the proof of
  Lemma~\ref{lemma:joint-ind-comparison} we have used the coupling
  between a pair of independent random walks on the same environment
  and a pair of random walks on independent environments to carry over
  some properties that are known for the independent random walks on
  independent environments.

  Thus, if we consider some property $W_n$ for independent random
  walks on independent environments up to time $n$ then using the
  above arguments and notation we obtain
  \begin{align}
    \label{eq:57}
    \Pr^\joint(W_n)
    \le \Pr(W_n \cap A ) + \Pr(A^\compl ) \le  \Pr^\indi (W_n) + \Pr(A^\compl )
  \end{align}
  with $A$ from \eqref{def:eventA}.
  By the bounds from the proof of
  Lemma~\ref{lemma:joint-ind-comparison} we have
  \begin{align}
    \label{eq:58}
    \Pr(A^\compl) \le \bP(\abs{\mathcal{I}^\compl}
    > n^{b_1}) + \bP\Big(B_\joint^\compl , \abs{\mathcal{I}^\compl}
    \le n^{b_1}\Big) + \bP\Big(B_\indi^\compl ,
    \abs{\mathcal{I}^\compl} \le n^{b_1}\Big)\le n^{-c} + Cn^{b_1 -\varepsilon\beta}
  \end{align}
  and $c$ and $\beta$ can be made large by tuning the other parameters
  of the model. In particular they can be chosen such that the right
  hand side of \eqref{eq:58} is summable so that Borel-Cantelli
  arguments can be applied when needed.
\end{remark}

For $m \in \N$ we set $\tau^\indi_m \coloneqq T^\indi_m(0,0)-
T^\indi_{m-1}(0,0)$. Furthermore we denote by $V^\indi_n \coloneqq
\max\{m \in \bZ_+ \colon T^\indi_m(0,0) \leq n \}$ and $V^\joint_n
\coloneqq \max\{m \in \bZ_+ \colon T^\joint_m(0,0) \leq n \}$ the
indices of the last regeneration times prior to $n$.
\begin{lemma}
  We have  \label{lem:ind-to-joint} for all $x_0, x_0' \in \Z^d$
  \begin{align}
    \label{eq:60}
        \limsup_{n\to \infty}  \frac{\vert V^\joint_n -n/\bE[\tau^\indi_2]
    \vert}{\sqrt{n\log\log n}} <\infty \quad \bP^\joint_{x_0,x_0'}\text{-a.s.}
  \end{align}
  Furthermore there exist $C < \infty, \gamma >0$ so that
  \begin{align}
    \label{eq:Matthias4}
    \sup_{x_0,x_0' \in \Z^d} \Pr^{\joint}_{x_0,x_0'}\left( \exists \, m \le n : \, | T_m^\joint - m
    \bE[\tau_2^{\indi}] | > n^{1/2 + \varepsilon} \right)
    \le C n^{-\gamma} \quad \text{for } n \in \N.
  \end{align}
\end{lemma}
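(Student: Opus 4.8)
The plan is to reduce both claims to the $\indi$-process, where by Proposition~\ref{prop:IndRegTimesBound} and Remark~\ref{rem:indMC} the increments $\tau_k^\indi = T_k^\indi - T_{k-1}^\indi$ are i.i.d.\ with finite moments of every order $<\beta$, and then to transfer to the $\joint$-process through the coupled chain~\eqref{eq:bigchain} and Lemma~\ref{lem:coupl}. Since $\beta$ may be taken arbitrarily large, I will assume $\beta$ is large; in particular $\mu \coloneqq \bE[\tau_2^\indi] \in (0,\infty)$ and $\sigma^2 \coloneqq \bE[(\tau_2^\indi-\mu)^2] < \infty$.

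First I would control the discrepancy between $T^\joint$ and $T^\indi$. Let $\mathcal{I}_m^\compl$ be the set of uncoupled steps among the first $m$ regenerations, as in the proof of Lemma~\ref{lemma:joint-ind-comparison}; on coupled steps the $\joint$- and $\indi$-increments agree, so
\begin{align*}
  \sup_{m\le M}\abs{T_m^\joint - T_m^\indi} \le \sum_{k\in\mathcal{I}_M^\compl}\Bigl((T_k^\joint - T_{k-1}^\joint) + (T_k^\indi - T_{k-1}^\indi)\Bigr).
\end{align*}
By Lemma~\ref{lem:coupl}, $\Pr(\abs{\mathcal{I}_N^\compl} > N^{b_1}) \le CN^{-c}$ with $b_1\in(0,1/2)$ fixed and $c$ as large as desired, while Propositions~\ref{prop:JointRegTimesBound} and~\ref{prop:IndRegTimesBound} give $\Pr\bigl(\max_{k\le N}(T_k^\joint-T_{k-1}^\joint)\vee(T_k^\indi-T_{k-1}^\indi) > N^{2/\beta}\bigr) \le CN\cdot N^{-2} = C/N$. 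Hence, off an event of probability $\le CN^{-\min(c,1)}$, one gets $\sup_{m\le N}\abs{T_m^\joint - T_m^\indi} \le CN^{b_1+2/\beta}$, which is $o(N^{1/2})$ once $\beta$ is large enough that $b_1+2/\beta<1/2$. For the almost sure version I would fix a small $a>2/\beta$ and $c>1$; since $m\mapsto\abs{\mathcal{I}_m^\compl}$ is nondecreasing and $\Pr\bigl(\max_{k\le M}(T_k^\joint-T_{k-1}^\joint)\vee(T_k^\indi-T_{k-1}^\indi) > M^a\bigr) \le CM^{1-a\beta}$ is summable, Borel--Cantelli gives $\sup_{m\le M}\abs{T_m^\joint - T_m^\indi} \le M^{b_1+a} = o(\sqrt{M\log\log M})$ for all large $M$, a.s.

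Next I would handle the $\indi$-side and then invert. The walk $S_m \coloneqq T_m^\indi - m\mu = \sum_{k\le m}(\tau_k^\indi-\mu)$ has i.i.d.\ centred increments in $L^q$ for every $q<\beta$. For \eqref{eq:Matthias4}, Doob's $L^q$-maximal inequality combined with the Marcinkiewicz--Zygmund/Rosenthal bound $\bE\abs{S_n}^q \le C_q n^{q/2}$ yields $\Pr(\max_{m\le n}\abs{S_m} > n^{1/2+\varepsilon}) \le C_q n^{-q\varepsilon}$; together with the discrepancy bound (and $N^{b_1+2/\beta}\le N^{1/2+\varepsilon}$) this gives \eqref{eq:Matthias4} with $\gamma = \min(q\varepsilon,c,1)$, which can be made as large as we like by enlarging $\beta$ and $q$. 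For \eqref{eq:60}, the Hartman--Wintner law of the iterated logarithm gives $\limsup_M\abs{S_M}/\sqrt{M\log\log M}<\infty$ a.s., so by the discrepancy bound $\abs{T_M^\joint - M\mu} = O(\sqrt{M\log\log M})$ a.s., and in particular $T_M^\joint/M\to\mu$ a.s. Writing $m_n\coloneqq V_n^\joint$, the sandwich $T_{m_n}^\joint\le n< T_{m_n+1}^\joint$ forces $m_n\sim n/\mu$, and feeding the a.s.\ bound $\abs{T_M^\joint - M\mu}\le C\sqrt{M\log\log M}$ in at $M=m_n$ yields $\abs{m_n - n/\mu} \le C\mu^{-1}\sqrt{m_n\log\log m_n} + O(1) = O(\sqrt{n\log\log n})$ a.s., which is \eqref{eq:60}. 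This final step is the classical renewal-theoretic inversion that derives a LIL for the counting process from one for the partial sums.

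The main obstacle, I expect, is the bookkeeping in the discrepancy step: one must use that Lemma~\ref{lem:coupl} and the increment tail bounds hold uniformly in the starting configuration (as they do, by the uniformity built into Propositions~\ref{prop:JointRegTimesBound}--\ref{prop:IndRegTimesBound} and Lemma~\ref{lem:coupl}), so that the Borel--Cantelli argument for the uncoupled-step error is legitimate even though the transition dynamics of the coupled chain depend on the full displacement of the two $\joint$-walks and not just on its modulus. The remaining ingredients --- the maximal and LIL estimates for $S_m$ and the renewal inversion --- are standard.
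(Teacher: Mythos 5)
Your proof is correct and follows essentially the same route as the paper: both arguments reduce to the $\indi$-pair via the coupling in \eqref{eq:bigchain}, bound the $\joint$--$\indi$ discrepancy by the number of uncoupled steps (Lemma~\ref{lem:coupl}) times the size of the largest inter-regeneration increment (Propositions~\ref{prop:JointRegTimesBound} and \ref{prop:IndRegTimesBound}), and then invoke classical theory for the i.i.d.\ $\indi$-increments, with your Doob/Rosenthal maximal estimate playing exactly the role of the paper's Burkholder bound in \eqref{eq:Matthias4}. The only difference is cosmetic and concerns \eqref{eq:60}: you apply the Hartman--Wintner LIL to the partial sums $T^\indi_m - m\bE[\tau_2^\indi]$, transfer to $T^\joint$ and invert by hand via the sandwich $T^\joint_{V_n}\le n < T^\joint_{V_n+1}$, whereas the paper compares the counting processes $V^\joint_n$ and $V^\indi_n$ directly and cites the renewal-theoretic LIL for $V^\indi_n$ from the literature.
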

\begin{proof}
  One can deduce the assertion \eqref{eq:60} from an iterated
  logarithm law for $V^\joint_n$. Using the triangle inequality we
  have
  \begin{align}
    \label{eq:3}
    \limsup_{n\to \infty}  \frac{\vert V^\joint_n -n/\bE[\tau^\indi_2]
    \vert}{\sqrt{n\log\log n}}
    \le  \limsup_{n\to \infty} \frac{\vert V^\joint_n - V^\indi_n \vert}{\sqrt{n\log\log n}}
    +  \limsup_{n\to \infty}  \frac{\vert V^\indi_n -n/\bE[\tau^\indi_2]
    \vert}{\sqrt{n\log\log n}}.
  \end{align}
  By classical renewal theory the second term on the right hand side
  is a.s.\ finite, see e.g.\ \cite[Thm.~III.11.1]{Gut1988}.

  As discussed in Remark~\ref{rem:TVdistance-joint-ind-1step} we can
  couple the regeneration times as well as the spatial positions,
  since the proof relies on a construction that ensures that the two
  pairs of random walks see the same environment and therefore both
  can regenerate at the same time. Thus, any difference between
  $V^\joint_n$ and $V^\indi_n$ comes only from the uncoupled steps.
  Hence $\abs{V^\joint_n -V^\indi_n} \ge K$ implies that there are
  uncoupled steps $i$ in which the increments $T^\joint_i -
  T^\joint_{i-1}$ and $T^\indi_i - T^\indi_{i-1}$ differ and the sum
  of such differences up to time $n$ is larger than $K$.

  Obviously we have $n\le \min\{T^\joint_n,T^\indi_n\}$. Let
  $\mathcal I_n$ be the (random) set of coupled steps up to $n$ from
  \eqref{eq:59}. For $c>1$ we obtain
  \begin{align*}
    \bP&\big(\abs{V^\joint_n - V^\indi_n}\ge n^{1/2-\varepsilon}\big)\\
       &\le \bP\big( \abs{V^\joint_n - V^\indi_n}\ge
         n^{1/2-\varepsilon}, \abs{\mathcal{I}^\compl_n} \le n^{b_1} \big) +
         \bP(\abs{\mathcal{I}^\compl_n} > n^{b_1})\\
       &\le \bP\big( \sum_{i \in \mathcal{I}^\compl_n} \abs{T^\joint_i
         - T^\joint_{i-1} - T^\indi_i + T^\indi_{i-1}}\ge
         n^{1/2-\varepsilon},  \abs{\mathcal{I}^\compl_n} \le n^{b_1} \big) + n^{-c}\\
       &\le \bP\big(\exists i \le n \colon \abs{T^\joint_i -
         T^\joint_{i-1} - T^\indi_i + T^\indi_{i-1}}\ge
         n^{1/2-\varepsilon-b_1}  \big)+ n^{-c}\\
       &\le n\Big( \bP\big( \abs{T^\joint_2-T^\joint_1} \ge \frac{1}{2}
         n^{1/2-\varepsilon-b_1} \big)+\bP\big(
         \abs{T^\indi_2-T^\indi_1} \ge \frac{1}{2}
         n^{1/2-\varepsilon-b_1} \big) \Big) + n^{-c}\\
       &\le C'n^{1-\beta(1/2-\varepsilon-b_1)} + n^{-c},
  \end{align*}
  where $1-\beta(1/2-\varepsilon-b_1)<-1$ for $\beta$ large enough;
  recall that $b_1<1/2$ from Lemma~\ref{lem:coupl} and note that we
  can choose $\varepsilon$ suitably. Therefore, using the
  Borel-Cantelli lemma, we have
  $\abs{V^\joint_n - V^\indi_n }/\sqrt{n} \to 0$ a.s. This implies
  that the first summand on the right hand side in \eqref{eq:3} is
  equal to $0$ and therefore the right hand side is almost surely
  finite proving \eqref{eq:60}.

  For \eqref{eq:Matthias4} by similar conditioning and arguments as
  above we have
  \begin{align*}
    \Pr
    & \left( \exists \, m \le n : \, | T_m^\joint- m \bE[\tau_2^{\indi}] | >
      n^{1/2 + \varepsilon} \right)\\
     & \le\bP\left( \exists \, m \le n : \, | T^\joint_m- m
       \bE[\tau_2^{\indi}] | > n^{1/2 + \varepsilon}, A_n\right) +
       Cn^{-\gamma}\\
     & \le \bP\left( \exists \, m \le n : \, | T^\indi_m- m
       \bE[\tau_2^{\indi}] |>n^{1/2 + \varepsilon}, A_n \right)\\
     & \hspace{3cm}+\bP\left( \exists \, m \le n : \,
       \abs{T^\joint_m-T^\indi_m}>n^{1/2 + \varepsilon}, A_n \right) +
       Cn^{-\gamma}
  \end{align*}
  where
  \begin{equation}
    \label{event:fewuncoupledsteps}
    A_n \coloneqq \{ \abs{\mathcal{I}_n^\compl} \le n^{b_1} \}
  \end{equation}
  is the event that there are at most $n^{b_1}$ uncoupled steps before
  time $n$. By Lemma~\ref{lem:coupl} we obtain $\bP(A^\compl_n) \le
  Cn^{-\gamma}$. For the first term we make use of a Burkholder
  inequality \cite{Burkholder1973} (a version which works for
  discrete-time martingales and is particularly convenient for our
  purposes here appears in \cite[Ch.~II.4, (PSF) on
  p.~152]{Perkins1999}). Note that
  \begin{equation*}
        M_m \coloneqq T^\indi_m- m\bE[\tau_2^{\indi}] = \sum_{i=1}^m T^\indi_i - T^\indi_{i-1} -\bE[\tau_2^{\indi}]
  \end{equation*}
  is a sum of independent random variables with zero mean and thus a martingale. Therefore, using the Markov- and
  Burkholder-inequalities with the notation
  \begin{equation*}
        \langle M \rangle_n = \sum_{m=1}^{n}\bE[ (M_m-M_{m-1})^2 \, \vert\, \mathcal{F}_{m-1} ] + \bE[M_0^2]
  \end{equation*}
  we obtain for $1 \le q < \beta$
  \begin{align}
    \bP%
    &\left( \exists \, m \le n : \, | T^\indi_m- m
      \bE[\tau_2^{\indi}] |>n^{1/2 + \varepsilon}\right) \notag \\
    &\le \frac{1}{n^{q(1/2+\varepsilon)}} \bE\Big[ (\max_{m\le n} \norm{M_m}\big)^q \Big] \notag \\
    &\le \frac{c}{n^{q(1/2+\varepsilon)}} \Big( \bE\big[ \langle M \rangle_n^{q/2} \big] + \bE[(\max_{m\le n} \norm{M_m-M_{m-1}})^q] \Big) \notag  \\
    &\le \frac{c}{n^{q(1/2+\varepsilon)}} \Big( \bE\big[ \langle M \rangle_n^{q/2} \big] + \bE[\max_{m\le n} \norm{M_m-M_{m-1}}^q] \Big) \notag \\
    &\le \frac{c}{n^{q(1/2+\varepsilon)}} \Big( \bE\big[ \langle M \rangle_n^{q/2} \big] + C n \Big) \notag \\
    &\le \frac{c}{n^{q(1/2+\varepsilon)}} \Big( \bE\big[ \big(\sum_{m=1}^{n}\bE[ (M_m-M_{m-1})^2 \, \vert\, \mathcal{F}_{m-1} ]\big)^{q/2} \big] + C n \Big) \notag \\
    &\le \frac{c}{n^{q(1/2+\varepsilon)}} \Big( \big( C n \big)^{q/2} + Cn \Big) \notag \\
    &\le \frac{C}{n^{q\varepsilon}},
      \label{eq:useBurkholder}
  \end{align}
  where we need to tune $\beta$ large enough to get a finite $q$-th
  moment, using Proposition~\ref{prop:IndRegTimesBound}. A consequence
  is that we can make the exponent $q\varepsilon$ in the above display
  arbitrarily large, and therefore get $q\varepsilon > \gamma$, by
  tuning $\beta$ large enough. Combining the above estimates we obtain
  \begin{align*}
        \Pr
        & \left( \exists \, m \le n : \, | T_m^\joint- m \bE[\tau_2^{\indi}] | >
        n^{1/2 + \varepsilon} \right)\\
    & \le n\Big( \bP\big( \abs{T^\joint_2-T^\joint_1} \ge \frac{1}{2}
      n^{1/2+\varepsilon-b_1} \big)+\bP\big( \abs{T^\indi_2-T^\indi_1}
      \ge \frac{1}{2} n^{1/2+\varepsilon-b_1} \big) \Big) +
      Cn^{-\gamma}\\
    & \le Cn^{1-\beta(1/2+\varepsilon-b_1)} + C n^{-\gamma},
  \end{align*}
  where we have used \eqref{eq:37} in the last step.

  By choosing
  $\beta$ large enough we arrive at \eqref{eq:Matthias4}.
\end{proof}

\begin{lemma}
  \label{lem:joint_fluctuations}
  For $\delta\in(1/2,1)$ and $\varepsilon>0$ small enough such that
  $\delta/2+\varepsilon < 1/2$ there exists $\gamma>0$ and $C <
  \infty$, if the parameters are tuned correctly (in particular,
  $\beta$ must be large), such that
  \begin{align}
    \label{eq:Matthias5}
    \sup_{0 \le \theta \le 1} \Pr^{\joint}\left( \sup_{\abs{k-[\theta n]}  \leq n^\delta}
    \norm{\widehat{X}_k - \widehat{X}_{[\theta n]}} >  n^{\delta/2+\varepsilon}\right)
    \le C n^{-\gamma}.
  \end{align}
\end{lemma}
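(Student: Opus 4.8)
The statement is a fluctuation bound for the walk $\widehat X^{\joint}$ observed along joint regeneration times: over a window of $n^\delta$ regeneration steps around any fixed step $[\theta n]$, the displacement is at most $n^{\delta/2+\varepsilon}$ with failure probability $\le Cn^{-\gamma}$. The plan is to transfer the corresponding (and essentially classical) estimate from the $\indi$-pair, where the increments of $\widehat X$ are i.i.d.\ with finite $\gamma$-th moments for any $\gamma<\beta$ (Proposition~\ref{prop:IndRegTimesBound} and Remark~\ref{rem:indMC}), to the $\joint$-pair, using the coupling of \eqref{eq:bigchain} and Lemma~\ref{lem:coupl}.

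First I would reduce to the $\indi$-walk. On the event $A_n=\{|\mathcal I_n^\compl|\le n^{b_1}\}$ from \eqref{event:fewuncoupledsteps} (which by Lemma~\ref{lem:coupl} has complement probability $\le Cn^{-c}$ with $c$ as large as we like), every regeneration step except at most $n^{b_1}$ is coupled, so for $|k-[\theta n]|\le n^\delta$ we have, by finite range (Assumption~\ref{ass:finite-range}),
\begin{align*}
  \norm{\widehat X^{\joint}_k - \widehat X^{\joint}_{[\theta n]}}
  \le \norm{\widehat X^{\indi}_k - \widehat X^{\indi}_{[\theta n]}}
  + R_\kappa \sum_{i\in\mathcal I_n^\compl}\bigl(T^\joint_i-T^\joint_{i-1}\bigr)
  + R_\kappa \sum_{i\in\mathcal I_n^\compl}\bigl(T^\indi_i-T^\indi_{i-1}\bigr),
\end{align*}
since an uncoupled regeneration step can displace $\widehat X$ by at most $R_\kappa$ times the corresponding inter-regeneration time. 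Exactly as in the proof of Lemma~\ref{lemma:joint-ind-comparison}, on $A_n\cap\widetilde B_\joint\cap\widetilde B_\indi$ (with $\widetilde B_\joint=\{M_n^\joint\le n^{2/\beta}\}$ etc., whose complements have probability $\le C/n$ by Propositions~\ref{prop:JointRegTimesBound} and \ref{prop:IndRegTimesBound}) the two correction sums are each $\le Cn^{b_1+2/\beta}$, which is $o(n^{\delta/2+\varepsilon})$ once $\beta$ is large and $b_1$ small (recall $\delta>1/2$, so $\delta/2>1/4$, and $b_1<1/2$ can be taken tiny). Hence it suffices to bound
\begin{align*}
  \sup_{0\le\theta\le1}\bP^{\indi}\Bigl(\sup_{|k-[\theta n]|\le n^\delta}\norm{\widehat X_k-\widehat X_{[\theta n]}}>\tfrac12 n^{\delta/2+\varepsilon}\Bigr)
\end{align*}
by $Cn^{-\gamma}$, plus the error terms $n^{-c}+C/n$ already accounted for.

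For the $\indi$-bound I would use the maximal inequality for the i.i.d.\ random walk $\widehat X^{\indi}$. Shifting so that $[\theta n]$ becomes the origin (allowed by Remark~\ref{rem:indMC}, since $\indi$-increments are homogeneous) and splitting the window into the two one-sided pieces $k>[\theta n]$ and $k<[\theta n]$, each piece is a sum of at most $n^\delta$ i.i.d.\ centred increments $\widehat X_i - \widehat X_{i-1}$ with $\bE^{\indi}[\norm{\widehat X_1}^q]<\infty$ for any $q<\beta$. Applying the Doob/Kolmogorov maximal inequality (or the Burkholder inequality as in \eqref{eq:useBurkholder}) to this sum gives
\begin{align*}
  \bP^{\indi}\Bigl(\max_{1\le j\le n^\delta}\norm{\textstyle\sum_{i=1}^j(\widehat X_i-\widehat X_{i-1})}>\tfrac14 n^{\delta/2+\varepsilon}\Bigr)
  \le \frac{C\,\bE^{\indi}\bigl[(\sum_{i=1}^{n^\delta}\norm{\widehat X_i-\widehat X_{i-1}}^2)^{q/2}\bigr]}{n^{q(\delta/2+\varepsilon)}}
  \le \frac{C\,(n^\delta)^{q/2}}{n^{q(\delta/2+\varepsilon)}}
  = Cn^{-q\varepsilon},
\end{align*}
where in the last inequality I used that the $q$-th moment of a sum of $n^\delta$ i.i.d.\ mean-zero terms of finite $q$-th moment is $O((n^\delta)^{q/2})$ for $q\ge 2$ (Rosenthal's inequality, which is exactly what the Burkholder estimate in \eqref{eq:useBurkholder} delivers). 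Taking $\beta$ large enough that some $q<\beta$ satisfies $q\varepsilon>\gamma$ finishes the $\indi$-bound, and the union bound over the two one-sided pieces only changes the constant. Collecting the $\indi$-estimate with the coupling errors $n^{-c}$, $C/n$, $Cn^{b_1+2/\beta-(\delta/2+\varepsilon)}$ (all made $\le Cn^{-\gamma}$ by the parameter tuning) yields \eqref{eq:Matthias5}, and crucially the bound is uniform in $\theta$ because the $\indi$-estimate is translation-invariant and the coupling estimates in Lemma~\ref{lem:coupl} are uniform in the starting configuration.

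The main obstacle is not the $\indi$-random-walk computation, which is routine, but making the coupling transfer genuinely uniform in $\theta\in[0,1]$ while the window $[\,[\theta n]-n^\delta,[\theta n]+n^\delta\,]$ ranges over all of $\{0,\dots,n\}$: the event $A_n$ controls uncoupled steps globally up to time $n$ (not just inside the window), so one must check that $n^{b_1}$ uncoupled steps and $M_n^\joint\le n^{2/\beta}$ suffice to bound the displacement correction inside \emph{every} sub-window simultaneously — which they do, since the worst case is all $n^{b_1}$ uncoupled steps falling in one window and each contributing at most $R_\kappa n^{2/\beta}$. The only real constraint is the arithmetic $b_1+2/\beta<\delta/2+\varepsilon$ together with $q\varepsilon>\gamma$ for some $q<\beta$; both hold once $\beta$ is large and $b_1$ small, which Lemma~\ref{lem:coupl} permits.
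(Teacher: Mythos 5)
Your proof is correct and follows essentially the paper's route: split off the coupling-failure event from Lemma~\ref{lem:coupl}, control the maximal fluctuation of the $\indi$-walk over the window by a Burkholder/Rosenthal bound (your $Cn^{-q\varepsilon}$ is in fact the careful version of the paper's estimate), and absorb the uncoupled-step correction via tail bounds on the inter-regeneration times. The one difference is that you bound the correction through the global events $\widetilde B_\joint,\widetilde B_\indi$ with cutoff $n^{2/\beta}$, whose complements cost $C/n$, so your argument as written yields only $\gamma\le 1$; the paper instead union-bounds each uncoupled increment inside the window against the threshold $\tfrac{1}{4R_\kappa}n^{\delta/2+\varepsilon-b_1}$, obtaining $Cn^{\delta-\beta(\delta/2+\varepsilon-b_1)}$ and hence $\gamma$ arbitrarily large for $\beta$ large, a feature invoked later (Borel--Cantelli with $\gamma>1$ in the proof of Theorem~\ref{thm:LLNuCLTmodel1} for $d\ge 2$). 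Your version proves the lemma as stated, and raising the cutoff exponent in $\widetilde B$ (still $o(n^{\delta/2+\varepsilon})$ since $\beta$ is large) recovers the stronger decay.
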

We will see in the proof that $\gamma$ can be chosen large when $\beta$ is large.
\begin{proof}
  Indeed, if we denote by $A_n$ as in \eqref{event:fewuncoupledsteps}
  in the proof of Lemma~\ref{lem:ind-to-joint} the event that there
  are at most $n^{b_1}$ uncoupled steps before time $n$ and follow
  these steps
  \begin{align*}
    \bP
    & \Big( \sup_{\abs{[\theta n]-k}  \leq n^\delta}
      \Norm{\widehat{X}^\joint_k-\widehat{X}^\joint_{[\theta n]}} >
      n^{\delta/2+\varepsilon} \Big)\\
    & \le \bP\Big( \sup_{\abs{[\theta n]-k}  \leq n^\delta}
      \Norm{\widehat{X}^\joint_k-\widehat{X}^\joint_{[\theta n]}} >
      n^{\delta/2+\varepsilon}, A_{2n} \Big) + \bP(A^\compl_{2n})\\
    & \le \bP\Big( \sup_{\abs{[\theta n]-k}  \leq
      n^\delta}\Norm{\widehat{X}^\indi_k-\widehat{X}^\indi_{[\theta
      n]}} +\Norm{\widehat{X}^\joint_k-\widehat{X}^\joint_{[\theta
      n]}-\widehat{X}^\indi_k+\widehat{X}^\indi_{[\theta n]}} >
      n^{\delta/2+\varepsilon}, A_{2n} \Big) + \bP(A^\compl_{2n})\\
        &\le \bP\Big( \sup_{\abs{[\theta n]-k}  \leq n^\delta}\Norm{\widehat{X}^\indi_k-\widehat{X}^\indi_{[\theta n]}} >\frac{1}{2}n^{\delta/2+\varepsilon}\Big)\\
        &+\bP\Big(\sup_{\abs{[\theta n]-k}  \leq n^\delta}\Norm{\sum_{i=\min\{k,[\theta n]\}+1}^{\max\{k,[\theta n]\}}\bigl(\widehat{X}^\joint_i-\widehat{X}^\joint_{i-1}\bigr)-\sum_{i=\min\{k,[\theta n]\}+1}^{\max\{k,[\theta n]\}} \bigl(\widehat{X}^\indi_i-\widehat{X}^\indi_{i-1} \bigr)} > \frac{1}{2}n^{\delta/2+\varepsilon}, A_{2n} \Big)\\
        &+ \bP(A^\compl_{2n}).
  \end{align*}

  For the first term we can use $1 \le q < \beta$ and a
  Burkholder inequality as in the proof of Lemma~\ref{lem:ind-to-joint}
  (see Eq.~\eqref{eq:useBurkholder})
  \begin{equation*}
    \bP\Big( \sup_{\abs{[\theta n]-k}  \leq n^\delta}\Norm{\widehat{X}^\indi_k-\widehat{X}^\indi_{[\theta n]}} >\frac{1}{2}n^{\delta/2+\varepsilon}\Big) \le
    \frac{C}{n^{q(\delta/2+\varepsilon)}}
  \end{equation*}
  and note that we have $q(\delta/2+\varepsilon) > \gamma$ when
  $\beta$ is large and $q$ close to $\beta$.

  For the last term we know that by Lemma~\ref{lem:coupl}
  \begin{equation*}
    \bP(A^\compl_{2n}) \le Cn^{-\gamma}
  \end{equation*}
  and we recall that $\gamma$ can be chosen large there (recall the
  discussion below Lemma~\ref{lem:coupl}).

  Lastly we need a suitable upper bound for the remaining term.
  \begin{align*}
    \bP& \Big(\sup_{\abs{[\theta n]-k}  \leq
         n^\delta}\Norm{\sum_{i=\min\{k,[\theta
         n]\}+1}^{\max\{k,[\theta n]\}}
         \bigl(\widehat{X}^\joint_i-\widehat{X}^\joint_{i-1}\bigr)-\sum_{i=\min\{k,[\theta
         n]\}+1}^{\max\{k,[\theta
         n]\}}\bigl(\widehat{X}^\indi_i-\widehat{X}^\indi_{i-1}\bigr)}
         > \frac{1}{2}n^{\delta/2+\varepsilon}, \, A_{2n} \Big)\\
       & \le \bP\Big(\sup_{\abs{[\theta n]-k}  \leq n^\delta}\sum_{i\in
         \mathcal{I}_n^\compl \cap \{\min\{k,[\theta n]\},\dots,\max\{k,[\theta n]\}\}}
         \hspace{-4.55em}
         \bigl(
         \norm{\widehat{X}^\joint_i-\widehat{X}^\joint_{i-1}}+\norm{\widehat{X}^\indi_i-\widehat{X}^\indi_{i-1}}
         \bigr) > \frac{1}{2}n^{\delta/2+\varepsilon}, \, A_{2n} \Big)
  \end{align*}
  \begin{align*}
    &\le\bP\Big( \sum_{i\in \mathcal{I}_n^\compl
      \cap \{[\theta n]-n^\delta,\dots,[\theta n]+n^\delta\}}
      \hspace{-3.5em}
      \bigl(
      \norm{\widehat{X}^\joint_i-\widehat{X}^\joint_{i-1}}+\norm{\widehat{X}^\indi_i-\widehat{X}^\indi_{i-1}}
      \bigr) > \frac{1}{2}n^{\delta/2+\varepsilon}, \, A_{2n} \Big)\\
    &\le\bP\Big( \exists \, i\in \mathcal{I}_n^\compl
      \cap \{[\theta n]-n^\delta,\dots,[\theta n]+n^\delta\}
      \text{ with } \norm{\widehat{X}^\joint_i-\widehat{X}^\joint_{i-1}}>\frac{1}{4}n^{\delta/2+\varepsilon-b_1}\\
    &\hspace{20em}\text{or }\norm{\widehat{X}^\indi_i-\widehat{X}^\indi_{i-1}} > \frac{1}{4}n^{\delta/2+\varepsilon-b_1}, \, A_{2n} \Big)\\
    &\le\sum_{i=[\theta n]-n^\delta}^{[\theta n]+n^\delta} \bP\Big( \norm{\widehat{X}^\joint_i-\widehat{X}^\joint_{i-1}}>\frac{1}{4}n^{\delta/2+\varepsilon-b_1} \Big)
      +\bP\Big(\norm{\widehat{X}^\indi_i-\widehat{X}^\indi_{i-1}} > \frac{1}{4}n^{\delta/2+\varepsilon-b_1} \Big)\\
    &\le \sum_{i=[\theta n]-n^\delta}^{[\theta n]+n^\delta} \bP\Big( T^\joint
      _i - T^\joint_{i-1}>\frac{1}{4R_\kappa}n^{\delta/2+\varepsilon-b_1} \Big) +\bP\Big(T^\indi_i-T^\indi_{i-1} > \frac{1}{4R_\kappa}n^{\delta/2+\varepsilon-b_1} \Big)\\
    &\le 4n^\delta (4R_\kappa)^\beta n^{-\beta(\delta/2+\varepsilon-b_1)}\\
    &\le Cn^{-\gamma},
  \end{align*}
  for any choice of $\gamma$ if $\beta$ is large enough. Combining the
  three estimates completes the proof.
\end{proof}

Now we have all the tools to prove Proposition~\ref{prop:2nd-mom}.

\begin{proof}[Proof of Proposition~\ref{prop:2nd-mom}]
  The \label{proof-prop:2nd-mom} proof uses
  Lemma~\ref{lemma:joint-ind-comparison} and Berry-Esseen estimates.

  First we claim that for any $\varepsilon>0$ (small) and $\gamma>0$ (large), if
  the parameters are tuned right (in particular, $\beta$ must be very large), we have
  \begin{align}
    \label{eq:Matthias6}
    \bP^{\joint}_{(0,0)} \left( \norm{X_N - \widehat{X}_{N/\bE[\tau_2^{\indi}]}}
    > N^{1/4+\varepsilon}\right) \le C N^{-\gamma} .
  \end{align}
  Indeed, note that
  \begin{align}
    \norm{X_N - \widehat{X}_{N/\bE[\tau_2^{\indi}]}}
    \le R_\kappa \max \{ T^\joint_m-T^\joint_{m-1} : m \le N \}
    + \norm{\widehat{X}_{N/\bE[\tau_2^{\indi}]} - \widehat{X}_{V_N}}
  \end{align}
  with $V_N := \max \{ m : T_m \le N \}$. On $\{
  \abs{N/\bE[\tau_2^{\indi}] - V_N} \le N^{1/2+2\varepsilon} \}$,
  which is likely by \eqref{eq:Matthias4}, we can bound the second
  term on the right-hand side by $ \sup_{\abs{k-N/\bE[\tau_2^{\indi}]}
    \leq N^{1/2+2\varepsilon}} \norm{\widehat{X}_k -
    \widehat{X}_{N/\bE[\tau_2^{\indi}]}} $ and then use
  \eqref{eq:Matthias5}. For the first term in the above upper bound
  just note that
  \begin{align*}
    \bP
    & \Big( \max\{ T^\joint_m-T^\joint_{m-1} \colon m\le N\}
      >\frac{1}{2R_\kappa} N^{1/4+\varepsilon} \Big)\\
    & \le \sum_{m=1}^N\bP\Big( T^\joint_m -T^\joint_{m-1} >
      \frac{1}{2R_\kappa}N^{1/4+\varepsilon} \Big)\\
    &\le CN^{1-\beta(1/4+\varepsilon)},
  \end{align*}
  which, again by taking $\beta$ large enough, is smaller than
  $N^{-\gamma}$.

  From \eqref{eq:Matthias6} and Markov's inequality we have
  \begin{align}
    \label{eq:Matthias8a}
    \Pr\left( \{ \omega : P_\omega(\norm{X_N - \widehat{X}^\joint_{N/\bE[\tau_2^{\indi}]}}
    > N^{1/4+\varepsilon}) > N^{-\gamma/2} \}\right) \le C N^{-\gamma/2}
  \end{align}
  and the same ist true for $X'$ replacing $X$ in the above display.

  Thus
  \begin{align}
    \label{eq:Matthias8c}
    \begin{split}
      & \bE[E_\omega[f(\widehat{X}^\joint_m/\sqrt{m})]
        E_\omega[f(\widehat{X}^{\prime,\joint}_m/\sqrt{m})]] \\
      & \qquad
        = \bE[E_\omega[f(X_{m \bE[\tau_2^{\indi}]}/\sqrt{m})]
        E_\omega[f(X^\prime_{m \bE[\tau_2^{\indi}]}/\sqrt{m})]]
        + O(m^{-1/4+\varepsilon}) + O(m^{-\gamma/2}) \\
      & \qquad
        = \bE^\joint[f(X_{m \bE[\tau_2^{\indi}]}/\sqrt{m})
        f(X^\prime_{m \bE[\tau_2^{\indi}]}/\sqrt{m})] + O(m^{-1/4+\varepsilon})  + O(m^{-\gamma/2})\\
      & \qquad
        = \bE^\joint[f(\widehat{X}_m/\sqrt{m})f(\widehat{X}^\prime_m/\sqrt{m})]
        + O(m^{-1/4+\varepsilon})  + O(m^{-\gamma/2}).
    \end{split}
  \end{align}
  Observe that here we need to pass from
  $(\widehat{X}^\joint,\widehat{X}'^\joint)$ to $(X,X')$ in the
  intermediate step because it is true that
  $(E_\omega[f(X_m/\sqrt{m})])^2 = E_\omega[f(X_m/\sqrt{m})]
  E_\omega[f(X'_m/\sqrt{m})]$ by definition while in general
  $(E_\omega[f(\widehat{X}^\joint_m/\sqrt{m})])^2 \neq
  E_\omega[f(\widehat{X}^\joint_m/\sqrt{m})]
  E_\omega[f(\widehat{X}'^\joint_m/\sqrt{m})]$ (note that the
  right-hand side refers to two independent copies of the regeneration
  construction in the same environment whereas the left-hand side
  involves only a single regeneration construction in that
  environment).

  With that we have
  \begin{align*}
    \bE\Bigl[& \Bigl(E_\omega[f(\widehat{X}_m/\sqrt{m})]
     -\wt{\Phi}(f) \Bigr)^2\Bigr] \\
    & \le \Big|\bE^\joint[f(\widehat{X}_m/\sqrt{m})f(\widehat{X}^\prime_m/\sqrt{m})] - \bE^\indi[f(\widehat{X}_m/\sqrt{m})f(\widehat{X}^\prime_m/\sqrt{m})] \Big|\\
    & \quad +\Big|
      \bE^\indi[f(\widehat{X}_m/\sqrt{m})f(\widehat{X}^\prime_m/\sqrt{m})]
      - \wt{\Phi}(f)^2\Big|\\ 
    & \quad + 2|\wt{\Phi}(f)|\cdot\Big|
      \bE^\indi\bigl[f(\widehat{X}_m/\sqrt{m}) \bigr]
      -\bE^\joint\bigl[f(\widehat{X}_m/\sqrt{m}) \bigr] \Big|\\ 
    & \quad + 2|\wt{\Phi}(f)|\cdot\Big|  \wt{\Phi}(f)
      -\bE^\indi\bigl[f(\widehat{X}_m/\sqrt{m}) \bigr] \Big|\\ 
    & \leq 2C_fm^{-a} + Cm^{-1/2} \leq C_fm^{-c}
  \end{align*}
  for a suitable $c>0$, where we used
  Lemma~\ref{lemma:joint-ind-comparison} (observe also that by
  choosing $g \equiv 1$ in Lemma~\ref{lemma:joint-ind-comparison}, we
  obtain a bound for $\bE^\indi\bigl[f(\widehat{X}_m/\sqrt{m}) \bigr]
  -\bE^\joint\bigl[f(\widehat{X}_m/\sqrt{m}) \bigr]$) and Berry-Esseen
  type bounds, since $\widehat{X}_n$ has bounded third moments under
  $\bP^\indi$ for $\beta$ large enough, in the last line.
\end{proof}

\begin{lemma}[Analogue to Lemma~3.12 in
  \cite{BirknerCernyDepperschmidtGantert2013}]
  \label{lemma:convergence for subsequence to full sequence}
  Assume that for some $c>1$, and any bounded Lipschitz function
  $f: \R^d \to \R$
  \begin{align}
    E_\omega\left[f(X_{T^\joint_{k^c}}/k^{c/2})\right]\underset{k\to
    \infty}{\longrightarrow} \wt{\Phi}(f) \quad \text{for a.a. }
    \omega,
  \end{align}
  where $\wt{\Phi}$ is some non-trivial centred $d$-dimensional normal
  law and the $T^\joint_\cdot$ refer to two walks both starting at $0$
  in the same environment. Then we have for any bounded Lipschitz
  function $f$
  \begin{align*}
    E_\omega\left[f(X_{T^\joint_{m}}/m^{1/2})\right] \underset{m \to
    \infty}{\longrightarrow} \wt{\Phi}(f) \quad \text{for a.a. }
    \omega.
  \end{align*}
\end{lemma}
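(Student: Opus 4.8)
The plan is a routine interpolation between the sparse subsequence $(n_k)$, $n_k\coloneqq\lfloor k^c\rfloor$, and the full sequence, exploiting that the gaps satisfy $n_{k+1}-n_k=O(k^{c-1})=o(n_k)$. Write $\widehat X^\joint_m\coloneqq X_{T^\joint_m}$ throughout. It suffices to establish the \emph{block estimate}: for $\bP$-almost every $\omega$,
\begin{align}
  \label{eq:block-est-plan}
  \sup_{n_k\le m\le n_{k+1}}\Bigl|E_\omega\bigl[f(\widehat X^\joint_m/\sqrt m)\bigr]
  -E_\omega\bigl[f(\widehat X^\joint_{n_k}/\sqrt{n_k})\bigr]\Bigr|\xrightarrow[k\to\infty]{}0.
\end{align}
Indeed, for $m$ large set $k=\lfloor m^{1/c}\rfloor$, so that $n_k\le m\le n_{k+1}$ and $k\to\infty$; combining \eqref{eq:block-est-plan} with the hypothesis (the harmless discrepancy between the normalisations $\sqrt{n_k}$ and $k^{c/2}$ being itself absorbed by a Lipschitz estimate) yields $E_\omega[f(\widehat X^\joint_m/\sqrt m)]\to\wt\Phi(f)$. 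For \eqref{eq:block-est-plan} I use that $f$ is bounded and Lipschitz with constant $L_f$, so that for $n_k\le m\le n_{k+1}$,
\begin{align}
  \label{eq:split-plan}
  \bigl|f(\widehat X^\joint_m/\sqrt m)-f(\widehat X^\joint_{n_k}/\sqrt{n_k})\bigr|
  \le \Bigl(L_f\,\frac{\norm{\widehat X^\joint_m-\widehat X^\joint_{n_k}}}{\sqrt m}
  +L_f\,\bigl|m^{-1/2}-n_k^{-1/2}\bigr|\,\norm{\widehat X^\joint_{n_k}}\Bigr)\wedge 2\norm f_\infty ;
\end{align}
the first summand is the ``fluctuation'' term, the second the ``renormalisation'' term, and on any event of small probability one may fall back on the bound $2\norm f_\infty$.

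For the renormalisation term, note $|m^{-1/2}-n_k^{-1/2}|\le C/(k\sqrt{n_k})$ uniformly for $m\in[n_k,n_{k+1}]$ (because $1-(k/(k+1))^{c/2}\le C/k$). The hypothesis, applied to a countable convergence-determining family of Lipschitz functions, shows that for a.a.\ $\omega$ the laws of $\widehat X^\joint_{n_k}/\sqrt{n_k}$ under $P_\omega$ converge weakly to $\wt\Phi$, hence form a tight family, so for any $M_k\uparrow\infty$ one has $P_\omega(\norm{\widehat X^\joint_{n_k}}>M_k\sqrt{n_k})\to0$. Choosing $M_k\uparrow\infty$ with $M_k=o(k)$ (e.g.\ $M_k=\log k$) and restricting the expectation to $\{\norm{\widehat X^\joint_{n_k}}\le M_k\sqrt{n_k}\}$ bounds the renormalisation contribution by $CL_fM_k/k\to0$, while its complement contributes at most $2\norm f_\infty\,P_\omega(\norm{\widehat X^\joint_{n_k}}>M_k\sqrt{n_k})\to0$; no moment bound on $\widehat X^\joint$ is needed here.

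For the fluctuation term, bounding the supremum inside the expectation gives $\sup_m E_\omega[\norm{\widehat X^\joint_m-\widehat X^\joint_{n_k}}/\sqrt m]\le E_\omega[\sup_m\norm{\widehat X^\joint_m-\widehat X^\joint_{n_k}}]/\sqrt{n_k}$. Choose $\delta$ with $\tfrac{c-1}{c}\vee\tfrac12<\delta<1$ and then $\varepsilon>0$ with $\tfrac\delta2+\varepsilon<\tfrac12$; for large $k$ the whole block $\{n_k,\dots,n_{k+1}\}$ lies within distance $n_{k+1}^{\delta}$ of $n_k$, so Lemma~\ref{lem:joint_fluctuations} (with $n=n_{k+1}$ and base point $n_k$) produces an event $F_k$ with $\bP^\joint_{0,0}(F_k)\le Cn_{k+1}^{-\gamma}$ off which $\sup_{n_k\le m\le n_{k+1}}\norm{\widehat X^\joint_m-\widehat X^\joint_{n_k}}\le n_{k+1}^{\delta/2+\varepsilon}$. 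On $F_k$ we use the crude bound $\norm{\widehat X^\joint_m-\widehat X^\joint_{n_k}}\le R_\kappa\,T^\joint_{n_{k+1}}$ and Cauchy--Schwarz; since the regeneration increments have uniformly bounded second moments (Corollary~\ref{cor:tail bound random walks at sim regnerations} with $\beta>2$ together with the Markov property of Remark~\ref{rem:jointMC}), we get $\bE^\joint_{0,0}[(T^\joint_{n_{k+1}})^2]\le Cn_{k+1}^2$. Markov's inequality plus Borel--Cantelli then give, for a.a.\ $\omega$ and all large $k$, both $E_\omega[(T^\joint_{n_{k+1}})^2]\le n_{k+1}^2k^2$ and $P_\omega(F_k)\le Cn_{k+1}^{-\gamma/2}$ (all relevant series being summable once $\beta$, hence $\gamma$ and the available number of moments, are large). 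Dividing by $\sqrt{n_k}\asymp k^{c/2}$, the off-$F_k$ part contributes $n_{k+1}^{\delta/2+\varepsilon}/\sqrt{n_k}\to0$ since $\delta/2+\varepsilon<1/2$, and the $F_k$ part contributes at most $R_\kappa n_{k+1}k\,n_{k+1}^{-\gamma/4}/\sqrt{n_k}\to0$ for $\gamma$ large. This establishes \eqref{eq:block-est-plan} and hence the lemma.

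The main obstacle is that, unlike in \cite{BirknerCernyDepperschmidtGantert2013}, the regeneration times and spatial increments have only polynomial tails, so the error events cannot be summed crudely: one must balance the truncation scale $M_k$, the decay exponents $\gamma$ from Lemma~\ref{lem:joint_fluctuations} and $\beta$ from Proposition~\ref{prop:JointRegTimesBound}, and the number of available moments of $T_1$ and $\widehat X_1$, which forces $\beta$ (and with it $\gamma$) to be taken sufficiently large --- the standing assumption in this part of the paper. A secondary point is that under $\bP^\joint$ the pair $(\widehat X^\joint,\widehat X'^{\,\joint})$ is merely a Markov chain and $\widehat X^\joint$ on its own is not a random walk, so the diffusive a priori control of $\norm{\widehat X^\joint_{n_k}}$ used in the renormalisation term cannot be obtained from an independent-increments computation and is instead imported, via tightness, from the quenched convergence along $(n_k)$ that we are assuming.
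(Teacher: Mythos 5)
Your argument is essentially correct for $d\ge 2$, but it takes a different route from the paper. The paper does not invoke Lemma~\ref{lem:joint_fluctuations} at all: it bounds the annealed probability of a large block fluctuation, $\Pr_{0,0}\bigl(\max_{k^c\le \ell\le (k+1)^c}\norm{\widehat{X}^\joint_\ell-\widehat{X}^\joint_{k^c}}\ge \delta k^{c/2}\bigr)$, by hand, introducing the events $A_\ell$, $B_k$, $C_\ell$, $D_\ell$ for the coupled chain \eqref{eq:bigchain}, using Azuma's inequality for the $\indi$-pair and the polynomial tail bounds, showing summability in $k$ and applying Borel--Cantelli; the interpolation step and the renormalisation term $\norm{\widehat{X}^\joint_{k^c}}\bigl(k^{c/2}/\sqrt m-1\bigr)/k^{c/2}$ are then handled with the same estimate. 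You instead reuse Lemma~\ref{lem:joint_fluctuations} as a black box, convert its annealed bound and the annealed second moment of $T^\joint_{n_{k+1}}$ into quenched statements via Markov plus Borel--Cantelli, and treat the renormalisation term by tightness of the quenched laws along the subsequence (imported from the hypothesis). The balancing of exponents ($\delta>\tfrac{c-1}{c}\vee\tfrac12$, $\delta/2+\varepsilon<\tfrac12$, $\gamma$ and $\beta$ large) works out, and your tightness argument is a genuine simplification in that it needs no moment control of $\widehat{X}^\joint_{n_k}$.

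The one substantive caveat is dimension. The paper's proof is deliberately dimension-free and this is exploited later: in Section~\ref{sec:dimension1} the authors state explicitly that the proof of this lemma did not use $d\ge 2$ and reuse it for $d=1$. Your route goes through Lemma~\ref{lem:joint_fluctuations}, whose proof rests on Lemma~\ref{lem:coupl} (at most $N^{b_1}$ uncoupled steps with $b_1<1/2$), and the paper points out that an analogue of Lemma~\ref{lem:coupl} cannot hold in $d=1$. So as written your proof only establishes the statement for $d\ge 2$; to cover $d=1$ you would have to substitute the $d=1$ fluctuation estimate proved in Appendix~\ref{sec:proofs} (the analogue of \eqref{eq:Matthias5} with $N^{1/2+\varepsilon}$ uncoupled steps), after which the rest of your argument goes through unchanged.
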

\begin{proof}
  Recall the abbreviation $\widehat{X}_n=X_{T^\joint_n}$ from
  \eqref{def:Xhatn}. Since the distribution of $\{\widehat{X}_i -
  \widehat{X}_{i-1}\}$ does not have exponential tails we have
  $\bE[\exp(\lambda \cdot (\widehat{X}_i - \widehat{X}_{i-1}))]
  =\infty$ for $\lambda \neq 0$, unlike the situation in
  \cite{BirknerCernyDepperschmidtGantert2013}. Therefore we need to
  calculate more carefully than in
  \cite{BirknerCernyDepperschmidtGantert2013} and control the
  probabilities via large deviations for heavy tailed distributions.
  Our first aim is to obtain a summable upper bound on the probability
  \begin{align}
    \label{eq:maxfluctuations}
    \Pr_{0,0}\left( \max_{k^c\leq \ell \leq (k+1)^c}
    \frac{\abs{\widehat{X}^\joint_\ell - \widehat{X}^\joint_{k^c}}}{k^{c/2}}\geq \delta \right),
  \end{align}
  where $\delta>0$. Because of the heavy tails, the probability in
  \eqref{eq:maxfluctuations} does not decay exponentially. We need to
  work around the fact that, along the simultaneous regeneration times
  $T^\joint$, the increments of the random walk are not independent.
  To be able to compare $T^\indi$ with $T^\joint$ we recall
  Remark~\ref{rem:TVdistance-joint-ind-1step} and the discussion
  around it. Define $\hat{\ell} \coloneqq \min\{n\in\N \colon
  T^\indi_n \ge T^\joint_\ell \}$, note that we can define $T^\joint$
  and $T^\indi$ together on the same probability space by using the
  Markov chain \eqref{eq:bigchain}, as well as the following events
  \begin{align*}
    A_{\ell} & \coloneqq \left\{ T^{\joint}_{\ell} \le
               T^\indi_{k^{c-1+\varepsilon}} \right\},\\
    B_k & \coloneqq \left\{ T^\indi_i - T^\indi_{i-1}\le k^\alpha
          \text{ for all } i=1,\dots,k^{c-1+\varepsilon} \right\},\\
    C_\ell & \coloneqq \left\{ \norm{\widehat{X}^\indi_{\ell}} \le
             \frac{1}{2}\delta k^{c/2} \right\},\\
    D_\ell & \coloneqq \left\{ T^\indi_{\hat{\ell}} - T^\joint_\ell <
             \frac{1}{2R_\kappa} \delta k^{c/2} \right\},
  \end{align*}
  where $0<\alpha<1/2$ and $\varepsilon>0$ are small constants to be tuned later.
  We get the following upper bounds for above: starting with
  $A_\ell^\compl$
  \begin{align*}
    \bP_{x,x'} (A_\ell^\compl)
    & \le \bP_{x,x'}(T^\joint_\ell > k^{c-1+\varepsilon})\\
    & \le \bP_{x,x'}\Big(\exists i \le \ell \text{ such that }
      T^\joint_i- T^\joint_{i-1} > \frac{k^{c+\varepsilon}}{\ell} \Big)\\
    & \le \sum_{i=1}^\ell \bP_{x,x'}\Big( T^\joint_i- T^\joint_{i-1} >
      \frac{k^{c-1+\varepsilon}}{\ell} \Big)\\
    & \le \sum_{i=1}^\ell \Big(\frac{k^{c-1+\varepsilon}}{\ell}
      \Big)^{-\beta}\le \ell^{1+\beta}k^{-\beta(c-1+\varepsilon)},
  \end{align*}
  where we understand $\bP_{x,x'}$ as describing the chain from
  \eqref{eq:bigchain} starting from
  $\widehat{X}^\joint_0=x=\widehat{X}^\indi_0,
  \widehat{X}'^\joint_0=x'=\widehat{X}'^\indi_0$. For $B_k^\compl$, by
  Proposition~\ref{prop:JointRegTimesBound}, we obtain
  \begin{align*}
    \bP_{x,x'}(B_k^\compl)
    & = \bP_{x,x'}(\exists i\le k^{c-1+\varepsilon} \text{ such that }
      T^\indi_i-T^\indi_{i-1} >k^\alpha )\\
    & \le k^{c-1+\varepsilon}k^{-\beta \alpha},
  \end{align*}
  for $C_\ell^\compl$ using Azuma's inequality on every coordinate, we
  have
  \begin{align*}
    \bP_{x,x'}(C_\ell^\compl\,\vert\, B_k)
    & = \bP_{x,x'}\Big(\norm{\sum_{i=1}^\ell\widehat{X}^\indi_{i}-\widehat{X}^\indi_{i-1}}
      > \frac{1}{2}\delta k^{c/2} \,\vert\, B_k\Big)\\
    & \le \sum_{j=1}^d \bP_{x,x'}\Big(\norm{\sum_{i=1}^\ell\widehat{X}^\indi_{i}(j)-
      \widehat{X}^\indi_{i-1}(j)} > \frac{1}{2}\delta k^{c/2} \,\vert\, B_k\Big)\\
    & \le 2d \exp\Big( -\frac{\delta^2 k^c}{8\ell R_\kappa k^{2\alpha}} \Big)
  \end{align*}
  and consequently
  \begin{align*}
    \bP_{x,x'}\Big(\bigcup_{\ell \le k^{c-1+\varepsilon}} C_\ell^\compl\Big)
    & \le k^{2(c-1+\varepsilon)}k^{-\beta \alpha} + \sum_{\ell \le k^{c-1+\varepsilon}}
      \bP_{x,x'}(C_\ell^\compl\,\vert\, B_k)\\
    & \le  k^{2(c-1+\varepsilon)}k^{-\beta \alpha} + k^{c-1+\varepsilon}\exp\Big(
      -C(\delta,R_\kappa)k^{1-\varepsilon-2\alpha} \Big).
  \end{align*}
  Lastly, for $D_\ell^\compl$, we have
  \begin{align*}
    \bP_{x,x'}(D_\ell^\compl)
    & \le \bP_{x,x'}(D^\compl_\ell , A_\ell, B_k) + \bP_{x,x'}( A^\compl_\ell )+\bP_{x,x'}( B^\compl_k )\\
    & = \bP_{x,x'}( A^\compl_\ell)+\bP_{x,x'}( B^\compl_k),
  \end{align*}
  where we used the fact that on $A_\ell$ we know that $\hat{\ell}\le
  k^{c-1+\varepsilon}$ and therefore on $A_\ell\cap B_k$ we have
  $T^\indi_{\hat{\ell}}-T^\joint_\ell \le
  T^\indi_{\hat{\ell}}-T^\indi_{\hat{\ell}-1} \le k^\alpha < k^{c/2} $
  since $c>1$ and $\alpha<1/2$. Define $\widetilde{C}_{k}\coloneqq
  \bigcap_{\ell \le k^{c-1+\varepsilon}}C_\ell$, combine the proved
  upper bounds and the fact that for $c>1$ we have $(k+1)^c-k^c \le
  c(k+1)^{c-1}$ to obtain
  \begin{align}
    \notag
    & \bP_{0,0}\left( \max_{k^c\leq \ell \leq (k+1)^c}
      \frac{\abs{\widehat{X}^\joint_\ell - \widehat{X}^\joint_{k^c}}}{k^{c/2}}\geq \delta \right)\\
    \notag
    & = \sum_{x,x'} \bP^\joint_{0,0}\Big(\max_{k^c\leq \ell \leq
      (k+1)^c} \abs{\widehat{X}_\ell - \widehat{X}_{k^c}} \ge
      \delta k^{c/2}\,\Big\vert\,
      \widehat{X}_{k^c}=x,\widehat{X}'_{k^c}=x'\Big)\bP_{0,0}^\joint(\widehat{X}_{k^c}=x,\widehat{X}'_{k^c}=x')\\
    \notag
    & \le \sup_{x,x'} \bP^\joint_{x,x'}(\max_{\ell \in
      \{1,\dots,ck^{c-1} \}} \abs{\widehat{X}_\ell-x}>\delta k^{c/2} )\\
    \label{eq:proof lemma 3.8 eq 1}
    & \le \sum_{\ell=1}^{ck^{c-1}} \sup_{x,x'}
      \bP^\joint_{x,x'}(\abs{\widehat{X}_\ell-x}>\delta k^{c/2})\\
          \notag
    & \le \sum_{\ell=1}^{ck^{c-1}} \sup_{x,x'}
      \bP_{x,x'}(\abs{\widehat{X}^\joint_\ell-x}>\delta k^{c/2},
      A_\ell,\widetilde{C}_{k},D_\ell) + \bP_{x,x'}(A_\ell^\compl) +
      \bP_{x,x'}(\widetilde{C}_{k}^\compl) + \bP_{x,x'}(D_\ell^\compl).
  \end{align}
  Note that the events $A_\ell\cap \widetilde{C}_{k}\cap D_\ell$ and
  $\{\abs{\widehat{X}^\joint_\ell-x}>\delta k^{c/2} \}$ are,
  when joint, a null set and thus,
  \begin{align*}
    \eqref{eq:maxfluctuations}
    & \le \sum_{\ell=1}^{ck^{c-1}}\sup_{x,x'} \bP_{x,x'}(A_\ell^\compl)
      + \bP_{x,x'}(\widetilde{C}_{k}^\compl) +
      \bP_{x,x'}(D_\ell^\compl) \\
    & \le \sum_{\ell=1}^{ck^{c-1}}\sup_{x,x'} \bP_{x,x'}(A_\ell^\compl)
      + \bP_{x,x'}\Big(\bigcup_{\ell \le k^{c-1+\varepsilon}} C_{\ell}^\compl\Big) +
      \bP_{x,x'}(D_\ell^\compl) \\
    & \le \sum_{\ell=1}^{ck^{c-1}}\bigg(
      \ell^{1+\beta}k^{-\beta(c-1+\varepsilon)} +
      k^{2(c-1+\varepsilon)}k^{-\beta \alpha}\\
    & \hspace{2cm} + k^{c-1+\varepsilon}\exp\Big(
      -C(\delta,R_\kappa)k^{1-\varepsilon-2\alpha} \Big)\bigg).
  \end{align*}
  For $\beta$ large enough and $\varepsilon$ small enough such that
  $1-\varepsilon-2\alpha>0$ the upper bound for
  \eqref{eq:maxfluctuations} given above is summable in $k$ and thus
  \begin{align*}
    \limsup_{k\to \infty} \max_{k^c\leq \ell \leq (k+1)^c}
    \frac{\abs{\widehat{X}^\joint_\ell - \widehat{X}^\joint_{k^c}}}{k^{c/2}} =
    0, \qquad \text{for-a.a. } \omega.
  \end{align*}
  From that it follows for $k^c \le m \le (k+1)^c$
  \begin{align*}
    & \abs{E_\omega[f(X_{T^\joint_{m}(0,0)}/m^{1/2})]- \wt{\Phi}(f)}\\
    & \le L_f \left\Vert\frac{\widehat{X}^\joint_m}{\sqrt{m}} -
      \frac{\widehat{X}^\joint_k}{k^{c/2}}\right\Vert +
      \abs{E_\omega[f(X_{T^\joint_{k}(0,0)}/k^{c/2})]-
      \wt{\Phi}(f)}\\
    & \le L_f \left\Vert
      \frac{\widehat{X}^\joint_m}{\sqrt{m}}-\frac{\widehat{X}^\joint_{k^c}}{\sqrt{m}}
      \right\Vert + L_f \left\Vert
      \frac{\widehat{X}^\joint_{k^c}}{\sqrt{m}} -
      \frac{\widehat{X}^\joint_{k^c}}{k^{c/2}} \right\Vert +
      \abs{E_\omega[f(X_{T^\joint_{k}(0,0)}/k^{c/2})]-
      \wt{\Phi}(f)}\\
    & \le L_f\frac{\norm{\widehat{X}^\joint_m -
      \widehat{X}^\joint_{k^c}}}{k^{c/2}}  +
      L_f\frac{\norm{\widehat{X}^\joint_{k^{c}}}}{k^{c/2}} \Big(
      \frac{k^{c/2}}{\sqrt{m}} -1 \Big)+
      \abs{E_\omega[f(X_{T^\joint_{k}(0,0)}/k^{c/2})]-
      \wt{\Phi}(f)}.
  \end{align*}
  The calculations above show that the first term goes to $0$ a.s. and
  we can extract the same result for the second term since equation
  \eqref{eq:proof lemma 3.8 eq 1} yields an upper bound for it.
  Therefore
  \begin{align*}
    \abs{E_\omega[f(X_{T^\joint_{m}(0,0)}/m^{1/2})]- \wt{\Phi}(f)}
    \to 0 \qquad \text{for a.a. } \omega.
  \end{align*}
\end{proof}

Proposition~\ref{prop:2nd-mom} and Lemma~\ref{lemma:convergence for
  subsequence to full sequence} together imply the following quenched
CLT along the regeneration times.
\begin{proposition}
  \label{prop:qCLT.regen.1}
  Let $d \ge 2$.
  For $f: \R^d \to \R$ bounded and Lipschitz we have
  \begin{equation}
    E_\omega\left[f\left(X_{T^\joint_{n}(0,0)}/\sqrt{n}\right)\right]
    \mathop{\longrightarrow}_{n\to\infty} \wt{\Phi}(f)
    \qquad \text{a.s.},
  \end{equation}
  where $\wt{\Phi}$ is the normal law from Proposition~\ref{prop:2nd-mom}.
\end{proposition}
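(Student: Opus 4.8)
The plan is to deduce the statement by combining the $L^2$ second-moment estimate of Proposition~\ref{prop:2nd-mom} with the subsequence-to-full-sequence upgrade provided by Lemma~\ref{lemma:convergence for subsequence to full sequence}. Throughout I recall that $\widehat{X}^\joint_m = X_{T^\joint_m}$ and that $\wt{\Phi}$ is the non-trivial centred normal law furnished by Proposition~\ref{prop:2nd-mom}, so its non-degeneracy is inherited and need not be re-established. First I would fix a bounded Lipschitz $f:\R^d\to\R$ and pick an exponent $c'>1$ large enough that $c'c>1$, where $c>0$ is the constant from Proposition~\ref{prop:2nd-mom} (such a choice is possible whatever the value of $c$). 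Applying Proposition~\ref{prop:2nd-mom} with $m=\lceil k^{c'}\rceil$ together with Chebyshev's inequality gives, for every $\varepsilon>0$,
\[
  \Pr\Big( \big| E_\omega[f(X_{T^\joint_{\lceil k^{c'}\rceil}}/\sqrt{\lceil k^{c'}\rceil})] - \wt{\Phi}(f) \big| > \varepsilon \Big)
  \le \varepsilon^{-2} C_f \lceil k^{c'}\rceil^{-c} \le \varepsilon^{-2} C'_f k^{-c'c}.
\]
Since $c'c>1$ the right-hand side is summable in $k$, so by the Borel--Cantelli lemma
\[
  E_\omega\big[f(X_{T^\joint_{\lceil k^{c'}\rceil}}/\sqrt{\lceil k^{c'}\rceil})\big]
  \xrightarrow{k\to\infty} \wt{\Phi}(f) \qquad \text{for a.a.\ }\omega .
\]
The discrepancy between $\lceil k^{c'}\rceil$ and $k^{c'}$ in the normalisation is immaterial, e.g.\ after an elementary Lipschitz estimate using the finite-range a priori bound $\norm{\widehat{X}^\joint_m}\le R_\kappa T^\joint_m$ together with Proposition~\ref{prop:JointRegTimesBound}.

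This convergence along the subsequence $(\lceil k^{c'}\rceil)_k$ is precisely the hypothesis of Lemma~\ref{lemma:convergence for subsequence to full sequence} with its exponent taken to be $c'>1$; that lemma then upgrades the convergence to
\[
  E_\omega\big[f(X_{T^\joint_m(0,0)}/\sqrt{m})\big] \xrightarrow{m\to\infty} \wt{\Phi}(f) \qquad \text{for a.a.\ }\omega,
\]
which is the assertion of the proposition (with $\wt{\Phi}$ the normal law from Proposition~\ref{prop:2nd-mom}, as claimed).

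I do not expect a genuine obstacle at this stage: the substantive analytic work has already been carried out in Proposition~\ref{prop:2nd-mom} (the second-moment bound along regeneration times, via the $\joint$-to-$\indi$ comparison of Lemma~\ref{lemma:joint-ind-comparison} and Berry--Esseen bounds) and in Lemma~\ref{lemma:convergence for subsequence to full sequence} (control of the within-block fluctuations $\max_{k^{c'}\le \ell\le (k+1)^{c'}}\norm{\widehat{X}^\joint_\ell-\widehat{X}^\joint_{k^{c'}}}$ via large deviations for the heavy-tailed regeneration increments). The only point requiring care is the bookkeeping of exponents: the subsequence exponent must simultaneously exceed $1$, as Lemma~\ref{lemma:convergence for subsequence to full sequence} demands, and make $\lceil k^{c'}\rceil^{-c}$ summable in $k$, and taking $c'$ sufficiently large achieves both at once. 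If one wished the exceptional $\omega$-set to be independent of the test function, one could in addition run the argument over a countable family of Lipschitz functions that is dense in the relevant sense and then approximate, but for the statement as given this is not needed.
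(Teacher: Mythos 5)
Your proposal is correct and follows essentially the same route as the paper: apply the second-moment bound of Proposition~\ref{prop:2nd-mom} along a polynomially growing subsequence with exponent $c'$ chosen so that $c'c>1$ and $c'>1$, use Chebyshev plus Borel--Cantelli to get almost sure convergence along that subsequence, and then invoke Lemma~\ref{lemma:convergence for subsequence to full sequence} to pass to the full sequence. The only differences are cosmetic (ceiling versus integer part in the subsequence, and your explicit remark on the normalisation discrepancy), so no further comment is needed.
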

\begin{proof}
  Let $f:\R^d \to \R$ be bounded and Lipschitz, $c' > 1/c \wedge 1$
  with $c$ from Proposition~\ref{prop:2nd-mom}. By
  \eqref{eq:2nd-mom-estimate} and Markov's inequality, abbreviating
  $\widehat{X}_m = \widehat{X}^\joint_m$,
  \begin{align}
    \begin{split}
      & \Pr\Big( \abs{E_\omega [
        f(\widehat{X}_{[n^{c'}]}/\sqrt{[n^{c'}]})] ]
        -\wt{\Phi}(f)}>\varepsilon \Big)\\
      & \leq \frac{\bE \Big[ \big( E_\omega[
        f(\widehat{X}_{[n^{c'}]}/\sqrt{[n^{c'}]})] - \wt{\Phi}(f) \big)^2
        \Big]}{\varepsilon^2}\\
      & \leq C_f n^{-c'c} \varepsilon^{-2},
    \end{split}
  \end{align}
  which is summable and hence by Borel-Cantelli
  \begin{align}
    E_\omega [f(\widehat{X}_{[n^{c'}]}/\sqrt{[n^{c'}]})] \to
    \wt{\Phi}(f) \quad \text{a.s. as }n\to\infty.
  \end{align}
  Now Lemma~\ref{lemma:convergence for subsequence to full sequence} yields
  \begin{align}
    \label{eq:Thm proof convergence along reg. times}
    E_\omega[f(\widehat{X}_m/\sqrt{m})] \underset{m \to
    \infty}{\longrightarrow} \wt{\Phi}(f) \quad \text{for a.a. }
    \omega.
  \end{align}
\end{proof}

\subsection{Proof of Theorem~\ref{thm:LLNuCLTmodel1} for $d \ge 2$}
\label{sect:qCLTd>=2-1}

Here, we complete the proof of Theorem \ref{thm:LLNuCLTmodel1} for
$d \ge 2$.  In view of Proposition~ \ref{prop:qCLT.regen.1}, it
remains to transfer the CLT from the regeneration subsequence to the
full sequence. Thus, we only need to control the behaviour of the
random walk between the regeneration times.

To that end set
$\tau_m \coloneqq T^\joint_m(0,0)- T^\joint_{m-1}(0,0)$ and let
$V_n \coloneqq \max\{m \in \bZ_+ \colon T^\joint_m(0,0) \leq n \}$.

For $\alpha>0$
\begin{align*}
  \begin{split}
    \Pr(\max_{j \le n}\{ j-T_{V_j}\} > c n^\alpha )
    & \leq \Pr(\text{there exists }i\in\{1,\dots,V_n\} \text{ such
      that } \tau_i > cn^\alpha)\\
    & \leq n \Pr(\tau_2 > c n^\alpha) + \Pr(\tau_1 >c n^\alpha)\\
    & \leq C(n+1)n^{-\beta\alpha}
  \end{split}
\end{align*}
which is summable if $1-\beta\alpha < -1$, so
\begin{align}
  \label{eq:proof of thm 5 eq 1}
  \limsup_{n\to \infty} \frac{\max_{j \le n}\{j-T_{V_j}
  \}}{n^{\alpha}} < \infty  \quad \text{ a.s.}
\end{align}
in fact we obtain
\begin{align}
  \label{eq:proof of thm 5 eq 2}
  P_\omega(\max_{j \le n}\{j-T_{V_j} \}> cn^\alpha) \longrightarrow
  0 \quad \text{a.s.}
\end{align}
for an appropriate choice of $\alpha$ and $\beta$. Since we can
choose $\beta$ arbitrarily large it is possible to have the above
probability small for any choice of $\alpha$. Since
$X_{T_{V_n}}=X_{T^\joint_{V_n}(0,0)} = \widehat{X}_{V_n}$ we have
\begin{align}
  \begin{split}
    P_\omega&(\norm{X_n -\widehat{X}_{V_n}} \ge \log(n)cn^\alpha)\\
    &=P_\omega(\norm{X_n -\widehat{X}_{V_n}} \ge \log(n)cn^\alpha,
    \max_{j \le n}\{j-T_{V_j} \}\leq cn^\alpha)\\
    & \hspace{2cm}+P_\omega(\norm{X_n -\widehat{X}_{V_n}} \ge
    \log(n)cn^\alpha,\max_{j \le n}\{j-T_{V_j} \}> cn^\alpha)\\
    & \leq P_\omega(\norm{X_n -\widehat{X}_{V_n}} \ge \log(n)cn^\alpha,
    \max_{j \le n}\{j-T_{V_j} \}\leq cn^\alpha) + P_\omega(\max_{j
      \le n}\{j-T_{V_j} \}> cn^\alpha)
  \end{split}
\end{align}
and
\begin{align}
  \begin{split}
    P_\omega
    & (\norm{X_n -\widehat{X}_{V_n}} \ge \log(n)cn^\alpha, \max_{j \le
      n}\{j-T_{V_j} \}\leq cn^\alpha)\\
    & = P_\omega( \log(n)cn^\alpha \le \norm{X_n -\widehat{X}_{V_n}} \le
    R_\kappa(n-T_{V_n}), \max_{j \le n}\{j-T_{V_j} \}\leq
    cn^\alpha)\\
    & \leq P_\omega(\log(n)cn^\alpha \leq \norm{X_n -\widehat{X}_{V_n}}
    \le R_\kappa cn^\alpha ) \longrightarrow 0 \quad \text{a.s.},
  \end{split}
\end{align}
consequently
\begin{align}
  \label{eq:Thm proof 2}
  P_\omega(\norm{X_n -\widehat{X}_{V_n}} \ge \log(n)cn^\alpha) \longrightarrow 0 \quad \text{a.s.}
\end{align}
By \eqref{eq:60} in Lemma~\ref{lem:ind-to-joint} for any $\varepsilon>0$
\begin{align}
  \label{eq:Thm proof 3}
  P_\omega(\abs{V_n -n/\bE[\tau^\indi_2]}\geq n^{1/2+\varepsilon}) \rightarrow
  0 \quad \text{a.s.}
\end{align}

Using equation~\eqref{eq:Matthias5} from Lemma~\ref{lem:joint_fluctuations}, since we can
choose $\gamma>1$ there, we have by the Borel-Cantelli Lemma for any $\varepsilon>0$
\begin{align}
  \label{eq:Thm proof 4}
  \limsup_{n\to \infty} \sup_{\abs{k-[\theta n]}  \leq n^{1/2+\varepsilon}}
  \frac{\abs{\widehat{X}_k - \widehat{X}_{[\theta n]}}}{n^{1/4+2\varepsilon}} \to 0
  \quad \text{a.s.}
\end{align}

Writing $X_n/\sqrt{n}$ in terms that we can bound by what we showed above
\begin{align}
  \frac{X_n}{\sqrt{n}} = \frac{X_n - \widehat{X}_{V_n}}{\sqrt{n}} +
  \frac{\widehat{X}_{V_n}-\widehat{X}_{[n/\bE[\tau^\indi_2]]}}{\sqrt{n}} +
  \frac{\widehat{X}_{[n/\bE[\tau^\indi_2]]}}{\sqrt{n/\bE[\tau^\indi_2]}}\sqrt{1/\bE[\tau^\indi_2]}
\end{align}
and let $\Phi$ be defined by
$\Phi(f)\coloneqq\wt{\Phi}(f((\bE[\tau^\indi_2]^{-1/2})\cdot))$, i.e.\
$\Phi$ is the image measure of $\wt{\Phi}$ under
$x \to x/\sqrt{\bE[\tau^\indi_2]}$. Then, defining the sets
\begin{align*}
  \begin{split}
    & A_n \coloneqq \{ \abs{X_n - \widehat{X}_{V_n}}\geq n^\alpha\log n \},\\
    & B_n \coloneqq \{ \abs{V_n - n/\bE[\tau^\indi_2]}\geq n^{\varepsilon + 1/2} \},\\
    & C_n \coloneqq \Big\{ \sup_{\abs{k -  n/\bE[\tau^\indi_2]}\leq
      n^{1/2+\varepsilon}} \abs{\widehat{X}_k - \widehat{X}_{[
        n/\bE[\tau^\indi_2]]}}>n^{1/4+2\varepsilon} \Big\},\\
    & D_n \coloneqq A_n^\compl \cap B_n^\compl \cap C_n^\compl,
  \end{split}
\end{align*}
we conclude
\begin{align}
  \begin{split}
    & \abs{E_\omega[f(X_n/\sqrt{n})] - \Phi(f)}\\
    & \le\abs{E_\omega[\indset{D_n}f(X_n/\sqrt{n})]- \Phi(f)} +
    \norm{f}_\infty E_\omega[\indset{D_n^c}],
  \end{split}
\end{align}
where on $D_n$ we get
\begin{align}
  \begin{split}
    & \abs{E_\omega[\indset{D_n}f(X_n/\sqrt{n})] - \Phi(f)}\\
    & \leq CL_f\left(\frac{n^\alpha\log n}{\sqrt{n}} +
      n^{\gamma-1/2}\right) + \left\vert E_\omega\left[
      f\left(\frac{\widehat{X}_{[n/\bE[\tau^\indi_2]]}}{\sqrt{n/\bE[\tau^\indi_2]}}\sqrt{1/\bE[\tau^\indi_2]}\right)
      \right] - \Phi(f) \right\vert\\
    & \rightarrow 0 \quad \text{a.s. as }n \to \infty,
  \end{split}
\end{align}
by \eqref{eq:Thm proof convergence along reg. times}, $\gamma<1/2$
and the fact that $\alpha>0$ can be chosen close to 0 if the
parameters of the model are tuned correctly, i.e. $p$ close to 1,
$s_\inn$ close to $s_\out$ and $s_\out$ much larger than the a
priori bound $s_\mathrm{max}$ from lemma 2.16 of
\cite{BirknerCernyDepperschmidt2016}. Additionally by \eqref{eq:Thm
  proof 2}, \eqref{eq:Thm proof 3} and \eqref{eq:Thm proof 4}
\begin{align}
  \begin{split}
    E_\omega[\indset{D_n^\compl}]\leq P_\omega(A_n) + P_\omega(B_n) +
    P_\omega(C_n)\rightarrow 0 \quad \text{a.s.}.
  \end{split}
\end{align}
This proves convergence for bounded Lipschitz functions which, by
the Portmanteau-theorem, is sufficient to prove the weak convergence
in Theorem \ref{thm:LLNuCLTmodel1}.
\hfill $\qed$

\subsection{Proof of Theorem~\ref{thm:LLNuCLTmodel1} for $d=1$}
\label{sec:dimension1}

The reason we needed to split the proof for $d=1$ is that in this case
the random walks meet often and single excursions away from each other
will typically not be long. In fact, there will be $O(\sqrt{N})$
collisions up to time $N$, so Lemma~\ref{lem:coupl} can not work in
$d=1$.

Therefore we will need to calculate more carefully and consider the
time for an excursion as well as the number of excursions before time
$n$. It turns out that, although a single excursion will not take up
much time, the random walks will split fast enough such that the total
time spent close to each other up until time $n$ will be of order
$o(n)$ in probability. The idea now is to follow the proof in
\cite{BirknerCernyDepperschmidtGantert2013} with a few adjustments,
where the main problem stems from the fact that our bound on the total
variation distance between $\Pr^{\indi}_{x,x'}$ and
$\Pr^{\joint}_{x,x'}$ only has polynomial decay in the distance of the
starting points $x$ and $x'$. Therefore we will introduce so called
\emph{black box intervals} where the random walks are close to each
other and a coupling using Remark~\ref{rem:TVdistance-joint-ind-1step}
will not be possible. While the random walks are not in a black box
interval however, we can make use of
Remark~\ref{rem:TVdistance-joint-ind-1step}.

\medskip

\noindent
Let
$(\widehat{X}^\joint_n,\widehat{X}^{',\joint}_n)_n$
be a pair of random walks in $d=1$ in the same environment observed
along the simultaneous renewal times with transition probabilities
$\hat \Psi^{\joint}((x,x'),(y,y'))$, i.e.\
\begin{align*}
  \hat{\Psi}^{\joint}((x,x'),(y,y'))=\bP^\joint(\widehat{X}_n=y,\widehat{X}'_n=y'\,\vert
  \,\widehat{X}_{n-1}=x,\widehat{X}'_{n-1}=x' )
\end{align*}
and similarly for random walks in independent environments
\begin{align*}
  \hat{\Psi}^{\indi}((x,x'),(y,y'))=\bP^\indi(\widehat{X}_n=y,\widehat{X}'_n=y'\,\vert
  \,\widehat{X}_{n-1}=x,\widehat{X}'_{n-1}=x' ).
\end{align*}
This section will mostly be about
$(\widehat{X}^\joint_n,\widehat{X}^{',\joint}_n)_n$. We therefore abbreviate
$(\widehat{X}_n,\widehat{X}'_n)_n=(\widehat{X}^\joint_n,\widehat{X}^{',\joint}_n)_n$
and will specify when we mean $\widehat{X}^\indi$ and $\widehat{X}^{'\indi}$.
Write
$\hat{\mathcal{F}}_n \coloneqq \sigma(\widehat{X}_i,\widehat{X}'_i, 0\leq i
\leq n)$ for the canonical filtration of $(\widehat{X}_n,\widehat{X}'_n)_n$.

Set
\begin{align*}
  \phi_1(x,x')
  & \coloneqq \sum_{y,y'}(y-x)\hat \Psi^{\joint}((x,x'),(y,y'))\\
  \phi_2(x,x')
  & \coloneqq \sum_{y,y'}(y'-x')\hat \Psi^{\joint}((x,x'),(y,y'))\\
  \phi_{11}(x,x')
  & \coloneqq \sum_{y,y'}(y-x-\phi_1(x,x'))^2\hat \Psi^{\joint}((x,x'),(y,y'))\\
  \phi_{22}(x,x')
  & \coloneqq \sum_{y,y'}(y'-x'-\phi_2(x,x'))^2\hat \Psi^{\joint}((x,x'),(y,y'))\\
  \phi_{12}(x,x')
  & \coloneqq \sum_{y,y'}(y-x-\phi_1(x,x'))(y'-x'-\phi_2(x,x'))\hat
    \Psi^{\joint}((x,x'),(y,y')).
\end{align*}
By Lemma~\ref{prop:JointRegTimesBound} these are bounded,
\begin{align}
  \label{eq:boundAllPhi}
  C_\phi \coloneqq \norm{\phi_1}_\infty \vee \norm{\phi_2}_\infty \vee
  \norm{\phi_{11}}_\infty \vee \norm{\phi_{22}}_\infty \vee
  \norm{\phi_{12}}_\infty < \infty.
\end{align}
Define
\begin{align}
  A^{(1)}_n
  & \coloneqq \sum_{j=0}^{n-1}\phi_1(\widehat{X}_j,\widehat{X}'_j),
    \quad A^{(2)}_n \coloneqq
    \sum_{j=0}^{n-1}\phi_2(\widehat{X}_j,\widehat{X}'_j),\\
  A^{(11)}_n
  & \coloneqq \sum_{j=0}^{n-1}
    \phi_{11}(\widehat{X}_j,\widehat{X}'_j), \quad A^{(22)}_n
    \coloneqq \sum_{j=0}^{n-1}
    \phi_{22}(\widehat{X}_j,\widehat{X}'_j), \quad A^{(12)}_n
    \coloneqq
    \sum_{j=0}^{n-1}\phi_{12}(\widehat{X}_j,\widehat{X}'_j),\\
  M_n
  & \coloneqq \widehat{X}_n - A^{(1)}_n, \quad M'_n \coloneqq \widehat{X}'_n - A^{(2)}_n.
\end{align}
Now $(M_n), (M'_n), (M_n^2-A^{(11)}_n), ({M'_n}^2-A^{(22)}_n)$ and
$(M_nM'_n - A^{(12)}_n)$ are martingales and by
Lemma~\ref{prop:JointRegTimesBound} the distribution of their
increments has polynomial tails.

We write
$\hat{\sigma}^2 \coloneqq \sum_{y,y'} y^2
\hat{\Psi}^\indi((0,0),(y,y'))$ for the variance of a single
increment under $\hat{\Psi}^\indi$.

By Lemma~\ref{prop:JointRegTimesBound} there exist $C_1,a>0$ such
that for $x,x'\in \bZ$ with $\abs{x-x'}\geq n^a$
\begin{align}
  \label{eq:estimatesPhi}
  \abs{\phi_1(x,x')},\abs{\phi_2(x,x')},\abs{\phi_{12}(x,x')} \leq \frac{C_1}{n^2},\\
  \label{eq:estimatesPhi11}
  \abs{\phi_{11}(x,x')-\hat{\sigma}^2},
  \abs{\phi_{22}(x,x')-\hat{\sigma}^2}\leq \frac{C_1}{n^2}.
\end{align}
Here we can choose the $a$ arbitrarily small by
Lemma~\ref{prop:TVdistance-joint-ind-1step} if we tune the parameters right,
e.g. $\beta\geq 2+ 2/a$ with $\beta$ of said Lemma. See for example,
noting that $\bE^\indi_{0,x'}[\widehat{X}_1]=0$,
\begin{align*}
  \abs{\phi_1(x,x')}
  & = \abs{\sum_{(y,y')} (y-x) \hat{\Psi}^\joint((x,x'),(y,y'))}\\
  & =\abs{\sum_{\substack{y,y'\\\abs{y-x}\geq n^a}}(y-x)
  \hat{\Psi}^\joint((x,x'),(y,y')) +
  \sum_{\substack{y,y'\\\abs{y-x}< n^a}}(y-x)
  \hat{\Psi}^\joint((x,x'),(y,y'))}\\
  & \leq
    \bE_{0,x'}^\joint[\abs{\widehat{X}_1}\indset{\abs{\widehat{X}}\geq n^a}] +
    \bE_{0,x'}^\indi[\abs{\widehat{X}_1}\indset{\abs{\widehat{X}}\geq n^a}] \\
  & + \abs{\sum_{\substack{y,y'\\\abs{y-x}< n^a}}(y-x)
  (\hat{\Psi}^\joint((x,x'),(y,y'))-\hat{\Psi}^\indi((x,x'),(y,y')))}\\
  & \leq C(\beta) n^{a(2-\beta)} + 2n^a\abs{x-x'}^{-\beta} \leq C(\beta)(n^{a(2-\beta)}+
    n^{a(1-\beta)}) \leq C(\beta)n^{a(2-\beta)}
\end{align*}
now for $\beta\geq 2 + 2/a$ we have $a(2-\beta)\leq -2$. The other estimates
can be shown analogously.

Let
\begin{align}
  \label{eq:18}
  R_n \coloneqq \#\{ 0\leq j \leq n : \abs{\widehat{X}_j -
  \widehat{X}'_j}\leq n^a \}
\end{align}
be the time that the random walks spend close to each other until time
$n$. Next we want to prove a moment condition for $R_n$ and that the
predictable processes $(A^{(1)}_n)_n$ and $(A^{(2)}_n)_n$ are small on
the diffusive scale.
\begin{lemma}
  \label{lem:bound_for_A1_and_A2}
  1. There exist $0\leq \delta_R< 1/2, c_R < \infty$ such that for all $x_0,x'_0\in\bZ$
  \begin{align}
    \label{eq:bound_R_n_moment}
    \bE^\joint_{x_0,x'_0}[R_n^{3/2}] \leq c_Rn^{1+\delta_R}\quad \text{for all } n.
  \end{align}
  2. There exist $\delta_C>0, c_C< \infty$ such that for all $x_0,x'_0\in \bZ$
  \begin{align}
        \label{eq:bound_for_A1_and_A2}
    \bE^\joint_{x_0,x'_0}\left[ \frac{\abs{A_n^{(1)}}}{\sqrt{n}} \right], \bE^\joint_{x_0,x'_0}\left[
      \frac{\abs{A_n^{(2)}}}{\sqrt{n}} \right] \leq
    \frac{c_C}{n^{\delta_C}} \quad \text{for all }n.
  \end{align}
\end{lemma}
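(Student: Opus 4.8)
The plan is to prove both statements by controlling the time $R_n$ that the two walks spend within distance $n^a$ of each other, and then using the estimates \eqref{eq:estimatesPhi}--\eqref{eq:estimatesPhi11} on $\phi_1,\phi_2$ to reduce the bound on $A^{(1)}_n,A^{(2)}_n$ to a bound on $R_n$. First I would establish Part~1. The key observation is that $R_n$ decomposes into excursions: each time the walks come within distance $n^a$, the Separation lemma (Lemma~\ref{lem:separ}, in the $d=1$ version — or more precisely its one-dimensional analogue via the random-walk structure under $\bP^\indi$ and the coupling of Remark~\ref{rem:TVdistance-joint-ind-1step}) guarantees that the pair separates back to distance $\gg n^a$ within a polynomially-bounded number of steps with overwhelming probability. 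Between such separations, the number of returns to within $n^a$ is governed by the difference walk $\widehat Z_k = \widehat X_k - \widehat X'_k$, which under $\bP^\indi$ is a centred one-dimensional random walk with finite variance (Remark~\ref{rem:indMC}, Proposition~\ref{prop:IndRegTimesBound}), and the coupling error is only $O(\|x-x'\|^{-\beta})$ per step with $\beta$ tunable large. A classical local-CLT / occupation-time estimate gives that a centred one-dimensional walk of finite variance spends on the order of $\sqrt{n}\cdot n^a$ steps within distance $n^a$ of the origin up to time $n$, with all moments under control; transferring this to $\bP^\joint$ via the coupling (paying $n\cdot n^{-a\beta}$, negligible for $\beta$ large) yields $\bE^\joint[R_n^{3/2}] \le c_R n^{3/4 + (3/2)a + o(1)}$, so one takes $\delta_R$ slightly above $\tfrac32 a$ (small, since $a$ is small), and in particular $\delta_R < 1/2$ provided $a < 1/3$.

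For Part~2, I would write
\begin{align*}
  \bE^\joint_{x_0,x'_0}\!\big[\,\abs{A_n^{(1)}}\,\big]
  &\le \sum_{j=0}^{n-1} \bE^\joint_{x_0,x'_0}\!\Big[\,\abs{\phi_1(\widehat X_j,\widehat X'_j)}\,\Big]
  = \sum_{j=0}^{n-1} \bE^\joint_{x_0,x'_0}\!\Big[\abs{\phi_1}\,\big(\indset{\abs{\widehat X_j - \widehat X'_j}\le n^a} + \indset{\abs{\widehat X_j - \widehat X'_j}> n^a}\big)\Big],
\end{align*}
and on the first event bound $\abs{\phi_1}\le C_\phi$ by \eqref{eq:boundAllPhi}, on the second use \eqref{eq:estimatesPhi}, i.e.\ $\abs{\phi_1(\widehat X_j,\widehat X'_j)}\le C_1/n^2$. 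This gives $\bE^\joint[\abs{A_n^{(1)}}] \le C_\phi\, \bE^\joint[R_n] + n\cdot C_1/n^2 = C_\phi\,\bE^\joint[R_n] + C_1/n$. By Jensen (or Hölder) applied to \eqref{eq:bound_R_n_moment}, $\bE^\joint[R_n] \le (\bE^\joint[R_n^{3/2}])^{2/3} \le (c_R n^{1+\delta_R})^{2/3} = c_R^{2/3} n^{2/3 + 2\delta_R/3}$, so dividing by $\sqrt n$ yields $\bE^\joint[\abs{A_n^{(1)}}/\sqrt n] \le c_C n^{-(1/6 - 2\delta_R/3)}$, which is $\le c_C n^{-\delta_C}$ with $\delta_C \coloneqq \tfrac16 - \tfrac23 \delta_R > 0$ as soon as $\delta_R < 1/4$ — again guaranteed by taking $a$ small enough (concretely $a < 1/6$, say). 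The argument for $A^{(2)}_n$ is identical by symmetry of the roles of $\widehat X$ and $\widehat X'$.

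The main obstacle will be making Part~1 fully rigorous, because the pair $(\widehat X^\joint,\widehat X'^{\,\joint})$ is not a random walk — its increments depend on the displacement $\widehat X_j - \widehat X'_j$ (Remark~\ref{rem:jointMC}), and the coupling to the genuinely independent pair $(\widehat X^\indi,\widehat X'^{\,\indi})$ degrades only polynomially in the separation rather than exponentially (this is the whole reason $d=1$ needs separate treatment). So the excursion bookkeeping must be set up carefully: one decomposes $[0,n]$ into alternating ``close'' and ``far'' phases, controls the number of phases and the length of close phases via the one-dimensional separation estimate and the renewal-time tail bounds of Proposition~\ref{prop:JointRegTimesBound}, and at each step where the coupling is invoked one absorbs an error $C\,n^{-a\beta}$, summing to $C\,n^{1-a\beta}$ over the whole trajectory — negligible since $\beta$ may be chosen $\ge 2/a$ or larger. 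The technical heart is a quantitative occupation-time / return-count estimate for a centred finite-variance one-dimensional walk near a slowly-growing window of width $n^a$, together with the observation that the heavy (polynomial, exponent $\beta$) tails of the increments do not spoil the $3/2$-moment of $R_n$ once $\beta$ is large. I expect this to follow the template of \cite[Section~3.4]{BirknerCernyDepperschmidtGantert2013} with the coupling error handled as in the proofs of Lemma~\ref{lem:ind-to-joint} and Lemma~\ref{lem:joint_fluctuations} above.
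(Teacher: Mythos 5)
There is a genuine gap, and it sits at the heart of Part~2. Your bound $\abs{A^{(1)}_n}\le C_\phi R_n + C_1/n$ followed by a moment bound on $R_n$ cannot work: in $d=1$ the walks are within distance $n^a$ of each other on order $\sqrt{n}$ of the regeneration steps (this is exactly why the paper treats $d=1$ separately), so $\bE^\joint[R_n]$ is at least of order $\sqrt{n}$ and $\bE^\joint[\abs{A^{(1)}_n}]/\sqrt{n}$ obtained by the triangle inequality is at best bounded, never decaying. Your own arithmetic also betrays this: $n^{2/3+2\delta_R/3}/\sqrt{n}=n^{1/6+2\delta_R/3}$, a \emph{diverging} bound, not $n^{-(1/6-2\delta_R/3)}$ as you wrote. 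The decay in \eqref{eq:bound_for_A1_and_A2} comes from cancellation, which the triangle inequality destroys. The paper's proof decomposes $A^{(1)}$ into its increments $D_{n,j}=A^{(1)}_{\mathcal{D}_{n,j}}-A^{(1)}_{\mathcal{R}_{n,j-1}}$ over the black-box intervals, uses the symmetry relations $\bE^\joint[D_{n,j}]=0$, $\bE^\joint[D_{n,j}\mid W_{n,j}=1]=-\bE^\joint[D_{n,j}\mid W_{n,j}=3]$ (and $2\leftrightarrow 4$), the exponential mixing of the type chain $(W_{n,j})_j$ from Lemma~\ref{lem:separ in d=1}, and McLeish's maximal inequality for mixingales to get $\bE^\joint[\max_{m\le n^{3/4}}S_{n,m}^2]\lesssim n^{3/4+3b_2/2}$; dividing by $\sqrt{n}$ then yields decay $n^{-1/8+3b_2/4}$ because $b_2<1/8$. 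Without some such cancellation mechanism Part~2 is unreachable from Part~1 alone.

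Part~1 of your plan is closer to the paper but also has a soft spot: you propose to transfer an occupation-time estimate for the difference walk from $\bP^\indi$ to $\bP^\joint$ "paying $n\cdot n^{-a\beta}$". The coupling error per step is $C\norm{\widehat X_m-\widehat X'_m}^{-\beta}$, which is only $n^{-a\beta}$ while the separation exceeds $n^a$; during the close phases --- precisely the time you are trying to count --- the coupling gives no control at all. The paper avoids this by (i) bounding the length of each close (black-box) phase by $n^{b_2}$ with stretched-exponentially small failure probability via the $d=1$ separation lemma (Corollary~\ref{cor:separation corollary d=1}), whose proof works directly under $\bP^\joint$ through corridor constructions, and (ii) bounding the \emph{number} $I_n$ of black-box phases by $O(\sqrt{n})$ (indeed $\bP(I_n\ge k)\le e^{-ck^2/n}$) via the stochastic domination $\mathcal{R}_{n,1}-\mathcal{D}_{n,1}\succcurlyeq((1-V)+VY)\wedge n$ with $\bP(Y\ge\ell)\ge c/\sqrt{\ell}$, where the coupling is invoked only on the far excursion starting from separation $n^{b'}$ (failure probability $\le 1/n$ over $n$ steps, hence the Bernoulli $V$). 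This yields $\bE^\joint[R_n^2]\le Cn^{1+2b_2}$ and then \eqref{eq:bound_R_n_moment} by Jensen. You would need to restructure your excursion bookkeeping along these lines rather than via a direct occupation-time transfer.
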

To prove this lemma, we can follow the proof of Lemma~3.14 from
\cite{BirknerCernyDepperschmidt2016}. The necessary ingredients
are a coupling based on Proposition~\ref{prop:TVdistance-joint-ind-1step} as well as the upper
bound from equation~\eqref{eq:sep in d=1} from
Corollary~\ref{cor:separation corollary d=1} below. We provide the detailed
calculations in Section~\ref{sec:proofs}.\\
\medskip

\noindent
To work toward Corollary~\ref{cor:separation corollary d=1} we will introduce some
new notation: Set $\mathcal{R}_{n,0}\coloneqq0$ for $n\in \N$ and for $i\in \N$
\begin{align}
  \label{eq:defn_MathcalDn}
  \mathcal{D}_{n,i}
  & \coloneqq \min\{m>\mathcal{R}_{n,i-1}:\abs{\widehat{X}_m-\widehat{X}'_m}\geq n^{b'} \},\\
  \label{eq:defn_MathcalRn}
  \mathcal{R}_{n,i}
  & \coloneqq \min\{ m>\mathcal{D}_{n,i}: \abs{\widehat{X}_m-\widehat{X}'_m} \leq  n^a \},
\end{align}
with $b' \in (0,1/2)$ and $0<a\ll b'$. We call
$[\mathcal{R}_{n,i-1},\mathcal{D}_{n,i})$ the $i$-th \emph{black box
  interval}. With this definition $R_n$ is the time spent in a black
box interval until time $n$. Note that we can not make use of the
coupling result from Lemma~\ref{prop:TVdistance-joint-ind-1step} in those
intervals.

We differentiate between four possible types of black box intervals,
depending on the relative positions of $\widehat{X}$ and $\widehat{X}'$
at the beginning and end of the interval:
\begin{align}
  W_{n,i}\coloneqq
  \begin{cases}
    1 \quad \text{if }
    \widehat{X}_{\mathcal{R}_{n,i-1}}>\widehat{X}'_{\mathcal{R}_{n,i-1}},
    \widehat{X}_{\mathcal{D}_{n,i}}<\widehat{X}'_{\mathcal{D}_{n,i}},\\
    2 \quad \text{if }
    \widehat{X}_{\mathcal{R}_{n,i-1}}>\widehat{X}'_{\mathcal{R}_{n,i-1}},
    \widehat{X}_{\mathcal{D}_{n,i}}>\widehat{X}'_{\mathcal{D}_{n,i}},\\
    3 \quad \text{if }
    \widehat{X}_{\mathcal{R}_{n,i-1}}<\widehat{X}'_{\mathcal{R}_{n,i-1}},
    \widehat{X}_{\mathcal{D}_{n,i}}>\widehat{X}'_{\mathcal{D}_{n,i}},\\
    4 \quad \text{if }
    \widehat{X}_{\mathcal{R}_{n,i-1}}<\widehat{X}'_{\mathcal{R}_{n,i-1}},
    \widehat{X}_{\mathcal{D}_{n,i}}<\widehat{X}'_{\mathcal{D}_{n,i}}.
  \end{cases}
\end{align}
By construction and the strong Markov property of
$(\widehat{X}_m,\widehat{X}'_m)_m$ we have that: For each $n \in \N$,
$(\mathcal{R}_{n,i}-\mathcal{D}_{n,i})_{i=1,2,\dots}$ is an i.i.d.\
sequence, and
$(W_{n,i},\mathcal{D}_{n,i}-\mathcal{R}_{n,i-1})_{i=2,3,\dots}$ is a
Markov chain. In addition the two objects are independent, the
transition probabilities of the second chain depend only on the first
coordinate and the following separation lemma, similar to
Lemma~\ref{lem:separ} in $d\ge 2$, holds.
\begin{lemma}
  \label{lem:separ in d=1}
  For any $x_0,x'_0 \in \bZ$ and all small enough, positive $\delta$
  there exist $0<b_2<1/8$ and $C,c>0$ such that
  \begin{align}
    \label{eq:25}
    \Pr^\joint_{x_0,x'_0}(H(n^{\delta} )\ge n^{b_2} ) \leq \exp(-Cn^c), \quad n\in \N.
  \end{align}
  Furthermore, there exists $\varepsilon>0$ such that uniformly in $n$
  \begin{align}
    \label{eq:lower bound for W}
    \Pr^\joint(W_{n,2}=w'\, \vert \, W_{n,1}=w) \geq \varepsilon
  \end{align}
  for all pairs $(w,w')\in \{1,2,3,4\}^2$ where a transition is
  ``logically possible''.
\end{lemma}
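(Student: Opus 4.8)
The plan is to mirror the five-step structure of the proof of Lemma~\ref{lem:separ} for $d\ge 2$, making two adaptations appropriate to the one-dimensional setting: first, the distance we need to reach is now the polynomial scale $n^\delta$ rather than just a multiple of $\log n$ (so the iteration of doubling the logarithmic exponent still goes through essentially verbatim), and second, we must additionally produce the lower bound \eqref{eq:lower bound for W} on the transition probabilities of the type chain $(W_{n,i})$. For the first part, I would begin exactly as in Step~1 of Lemma~\ref{lem:separ}: construct ``corridors'' forcing the walks onto sites with $\eta=1$ so that each regeneration step separates the walkers by at least $2$ with a uniformly positive probability $\hat\delta$, and conclude by a restart argument that $\Pr^\joint_{x,x'}(H(\varepsilon_1\log n)>n^{b_4})\le cn^{-b_5}$ for suitable small $\varepsilon_1,b_4,b_5$. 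Steps~2 and 3 are literally the same: couple with $\Pr^\indi$ using Proposition~\ref{prop:TVdistance-joint-ind-1step} (the algebraic coupling error is harmless because $\beta$ can be taken large), and apply the one-dimensional case of Lemma~\ref{lemma:exitAnnulus}, i.e.\ the gambler's-ruin formula $f_1(r;r_1,r_2)=(r-r_1)/(r_2-r_1)$, to bound $\Pr^\indi_{x,y}(H(r_2)<h(r_1))$ from below on the intermediate scales $\log^{2^j} n$, while $\Pr^\indi_{x,y}(H(\cdot)>(\cdot)^{2+\alpha})\to 0$ by the annealed CLT for $\widehat X-\widehat X'$. Iterating the doubling from $\log^4 n$ through $\log^{2^j} n$ up to $n^\delta$ requires $j_n^*\asymp \log\log n$ iterations; multiplying the (uniformly positive, but $n$-dependent through the error terms) success probabilities gives a total lower bound of order $(\log n)^{-c\varepsilon'}$, and since each attempt costs at most $n^{2\delta+2\alpha}$ steps we can afford $n^{\varepsilon^*-2\alpha}$ independent attempts; a geometric-series / Borel–Cantelli estimate then yields \eqref{eq:25} with $b_2<1/8$ (the stronger constraint $b_2<1/8$ rather than $b_2<1/2$ comes from needing $2\delta+\varepsilon^*$ small enough that later $O(\sqrt n)$-type bookkeeping in Section~\ref{sec:dimension1} survives — one simply takes $\delta,\varepsilon^*$ correspondingly small).

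For the second assertion \eqref{eq:lower bound for W}, I would argue as follows. By the strong Markov property applied at time $\mathcal D_{n,1}$, conditionally on $W_{n,1}=w$ the pair $(\widehat X,\widehat X')$ restarts from a configuration with $|\widehat X_{\mathcal D_{n,1}}-\widehat X'_{\mathcal D_{n,1}}|\ge n^{b'}$ and a definite sign of the difference. The walks then run until $\mathcal R_{n,1}$ (first return to distance $\le n^a$) and on to $\mathcal D_{n,2}$ (first exit to distance $\ge n^{b'}$ again). The value $W_{n,2}$ records the sign of $\widehat Z:=\widehat X-\widehat X'$ at $\mathcal R_{n,1}$-exit time $\mathcal D_{n,2}$. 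Because $n^a\ll n^{b'}$ and $\widehat Z$ under $\bP^\indi$ is a centred random walk with finite variance (Remark~\ref{rem:indMC}, Proposition~\ref{prop:IndRegTimesBound}) whose increments have at most range $2R_\kappa$ (so it cannot jump far over the origin), a fixed gambler's-ruin computation on the interval $[-n^{b'},n^{b'}]$ shows that, starting from distance $\le n^a$ with either sign, there is a uniformly positive (in $n$) probability of exiting at $+n^{b'}$ and likewise of exiting at $-n^{b'}$ — this handles whichever of the four transitions is logically possible. Transferring this from $\bP^\indi$ to $\bP^\joint$ costs only $C n^{-a\beta}\cdot(\text{number of steps})$, which is $o(1)$ for $\beta$ large, so the bound persists under $\bP^\joint$. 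This yields a uniform $\varepsilon>0$.

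Concretely I would organise the write-up as: (i) reduce \eqref{eq:25} to the $d\ge 2$ argument, emphasising only the two places where the one-dimensional formula $f_1$ replaces $f_d$ and where the exponent $b_2<1/8$ is extracted; (ii) prove \eqref{eq:lower bound for W} via the strong Markov property at $\mathcal D_{n,1}$ plus a one-step coupling with $\bP^\indi$ and the elementary gambler's-ruin estimate. The main obstacle is the same as in Lemma~\ref{lem:separ}: because the coupling error in Proposition~\ref{prop:TVdistance-joint-ind-1step} decays only polynomially in the separation, one cannot jump in a single stroke from a logarithmic to a polynomial separation as in \cite{BirknerCernyDepperschmidtGantert2013}; one must iterate through the scales $\log^{2^j} n$ and then check that the product of the $O(\log\log n)$ many uniformly-positive success probabilities still decays only like a negative power of $\log n$, so that polynomially many restarts give stretched-exponential control. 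The one new wrinkle, producing \eqref{eq:lower bound for W}, is comparatively soft: the only care needed is to ensure the gambler's-ruin probabilities are bounded below \emph{uniformly in $n$}, which holds because the ratio $n^a/n^{b'}\to 0$ and the increment range $2R_\kappa$ is fixed, so the exit probabilities at $\pm n^{b'}$ converge to $1/2$.
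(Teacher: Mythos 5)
Your plan for \eqref{eq:25} inherits a quantitative error that is precisely the reason the one-dimensional case needs a separate lemma. In $d=1$ the gambler's-ruin function is linear, so the probability of moving up one scale before falling back is \emph{not} uniformly positive: with your doubling scales, the chance of reaching $\log^{2^{j+1}}n$ from $\log^{2^j}n$ before dropping to $\log^{2^{j-1}}n$ is of order $(\log n)^{-2^{j-1}}$ (and even with the gentler scales $\log^k n\to\log^{k+1}n$ it is only of order $1/\log n$). Consequently the product over the $O(\log n/\log\log n)$ scales is polynomially small in $n$, not of order $(\log n)^{-c\varepsilon'}$ as you assert, and your subsequent bookkeeping (time per attempt $n^{2\delta+2\alpha}$ versus $n^{\varepsilon^*-2\alpha}$ attempts) no longer closes. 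The paper's proof accepts this: one full attempt from $x_0=x_0'$ to distance $n^{b'}$ succeeds only with probability $\gtrsim n^{-2b'+\varepsilon_1\log\delta_0}$, costs at most $n^{4b'}$ steps, and one then chooses $b_2<1/8$, $b'$ and $\varepsilon_1$ so that the number $n^{b_2-4b'}$ of available attempts has a larger exponent than the per-attempt failure; also the coupling error $C(\log^{k+1}n)^3(\tfrac12\log^k n)^{-\beta}$ and the time-cutoff correction must be shown to be \emph{smaller than the main term} $\asymp 1/\log n$, not merely $o(1)$. Your argument could be repaired along these lines, but as written the central estimate is wrong, and your explanation of where $b_2<1/8$ comes from is not the actual constraint.

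The proof you sketch for \eqref{eq:lower bound for W} has a more serious gap. The sign change of $\widehat{Z}=\widehat{X}-\widehat{X}'$ must occur inside the black-box interval, i.e.\ at separations $\le n^a$ and typically $O(1)$; there the total variation bound of Proposition~\ref{prop:TVdistance-joint-ind-1step} gives a per-step coupling cost of order $\norm{\widehat{X}_k-\widehat{X}'_k}^{-\beta}$, which is $O(1)$, so ``transferring the gambler's-ruin computation from $\bP^\indi$ to $\bP^\joint$ at cost $Cn^{-a\beta}$ per step'' is not available exactly where it is needed. This is the whole point of the black-box formalism: inside it the $\indi$/$\joint$ comparison is useless. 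The paper instead forces the crossing by an explicit corridor construction (as in Step~1 of Lemma~\ref{lem:separ}) once the difference has been brought down, through a downward iteration over scales $\log^k n$, to size $\varepsilon\log n$; one such attempt succeeds only with probability polynomially small in $n$, and the uniform-in-$n$ bound \eqref{eq:lower bound for W} is recovered because, before the difference exceeds $n^{b'}$, there are with positive probability of order $n^{b'-a}/\log n$ independent returns to small distance, i.e.\ polynomially many attempts. Without an argument of this type (or some other control of the joint dynamics at $O(1)$ separation), the uniform lower bound on the sign-flip transitions of $(W_{n,i})$ does not follow.
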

Note that as a consequence of \eqref{eq:lower bound for W}
$(W_{n,i})_i$ is exponentially mixing. The proof for
equation~\eqref{eq:25} follows basically the same iteration scheme
used in the proof of Lemma~\ref{lem:separ}. For
equation~\eqref{eq:lower bound for W} we follow the steps used in the
proof of Lemma~3.15 in \cite{BirknerCernyDepperschmidtGantert2013} to
prove equation (3.89) therein. The details of the proof can be found
in
Section~\ref{sec:proofs}.

\medskip

\noindent
As a corollary to equation \eqref{eq:25} from Lemma~\ref{lem:separ in
  d=1} we obtain
\begin{corollary}
  \label{cor:separation corollary d=1}
  We can choose $0<b_2<1/8$ and $C,c>0$ such that for any choice of $x_0,x'_0\in\bZ$
  \begin{align}
    \label{eq:sep in d=1}
    \Pr^\joint_{x_0,x'_0}(\mathcal{D}_{n,i}-\mathcal{R}_{n,i-1}\geq n^{b_2}\,
    \vert \, W_{n,i}=w) \leq \exp(-Cn^c), \quad w\in\{1,2,3,4\}, n\in \N.
  \end{align}
\end{corollary}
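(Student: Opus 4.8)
The plan is to obtain the corollary from the separation estimate \eqref{eq:25} of Lemma~\ref{lem:separ in d=1} by the strong Markov property, using the uniform lower bound \eqref{eq:lower bound for W} to remove the conditioning on the type $W_{n,i}=w$.

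First I would record that $\mathcal R_{n,i-1}$ and $\mathcal D_{n,i}$ are $\hat{\mathcal F}$-stopping times and, since the one-dimensional difference walk $(\widehat X_m-\widehat X'_m)_m$ has centred increments with finite variance by Proposition~\ref{prop:JointRegTimesBound} and hence oscillates, they are almost surely finite. For $i\ge 2$ the pair is at distance $\le n^{a}<n^{b'}$ at time $\mathcal R_{n,i-1}$, so by the definitions \eqref{eq:defn_MathcalDn}--\eqref{eq:defn_MathcalRn} the difference $\mathcal D_{n,i}-\mathcal R_{n,i-1}$ equals exactly the hitting time $H(n^{b'})$ of $\{\abs{\widehat X-\widehat X'}\ge n^{b'}\}$ for the pair restarted from $(\widehat X_{\mathcal R_{n,i-1}},\widehat X'_{\mathcal R_{n,i-1}})$; the case $i=1$ with $\abs{x_0-x_0'}\ge n^{b'}$ reduces to the same by conditioning on the first step and is trivial when the walks stay separated. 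Now I would choose the construction parameter $b'$ small enough to be admissible as $\delta$ in \eqref{eq:25} (so that the resulting $0<b_2<1/8$ and $C,c>0$ are the ones quoted in the corollary), and use that \eqref{eq:25} holds uniformly in the starting positions; then the strong Markov property applied at $\mathcal R_{n,i-1}$ gives
\begin{align*}
  \Pr^\joint_{x_0,x_0'}\bigl(\mathcal D_{n,i}-\mathcal R_{n,i-1}\ge n^{b_2}\,\bigm|\,\hat{\mathcal F}_{\mathcal R_{n,i-1}}\bigr)\le \exp(-Cn^{c})\qquad\text{a.s.},
\end{align*}
whence $\Pr^\joint_{x_0,x_0'}(\mathcal D_{n,i}-\mathcal R_{n,i-1}\ge n^{b_2})\le \exp(-Cn^{c})$ after taking expectations.

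To pass to the conditional statement, observe that $W_{n,i}$ is $\hat{\mathcal F}_{\mathcal D_{n,i}}$-measurable, so writing the conditional probability as a quotient and dropping the $W$-constraint in the numerator,
\begin{align*}
  \Pr^\joint_{x_0,x_0'}\bigl(\mathcal D_{n,i}-\mathcal R_{n,i-1}\ge n^{b_2}\,\bigm|\,W_{n,i}=w\bigr)
  \le\frac{\Pr^\joint_{x_0,x_0'}(\mathcal D_{n,i}-\mathcal R_{n,i-1}\ge n^{b_2})}{\Pr^\joint_{x_0,x_0'}(W_{n,i}=w)}
  \le\frac{\exp(-Cn^{c})}{\Pr^\joint_{x_0,x_0'}(W_{n,i}=w)}.
\end{align*}
By \eqref{eq:lower bound for W}, $(W_{n,i})_i$ is a finite-state chain whose logically possible one-step transitions all have probability at least a fixed $\varepsilon>0$ independent of $n$; a routine mixing estimate then bounds $\Pr^\joint_{x_0,x_0'}(W_{n,i}=w)$ below by some $c_0(\varepsilon)>0$ uniformly in $n$, $i$ and $x_0,x_0'$ for every $w$ occurring with positive probability (for the remaining $w$ there is nothing to prove). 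Absorbing $c_0^{-1}$ into the constant then yields the claim.

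The only point needing care is this last one: conditioning on a type $w$ may force the two walks to cross (types $1$ and $3$) before separating, which could a priori lengthen $\mathcal D_{n,i}-\mathcal R_{n,i-1}$. This is dealt with cheaply by first bounding the joint probability $\Pr^\joint_{x_0,x_0'}(\,\cdot\,,W_{n,i}=w)$ by the unconditioned probability from the previous step and only afterwards dividing by the uniformly positive mass of $\{W_{n,i}=w\}$; everything else is a verbatim transfer of \eqref{eq:25} to the restarted pair via the strong Markov property and the a.s.\ finiteness of the relevant stopping times.
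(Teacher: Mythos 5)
Your preliminary reduction is fine and matches the spirit of the paper's (very terse) argument: applying the strong Markov property at $\mathcal R_{n,i-1}$ and using that \eqref{eq:25} holds uniformly in the starting configuration does give the unconditional bound $\Pr^\joint_{x_0,x_0'}(\mathcal D_{n,i}-\mathcal R_{n,i-1}\ge n^{b_2})\le\exp(-Cn^c)$, provided $b'$ is admissible as $\delta$ there. The gap is in how you remove the conditioning on $\{W_{n,i}=w\}$: you divide by $\Pr^\joint_{x_0,x_0'}(W_{n,i}=w)$ and assert that a ``routine mixing estimate'' bounds this below by a constant $c_0(\varepsilon)>0$ uniformly in $n$, $i$ \emph{and} $x_0,x_0'$. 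That assertion is false, and \eqref{eq:lower bound for W} cannot deliver it: it only controls transition probabilities given the previous type and says nothing about the law of $W_{n,1}$, which depends strongly on the initial separation. For instance, if $x_0>x_0'$ with $\norm{x_0-x_0'}\ge n^{b'}$, then $\mathcal D_{n,1}=1$ unless a macroscopic jump occurs, and the crossing type $w=1$ forces a single inter-regeneration increment of order $\norm{x_0-x_0'}$, so $\Pr^\joint_{x_0,x_0'}(W_{n,1}=1)$ is at most of order $\norm{x_0-x_0'}^{-\beta}$; taking $\norm{x_0-x_0'}=e^{n}$ makes the denominator far smaller than your numerator $\exp(-Cn^c)$, and the same defect propagates to $i=2$ through $\Pr(W_{n,1}\in\cdot)$. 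So the quotient bound is vacuous precisely where the corollary claims uniformity in $x_0,x_0'$ (your chain argument does give a uniform lower bound $\varepsilon^2$ for $i\ge 3$, but not for $i=1,2$).

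The repair — and what the paper's one-line proof, read together with the proof of \eqref{eq:lower bound for W}, is actually doing — is to keep the conditioning at the \emph{start of the black box} instead of dividing by an unconditional type probability. Decompose the event $\{W_{n,i}=w\}$ into the relative order at $\mathcal R_{n,i-1}$ (which is $\hat{\mathcal F}_{\mathcal R_{n,i-1}}$-measurable) and the relative order at $\mathcal D_{n,i}$, and write both $\Pr^\joint_{x_0,x_0'}(\mathcal D_{n,i}-\mathcal R_{n,i-1}\ge n^{b_2},\,W_{n,i}=w)$ and $\Pr^\joint_{x_0,x_0'}(W_{n,i}=w)$ as expectations, over configurations at time $\mathcal R_{n,i-1}$ with the correct starting order, of conditional probabilities. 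From any configuration with $\abs{\widehat X-\widehat X'}\le n^a$ the conditional probability of realising the required exit side is at least a uniform $\varepsilon>0$ (this is exactly what the corridor/exit argument behind \eqref{eq:lower bound for W} provides; the inequality $n^{b'}-K\log n>n^{b'}-n^a$ quoted in the paper is the observation that prescribing the exit side does not increase the distance still to be covered), while the conditional probability that the exit takes longer than $n^{b_2}$ is at most $\exp(-Cn^c)$ by \eqref{eq:25}. Taking the ratio of these two bounds inside the expectation gives \eqref{eq:sep in d=1} with constant $\varepsilon^{-1}\exp(-Cn^c)$, uniformly in $x_0,x_0'$; the case $i=1$ started from a separation exceeding $n^{b'}$ needs a separate word (condition at the first time the distance drops below $n^{b'}$, or note that otherwise $\mathcal D_{n,1}=1$), which your quotient argument also does not cover.
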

\begin{proof}
  This is a direct consequence of the fact that
  $n^{b'} - K\log n > n^{b'}-n^a$.
\end{proof}

By construction we have, due to symmetry,
\begin{align}
  \label{eq:12}
  \bP^\joint(W_{n,j}=1)=\bP^\joint(W_{n,j}=3) \quad \text{and}\quad
  \bP^\joint(W_{n,j}=2)=\bP^\joint(W_{n,j}=4)\quad \text{for all }j,n.
\end{align}

Write
$\hat\sigma^2\coloneqq\sum_{y,y'} y^2\hat{\Psi}^\indi((0,0),(y,y'))$
for the variance of a single increment under $\hat{\Psi}$.
\begin{lemma}
  \label{lem:key lemma for d=1}
  There exist $C>0,\wt{b}\in(0,1/4)$ such that for all bounded
  Lipschitz continuous $f:\R^2\to\R$ and all $x_0,x'_0\in\bZ$
  \begin{align}
    \Big\vert \bE^\joint_{x_0,x'_0}\Big[ f\bigg(
    \frac{\widehat{X}_n}{\hat{\sigma}\sqrt{n}},\frac{\widehat{X}'_n}{\hat{\sigma}\sqrt{n}}
    \bigg) \Big] - \bE\big[ f(Z)\big] \Big\vert \leq
    L_f\frac{C}{n^{\wt{b}}}
  \end{align}
  where $Z$ is two-dimensional standard normal and $L_f$ the Lipschitz
  constant of $f$.
\end{lemma}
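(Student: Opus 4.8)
The plan is to prove the two-dimensional martingale central limit theorem for $(\widehat{X}_n, \widehat{X}'_n)$ along the joint regeneration times, with quantitative (Berry--Esseen type) error control, by exploiting the martingale decomposition $\widehat{X}_n = M_n + A_n^{(1)}$, $\widehat{X}'_n = M'_n + A_n^{(2)}$ already set up above. The first step is to discard the predictable parts: by Lemma~\ref{lem:bound_for_A1_and_A2}, part~2, we have $\bE^\joint_{x_0,x_0'}[|A_n^{(1)}|/\sqrt{n}], \bE^\joint_{x_0,x_0'}[|A_n^{(2)}|/\sqrt{n}] \le c_C n^{-\delta_C}$, so replacing $(\widehat{X}_n,\widehat{X}'_n)/(\hat\sigma\sqrt{n})$ by $(M_n, M'_n)/(\hat\sigma\sqrt{n})$ in the test function $f$ costs only $L_f \cdot O(n^{-\delta_C})$ since $f$ is Lipschitz. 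It thus remains to show a quantitative CLT for the martingale pair $(M_n, M'_n)/(\hat\sigma\sqrt{n})$ converging to a two-dimensional standard normal $Z$, i.e.\ that the rescaled martingale has asymptotically independent coordinates each of variance $1$.

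The second, and main, step is to control the predictable quadratic variations. Using the notation $A_n^{(11)}, A_n^{(22)}, A_n^{(12)}$, the bracket processes of $M, M'$ and of $M M'$ are $A_n^{(11)}, A_n^{(22)}, A_n^{(12)}$ respectively. The estimates \eqref{eq:estimatesPhi} and \eqref{eq:estimatesPhi11} say that whenever $|\widehat{X}_j - \widehat{X}'_j| \ge n^a$ we have $|\phi_{11}(\widehat{X}_j,\widehat{X}'_j) - \hat\sigma^2|, |\phi_{22}(\widehat{X}_j,\widehat{X}'_j) - \hat\sigma^2| \le C_1 n^{-2}$ and $|\phi_{12}(\widehat{X}_j,\widehat{X}'_j)| \le C_1 n^{-2}$; while for the $R_n$ indices $j$ with $|\widehat{X}_j - \widehat{X}'_j| < n^a$ the summands are merely bounded by $C_\phi$ from \eqref{eq:boundAllPhi}. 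Hence
\begin{align*}
  \Big| \tfrac{1}{n}A_n^{(11)} - \hat\sigma^2 \Big|
  \le \frac{C_\phi R_n}{n} + \frac{C_1}{n^2}\cdot n
  \le \frac{C_\phi R_n}{n} + \frac{C_1}{n},
\end{align*}
and similarly for $A_n^{(22)}/n$ and for $A_n^{(12)}/n$ (with no $\hat\sigma^2$ term in the latter). By Lemma~\ref{lem:bound_for_A1_and_A2}, part~1, and Markov's inequality, $R_n/n \le n^{-(1/2-\delta_R)/2}$ with probability at least $1 - c_R n^{-(1/2-\delta_R)/2}$ (taking the $3/2$-th moment bound $\bE[R_n^{3/2}] \le c_R n^{1+\delta_R}$ and optimizing: $\Pr(R_n > n^{1-\theta}) \le c_R n^{1+\delta_R - 3(1-\theta)/2}$, which is small for suitable $\theta > 0$ since $\delta_R < 1/2$). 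This gives $\bE^\joint[|A_n^{(11)}/n - \hat\sigma^2|] \le C n^{-\rho}$ for some $\rho > 0$, and likewise for the other two brackets, so the normalized bracket matrix of $(M, M')/(\hat\sigma\sqrt n)$ converges to the identity in $L^1$ at a polynomial rate.

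The third step is to feed this into a quantitative martingale CLT. One applies a Lindeberg-type argument directly, as in \cite[Section~3.4]{BirknerCernyDepperschmidtGantert2013}: for $f$ bounded and Lipschitz (smoothing first to $C^3$ with a mollifier of width $n^{-\tilde b}$ if needed, at cost $L_f O(n^{-\tilde b})$), write $f(M_n/(\hat\sigma\sqrt n), M'_n/(\hat\sigma\sqrt n)) - \bE[f(Z)]$ as a telescoping sum and expand $f$ to third order along each martingale increment. The first-order terms vanish in conditional expectation (martingale property); the second-order terms are handled by comparison of $\bE[(M_{j}-M_{j-1})(M_{j}-M_{j-1})^\top \mid \hat{\mathcal{F}}_{j-1}]$ with $\hat\sigma^2$ times the identity, which is exactly the bracket control from step two; the third-order remainder terms are bounded using that the increments have finite third moments under $\bP^\joint$ (by Lemma~\ref{prop:JointRegTimesBound}, choosing $\beta > 3$), giving a contribution $O(n^{-1/2}\cdot(\text{third moment sum})/n^{1/2}) = O(n^{-1/2})$ up to logarithmic factors — here one uses a truncation of the heavy-tailed increments at level $n^{1/2-\epsilon}$, with the discarded mass $\le n \cdot n^{-\beta(1/2-\epsilon)}$ negligible for $\beta$ large. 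Collecting the errors from the three steps yields the bound $L_f C n^{-\tilde b}$ for some $\tilde b \in (0,1/4)$, the exponent being dictated by the worst of $\delta_C$, $\rho$ (which depends on $\delta_R$), and the martingale-CLT remainder. The main obstacle is step two, and specifically the interplay between the heavy tails of the increments (forcing truncation and careful moment bookkeeping) and the bound on $R_n$: one must ensure $\delta_R$ can be taken small enough, and $a$ (hence $\beta$) chosen appropriately, that $R_n/n$ is genuinely $o(1)$ at a usable polynomial rate — this is where the non-exponential tails make the argument more delicate than in \cite{BirknerCernyDepperschmidtGantert2013}, but the robustness afforded by taking $\varepsilon_\rf$ small (hence $\beta$ arbitrarily large) rescues it.
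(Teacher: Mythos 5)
Your proposal is correct and follows essentially the same strategy as the paper: discard $A_n^{(1)},A_n^{(2)}$ via Lemma~\ref{lem:bound_for_A1_and_A2}(2) and the Lipschitz property of $f$, then prove a quantitative CLT for the martingale pair $(M_n,M'_n)/(\hat\sigma\sqrt n)$ using the fact that the conditional covariances deviate from $\hat\sigma^2 I$ only at the $R_n$ ``close'' steps, with $\bE^\joint[R_n^{3/2}]\le c_R n^{1+\delta_R}$ doing the quantitative work. The one genuine difference is the last step: the paper does not run a Lindeberg argument by hand but instead reformulates the bracket control through the eigenvalues $\lambda_{j,i}$ of the conditional covariance matrices (display \eqref{eq:19}) and then directly invokes Rackauskas's quantitative martingale CLT (Corollary~1.3 in \cite{Rackauskas1995}), whose hypothesis is exactly a $3/2$-moment bound on $\sum_i|1-\sum_k\wt\lambda_{k,i}|$ plus bounded third moments of the increments; this matches the $\bE[(R_n+1/n)^{3/2}]$ bound with no further work and yields the explicit exponent $b^*=\tfrac13(\tfrac12-\delta_R)$. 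Your route replaces that citation by mollifying $f$ at scale $n^{-\wt b}$ and doing a third-order Lindeberg/Taylor swap with truncation of the heavy-tailed increments; this is more self-contained but costs extra bookkeeping (the smoothing inflates the second and third derivatives by $n^{\wt b}$ and $n^{2\wt b}$, which must be traded against the $\bE[R_n]/n$ and $n^{-1/2}$ error terms, and the truncation needs $\beta$ large), and it produces a less explicit, though still valid, exponent $\wt b\in(0,1/4)$. Minor quibbles only: your Markov-inequality exponent for $\Pr(R_n>n^{1-\theta})$ is slightly off (the correct decay with your choice of $\theta$ is $n^{-(1/2-\delta_R)/4}$), and for the second-order comparison it is cleaner to use $\bE[R_n]\le(\bE[R_n^{3/2}])^{2/3}$ directly; neither affects the conclusion.
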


Since, as has been shown above, $R_n=o(n)$ (with $R_n$ from
\eqref{eq:18}) in probability, we obtain, using the bounds from
\eqref{eq:estimatesPhi} and \eqref{eq:estimatesPhi11},
\begin{align}
  \frac{A^{(11)}_n}{n} \to \hat{\sigma}^2, \quad \frac{A^{(22)}_n}{n}
  \to \hat{\sigma}^2, \quad \frac{A^{(12)}_n}{n} \to 0
\end{align}
in probability as $n\to \infty$. Since $\widehat{X}_n = M_n+A^{(1)}_n$ and
$\widehat{X}'_n = M'_n+A^{(2)}_n$ we make use of the convergence of
$(M_n/\sqrt{n},M'_n/\sqrt{n})$ and the bounds in
\eqref{eq:bound_for_A1_and_A2} to prove Lemma~\ref{lem:key lemma for
  d=1}. To prepare that, for $n\in \N$, let
\begin{align}
  \label{eq:covariance matrix}
  Q_n\coloneqq
  \begin{pmatrix}
    \phi_{11}(\widehat{X}_{n-1},\widehat{X}_{n-1})& \phi_{12}(\widehat{X}_{n-1},\widehat{X}_{n-1})\\
    \phi_{12}(\widehat{X}_{n-1},\widehat{X}_{n-1}) & \phi_{22}(\widehat{X}_{n-1},\widehat{X}_{n-1})
  \end{pmatrix}
\end{align}
be the conditional covariance matrix given $\hat{F}_{n-1}$ of the
random variable $(M_n-M_{n-1},M'_n-M'_{n-1})$ and let
$\lambda_{n,1}\ge \lambda_{n,2}\ge 0$ be its eigenvalues. Equations
\eqref{eq:estimatesPhi}, \eqref{eq:estimatesPhi11} and
\eqref{eq:boundAllPhi} yield bounds on the entries of $Q_n$ and thus,
by stability properties for the eigenvalues of symmetric matrices,
\begin{align}
  \abs{\lambda_{j+1,1}-\hat{\sigma}^2} +
  \abs{\lambda_{j+1,2}-\hat{\sigma}^2} \le C_2\indset{\{
  \abs{\widehat{X}_j-\widehat{X}'_j}\le n^a \}} + \frac{C_2}{n^2}\indset{\{
  \abs{\widehat{X}_j-\widehat{X}'_j}>n^a \}}
\end{align}
for some constant $C_2< \infty$, see \cite{MR0158519}.

In particular,
\begin{align}
  \label{eq:19}
  \sum_{i=1}^2\abs{n\hat{\sigma}^2- \sum_{j=1}^n \lambda_{j,i}} \le C_2R_n+\frac{C_2}{n}
\end{align}
because for $i=1,2$ with
$B_n\coloneqq \{ j\leq n \colon \abs{\widehat{X}_{j-1}-\widehat{X}'_{j-1} }\le n^a \}$
\begin{align*}
  \abs{n\hat{\sigma}^2- \sum_{j=1}^n \lambda_{j,i}}
  & \le \sum_{j=1}^n \abs{\hat{\sigma}^2 - \lambda_{j,i}}\\
  & = \sum_{j\in B_n} \abs{ \hat{\sigma}^2 - \lambda_{j,i} } +
    \sum_{j\notin B_n} \abs{ \hat{\sigma}^2 - \lambda_{j,i} }\\
  & \le R_nC_2 + (n-R_n)\frac{C_2}{n^2}\\
  & \le R_nC_2 + \frac{C_2}{n}.
\end{align*}
\begin{proof}[Proof of Lemma~\ref{lem:key lemma for d=1}]
  Let $f\colon \R^d\to \R$ be a bounded Lipschitz continuous function
  with Lipschitz constant $L_f$ and $Z$ two-dimensional standard
  normal. Using \eqref{eq:19} and \eqref{eq:bound_R_n_moment} and
  Corollary~1.3 in \cite{Rackauskas1995} we conclude that
  \begin{align}
    \label{eq:20}
    \Bigg|\bE^\joint_{x_0,x'_0}\bigg[ f\Big(
    \frac{M_n}{\hat{\sigma}\sqrt{n}},\frac{M'_n}{\hat{\sigma}\sqrt{n}}
    \Big) \bigg] - \bE\big[f(Z)\big] \Bigg| \le
    L_f\frac{C}{n^{b^*}}\qquad \text{for all } n
  \end{align}
  for some $C<\infty$ and $b^*=\frac{1}{3}(\frac{1}{2}-\delta_R)$. For
  the use of Corollary~1.3 in \cite{Rackauskas1995} we read
  $X_k=\Big((M_k-M_{k-1})/\sqrt{\hat{\sigma}^2n},(M'_k-M'_{k-1})/\sqrt{\hat{\sigma}^2n}
  \Big)$ which leads to a covariance matrix with eigenvalues
  $\wt{\lambda}_{k,i} = \lambda_{k,i}/(\hat{\sigma}^2 n)$ for $i=1,2$.
  Moreover note that due to the tail bounds on the regeneration times
  from Lemma~\ref{prop:JointRegTimesBound}, by tuning the parameters
  right, we obtain
  $\sup_k \bE\big[ \norm{(M_k-M_{k-1},M'_k-M'_{k-1})}^3 \big] <
  \infty$. We briefly want to show the calculation for the second
  expectation from the first part of Corollary~1.3 in
  \cite{Rackauskas1995}.
  \begin{align*}
    &\bE^\joint_{x_0,x'_0}\bigg[ \Big(\sum_{i=1}^2 \vert 1- \sum_{k=1}^n \wt{\lambda}_{k,i}^2 \vert\Big)^{3/2} \bigg]\\
    &=(\hat{\sigma}^2 n)^{-3/2}\bE^\joint_{x_0,x'_0}\bigg[ \Big(\sum_{i=1}^2 \vert \hat{\sigma}^2n- \sum_{k=1}^n \lambda_{k,i}^2 \vert\Big)^{3/2} \bigg]\\
    &\le (\hat{\sigma}^2 n)^{-3/2} C\bE^\joint_{x_0,x'_0}\bigg[ \Big(R_n+\frac{1}{n}\Big)^{3/2} \bigg]
  \end{align*}
  from which we can conclude \eqref{eq:20}. Then, combining
  \eqref{eq:20} and \eqref{eq:bound_for_A1_and_A2} yields
  \begin{align}
    \notag
    & \bigg| \bE^\joint_{x_0,x'_0}\Big[
      f\Big(\frac{\widehat{X}_n}{\hat{\sigma}\sqrt{n}},
      \frac{\widehat{X}'_n}{\hat{\sigma}\sqrt{n}}\Big) \Big] - \bE[f(Z)]\bigg|\\
    \notag
    & \le \bigg| \bE^\joint_{x_0,x'_0}\Big[f\Big(\frac{\hat{M}_n}{\hat{\sigma}\sqrt{n}},
      \frac{\hat{M}'_n}{\hat{\sigma}\sqrt{n}}\Big) \Big] - \bE[f(Z)]\bigg|
      + CL_f \bE^\joint_{x_0,x'_0}\bigg[\frac{\abs{A^{(1)}_n}}{\sqrt{n}}
      + \frac{\abs{A^{(2)}_n}}{\sqrt{n}} \bigg]\\
    \label{eq:21}
    & \le L_f\frac{C}{n^{b^*}} + L_f\frac{C}{n^{\delta_C}}
  \end{align}
\end{proof}

Now we have the desired result for $(\widehat{X},\widehat{X}')$, a
pair of random walks observed along joint regeneration times. To prove
Theorem~\ref{thm:LLNuCLTmodel1} for $d\ge 2$ we used
Proposition~\ref{prop:2nd-mom}, Lemma~\ref{prop:TVdistance-joint-ind-1step}
and Lemma~\ref{lemma:convergence for subsequence to full sequence}.
Note that Lemma~\ref{prop:TVdistance-joint-ind-1step} holds for all $d\ge 1$
and in the proof of Lemma~\ref{lemma:convergence for subsequence to
  full sequence} we did not use $d\ge 2$ and in fact this lemma holds
for all $d\ge 1$. To prove Proposition~\ref{prop:2nd-mom} we now use
Lemma~\ref{lem:key lemma for d=1}, noting that the product of two
bounded continuous Lipschitz-functions is again bounded, continuous
and Lipschitz. Therefore we can directly apply Lemma~\ref{lem:key
  lemma for d=1} in the proof of Proposition~\ref{prop:2nd-mom} to
obtain the same result for $d=1$. The proof for
Theorem~\ref{thm:LLNuCLTmodel1} then stays the same.

\section{A more abstract setting}
\label{sec:abstract_proofs}

The proofs for the abstract setting will largely be in the same spirit
as for the toy model. There are some parts that need to be revisited
since they argue specifically with the environment and the behaviour
of the random walk. We will highlight those parts and explain how the
proofs can be adapted.

To apply our results for the random walk on oriented percolation we
need to, similar to \cite{BirknerCernyDepperschmidt2016}, adapt the
regeneration times for two random walks to a coarse grained level.
This will mainly lead to more notation but in essence we transfer the
ideas above to the model introduced in Section~\ref{sec:mgenermod}.

We start by defining the dynamics for the two random walks. Let
$\widehat{U} \coloneqq (\widehat{U}(x,k)\colon x\in\bZ^d,k\in\N_0)$,
$\widehat{U}' \coloneqq (\widehat{U}'(x,k)\colon x\in\bZ^d,k\in\N_0)$
and $\widehat{U}^{''} \coloneqq (\widehat{U}^{''}(x,k)\colon
x\in\bZ^d,k\in\N_0)$ be independent space-time fields of random
variables that are uniformly distributed on $(0,1)$ and independent of
each other.

For our purposes we consider two independent environments $\eta$ and
$\eta'$ that are both according to equation~\eqref{eq:etadynabstr},
where $\eta'$ is built with a family of random variables $U'$, that is
independent of $U$ and identically distributed. Recall from
\eqref{eq:phix}
\begin{align}
  \varphi_X\colon \bZ^{B_{R_X}}_+\times\bZ^{B_{R_X}}_+ \times [0,1]
  \to B_{R_X}
\end{align}
a measurable function that defines the dynamics of the random walks,
where $R_X\in \N$ is an upper bound on the jump size as well as on the
dependence range. Given $\eta$, respectively $\eta'$, we can define
the $\joint$ and $\indi$-dynamics of a pair of random walks. Let
$X_0=X'_0=X^{''}_0=0$ then we define

\begin{align}
  \label{eq:defn random walks abstract}
  \begin{split}
    X_{k+1}(\eta) & \coloneqq X_k(\eta) + \varphi_X \Big( \Theta^{X_k}
                    \eta_{-k}\vert_{B_{R_X}}\, , \, \Theta^{X_k}
                    \eta_{-k-1}\vert_{B_{R_X}}\, , \widehat{U}(X_k,k) \Big), \quad
                    k=0,1,\dots \\
    X'_{k+1}(\eta) & \coloneqq X'_k(\eta) + \varphi_X \Big(
                     \Theta^{X'_k} \eta_{-k}\vert_{B_{R_X}}\, , \, \Theta^{X'_k}
                     \eta_{-k-1}\vert_{B_{R_X}}\, , \widehat{U}'(X'_k,k) \Big), \quad
                     k=0,1,\dots, \\
    X^{''}_{k+1}(\eta') & \coloneqq X^{''}_k(\eta') + \varphi_X \Big(
                          \Theta^{X^{''}_k} \eta'_{-k}\vert_{B_{R_X}}\, , \,
                          \Theta^{X^{''}_k} \eta'_{-k-1}\vert_{B_{R_X}}\, ,
                          \widehat{U}^{''}(X^{''}_k,k) \Big), \quad k=0,1,\dots.
  \end{split}
\end{align}
Note that with this definition we have
$(X^\joint,X^{'\,\joint})=(X,X')$ and
$(X^\indi,X^{'\,\indi})=(X,X^{''})$ for the $\joint$ and $\indi$ pair
of random walks. Where for $(X^\joint, X^{'\,\joint})$ both random
walks again evolve in the same environment $\eta$ and for
$(X^\indi,X^{'\,\indi})$ they evolve in two independent environments.

With the same interpretation as in Section~\ref{sec:preparations}, see
above Proposition~\ref{prop:TVdistance-joint-ind-1step}, we will often
just write $X$ and $X'$ referring to a pair of random walks and from
the context make it clear whether we mean the $\joint$ or $\indi$
pair.

\medskip
\noindent

\begin{remark}
  \label{rem:abstract multiple defn}
  In the following we will define objects needed for the construction
  of the regeneration times for a pair of random walks $(X,X')$. To
  that end we define most objects for $X$ and $X'$ separately. Again
  we want to note that those objects can be different, depending on
  whether we are looking at the pair $(X,X')$ in the $\joint$ or
  $\indi$ dynamics, i.e. at $(X^\joint,X^{',\joint})$ or
  $(X^\indi,X^{',\indi})$. We do this to reduce the amount of notation
  and make it clear from the context which pair we are considering.
\end{remark}

We define
\begin{align}
  \label{eq:coarse grained neighbourhood}
  \begin{split}
    \wt{V}_{\wt{m}}
    & \coloneqq \{ \tilde{x}\in\bZ^d \colon \exists k, (\wt{m}-1)L_{\mathrm{t}} \le
      k \le \wt{m}L_{\mathrm{t}},\norm{X_k-\tilde{x}L_{\mathrm{s}}} \le L_{\mathrm{s}}+R_X \},\\
    \wt{V}'_{\wt{m}}
    & \coloneqq \{ \tilde{x}\in\bZ^d \colon \exists k, (\wt{m}-1)L_{\mathrm{t}} \le
      k \le \wt{m}L_{\mathrm{t}},\norm{X'_k-\tilde{x}L_{\mathrm{s}}} \le L_{\mathrm{s}}+R_{X} \},
  \end{split}
\end{align}
the neighbourhood of the random walks $X$ and $X'$ on the
coarse-grained level at time $\wt{m}$, i.e.\ all points on the
coarse-grained level for which the random walks see part of the
environment. Define the tubes for both random walks
\begin{align}
  \label{eq:coarse grained double tube}
  \begin{split}
    \Tube_{\tilde{n}}
    &\coloneqq \bigcup_{\wt{m}\le \tilde{n}} \Big(\wt{V}_{\wt{m}}\Big) \times\{\wt{m}\},\\
    \Tube'_{\tilde{n}}
    &\coloneqq \bigcup_{\wt{m}\le \tilde{n}} \Big(\wt{V}'_{\wt{m}}\Big) \times\{\wt{m}\}.
  \end{split}
\end{align}
We recall the definition of the determining clusters
$\DC(\tilde{x},\tilde{n})$ from \cite{BirknerCernyDepperschmidt2016},
which play the role of the determining triangles we introduced for the
toy model in \eqref{eq:defn_determining_triangles}. Here the behaviour
of the random walk depends on the coarse-grained oriented percolation
$\widetilde{\xi}$ as well as the random environment $\eta$. Therefore
we need to control both at regeneration times. By
Assumption~\ref{ass:coupling} and Lemma~\ref{lem:abstr-OCcoupl} we
know that on $\tilde{\xi}=1$ the value of $\eta(x,n)$ can be derived
from only observing the driving noise $U$ in a certain finite set
below $(x,n)$; see Remark~3.13 in
\cite{BirknerCernyDepperschmidt2016}. Recall the constant
\begin{equation}
  \label{eq:defn_K_eta}
  K_\eta=R_\eta\big( \lceil \tfrac{\Lt}{\Ls}\rceil +1  \big)
\end{equation}
from the mentioned remark. We can use the following algorithm to
determine the set $\DC(\tilde{x},\tilde{n})$:
\begin{itemize}
\item[1.] Initially, put $\tilde{k}\coloneqq \tilde{n}, \DC(\tilde{x},\tilde{n})\coloneqq \{(\tilde{x},\tilde{n})\}$.
\item[2.] If $\widetilde{\xi}(\tilde{y},\tilde{k})=1$ for all $(\tilde{y},\tilde{k})\in \DC(\tilde{x},\tilde{n})$ : Stop.
\item[3.] Otherwise, for all blocks where this condition fails, add every block one time layer below that could have influenced it, that is replace $\DC(\tilde{x},\tilde{n})$ by
  \begin{equation*}
    \DC(\tilde{x},\tilde{n})\cup \left\{ (\tilde{z},\tilde{k}-1)\colon
      \norm{\tilde{z}-\tilde{y}}\le K_\eta \text{ for some } \tilde{y}
      \text{ with } \widetilde{\xi}(\tilde{y},\tilde{k})=0 \right\},
  \end{equation*}
  put $\tilde{k}=\tilde{k}-1$ and go back to Step 2.
\end{itemize}

The tubes containing the determining clusters are
\begin{align}
  \label{eq:coarse grained determining tube}
  \begin{split}
    \DTube_{\tilde{n}}
    & \coloneqq \bigcup_{(\tilde{x},\wt{\jmath})\in\Tube_{\tilde{n}}} \DC(\tilde{x},\wt{\jmath}),\\
    \DTube'_{\tilde{n}}
    &\coloneqq \bigcup_{(\tilde{x},\wt{\jmath})\in\Tube'_{\tilde{n}}} \DC(\tilde{x},\wt{\jmath}).
  \end{split}
\end{align}

We define a coarse-graining function $\wt{\pi}:\bZ^d\to\bZ^d$ by
\begin{align}
  \label{eq:coarse-graining function}
  \wt{\pi}(x) = \wt{\pi}(x_1,\dots,x_d) = (\tilde{x}_1,\dots,\tilde{x}_d)
  \coloneqq \Big( \Big\lceil \frac{x_1}{L_{\mathrm{s}}}-\frac{1}{2}
  \Big\rceil,\dots,\Big\lceil \frac{x_d}{L_{\mathrm{s}}}-\frac{1}{2} \Big\rceil
  \Big),
\end{align}
and denote by $\wt{\rho}(x)$ the relative position of $x$ inside the
block centred on $\tilde{x}L_{\mathrm{s}}$, i.e.\ we set
\begin{align}
  \label{eq:relative postion function}
  \wt{\rho}(x) \coloneqq x-\tilde{x}L_{\mathrm{s}}.
\end{align}
We then define the coarse-grained random walk
$\wt{X}=(\wt{X}_{\tilde{n}})_{\tilde{n}=0,1,\dots}$ and the relative
positions $\wt{Y}=(\wt{Y}_{\tilde{n}})_{\tilde{n}=0,1,\dots}$ by
\begin{align}
  \label{eq:defn coarse-grained rw}
  \wt{X}_{\tilde{n}}\coloneqq \wt{\pi}(X_{\tilde{n}L_{\mathrm{t}}})
  \quad \text{and} \quad \wt{Y}_{\tilde{n}}\coloneqq \wt{\rho}(X_{\tilde{n}L_{\mathrm{t}}}),
\end{align}
analogously we define $\wt{X}'_{\tilde{n}}$ and $\wt{Y}'_{\tilde{n}}$
using $X'_{\tilde{n}L_{\mathrm{t}}}$ instead of
$X_{\tilde{n}L_{\mathrm{t}}}$. This yields the following relation
between the coarse-grained random walks and their original
counterparts
\begin{align*}
  X_{\tilde{n}L_{\mathrm{t}}} &= \wt{X}_{\tilde{n}}L_{\mathrm{s}} + \wt{Y}_{\tilde{n}},\\
  X'_{\tilde{n}L_{\mathrm{t}}} &= \wt{X}'_{\tilde{n}}L_{\mathrm{s}} + \wt{Y}'_{\tilde{n}}.
\end{align*}

Since we have a new assumption, we need to address the a priori bound
on the speed of $\wt{X}$. This can be done in the same way Lemma~2.16
was proven in \cite{BirknerCernyDepperschmidt2016} and the proof can
be found in Appendix~\ref{sec:abst_detailed_proofs}.
\begin{lemma}[A priori bound on the speed of $\wt{X}$]
  \label{lem:a priori bound for tilde X}
  If $\eta$ satisfies Assumptions~\ref{ass:markovdyn},
  \ref{ass:coupling} and \ref{ass:irred} and the random walk $X$
  satisfies
  Assumption~\ref{ass:abstract_closeness_to_symmetric_transition_kernel}
  then there exist positive constants $\wt{s}_{\mathrm{max}}, c$ and
  $C$ so that for any starting displacement $\wt{y} \in
  [-L_{\mathrm{s}}/2,L_{\mathrm{s}}/2]^d$
  \begin{align}
    \label{eq:a priori bound for tilde X}
    \bP_{0}\Big(\norm{\wt{X}_{\tilde{n}}}>\wt{s}_{\mathrm{max}} \tilde{n} \,\vert\,
    \wt{Y}_0=\wt{y} \Big) \le C \mathrm{e}^{-c \tilde{n}}, \quad \tilde{n}\in\N.
  \end{align}
  The bound $\wt{s}_{\mathrm{max}}$ can be chosen arbitrarily small by
  taking $1-\varepsilon_{\wt{\omega}}$ close to $1$ and
  $\varepsilon_{\mathrm{symm}}$ close to 0.
\end{lemma}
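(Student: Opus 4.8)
The plan is to transfer the argument of Lemma~2.16 from \cite{BirknerCernyDepperschmidt2016} to the coarse-grained walk $\wt{X}$ under the new Assumption~\ref{ass:abstract_closeness_to_symmetric_transition_kernel}. First I would record the key structural fact: by Lemma~\ref{lem:abstr-OCcoupl}, the coarse-grained field $\wt{G}(\tilde{x},\tilde{n})$ dominates a discrete-time contact process $\wt\xi$ driven by $\wt\omega$, whose density parameter $1-\varepsilon_{\wt\omega}$ can be pushed arbitrarily close to $1$. On the event $\{\wt{G}(\tilde{x},-\tilde{n})=1\}$, Assumption~\ref{ass:abstract_closeness_to_symmetric_transition_kernel} gives that the conditional law of the step $X_{\tilde{n}L_{\mathrm t}}-X_{(\tilde{n}-1)L_{\mathrm t}}$ (given the environment and the position lying in the appropriate $L_{\mathrm s}$-block) is within total variation $\varepsilon_{\mathrm{symm}}$ of the fixed symmetric kernel $\kappa_{L_{\mathrm t}}$, which has range $\le R L_{\mathrm t}$; hence on such time steps the coarse-grained increment $\wt{X}_{\tilde{n}}-\wt{X}_{\tilde{n}-1}$ is bounded (by roughly $R L_{\mathrm t}/L_{\mathrm s}+1$ in sup-norm) and its distribution is $\varepsilon_{\mathrm{symm}}$-close to a symmetric, zero-mean distribution. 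On the (rare) complementary steps, $\{\wt{G}=0\}$, we have no control on the law of the step, but we still have a deterministic bound on its size because $X$ has jump bound $R_X$, so over $L_{\mathrm t}$ micro-steps $\norm{X_{\tilde{n}L_{\mathrm t}}-X_{(\tilde{n}-1)L_{\mathrm t}}}\le R_X L_{\mathrm t}$, i.e.\ $\norm{\wt{X}_{\tilde{n}}-\wt{X}_{\tilde{n}-1}}\le R_X L_{\mathrm t}/L_{\mathrm s}+1=:C_0$.

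The core of the argument is then a large-deviation estimate. Fix a coordinate direction $j$ and consider the one-dimensional process $\wt{X}_{\tilde{n}}^{(j)}$. I would split the running sum into contributions from ``good'' steps (those with $\wt{G}=1$ at the relevant coarse box, which is in turn controlled below by $\wt\xi=1$) and ``bad'' steps. For the bad steps, the number of $\tilde{n}\le N$ with $\wt\xi$ equal to $0$ along the path has exponential tails: this is a standard comparison-to-oriented-percolation statement (the density of dry sites seen along any path is small with exponential concentration when $\varepsilon_{\wt\omega}$ is small), already available in the toolbox of \cite{BirknerCernyDepperschmidt2016} — essentially the same estimate used there to prove Lemma~2.16, and compatible with Corollary~\ref{corollary:dryPoints}-type bounds. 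So with probability $\ge 1-Ce^{-cN}$ there are at most $\delta N$ bad steps among the first $N$, and they contribute at most $C_0\delta N$ to $\norm{\wt{X}_N}$. For the good steps, I would bound $\bP(\sum_{\text{good }\tilde n\le N}(\wt{X}_{\tilde n}^{(j)}-\wt{X}_{\tilde n-1}^{(j)})> a N)$ by an exponential (Chernoff/Azuma) bound: the increments are bounded by $C_0$ and, after subtracting the $O(\varepsilon_{\mathrm{symm}})$ bias coming from the total-variation closeness to the centred kernel $\kappa_{L_{\mathrm t}}$, have conditional mean bounded by $C_0\varepsilon_{\mathrm{symm}}$; a bounded-increment martingale (Hoeffding–Azuma) argument applied to $\wt{X}^{(j)}_{\tilde n}-\sum_{k\le \tilde n}m_k$, where $m_k$ is the conditional mean, then gives $\bP(\cdot)\le e^{-c a^2 N}$ for the centred part, while $\sum m_k\le C_0\varepsilon_{\mathrm{symm}}N$. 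Choosing $\wt{s}_{\mathrm{max}}=C_0(\delta+\varepsilon_{\mathrm{symm}})+a$ with $\delta,\varepsilon_{\mathrm{symm}},a$ all small, a union bound over the $d$ coordinates and over the two sources of deviation yields $\bP_0(\norm{\wt{X}_N}>\wt{s}_{\mathrm{max}}N\mid \wt{Y}_0=\wt y)\le Ce^{-cN}$, uniformly in the starting displacement $\wt y$ since all the estimates are uniform in $\wt Y_0$.

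The main obstacle I anticipate is bookkeeping the conditioning correctly: the event $\{\wt{G}(\tilde x,-\tilde n)=1\}$ on which Assumption~\ref{ass:abstract_closeness_to_symmetric_transition_kernel} applies depends on the environment $\eta$ and on $\wt\xi$ in a way that is \emph{not} adapted to the natural filtration of $\wt X$ alone — indeed $\wt G$ at step $\tilde n$ involves $\eta$ at the bottom of block $(\tilde x,-\tilde n)$, which the walk influences through its own past. One must therefore work with the joint filtration generated by $(\wt X_{\le \tilde n}, \wt\xi, \eta\text{ in the explored region})$, check that ``$\tilde n$ is a good step'' is measurable with respect to the information available \emph{before} taking the $\tilde n$-th step together with the one-step-lookahead that the walk performs (cf.\ \eqref{eq:abstr-walk}), and verify the conditional-mean bound in that filtration. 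This is precisely the point where the proof of Lemma~2.16 in \cite{BirknerCernyDepperschmidt2016} did the analogous work, and the adaptation is routine but must be written carefully; I would relegate the detailed filtration argument and the exponential dry-site bound to Appendix~\ref{sec:abst_detailed_proofs} as indicated.
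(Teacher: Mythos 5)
Your plan is correct and follows essentially the same route as the paper's proof: an adaptation of Lemma~2.16 of \cite{BirknerCernyDepperschmidt2016}, in which the number of coarse steps with $\wt{G}=0$ is shown to be at most $\delta\tilde n$ with exponentially high probability (via the coupling of Lemma~\ref{lem:abstr-OCcoupl} and the dry-site bound of Lemma~2.11 there), while on good steps the increment law is $\varepsilon_{\mathrm{symm}}$-close in total variation to the bounded-range symmetric kernel $\kappa_{L_{\mathrm t}}$, and one concludes with an exponential bound, making $\wt{s}_{\mathrm{max}}$ small as $\varepsilon_{\wt\omega},\varepsilon_{\mathrm{symm}}\to 0$. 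The only cosmetic difference is that the paper handles the bad-step count by an explicit union bound over joint coarse and fine paths, bounding each good-step probability by $K=\varepsilon_{\mathrm{symm}}+\max\kappa_{L_{\mathrm t}}$ (which also sidesteps the filtration bookkeeping you flag), rather than by your concentration-plus-Azuma formulation, and then refers back to Lemma~2.16 for the remaining comparison with the reference walk.
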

As a direct consequence of Lemma~\ref{lem:a priori bound for tilde X}
we obtain for all $\tilde{x}$, $\tilde{x}'$ and all $\wt{y}$,
$\wt{y}'$
\begin{align*}
  &\bP^\joint_{\tilde{x},\tilde{x}'}(\norm{\wt{X}_{\tilde{n}}-\tilde{x}}\vee\norm{\wt{X}'_{\tilde{n}}-\tilde{x}'}
    >\wt{s}_{\mathrm{max}}\tilde{n}\,\vert\, \wt{Y}_0=\wt{y},\wt{Y}'_0=\wt{y}')\\
  &\le \bP_{0}(\norm{\wt{X}_{\tilde{n}}}>\wt{s}_{\mathrm{max}}\tilde{n}\,\vert\, \wt{Y}_0=\wt{y} )
    + \bP_0(\norm{\wt{X}'_{\tilde{n}}}>\wt{s}_{\mathrm{max}}\tilde{n}\,\vert\, \wt{Y}'_0=\wt{y}' )\\
  &\le Ce^{-c\tilde{n}},
\end{align*}
since $\bP^\joint_{0,0}(\wt{X}_{\tilde{n}} =\cdot)
=\bP_0(\wt{X}_{\tilde{n}} =\cdot)$ and the same holds for $\wt{X}'$ as
well as in the case of $\bP^\indi$ where the random walks are
independent.

Note that $\varepsilon_{\wt{\omega}}$ depends on $\varepsilon_{U}$ and
therefore, since we have $U=^d U'$,
\begin{align*}
  \bP(U\vert_{\block_4(0,0)}\in G_U) = \bP(U'\vert_{\block_4(0,0)}\in
  G_U)\ge 1-\varepsilon_U.
\end{align*}
Thus we can couple $\wt{U}'$ to an i.i.d.\ Bernoulli random field
$\wt{\omega}'$ with $\bP(\wt{\omega}'(\tilde{x},\tilde{n})=1)\ge
1-\varepsilon_{\wt{\omega}}$ (with the same lower bound as for
$\wt{\omega}$)

Since we aim to compare the behaviour of two random walks in a joint
environment with two random walks evolving in two independent
environments, we define the following filtrations:
\begin{align}
  \begin{split}
    \wt{\mathcal{F}}^\joint_{\tilde{n}} \coloneqq
    & \sigma\Big( (\wt{X}_{\wt{\jmath}},\wt{Y}_{\wt{\jmath}}) \colon 0\le
      \wt{\jmath}\le \tilde{n}\Big) \vee
      \sigma\Big((\wt{X}'_{\wt{\jmath}},\wt{Y}'_{\wt{\jmath}}) \colon 0\le
      \wt{\jmath}\le \tilde{n} \Big)\\
    & \vee \sigma\Big(
      \wt{\omega}(\wt{y},\wt{\jmath}),\tilde{\xi}(\wt{y},\wt{\jmath}),
      U\vert_{\block_4(\wt{y},\wt{\jmath})}\colon
      (\wt{y},\wt{\jmath})\in\DTube_{\tilde{n}}\cup \DTube'_{\tilde{n}} \Big).
  \end{split}
\end{align}
and
\begin{align}
  \begin{split}
    \wt{\mathcal{F}}^\indi_{\tilde{n}} \coloneqq
    & \sigma\Big( (\wt{X}_{\wt{\jmath}},\wt{Y}_{\wt{\jmath}}) \colon 0\le
      \wt{\jmath}\le \tilde{n}\Big) \vee
      \sigma\Big((\wt{X}'_{\wt{\jmath}},\wt{Y}'_{\wt{\jmath}}) \colon 0\le
      \wt{\jmath}\le \tilde{n} \Big)\\
    & \vee \sigma\Big(
      \wt{\omega}(\wt{y},\wt{\jmath}),\tilde{\xi}(\wt{y},\wt{\jmath}),
      U\vert_{\block_4(\wt{y},\wt{\jmath})}\colon
      (\wt{y},\wt{\jmath})\in\DTube_{\tilde{n}} \Big)\\
    & \vee \sigma\Big(
      \wt{\omega}'(\wt{y},\wt{\jmath}),\tilde{\xi}'(\wt{y},\wt{\jmath}),
      U'\vert_{\block_4(\wt{y},\wt{\jmath})}\colon
      (\wt{y},\wt{\jmath})\in\DTube'_{\tilde{n}} \Big).
  \end{split}
\end{align}
where for $\wt{\mathcal{F}}^\indi_{\tilde{n}}$ we want to remind the
reader that $\wt{X}'$ is actually the random walk evolving in the
second environment $\eta'$, i.e.\ it is $\wt{X}^{''}$.

To keep with our construction of regeneration times above we need to
check the determining clusters in the neighbourhood of both walkers
and set
\begin{align}
  \wt{D}_{\tilde{n}} \coloneqq \tilde{n} + \max\Big\{
  \mathrm{height}(\DC(\tilde{x},\tilde{n})) \colon \tilde{x}\in
  \wt{V}_{\tilde{n}}\cup\wt{V}'_{\tilde{n}}\Big\}
\end{align}
and define the stopping times
\begin{align}
  \label{eq:abstract defn sigma}
  \tilde{\sigma}_0 \coloneqq 0, \quad \tilde{\sigma}_i \coloneqq \min\Big\{
  \wt{m}>\tilde{\sigma}_{i-1} \colon \max_{\tilde{\sigma}_{i-1}\le \tilde{n}\le
  \wt{m}} \wt{D}_{\tilde{n}}\le \wt{m} \Big\}, \, i\ge 1.
\end{align}

\begin{lemma}[Tails for $\tilde{\sigma}$]
  When $1-\varepsilon_{\wt{\omega}}$ is sufficiently close to $1$
  there exist finite positive constants $c$ and $C$ such that for all
  $\tilde{x}_0,\tilde{x}'_0$
  \begin{align}
    \label{eq:abstract_tail_bounds_sigma}
    \bP_{\tilde{x}_0,\tilde{x}'_0}(\tilde{\sigma}^a_{i+1} - \tilde{\sigma}^a_{i} >
    \tilde{n} \,\vert \, \wt{\mathcal{F}}^a_{\tilde{\sigma}_i}) \le
    C\mathrm{e}^{-c\tilde{n}} \qquad \text{for all }\tilde{n}=1,2,\dots,
    \quad i=0,1,\dots, a\in\{\joint,\indi\} \text{ a.s.},
  \end{align}
  in particular all $\tilde{\sigma}_i$ are a.s. finite. Furthermore
  \begin{align}
    \label{eq:abstract_stochDomSigmaJoint}
    \mathcal{L}\big(
    (\wt{\omega}(\cdot,-\wt{j}-\tilde{\sigma}^a_i))_{\wt{j}=0,1\dots}
    \,\vert \,  \wt{\mathcal{F}}^a_{\tilde{\sigma}_i}\big) \succcurlyeq
    \mathcal{L}\big(
    (\wt{\omega}(\cdot,-\wt{j}))_{\wt{j}=0,1,\dots})\big) \qquad
    \text{for every } i=0,1,\dots,a\in\{\joint,\indi\} \text{ a.s.},
  \end{align}
  where $\succcurlyeq$ denotes stochastic domination.
\end{lemma}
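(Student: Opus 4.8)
The plan is to follow the template of Lemma~\ref{lemma:expTailsSigmaJoint} (that is, Lemma~2.17 in \cite{BirknerCernyDepperschmidt2016}), with the coarse-grained objects $\wt\omega$, $\tilde\xi$ and the determining clusters $\DC(\cdot,\cdot)$ taking the roles previously played by $\omega$, $\eta$ and the determining triangles. I would treat $a=\joint$ and $a=\indi$ in parallel: in both cases $\wt D_{\tilde n}$ is controlled separately through the tube $\DTube_{\tilde n}$ around $X$ and the tube $\DTube'_{\tilde n}$ around $X'$, and since these tubes live on the same field $\wt\omega$ in the $\joint$ case and on the independent fields $\wt\omega$, $\wt\omega'$ in the $\indi$ case, a union bound over the two walkers reduces the estimate to a single-walker statement that is proved as in the toy model.

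First I would bound the tails of $\wt D_{\tilde n}$. Recall $\wt D_{\tilde n}-\tilde n=\max\{\mathrm{height}(\DC(\tilde x,\tilde n)):\tilde x\in\wt V_{\tilde n}\cup\wt V'_{\tilde n}\}$. By Lemma~\ref{lem:a priori bound for tilde X} the sets $\wt V_{\tilde n}$, $\wt V'_{\tilde n}$ are, outside an event of probability $\le Ce^{-c\tilde n}$, contained in balls of radius $O(\tilde n)$, so $\#(\wt V_{\tilde n}\cup\wt V'_{\tilde n})$ grows only polynomially in $\tilde n$. On the other hand, by the determining-cluster exploration, $\mathrm{height}(\DC(\tilde x,\tilde n))\ge\tilde k$ forces a block $\tilde k$ layers below $(\tilde x,\tilde n)$ that is joined to $(\tilde x,\tilde n)$ through a connected cluster of blocks at mutual $K_\eta$-distance (with $K_\eta$ from \eqref{eq:defn_K_eta}) all carrying the value $0$ of $\tilde\xi$. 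Since $\tilde\xi$ dominates the discrete-time contact process driven by the i.i.d.\ field $\wt\omega$ of density $\ge 1-\varepsilon_{\wt\omega}$ (Lemma~\ref{lem:abstr-OCcoupl}), this is a ``finite dry cluster of depth $\tilde k$'' event, whose probability is at most $Ce^{-c\tilde k}$ uniformly in space once $1-\varepsilon_{\wt\omega}$ is close enough to $1$. A union bound over $\wt V_{\tilde n}\cup\wt V'_{\tilde n}$ and over the layers, together with the a priori speed estimate and the Markov property of the coarse-grained dynamics, gives $\bP_{\tilde x_0,\tilde x_0'}(\wt D_{\tilde n}>2\tilde n\mid\wt{\mathcal F}^a_{\tilde n})\le Ce^{-c\tilde n}$; summing the geometric series and unwinding the definition \eqref{eq:abstract defn sigma} of $\tilde\sigma_i$ exactly as in \cite{BirknerCernyDepperschmidt2016} then yields \eqref{eq:abstract_tail_bounds_sigma}, and in particular $\tilde\sigma_i<\infty$ a.s.

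For \eqref{eq:abstract_stochDomSigmaJoint} the key point — as for \eqref{eq:stochDomSigmaJoint} — is that at a time $\tilde\sigma_i$ the information in $\wt{\mathcal F}^a_{\tilde\sigma_i}$ concerning the part of $\wt\omega$ below the current slice is, by construction of the determining clusters, only of the form ``the blocks on the bottom frontier of the explored clusters are connected to $-\infty$'', i.e.\ a finite intersection of increasing events for $\wt\omega$. Since $\wt\omega$ is an i.i.d.\ Bernoulli field, the FKG inequality then shows that, conditionally on $\wt{\mathcal F}^a_{\tilde\sigma_i}$, the law of $(\wt\omega(\cdot,-\tilde j-\tilde\sigma_i))_{\tilde j\ge 0}$ stochastically dominates its unconditional law; the space-time shift by $\tilde\sigma_i$ is harmless by shift-invariance of the construction and because $\tilde\sigma_i$ is an $\wt{\mathcal F}^a$-stopping time. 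The same argument applies verbatim with $\wt\omega'$ in place of $\wt\omega$ in the $\indi$ case.

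The hard part will be the bookkeeping that makes precise that $\wt{\mathcal F}^a_{\tilde\sigma_i}$ really retains only increasing information about the unexplored part of $\wt\omega$: unlike in the toy model, $\tilde\xi$ depends on infinitely many $\wt\omega$'s, the determining-cluster exploration is itself random, and the walk looks one block-time into the past of the environment (which is why $\wt D_{\tilde n}$ carries the extra layer and why the enlargement radius is $K_\eta$ rather than $1$). I would also need to check that the exploration, stopped at $\tilde\sigma_i$, is honest, in the sense that the $\sigma$-algebra it generates coincides with the one generated by the values of $\wt\omega$, $\tilde\xi$ and $U$ on a random but $\wt{\mathcal F}^a_{\tilde\sigma_i}$-measurable set, together with connectivity-to-$-\infty$ events on its lower boundary. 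This is the abstract counterpart of the discussion preceding Lemma~\ref{lemma:expTailsSigmaJoint}; once it is in place, the remainder is a routine transcription of the proof of Lemma~2.17 in \cite{BirknerCernyDepperschmidt2016}.
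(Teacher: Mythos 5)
Your proposal follows essentially the same route as the paper: exponential tail bounds for $\mathrm{height}(\DC(\tilde{x},\tilde{n}))$ (which the paper simply imports from Lemma~3.12 of \cite{BirknerCernyDepperschmidt2016}, while you rederive them from the coupling with supercritical oriented percolation), then the same chain-of-determining-clusters argument as for \eqref{eq:expTailsSigmaJoint} to get \eqref{eq:abstract_tail_bounds_sigma}, and the FKG inequality for the i.i.d.\ Bernoulli field $\wt{\omega}$ (and $\wt{\omega}'$ in the $\indi$ case) for \eqref{eq:abstract_stochDomSigmaJoint}. Just note that the isolated estimate $\bP(\wt{D}_{\tilde n}>2\tilde n\,\vert\,\cdot)\le C\mathrm{e}^{-c\tilde n}$ plus a union bound would not by itself give the increment tail, so the real content is indeed the overlapping-cluster bookkeeping you defer to, exactly as the paper does.
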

\begin{proof}
  By Lemma~3.12 from \cite{BirknerCernyDepperschmidt2016} we know that
  $\mathrm{height}(\DC(\tilde{x},\tilde{n}))$ has exponential tail
  bounds. Therefore \eqref{eq:abstract_tail_bounds_sigma} can be shown
  analogously to \eqref{eq:expTailsSigmaJoint}. For
  \eqref{eq:abstract_stochDomSigmaJoint} argue as in the proof of
  \eqref{eq:stochDomSigmaJoint} since $\wt{\omega}$ is again an
  i.d.d.\ Bernoulli random field. Also note that for $a=\indi$
  \eqref{eq:abstract_stochDomSigmaJoint} holds for $\wt{\omega}'$ as
  well.
\end{proof}
For two random walks $\wt{X}$ and $\wt{X}'$ we say that $\tilde{n}$ is
a $(b,s)$-cone time point for the decorated path beyond $\wt{m}$ (with
$\wt{m}<\tilde{n}$) if
\begin{align*}
  \DTube_{\tilde{n}} \cap
  & \Big( \bZ^d \times\{-\tilde{n},\dots,-\wt{m}\} \Big)\\
  & \subset \Big\{ (\wt{y},-\wt{j}) \colon \wt{m}\le \wt{j} \le \tilde{n},
    \min\{\norm{\wt{y} - \wt{X}_{\tilde{n}}},\norm{\wt{y}-\wt{X}'_{\tilde{n}}}\}
    \le b+s(\tilde{n}-\wt{j})  \Big\}.
\end{align*}

\begin{lemma}[Cone time points]
  For \label{lem:abstract cone time point} every $\varepsilon>0$ there
  exist suitable $b$ and $s>\wt{s}_{\mathrm{max}}$ such that for all
  finite $\mathcal{\wt{F}}$-stopping times $\wt{T}$ with
  $\wt{T}\in\{\tilde{\sigma}_i\colon i\in\N \}$ a.s.\ and all
  $\wt{k}\in\N$, with
  $\bar{T}\coloneqq \inf\{\tilde{\sigma}_i\colon \tilde{\sigma}_i \ge \wt{k} \}$
  \begin{align}
    \label{eq:abstract cone time points bound}
    \bP_{\tilde{x}_0,\tilde{x}'_0}(\bar{T} \text{ is a }
    (b,s)\text{-cone time point for the decorated paths beyond }
    \wt{T}\,\vert\, \mathcal{\wt{F}}_{\wt{T}})\ge 1-\varepsilon
  \end{align}
  a.s.\ on $\{\wt{T}<\wt{k}\}$. Furthermore
  $0<s-\wt{s}_{\mathrm{max}}\ll 1$ can be chosen small. The inequality
  in \eqref{eq:abstract cone time points bound} holds for
  $\mathcal{\wt{F}}=\mathcal{\wt{F}}^\joint$ as well as
  $\mathcal{\wt{F}}=\mathcal{\wt{F}}^\indi$, where we consider
  $\tilde{\sigma}^\joint$, respectively $\tilde{\sigma}^\indi$ for
  $\tilde{\sigma}$.
\end{lemma}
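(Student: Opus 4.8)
The strategy mirrors exactly that of Lemma~\ref{lem:conepoint} in the oriented percolation setting: we first establish the statement for a \emph{single} coarse-grained walk and then combine the two single-walk estimates by a union bound. The single-walk version is the coarse-grained analogue of Lemma~2.21 in \cite{BirknerCernyDepperschmidt2016}, and its proof rests on three ingredients, all available here: (a) the a priori speed bound on $\wt{X}$ from Lemma~\ref{lem:a priori bound for tilde X}, with $\wt{s}_{\mathrm{max}}$ as small as we wish once $1-\varepsilon_{\wt{\omega}}$ is close to $1$ and $\varepsilon_{\mathrm{symm}}$ close to $0$; (b) the conditional exponential tails \eqref{eq:abstract_tail_bounds_sigma} for the increments $\tilde{\sigma}_{i+1}-\tilde{\sigma}_i$ together with the stochastic domination \eqref{eq:abstract_stochDomSigmaJoint}; and (c) the exponential tail on $\mathrm{height}(\DC(\tilde{x},\tilde{n}))$, which is Lemma~3.12 in \cite{BirknerCernyDepperschmidt2016}. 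Since (b) and (c) are stated conditionally on $\wt{\mathcal{F}}^a_{\tilde{\sigma}_i}$ for both $a\in\{\joint,\indi\}$, the resulting bound will automatically hold for $\wt{\mathcal{F}}^\joint$ and $\wt{\mathcal{F}}^\indi$ alike.

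For the single walk $\wt{X}$, after $\wt{T}$ we use \eqref{eq:abstract_stochDomSigmaJoint} to couple the environment ahead of time $-\wt{T}$ with a fresh stationary environment \emph{from above}, so that all a priori estimates apply under $\bP_{\cdot}(\,\cdot\mid\wt{\mathcal{F}}_{\wt{T}})$. Fix a small margin $\varepsilon'=\varepsilon'(\varepsilon)>0$. For $\bar{T}$ to be a $(b,s)$-cone time point for the decorated path of $\wt{X}$ beyond $\wt{T}$ it suffices that, for every coarse-grained time $-\wt{\jmath}$ with $\wt{T}\le\wt{\jmath}\le\bar{T}$: (i) $\norm{\wt{X}_{\wt{\jmath}}-\wt{X}_{\bar{T}}}\le \tfrac{b}{2}+(s-\wt{s}_{\mathrm{max}})(\bar{T}-\wt{\jmath})$, i.e.\ the walk stays inside a cone of slightly smaller slope; and (ii) $\wt{V}_{\wt{\jmath}}$ together with the determining clusters emanating from it stays within spatial distance $\tfrac{b}{2}$ of $\wt{X}_{\wt{\jmath}}$. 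Claim (i) follows from the conditional a priori speed control -- the abstract analogue of Lemma~\ref{lem:apriori speed conditioned on F}, in particular of \eqref{eq:aprioriAlongStoppingTimesX2}, obtained from Lemma~\ref{lem:a priori bound for tilde X} by a restart/large-deviation argument exactly as \eqref{eq:aprioriAlongStoppingTimesX2} is obtained from Lemma~2.16 in \cite{BirknerCernyDepperschmidt2016}: the $(1+\varepsilon)$-slack there is absorbed into $s-\wt{s}_{\mathrm{max}}$ and the short initial stretch near $\bar{T}$ into $b$. For (ii), $\wt{V}_{\wt{\jmath}}$ has diameter $O(L_{\mathrm{s}}+R_X)$, and each cluster $\DC(\tilde{x},\wt{\jmath})$ with $\tilde{x}\in\wt{V}_{\wt{\jmath}}$ extends spatially at most $K_\eta\cdot\mathrm{height}(\DC(\tilde{x},\wt{\jmath}))$ with $K_\eta$ from \eqref{eq:defn_K_eta}; by (c) and a union bound over the linearly (in $\bar{T}$) many base points, with conditional probability $\ge 1-\varepsilon'$ all these heights are bounded by a constant, so the total decoration width is a constant, absorbed into $\tfrac{b}{2}$ once $b$ is large. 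Choosing $b$ large, $s$ just above $\wt{s}_{\mathrm{max}}$, and then $\wt{s}_{\mathrm{max}}$ (hence $\varepsilon_{\wt{\omega}},\varepsilon_{\mathrm{symm}}$) small makes the exceptional conditional probability at most $\varepsilon/2$; this argument also yields the assertion $0<s-\wt{s}_{\mathrm{max}}\ll1$.

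The two-walk statement then follows easily, as in the proof of Lemma~\ref{lem:conepoint}. Since both inner cones lie in the double inner cone and the decorated double tube restricted to the relevant time slices is the union of the decorated tubes of $\wt{X}$ and of $\wt{X}'$, the event in \eqref{eq:abstract cone time points bound} contains the intersection of the two single-walk cone-time-point events. Applying the single-walk bound with error $\varepsilon/2$ to $\wt{X}$ and to $\wt{X}'$ separately -- each holds conditionally on $\wt{\mathcal{F}}^a_{\wt{T}}$, $a\in\{\joint,\indi\}$, by the conditional estimates \eqref{eq:abstract_tail_bounds_sigma} and \eqref{eq:abstract_stochDomSigmaJoint} (under $\wt{\mathcal{F}}^\indi$ the two walks additionally move in independent environments of identical law) -- and taking a union bound gives \eqref{eq:abstract cone time points bound} with error $\varepsilon$.

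The main obstacle is the conditional a priori speed bound behind claim (i): one must check that, conditionally on $\wt{\mathcal{F}}_{\wt{T}}$, the coarse-grained walk does not exceed speed $(1+\varepsilon)\wt{s}_{\mathrm{max}}$ over any subinterval before $\bar{T}$, which requires carefully combining Lemma~\ref{lem:a priori bound for tilde X} with the stochastic domination \eqref{eq:abstract_stochDomSigmaJoint}. The secondary, more bookkeeping-heavy point is tracking how the two length scales entering the decorated tube -- the $O(L_{\mathrm{s}}+R_X)$ width of $\wt{V}_{\wt{\jmath}}$ and the $K_\eta\cdot\mathrm{height}$ dilation of the determining clusters -- are absorbed into the base radius $b$ versus the slope $s$; this is the coarse-grained counterpart of the corresponding step in \cite{BirknerCernyDepperschmidt2016} and presents no genuinely new difficulty.
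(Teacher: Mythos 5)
Your overall strategy is exactly the paper's: the paper proves this lemma by declaring it analogous to Lemma~\ref{lem:conepoint}, i.e.\ a single-walk cone-point estimate (the coarse-grained analogue of Lemma~2.21 of \cite{BirknerCernyDepperschmidt2016}) applied to each walk with error $\varepsilon/2$ and then combined by a union bound, the only new inputs being the exponential tails \eqref{eq:abstract_tail_bounds_sigma} for $\tilde{\sigma}$, the a priori speed bound of Lemma~\ref{lem:a priori bound for tilde X}, and the exponential tails for $\mathrm{height}(\DC(\cdot,\cdot))$. Your ingredients (a)--(c), the use of \eqref{eq:abstract_stochDomSigmaJoint}, and the remark that the conditional estimates hold for both $\wt{\mathcal{F}}^\joint$ and $\wt{\mathcal{F}}^\indi$ are precisely what the paper relies on.

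However, the explicit single-walk decomposition you wrote down would not go through as stated. In (i) you ask the path to stay in a cone of slope $s-\wt{s}_{\mathrm{max}}$ around $\wt{X}_{\bar{T}}$; since $s$ is taken only slightly above $\wt{s}_{\mathrm{max}}$, that slope is tiny, while the walk genuinely moves at speed up to about $\wt{s}_{\mathrm{max}}$, so this containment cannot hold with probability close to one. The correct path containment is at slope $(1+\varepsilon)\wt{s}_{\mathrm{max}}<s$, which is what the coarse-grained analogue of \eqref{eq:aprioriAlongStoppingTimesX2} delivers. In (ii) you claim that with conditional probability $\ge 1-\varepsilon'$ \emph{all} determining-cluster heights along the tube are bounded by a constant; this is not uniform in $\bar{T}-\wt{T}$ (there are of order $\bar{T}-\wt{T}$ base points, so a fixed cutoff $h$ only gives an error of order $(\bar{T}-\wt{T})\mathrm{e}^{-ch}$), and the lemma must hold for arbitrary $\wt{k}$. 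The standard repair, as in Lemma~2.21 of \cite{BirknerCernyDepperschmidt2016}, is to allow the decoration based at coarse time $\wt{\jmath}$ a spatial extent of order $b+\bigl(s-(1+\varepsilon)\wt{s}_{\mathrm{max}}\bigr)(\bar{T}-\wt{\jmath})$, i.e.\ growing linearly in the distance from the cone tip; the union bound over base points is then geometric, of order $\mathrm{e}^{-cb}$ uniformly in $\bar{T}-\wt{T}$, and is made smaller than $\varepsilon/2$ by choosing $b$ large. With these two corrections your sketch reduces to the paper's argument.
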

\begin{proof}
  The proof is analogous to the proof of Lemma~\ref{lem:conepoint}. We
  need to adapt Lemma~\ref{lem:apriori speed conditioned on F} and
  Lemma~\ref{lemma:expTailsTau} for which we only need the exponential
  tail bounds on $\tilde{\sigma}$. The proofs are then a straight
  forward adaption.
\end{proof}
The regeneration times can be constructed by adapting the version from
Section~\ref{sec:preparations} to the coarse-grained setting in a
straightforward manner. We need the same ingredients for a
regeneration, i.e.\ we want the random walks to discover all reasons
for $0$'s of $\tilde{\xi}$ along the path, $\tilde{\xi}$ in the
neighbourhood of both random walks only has $1$'s, we want the
$\wt{\omega}$ to be in a good configuration (we will be more precise
below) and we want the cones constructed along possible regeneration
attempts to include previous cones.

\smallskip
\noindent
The first ingredient, the times at which the random walks have
discovered all reasons for $0$'s of $\tilde{\xi}$ are given by the
sequence $\tilde{\sigma}$ defined in \eqref{eq:abstract defn sigma}.
The good configurations of $\wt{\omega}$ are given by: Let
$\wt{G}_{\tilde{x},\tilde{x}'}(b_\inn,b_\out,s_\inn,s_\out,h) \subset
\{0,1\}^{\dcs(\tilde{x},\tilde{x}',b_\inn,b_\out,s_\inn,s_\out,h)}$ be
the set of possible $\wt{\omega}$-configurations in
$\dcs(\tilde{x},\tilde{x}',b_\inn,b_\out,s_\inn,s_\out,h)$ with the
property

\begin{multline}
  \label{eq:abstract good omega configurations}
  \forall \tilde{\xi}_1(\cdot,0),\tilde{\xi}_2(\cdot,0) \in \{0,1\}^{\bZ^d}
  \text{ with } \tilde{\xi}_1(\cdot,0)\vert_{B_{b_\out}(\tilde{x})\cup
    B_{b_\out}(\tilde{x}')}=\tilde{\xi}_2(\cdot,0)\vert_{B_{b_\out}(\tilde{x})\cup
    B_{b_\out}(\tilde{x}')}\equiv 1 \text{ and}\\
  \wt{\omega} \in \{0,1 \}^{\bZ^d\times\{1,\dots,h\}} \text{ with }
  \wt{\omega}\vert_{\dcs(\tilde{x},\tilde{x}',b_\inn,b_\out,s_\inn,s_\out,h)}
  \in \wt{G}_{\tilde{x},\tilde{x}'}(b_\inn,b_\out,s_\inn,s_\out,h) \colon \\
  \tilde{\xi}_1(\tilde{z},\tilde{n})=\tilde{\xi}_2(\tilde{z},\tilde{n}) \text{ for all }
  (\tilde{z},\tilde{n})\in \cone(\tilde{x};b_\inn,s_\inn,h)\cup
  \cone(\tilde{x}';b_\inn,s_\inn,h)
\end{multline}
where $\tilde{\xi}_1$ and $\tilde{\xi}_2$ are constructed from the same
$\wt{\omega}$.

Note that, by construction, if
$\tilde{\xi}(\cdot,0)\vert_{B_{b_\out}(\tilde{x})\cup
  B_{b_\out}(\tilde{x}') }\equiv 1$ and
\begin{align*}
  \wt{\omega}\vert_{\dcs(\tilde{x},\tilde{x}',b_\inn,b_\out,s_\inn,s_\out,h)}
  \in \wt{G}_{\tilde{x},\tilde{x}'}(b_\inn,b_\out,s_\inn,s_\out,h)
\end{align*}
then
\begin{align*}
  \{ \eta_n(z)\colon (z,n) \in \block(\tilde{z},\tilde{n}),
  (\tilde{z},\tilde{n})\in \cone(\tilde{x};b_\inn,s_\inn,h)\cup
  \cone(\tilde{x}';b_\inn,s_\inn,h) \}
\end{align*}
is a function depending only on $\eta_0(y)$ for $y$ with
$\min\{\norm{y - \tilde{x}},\norm{y-\tilde{x}'} \}\le b_\out
L_{\mathrm{s}}$ and $U\vert_{\block_4(\tilde{z},\tilde{n})}$ for
$(\tilde{z},\tilde{n})\in \cone(\tilde{x};b_\inn,s_\inn,h)\cup
\cone(\tilde{x}';b_\inn,s_\inn,h)$. In particular if we have a
different $\eta'_0$ build from $U'$ with $\eta_0(y)=\eta'_0(y)$ for
all $y$ with $\min\{\norm{y - \tilde{x}},\norm{y-\tilde{x}'} \}\le
b_\out L_{\mathrm{s}}$ and
$U\vert_{\block_4(\tilde{z},\tilde{n})}=U'\vert_{\block_4(\tilde{z},\tilde{n})}$
for $(\tilde{z},\tilde{n})\in \cone(b_\inn,s_\inn,h)\cup
\cone(b_\inn,s_\inn,h)$ then
\begin{align*}
  \eta_n(z)=\eta'_n(z) \text{ for all } (z,n) \in
  \block(\tilde{z},\tilde{n}),(\tilde{z},\tilde{n})\in
  \cone(\tilde{x};b_\inn,s_\inn,h)\cup \cone(\tilde{x}';b_\inn,s_\inn,h).
\end{align*}
This is true since we can construct the value of $\eta$ in any block
where $\tilde{\xi}=0$ locally and only using information about the
environment inside the inner cone, see the determining clusters and
the definition of $\tilde{\sigma}$. For every block inside the cones
where $\tilde{\xi}=1$ we can use that the property of the good
configuration of the cone shell combined with $\tilde{\xi}=1$ at the
base of the cones enforces that $\eta$ in that block is only depended
on $\eta_0$ and $U$ on the blocks inside the cone construction.

Lastly we can use the same sequence $(t_\ell)_\ell$ introduced around
\eqref{eq:space-time-shift.used} from the toy model, only replacing
$s_{\mathrm{max}}$ with $\wt{s}_{\mathrm{max}}$ to find suitable times
at which we can base a double cone such that previous cones will be
included if the random walks do not exceed the a-priori speed.

\smallskip
\noindent
This yields the following construction to find a regeneration time.
\begin{constr}[Regeneration times on the coarse grained level]
  \label{constr:reg-time abstract}
  \begin{enumerate}[1.]
  \item We start by going to the first $\tilde{\sigma}_i$ after
    $t_1$. Check if
    \begin{enumerate}[(i)]
    \item in the spatial $b_\out$-neighbourhood of
      $(\wt{X}_{\tilde{\sigma}_i},-\tilde{\sigma}_i)$ and
      $(\wt{X}'_{\tilde{\sigma}_i},-\tilde{\sigma}_i)$ we have
      $\tilde{\xi}\equiv 1$
    \item the paths together with tubes and determining clusters have
      stayed inside the interior of the corresponding double cone
      shell, i.e.\ inside one of the inner cones, based at the current
      space-time positions
    \item $\wt{\omega}$ restricted to the double cone shell is in the
      good set defined above in \eqref{eq:abstract good omega
        configurations}.
    \end{enumerate}
    All this has positive probability and if it occurs, we have the
    first regeneration times.
  \item If the previous step is unsuccessful we need to check the
    subsequent times in the sequence $(t_\ell)_\ell$. If not
    previously successful, at the $\ell$-th attempt let
    $\tilde{\sigma}_{J(\ell)}$ be the first $\tilde{\sigma}_i$ after
    $t_\ell$. Check
    \begin{enumerate}[(i)]
    \item if $\tilde{\xi}\equiv 1$ in the spatial
      $b_\out$-neighbourhood of
      $(\wt{X}_{\tilde{\sigma}_{J(\ell)}},-\tilde{\sigma}_{J(\ell)})$
      and
      $(\wt{X}'_{\tilde{\sigma}_{J(\ell)}},-\tilde{\sigma}_{J(\ell)})$
    \item if $\tilde{\sigma}_{J(\ell)}$ is a cone-time point for the
      paths beyond $t_{\ell -1}$
    \item if
      $\norm{\wt{X}_{\tilde{\sigma}_{J(\ell)}}}\le \wt{s}_{\mathrm{max}}
      \tilde{\sigma}_{J(\ell)}$ and
      $\norm{\wt{X}'_{\tilde{\sigma}_{J(\ell)}}}\le \wt{s}_{\mathrm{max}}
      \tilde{\sigma}_{J(\ell)}$
    \item $\wt{\omega}$ restricted to the double cone shell is in the
      good set defined above in \eqref{eq:abstract good omega
        configurations}
    \item the path, together with the determining blocks, up to time
      $t_{\ell-1}$ is contained in a box of diameter
      $s_\out t_{\ell-1} + b_\out$ and height $t_{\ell-1}$
    \end{enumerate}
    If all this holds we have found the regeneration time.
  \end{enumerate}
\end{constr}

Then, by the same arguments as in Section~\ref{sec:preparations-1}
(see Propositions~\ref{prop:JointRegTimesBound} and
\ref{prop:IndRegTimesBound} there), we also obtain the following tail
bounds for the regeneration times.
\begin{lemma}
  Denote \label{lem:abstract_JointRegTimesBound}by $\wt{R}_1$ the
  first simultaneous regeneration time, then there are positive
  constants $C$ and $\tilde{\beta}$ so that
  \begin{align}
    \label{eq:537}
    \sup_{\tilde{x}_0,\tilde{x}_0'} \Pr_{\tilde{x}_0,\tilde{x}_0'}^{\joint} (\wt{R}_1 >t) \le
    Ct^{-\tilde{\beta}} \quad \text{and} \quad
    \sup_{\tilde{x}_0,\tilde{x}_0'} \Pr_{\tilde{x}_0,\tilde{x}_0'}^{\indi} (\wt{R}_1 >t) \le
    Ct^{-\tilde{\beta}}.
  \end{align}
  Furthermore, $\tilde{\beta}$ can be chosen arbitrarily large if
  $1-\varepsilon_{\wt{\omega}}$ is suitably close to $1$, which can be
  done by tuning $\varepsilon_U$ appropriately, and by tuning
  $\wt{s}_{\mathrm{max}}$ small enough. The same bounds hold for every
  increment $\wt{R}_i-\wt{R}_{i-1}$ for $i=2,\dots$.
\end{lemma}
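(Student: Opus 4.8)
The plan is to replay the proof of Proposition~\ref{prop:JointRegTimesBound} in the coarse-grained setting, the role of supercritical oriented percolation on $\bZ^d\times\bZ$ now being played by the i.i.d.\ Bernoulli field $\wt{\omega}$ with density $1-\varepsilon_{\wt{\omega}}$, which is supercritical once $\varepsilon_{\wt{\omega}}$ is small (Lemma~\ref{lem:abstr-OCcoupl}). Write $\wt{\sigma}_{J(\ell)}$ for the first $\tilde{\sigma}_i$ at or after $t_\ell$, with $(t_\ell)$ the sequence from \eqref{eq:space-time-shift.used} with $\wt{s}_{\mathrm{max}}$ in place of $s_{\mathrm{max}}$, so that the regeneration attempts of Construction~\ref{constr:reg-time abstract} are made along $(\wt{\sigma}_{J(\ell)})_\ell$. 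Exactly as in \eqref{eq:40} one splits, for suitable $\rho>1$ and $m=\lceil\log t/(2\log\rho)\rceil$,
\[
\Pr^\joint_{\tilde{x}_0,\tilde{x}'_0}(\wt{R}_1>t)\le\sum_{\ell=1}^{m-1}\Pr(\wt{\sigma}_{J(\ell)}>t)+\sum_{\ell=m}^\infty\Pr(\wt{R}_1=\wt{\sigma}_{J(\ell)}).
\]

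For the first sum, the exponential tails \eqref{eq:abstract_tail_bounds_sigma} for the increments of $\tilde{\sigma}$, together with $t_{m-1}\le\lceil\rho^{m-1}\rceil\le\sqrt{t}$ for $t$ large, give the bound $C(m-1)e^{-c(t-\sqrt{t})}$, which is $\le Ct^{-\tilde{\beta}}$ for any $\tilde{\beta}$, verbatim as in \eqref{eq:UpperBoundFirstPartialSum}. For the second sum, the key step is a uniform positive lower bound $\delta_0$ on the conditional probability that the $\ell$-th attempt succeeds, for all $\ell\ge2$, with $\delta_0\to1$ as $\varepsilon_{\wt{\omega}}\downarrow0$ and $\wt{s}_{\mathrm{max}}\downarrow0$. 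The five conditions in Construction~\ref{constr:reg-time abstract} are handled by the coarse-grained analogues already assembled: $\wt{\omega}$ restricted to the double cone shell lies in the good set \eqref{eq:abstract good omega configurations} with uniform probability $\ge1-\varepsilon$ by the two-cone version of \cite[Lemma~2.13]{BirknerCernyDepperschmidt2016} applied to $\wt{\omega}$ (the coarse-grained counterpart of Lemmas~\ref{th:lemma2.13Analog} and \ref{lemma:GoodOmegaSet}), and these events are independent over $\ell$ since disjoint space-time regions of $\wt{\omega}$ are inspected; $\tilde{\xi}\equiv1$ in the $b_\out$-neighbourhoods at $\wt{\sigma}_{J(\ell)}$ has uniformly positive probability by the stochastic domination \eqref{eq:abstract_stochDomSigmaJoint} and the FKG inequality; the cone-time-point property holds with probability $\ge1-\varepsilon$ by Lemma~\ref{lem:abstract cone time point}; the a priori speed bound $\norm{\wt{X}_{\wt{\sigma}_{J(\ell)}}}\vee\norm{\wt{X}'_{\wt{\sigma}_{J(\ell)}}}\le\wt{s}_{\mathrm{max}}\wt{\sigma}_{J(\ell)}$ and the containment of the decorated paths up to $t_{\ell-1}$ in a box of diameter $s_\out t_{\ell-1}+b_\out$ and height $t_{\ell-1}$ hold for all large $\ell$, with complement probability $\le C\exp(-ct_{\ell'})$, by combining Lemma~\ref{lem:a priori bound for tilde X} with \eqref{eq:abstract_tail_bounds_sigma} to control the number and size of large $\tilde{\sigma}$-increments, hence the sizes of the determining clusters and tubes; and $\wt{\sigma}_{J(\ell)}<t_{\ell+1}$ holds with high probability since $t_\ell$ grows exponentially. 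Consequently the second sum is $\le\sum_{\ell\ge m}\delta_0(1-\delta_0)^\ell=(1-\delta_0)^{m+1}\le t^{\ln(1-\delta_0)/(2\ln\rho)+2}$, as in \eqref{eq:UpperBoundSecondPartialSum}.

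It remains to choose $\rho$ close to $1$: here condition \eqref{eq:RhoSoftCondition} becomes $\rho>(s_\out+\wt{s}_{\mathrm{max}})/(s_\inn-\wt{s}_{\mathrm{max}})$, which permits $\rho$ arbitrarily close to $1$ once $\wt{s}_{\mathrm{max}}$ is small (Lemma~\ref{lem:a priori bound for tilde X}) and $s_\out-s_\inn$ is small (the coarse-grained version of Lemma~\ref{th:lemma2.13Analog}); then $-\ln(1-\delta_0)/(2\ln\rho)>\tilde{\beta}+2$ for any prescribed $\tilde{\beta}$, yielding the first bound in \eqref{eq:537}. The bound for $\Pr^\indi$ is obtained identically, using two single cones in independent environments and the obvious $\indi$-analogues of the cited lemmas, mirroring Proposition~\ref{prop:IndRegTimesBound}. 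For the later increments $\wt{R}_i-\wt{R}_{i-1}$, $i\ge2$, the regeneration restarts at time $\wt{R}_{i-1}$ with the environment below time $-\wt{R}_{i-1}$ re-drawn subject only to the positive information that the $b_\out$-neighbourhoods of $\wt{X}_{\wt{R}_{i-1}}$ and $\wt{X}'_{\wt{R}_{i-1}}$ connect to $-\infty$ (and with $\tilde{\xi}\equiv1$ there, which only helps), so the same estimate applies uniformly by shift-invariance. The only genuine work — none of it new — lies in checking that the two-cone isolation lemma and the a priori speed bound survive the passage to the coarse-grained contact process $\tilde{\xi}$ driven by $\wt{\omega}$; since $\wt{\omega}$ is i.i.d.\ Bernoulli with supercritical density this is precisely the content of those lemmas with $p$ replaced by $1-\varepsilon_{\wt{\omega}}$, the model-specific bookkeeping relating $\tilde{\xi}$ back to $\eta$ having already been packaged into the determining clusters and the good-configuration set \eqref{eq:abstract good omega configurations}.
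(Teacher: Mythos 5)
Your proposal is correct and takes essentially the same route as the paper: the paper's proof simply declares itself "essentially the same as for Proposition~\ref{prop:JointRegTimesBound}", invoking the geometric growth of the $t_\ell$, the uniform positive success probability per attempt (hence a geometric number of attempts), and the coarse-grained ingredients you cite — the tails \eqref{eq:abstract_tail_bounds_sigma}, the good set \eqref{eq:abstract good omega configurations}, Lemma~\ref{lem:abstract cone time point}, and Lemma~\ref{lem:a priori bound for tilde X} — to conclude $\bP(\wt{R}_1>\tilde{n})\le \tilde{n}^{-\tilde{\beta}}$ with $\tilde{\beta}$ large when $\varepsilon_{\wt{\omega}}$ is small. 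You merely spell out in detail the steps the paper leaves implicit, which is fine.
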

\begin{proof}
  The proof is essentially the same as for
  Proposition~\ref{prop:JointRegTimesBound}. Note that the path
  containment property of the construction above holds from some
  finite $\ell_0$ on. Given the construction and all information
  obtained until the $(\ell-1)$-th step, the probability that the
  other conditions hold is uniformly bounded away from $0$ and in fact
  high.

  The constructed regeneration time $\wt{R}_1$ has the following
  properties:
  \begin{enumerate}[(i)]
  \item $\tilde{\xi}(\wt{X}_{\wt{R}_1}+\wt{y},-\wt{R}_1)
    =\tilde{\xi}(\wt{X}'_{\wt{R}_1}+\tilde{z},-\wt{R}_1)$ for all
    $\norm{\wt{y}},\norm{\tilde{z}}\le b_\out$,
  \item the path with determining blocks up to time $\wt{R}_1$ is
    contained in
    \begin{align*}
      \cone\Big((\wt{X}_{\wt{R}_1},-\wt{R}_1);b_\inn,s_\inn,\wt{R}_1\Big)
      \cup
      \cone\Big((\wt{X}'_{\wt{R}_1},-\wt{R}_1);b_\inn,s_\inn,\wt{R}_1\Big);
    \end{align*}
  \item $\wt{\omega}$ in the double cone shell ``centred'' together at
    the base points $(\wt{X}_{\wt{R}_1},-\wt{R}_1)$ and
    $(\wt{X}'_{\wt{R}_1},-\wt{R}_1)$ lies in the good set, that is
    \begin{align*}
      \wt{\omega}\vert_{\dcs\big((\wt{X}_{\wt{R}_1},-\wt{R}_1),(\wt{X}'_{\wt{R}_1},-\wt{R}_1);
      b_\inn,b_\out,s_\inn,s_\out,\wt{R}_1\big)} \in
      G_{\wt{X}_{\wt{R}_1},\wt{X}'_{\wt{R}_1},b_\inn,b_\out,s_\inn,s_\out,\wt{R}_1},
    \end{align*}
  \end{enumerate}
  where $G$ comes from \eqref{eq:GoodOmegaSet} and \eqref{eq:36}.
  Again we can argue as in the proof of
  Proposition~\ref{prop:JointRegTimesBound} that we only require a
  geometric number of $t_\ell$'s to find $\wt{R}_1$ and obtain
  \begin{align*}
    \bP^\joint_{\tilde{x}_0,\tilde{x}'_0}(\wt{R}_1 > \tilde{n}) \le
    \bP^\joint_{\tilde{x}_0,\tilde{x}'_0}(\text{more that }\log \tilde{n}/\log
    c \text{ attempts needed}) \le \delta^{\log \tilde{n}/\log c} =
    \tilde{n}^{-\tilde{\beta}},
  \end{align*}
  where we can make $\tilde{\beta}$ arbitrarily large when
  $\varepsilon_{\wt{\omega}}$ is close to $0$.
\end{proof}
Setting
\begin{align*}
  \hat{\eta}_1 \coloneqq
  & (\eta_{-L_{\mathrm{t}}\wt{R}_1}(x) \colon \norm{x-L_{\mathrm{s}}\wt{X}_{\wt{R}_1}}\le b_\out L_{\mathrm{s}}),\\
  \hat{\eta}'_1 \coloneqq
  & (\eta_{-L_{\mathrm{t}}\wt{R}_1}(x) \colon \norm{x-L_{\mathrm{s}}\wt{X}'_{\wt{R}_1}}\le b_\out L_{\mathrm{s}}),\\
  \widehat{Y}_1 \coloneqq
  & \wt{Y}_{\wt{R}_1}, \text{ the displacement of } X_{L_{\mathrm{t}}\wt{R}_1} \text{ relative to the centre of the}\\
  & L_{\mathrm{s}} \text{-box in which it is contained,}\\
  \widehat{Y}'_1 \coloneqq
  & \wt{Y}'_{\wt{R}_1}, \text{ the displacement of } X'_{L_{\mathrm{t}}\wt{R}_1} \text{ relative to the centre of the}\\
  &L_{\mathrm{s}} \text{-box in which it is contained.}
\end{align*}
By shifting the space-time origin to $(\wt{X}_{\wt{R}_1},-\wt{R}_1)$
we can start anew conditioned on seeing
\begin{enumerate}[(i)]
\item the configuration $\tilde{\xi}\equiv 1$ in the
  $b_\out$-neighbourhood of both $\wt{X}_{\wt{R}_1}$ and
  $\wt{X}'_{\wt{R}_1}$;
\item $\hat{\eta}_1$ in the $b_\out L_{\mathrm{s}}$ box around
  $\wt{X}_{\wt{R}_1}$ and $\hat{\eta}'_1$ for the box around
  $\wt{X}'_{\wt{R}_1}$;
\item the displacements of the walker on the fine level relative to
  the centre of the corresponding coarse-graining box given by
  $\widehat{Y}_1$ and $\widehat{Y}'_1$.
\end{enumerate}
We iterate the above construction and obtain a sequence of random
times $\wt{R}_i$, positions $\wt{X}_{\wt{R}_i}$ and
$\wt{X}'_{\wt{R}_i}$, relative displacements $\widehat{Y}_i,\widehat{Y}'_i$
and local configurations $\hat{\eta}_i,\hat{\eta}'_i$. By construction
\begin{align}
  \label{eq:43}
  \Big(
  \wt{X}_{\wt{R}_i}-\wt{X}_{\wt{R}_{i-1}},\wt{X}'_{\wt{R}_i}-\wt{X}'_{\wt{R}_{i-1}},
  \wt{R}_i-\wt{R}_{i-1},\wt{X}_{\wt{R}_{i-1}},\wt{X}'_{\wt{R}_{i-1}},
  \widehat{Y}_i,\widehat{Y}'_i,\hat{\eta}_i ,\hat{\eta}'_i\Big)_{i\in\N}
\end{align}
is a Markov chain. Furthermore, the increments
$(\wt{X}_{\wt{R}_i}-\wt{X}_{\wt{R}_{i-1}},\wt{X}'_{\wt{R}_i}-\wt{X}'_{\wt{R}_{i-1}},\wt{R}_i-\wt{R}_{i-1})$
only depend on
$(\widehat{Y}_{i-1},\widehat{Y}'_{i-1},\hat{\eta}_{i-1},\hat{\eta}'_{i-1},\wt{X}_{\wt{R}_{i-1}}-\wt{X}'_{\wt{R}_{i-1}})$.
Along the random times $L_{\mathrm{t}}\wt{R}_i$
\begin{align}
  \label{eq:44}
  X_{L_{\mathrm{t}}\wt{R}_i} = \widehat{Y}_i +\sum_{j=1}^i
  L_{\mathrm{s}}(\wt{X}_{\wt{R}_j}-\wt{X}_{\wt{R}_{j-1}})
\end{align}
is the sum of an additive functional of a Markov chain and a function
of the current state of the chain, analogously for $\wt{X}'$.

Recall Remark~\ref{rem:eta-top_eta-bot_independent}. The claim from
this remark holds as follows. Let $\eta_0,\eta_1\in G_\eta$ be two
good configurations and consider the space-time block
$\block(\tilde{x},\tilde{n})$, furthermore let $\eta_{\mathrm{top}}$
and $\eta_{\mathrm{bot}}$ be the configurations at the top, respective
bottom, of $\block(\tilde{x},\tilde{n})$. Then we have
\begin{align}
  \label{eq:eta_rem_1}
  \begin{split}
    \bP&(\eta_{\mathrm{top}}=\eta_1, \eta_{\mathrm{bot}}=\eta_0, \restr{U}{\mathsf{block}_4(\tilde{x},\tilde{n})}\in G_U)\\
       &=\bP(\eta_{\mathrm{bot}}=\eta_0)\int_{G_U}
         \bP(\eta_{\mathrm{top}}=\eta_1 \,\vert\,
         \restr{U}{\mathsf{block}_4(\tilde{x},\tilde{n})}\in du,
         \eta_{\mathrm{bot}}=\eta_{\mathrm{ref}})\bP(\restr{U}{\mathsf{block}_4(\tilde{x},\tilde{n})}\in
         du),
  \end{split}
\end{align}
where $\eta_{\mathrm{ref}}\in G_\eta$ is a reference configuration and
can be any element of $G_\eta$, since by
equation~\eqref{eq:contraction} we obtain the same configuration at
the top for every good configuration at the bottom. Moreover,
\begin{align}
  \label{eq:eta_rem_2}
  \begin{split}
    \bP&(\eta_{\mathrm{bot}} \in G_\eta, \eta_{\mathrm{top}} = \eta_1, \restr{U}{\mathsf{block}_4(\tilde{x},\tilde{n})}\in G_U) \\
       &= \sum_{\eta \in G_\eta} \bP(\eta_{\mathrm{bot}}=\eta, \eta_{\mathrm{top}} = \eta_1, \restr{U}{\mathsf{block}_4(\tilde{x},\tilde{n})}\in G_U) \\
       &=\sum_{\eta \in G_\eta} \bP(\eta_{\mathrm{bot}}=\eta) \int_{G_U} \bP(\eta_{\mathrm{top}}=\eta_1 \,\vert\, \restr{U}{\mathsf{block}_4(\tilde{x},\tilde{n})}\in du, \eta_{\mathrm{bot}}=\eta_{\mathrm{ref}})\bP(\restr{U}{\mathsf{block}_4(\tilde{x},\tilde{n})}\in du)\\
       &=\bP(\eta_{\mathrm{bot}}\in G_\eta) \int_{G_U} \bP(\eta_{\mathrm{top}}=\eta_1 \,\vert\, \restr{U}{\mathsf{block}_4(\tilde{x},\tilde{n})}\in du, \eta_{\mathrm{bot}}=\eta_{\mathrm{ref}})\bP(\restr{U}{\mathsf{block}_4(\tilde{x},\tilde{n})}\in du).
  \end{split}
\end{align}
Combining \eqref{eq:eta_rem_1} and \eqref{eq:eta_rem_2} we obtain
\begin{align}
  \bP&(\eta_{\mathrm{bot}} = \eta_0 \,\vert\, \eta_{\mathrm{top}}=\eta_1, \restr{U}{\mathsf{block}_4(\tilde{x},\tilde{n})}\in G_U, \eta_{\mathrm{bot}} \in G_\eta ) \notag \\
     &= \frac{\eqref{eq:eta_rem_1}}{\eqref{eq:eta_rem_2}} =
       \frac{\bP(\eta_{\mathrm{bot}}=\eta_0)}{\bP(\eta_{\mathrm{bot}}
       \in G_\eta)} = \bP(\eta_{\mathrm{bot}}=\eta_0 \,\vert\,
       \eta_{\mathrm{bot}}\in G_\eta).
       \label{eq:eta_rem_3}
\end{align}
Furthermore, note that since there are only finitely many
configurations for $\widehat{\eta}$ and $\widehat{Y}$ possible at the
time of a regeneration the Lemma~\ref{lem:abstract_JointRegTimesBound}
also implies that there exists a constant $C>0$ such that
\begin{equation}
  \label{eq:61}
  \bP^\indi_{x_0,x'_0} (\wt{R}_i-\wt{R}_{i-1}>t\,\vert\,
  \widehat{\eta}_{i-1}=\eta_{\mathrm{local}},\widehat{Y}_{i-1}=y) \le
  Ct^{-\beta}
\end{equation}
for all good local configurations $\eta_{\mathrm{local}}\in G_\eta$
and all displacements $y\in B_{L_s}$.

\begin{remark}[In the $\indi$-case $(\hat{\eta}_i)_i$ is an independent sequence]
  \label{rem:eta_hat_independent_sequence}
  One immediate consequence of the regeneration construction is that
  the cone shell isolates the values of $\eta$ inside the inner cone
  $\cone(\wt{X}_{\wt{R}_1}, b_\inn,s_\inn, \wt{R}_1)\cup
  \cone(\wt{X}'_{\wt{R}_1}, b_\inn,s_\inn, \wt{R}_1)$ from the
  environment outside the cone construction. Therefore $\eta$ inside
  only depends on $\hat{\eta}_1$, $\hat{\eta}'_1$ and
  $U\vert_{\block_5(\tilde{z},\tilde{n})\cup
    \block_5(\tilde{z},\tilde{n}-1)}$ for $(\tilde{z},\tilde{n}) \in
  \cone(\wt{X}_{\wt{R}_1},b_\inn,s_\inn,\wt{R}_1)\cup
  \cone(\wt{X}'_{\wt{R}_1}, b_\inn,s_\inn, \wt{R}_1)$; see the
  dynamics of $\eta$ in \eqref{eq:etadynabstr}. Furthermore, since
  $\tilde{\xi}(\tilde{y},-\wt{R}_1)=1$ for all
  $\norm{\tilde{y}-\wt{X}_{\wt{R}_1}}\le b_\out$, we know that the
  values of $\eta_{-L_t(\wt{R}_1-1)}$ inside the cone construction
  depend only on $U\vert_{\block_4(\tilde{x},\wt{R}_1)}$ for
  $\norm{\tilde{x}-\wt{X}_{\wt{R}_1}}\le b_\out$, see
  Assumption~\ref{ass:coupling}. Therefore, the values of
  $\eta_{-n}(x)$ inside the inner cone for $n\le L_t(\wt{R}_1-1)$ are
  independent of the values of $\eta$ at the base of the cone
  construction, i.e.\ $\eta_{-L_t\wt{R}_1}\vert_{B_{b_\out
      L_s}(L_s\wt{X}_{\wt{R}_1})}$ and
  $\eta_{-L_t\wt{R}_1}\vert_{B_{b_\out L_s}(L_s\wt{X}'_{\wt{R}_1})}$.
  Thus $(\hat{\eta}_0,\hat{\eta}'_0)$ is also independent of
  $(\hat{\eta}_1,\hat{\eta}'_1)$. Recall the definition of
  deterministic sequence $(t_\ell)_\ell$ of times after which a
  regeneration is attempted; see \eqref{eq:SequenceCondition}. This
  condition enforces that $\hat{\eta}_i$ and $\hat{\eta}'_i$ are in
  the inner cone of the $i+1$-th regeneration, which enables us to
  iterate the above argument for any step in the regeneration scheme
  and conclude $(\hat{\eta}_i,\hat{\eta}'_i)_i$ is a sequence of
  independent random variables.
\end{remark}

\begin{remark}[$(\widehat{X}^\indi_i-\widehat{X}^\indi_{i-1},\widehat{X}^{'\indi}_i-\widehat{X}^{'\indi}_{i-1})_i$ is almost an independent sequence]
  \label{rem:ind_increments_almost_independent}
  The ideas from Remark~\ref{rem:eta_hat_independent_sequence} can
  also be applied to the random walk $(X,X')$. Recall the dynamics of
  the random walk from \eqref{eq:abstr-walk}. For the $k$-th step the
  random walk evaluates $\eta_{-k}$ and $\eta_{-k-1}$ around its
  current position to determine its next step. By construction of the
  regeneration times we know that every part of $\eta$ that the random
  walk needs to evaluate for its next step is inside the inner cone,
  for the last step it needs the values of $\eta$ at the time of the
  regeneration. Because of that the sequence is not independent, but
  $(\hat{X}^\indi_i-\hat{X}^\indi_{i-1},\hat{X}^{'\indi}_i-\hat{X}^{'\indi}_{i-1})$
  is independent of
  $(\hat{X}^\indi_j-\hat{X}^\indi_{j-1},\hat{X}^{'\indi}_j-\hat{X}^{'\indi}_{j-1})$
  if $\abs{i-j}\ge 2$. To see this we fix an $i$. For the increment
  $(\hat{X}^\indi_i-\hat{X}^\indi_{i-1},\hat{X}^{'\indi}_i-\hat{X}^{'\indi}_{i-1})$
  we need to know the values of $\eta_k$ for
  $k=-L_t\wt{R}_{i},\dots,-L_t\wt{R}_{i-1}$, that is the values of
  $\eta$ inside the cone construction for the $i$-th regeneration
  times and at the base of the previous cone
  $\eta_{-L_t\wt{R}_{i-1}}\vert_{B_{b_\out
      L_s}(L_s\wt{X}_{\wt{R}_{i-1}})}$ and
  $\eta'_{-L_t\wt{R}_{i-1}}\vert_{B_{b_\out
      L_s}(L_s\wt{X}'_{\wt{R}_{i-1}})}$. Now take $j$ with
  $\abs{i-j}\ge 2$ and consider the $j$-th regeneration increment. By
  the previous argument and
  Remark~\ref{rem:eta_hat_independent_sequence} we see that the random
  walk looks at an independent part of $\eta$ during this increment
  compared the $i$-th and thus the increments are independent.
\end{remark}

\begin{remark}
  \label{rem:regen_times_almost_independent}
  To decide whether a regeneration time is found, we only have to know
  the environment, that is $\eta$ and $U$ in a local space-time
  window, that is in the cone that is constructed. The only
  information about the environment in the future of the random walks
  are $\hat{\eta}_i$ and $\hat{\eta}'_i$. Since
  $(\hat{\eta}_i,\hat{\eta}'_i)_i$ is an independent sequence and
  $(\hat{\eta}_i,\hat{\eta}'_i)$ only influences the value of
  $\wt{R}_{i+1}-\wt{R}_i$, we conclude that $\wt{R}_{j+1}-\wt{R}_{j}$
  is independent of $\wt{R}_{k+1}-\wt{R}_{k}$ if $\abs{k-j} \ge 2$.
  During this section we will refer to the proofs in the previous
  section. However the increments of the regeneration times in the
  $\indi$-case are not independent in this section but have finite
  range correlations. This property is still strong enough such that a
  CLT, Donskers invariance principle and the law of the iterated
  logarithm hold. By the same arguments we know the same holds for the
  random walks.
\end{remark}

\begin{remark}
  Note that most claims in this section can be made for either the
  coarse-grained random walks $\wt{X},\wt{X}'$ and $\wt{X}^{''}$ along
  the sequence of regeneration times or for the original random walks
  $X,X'$ and $X^{''}$ along $L_t$ times the regeneration times. Since
  we study asymptotic behaviour and the displacements, that is the
  difference between the coarse-grained and original versions of the
  random walks at the regeneration times, are smaller than $L_s$ we
  can easily prove the respective lemmas for both versions.
\end{remark}
Along the simultaneous regeneration times we again obtain a coupling
result comparing the $\joint$ and $\indi$-dynamics.
\begin{lemma}
  There \label{lem:abstract_TVdistance-joint-ind} are constants
  $0<c,\tilde{\beta}<\infty$, such that for all $\tilde{x},\tilde{x}'\in\bZ^d$
  we have
  \begin{align}
    \sum_{\tilde{x},\tilde{x}'\in\bZ^d,\tilde{m}'\in \mathbb{N}}
    &\left| \bP\Big((\wt{X}_{\wt{R}^\joint_1},\wt{X}'_{\wt{R}^\joint_1},\wt{R}^\joint
      _1) = (\tilde{x},\tilde{x}',\tilde{m}') \Big)
      -\bP\Big(
      (\wt{X}_{\wt{R}^\indi_1},\wt{X}^{''}_{\wt{R}^\indi_1},\wt{R}^\indi_1) \in(\tilde{x},\tilde{x}',\tilde{m}')
      \Big) \right| \notag\\
    &\le c\norm{\tilde{x}_0-\tilde{x}'_0}^{-\tilde{\beta}},
  \end{align}
  where $\tilde{x}_0\in\bZ^d$ and $\tilde{x}'_0\in \bZ$ are the
  initial position of the two pairs and the constant $\tilde{\beta}$
  can be chosen large.
\end{lemma}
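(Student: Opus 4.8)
The plan is to mimic the proof of Proposition~\ref{prop:TVdistance-joint-ind-1step}, now carried out on the coarse-grained environment. Fix the initial coarse-grained positions $\tilde{x}_0,\tilde{x}'_0$ and let $H\subset\R^d$ be the hyperplane perpendicular to $\tilde{x}'_0-\tilde{x}_0$ passing through the midpoint $(\tilde{x}_0+\tilde{x}'_0)/2$. Using the two independent driving fields $U$ and $U'$ I would define an auxiliary i.i.d.\ field $U''$ which agrees with $U$ on the $\tilde{x}_0$-side of $H$ and with $U'$ on the $\tilde{x}'_0$-side; from $U''$ one reads off the environment $\eta''$ exactly as in \eqref{eq:etadynabstr}, and one re-runs the monotone coupling of Lemma~\ref{lem:abstr-OCcoupl} to obtain $\wt{\omega}''$ and the coarse-grained contact process $\tilde{\xi}''$ with $\wt{G}''\ge\tilde{\xi}''$. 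Since $U''$ has the same law as $U$, the pair $(\overline{X},\overline{X}')$ obtained from \eqref{eq:defn random walks abstract} by running both walks in the single environment $\eta''$ (with the auxiliary fields $\widehat{U},\widehat{U}'$ as in the $\joint$-construction), started from $\tilde{x}_0$ and $\tilde{x}'_0$, has the same joint law as the $\joint$-pair $(X,X')$; in particular the output $(\wt{X}_{\overline{R}_1},\wt{X}'_{\overline{R}_1},\overline{R}_1)$ of Construction~\ref{constr:reg-time abstract} applied to $(\overline{X},\overline{X}')$ is distributed according to $\bP^\joint$. (A minor point requiring bookkeeping here is that the extra randomness used in the coupling of Lemma~\ref{lem:abstr-OCcoupl} must be copied along, so that this distributional identity holds literally.)

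Next I would compare this output with that of the $\indi$-construction for $(X,X'')$. Let $E$ be the event that, throughout the regeneration construction for $(\overline{X},\overline{X}')$, neither of the two outer cones --- together with its $R_\eta$-tube, its determining clusters, and the additional $\block_5$-neighbourhoods needed to evaluate $\eta''$ one coarse-grained step into its past, cf.\ Remark~\ref{rem:ind_increments_almost_independent} --- crosses $H$, and the two outer cones do not meet one another. On $E$ the double cone shell of Construction~\ref{constr:reg-time abstract} is the disjoint union of the two single cone shells, $\overline{X}$ (with its cone) inspects only $U''=U$ on the $\tilde{x}_0$-side of $H$, and $\overline{X}'$ (with its cone) only $U''=U'$ on the other side; hence $\overline{X}$ has the same trajectory, local configurations and acceptance/rejection checks as $X$ driven by $U$, and $\overline{X}'$ the same as $X''$ driven by $U'$. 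Since on a disjoint double cone shell the ``good $\wt{\omega}$'' condition \eqref{eq:abstract good omega configurations} factors into the two per-cone conditions used in the $\indi$-construction, and all other conditions (determining clusters, tubes, cone-time-point conditions, and the a priori speed clauses of Construction~\ref{constr:reg-time abstract}) are local to the respective cone, the construction for $(\overline{X},\overline{X}')$ proceeds on $E$ through exactly the same sequence of attempts as the $\indi$-construction for $(X,X'')$. Thus on $E$ one has $\overline{R}_1=\wt{R}^\indi_1$, $\wt{X}_{\overline{R}_1}=\wt{X}_{\wt{R}^\indi_1}$ and $\wt{X}'_{\overline{R}_1}=\wt{X}^{''}_{\wt{R}^\indi_1}$, so the left-hand side of the asserted inequality is bounded by $2\,\bP(E^\compl)$.

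To bound $\bP(E^\compl)$ I would invoke the a priori speed bound of Lemma~\ref{lem:a priori bound for tilde X}: up to time $\tilde{n}$ both walks stay within distance $\wt{s}_{\mathrm{max}}\tilde{n}$ of their starting points except on an event of exponentially small probability, and the outer cones have slope $s_\out<1$, so for either outer cone (with its finite-range decorations) to reach $H$, or for the two outer cones to intersect, the regeneration time $\overline{R}_1$ must be at least $c\norm{\tilde{x}_0-\tilde{x}'_0}$. By Lemma~\ref{lem:abstract_JointRegTimesBound} this has probability at most $C\norm{\tilde{x}_0-\tilde{x}'_0}^{-\tilde{\beta}}$, whence $\bP(E^\compl)\le C\norm{\tilde{x}_0-\tilde{x}'_0}^{-\tilde{\beta}}$; since $\tilde{\beta}$ in Lemma~\ref{lem:abstract_JointRegTimesBound} can be made arbitrarily large by tuning $\varepsilon_U$ and $\wt{s}_{\mathrm{max}}$, this also yields the last assertion of the lemma.

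The step I expect to be the main obstacle is the careful verification that the $\joint$-construction for $(\overline{X},\overline{X}')$ in the spliced environment $U''$ agrees on $E$, step for step, with the $\indi$-construction for $(X,X'')$: one must check that the double cone shell genuinely degenerates into two disjoint single shells, that the local ``good noise'' event and the determining-cluster and tube conditions split accordingly, and --- the point specific to the abstract setting --- that the one-coarse-grained-step-ahead look of the walk into the environment (which is precisely what makes the $\indi$-increments only finitely-range correlated rather than fully independent, see Remarks~\ref{rem:ind_increments_almost_independent} and \ref{rem:regen_times_almost_independent}) stays inside the relevant cone, so that $\eta''$ restricted to a cone coincides with $\eta$, respectively $\eta'$, there. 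The remaining steps --- including carrying the fine-level displacements $\widehat{Y}_1,\widehat{Y}'_1$ through the identification --- are routine and follow the pattern of Section~\ref{sec:preparations-1}.
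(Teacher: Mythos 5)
Your overall route is the paper's: splice the two driving fields along the mid-hyperplane, observe that the pair run in the spliced noise has the $\joint$-law while the pair run in $(U,U')$ has the $\indi$-law, identify the two regeneration constructions until the first regeneration, and bound the failure probability using the polynomial tail of $\wt{R}_1$ from Lemma~\ref{lem:abstract_JointRegTimesBound}; your treatment of the ``regeneration takes too long'' contribution (a priori speed, cone slopes, tail bound) matches the paper's.

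However, the step ``on $E$ the two constructions agree step for step'' has a genuine hole, and it is precisely where the paper does extra work. The walks are driven by $\eta$ (and the construction by $\tilde{\xi}$, $\wt{G}$), and these are \emph{not} local functions of the driving noise: $\eta_{-n}(x)$ depends on $U$ in an unbounded backward region, and $\tilde{\xi}(\tilde{y},\tilde{k})=1$ asserts the existence of an infinite open backward path in $\wt{\omega}$. Hence the fact that the cones, tubes, $\block_5$-margins and the determining clusters \emph{computed in the spliced environment} stay on one side of $H$ does not imply $\eta''=\eta$ (resp.\ $\eta''=\eta'$) there: the determining-cluster algorithm terminates at blocks where $\tilde{\xi}''=1$, and nothing in your event $E$ guarantees $\tilde{\xi}=\tilde{\xi}''$ (equivalently, that the unspliced bottom configuration is good) at those termination blocks --- a discrepancy created by the altered noise beyond $H$ can propagate into the boxes through the infinite past. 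What is needed is a separate probabilistic estimate for the event that $\eta$ changes inside the boxes containing the construction when the noise is replaced on the far side of $H$. In the paper this is supplied by the locality of $\eta$ on $\{\tilde{\xi}=1\}$ (Remark~3.13 in \cite{BirknerCernyDepperschmidt2016}), the exponential tails of the determining-cluster heights (Lemma~3.12 there), and a union bound over the polynomially many blocks in $\wt{B}\cup\wt{B}'$; it is the abstract analogue of the event $D_1$ and the ``long open path not connected to $-\infty$'' estimate in the proof of Proposition~\ref{prop:TVdistance-joint-ind-1step} (Section~\ref{sec:proof:lem:TVdistance-joint-ind}). You flag the identification as the main obstacle but treat it as a deterministic verification on $E$; as stated that step fails, and adding this exponentially small discrepancy term (harmless next to $C\norm{\tilde{x}_0-\tilde{x}'_0}^{-\tilde{\beta}}$) is exactly what completes the proof.
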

\begin{proof}
  We can, for the most part, argue as in the proof of
  Proposition~\ref{prop:TVdistance-joint-ind-1step}. We now have to
  check whether $\eta$ changes in a box in which we force the random
  walks to stay. More precisely we define two families of i.i.d.\
  random variables
  \begin{align*}
    U_i \coloneqq \{ U_i(x,n)\colon (x,n)\in\bZ^d\times\bZ \},
  \end{align*}
  where $U_i$ is the random driving noise the environments are build
  on (see \eqref{eq:defn_U}) and $U_1$ is independent of $U_2$.
  Furthermore let another i.i.d.\ family $U_3$ be defined by
  \begin{align*}
    U_3(x,n) \coloneqq
    \begin{cases}
      U_1(x,n), &  \text{if } x_1\le 0,\\
      U_2(x,n), &  \text{if } x_1 >0.
    \end{cases}
  \end{align*}
  Now we consider a pair of random walks $(\wt{X}(U_1),\wt{X}'(U_2))$,
  where we write $\wt{X}(U_1)$ to highlight that $\wt{X}$ moves in the
  environment constructed using $U_1$, and a second pair
  $(\wt{X}(U_3),\wt{X}'(U_3))$ where both move in $U_3$. Therefore the
  distribution of the pair $(\wt{X}(U_1),\wt{X}'(U_2))$ is given by
  $\bP^\indi$ and of $(\wt{X}(U_3),\wt{X}'(U_3))$ by $\bP^\joint$. Let
  \begin{align*}
    \wt{B}
    & \coloneqq \Bigl\{ (z,n) \in \bZ^d\times \bZ: \norm{z-x} \leq \Lt \tfrac{m}{10}+
      L_{\mathrm{s}} (b_\out + \Lt\tfrac{m}{10 R_X} s_\out), n \in
      \bigl\{0,\dots, -L_{\mathrm{t}}\tfrac{m}{10 R_X}\bigr\} \Bigr\},\\
    \wt{B}'
    & \coloneqq \Bigl\{ (z,n) \in \bZ^d\times \bZ: \norm{z-x'} \leq \Lt \tfrac{m}{10} +
      L_{\mathrm{s}}( b_\out + \Lt\tfrac{m}{10 R_X} s_\out), n \in
      \bigl\{0,\dots, -L_{\mathrm{t}}\tfrac{m}{10 R_X}\bigr\} \Bigr\}.
  \end{align*}
  If the first regeneration happens before time $m/(10R_X)$ (here the
  time is for the coarse-grained random walk $\wt{X}$), then these
  boxes will contain the random walks including the cone construction
  for the regeneration. Again we only need to control the probability
  that values of $\eta$ change in these boxes and that the
  regeneration time takes too long. Let $\wt{R}_{1,2}\coloneqq
  \wt{R}_1(\Omega_1,\Omega_2)$ and $\wt{R}_{3,3}\coloneqq
  \wt{R}_1(\Omega_3,\Omega_3)$ be the first regeneration time of the
  pair $(\wt{X}(U_1),\wt{X}'(U_2))$ and $(\wt{X}(U_3),\wt{X}'(U_3))$
  respectively. Note that we can identify $\wt{R}_{1,2}
  \,\widehat{=}\, \wt{R}^\indi_1$ and $\wt{R}_{3,3} \,\widehat{=}\,
  \wt{R}^\joint_1$. By Lemma~\ref{lem:abstract_JointRegTimesBound} we
  have
  \begin{align*}
    \bP(\wt{R}_{1,2}>r) \le Cr^{-\tilde{\beta}}
  \end{align*}
  and
  \begin{align*}
    \bP(\wt{R}_{3,3}>r) \le Cr^{-\tilde{\beta}}.
  \end{align*}
  This leaves to show a suitable upper bound on the probability, that
  $\eta$ does not change inside the boxes $\wt{B}$ and $\wt{B}'$. In
  principle the correlations in $\eta$ have infinite range but on
  $\{\tilde{\xi}(\tilde{x},\tilde{n})=1\}$ we know that
  $\eta\vert_{\block(\tilde{x},\tilde{n})}$ is a function of local
  randomness; see Remark~3.13 in \cite{BirknerCernyDepperschmidt2016}.
  It is then determined by
  $U\vert_{\block_5(\tilde{x},\tilde{n})\cup\block_5(\tilde{x},\tilde{n}-1)}$.
  Furthermore, if we encounter a site where
  $\tilde{\xi}(\tilde{x},\tilde{n})=0$, then the height of
  corresponding determining cluster has exponential tails; see
  Lemma~3.12 in \cite{BirknerCernyDepperschmidt2016}. Since
  \begin{align*}
    \abs{B\cup B'}\le 2 L_{\mathrm{t}}\frac{m}{10R_X}\Bigg( L_{\mathrm{s}}\bigg(
    K_\eta\frac{m}{10}+b_\out+\frac{m}{10R_X}s_\out \bigg) \Bigg)^d
  \end{align*}
  grows polynomially in $m$, the probability that a determining
  cluster for any site in $B\cup B'$ is `too large' is still
  exponentially small. For more detailed steps compare the current
  setting with the proof of
  Proposition~\ref{prop:TVdistance-joint-ind-1step}.
\end{proof}

Now we collect the necessary ingredients, similar to
Section~\ref{sec:preparations} we used to prove the quenched CLT.
Starting with a separation lemma for the coarse-grained random walks.
Recall the definition of $H$ from Lemma~\ref{lemma:exitAnnulus}. We
define $\wt{H}$ to fulfil the the same role for $\wt{X}$, i.e.\
\begin{align*}
  \wt{H}(r) \coloneqq \inf\Big\{ k\in\bZ_+ \colon
  \norm{\wt{X}_{\wt{R}_k}-\wt{X}'_{\wt{R}_k}}_2 \ge r \Big\}.
\end{align*}
\begin{lemma}[Separation lemma]
  Let $d\ge 2$. For \label{lem:abstract separ} all small enough $\delta>0$ and $\varepsilon>0$ there exist $C,c>0$ such that
  \begin{equation}
    \label{eq:abstract sep d>=2}
    \sup_{x_0,x'_0}\bP^\joint_{x_0, x_0'}\Big( \wt{H}(\tilde{n}^\delta) > \tilde{n}^{2\delta+\varepsilon} \Big) \le \exp(-C\tilde{n}^c).
  \end{equation}

  Let $d=1$. For any small enough $\delta>0$ there exists
  $0<b_2<1/8$  and $C,c>0$ such that
  \begin{align}
    \label{eq:abstract sep d=1}
    \sup_{x_0,x'_0}\Pr^{\joint}_{x_0, x_0'}\left( \wt{H}(\tilde{n}^{\delta}) >
    \tilde{n}^{b_2} \right) \leq \exp(-C\tilde{n}^c).
  \end{align}
\end{lemma}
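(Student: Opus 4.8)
The plan is to transfer the proofs of Lemma~\ref{lem:separ} (for $d\ge 2$) and of Lemma~\ref{lem:separ in d=1}, i.e.\ of \eqref{eq:25}, to the coarse-grained pair $(\wt{X}_{\wt{R}_k},\wt{X}'_{\wt{R}_k})_k$ observed along the simultaneous coarse-grained regeneration times. Every structural ingredient used in those proofs has an analogue here: the one-step coupling between the $\joint$- and $\indi$-dynamics with algebraically small error (Lemma~\ref{lem:abstract_TVdistance-joint-ind}), the algebraic tail bound for the regeneration increments $\wt{R}_i-\wt{R}_{i-1}$, which can be made arbitrarily heavy by tuning $\varepsilon_{\wt{\omega}}$ and $\wt{s}_{\mathrm{max}}$ small (Lemma~\ref{lem:abstract_JointRegTimesBound}, with its conditional refinement \eqref{eq:61}), the a-priori speed bound from Lemma~\ref{lem:a priori bound for tilde X}, and --- this is the one real novelty compared with Section~\ref{sec:preparations} --- the fact that under $\bP^\indi$ the increments of $(\wt{X}_{\wt{R}_i}-\wt{X}'_{\wt{R}_i})_i$ and of $(\wt{R}_i-\wt{R}_{i-1})_i$ are not i.i.d.\ but only $1$-dependent (Remarks~\ref{rem:eta_hat_independent_sequence}--\ref{rem:regen_times_almost_independent}).

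First I would establish the coarse-grained analogue of Lemma~\ref{lemma:exitAnnulus}: under $\bP^\indi_{x,y}$ the difference walk $\wt{Z}_k\coloneqq\wt{X}_{\wt{R}_k}-\wt{X}'_{\wt{R}_k}$ satisfies an annealed invariance principle, whence the sphere hitting-probability estimates governed by $f_d(r;r_1,r_2)$. Relative to the proof sketch of Lemma~\ref{lemma:exitAnnulus} the only change is that the increments of $\wt{Z}$ are $1$-dependent instead of independent; but they are centred by the point-reflection symmetry (Assumption~\ref{ass:abstract_symmetry_of_point_reflection}, giving the coarse-grained version of \eqref{eq:ind.centred}) and square-integrable once $\tilde{\beta}>2$ (Lemma~\ref{lem:abstract_JointRegTimesBound}), so a CLT and Donsker's theorem for $1$-dependent increments apply, exactly as observed in Remark~\ref{rem:regen_times_almost_independent}. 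In practice the $1$-dependence is handled by splitting the increment sequence into its even- and odd-indexed subsequences, each of which is i.i.d.

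With this in hand the five-step scheme of Lemma~\ref{lem:separ} carries over for $d\ge 2$. Step~1 --- the corridor lower bound $\bP^\joint_{x,y}(\norm{\widehat{X}_j-\widehat{X}'_j}\ge 2j)\ge\hat\delta^{\,j}$ --- now uses that on a good coarse-grained block the step of $\wt{X}$ has a uniformly positive chance of increasing a prescribed coordinate, which follows from Assumption~\ref{ass:abstract_closeness_to_symmetric_transition_kernel} with $\varepsilon_{\mathrm{symm}}$ small together with the uniform lower bound on a successful regeneration attempt from Lemma~\ref{lem:abstract_JointRegTimesBound}. Steps~2--5, the successive ``double the separation from $\log^{2^j}\tilde{n}$ to $\log^{2^{j+1}}\tilde{n}$'' arguments, combine the $\indi$-annulus estimate with the coupling error $\norm{x-y}^{-\tilde{\beta}}$ of Lemma~\ref{lem:abstract_TVdistance-joint-ind}; since $\tilde{\beta}$ is arbitrarily large the coupling error is negligible at every scale, the number of doublings is $O(\log\log\tilde{n})$, and the time per doubling is $\le\tilde{n}^{2\delta+\varepsilon}$ by the heavy-tailed large-deviation bound (e.g.\ \cite{Nagaev82}) for sums of regeneration increments; the Azuma/Burkholder concentration estimates entering the iteration are applied to the $1$-dependent increments via the even/odd splitting or directly as in \eqref{eq:useBurkholder}. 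This yields \eqref{eq:abstract sep d>=2}. For $d=1$ the same iteration, stopped at distance $\tilde{n}^\delta$ and run over a bounded number of doublings, reproduces the argument behind \eqref{eq:25} and yields \eqref{eq:abstract sep d=1}.

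I expect the main obstacle to be essentially bookkeeping rather than conceptual: consistently propagating the $1$-dependence of the $\indi$-increments through every invariance-principle and concentration input of the two source proofs, and keeping track of the harmless $L_{\mathrm{s}},L_{\mathrm{t}}$ rescalings relating $\wt{X}$ to $X$. No genuinely new probabilistic idea beyond what already appears in Sections~\ref{sec:preparations}--\ref{sec:dimension1} should be required.
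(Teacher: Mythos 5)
Your proposal follows essentially the same route as the paper's proof: there, too, Step~1 of Lemma~\ref{lem:separ} is redone with corridors built from Assumption~\ref{ass:abstract_closeness_to_symmetric_transition_kernel} together with the uniform regeneration bound, and the remaining steps (and, for $d=1$, the proof of Lemma~\ref{lem:separ in d=1}) are transferred using Lemma~\ref{lem:abstract_TVdistance-joint-ind}, the tail bounds of Lemma~\ref{lem:abstract_JointRegTimesBound} with \eqref{eq:61}, and an invariance principle for the $1$-dependent $\indi$-difference walk. The only cosmetic differences are that the paper obtains that invariance principle from the strong-mixing functional CLT of Doukhan--Massart--Rio (noting $\alpha_n=0$ for $n\ge 2$) rather than via your even/odd splitting, and that it records in a follow-up remark why the restart arguments survive the configuration-dependence at regenerations (independence of the sequence $(\hat{\eta}_i)_i$), a point your appeal to the uniform bound \eqref{eq:61} likewise covers.
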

\begin{proof}
  The proof will follow the same basic ideas as the proof of
  Lemma~\ref{lem:separ}. Without loss of generality we assume that
  $\wt{X}_0=\wt{X}'_0=0$. We start by ``forcing'' the random walks to
  a distance of $\varepsilon_1 \log \tilde{n}$. We claim that there
  exist a small $\varepsilon_1>0$ and some $b_4\in(0,1/2), b_5>0$ such
  that
  \begin{align}
    \label{eq:abstract sep lemma step 1}
    \bP^\joint_{x,y} \Big( \wt{H}(\varepsilon_1 \log \tilde{n} ) > \tilde{n}^{b_4} \Big) \le b\tilde{n}^{-b_5},
  \end{align}
  uniformly in $x,y\in\bZ^d$. Using the closeness to a simple random
  walk on $\wt{G}=1$ we construct corridors to separate the random
  walks. If the environment allows the random walks to take only steps
  on $\wt{G}=1$ it can reach a distance of $\varepsilon_1
  \log\tilde{n}$ in as many steps, since by
  Assumption~\ref{ass:abstract_closeness_to_symmetric_transition_kernel}
  there has to exist some $\tilde{x}\in B_{R_{L_{\mathrm{t}}}}(0)$ and
  $\delta_1>0$ such that
  \begin{align*}
    \bP(X_{\tilde{n}L_{\mathrm{t}}}=\tilde{x}+z \,\vert\,
    X_{(\tilde{n}-1)L_{\mathrm{t}}}=z,
    \wt{G}(\tilde{x},\tilde{n})=1,\eta) \ge \delta_1.
  \end{align*}
  Since the reference transition kernel $\kappa_{L_{\mathrm{t}}}$ is
  symmetric we obtain the same lower bound for $-\tilde{x}$ and
  therefore, with probability at least $\delta_1^2$ the random walks
  can separate by a distance of at least $2\norm{\tilde{x}}_\infty>1$
  each step. It remains to show that such corridors exist with
  positive probability. Assume that $\tilde{x}=e_1$ the unit vector of
  the first coordinate. This will result in the slowest possible way
  to separate via the above corridors. Starting from
  $\wt{X}_{\wt{R}_0}=0$ and $\wt{X}'_{\wt{R}_0}=0$ we want to follow
  the paths $\wt{X}_{\wt{R}_i}=\wt{X}_i=-i\cdot \tilde{x}$ and
  $\wt{X}'_{\wt{R}_i}=\wt{X}'_i=i\cdot \tilde{x}$ for all
  $i=1,\dots,\varepsilon_1\log \tilde{n}$. This can be done if the
  paths walk on only $\wt{G}=1$ and therefore regenerate every time at
  the first attempt which we can enforce if $\tilde{\xi}=1$ in the
  $b_\out$-neighbourhood along the paths of the random walks $\wt{X}$;
  note that this neighbourhood is on the coarse-grained level. This
  has a uniform lower bound as was discussed in the proof of
  Lemma~\ref{lem:separ} for the contact process $\xi$ therein.
  Consequently, we can conclude that \eqref{eq:abstract sep lemma step
    1} holds by the same arguments as in the proof of
  Lemma~\ref{lem:separ}.

  Note that it is a bit imprecise to say that a regeneration can occur
  at every step since the construction forces us to make the first
  attempt after $t_1\ge 1$. But the arguments essentially stay the
  same with enforcing the regeneration after $t_1+1$ steps every time.

  \smallskip
  \noindent
  The remaining steps build on a combination of the coupling provided
  by Lemma~\ref{lem:abstract_TVdistance-joint-ind} and the invariance
  principle for $(D_n)_n=(X^\indi_{L_t\wt{R}_n}-
  X^{'\indi}_{L_t\wt{R}_n})_n$. Note that a difference compared to the
  toy model is that for each step we have to carry over the local
  configuration of the environment and the displacement to the next
  step because the underlying random walks depend on it. We want to
  highlight though, that the initial configurations only influence the
  first two steps which means that the invariance principle holds for
  any starting configuration by
  Lemma~\ref{lem:abstract_JointRegTimesBound} or more specifically
  equation~\eqref{eq:61}. To be more precise by
  Remark~\ref{rem:regen_times_almost_independent} $(X^\indi)_i$ and
  $(X^{'\indi})_i$ are almost independent sequences along their
  simultaneous regeneration times by construction, have well behaved
  increments and are independent of each other. We briefly highlight
  how these properties are sufficient to show that the process
  $(X^\indi_{L_t\wt{R}_n}- X^{'\indi}_{L_t\wt{R}_n})_n$ satisfies the
  conditions for Theorem~1 in \cite{DoukhanMassartRio1994}. Using the
  notation therein, we define the mixing coefficients
  \begin{equation*}
    \alpha_n = \sup_{(A,B)\in \sigma(D_i\colon i\le 0)\times \sigma(D_i\colon i\ge n)}\abs{\bP(A\cap B)-\bP(A)\bP(B)}
  \end{equation*}
  which, by Remark~\ref{rem:regen_times_almost_independent}, have the
  property $\alpha(n)=0$ for all $n\ge 2$ and for the tail function of
  the increments we have
  \begin{equation*}
    \bP(\norm{D_i-D_{i-1}}> t) \le ct^{-\tilde{\beta}}
  \end{equation*}
  by Lemma~\ref{lem:abstract_JointRegTimesBound}. Setting
  $\alpha^{-1}(u) = \inf\{ t\colon \alpha_t \le u \}$ we obtain the
  following, by observing $\alpha^{-1}(u)\le 2$ for all $u\ge 0$,
  \begin{align*}
    \int_0^1\alpha^{-1}(u) \big[\bP(\norm{D_1} >u) \big]^2 \,du \le 2c^{2/\tilde{\beta}}\int_0^1 u^{-2/\tilde{\beta}}\,du
  \end{align*}
  and the the integral on the right hand side is finite if we tune
  $\tilde{\beta}>2$.

  Note that as a consequence we also obtain
  Lemma~\ref{lemma:exitAnnulus} for
  $\norm{\wt{X}^\indi-\wt{X}^{'\indi}}$, since the displacements are
  only of constant order. With those ingredients we can replicate the
  steps in the proof of Lemma~\ref{lem:separ}.

  \smallskip
  \noindent
  Lastly note that for $d=1$ we essentially use the same methods and
  can therefore adapt the proof of Lemma~\ref{lem:separ in d=1} as
  well. This concludes the proof for all $d\ge 1$.
\end{proof}

\begin{remark}
  Note that the restart arguments used in Lemma~\ref{lem:separ} cannot
  be applied as directly in the abstract setting since the environment
  $\eta$ around the position of the random walkers at the time of the
  regenerations is not necessarily in the same configuration for every
  regeneration. However, the strong mixing properties remedy that as
  the starting configuration has almost no influence on the long time
  behaviour of the random walkers. To illustrate that let
  $r<r_1<r_2<R$ be positive real numbers and consider
  \begin{align*}
    \bP_{r_2,\widehat{\eta}_0,\widehat{Y}_{0}}(\tau_{r_1}<\tau_R<\tau_r)
    &= \bE_{r_2,\widehat{\eta}_0,\widehat{Y}_{0}}\Big[ \bE_{r_2}\big[
      \ind{\tau_{r_1}<\tau_R<\tau_r}\,\vert\, \mathcal{F}_{\tau_{r_1}}
      \big] \Big]\\
    &=\bE_{r_2,\widehat{\eta}_0,\widehat{Y}_{0}}\Big[
      \ind{\tau_{r_1}<\tau_R}\bE_{r_2,\widehat{\eta}_0,\widehat{Y}_{0}}\big[
      \ind{\tau'_R<\tau'_r}\,\vert\, \mathcal{F}_{\tau_{r_1}} \big]
      \Big]\\
    &=\bE_{r_2,\widehat{\eta}_0,\widehat{Y}_{0}}\Big[
      \ind{\tau_{r_1}<\tau_R}\bE_{r_1,\widehat{\eta}_{\tau_{r_1}},\widehat{Y}_{\tau_{r_1}}}\big[
      \ind{\tau_R<\tau_r} \big] \Big],
  \end{align*}
  where we note that $\widehat{\eta}_{\tau_{r_1}}$ is independent of
  the starting configuration $\widehat{\eta}_0$ and distributed
  according to the invariant measure. Thus we can restart from the
  radius $r_1$ with a new configuration
  $(\widehat{\eta}'_0,\widehat{Y}'_0)$ drawn from the invariant
  measure.
\end{remark}
Next we use this separation lemma to prove that the number of steps
with a successful coupling is high. In this setting we say that the
coupling is successful for the $i$-th step if
\begin{align*}
  \wt{X}'_{\wt{R}^\joint_i}-\wt{X}'_{\wt{R}^\joint_{i-1}}
  &= \wt{X}^{''}_{\wt{R}^\indi_i}-\wt{X}^{''}_{\wt{R}^\indi_{i-1}}, \qquad \text{and}\\
  \wt{R}^\joint_i - \wt{R}^\joint_{i-1}
  &= \wt{R}^\indi_i - \wt{R}^\indi_{i-1}.
\end{align*}
Note that by construction this directly implies
$\wt{X}_{L_t\wt{R}^\joint_i}-\wt{X}_{L_t\wt{R}^\joint_{i-1}} =
\wt{X}_{L_t\wt{R}^\indi_i}-\wt{X}_{L_t\wt{R}^\indi_{i-1}}$. Recalling
the notation at the start of this section, \eqref{eq:defn random walks
  abstract} and the text below, a successful coupling means an
increment of the $\joint$ and $\indi$ pair is equal. Note that the
following results, which are the key ingredients to prove the quenched
CLT, are all for the coarse-grained pair. Since the displacements are
only of constant order however, this has almost no impact on the proof
of the quenched CLT for the original pair.

\begin{lemma}
  Let $d\geq 2$. For any \label{lem:abstract coupl} $b_1>0$ and $c>0$
  there exists $\tilde{\beta}$ such that
  \begin{align}
    \label{eq:505}
    \sup_{x_0,x'_0}\Pr^{\joint}_{x_0, x_0'} (\text{at most $N^{b_1}$ uncoupled
    steps before time $N$}) \ge 1-N^{-c}.
  \end{align}
  Let $d=1$. For any $\varepsilon>0$ and $\gamma>0$ there exists
  $\tilde{\beta}$ such that
  \begin{equation}
    \label{eq:506}
    \sup_{x_0,x'_0}\Pr^{\joint}_{x_0, x_0'} (\text{at most $N^{1/2+\varepsilon}$ uncoupled
      steps before time $N$}) \ge 1-N^{-\gamma}.
  \end{equation}
\end{lemma}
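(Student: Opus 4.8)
The plan is to follow closely the proof of Lemma~\ref{lem:coupl} (for $d\ge 2$) and of the corresponding step in Section~\ref{sec:dimension1} (for $d=1$), carrying everything over to the coarse-grained setting. The only genuinely new ingredients needed are: the separation lemma for the coarse-grained walks, Lemma~\ref{lem:abstract separ}; the one-step coupling comparison, Lemma~\ref{lem:abstract_TVdistance-joint-ind}; and the tail bounds for the coarse-grained regeneration increments, Lemma~\ref{lem:abstract_JointRegTimesBound} together with \eqref{eq:61}. The additional structural subtlety, which must be tracked throughout, is that unlike in Section~\ref{sec:preparations} the $\indi$-pair $(\wt X^\indi,\wt X^{'\indi})$ does \emph{not} have i.i.d.\ regeneration increments; by Remarks~\ref{rem:eta_hat_independent_sequence}, \ref{rem:ind_increments_almost_independent} and \ref{rem:regen_times_almost_independent} the increments (of the walks and of the regeneration times) have range-$2$ correlations, and the invariance principle / law of the iterated logarithm still hold for such $2$-dependent sequences (via e.g.\ \cite{DoukhanMassartRio1994}, as already invoked in the proof of Lemma~\ref{lem:abstract separ}). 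For Lemma~\ref{lemma:exitAnnulus} one uses the version stated for $\norm{\wt X^\indi-\wt X^{'\indi}}$ noted at the end of the proof of Lemma~\ref{lem:abstract separ}.

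For $d\ge 2$: decompose the trajectory of $(\wt X^\joint_n,\wt X^{'\joint}_n)_n$ into the same excursions $[\mathcal R_{i-1},\mathcal D_i)$, $[\mathcal D_i,\mathcal R_i)$ between distance $N^\alpha$ and $N^\delta$ (with $0<\alpha<\delta$ small, $\delta$ admissible in Lemma~\ref{lem:abstract separ}), together with $\mathcal U=\inf\{k:\norm{\wt X^\joint_k-\wt X^{'\joint}_k}\ge K'N\}$. As in the proof of Lemma~\ref{lem:coupl}, first bound the number $J$ of excursions before $\mathcal U$ by a geometric random variable: this uses Lemma~\ref{lem:abstract_TVdistance-joint-ind} to compare $\bP^\joint$ with $\bP^\indi$ and then Lemma~\ref{lemma:exitAnnulus} (the coarse-grained version) to estimate $\bP^\indi_{N^\delta}(\wt H(K'N)<\wt h(N^\alpha))$ from below by a positive constant, absorbing a coupling error $\le (K'N)^3 N^{-\alpha\tilde\beta}$ which is negligible once $\tilde\beta>3/\alpha$. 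Next, Lemma~\ref{lem:abstract separ} gives $\bP^\joint(\mathcal D_i-\mathcal R_{i-1}\ge N^{2\delta+\varepsilon^*})\le\exp(-CN^c)$, so $\sum_{i\le J}(\mathcal D_i-\mathcal R_{i-1})\le K(\alpha,\delta)N^{2\delta+\varepsilon^*}\log N$ with failure probability a negative power of $N$. Then, using Lemma~\ref{lem:abstract_TVdistance-joint-ind} to pass to $\bP^\indi$ and large-deviation bounds for sums of heavy-tailed $2$-dependent variables (the tail input is Lemma~\ref{lem:abstract_JointRegTimesBound}; $2$-dependence is harmless, one simply splits into two i.i.d.\ subsequences), bound $\bP^\joint(\mathcal U\le N)$ by a negative power of $N$ for $K'$ large. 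Outside the excursion pieces $[\mathcal D_i,\mathcal R_i)$ the two walkers are at distance $\ge N^\alpha$, so on the complement of the above bad events Lemma~\ref{lem:abstract_TVdistance-joint-ind} couples every such step with probability $\ge 1-CN^{-\alpha\tilde\beta}$; choosing $2\delta+\varepsilon^*<b_1$ and $\tilde\beta$ large yields \eqref{eq:505} with $c$ as large as desired.

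For $d=1$: one cannot hope for $o(\sqrt N)$ uncoupled steps, but the target is only $N^{1/2+\varepsilon}$. Here use the black-box-interval decomposition introduced in Section~\ref{sec:dimension1}, transported to the coarse-grained walk via Corollary~\ref{cor:separation corollary d=1}'s analogue, i.e.\ the $d=1$ half of Lemma~\ref{lem:abstract separ}: each black-box interval $[\mathcal R_{n,i-1},\mathcal D_{n,i})$ has length $\le N^{b_2}$ with $b_2<1/8$ except on an event of probability $\exp(-CN^c)$. By the invariance principle for the $2$-dependent difference sequence $(D_n)=(X^\indi_{L_t\wt R_n}-X^{'\indi}_{L_t\wt R_n})$, a one-dimensional centred walk with finite variance makes $O(\sqrt N)$ excursions below $N^{b'}$ up to time $N$ (more precisely, with high probability at most $N^{1/2+\varepsilon/2}$), transferring to $\bP^\joint$ via Lemma~\ref{lem:abstract_TVdistance-joint-ind} exactly as in the proof of Lemma~\ref{lem:bound_for_A1_and_A2}. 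Multiplying the number of black-box intervals ($\lesssim N^{1/2+\varepsilon/2}$) by their length ($\le N^{b_2}$) bounds the total time spent in black-box intervals — hence the total number of uncoupled steps — by $N^{1/2+\varepsilon/2+b_2}\le N^{1/2+\varepsilon}$ for $b_2$ small, with failure probability a negative power of $N$; choosing $\tilde\beta$ large makes $\gamma$ arbitrarily large. I expect the main obstacle to be not any single estimate but the careful bookkeeping of the range-$2$ dependence in the $\indi$-increments: every place where the proofs in Section~\ref{sec:preparations} invoked independence of regeneration increments (the geometric bound on $J$, the heavy-tailed large deviations for $\mathcal U$, the invariance principle and LIL for the difference walk) must be re-justified for $2$-dependent sequences, which is routine but needs to be stated explicitly; apart from that, the scaling by $L_s$, $L_t$ is cosmetic and the rest is a line-by-line transcription.
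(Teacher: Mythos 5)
Your proposal is correct and follows essentially the same route as the paper: the paper's own proof is just a short remark that Lemma~\ref{lem:abstract_TVdistance-joint-ind}, Lemma~\ref{lem:abstract separ} and the invariance principle for the (range-$2$ dependent) $\indi$-pair supply all ingredients needed to adapt the proofs of Lemma~\ref{lem:coupl} (for $d\ge 2$) and its $d=1$ analogue from the appendix, with $\varepsilon_{\wt\omega}$ small and $\tilde\beta$ large. Your write-up simply makes this adaptation explicit, including the bookkeeping of the $2$-dependent regeneration increments that the paper handles via Remarks~\ref{rem:eta_hat_independent_sequence}--\ref{rem:regen_times_almost_independent}.
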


\begin{proof}
  The proof for Lemma~\ref{lem:coupl} requires
  Proposition~\ref{prop:TVdistance-joint-ind-1step},
  Lemma~\ref{lem:separ} which we have already proven for the abstract
  setting and the invariance principle for the coarse-grained pair in
  the $\indi$-case. Furthermore, we can again choose
  $\varepsilon_{\wt{\omega}}$ close to 0 and $\tilde{\beta}$ large by
  tuning the parameters of the model. Therefore we have all
  ingredients to adapt the proof of Lemma~\ref{lem:coupl} to the
  current setting.
\end{proof}

We have the comparison lemma used to prove the key proposition.

\begin{lemma}
  Let \label{lemma:abstract joint-ind-comparison} $d\geq 2$. Then,
  there exist $a,C>0$ such that for every pair of bounded Lipschitz
  functions $f,g:\R^d\to \R$
  \begin{align}
    \begin{split}
      \abs{& \bE^{\joint}_{0,0}[f(\wt{X}_{\wt{R}_n}/\sqrt{n})g(\wt{X}'_{\wt{R}_n}/\sqrt{n})]
             - \bE^{\indi}_{0,0}[f(\wt{X}_{\wt{R}_n}/\sqrt{n})g(\wt{X}'_{\wt{R}_n}/\sqrt{n})]}\\
           & \leq C(1 + \norm{f}_\infty L_f)(1+\norm{g}_\infty L_g) n^{-a},
    \end{split}
  \end{align}
  where $L_f \coloneqq\sup_{x\neq y}\abs{f(y) - f(x)}/\norm{x-y}$ and
  $L_g$ are the Lipschitz constants of $f$ and $g$.
\end{lemma}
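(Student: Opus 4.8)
The plan is to follow the proof of Lemma~\ref{lemma:joint-ind-comparison} almost verbatim, replacing the single-environment regeneration construction by the coarse-grained one from Construction~\ref{constr:reg-time abstract} and using the abstract analogues of the tools from Section~\ref{sec:preparations} that have already been assembled. Concretely, I would work on the coupled Markov chain obtained by iterating the one-step coupling of Lemma~\ref{lem:abstract_TVdistance-joint-ind} (exactly as the chain \eqref{eq:bigchain} was built from Remark~\ref{rem:TVdistance-joint-ind-1step}), producing a joint realisation of the $\joint$-pair $(\wt{X}_{\wt{R}^\joint_n},\wt{X}'_{\wt{R}^\joint_n},\wt{R}^\joint_n)$ and the $\indi$-pair $(\wt{X}_{\wt{R}^\indi_n},\wt{X}^{''}_{\wt{R}^\indi_n},\wt{R}^\indi_n)$ on one space, with the uncoupled-step probability controlled by $c\norm{\tilde{x}-\tilde{x}'}^{-\tilde\beta}$.

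First I would introduce, as in the toy-model proof, the set $\mathcal{I}_n$ of coupled steps before time $n$, its complement $\mathcal{I}_n^\compl$, and the events $B_\joint,B_\indi$ bounding $\sum_{k\in\mathcal{I}_n^\compl}(\wt R^\joint_k-\wt R^\joint_{k-1})\le n^{b_1+2/\tilde\beta}$ (and likewise for $\indi$). By Lemma~\ref{lem:abstract coupl}, $\bP(\abs{\mathcal{I}_n^\compl}>n^{b_1})\le n^{-c}$; by Lemma~\ref{lem:abstract_JointRegTimesBound} (together with \eqref{eq:61}), $\bP(\exists k\le n:\wt R^\joint_k-\wt R^\joint_{k-1}>n^{2/\tilde\beta})\le Cn\cdot n^{-2}=C/n$, and the same for $\indi$; hence on the good event $A$ (the coarse-grained analogue of \eqref{def:eventA}) we get $\norm{X_{L_t\wt R^\joint_n}-X_{L_t\wt R^\indi_n}}\le C n^{b_1+2/\tilde\beta}$, using that the finite-range bound $R_X$ on the walk's jumps gives $\norm{X_{L_t k}-X_{L_t (k-1)}}\le R_X(\wt R_k-\wt R_{k-1})\cdot$ (times a constant absorbing $L_t$), and the displacements $\wt Y$ are bounded by $L_s$. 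Here I would carry everything out at the level of the original walks $X,X'$ along the times $L_t\wt R_\cdot$, since the displacements $\wt Y_i,\wt Y'_i$ differ from the coarse-grained positions by at most $L_s$, and then divide by $\sqrt{n}$; the factor $L_s$ is a harmless constant on the diffusive scale. The Lipschitz estimate $\abs{f(x)g(y)-f(x')g(y')}\le\norm{g}_\infty L_f\norm{x-x'}+\norm{f}_\infty L_g\norm{y-y'}$ then yields the bound $C\norm g_\infty L_f n^{b_1+2/\tilde\beta-1/2}+C\norm f_\infty L_g n^{b_1+2/\tilde\beta-1/2}+C\norm f_\infty\norm g_\infty(n^{-c}+n^{-1})$, and choosing $\tilde\beta>4$ and $b_1$ small so that $b_1+2/\tilde\beta<1/2$ finishes the argument, with $a:=\tfrac12-b_1-2/\tilde\beta\wedge c\wedge 1>0$.

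The only genuinely new point compared with the toy model — and the step I expect to need the most care — is that the increments of the $\indi$-pair are no longer i.i.d.\ but only finite-range correlated, because the walk must inspect the environment one time-step into its future (see \eqref{eq:abstr-walk} and Remarks~\ref{rem:eta_hat_independent_sequence}--\ref{rem:regen_times_almost_independent}). This means that when iterating the one-step coupling of Lemma~\ref{lem:abstract_TVdistance-joint-ind} to build the coupled chain, I must keep the extra coordinates $(\widehat Y_i,\widehat Y'_i,\widehat\eta_i,\widehat\eta'_i,\wt X_{\wt R_{i-1}}-\wt X'_{\wt R_{i-1}})$ in the state space, as in \eqref{eq:43}, so that the pair is genuinely Markov; the one-step coupling then produces an uncoupled-step probability that still only depends on the current separation (uniformly over the extra coordinates, thanks to \eqref{eq:61}), which is all that Lemma~\ref{lem:abstract coupl} and hence the present estimate require. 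The finite-range (rather than zero-range) correlation of increments does not affect this lemma at all — it only matters for the invariance principle underlying Lemma~\ref{lem:abstract separ} and Lemma~\ref{lemma:exitAnnulus}, which were already handled via Theorem~1 of \cite{DoukhanMassartRio1994} in the proof of Lemma~\ref{lem:abstract separ}. So here I would simply remark that the bookkeeping with the enlarged state space goes through and otherwise transcribe the proof of Lemma~\ref{lemma:joint-ind-comparison}.
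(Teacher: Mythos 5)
Your proposal is correct and follows exactly the route the paper takes: the paper's own proof of Lemma~\ref{lemma:abstract joint-ind-comparison} is just the observation that the ingredients of the toy-model argument (the coupling bound of Lemma~\ref{lem:abstract coupl}, the regeneration-time tails of Lemma~\ref{lem:abstract_JointRegTimesBound}, and the finite range $R_X$) have all been adapted to the coarse-grained setting, so the proof of Lemma~\ref{lemma:joint-ind-comparison} transcribes verbatim. Your additional remarks on tracking $(\widehat{Y}_i,\widehat{\eta}_i)$ in the coupled chain and on the harmlessness of the finite-range correlations for this particular estimate are accurate and simply make explicit what the paper leaves implicit.
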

\begin{proof}
  The main tools for the proof of
  Lemma~\ref{lemma:joint-ind-comparison} are Lemma~\ref{lem:coupl},
  Propositions~\ref{prop:JointRegTimesBound} and
  \ref{prop:IndRegTimesBound} and finite range for the transition
  kernels from Assumption~\ref{ass:finite-range}. Both lemmas have
  been adapted to the abstract setting and finite range is an one of
  the assumptions made for the random walks.
\end{proof}

We use the same idea, as in Lemma~\ref{lem:ind-to-joint}, of
transferring properties for the $\indi$-regeneration times to the
$\joint$ ones. To that end define $\wt{\tau}^\indi_m \coloneqq
\wt{R}^\indi_m-\wt{R}^\indi_{m-1}$, let $\wt{V}^\indi_n \coloneqq
\max\{ m\in\bZ_+\colon \wt{R}^\indi_m \le n \}$ and define
$\wt{V}^\joint_n$ analogously.
\begin{lemma}
  We have \label{lem:abstract ind-to-joint}
  \begin{equation}
    \limsup_{n\to \infty} \frac{\abs{\wt{V}^\joint_n -
        n/\bE[\wt{\tau}^\indi_2]}}{\sqrt{n\log\log n}} < \infty \qquad
    \text{a.s.}
  \end{equation}
  Furthermore for any $\varepsilon>0$ and $C,\gamma>0$ there exist
  $\tilde{\beta}$ so that
  \begin{equation}
    \sup_{x,x'}\bP^\joint_{x,x'}\Big( \exists m\le n\colon
    \abs{\wt{R}_m - m\bE[\wt{\tau}^\indi_2]} > n^{1/2+\varepsilon}
    \Big) \le Cn^{-\gamma}.
  \end{equation}
\end{lemma}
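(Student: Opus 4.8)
The plan is to mirror the proof of Lemma~\ref{lem:ind-to-joint} line by line, replacing the one-step total variation input by Lemma~\ref{lem:abstract_TVdistance-joint-ind} and the tail bounds on the (fine) regeneration times by Lemma~\ref{lem:abstract_JointRegTimesBound}, and to use Remark~\ref{rem:regen_times_almost_independent} at every place where independence of the $\indi$-increments was previously invoked. For the first (almost-sure) statement, I would write, using the triangle inequality,
\begin{equation*}
  \limsup_{n\to\infty} \frac{\abs{\wt V^\joint_n - n/\bE[\wt\tau^\indi_2]}}{\sqrt{n\log\log n}}
  \le \limsup_{n\to\infty} \frac{\abs{\wt V^\joint_n - \wt V^\indi_n}}{\sqrt{n\log\log n}}
  + \limsup_{n\to\infty} \frac{\abs{\wt V^\indi_n - n/\bE[\wt\tau^\indi_2]}}{\sqrt{n\log\log n}}.
\end{equation*}
The second term is a.s.\ finite by the renewal-theoretic law of the iterated logarithm for $(\wt R^\indi_m)_m$; this is slightly more delicate here than in Lemma~\ref{lem:ind-to-joint} because the increments $\wt\tau^\indi_m$ are not i.i.d.\ but only $2$-dependent (Remark~\ref{rem:regen_times_almost_independent}), so I would invoke the LIL for renewal processes built on stationary, finite-range dependent sequences with finite second moment — finiteness of the second moment comes from \eqref{eq:61} with $\tilde\beta>2$, and the $2$-dependence is enough to run the classical Strassen-type argument (as already discussed in Remark~\ref{rem:regen_times_almost_independent}, CLT, Donsker and LIL all survive). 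For the first term, exactly as in Lemma~\ref{lem:ind-to-joint}, the coupling of Lemma~\ref{lem:abstract_TVdistance-joint-ind} (iterated into the big Markov chain, just as in \eqref{eq:bigchain}) ensures that $\wt V^\joint_n$ and $\wt V^\indi_n$ can differ only because of uncoupled steps, so $\abs{\wt V^\joint_n - \wt V^\indi_n} \ge K$ forces $\sum_{i\in\mathcal I^\compl_n}\abs{\wt R^\joint_i - \wt R^\joint_{i-1} - \wt R^\indi_i + \wt R^\indi_{i-1}} \ge K$. Splitting on the event $\{\abs{\mathcal I^\compl_n}\le n^{b_1}\}$ (likely by Lemma~\ref{lem:abstract coupl}) and using the tail bound from Lemma~\ref{lem:abstract_JointRegTimesBound} on a single increment, one gets $\bP(\abs{\wt V^\joint_n - \wt V^\indi_n}\ge n^{1/2-\varepsilon}) \le C n^{1-\tilde\beta(1/2-\varepsilon-b_1)} + n^{-c}$, which is summable once $\tilde\beta$ is large, so Borel--Cantelli gives $\abs{\wt V^\joint_n - \wt V^\indi_n}/\sqrt n \to 0$ a.s.\ and hence the first $\limsup$ is $0$.

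For the quantitative second statement I would follow \eqref{eq:Matthias4} verbatim: condition on the event $A_n = \{\abs{\mathcal I^\compl_n}\le n^{b_1}\}$ (so $\bP(A^\compl_n)\le Cn^{-\gamma}$ by Lemma~\ref{lem:abstract coupl}), bound
\begin{equation*}
  \bP^\joint\bigl(\exists m\le n:\abs{\wt R_m - m\bE[\wt\tau^\indi_2]} > n^{1/2+\varepsilon}\bigr)
  \le \bP^\indi\bigl(\exists m\le n:\abs{\wt R^\indi_m - m\bE[\wt\tau^\indi_2]} > n^{1/2+\varepsilon}, A_n\bigr)
  + \bP^\joint\bigl(\exists m\le n:\abs{\wt R^\joint_m - \wt R^\indi_m} > n^{1/2+\varepsilon}, A_n\bigr) + Cn^{-\gamma},
\end{equation*}
and treat the two main terms as before. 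The second term is handled by the uncoupled-step argument of the previous paragraph together with the single-increment tail bound of Lemma~\ref{lem:abstract_JointRegTimesBound}, giving a bound $Cn^{1-\tilde\beta(1/2+\varepsilon-b_1)}$. For the first term, set $M_m := \wt R^\indi_m - m\bE[\wt\tau^\indi_2]$; this is no longer a martingale because the increments are only $2$-dependent, but one can split $M_m$ into three sub-sums along residues mod $3$ (or mod $2$ plus a bounded correction), each of which is a sum of i.i.d.\ centred terms with finite $q$-th moment for $q<\tilde\beta$, and apply Doob's maximal inequality plus the moment bound on $\max_m\norm{M_m - M_{m-1}}$ exactly as in the Burkholder computation \eqref{eq:useBurkholder}; alternatively one simply quotes a Rosenthal-type maximal inequality for partial sums of finite-range dependent sequences. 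This yields $\bP^\indi(\max_{m\le n}\norm{M_m} > n^{1/2+\varepsilon}) \le C n^{-q\varepsilon}$, and $q\varepsilon$ can be made as large as wished by taking $\tilde\beta$ (hence $q$) large. Combining the three pieces and choosing $\tilde\beta$ large enough gives the claimed $Cn^{-\gamma}$.

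The main obstacle, and the only genuinely new point relative to Lemma~\ref{lem:ind-to-joint}, is the loss of exact independence of the $\indi$-increments: both the LIL input for $\wt V^\indi_n$ and the martingale/Burkholder step for $\wt R^\indi_m$ must be rerun for a $2$-dependent stationary sequence. As noted in Remark~\ref{rem:regen_times_almost_independent} this is not a real difficulty — finite-range dependence is as good as independence for the CLT, Donsker's invariance principle, the LIL, and Rosenthal-type maximal inequalities, provided the relevant moments are finite, which they are by \eqref{eq:61} once $\tilde\beta$ is tuned large. I would make this explicit once (e.g. by the mod-$3$ decomposition) and then the rest of the proof is a mechanical transcription of the proof of Lemma~\ref{lem:ind-to-joint}, with $(X, X', T)$ replaced by $(\wt X, \wt X', \wt R)$, $\beta$ by $\tilde\beta$, and Proposition~\ref{prop:IndRegTimesBound}/Proposition~\ref{prop:JointRegTimesBound}/Lemma~\ref{lem:coupl} by Lemma~\ref{lem:abstract_JointRegTimesBound}/Lemma~\ref{lem:abstract coupl}.
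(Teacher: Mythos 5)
Your proposal is correct and takes essentially the same route as the paper: transfer the proof of Lemma~\ref{lem:ind-to-joint}, get the LIL for the $\indi$-regeneration sequence from its finite-range dependence (the paper invokes Theorem~3 of \cite{OodairaYoshihara1971} via Remark~\ref{rem:regen_times_almost_independent}), and replace the martingale/Burkholder step by splitting $\wt{R}_n$ into sums of independent increments. The only cosmetic difference is that the paper uses the even/odd (mod~$2$) split of the increments where you propose a mod~$3$ decomposition.
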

\begin{proof}
  The proof follows by the same arguments as the proof of
  Lemma~\ref{lem:ind-to-joint}, noticing that we can tune $\wt{\beta}$
  similar to $\beta$ such that $(\wt{R}^\indi)_n$ satisfies the moment
  condition and by Remark~\ref{rem:regen_times_almost_independent} the
  mixing condition of Theorem~3 from \cite{OodairaYoshihara1971}.
  Therefore $(\wt{R}^\indi)_n$ obeys the law of the iterated logarithm
  and we can argue for $\wt{V}^\indi$ again that
  \begin{equation*}
    \limsup_{n\to \infty} \frac{\abs{\wt{V}^\indi_n -
        n/\bE[\wt{\tau}^\indi_2]}}{\sqrt{n\log\log n}} < \infty \qquad
    \text{a.s.}
  \end{equation*}
  The other arguments are analogous to the proof of
  Lemma~\ref{lem:ind-to-joint} with the additional step to split
  $\wt{R}_n$ into two sums of independent increments, i.e. the even
  and odd increments $\sum_{i=1}^{n/2}\wt{R}_{2i}- \wt{R}_{2i-1}$ and
  $\sum_{i=0}^{n/2-1}\wt{R}_{2i+1}-\wt{R}_{2i}$.
\end{proof}

The following lemma will give some control over the fluctuations of
the random walks.
\begin{lemma}
  \label{lem:abstract_joint_fluctuations}
  For any $\delta, \varepsilon>0$ and $\gamma>0$, if parameters are tuned right
  (in part., $\beta$ must be very large)
  \begin{align}
    \label{eq:abstract_Matthias5}
    \sup_{0 \le \theta \le 1} \Pr^{\joint}\left( \sup_{\abs{k-[\theta n]}  \leq n^\delta}
    \norm{\wt{X}_{\wt{R}_k} - \wt{X}_{\wt{R}_{[\theta n]}}} >  n^{\delta/2+\varepsilon}\right)
    \le C n^{-\gamma}.
  \end{align}
\end{lemma}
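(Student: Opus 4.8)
The plan is to mirror the proof of Lemma~\ref{lem:joint_fluctuations} from the toy model, making only the adjustments required by the coarse-grained setting and by the fact that in the abstract model the increments of the $\indi$-pair are not independent but only finite-range correlated (Remark~\ref{rem:regen_times_almost_independent}). Concretely, let $A_{2n}$ denote the event that there are at most $n^{b_1}$ uncoupled steps before time $2n$ (for a small $b_1>0$ as in Lemma~\ref{lem:abstract coupl}), so that $\bP(A_{2n}^\compl)\le Cn^{-\gamma}$ with $\gamma$ arbitrarily large by tuning $\tilde\beta$. On $A_{2n}$ we split
\begin{align*}
  \wt{X}_{\wt{R}_k}-\wt{X}_{\wt{R}_{[\theta n]}}
  = \bigl(\wt{X}^\indi_{\wt{R}_k}-\wt{X}^\indi_{\wt{R}_{[\theta n]}}\bigr)
  + \Bigl(\sum_{i}\bigl(\wt{X}^\joint_{\wt{R}_i}-\wt{X}^\joint_{\wt{R}_{i-1}}\bigr)
  - \sum_{i}\bigl(\wt{X}^\indi_{\wt{R}_i}-\wt{X}^\indi_{\wt{R}_{i-1}}\bigr)\Bigr),
\end{align*}
the sums running over $i$ between $\min\{k,[\theta n]\}+1$ and $\max\{k,[\theta n]\}$. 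Hence the probability in \eqref{eq:abstract_Matthias5} is bounded by $\bP(A_{2n}^\compl)$ plus the probability that the $\indi$-term exceeds $\tfrac12 n^{\delta/2+\varepsilon}$ plus the probability that the coupling-error term exceeds $\tfrac12 n^{\delta/2+\varepsilon}$ on $A_{2n}$.

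For the coupling-error term the argument is essentially identical to that in Lemma~\ref{lem:joint_fluctuations}: on $A_{2n}$ at most $n^{b_1}$ of the $i$'s in the window are uncoupled, so the error is at most a sum over those $i$ of $\norm{\wt{X}^\joint_{\wt{R}_i}-\wt{X}^\joint_{\wt{R}_{i-1}}}+\norm{\wt{X}^\indi_{\wt{R}_i}-\wt{X}^\indi_{\wt{R}_{i-1}}}$; if this exceeds $\tfrac12 n^{\delta/2+\varepsilon}$ then some single increment exceeds $\tfrac14 n^{\delta/2+\varepsilon-b_1}$, and since the coarse-grained steps are bounded by $R_X$ times the regeneration-time increment $\wt{R}_i-\wt{R}_{i-1}$ (plus the $O(1)$ displacement correction), a union bound over $O(n^\delta)$ indices together with the tail bound $\bP(\wt{R}_i-\wt{R}_{i-1}>t)\le Ct^{-\tilde\beta}$ from Lemma~\ref{lem:abstract_JointRegTimesBound} gives $Cn^{\delta}n^{-\tilde\beta(\delta/2+\varepsilon-b_1)}\le Cn^{-\gamma}$ once $\tilde\beta$ is large and $b_1$ is small. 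For the $\indi$-term, the only real difference from the toy model is that the increments $(\wt{X}^\indi_{\wt{R}_i}-\wt{X}^\indi_{\wt{R}_{i-1}})_i$ are not i.i.d.; by Remark~\ref{rem:ind_increments_almost_independent} they are $2$-dependent with heavy tails. One handles this by splitting into the even-indexed and odd-indexed subsums, each of which is a genuine sum of i.i.d.\ centered increments with finite $q$-th moment for any $q<\tilde\beta$, and then applies a maximal/Burkholder inequality exactly as in \eqref{eq:useBurkholder}, obtaining a bound of order $n^{-q(\delta/2+\varepsilon)+\text{(something small)}}$, which is $\le Cn^{-\gamma}$ for $\tilde\beta$ large. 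Summing the three contributions and using that $\tilde\beta$ (hence $\gamma$) can be made arbitrarily large by tuning $\varepsilon_{\wt\omega}$ small and $\wt{s}_{\max}$ small finishes the proof.

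The main obstacle — though it is a mild one given the machinery already developed — is the non-independence of the $\indi$-increments: one must be slightly careful that the martingale/Burkholder estimate used for the $\indi$-term still applies. This is resolved cleanly by the even/odd splitting (as already invoked in the proof of Lemma~\ref{lem:abstract ind-to-joint}), reducing each piece to the i.i.d.\ case covered by Proposition~\ref{prop:IndRegTimesBound} and Remark~\ref{rem:indMC}. A secondary, purely bookkeeping point is tracking the $O(L_s)$ displacement corrections between $\wt{X}_{\wt{R}_i}$ and its coarse-grained counterpart, but since these are of constant order they are absorbed trivially into all the estimates and do not affect any exponent. I would therefore present the proof as a short adaptation: state the decomposition, cite Lemma~\ref{lem:abstract coupl} for $\bP(A_{2n}^\compl)$, cite Lemma~\ref{lem:abstract_JointRegTimesBound} for the uncoupled-increment tail bound, and invoke the even/odd-split Burkholder estimate for the $\indi$-term, noting at the end that $\gamma$ can be taken as large as desired.
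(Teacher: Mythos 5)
Your proposal is correct and follows essentially the same route as the paper: the paper's proof likewise carries over the argument of Lemma~\ref{lem:joint_fluctuations} (coupling event via Lemma~\ref{lem:abstract coupl}, tail bounds from Lemma~\ref{lem:abstract_JointRegTimesBound} for the uncoupled increments) and handles the lack of independence of the $\indi$-increments exactly as you do, by splitting into even- and odd-indexed sums of independent increments and reusing the Burkholder estimate.
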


\begin{proof}
  The proof follows along the same steps as the proof of
  Lemma~\ref{lem:joint_fluctuations} making use of
  Lemma~\ref{lem:abstract ind-to-joint} to transfer properties of the
  $\indi$-dynamics to the $\joint$-dynamics, with a small adaptation
  due to $(\wt{X}_{\wt{R}^\indi_i}-\wt{X}_{\wt{R}^\indi_{i-1}})_i$ and
  $(\wt{R}^\indi_i-\wt{R}^\indi_{i-1})_i$ not being independent
  sequences. Using the observations from
  Remark~\ref{rem:regen_times_almost_independent}
  \begin{align*}
    &\bP\Big( \sup_{\abs{[\theta n]-k } \leq n^\delta}
      \norm{\wt{X}^\indi_{\wt{R}_k} - \wt{X}^\indi_{\wt{R}_{[\theta
      n]}}} > \frac{1}{2} n^{\delta/2 +\varepsilon} \Big)\\
    &\le \bP\Big( \sup_{\abs{[\theta n]-k } \leq n^\delta} \norm{
      \sum_{i=\min\{ k,[\theta n] \}+1}^{\max\{ k,[\theta n] \}}
      \wt{X}^\indi_{\wt{R}_{i}} -\wt{X}^\indi_{\wt{R}_{i-1}} } >
      \frac{1}{2}n^{\delta/2+\varepsilon}  \Big)\\
    &\le \bP\Big(  \sup_{\abs{[\theta n]-k } \leq n^\delta}
      \norm{\sum_{\substack{i=\min\{ k,[\theta n] \}+1,\\i \text{
    even}}}^{\max\{ k,[\theta n] \}}
    \wt{X}^\indi_{\wt{R}_{i}}-\wt{X}^\indi_{\wt{R}_{i-1}} }
    +\norm{\sum_{\substack{i=\min\{ k,[\theta n] \}+1,\\i \text{
    odd}}}^{\max\{ k,[\theta n] \}}
    \wt{X}^\indi_{\wt{R}_{i}}-\wt{X}^\indi_{\wt{R}_{i-1}} } >
    \frac{1}{2}n^{\delta/2+\varepsilon}\Big)\\
    &\le \bP\Big( \sup_{\abs{[\theta n]-k } \leq n^\delta}
      \norm{\sum_{\substack{i=\min\{ k,[\theta n] \}+1,\\i \text{
    even}}}^{\max\{ k,[\theta n] \}}
    \wt{X}^\indi_{\wt{R}_{i}}-\wt{X}^\indi_{\wt{R}_{i-1}} } >
    \frac{1}{4}n^{\delta/2+\varepsilon} \Big)\\
    &\hspace{2cm}+ \bP\Big( \sup_{\abs{[\theta n]-k } \leq
      n^\delta} \norm{\sum_{\substack{i=\min\{ k,[\theta
      n] \}+1,\\i \text{ odd}}}^{\max\{ k,[\theta n] \}}
    \wt{X}^\indi_{\wt{R}_{i}}-\wt{X}^\indi_{\wt{R}_{i-1}} } >
    \frac{1}{4}n^{\delta/2+\varepsilon} \Big)\\
  \end{align*}
  we split the sum into two sums of independent random variables and
  can use the same arguments from the proof of
  Lemma~\ref{lem:joint_fluctuations} on both probabilities of the last
  line.
\end{proof}

The key proposition translates to.

\begin{proposition}
  \label{prop:abstract 2nd-mom}
  There exists $c>0$ and a non-trivial centred $d$-dimensional normal
  law $\wt{\Phi}$ such that for $f: \R^d \to \R$ bounded and Lipschitz
  we have
  \begin{align}
    \label{eq:abstract_2nd-mom-estimate}
    \bE\left[ \left( E_U[f(\wt{X}^\joint_{\wt{R}_m}/\sqrt{m})]-\wt{\Phi}(f)
    \right)^2 \right] \leq C_f m^{-c}.
  \end{align}
\end{proposition}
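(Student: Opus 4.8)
The plan is to mirror exactly the argument used to prove Proposition~\ref{prop:2nd-mom} in the oriented-percolation setting, transferring each ingredient to the coarse-grained pair $(\wt X,\wt X')$ along the simultaneous regeneration times $\wt R_m$. As in \eqref{eq:roadmap1-1}, the starting point is the identity
\begin{align}
  \label{eq:abstract-2nd-mom-start}
  \bE\!\left[\left(E_U[f(\wt X^\joint_{\wt R_m}/\sqrt m)]\right)^2\right]
  = \bE^\joint_{0,0}\!\left[f(\wt X_{\wt R_m}/\sqrt m)\,f(\wt X'_{\wt R_m}/\sqrt m)\right],
\end{align}
where the right-hand side refers to two coarse-grained walks driven by the same $U$ (hence the same $\eta$) and observed along their joint regenerations, exactly the $\joint$-pair from \eqref{eq:defn random walks abstract}. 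First I would expand the square $\bE[(E_U[\cdots]-\wt\Phi(f))^2]$ into the four terms as at the end of the proof of Proposition~\ref{prop:2nd-mom}, reducing everything to: (a) a $\joint$-versus-$\indi$ comparison for the product $f\otimes f$, and (b) a Berry--Esseen estimate for $\bE^\indi[f(\wt X_{\wt R_m}/\sqrt m)]$.

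\textbf{Key steps, in order.} Step 1: apply Lemma~\ref{lemma:abstract joint-ind-comparison} (the abstract analogue of Lemma~\ref{lemma:joint-ind-comparison}), with $g=f$ and again with $g\equiv1$, to obtain
\[
  \bigl|\bE^\joint_{0,0}[f(\wt X_{\wt R_m}/\sqrt m)\,f(\wt X'_{\wt R_m}/\sqrt m)]
  - \bE^\indi_{0,0}[f(\wt X_{\wt R_m}/\sqrt m)\,f(\wt X'_{\wt R_m}/\sqrt m)]\bigr|
  \le C_f\,m^{-a},
\]
and likewise $|\bE^\indi[f(\wt X_{\wt R_m}/\sqrt m)]-\bE^\joint[f(\wt X_{\wt R_m}/\sqrt m)]|\le C_f m^{-a}$. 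Step 2: under $\bP^\indi$ the two coarse-grained walks move in independent environments $\eta,\eta'$, so $\wt X_{\wt R_\cdot}$ and $\wt X'_{\wt R_\cdot}$ are independent of each other; therefore $\bE^\indi[f\otimes f]=\bE^\indi[f]\cdot\bE^\indi[f]$ up to the product structure, and it suffices to show $\bE^\indi_{0,0}[f(\wt X_{\wt R_m}/\sqrt m)]\to\wt\Phi(f)$ with polynomial rate. Step 3: establish this single-walk CLT with rate. The increments $(\wt X_{\wt R_i}-\wt X_{\wt R_{i-1}})_i$ are \emph{not} i.i.d.\ here — they carry the local configuration $\hat\eta_{i-1}$ and displacement $\widehat Y_{i-1}$ — but by Remark~\ref{rem:eta_hat_independent_sequence} and Remark~\ref{rem:ind_increments_almost_independent} they form a sequence with finite-range ($2$-dependent) correlations, and by \eqref{eq:61} and Lemma~\ref{lem:abstract_JointRegTimesBound} they have finite $\gamma$-th moments for any $\gamma<\tilde\beta$. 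Splitting into even- and odd-indexed sub-sums (as in the proof of Lemma~\ref{lem:abstract ind-to-joint} and Lemma~\ref{lem:abstract_joint_fluctuations}) yields two sums of i.i.d.\ bounded-third-moment random variables, to each of which a Berry--Esseen bound applies; recombining gives $|\bE^\indi_{0,0}[f(\wt X_{\wt R_m}/\sqrt m)]-\wt\Phi(f)|\le C_f m^{-1/2}$ for Lipschitz $f$ (with $\wt\Phi$ the limiting Gaussian, whose covariance is determined by the $\indi$-increments). Step 4: one also needs $\wt\Phi$ to be the \emph{same} law for $\joint$ and $\indi$; this follows because the annealed means agree by Step~1 applied with $g\equiv1$. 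Combining Steps~1--4 exactly as in the proof of Proposition~\ref{prop:2nd-mom} gives
\[
  \bE\!\left[\left(E_U[f(\wt X^\joint_{\wt R_m}/\sqrt m)]-\wt\Phi(f)\right)^2\right]
  \le 2C_f m^{-a}+C m^{-1/2}\le C_f m^{-c}
\]
for a suitable $c>0$.

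\textbf{Main obstacle.} The one genuinely new point compared to Proposition~\ref{prop:2nd-mom} is that in the $\indi$-setting the regeneration increments of the coarse-grained walk are no longer independent but only $2$-dependent (and likewise the regeneration times), because the walk peeks one coarse time-step into the environment's future (see \eqref{eq:abstr-walk} and the discussion around \eqref{eq:43}). The hard part is therefore to carry all the probabilistic estimates that were proved in Section~\ref{sec:preparations} under genuine independence — the Berry--Esseen / invariance-principle inputs, the heavy-tailed large-deviation bounds, the hitting-probability estimates of Lemma~\ref{lemma:exitAnnulus} — through this $2$-dependence. As indicated in Remark~\ref{rem:regen_times_almost_independent}, the standard device is the even/odd splitting into two i.i.d.\ subsequences, which suffices for CLT, Donsker, and the law of the iterated logarithm; I would make this splitting explicit wherever a result from Section~\ref{sec:preparations} is invoked, and otherwise the proof is a routine transcription with the scaling factors $L_{\mathrm s}, L_{\mathrm t}$ absorbed into constants (the displacements $\widehat Y_i$ being of bounded order and hence negligible on the diffusive scale).
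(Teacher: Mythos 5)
Your overall route is the one the paper intends (its proof is literally ``adapt the proof of Proposition~\ref{prop:2nd-mom}''), but your starting identity is a genuine gap, and it is exactly the subtlety the paper flags in that proof. You write
\[
  \bE\Bigl[\bigl(E_U[f(\wt X^\joint_{\wt R_m}/\sqrt m)]\bigr)^2\Bigr]
  = \bE^\joint_{0,0}\bigl[f(\wt X_{\wt R_m}/\sqrt m)\,f(\wt X'_{\wt R_m}/\sqrt m)\bigr],
\]
but this is false in general: the joint regeneration times $\wt R_m$ are functionals of \emph{both} walks, so $\wt X_{\wt R_m}$ and $\wt X'_{\wt R_m}$ are not conditionally independent given the environment, and squaring the quenched expectation produces two independent copies of the whole pair construction (four walks, two regeneration sequences) in the same environment, not the quenched product over a single pair. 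The analogue of \eqref{eq:roadmap1-1} holds only for the original walks at deterministic times, where conditional independence is true by definition. The paper's proof of Proposition~\ref{prop:2nd-mom} deals with precisely this by inserting the intermediate comparison \eqref{eq:Matthias6}--\eqref{eq:Matthias8c}: replace $\widehat X^\joint_m$ by $X_{m\bE[\tau^\indi_2]}$ with error $O(m^{-1/4+\varepsilon})+O(m^{-\gamma/2})$ (using Lemma~\ref{lem:ind-to-joint}, Lemma~\ref{lem:joint_fluctuations} and Markov's inequality), use the exact factorisation $(E_\omega[f(X_n/\sqrt n)])^2=E_\omega[f(X_n/\sqrt n)]E_\omega[f(X'_n/\sqrt n)]$ there, and then pass back to the hatted pair under the annealed law. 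In the abstract setting this step is still needed and requires the coarse-grained analogues Lemma~\ref{lem:abstract ind-to-joint} and Lemma~\ref{lem:abstract_joint_fluctuations}, the tail bounds of Lemma~\ref{lem:abstract_JointRegTimesBound}, and the relation $X_{\tilde nL_{\mathrm t}}=L_{\mathrm s}\wt X_{\tilde n}+\wt Y_{\tilde n}$ with $\norm{\wt Y_{\tilde n}}\le L_{\mathrm s}$ to absorb the displacements; you cite these lemmas in your ``main obstacle'' paragraph but never use them at the one place where they are indispensable.

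Two smaller points. First, in your Step 2 the exact factorisation $\bE^\indi[f\otimes f]=\bE^\indi[f]\cdot\bE^\indi[f]$ is also not literally true, again because the simultaneous regeneration times couple the two components even when the environments are independent; the paper instead controls $\bigl|\bE^\indi[f(\widehat X_m/\sqrt m)f(\widehat X'_m/\sqrt m)]-\wt\Phi(f)^2\bigr|$ by a Berry--Esseen-type bound for the pair, which is what you should do here as well (your even/odd splitting handles the $2$-dependence of Remark~\ref{rem:ind_increments_almost_independent} for this purpose). Second, your Steps 1, 3, 4 (the $\joint$/$\indi$ comparison via Lemma~\ref{lemma:abstract joint-ind-comparison}, including $g\equiv1$, and the quantitative CLT under $\bP^\indi$) are the right ingredients and match the paper's template; once the intermediate passage through the original walk at deterministic times is added, the argument closes as claimed.
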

\begin{proof}
  Adapt the proof of Proposition~\ref{prop:2nd-mom}.
\end{proof}

We can expand the convergence from a subsequence to the full sequence.
\begin{lemma}
  \label{lemma:abstract_convergence_for_subsequence_to_full_sequence}
  Assume that for some $c>1$, and any bounded Lipschitz function $f:
  \R^d \to \R$
  \begin{align}
    \label{eq:42}
    E_U[f(\wt{X}_{\wt{R}^\joint_{k^c}(0,0)}/k^{c/2})]\underset{k\to
    \infty}{\longrightarrow} \wt{\Phi}(f) \quad \text{for a.a. }
    U,
  \end{align}
  where $\wt{\Phi}$ is some non-trivial centered $d$-dimensional
  normal law. Then we have for any bounded Lipschitz function $f$
  \begin{align*}
    E_U[f(\wt{X}_{\wt{R}^\joint_{m}(0,0)}/m^{1/2})] \underset{m \to
    \infty}{\longrightarrow} \wt{\Phi}(f) \quad \text{for a.a. } U.
  \end{align*}
\end{lemma}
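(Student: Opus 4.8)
The plan is to adapt the proof of Lemma~\ref{lemma:convergence for subsequence to full sequence} essentially verbatim, replacing the single-environment regeneration times $T^\joint_\ell$ by the coarse-grained regeneration times $\wt{R}^\joint_\ell$, the walks $\widehat{X}^\joint_\ell$ by $\wt{X}_{\wt{R}^\joint_\ell}$, and the environment $\omega$ by the driving noise $U$. As already noted in the excerpt (the paragraph just after the statement of Lemma~\ref{lem:key lemma for d=1} and the discussion below Lemma~\ref{lem:abstract_joint_fluctuations}), the proof of Lemma~\ref{lemma:convergence for subsequence to full sequence} never actually used $d\ge 2$ nor any property specific to oriented percolation; it only relied on (i) the algebraic tail bounds for the inter-regeneration times, (ii) the ability to compare the $\joint$-dynamics with the $\indi$-dynamics via the one-step coupling, and (iii) large-deviation estimates for heavy-tailed sums and Azuma's inequality. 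All three inputs are available in the abstract setting: (i) is Lemma~\ref{lem:abstract_JointRegTimesBound} (together with \eqref{eq:61}), (ii) is Lemma~\ref{lem:abstract_TVdistance-joint-ind} and the induced coupled Markov chain analogous to \eqref{eq:bigchain}, and (iii) go through unchanged.

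Concretely, first I would set up, exactly as in Section~\ref{sect:coupling-1} but on the coarse-grained level, the coupled Markov chain driving $(\wt{X}^\joint,\wt{X}^{'\joint},\wt{R}^\joint)$ and $(\wt{X}^\indi,\wt{X}^{''\indi},\wt{R}^\indi)$ so that the one-step coupling error is controlled by Lemma~\ref{lem:abstract_TVdistance-joint-ind}; here one must carry along the extra coordinates $(\widehat{Y},\widehat{Y}',\hat\eta,\hat\eta')$ in the state, as in \eqref{eq:43}, but since the displacements $\widehat{Y}$ are bounded by $L_\mathrm{s}$ and there are only finitely many local configurations $\hat\eta\in G_\eta$, this only affects constants. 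The central estimate is the coarse-grained analogue of \eqref{eq:maxfluctuations}: an upper bound on $\bP_{0,0}\bigl(\max_{k^c\le\ell\le(k+1)^c}\abs{\wt{X}^\joint_{\wt{R}_\ell}-\wt{X}^\joint_{\wt{R}_{k^c}}}/k^{c/2}\ge\delta\bigr)$ that is summable in $k$. I would reproduce the decomposition into the events $A_\ell,B_k,C_\ell,D_\ell$ from the original proof, using: for $A_\ell^\compl$ and $B_k^\compl$ the tail bound \eqref{eq:61} / Lemma~\ref{lem:abstract_JointRegTimesBound}; for $C_\ell^\compl$ Azuma's inequality applied to each coordinate of $\wt{X}^\indi$ — here one uses that $(\wt{X}^\indi_{\wt{R}_i}-\wt{X}^\indi_{\wt{R}_{i-1}})_i$ has bounded conditional range on $B_k$, which follows from finite range of $\varphi_X$ and the truncation at $k^\alpha$; and for $D_\ell^\compl$ the reduction to $A_\ell^\compl\cup B_k^\compl$ exactly as before. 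Summing over $\ell\le ck^{c-1}$ and then over $k$, choosing $\tilde\beta$ large enough and $\varepsilon,\alpha$ small enough so that $1-\varepsilon-2\alpha>0$, yields summability and hence, via Borel--Cantelli, $\limsup_{k\to\infty}\max_{k^c\le\ell\le(k+1)^c}\abs{\wt{X}^\joint_{\wt{R}_\ell}-\wt{X}^\joint_{\wt{R}_{k^c}}}/k^{c/2}=0$ a.s.

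Given that fluctuation control, the passage from the subsequence $m=k^c$ to general $m$ with $k^c\le m\le(k+1)^c$ is the same three-term triangle-inequality estimate as at the end of the proof of Lemma~\ref{lemma:convergence for subsequence to full sequence}: write
\[
  \Bigl\lvert E_U[f(\wt{X}_{\wt{R}^\joint_m}/m^{1/2})]-\wt\Phi(f)\Bigr\rvert
  \le L_f\frac{\norm{\wt{X}^\joint_{\wt{R}_m}-\wt{X}^\joint_{\wt{R}_{k^c}}}}{k^{c/2}}
  + L_f\frac{\norm{\wt{X}^\joint_{\wt{R}_{k^c}}}}{k^{c/2}}\Bigl(\tfrac{k^{c/2}}{\sqrt m}-1\Bigr)
  + \Bigl\lvert E_U[f(\wt{X}_{\wt{R}^\joint_{k^c}}/k^{c/2})]-\wt\Phi(f)\Bigr\rvert,
\]
and observe that the first term tends to $0$ a.s.\ by the fluctuation estimate, the second because $k^{c/2}/\sqrt m\to 1$ while $\norm{\wt{X}^\joint_{\wt{R}_{k^c}}}/k^{c/2}$ is controlled by the same bound (it appears as a special case of the sum estimate), and the third vanishes by hypothesis \eqref{eq:42}. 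The main obstacle is purely bookkeeping rather than conceptual: one has to be careful that the coarse-grained chain's transitions genuinely depend on $(\widehat{Y},\widehat{Y}',\hat\eta,\hat\eta')$ and not just on the distance, so the "pretend it only depends on the distance" shortcut used in Section~\ref{sec:preparations} must be justified here by invoking Lemma~\ref{lem:abstract_TVdistance-joint-ind} together with the boundedness/finiteness of the auxiliary coordinates; but as in the $d\ge 2$ toy-model proofs, this robustness is already built into the abstract framework and introduces only harmless constant factors.
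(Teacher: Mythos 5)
Your proposal follows the same route as the paper: the paper's own proof of this lemma is a two-line remark that the toy-model argument for Lemma~\ref{lemma:convergence for subsequence to full sequence} transfers verbatim because its only inputs (finite range of the walk's steps, arbitrarily large tail exponent, and the regeneration-time tail bound, here Lemma~\ref{lem:abstract_JointRegTimesBound}/\eqref{eq:61}) are available in the abstract setting. Your write-up simply carries out that transfer explicitly, including the bookkeeping for the extra coordinates $(\widehat{Y},\widehat{Y}',\hat\eta,\hat\eta')$, so it is correct and consistent with the paper's argument.
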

\begin{proof}
  The proof of Lemma~\ref{lemma:convergence for subsequence to full
    sequence} requires finite range for the random walk steps, $\beta$
  large enough and Proposition~\ref{prop:JointRegTimesBound}. We have
  all of this in the abstract setting.
\end{proof}
This gives us the necessary ingredients to prove the quenched CLT for
the random walk $X$, by comparing it to the coarse-grained random walk
$\widetilde{X}$ along its regeneration times. The proof is similar to
the proof of Theorem~\ref{thm:LLNuCLTmodel1} but has another
complication due to the coarse-graining.
\begin{proof}[Proof of Theorem~\ref{thm:abstract_quenched_clt}]
  During the course of this proof we abbreviate $\widehat{X}_m =
  \widetilde{X}^\joint_{\widetilde{R}_m}$ for the coarse-grained
  process at its $m$-th regeneration time. Let $f\colon \R^d \to \R$
  be a bounded Lipschitz function and $c'>1/c\wedge 1$ with $c$ from
  Proposition~\ref{prop:abstract 2nd-mom}. Using
  Proposition~\ref{prop:abstract 2nd-mom} and Markov's inequality we
  find
  \begin{align*}
    \bP&\Bigg( \Abs{E_U\Big[ f(\widehat{X}_{[n^{c'}]}/\sqrt{[n^{c'}]}) \Big] - \widetilde{\Phi}(f)} >\varepsilon \Bigg)\\
       &\le \frac{\bE\bigg[ \Big( E_U\Big[ f(\widehat{X}_{[n^{c'}]}/\sqrt{[n^{c'}]}) \Big] - \widetilde{\Phi}(f) \Big)^2 \bigg]}{\varepsilon^2}\\
       &\le C_f n^{-cc'}\varepsilon^{-2},
  \end{align*}
  which leads to, using Borel--Cantelli and
  Lemma~\ref{lemma:abstract_convergence_for_subsequence_to_full_sequence},
  \begin{equation}
    \label{eq:48}
    E_U\Big[ f(\widehat{X}_m/\sqrt{m}) \Big] \xrightarrow[m\to\infty]{} \widetilde{\Phi}(f)\qquad \text{for a.a. } U.
  \end{equation}
  We want to show a sufficiently good behaviour of the random walk
  between regeneration times. The difference to the proof of
  Theorem~\ref{thm:LLNuCLTmodel1} is the space-time scaling we now
  need to consider due to the comparison between $X$, the walk itself,
  and its coarse-graining $\widetilde{X}$. To that end recall the
  relation $X_{\widetilde{n}L_t} =
  \widetilde{X}_{\widetilde{n}}L_s+\widetilde{Y}_{\widetilde{n}}$. Let
  $\widetilde{\tau}_m \coloneqq \widetilde{R}^\joint_m(0,0) -
  \widetilde{R}^\joint_{m-1}(0,0) $ the $m$-th increment of the
  regeneration times on the coarse-grained level. Furthermore, we
  abbreviate $\widetilde{R}_m = \widetilde{R}^\joint_m(0,0)$ and let
  $\widetilde{W}_n \coloneqq \max\{ m\in\Z_+ \colon
  L_t\widetilde{R}_m\le n \}$. Note that the scaling by $L_t$ is due
  to the coarse-graining. By Lemma~\ref{lem:abstract ind-to-joint} we
  have
  \begin{equation}
    \label{eq:46}
    \limsup\limits_{n\to \infty} \frac{\abs{L_t\widetilde{W}_n - n/\bE[\widetilde{\tau}_2]}}{\sqrt{n\log\log n}} < \infty \quad \text{a.s.}
  \end{equation}
  By Lemma~\ref{lem:abstract_JointRegTimesBound}, for any $\alpha>0$,
  \begin{align*}
    \bP\Big( \max_{j \le n} \{j-L_t\widetilde{R}_{\widetilde{W}_j} \}
    > cn^\alpha \Big)
    &\le \bP \Big( \text{there exists}i\in\{1,\dots,\widetilde{W}_n \} \text{ such
      that } L_t\widetilde{\tau}_i > cn^\alpha \Big)\\
    &\le n \bP( \widetilde{\tau}_2 > cn^\alpha/L_t)  + \bP(\widetilde{\tau}_1 > cn^\alpha/L_t)\\
    &\le C(n+1) n^{-\tilde{\beta} \alpha},
  \end{align*}
  which is summable if $1-\widetilde{\beta}\alpha< -1 $. Therefore we obtain
  \begin{equation}
    \label{eq:45}
    P_U\Big(  \max_{j\le n}\{ j-L_t\widetilde{R}_{\widetilde{W}_j} \}> cn^\alpha\Big) \xrightarrow[n\to \infty]{} 0 \quad \text{a.s.}
  \end{equation}
  for an appropriate choice of $\alpha$ and $\tilde{\beta}$. Note that
  $\tilde{\beta}$ can be chosen arbitrarily large by
  Lemma~\ref{lem:abstract_JointRegTimesBound} and can therefore be
  adjusted to any choice of $\alpha>0$. This yields the following
  control on the behaviour of the random walk
  \begin{align}
    \begin{split}
      P_U\Big( &\norm{X_n - L_s\widehat{X}_{\widetilde{W}_n}}\ge cn^\alpha\log n\Big)\\
               &\le P_U\Big( \norm{X_n - L_s\widehat{X}_{\widetilde{W}_n}}\ge cn^\alpha\log n, \max_{j\le n}\{ j-L_t\widetilde{R}_{\widetilde{W}_j} \}\le cn^\alpha \Big)\\
               &\hspace{3cm}+ P_U\Big( \norm{X_n - L_s\widehat{X}_{\widetilde{W}_n}}\ge cn^\alpha\log n, \max_{j\le n}\{ j-L_t\widetilde{R}_{\widetilde{W}_j} \}> cn^\alpha \Big).
    \end{split}
  \end{align}
  Due to the relation between $X$ and its coarse-grained version
  $\widetilde{X}$ we have $\norm{X_n -
    L_s\widehat{X}_{\widetilde{W}_n}}= \norm{X_n -
    X_{L_t\widetilde{R}_{\widetilde{W}_n}} -
    \widetilde{Y}_{\widetilde{R}_{\widetilde{W}_n}} }$
  \begin{align*}
    P_U\Big(& \norm{X_n - L_s\widehat{X}_{\widetilde{W}_n}}\ge cn^\alpha\log n, \max_{j\le n}\{ j-L_t\widetilde{R}_{\widetilde{W}_j} \}\le cn^\alpha \Big)\\
            &\le P_U\Big( cn^\alpha \log n \le \norm{X_n -
              L_s\widehat{X}_{\widetilde{W}_n}} \le
              R_X(n-L_t\wt{R}_{\widetilde{W}_n})+L_s, \max_{j\le n}\{
              j-L_t\widetilde{R}_{\widetilde{W}_j} \}\le cn^\alpha
              \Big)\\
            &\le P_U\Big(  cn^\alpha \log n \le \norm{X_n - L_s\widehat{X}_{\widetilde{W}_n}} \le cR_Xn^\alpha \Big) \xrightarrow[n\to \infty]{} 0 \quad \text{a.s.}
  \end{align*}
  Combining this with \eqref{eq:45} we obtain
  \begin{equation}
    \label{eq:49}
    P_U\Big( \norm{X_n - L_s\widehat{X}_{\widetilde{W}_n}}\ge cn^\alpha\log n\Big) \xrightarrow[n\to \infty]{} 0\quad \text{a.s.}
  \end{equation}
  By equation~\eqref{eq:46}, for any $\varepsilon>0$,
  \begin{equation}
    \label{eq:50}
    P_U\big( \abs{\widetilde{W}_n - n/(L_t\bE[\widetilde{\tau}_2])} \ge n^{1/2 + \varepsilon} \big) \xrightarrow{} 0 \quad \text{a.s.}
  \end{equation}

  By Lemma~\ref{lem:abstract_joint_fluctuations} we have for
  $\delta,\varepsilon>0$
  \begin{equation}
    \bP\Big( \sup_{\abs{k-[\theta n]}  \leq n^\delta}
    \abs{\widehat{X}_k - \widehat{X}_{[\theta n]}} >
    n^{\delta/2+\varepsilon} \Big) \le Cn^{-\gamma},
  \end{equation}
  for some $\gamma>0$ that we can tune arbitrarily large if we have
  $\tilde{\beta}$ large. Therefore, using Borel--Cantelli yields
  \begin{equation}
    \label{eq:51}
    \limsup_{n\to \infty} \sup_{\abs{k-[\theta n]}  \leq n^\delta}
    \frac{\abs{\widehat{X}_k-\widehat{X}_{[\theta
          n]}}}{n^{\delta/2+\varepsilon}} \xrightarrow{} 0 \quad
    \text{a.s.}
  \end{equation}
  We write $X_n/\sqrt{n}$ as
  \begin{equation}
    \label{eq:47}
    \frac{X_n}{\sqrt{n}} = \frac{X_n -
      L_s\widehat{X}_{\widetilde{W}_n}}{\sqrt{n}} +
    \frac{L_s\widehat{X}_{\widetilde{W}_n} -L_s
      \widehat{X}_{[n/(L_t\bE[\widetilde{\tau}_2])]}}{\sqrt{n}} +
    \frac{L_s
      \widehat{X}_{[n/(L_t\bE[\widetilde{\tau}_2])]}}{\sqrt{n/(L_t\bE[\widetilde{\tau}_2])}}\sqrt{1/(L_t\bE[\widetilde{\tau}_2])}
  \end{equation}
  and define the sets
  \begin{align*}
    \widetilde{A}_n &\coloneqq \{ \abs{X_n - L_s\widehat{X}_{\widetilde{W}_n}} \ge n^\alpha \log n \},\\
    \widetilde{B}_n &\coloneqq \{ \abs{\widetilde{W}_n - n/(L_t\bE[\widetilde{\tau}_2])} \ge n^{\varepsilon+1/2} \},\\
    \widetilde{C}_n &\coloneqq \{ \sup_{\abs{k - n/(L_t\bE[\widetilde{\tau}_2])} \le n^{1/2+\varepsilon} } \abs{\widehat{X}_k - \widehat{X}_{[n/(L_t\bE[\widetilde{\tau}_2])]}} > n^{1/2 - \varepsilon} \},\\
    \widetilde{D}_n &\coloneqq \widetilde{A}_n^\compl \cap \widetilde{B}_n^\compl \cap \widetilde{C}_n^\compl.
  \end{align*}
  Denote by $\Phi$ the image measure of $\widetilde{\Phi}$ under $x\to
  L_s x/\sqrt{L_t\bE[\widetilde{\tau}_2]}$, i.e.\ $\Phi(f) =
  \widetilde{\Phi}(f(L_s(L_t\bE[\widetilde{\tau}_2])^{-1/2}\cdot))$.
  It follows
  \begin{align}
    &\abs{E_U[f(X_n/\sqrt{n})] - \Phi(f)}\\
    &\le \abs{E_U[\indset{\widetilde{D}_n}f(X_n/\sqrt{n})] - \Phi(f)} + \norm{f}_\infty E_U[\indset{\widetilde{D}_n^\compl}],
  \end{align}
  where on the set $\widetilde{D}_n$ we use \eqref{eq:47} and get
  \begin{align}
    \begin{split}
      &\abs{E_U[\indset{\widetilde{D}_n}f(X_n/\sqrt{n})] -\Phi(f)}\\
      &\le \abs{E_U[\indset{\widetilde{D}_n}(f(X_n/\sqrt{n}) - f(L_s\widehat{X}_{\widetilde{W}_n}/\sqrt{n}) )]}\\
      &\hspace{2cm}+ \abs{E_U[\indset{\widetilde{D}_n}(f(L_s\widehat{X}_{\widetilde{W}_n}/\sqrt{n}) - f(L_s\widehat{X}_{[n/(L_t\bE[\widetilde{\tau}_2])]}/\sqrt{n}))]}\\
      &\hspace{4cm}+ \abs{E_U[\indset{\widetilde{D}_n}( f(L_s\widehat{X}_{[n/(L_t\bE[\widetilde{\tau}_2])]}/\sqrt{n}))] - \Phi(f)}\\
      &\le CL_f \Big(\frac{n^\alpha\log n}{\sqrt{n}} +
        \frac{n^{1/2-\varepsilon}}{\sqrt{n}}\Big) + \Abs{E_U\bigg[
        f\bigg(
        \frac{L_s\widehat{X}_{[n/(L_t\bE[\widetilde{\tau}_2])]}}{\sqrt{n/(L_t\bE[\widetilde{\tau}_2])}}
        \sqrt{1/(L_t\bE[\widetilde{\tau}_2])}\bigg) \bigg] -\Phi(f) }
      \\
      &\xrightarrow[n\to \infty]{} 0 \qquad \text{a.s.}
    \end{split}
  \end{align}
  This hold by using \eqref{eq:48} and the fact that $\alpha$ can be
  chosen close to $0$. Additionally we have
  \begin{equation}
    E_U[\indset{\widetilde{D}_n^\compl}] \le P_U(\widetilde{A}_n) +
    P_U(\widetilde{B}_n) + P_U(\widetilde{C}_n) \xrightarrow[n\to
    \infty]{} 0 \quad \text{a.s.}
  \end{equation}
  by \eqref{eq:49}, \eqref{eq:50} and \eqref{eq:51}. Therefore we have
  proven the theorem for bounded Lipschitz function, which, by the
  Portmanteau-theorem, implies weak convergence.
\end{proof}

\subsection{Dimension $d=1$}
For dimension $d=1$ we need to adapt some more results from
Section~\ref{sec:dimension1}. We transfer the notation from
Section~\ref{sec:dimension1} to the coarse-grained random walk by
setting
\begin{align*}
  \bar{X}_i \coloneqq \wt{X}_{\wt{R}_i},\qquad
  \bar{X}'_i \coloneqq \wt{X}'_{\wt{R}_i}.
\end{align*}
We write all objects introduced in Section~\ref{sec:dimension1} with a
bar meaning the same object but exchanging $\widehat{X}$ with
$\bar{X}$ and $\widehat{X}'$ with $\bar{X}'$, furthermore for
notational convenience we write $\bar{R}_i = L_t\widetilde{R}_i$ and
define
\begin{align*}
  \bar{\phi}_1(x,x') \coloneqq \sum_{y,y'} (y-x)
  \bP^\joint\big(X_{\bar{R}_1}=y, X'_{\bar{R}_1}=y'\,\vert\, X_0=x,
  X'_0=x'\big).
\end{align*}
Note that $\bar{\phi}_2,\bar{\phi}_{11},\bar{\phi}_{22}$ and
$\bar{\phi}_{12}$ are derived analogously from
$\phi_2,\phi_{11},\phi_{22}$ and $\phi_{12}$ respectively. We set
\begin{align*}
  \bar{A}^{(1)}_n &\coloneqq \sum_{j=0}^{n-1} \bar{\phi}_1(X_{\bar{R}_j},X'_{\bar{R}_j}), \quad \bar{A}^{(2)}_n \coloneqq \sum_{j=0}^{n-1} \bar{\phi}_2(X_{\bar{R}_j},X'_{\bar{R}_j})\\
  \bar{A}^{(11)}_n &\coloneqq \sum_{j=0}^{n-1} \bar{\phi}_{11}(X_{\bar{R}_j},X'_{\bar{R}_j}), \quad \bar{A}^{(22)}_n \coloneqq \sum_{j=0}^{n-1} \bar{\phi}_{22}(X_{\bar{R}_j},X'_{\bar{R}_j})\\
  \bar{A}^{(12)}_n &\coloneqq \sum_{j=0}^{n-1} \bar{\phi}_{12}(X_{\bar{R}_j},X'_{\bar{R}_j})
\end{align*}
and define
\begin{align*}
  \bar{M}_n \coloneqq X_{\bar{R}_n} - \bar{A}^{(1)}_n, \quad \bar{M}'_n \coloneqq X'_{\bar{R}_n} - \bar{A}^{(2)}_n.
\end{align*}
Furthermore, let, similar to \eqref{eq:18},
\begin{align*}
  B_n\coloneqq \#\{0\le j \le n\colon \abs{X_{\bar{R}_j}-X'_{\bar{R}_j}} \le 2L_sn^a  \}
\end{align*}
be the number of times a black box is visited from the point of view
of the original random walks $X$ and $X'$. Setting 
\begin{equation*}
  \bar{B}_n \coloneqq \#\{0\le j \le n\colon \abs{\bar{X}_j-\bar{X}'_j} \le n^a  \}
\end{equation*}
as the number of times a black box is visited by the coarse-grained
walks, we see that $\bar{B}_n\le B_n$, which is the reason for the
choice of the different bound in the definition of $B_n$.
\begin{lemma}
  \label{lem:abstract bound_for_A1_and_A2}
  \begin{enumerate}[1.]
  \item There exist
    $0\leq \delta_{\bar{B}}\leq 1/2, c_{\bar{B}} < \infty$ such that
    \begin{align}
      \label{eq:abstract bound_R_n_moment}
      \bE^\joint[B_n^{3/2}] \leq c_{\bar{B}}n^{1+\delta_{\bar{B}}}\quad \text{for all } n.
    \end{align}
  \item There exist $\delta_{\bar{C}}>0, c_{\bar{C}}< \infty$ such
    that
    \begin{align}
      \label{eq:abstract bound_for_A1_and_A2}
      \bE^\joint_{0,0}\left[ \frac{\abs{\bar{A}_n^{(1)}}}{\sqrt{n}} \right], \bE^\joint_{0,0}\left[
      \frac{\abs{\bar{A}_n^{(2)}}}{\sqrt{n}} \right] \leq
      \frac{c_{\bar{C}}}{n^{\delta_{\bar{C}}}} \quad \text{for all }n.
    \end{align}
  \end{enumerate}
\end{lemma}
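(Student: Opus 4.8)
\textbf{Proof plan for Lemma~\ref{lem:abstract bound_for_A1_and_A2}.} The plan is to mirror the proof of Lemma~\ref{lem:bound_for_A1_and_A2} (whose full details live in Section~\ref{sec:proofs}), carefully tracking the extra bookkeeping that the coarse-graining introduces and the weaker independence (finite-range correlations rather than true independence of increments in the $\indi$-case, see Remark~\ref{rem:regen_times_almost_independent}). The two assertions require somewhat different inputs, so I would treat them in order.

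For part 1, the moment bound on $B_n$: by definition $\bar B_n \le B_n$, and since the displacements $\widehat Y_i, \widehat Y'_i$ live in $B_{L_s}$, the two counting functionals $B_n$ and $\bar B_n$ differ only by a bounded factor in the thresholds ($2L_s n^a$ versus $n^a$), so it suffices to bound $\bE^\joint[B_n^{3/2}]$ directly. I would decompose the trajectory $(\bar X_i, \bar X'_i)$ into the black-box excursions using the stopping times $\mathcal D_{n,i}, \mathcal R_{n,i}$ from \eqref{eq:defn_MathcalDn}--\eqref{eq:defn_MathcalRn}. The time spent in the $i$-th black box is controlled by Corollary~\ref{cor:separation corollary d=1} (adapted to the coarse-grained walk, which is legitimate because Lemma~\ref{lem:abstract separ} gives \eqref{eq:abstract sep d=1}): each excursion lasts at most $n^{b_2}$ with stretched-exponentially small failure probability. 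The number of excursions before time $n$ is, by the invariance principle for $\bar X - \bar X'$ under $\bP^\indi$ (available here with finite-range correlations via Theorem~1 of \cite{DoukhanMassartRio1994} as invoked in Lemma~\ref{lem:abstract separ}) transferred to $\bP^\joint$ via Lemma~\ref{lem:abstract_TVdistance-joint-ind}, of order $n^{1/2}$ up to lower-order fluctuations; and between successive black boxes the walkers are separated by at least $n^a$, so the coupling error per step is $O(n^{-a\tilde\beta})$ and summable once $\tilde\beta$ is large. Combining these — each black box costs $\le n^{b_2}$ time, there are $\lesssim n^{1/2+\text{small}}$ of them — gives $B_n \lesssim n^{1/2+b_2+\text{small}}$ with high probability, and since $B_n \le n$ deterministically one extracts $\bE^\joint[B_n^{3/2}] \le c_{\bar B} n^{1+\delta_{\bar B}}$ with $\delta_{\bar B} < 1/2$ by choosing $b_2<1/8$ and tuning $\tilde\beta$; the $3/2$ moment is controlled exactly as in \cite[Lemma~3.14]{BirknerCernyDepperschmidt2016} using the tail bounds from Lemma~\ref{lem:abstract_JointRegTimesBound} and large-deviation estimates for sums of heavy-tailed increments.

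For part 2, smallness of $\bar A_n^{(1)}/\sqrt n$ and $\bar A_n^{(2)}/\sqrt n$: here I would use that the coarse-grained analogues of \eqref{eq:estimatesPhi}--\eqref{eq:estimatesPhi11} hold, namely $|\bar\phi_1(x,x')|, |\bar\phi_2(x,x')| \le C/n^2$ whenever $|x-x'| \ge 2L_s n^a$. The proof of these is the same total-variation computation as in the display following \eqref{eq:estimatesPhi11}, now using Lemma~\ref{lem:abstract_TVdistance-joint-ind} in place of Proposition~\ref{prop:TVdistance-joint-ind-1step} and the $\indi$-centering $\bE^\indi[\bar X_1 - \bar X'_1]=0$ (which follows from the symmetry Assumption~\ref{ass:abstract_symmetry_of_point_reflection} together with Remark~\ref{rem:indMC}'s abstract analogue, since the two walks live in independent environments); one needs $\tilde\beta \ge 2 + 2/a$. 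Then $\bar A_n^{(1)} = \sum_{j=0}^{n-1} \bar\phi_1(X_{\bar R_j}, X'_{\bar R_j})$ splits into the black-box steps ($\le B_n$ many, each contributing $\le C_{\bar\phi}$ by the boundedness analogue of \eqref{eq:boundAllPhi}) and the good steps (each contributing $\le C/n^2$), so $|\bar A_n^{(1)}| \le C_{\bar\phi} B_n + C n \cdot n^{-2}$. Taking expectations, using part 1 with Jensen (or directly $\bE[B_n] \le (\bE[B_n^{3/2}])^{2/3} \le C n^{2/3 + 2\delta_{\bar B}/3}$), and dividing by $\sqrt n$ gives a bound of order $n^{2/3+2\delta_{\bar B}/3 - 1/2} = n^{-\delta_{\bar C}}$ with $\delta_{\bar C} > 0$ provided $\delta_{\bar B}$ is taken small enough — which it is, since we may tune $b_2$ and $\tilde\beta$.

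I expect the main obstacle to be the same as in the toy-model proof, but sharpened by the coarse-graining: controlling the number of black-box visits requires the invariance principle for the difference walk $\bar X - \bar X'$ under $\bP^\indi$, and in the abstract setting this sequence of increments is \emph{not} i.i.d.\ but only $2$-dependent (Remark~\ref{rem:ind_increments_almost_independent}, Remark~\ref{rem:regen_times_almost_independent}). One therefore has to split sums into even- and odd-indexed sub-sums of genuinely independent terms — exactly the device already used in the proofs of Lemma~\ref{lem:abstract ind-to-joint} and Lemma~\ref{lem:abstract_joint_fluctuations} — and verify that the heavy-tailed large-deviation estimates (e.g.\ \cite{Nagaev82}) and the annealed invariance principle still apply to each sub-sum; the dependence of the transitions on the local environment configuration $\widehat\eta_i$ and displacement $\widehat Y_i$ is harmless because, by Lemma~\ref{lem:abstract_JointRegTimesBound} and \eqref{eq:61}, these only affect the first two steps and the tail bounds are uniform over all good local configurations. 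Apart from this dependence bookkeeping and the extra $L_s$-factors in the thresholds, the argument is a routine transcription of \cite[Lemma~3.14]{BirknerCernyDepperschmidt2016}; the detailed calculations are deferred to the appendix.
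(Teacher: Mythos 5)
Your treatment of part 1 is broadly in the spirit of the paper: the paper simply transfers the toy-model proof of Lemma~\ref{lem:bound_for_A1_and_A2} (detailed in Section~\ref{sec:proofs}), whose ingredients are the one-step coupling, the $d=1$ separation estimate \eqref{eq:sep in d=1} (here \eqref{eq:abstract sep d=1}), a bound on the number of black boxes $I_n$ (via stochastic domination of $\mathcal{R}_{n,1}-\mathcal{D}_{n,1}$ by a heavy-tailed return time, giving $\bP(I_n\ge k)\le e^{-ck^2/n}$ and $\bE[I_n^2]\le Cn$), and then $\bE[R_n^2]\le Cn^{1+2b_2}$ plus Jensen for the $3/2$-moment. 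Your sketch reaches essentially the same conclusion by a slightly looser route, which is acceptable.

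Part 2, however, has a genuine gap. You bound $\abs{\bar A^{(1)}_n}\le C_{\bar\phi}B_n + Cn\cdot n^{-2}$ and then take expectations of $B_n$. This cannot work: in $d=1$ the walks meet of order $\sqrt{n}$ times, so $B_n\gtrsim \sqrt{n}$ (indeed $B_n$ is typically of order $n^{1/2+b_2}$ counting excursion lengths), and hence $\bE[B_n]/\sqrt{n}$ does not tend to $0$ — eliminating the possibility of any bound of the form $c/n^{\delta_{\bar C}}$ by this route. Your own arithmetic betrays this: $2/3+2\delta_{\bar B}/3-1/2 = 1/6+2\delta_{\bar B}/3 > 0$, so the exponent you produce is positive, not $-\delta_{\bar C}$. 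The point of the toy-model proof (which the paper's proof of the present lemma invokes: ``we use a bound on mixingales and the separation lemma for $d=1$'') is precisely that the black-box contributions must be summed \emph{with cancellation}, not in absolute value: one sets $D_{n,m}\coloneqq \bar A^{(1)}_{\mathcal{D}_{n,m}}-\bar A^{(1)}_{\mathcal{R}_{n,m-1}}$, uses the reflection symmetry of the excursion types $W_{n,j}$ to see that these increments are centred (and their conditional means given the type cancel in pairs, cf.\ \eqref{eq:11}--\eqref{eq:12}), uses $\bE[\abs{D_{n,j}}\mid W_{n,j}=w]\le Cn^{b_2}$ from the separation corollary and the exponential mixing of $(W_{n,j})_j$ (from \eqref{eq:lower bound for W}) to show $(S_{n,m})_m=(\sum_{j\le m}D_{n,j})_m$ is a mixingale, and then applies McLeish's maximal inequality to get $\bE[\abs{S_{n,I_n}}\ind{I_n\le n^{3/4}}]/\sqrt{n}\le Kn^{-1/8+3b_2/4}$, which decays because $b_2<1/8$; the remaining terms are handled with $\bE[I_n\ind{I_n>n^{3/4}}]\le ne^{-c\sqrt{n}}$ and the $C/n^2$ bound on $\bar\phi_1$ off the black boxes. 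Without this square-root cancellation across excursions your argument fails, and adding it is not a cosmetic fix but the core of the proof you would need to transcribe (together with the symmetry of the excursion-type chain for the coarse-grained pair, which you never verify).
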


\begin{proof}
  The proof for \eqref{eq:bound_R_n_moment} in
  Lemma~\ref{lem:bound_for_A1_and_A2} uses very general methods in
  combination with the separation lemma for $d=1$ and
  Proposition~\ref{prop:TVdistance-joint-ind-1step}, note that the
  difference now only lies in the fact that we use different
  regeneration times $\bar{R}_i$ for which we have already shown all
  necessary bounds. Therefore this can be adapted in a straightforward
  manner for \eqref{eq:abstract bound_R_n_moment}.

  \smallskip
  \noindent
  For the proof of \eqref{eq:bound_for_A1_and_A2} we use a bound on
  mixingales and the separation lemma for $d=1$. Both hold for the
  abstract setting and therefore \eqref{eq:abstract
    bound_for_A1_and_A2} holds as well.
\end{proof}
We set $\bar{\sigma}^2\coloneqq \sum_{y,y'} y^2 \bP^\indi(\bar{X}_1
=y,\bar{X}'_1=y' \,\vert\, \bar{X}_0=0,\bar{X}'_0=0)$
\begin{lemma}
  There \label{lem:abstract_key_lemma_for_d=1} exist
  $C>0,\wt{b}\in(0,1/4)$ such that for all bounded Lipschitz
  continuous $f:\R^2\to\R$
  \begin{align}
    \Big\vert \bE^\joint_{0,0}\Big[ f\bigg(
    \frac{\bar{X}_n}{\bar{\sigma}\sqrt{n}},\frac{\bar{X}'_n}{\bar{\sigma}\sqrt{n}}
    \bigg) \Big] - \bE\big[ f(Z)\big] \Big\vert \leq
    L_f\frac{C}{n^{\wt{b}}}
  \end{align}
  where $Z$ is two-dimensional standard normal and $L_f$ the Lipschitz
  constant of $f$.
\end{lemma}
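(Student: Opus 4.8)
The plan is to mirror the proof of Lemma~\ref{lem:key lemma for d=1} verbatim, with $\widehat{X},\widehat{X}'$ replaced by the coarse-grained walks $\bar{X},\bar{X}'$ observed along their regeneration times $\bar{R}_i$, and with $\widehat{\sigma}$ replaced by $\bar{\sigma}$. All the structural ingredients that powered the one-dimensional CLT in the toy model are now available in the abstract setting: the tail bounds on $\bar{R}_i-\bar{R}_{i-1}$ (Lemma~\ref{lem:abstract_JointRegTimesBound}, which gives finite moments of all orders below $\tilde{\beta}$ for the spatial and temporal regeneration increments when $\tilde{\beta}$ is tuned large), the one-step coupling estimate (Lemma~\ref{lem:abstract_TVdistance-joint-ind}), the separation lemma in $d=1$ (Lemma~\ref{lem:abstract separ}, equation~\eqref{eq:abstract sep d=1}), and the two bounds of Lemma~\ref{lem:abstract bound_for_A1_and_A2} on $\bE^\joint[B_n^{3/2}]$ and on $\bE^\joint[|\bar{A}^{(1)}_n|/\sqrt{n}]$, $\bE^\joint[|\bar{A}^{(2)}_n|/\sqrt{n}]$.

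First I would record the martingale decomposition $\bar{X}_n=\bar{M}_n+\bar{A}^{(1)}_n$, $\bar{X}'_n=\bar{M}'_n+\bar{A}^{(2)}_n$ already set up in the text above the lemma, together with the estimates on the conditional covariance matrix $\bar{Q}_n$: exactly as in \eqref{eq:estimatesPhi}--\eqref{eq:estimatesPhi11}, using Lemma~\ref{lem:abstract_TVdistance-joint-ind} in place of Proposition~\ref{prop:TVdistance-joint-ind-1step}, one gets that when $|\bar{X}_{j}-\bar{X}'_{j}|\ge n^{a}$ the drift terms $\bar{\phi}_1,\bar{\phi}_2,\bar{\phi}_{12}$ are $O(n^{-2})$ and $\bar{\phi}_{11},\bar{\phi}_{22}$ are within $O(n^{-2})$ of $\bar{\sigma}^2$, provided $\tilde{\beta}\ge 2+2/a$; the tuning that allows $a$ to be arbitrarily small is available because $\tilde{\beta}$ can be made arbitrarily large. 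Summing over $j\le n$ and separating the at most $B_n$ ``black-box'' indices from the rest yields the analogue of \eqref{eq:19}, namely $\sum_{i=1}^{2}|n\bar{\sigma}^2-\sum_{j=1}^{n}\bar{\lambda}_{j,i}|\le C B_n + C/n$, where $\bar{\lambda}_{j,i}$ are the eigenvalues of $\bar{Q}_j$; here I invoke the eigenvalue stability bound from \cite{MR0158519} just as in the toy model.

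Next I would apply the martingale CLT with rate, Corollary~1.3 in \cite{Rackauskas1995}, to the two-dimensional martingale $(\bar{M}_n,\bar{M}'_n)$ rescaled by $\bar{\sigma}\sqrt{n}$. The hypotheses are: a third-moment bound on the increments, which follows from Lemma~\ref{lem:abstract_JointRegTimesBound} (finite range of $X$ plus $T_1$ having finite third moment when $\tilde{\beta}>3$); and control of $\bE^\joint[(\sum_i |1-\sum_k \bar{\lambda}^2_{k,i}/(\bar{\sigma}^2 n)|)^{3/2}]$, which by the previous paragraph is bounded by $C n^{-3/2}\bE^\joint[(B_n+1/n)^{3/2}]\le C n^{-3/2}\cdot n^{1+\delta_{\bar{B}}}$ using \eqref{eq:abstract bound_R_n_moment}; since $\delta_{\bar{B}}<1/2$ this tends to zero polynomially. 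This gives $|\bE^\joint_{0,0}[f(\bar{M}_n/(\bar{\sigma}\sqrt{n}),\bar{M}'_n/(\bar{\sigma}\sqrt{n}))]-\bE[f(Z)]|\le L_f C n^{-b^*}$ with $b^*=\tfrac13(\tfrac12-\delta_{\bar{B}})$. Finally, Lipschitz continuity of $f$ lets me replace $\bar{M}_n,\bar{M}'_n$ by $\bar{X}_n,\bar{X}'_n$ at the cost of $CL_f\,\bE^\joint_{0,0}[(|\bar{A}^{(1)}_n|+|\bar{A}^{(2)}_n|)/\sqrt{n}]\le CL_f n^{-\delta_{\bar{C}}}$ by \eqref{eq:abstract bound_for_A1_and_A2}, and setting $\tilde{b}\coloneqq \min\{b^*,\delta_{\bar{C}}\}\in(0,1/4)$ completes the proof.

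The main obstacle is not any single step but making sure the error budget is self-consistent: the exponent $a$ governing the black-box width, the exponent $b'<1/2$ in \eqref{eq:defn_MathcalDn}, the regeneration-tail exponent $\tilde{\beta}$, and the Rackauskas rate $b^*$ all interact, and one must check that $\tilde{\beta}$ can be chosen large enough (which it can, by tuning $\varepsilon_U$ and $\tilde{s}_{\mathrm{max}}$ small via Lemma~\ref{lem:abstract_JointRegTimesBound}) so that simultaneously $\tilde{\beta}\ge 2+2/a$, $\delta_{\bar{B}}<1/2$, and the third moment of the increments is finite. A secondary point of care is that, unlike in Section~\ref{sec:dimension1}, the $\indi$-increments $(\bar{X}_{\bar{R}_i}-\bar{X}_{\bar{R}_{i-1}})_i$ are only finitely-range correlated rather than i.i.d.\ (Remarks~\ref{rem:ind_increments_almost_independent} and~\ref{rem:regen_times_almost_independent}); one should note, as is done for the separation lemma, that splitting into even- and odd-indexed subsequences reduces any i.i.d.-type estimate needed for $\bar{\sigma}^2$ and for the moment bounds to the independent case, so this causes no genuine difficulty.
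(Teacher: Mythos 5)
Your proposal is correct and follows essentially the same route as the paper: mirror the proof of Lemma~\ref{lem:key lemma for d=1}, using Lemma~\ref{lem:abstract_TVdistance-joint-ind} for the drift/covariance estimates, the eigenvalue stability bound together with \eqref{eq:abstract bound_R_n_moment} to control the Rackauskas error term, and \eqref{eq:abstract bound_for_A1_and_A2} to remove the predictable parts. One bookkeeping point the paper handles explicitly and you elide: the decomposition set up above the lemma is $\bar M_n = X_{\bar R_n}-\bar A^{(1)}_n$ for the \emph{original} walk at times $\bar R_n=L_t\wt R_n$ (and Lemma~\ref{lem:abstract bound_for_A1_and_A2} is stated for those drift sums), not for the coarse-grained $\bar X_n=\wt X_{\wt R_n}$; the paper therefore first swaps $\bar X_n$ for $X_{L_t\wt R_n}$ inside $f$, at cost $CL_f/\sqrt n$ since they differ only by the single bounded displacement $\wt Y_n$ (of size at most $L_s$), and then runs your argument. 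If instead one literally redefines the drift functions for the coarse-grained chain, one would need to reprove the analogues of \eqref{eq:abstract bound_for_A1_and_A2} for those sums, which the paper's Lipschitz-swap step avoids; with that one-line adjustment your argument coincides with the paper's.
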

\begin{proof}
  The idea is to write $\bar{X}_i = L_sX_{L_t\wt{R}_i} + \wt{Y}_i$ and
  $\bar{X}'=L_sX'_{L_t\wt{R}_i} + \wt{Y}'_i$ and use the martingale
  decomposition for $X$ and $X'$. To illustrate we give the starting
  argument in more detail. Since $f$ is Lipschitz, we obtain
  \begin{multline*}
    \Big\vert \bE^\joint_{0,0}\Big[ f\bigg(
    \frac{\bar{X}_n}{\bar{\sigma}\sqrt{n}},\frac{\bar{X}'_n}{\bar{\sigma}\sqrt{n}}
    \bigg) \Big] - \bE\big[ f(Z)\big] \Big\vert\\
    \le \Big\vert \bE^\joint_{0,0}\Big[ f\bigg(
    \frac{\bar{X}_n}{\bar{\sigma}\sqrt{n}},\frac{\bar{X}'_n}{\bar{\sigma}\sqrt{n}}
    \bigg) - f\bigg( \frac{L_sX_{L_t\wt{R}_n}}{\bar{\sigma}\sqrt{n}},
    \frac{L_sX'_{L_t\wt{R}_n}}{\bar{\sigma}\sqrt{n}} \bigg)\Big] \Big\vert\\
    + \Big\vert \bE^\joint_{0,0}\Big[ f\bigg( \frac{L_sX_{L_t\wt{R}_n}}{\bar{\sigma}\sqrt{n}}, \frac{L_sX'_{L_t\wt{R}_n}}{\bar{\sigma}\sqrt{n}} \bigg)\Big] - \bE\big[ f(Z)\big]\Big\vert \\
    \le \frac{C L_f}{\sqrt{n}}
    + \Big\vert \bE^\joint_{0,0}\Big[ f\bigg( \frac{L_sX_{L_t\wt{R}_n}}{\bar{\sigma}\sqrt{n}}, \frac{L_sX'_{L_t\wt{R}_n}}{\bar{\sigma}\sqrt{n}} \bigg)\Big] - \bE\big[ f(Z)\big]\Big\vert
  \end{multline*}
  Now we can then use the same arguments for $(\bar{M}_n,\bar{M}'_n)$
  as around \eqref{eq:covariance matrix}. The rest of the proof then
  follows by the same arguments as in the proof of Lemma~\ref{lem:key
    lemma for d=1}, noting that $\abs{\wt{Y}_i},\abs{\wt{Y}'_i}\le
  L_s$ and therefore the additional term from the above equation
  vanishes in the limit.
\end{proof}

Therefore the quenched CLT holds for the coarse-grained random walk
$\bar{X}$ in $d\ge 1$.

\appendix
\section{Detailed proofs of auxiliary results from Section~\ref{sec:preparations}}
\label{sec:proofs}
\subsection{Proof of Lemma~\ref{th:lemma2.13Analog}}
\label{sect:proof.th:lemma2.13Analog}
The analogous result for the single cone shell is Lemma~2.13 in
\cite{BirknerCernyDepperschmidt2016}. We will use similar arguments
but, as already pointed out before, an additional complication arises
from the overlapping parts of the cones. Throughout the proof for
$r>0$ and $x \in \mathbb{Z}^d$ we denote by $B_r(x)$ the closed
$\ell^2$ ball of radius $r$ around $x$. We will write $\cs(x_\base)$
and $\cs(x_\base')$ as an abbreviation for
$\cs(x_\base;b_\inn,b_\out, s_\inn, s_\out,\infty)$ and
$\cs(x_\base';b_\inn,b_\out,s_\inn,s_\out,\infty)$ respectively.

We split the proof in two cases according to $d=1$ or $d \ge 2$. We
will reuse some arguments from the case $d=1$ for higher dimensions
and thus begin with the case $d=1$.

\medskip
\noindent
\emph{Case $d=1$}. Without loss of generality we may assume
$x_\base < x_\base'$. We will focus on the differences to the version
of this lemma with only one cone. For that we distinguish according to
the distance of the bases of the cones.

\smallskip

First let $\norm{x_\base' -x_\base}_2 \leq 2 b_\out$. Since, in this
case, the bases of the two outer cones already overlap, it is
impossible for any path $\gamma $ to cross
$\dcs(x_\base,x_\base';\infty)$ from between $x_\base$ and $x_\base'$
without hitting one of the bases. (It easy to see how the picture in
Figure~\ref{fig:coneshells_in_d=1} changes in this case.) In this case
we can use the same arguments as in
\cite{BirknerCernyDepperschmidt2016}, since we can combine the two
cones to a single larger cone with the cone shell being
\begin{multline*}
        \dcs(x_\base,x'_\base) \coloneqq\left\{ (x,n) \in\bZ\times\bZ_+:
        x_\base - b_\out - n
        s_\out \leq x \leq x_\base - b_\inn - n s_\inn \right\}\\
        \cup \left\{ (x,n) \in\bZ\times\bZ_+: x_\base' + b_\inn + n s_\inn
        \leq x \leq x_\base' + b_\out + n s_\out \right\}.
\end{multline*}

\smallskip
\noindent
Now let $\norm{x_\base' -x_\base}_2 > 2 b_\out$. In particular, the
two cones do not overlap at time $t=0$. The two cone shells are each
made up of two wedges
\begin{align*}
        c^1_\ell
        & \coloneqq \left\{ (x,n) \in\bZ\times\bZ_+ : x_\base - b_\out -
        n s_\out \leq x \leq x_\base - b_\inn - n s_\inn \right\},\\
        c^1_r
        & \coloneqq \left\{ (x,n) \in\bZ\times\bZ_+ : x_\base + b_\inn +
        n s_\inn \leq x \leq x_\base + b_\out + n s_\out \right\},\\
        c^2_\ell
        & \coloneqq \left\{ (x,n) \in\bZ\times\bZ_+ : x_\base' - b_\out
        - n s_\out \leq x \leq x_\base' - b_\inn - n s_\inn \right\},\\
        c^2_r
        & \coloneqq \left\{ (x,n) \in\bZ\times\bZ_+ :  x_\base' + b_\inn
        + n s_\inn \leq x \leq x_\base' + b_\out + n s_\out \right\}.
\end{align*}
We build the cone shell for the double cone using the above wedges. We
have to isolate the inner cones using the double cone shell and need
to make sure, that the cone shell doesn't evolve into any inner cone.
Obviously this isn't a problem for $c^1_\ell$ and $c^2_r$ since these
two wedges both evolve away from the other cone. It remains to find a
suitable hight at which we cut and merge $c^1_r$ and $c^2_\ell$ to
avoid their propagation into the inner cones. Given the parameters of
the cones let
\begin{align}
  \label{eq:defn_meeting_time_inner_outer_cone}
  t^* \coloneqq \frac{(x_\base'-x_\base)-b_\out- b_\inn}{s_\out+s_\inn}
\end{align}
be the time at which the inner cones meet the respective outer cones
of the other double cone. Then the cone shell is the following set
\begin{align*} 
  \dcs(x_\base,x'_\base) \coloneqq c^1_\ell \cup c^2_r \cup \bigl(c^1_r \cap \bZ\times[0,\lceil t^*\rceil]
  \bigr) \cup \bigl( c^2_\ell \cap \bZ \times [0,\lceil t^*\rceil] \bigr).
\end{align*}
Note that we slightly deviate here from the definition given above in
equation~\eqref{eq:defn_double_cone_shell}. The cone shell given here
is a bit smaller and the arguments therefore hold as well for the a
cone shell defined by \eqref{eq:defn_double_cone_shell}. A sketch of
this cone shell can be seen in Figure~\ref{fig:coneshells_in_d=1}.

The cone shell is thus made up of four wedges and in Claim~2.15 from
\cite{BirknerCernyDepperschmidt2016} it was shown that for any
$\varepsilon'>0$, by tuning the parameters correctly, the contact
process $\eta^{c^1_\ell}$ restricted to the wedge $c^1_\ell$ survives
with probability at least $1-\varepsilon'$. The same holds for the
other wedges and therefore $\eta^{\dcs}$ survives in every wedge with
probability at least $1-4\varepsilon'$. For the outer wedges
$c^1_\ell$ and $c^2_r$ every path crossing the cone shell has to hit
the contact process if it survives. For the inner wedges we have to
argue a bit more carefully since we cut them at a certain height. It
is theoretically possible that the contact process survives up until
that point in time in both wedges but evolves in such a way that the
clusters of points visited (infected) by the contact process do not
intersect. Then there exists a path which crosses the cone shell but
doesn't hit the contact process $\eta^\dcs$.

\noindent
Theorem~2 from \cite{CoxMaricSchinazi2010} tells us that, if we
condition on the event
\begin{align*}
  B \coloneqq \bigl\{ \eta^{c^1_r}_t \neq 0, \forall t \geq 0 \bigr\} \cap
  \bigl\{\eta^{c^2_\ell}_t \neq 0, \forall t \geq 0 \bigr\},
\end{align*}
then the rightmost particle of the contact process is close to the
right border of the wedge. It follows, for every $\wt{\varepsilon} >0$
there exists $T >0$ such that for every $n \geq T$
\begin{align}
  \label{eq:41}
  \mathbb{P}\Bigl( \max\{x : \eta^{c^1_r}_n(x) =1  \} \leq
  \frac{1}{2}(b_\inn + b_\out + t(s_\inn+s_\out)) \,\Big\vert\,B \Bigr) < 1
  - \wt{\varepsilon}.
\end{align}
An analogous bound holds for the left most particle in the wedge
$c^2_\ell$. So $\eta^{c^1_r}$ and $\eta^{c^2_\ell}$, if they both
survive, will eventually meet. Note that we in fact only need the
contact process to survive until we cut the inner wedges which happens
at the time $t^*$ from \eqref{eq:defn_meeting_time_inner_outer_cone}.
The probability of this joint survival event is greater than
$1-\varepsilon'$.

Recall that $\norm{x_\base-x'_\base}> 2b_\out$, therefore by
\eqref{eq:defn_meeting_time_inner_outer_cone} we can increase $t^*$ by
increasing the base $b_\out$ of the outer cones which results in
\eqref{eq:41}. Furthermore, conditioned on the event $B$, the contact
processes $\eta^{c^1_r}$ and $\eta^{c^2_\ell}$ will survive and meet
each other with probability at least $1-2\wt\varepsilon$. Thus any
path crossing the cone shell has to hit the contact process
$\eta^\dcs$ at some point with probability at least $1-\varepsilon$
for some $\varepsilon>0$. So using the same arguments as in
\cite{BirknerCernyDepperschmidt2016} we obtain the claim for $d=1$.

\medskip
\noindent
\emph{Case $d>1$}. Recall the definition of $M^\dcs_n$ in
\eqref{eq:defn_middle_tube_dcs}. Note that for every path
\begin{align*}
        \gamma=((x_m,m),(x_{m+1},m+1),\dots,(x_{m'},m'))
\end{align*}
crossing $\dcs(x_\base,x_\base')$ there exists $i \in [m,m']$ with
$x_i \in M^\dcs_i$, i.e.\ every path will hit at least one point in
$\mathcal{M}$. Without loss of generality let $n$ be the time at which
the path hits $\mathcal{M}$. By Lemma~2.9 from
\cite{BirknerCernyDepperschmidt2016} it follows that with high
probability after $n$ steps the contact process started from a single
site will coincide with the contact process started from the upper
invariant measure in a ball of radius $n s_\coupl$ around the starting
site.

Let $\rho>0$ be a small constant such that for every $x\in M^\dcs_n$
we have
\begin{align*}
        B_{n \rho s_\coupl }(x)\times\{ \lfloor (1-\rho)n \rfloor \}\subset
        \dcs(x_\base,x'_\base).
\end{align*}
To get into the setting of \cite{BirknerCernyDepperschmidt2016} we
need to find unit vectors $\hat{v}^i \in \R^d, 1 \leq i \leq \hat{N}$,
with $\hat{N}$ sufficiently large so that we can ``cover'' the middle
tubes $M^\dcs_n$ in the following sense: For every $x\in M^\dcs_n$
there exists an $i \leq \hat{N}$ such that for $\rho$ small there is
$\delta=\delta(\rho) >0$ with the property
\begin{align}
  \label{eq:vectorProperty}
  \begin{split}
    & \text{the length of the intersection of the half-line } \{
      t\hat{v}^i : t \geq 0 \} \text{ with the (real) ball }\\
    & \{ y \in \R^d : \norm{x-y}_2 \leq n \rho s_{\coupl} \} \text{ is
      at least } \delta n.
  \end{split}
\end{align}
Following the idea in \cite{BirknerCernyDepperschmidt2016} we can
choose such vectors for the single cone shells $\cs(x_\base)$ and
$\cs(x_\base')$ respectively. The union $\{v^{i}:i=1,\dots, N \}\cup
\{v'^{,i}:i=1,\dots, N' \}$ of those two sets of vectors has the above
property for the double cone shell $\dcs(x_\base,x_\base')$. Now we
approximate the half-line $\{ tv^i: t\geq 0 \}$ with a self-avoiding
nearest neighbour path in $\bZ^d$ $\alpha^i = (\alpha^i(j))_{j \in
  \mathbb{N}}$ given by
\begin{enumerate}[(a)]
\item $\alpha^i_0 = x_\base$,
\item $\alpha^i$ makes steps only in direction of $v^i$,
\item $\alpha^i$ stays close to $tv^i$, that is $\{
  \alpha^i_j:j\in \N \} \subset \{ tv^i + z: t\geq 0,z\in \R^d,
  \norm{z} \leq d \}$.
\end{enumerate}
Analogously we define $\alpha'^{,i}$ in direction of $v'^{,i}$ for
$1\leq i \leq N'$. Next we define contact processes $\eta^{(i)}$
restricted to the set $\mathcal{W}^i \coloneqq (\alpha^i \times \bZ_+)
\cap \dcs(x_\base,x'_\base)$ and analogous $\eta'^{,(i)}$ restricted
to $\mathcal{W}'^{,i} \coloneqq (\alpha'^{,i} \times \bZ_+) \cap
\dcs(x_\base,x'_\base)$ and started from $\indset{\mathcal{W}^i
  \cap(\bZ^d\times \{0\})}$ and $\indset{\mathcal{W}'^{,i}\cap (\bZ^d
  \times\{0\})}$ respectively. By the definition of
$\dcs(x_\base,x'_\base)$ we avoid the problem of having any of the
$\eta^{(i)}$ or $\eta'^{,(i)}$ evolving into the inner double cone.

Let $N^*=N+N'$ be the total number of vectors above and
$\{\hat{\eta}^{(i)}:i\le N^* \}$ the collection of contact processes
defined. Define
$S\coloneqq\{ \hat{\eta}^{(i)} \text{ survives for every }i\leq N^*
\}$. Given $\varepsilon>0$, choose $\varepsilon^*$ so that
$(1-\varepsilon^*)^{N^*}\ge 1-\varepsilon/2$. Given that we get
\begin{align}
        \bP(S) \geq 1-\varepsilon/2.
\end{align}
The rest follows exactly as in the proof of Lemma~2.13 from
\cite{BirknerCernyDepperschmidt2016}.

\subsection{Proof of Lemma~\ref{lemma:expTailsSigmaJoint}}
\label{proof:lemma:expTailsSigmaJoint}
In order to verify \eqref{eq:stochDomSigmaJoint} note that at stopping
times $\sigma^{\simi}_i$ all the reasons for zeros along the path of
the random walks have been explored and are contained in
$\mathcal{F}^\simi_{\sigma^{\simi}_i}$. On the other hand the
knowledge of ones of $\eta$ enforces the existence of certain open
paths for the $\omega$'s. Thus, \eqref{eq:stochDomSigmaJoint} follows
from the FKG inequality for the $\omega$'s.

For the proof of \eqref{eq:expTailsSigmaJoint} we consider first the
case $i=0$. We write
$\widehat{R}_{\kappa} \coloneqq (2R_{\kappa} +1)^d$ and
$\widehat{R}_\loc \coloneqq (2R_\loc +1)^d$ for the number of
elements of a ball with radius $R_\kappa$ or $R_\loc$ respectively.
The event $\{ \sigma^{\simi}_1 > n \}$ enforces that there are
space-time points $(y_j,-j)$ with $\eta_{-j}(y_j)=0$ for
$j=0,1,\dots,n$ in the $R_\loc$-vicinity of the paths of the two
random walks $X$ and $X'$. Here, it is enough to have such a point
in only one of the two $R_\loc$-vicinities. By Lemma~2.11 from
\cite{BirknerCernyDepperschmidt2016} the probability that
$\eta_{-j}(y_j)=0$ for a fixed choice of $y_0,\dots,y_n$ is bounded
from above by $\varepsilon(p)^{n+1}$.

Now we prove an upper bound for the number of relevant vectors
$(y_0,y_1,\dots, y_n) \in (\bZ^d)^{n+1}$. Each of the two random
walks has $\widehat{R}_\kappa^n$ possible $n$-step paths. Thus, the
two walks together have at most $\widehat{R}_\kappa^{2n}$ possible
$n$-step paths. Assume that at exactly $k$ times,
$0 \leq m_1 < m_2 < \dots < m_k \leq n$, for sites
$(y_{m_i}, -m_i) \in B_{R_\loc}(X_{m_i})\times \{ -m_i\} \cup
B_{R_\loc}(X_{m_i}')\times\{ -m_i \}$ we have
$\eta_{-m_i} (y_{m_i})=0$ and hence the corresponding
``determining'' triangle $D(y_{m_i},-m_i)$ is not empty.

Set $m_{k+1}=n$. Then the height of $D(y_{m_i},-m_i)$ is bounded
from below by $m_{i+1}-m_i$, because the triangles have to overlap
until time $n$ to enforce $\sigma^{\simi}_1>n$. For a fixed $n$-step
path of $(X,X')$ and fixed $m_1 < m_2 < \dots < m_k $, there are at
most $(2\widehat{R}_\loc)^k$ many choices for the $y_{m_i}$,
$i=1,\dots,k$. Here we have $2\widehat{R}_\loc$ choices for every
$y_{m_i}$ since those points are in
$B_{R_\loc}(X_{m_i}) \cup B_{R_\loc}(X_{m_i}')$. And inside
$D(y_{m_i},-m_i)$ we have at most
$\widehat{R}_\kappa^{m_{i+1}-m_i-1}$ choices to pick
$y_{m_i+1},y_{m_i+2},\dots, y_{m_{i+1}-1}$ (start with $y_{m_i}$,
then follow the longest open path which is not connected to
$\bZ^d \times \{ -\infty \}$, these sites are necessarily zeros of
$\eta$). Thus, there are at most
\begin{align*}
        \widehat{R}_\kappa^{2n} \sum_{k=1}^n \sum_{m_1 < m_2 <\dots <m_k
                \leq m_{k+1}=n}(2\widehat{R}_\loc)^k
        & \prod_{i=1}^k \widehat{R}_\kappa^{m_{i+1}-m_i-1}\\
        & = \widehat{R}_\kappa^{2n}\sum_{k=1}^n \binom{n}{k}
        (2\widehat{R}_\loc)^k\widehat{R}_\kappa^{n-k} \leq
        \widehat{R}_\kappa^{2n}(2\widehat{R}_\loc+\widehat{R}_ \kappa)^n
\end{align*}
possible choices of $(y_0,y_1,\dots,y_n)$ and hence we have
\begin{align*}
        \mathbb{P}(\sigma^{\simi}_1 > n) \leq
        (\widehat{R}_\kappa^{2}(2\widehat{R}_\loc+\widehat{R}_\kappa)\varepsilon(p))^n.
\end{align*}
The right hand side decays exponentially when $p$ is close to $1$.
The general case $i>0$ in \eqref{eq:expTailsSigmaJoint} follows by
employing \eqref{eq:stochDomSigmaJoint} and the argument for $i=0$.

\subsection{Proof of Lemma~\ref{lemma:expTailsTau}}
\label{sec:proof:lemma:expTailsTau}
We start with the proof of \eqref{eq:expTailsTau1}. We have, abbreviating $\bP_{x_0,x'_0}=\bP$
\begin{align*}
        \Pr& (\wt{\tau}^{\simi}_1(t) \geq n \,\vert \,
        \mathcal{F}^{\simi}_{\sigma^{\simi}_i})\\
        & = \Pr(\sigma^{\simi}_{i+1} \geq t \vee (n+\sigma^{\simi}_i) \,
        \vert \, \mathcal{F}^{\simi}_{\sigma^{\simi}_i}) + \sum_{j
                >i} \sum_{\ell = \sigma^{\simi}_i+1}^{t-1}
        \Pr(\sigma^{\simi}_j=\ell,\sigma^{\simi}_{j+1}\geq t \vee
        (\ell+n)\, \vert \, \mathcal{F}^{\simi}_{\sigma^{\simi}_i} )\\
        & \leq Ce^{-cn} + \sum_{\ell=\sigma^{\simi}_i +1}^{t-1}
        Ce^{-c((t-\ell)\vee n)}\Pr(\exists j>i : \sigma^{\simi}_j = \ell \,
        \vert \, \mathcal{F}^{\simi}_{\sigma^{\simi}_i})\\
        & \leq Ce^{-cn} + \ind{\sigma^{\simi}_i \leq t-n-2}
        \sum_{\ell=\sigma^{\simi}_i +1 }^{t-n-1} C e^{-c(t-\ell)} +
        \ind{n+1 \leq t} \sum_{\ell=t-n}^{t-1} Ce^{-cn}\\
        & \leq C(1 + \frac{e^{-c}}{1-e^{-c}} +n)e^{-cn}
\end{align*}
where we used Lemma~\ref{lemma:expTailsSigmaJoint} and
\begin{align*}
        \Pr\bigl(\sigma^{\simi}_j = \ell, \sigma^{\simi}_{j+1} \geq t \vee (\ell+n)
        \, \vert \, \mathcal{F}^{\simi}_{\sigma^{\simi}_i}\bigr)
        = \mathbb{E}\bigl[\ind{\sigma^{\simi}_j=\ell}\Pr(\sigma^{\simi}_{j+1}
        -\sigma^{\simi}_j \geq (t-\ell)\vee n \, \vert \,
        \mathcal{F}^{\simi}_{\sigma^{\simi}_j})\vert
        \mathcal{F}^{\simi}_{\sigma^{\simi}_i} \bigr]
\end{align*}
in the first inequality.

\medskip To prove \eqref{eq:expTailsTaum} we assume
$\sigma^{\simi}_i \leq t-n-m-1$ since otherwise the conditional
probability appearing in that display equals $0$. We have
\begin{align}
        \label{eq:29}
        \begin{split}
                \Pr& (R_t \geq i+m,\wt{\tau}^{\simi}_m(t) \geq n\, \vert \,
                \mathcal{F}^{\simi}_{\sigma^{\simi}_i}) \\
                & = \Pr(R_t = i+m,\wt{\tau}^{\simi}_m(t) \geq n\, \vert \,
                \mathcal{F}^{\simi}_{\sigma^{\simi}_i}) +\Pr(R_t >
                i+m,\wt{\tau}^{\simi}_m(t) \geq n\, \vert \,
                \mathcal{F}^{\simi}_{\sigma^{\simi}_i})\\
                & = \Pr(\sigma^{\simi}_{i+1} - \sigma^{\simi}_i \geq n, R_t=i+m \,
                \vert \, \mathcal{F}^{\simi}_{\sigma^{\simi}_i}) + \Pr(R_t >
                i+m,\wt{\tau}^{\simi}_m(t) \geq n\, \vert \,
                \mathcal{F}^{\simi}_{\sigma^{\simi}_i})\\
                & \leq Ce^{-cn} + \sum_{j>i} \sum_{k=\sigma^{\simi}_i+1}^{t-m-n}
                \sum_{\ell=k+n}^{t-m+1} \Pr(\sigma^{\simi}_j =
                k,\sigma^{\simi}_{j+1}=l,\sigma^{\simi}_{j+m-1}<t,
                \sigma^{\simi}_{j+m}\geq t \, \vert \,
                \mathcal{F}^{\simi}_{\sigma^{\simi}_i})\\
                & \leq Ce^{-cn} + \sum_{j>i} \sum_{k=\sigma^{\simi}_i
                        +1}^{t-m-n}\sum_{\ell=k+n}^{t-m+1}
                \Pr(\sigma^{\simi}_j=k,\sigma^{\simi}_{j+1}=l\, \vert \,
                \mathcal{F}^{\simi}_{\sigma^{\simi}_i}) \times
                (m-1)Ce^{-c(t-\ell)/(m-1)}\\
                & \leq Ce^{-cn} + C(m-1) \sum_{k=\sigma^{\simi}_i+1}^{t-m-n}
                \sum_{\ell=k+n}^{t-m+1}
                e^{-c(t-\ell)/(m-1)}\sum_{j>i}\Pr(\sigma^{\simi}_j=k \, \vert \,
                \mathcal{F}^{\simi}_{\sigma^{\simi}_i})\times Ce^{-c(\ell-k)}\\
                & \leq Ce^{-cn} + C^2(m-1)
                \sum_{k=\sigma^{\simi}_i+1}^{t-m-n}e^{ck-ct/(m-1)}
                \sum_{\ell=k+n}^{t-m+1} \exp\Bigl(-c\frac{m-2}{m-1}l\Bigr)
        \end{split}
\end{align}
where we used in the second inequality that
\begin{align*}
        \{ \sigma^{\simi}_{j+1}=\ell, \sigma^{\simi}_{j+m-1} < t,
        \sigma^{\simi}_{j+m} >t \} \subset \bigcup_{r=j+2}^{j+m} \{
        \sigma^{\simi}_r - \sigma^{\simi}_{r-1} \geq \frac{t-\ell}{m-1} \}
\end{align*}
together with Lemma~\ref{lemma:expTailsSigmaJoint}. For $m=2$ the
inequality proven in \eqref{eq:29} yields
\begin{align*}
        \Pr(R_t \geq i+2, \wt{\tau}^{\simi}_m(t) \geq n \, \vert \,
        \mathcal{F}^{\simi}_{\sigma^{\simi}_i}) \leq Ce^{-cn} + C^2
        \sum_{k=\sigma^{\simi}_i +1}^{t-n}(t-k-n)e^{-c(t-k)} \leq C'
        e^{-cn}\sum_{\ell=0}^{\infty} \ell e^{-c\ell}
\end{align*}
whereas for $m>2$ we obtain
\begin{align*}
        \Pr(R_t \geq i+m,
        & \wt{\tau}^{\simi}_m(t) \geq n \, \vert \,
        \mathcal{F}^{\simi}_{\sigma^{\simi}_i}) \\
        & \leq Ce^{-cn} + C^2(m-1) \sum_{k=\sigma^{\simi}_i+1}^{t-m-n}e^{ck-ct/(m-1)}
        \frac{\exp(-c\frac{m-2}{m-1}(k+n))}{1-\exp(-c\frac{m-2}{m-1})}\\
        & = Ce^{-cn} + C^2(m-1)\frac{\exp(-c\frac{m-2}{m-1}n
                -c\frac{t}{m-1})}{1-\exp(-c\frac{m-2}{m-1})}
        \sum_{k=\sigma^{\simi}_i +1}^{t-m-n}e^{ck/(m-1)}\\
        & \leq Ce^{-cn} + \frac{C^2(m-1)}{1-\exp(-c\frac{m-2}{m-1})}
        \exp(-c\frac{m-2}{m-1}n -c\frac{t}{m-1})
        \frac{e^{c\frac{t-n}{m-1}}}{e^{c/(m-1)}-1}\\
        & = Ce^{-cn} + \frac{C^2(m-1)}{1-\exp(-c\frac{m-2}{m-1})}
        \frac{e^{-cn}}{e^{c/(m-1)}-1} \leq C'm^2e^{-cn}.
\end{align*}
where we note that $1-\exp(-c\frac{m-2}{m-1}) \ge 1 - \exp(c/2)$ and
$\exp(c/(m-1))-1 = \frac{c}{m+1}\sum_{k=0}^\infty
\frac{(c/(m-1))^k}{(k+1)!} > \frac{c}{m-1}$.

\subsection{Proof of Lemma~\ref{lem:apriori speed conditioned on F}}
\label{sec:proof:lem:apriori speed conditioned on F}
By Lemma~A.1 of \cite{BirknerCernyDepperschmidt2016}, we may assume
that $T=\sigma^{\simi}_\ell$ for some $\ell \in \N$. For
\eqref{eq:aprioriAlongStoppingTimesX} we combine
Corollary~\ref{corollary:dryPoints} with the proof of Lemma~2.16 of
\cite{BirknerCernyDepperschmidt2016}. Since the proof is analogous
to the proof of Lemma~2.20 in \cite{BirknerCernyDepperschmidt2016},
we briefly show the part of the proof that differs. Let $\Gamma_n$
be the set of all $n$-step paths $\gamma$ on $\bZ^d$ starting from
$\gamma_0 = X_T$ with the restriction
$\norm{\gamma_i - \gamma_{i-1}} \leq R_\kappa$, $i=1,\dots,n$, where
$R_\kappa$ is the range of the kernels $\kappa_n$ from
Assumption~\ref{ass:finite-range}. For $\gamma \in \Gamma_{k-T}$ and
$T \leq i_1 < i_2 < \dots < i_m \leq k$ we define
\begin{align*}
        D^{\gamma}_{i_1,\dots,i_m}
        & \coloneqq \{ \eta_{-\ell}(\gamma_\ell) =0 \text{ for all }
        \ell \in \{i_1,\dots,i_m \} \},\\
        W^{\gamma}_{i_1,\dots,i_m}
        & \coloneqq \{ \eta_{-\ell}(\gamma_\ell) =1 \text{ for all } \ell \in \{
        T,\dots,k \}\setminus \{ i_1,\dots,i_m \} \}.
\end{align*}
Let $H_{n} \coloneqq \# \{ T \leq i \leq n : \eta_{-i}(X_i)=0 \}$ be
the number of sites with $\eta=0$ the walker visits from time $T$ up to
time $n$ and set
$K\coloneqq \max_{x \in \bZ^d} \{ \kappa_{\rf}(x) \} +
\varepsilon_\rf$. We abbreviate again $\bP_{x_0,x'_0}=\bP$, it follows
\begin{align}
        \label{eq:33}
        \Pr(H_{k} = m \, \vert \, \mathcal{F}^{\simi}_T)
        = \sum_{T\leq i_1 < \dots < i_m \leq k} \sum_{\gamma \in
                \Gamma_{k-T}} \Pr((X_T,\dots,X_k)=\gamma,
        W^{\gamma}_{i_1,\dots,i_m},D^{\gamma}_{i_1,\dots,i_m}\, \vert \,
        \mathcal{F}^{\simi}_T).
\end{align}
Note that  if $i_m<k$ we get
\begin{align*}
        \Pr&((X_T,\dots,X_k)=\gamma,
        W^{\gamma}_{i_1,\dots,i_m},D^{\gamma}_{i_1,\dots,i_m}\, \vert
        \, \mathcal{F}^{\simi}_T)\\
        &= \mathbb{E}\Big[\ind{(X_T,\dots,X_{i_m-1})=(\gamma_T,
                \dots,\gamma_{i_m-1})}
        \indset{W^{\gamma\vert_{i_m-1}}_{i_1,\dots,i_{m-1}}}\\
        & \qquad \mathbb{E}[\ind{(X_{i_m},\dots,X_k) =
                (\gamma_{i_m},\dots,\gamma_k)}\ind{\eta_{-\ell}(\gamma_\ell)=1,
                i_m+1\leq l \leq k}  \indset{D^{\gamma}_{i_1,\dots,i_m}}\,
        \vert \, \mathcal{F}^{\simi}_{i_m-1}]\, \vert \,
        \mathcal{F}^{\simi}_T \Big]
\end{align*}
where $\gamma\vert_t = (\gamma_1,\dots,\gamma_t)$ is the path
$\gamma$ restricted to the first $t$ components. By conditioning on
$\mathcal{F}_{k-1},\dots,\mathcal{F}_{i_m}$ successively we get
\begin{multline*}
        \Pr((X_T,\dots,X_k)=\gamma, W^{\gamma}_{i_1,\dots,i_m},
        D^{\gamma}_{i_1,\dots,i_m}\, \vert \, \mathcal{F}^{\simi}_T)\\
        \leq \mathbb{E}[ \ind{(X_T,\dots,X_{i_m-1})=(\gamma_T,
                \dots,\gamma_{i_m-1})}
        \indset{W^{\gamma\vert_{i_m-1}}_{i_1,\dots,i_{m-1}}} K^{k-i_m-1}
        \mathbb{E}[ \indset{D^{\gamma}_{i_1,\dots,i_m}}\, \vert \,
        \mathcal{F}^{\simi}_{i_m-1}]\, \vert \, \mathcal{F}^{\simi}_T ]
\end{multline*}
because the path will only hit ones of $\eta$ for the steps after
$i_m$. If we repeat the same argument for every $i_j$ we obtain
\begin{align*}
        \Pr(H_{k} = m \, \vert \, \mathcal{F}^{\simi}_T)
        & \leq \sum_{T\leq i_1 < \dots < i_m \leq k} \sum_{\gamma \in
                \Gamma_{k-T}} K^{k-T-m}\Pr(D^{\gamma}_{i_1,\dots,i_m} \, \vert
        \, \mathcal{F}^{\simi}_T) \\
        & \leq  \sum_{T\leq i_1 < \dots < i_m \leq k}
        R_\kappa^{d(k-T)}K^{k-T-m}\varepsilon(p)^m =
        \binom{k-T}{m}R_\kappa^{d(k-T)}K^{k-T-m}\varepsilon(p)^m,
\end{align*}
where we used Corollary~\ref{corollary:dryPoints} in the second
inequality. From this point the rest of the proof consists of the
same calculations as in the proof of Lemma~2.16 of
\cite{BirknerCernyDepperschmidt2016}, where the walk is compared
with the reference walk for steps from sites on which $\eta$ equals
$1$ and one uses the a priori bound,
Assumption~\ref{ass:finite-range}, for step sizes from (the few)
other sites. This proves \eqref{eq:aprioriAlongStoppingTimesX}.
Since $(X'_k-X'_T)$ has the same maginal distribution as $(X_k-X_T)$ we also obtain
\eqref{eq:aprioriAlongStoppingTimesX'} by the same arguments.

For
\eqref{eq:aprioriAlongStoppingTimesX2} define
$T' \coloneqq \inf(\{ \sigma^{\simi}_i : i \in \N \} \cap
[j,\infty))$ the first $\sigma^{\simi}_i$ after $j$. Inequality
\eqref{eq:expTailsTau1} shows that
$\Pr(T' - j > \varepsilon(k-j) \, \vert \, \mathcal{F}^{\simi}_T)$ is
exponentially small in $k-j$ on $\{T < j \}$. Using
\eqref{eq:aprioriAlongStoppingTimesX} we obtain
\begin{align*}
        \Pr& (\norm{X_k - X_j} > (1+\varepsilon)s_{\mathrm{max}}(k-j),
        T'-j\leq\frac{\varepsilon s_{\mathrm{max}}}{R_\kappa}(k-j)
        \,\vert\, \mathcal{F}^{\simi}_T)\\
        & \leq \Pr(\norm{X_k - X_{T'}}+\norm{X_{T'}-X_j} >
        (1+\varepsilon)s_{\mathrm{max}}(k-j), T'-j\leq\frac{\varepsilon
                s_{\mathrm{max}}}{R_\kappa}(k-j) \,\vert\, \mathcal{F}^{\simi}_T)\\
        & \leq \Pr(\norm{X_k - X_{T'}} + \varepsilon s_{\mathrm{max}}(k-j)
        > (1+\varepsilon)s_{\mathrm{max}}(k-j),T'-j\leq\frac{\varepsilon
                s_{\mathrm{max}}}{R_\kappa}(k-j) \,\vert\, \mathcal{F}^{\simi}_T)\\
        & \leq \Pr(\norm{X_k - X_{T'}} >
        s_{\mathrm{max}}(k-j),T'-j\leq\frac{\varepsilon
                s_{\mathrm{max}}}{R_\kappa}(k-j) \,\vert\, \mathcal{F}^{\simi}_T)\\
        & \leq \mathbb{E}[\ind{T' -j \leq \frac{\varepsilon
                        s_{\mathrm{max}}}{R_\kappa}(k-j)}\Pr( \norm{X_k - X_{T'}} >
        s_{\mathrm{max}}(k-j) \,\vert\,
        \mathcal{F}^{\simi}_{T'})\,\vert\, \mathcal{F}^{\simi}_T] \\
        & \leq \mathbb{E}[\ind{T' -j \leq \frac{\varepsilon
                        s_{\mathrm{max}}}{R_\kappa}(k-j)} Ce^{-c(k-T')} \, \vert \,
        \mathcal{F}^{\simi}_T]\\
        & \leq \mathbb{E}[\ind{T' -j \leq \frac{\varepsilon
                        s_{\mathrm{max}}}{R_\kappa}(k-j)} Ce^{-c(k-j) +
                c\frac{\varepsilon s_{\mathrm{max}}}{R_\kappa}(k-j)} \, \vert \,
        \mathcal{F}^{\simi}_T]\\
        & \leq Ce^{-c(k-j) + c\frac{\varepsilon
                        s_{\mathrm{max}}}{R_\kappa}(k-j)}.
\end{align*}
This yields
\begin{align*}
        \Pr (\norm{X_k - X_j}
        & > (1+\varepsilon)s_{\mathrm{max}}(k-j) \, \vert \, \mathcal{F}^{\simi}_T)\\
        & = \Pr(\norm{X_k - X_j} > (1+\varepsilon)s_{\mathrm{max}}(k-j),
        T'-j>\frac{\varepsilon  s_{\mathrm{max}}}{R_\kappa}(k-j)
        \,\vert\, \mathcal{F}^{\simi}_T)\\
        & \qquad  +  \Pr(\norm{X_k - X_j} >
        (1+\varepsilon)s_{\mathrm{max}}(k-j), T'-j\leq\frac{\varepsilon
                s_{\mathrm{max}}}{R_\kappa}(k-j) \,\vert\, \mathcal{F}^{\simi}_T)\\
        & \leq Ce^{-c\frac{\varepsilon s_{\mathrm{max}}}{R_\kappa}(k-j)} +
        Ce^{-c(k-j) + c\frac{\varepsilon s_{\mathrm{max}}}{R_\kappa}(k-j)}\\
        & \leq Ce^{-c'(k-j)},
\end{align*}
where we used \eqref{eq:expTailsTau1} of
Lemma~\ref{lemma:expTailsTau} for the first inequality. Finally,
\eqref{eq:aprioriAlongStoppingTimesX'2} follows by the same
arguments.

\subsection{Proof of Lemma~\ref{prop:TVdistance-joint-ind-1step}}
\label{sec:proof:lem:TVdistance-joint-ind}
From equation~\eqref{eq:TVdistance-joint-ind-1step in prop} it is clear that the
exact starting positions of the random walks are negligible and
only the distance is important. Due to notational convenience we
will restrict ourselves in the proof to a distance in $\N$. The
proof can be adapted to all possible distances in $\R_+$.
Furthermore we choose starting positions on the axis of the first
coordinate. In the proof we want to split space in two parts
separating the staring points by a hyperplane with equal distance
to both. This can be done for all starting positions but requires
tedious notation that would not further the understanding of the
proof.

\medskip

\noindent
Therefore, to simplify the notation throughout the proof, we fix a
positive even integer $m$ and we prove the assertion for starting
positions $x=(-m/2,0,\dots,0)$ and $x'=(m/2,0,\dots,0)$ in $\bZ^d$. We
can do this since for the proof only the distance between the starting
positions is going to be relevant, not the exact positions.

Every environment is defined by a configuration
$\omega \in \{0,1\}^{\bZ^d\times \bZ}$ and dynamics of the random
walks are then given by the family of transition
kernels $\kappa = \{\kappa_n(x,\cdot):n\in \bZ, x \in \bZ^d\}$. Let
\begin{align}
        \Omega_i \coloneqq \{ (\omega_i(z,n) : (z,n) \in
        \bZ^d\times \bZ \}, \quad i=1,2
\end{align}
be two independent families of random variables, where the random
variable $\omega_i(z,n)$ are i.i.d.\ Bernoulli distributed with
parameter $p > p_c$. We introduce a composite environment $\Omega_3
\coloneqq \{ \omega_3(z,n) : (z,n) \in \bZ^d\times \bZ \}$, where
$\omega_3$ is constructed using $\omega_1$ and $\omega_2$ and defined
by
\begin{align*}
  \omega_3(z,n) \coloneqq
  \begin{cases}
    \omega_1(z,n) &: z_1 \leq 0,\\
    \omega_2(z,n) &: z_1 > 0,
  \end{cases}
\end{align*}
where $z_1$ is the first coordinate of $z$. Define
\begin{align*}
  B & \coloneqq \Bigl\{ (z,n) \in \bZ^d\times \bZ: \norm{z-x} \leq
      \frac{m}{10} + b_\out + \frac{m}{10 R_\kappa} s_\out, n \in
      \bigl\{0,\dots, -\frac{m}{10 R_\kappa}\bigr\} \Bigr\},\\
  B' & \coloneqq \Bigl\{ (z,n) \in \bZ^d\times \bZ: \norm{z-x'} \leq
       \frac{m}{10} + b_\out + \frac{m}{10 R_\kappa} s_\out, n \in
       \bigl\{0,\dots, -\frac{m}{10 R_\kappa}\bigr\} \Bigr\}.
\end{align*}
These boxes will contain the random walks and cones constructed for
the first regeneration if it happens before time $\frac{m}{10
  R_\kappa}$. This also means that the $R_\loc$-vicinity of the random
walks is inside these boxes and thus up until time $m/(10R_\kappa)$
the random walks in $(\Omega_1,\Omega_2)$ and $(\Omega_3,\Omega_3)$
will behave the same as long as the values of $\eta$ inside $B$ and
$B'$ are the same in both environments (this means, by
Assumption~\ref{ass:local}, that the transition kernels
$\kappa_n(x,\cdot)$ are the same).

Define $T_{1,2}=T_1(\Omega_1,\Omega_2)$ and
$T_{3,3}=T_1(\Omega_3,\Omega_3)$ as the first simultaneous
regeneration times using $\Omega_1, \Omega_2$ and $\Omega_3$
respectively. Note that $T_{3,3}=T^\joint_1$ and $T_{1,2}=T^\indi_1$
and we have already shown in
Propositions~\ref{prop:JointRegTimesBound} and
\ref{prop:IndRegTimesBound} that
\begin{align}
  \Pr_{x,x'}(T_{3,3} > r) \leq C r^{-\beta}
\end{align}
and
\begin{align}
  \Pr_{x,x'}(T_{1,2} > r) \leq C r^{-\beta}.
\end{align}
Now if $\eta_n(z,\Omega_1) = \eta_n(z,\Omega_3)$ for all $(z,n)\in B$
and $\eta_n(z,\Omega_2) = \eta_n(z,\Omega_3)$ for all $(z,n)\in B'$ we
can couple the two random walks at their first regeneration time in
$(\Omega_1,\Omega_2)$ with the two in $(\Omega_3,\Omega_3)$ until time
$\frac{m}{10R_\kappa}$, since the values of $\omega$ and $\eta$ in the
parts of the environment that the random walks explore are equal in
both cases and therefore their distribution is equal. To that end we
define the sets
\begin{align*}
  D_1 & \coloneqq \{ \text{for all } (z,n) \in B: \eta_n(z,\Omega_1) =
        \eta_n(z,\Omega_3) \} \cap \{ \text{for all } (z,n) \in B':
        \eta_n(z,\Omega_2) = \eta_n(z,\Omega_3)\},\\
  D_2 & \coloneqq \{ T_{1,2} \leq \frac{m}{10 R_\kappa},
        T_{3,3} \leq \frac{m}{10 R_\kappa} \}.
\end{align*}
On $D_1 \cap D_2$ we have $T_{1,2} = T_{3,3}$ and
since $T_{1,2} \leq \frac{m}{10 R_\kappa}$ the random walks
are still in the box and thus have the same distribution for their
position at the first regeneration in both cases. That means
\begin{align*}
  \bP_{x,x'}(X_{T_{1,2}} =y,X'_{T_{1,2}} =y')
  = \bP_{x,x'}(X_{T_{3,3}} =y,X'_{T_{3,3}} =y')
\end{align*}
on $D_1 \cap D_2$. With Propositions~\ref{prop:JointRegTimesBound} and
\ref{prop:IndRegTimesBound} we get an upper bound for the probability
of $D_2^c$. On $D_1^c$ there needs to exist a space-time site $(z,n)$
in $B$ with $\eta_n(z,\Omega_1)\neq \eta_n(z,\Omega_3)$ or $(z',n')$
in $B'$ with $\eta_{n'}(z',\Omega_2) \neq \eta_{n'}(z',\Omega_3)$.

\medskip

Assume there exists such a site $(z,n) \in B$ with
$\eta_{n}(z,\Omega_1)\neq\eta_{n}(z,\Omega_3)$. There are two cases in
which that can occur. First, $\eta_{n}(z,\Omega_1)=1$ and
$\eta_{n}(z,\Omega_3)=0$. That is $(z,n)$ is connected to $-\infty$ in
$\Omega_1$ but not in $\Omega_3$. This means that by changing the
values of $\omega$ in the positive half plane
$\bZ_+\times\bZ^{d-1}\times\bZ$ we cut of all infinitely long open
paths starting from $(z,n)$. Thus the only open paths connecting
$(z,n)$ to $-\infty$ are via the half plane
$\bZ_+\times\bZ^{d-1}\times\bZ$ and in $\Omega_3$ the contact process
started from $(z,n)$ lives for at least
$\wt{m}=\frac{m}{2}-\frac{m}{10}-b_\out-\frac{m}{10R_\kappa}s_\out$
steps, since that is the distance of $B$ and the positive half plane,
but dies out eventually. By Lemma~1.3 from \cite{Steiber2017}
\begin{align*}
  \bP(\eta^{\{(z,n)\}}_{\wt{m}} \neq 0 \text{ and } \eta^{\{(z,n)\}}
  \text{ eventually dies out}) \le Ce^{-\tilde{c}\wt{m}}\le Ce^{-cm}.
\end{align*}
The case where $\eta_{n}(z,\Omega_1)=0$ and $\eta_{n}(z,\Omega_3)=1$
follows the same arguments as above. We now know that the only open
paths connecting $(z,n)$ to $-\infty$ in $\Omega_3$ have to be cut
off in $\Omega_1$, which again has probability less than $Ce^{-cm}$.
Analogous arguments can be made for sites in $B'$. Since
$\abs{B\cup B'} = 2 (\frac{m}{10}+b_\out +
\frac{m}{10R_\kappa}s_\out)^d\cdot \frac{m}{10R_\kappa}$ we obtain
$\bP(D_1^c)\le Ce^{-cm}$. Since
$\bP_{x,x'}(X_{T_{1,2}} =y,X'_{T_{1,2}} =y') =
\bP^\indi_{x,x'}(\widehat{X}_1 =y,\widehat{X}'_1=y')$ and
$\bP_{x,x'}(X_{T_{3,3}} =y,X'_{T_{3,3}} =y') =
\bP^\joint_{x,x'}(\widehat{X}_1 =y,\widehat{X}'_1=y')$ we conclude
\begin{multline}
  \label{eq:7}
  \frac{1}{2}\sum_{(y,y')\in \bZ^d \times \bZ^d}
  \abs{\Pr^{\indi}_{x,x'}(\widehat{X}_1 =y, \widehat{X}'_1 = y') -
    \Pr^{\joint}_{x,x'}(\widehat{X}_1 =y, \widehat{X}'_1 = y')}\\
  \leq \Pr(D_1^c) + \Pr(D_2^c) \leq C e^{-cm} + C m^{-\beta} \leq C
  m^{-\beta}
\end{multline}

\subsection{Proof of Lemma~\ref{lem:separ in d=1}}
To prove \eqref{eq:25} the steps are analogous to
Lemma~\ref{lem:separ}. The only difference is that we use the
harmonic function for $d=1$ from Lemma~\ref{lemma:exitAnnulus}
\begin{align}
        \frac{r - r_1}{r_2-r_1}
\end{align}
for the proof. The steps will be the same so we will only highlight
the changes.

Assume that we start with $x_0=x'_0$. This will yield an upper bound
on the other starting pairs since we have no starting distance. By the
same arguments as in the proof of Lemma~\ref{lem:separ} we obtain for
some $\delta_0>0$, by constructing suitable corridors, the lower bound
\begin{equation*}
  \bP_{x_0,x'_0}^\joint( H(\varepsilon_1 \log n) \le \varepsilon_1 \log n ) \ge \delta_0^{\varepsilon_1 \log n} = n^{\varepsilon_1 \log \delta_0}
\end{equation*}
and for some large constant $K\gg \varepsilon_1$ and $\abs{x-x'}\ge \varepsilon_1 \log n$
\begin{align*}
  &\bP^\joint_{x,x'} \bigg(H(K\log n) < h\Big(\frac{1}{2}\varepsilon_1\log n\Big) \wedge (K\log n)^3 \bigg)\\
  &\ge \bP^\indi_{x,x'} \bigg(H(K\log n) < h\Big(\frac{1}{2}\varepsilon_1\log n\Big) \wedge (K\log n)^3 \bigg) - (K\log n)^3\Big(\frac{1}{2}\varepsilon_1 \log n\Big)^{-\beta}\\
  &\ge\bP^\indi_{x,x'} \bigg(H(K\log n) < h\Big(\frac{1}{2}\varepsilon_1 \log n\Big)\bigg) - C(K\log n)^{-1} - (K\log n)^3\Big(\frac{1}{2}\varepsilon_1 \log n\Big)^{-\beta}\\
  &\ge (1-\varepsilon) \frac{1}{(2K/\varepsilon_1) -1} - C(K\log n)^{-1} - (K\log n)^3\Big(\frac{1}{2}\varepsilon_1\log n\Big)^{-\beta}\\
  &\ge \frac{1}{4} \frac{\varepsilon_1}{K}
\end{align*}
for $n$ and $\beta$ large enough. Combining these with probability
greater than $n^{\varepsilon_1\log \delta_0} \frac{\varepsilon_1}{4K}$
we need at most $\varepsilon_1\log n + (K\log n)^3$ many steps to
reach a distance of at least $K\log n$.

If the random walks are already at distance $K\log n$ we want to start
the iteration of Step 2 and the following from the proof of
Lemma~\ref{lem:separ}. Therefore we need a lower bound in $d=1$, when
starting from a distance of $\norm{x-x'}\ge K\log n$, for
\begin{align}
  \label{eq:proof sep lemma d=1 - eq1}
  \Pr_{x,x'}^\indi(H(\log^2 n) < h(1/2 K\log n))
\end{align}
and starting from a distance of $\norm{x-x'}\ge \log^k n$ we want to
find a lower bound for
\begin{align}
  \label{eq:proof sep lemma d=1 - eq2}
  \Pr_{x,x'}^\indi(H(\log^{k+1} n) < h(1/2 \log^{k} n)).
\end{align}
Here we need to make smaller steps since our previous approximations
are not sufficient. For \eqref{eq:proof sep lemma d=1 - eq1} we get
the lower bound (note that here $\norm{x-x'}\ge K\log n$)
\begin{align}
  \Pr_{x,x'}^\indi(H(\log^2 n) < h(1/2 K\log n)) \geq
  (1-\varepsilon)\frac{K}{2\log n - K}
\end{align}
and for \eqref{eq:proof sep lemma d=1 - eq2} (note that here
$\norm{x-x'}\ge \log^k n$)
\begin{align}
  \Pr_{x,x'}^\indi(H(\log^{k+1} n) < h(1/2 \log^{k} n)) \geq
  (1-\varepsilon)\frac{1}{2\log n - 1}.
\end{align}

That means for one step,
starting from $\norm{x-x'} \ge \log^k n$,
\begin{align*}
  \Pr^\joint_{x,x'}
  & (H(\log^{(k+1)} n) < h(1/2 \log^k n))\\
  & \geq \Pr^\joint_{x,x'}(H(\log^{(k+1)} n) < h(1/2 \log^k n) \wedge (\log^{k+1}n)^3)\\
  & \geq \Pr^\indi_{x,x'}(H(\log^{(k+1)} n) < h(1/2 \log^k n) \wedge
    (\log^{k+1}n)^3) - C(\log^{k+1}n)^3(1/2 \log^k n)^{-\beta}\\
  & \geq \Pr^\indi_{x,x'}(H(\log^{(k+1)} n) < h(1/2 \log^k n)) - (\log
    n)^{-(k+1)} - C(\log^{k+1}n)^3(1/2 \log^k n)^{-\beta}\\
  & \geq (1-\varepsilon)\frac{1}{2}(\log n)^{-1} - (\log n)^{-(k+1)} -
    C(\log^{k+1}n)^3(1/2 \log^k n)^{-\beta}\\
  & \geq \frac{1}{4}(\log n)^{-1}
\end{align*}
for large $n$ and $\beta>0$ large enough, where we again use the
invariance principle, Lemma~\ref{lemma:exitAnnulus} and
Proposition~\ref{prop:TVdistance-joint-ind-1step}.

Now we need $b' \log n /\log\log n$ steps in the iteration to reach a
distance of $n^{b'}$, with the $k$-th step taking at most time
$(\log^{k+1}n)^3$. Consequently, we need at most $\sum_{k=2}^{b'\log
  n/\log\log n} (\log n)^{3k}$ time for one such attempt from distance
$K\log n$ to $n^{b'}$ and therefore from $x=x'$ we need at most
\begin{align*}
  \varepsilon_1\log n + (K\log n)^3 + \sum_{k=2}^{b'\log n/\log\log n} (\log n)^{3k} \le n^{4b'}
\end{align*}
for $n$ large enough. We obtain a lower bound on the probability to
make the whole distance in a single attempt
\begin{align*}
  &n^{\varepsilon_1\log \delta_0}\frac{\varepsilon_1}{4K}\prod_{k=2}^{b'\log n/\log\log n}\frac{1}{4 \log n}\\
  &\ge n^{\varepsilon_1\log \delta_0}\frac{\varepsilon_1}{4K} \exp\Big( -b'\log n \frac{\log \log n^4}{\log \log n} \Big)\\
  &\ge  \frac{\varepsilon_1}{4K} n^{-2b' + \varepsilon_1\log \delta_0}
\end{align*}
It is easy to see that for any $b'>0$ small enough there exist a
$0<b_2<1/8$ and $\varepsilon_1$ small enough such that $\alpha
\coloneqq b_2-4b'>2b'-\varepsilon_1\log \delta_0$ and with that
\begin{align*}
  & \Pr^\joint_{x,x'}(H(n^{b'})\geq n^{b_2})\\
  & \leq (1-n^{-2b'+\varepsilon_1\log \delta_0})^{n^\alpha}\\
  & \leq \exp(-n^{-2b'+\varepsilon_1\log \delta_0+\alpha})
\end{align*}
where the last term will tend to 0 as $n\to \infty$.

For \eqref{eq:lower bound for W} we need to show that there exists a
uniform lower bound in $n$ away from zero for the random walks to
change their positions after they come close to each other before they
reach a distance of $n^{b'}$. So let's say at time
$m\coloneqq\mathcal{R}_{n,i}$ we have $\widehat{X}_m>\widehat{X}'_m$.
More precisely, write $D_j \coloneqq \widehat{X}_j-\widehat{X}'_j$ and
pick a small $\varepsilon>0$ to be tuned later. Using the same methods
as in the proof of \eqref{eq:25} we can show a suitable uniform lower
bound on the probability that $D$ reaches $(-\infty,0]$ before
$n^{b'}$. By the same arguments as above and noting that
\begin{align*}
  \bP^\indi \bigl(h(\log^{k-1}n) < H(\log^{k+1}n) \,\vert\, D_0=\log^k n\bigr)
  \ge (1-\wt{\varepsilon})\frac{\log^{k+1}n - \log^k
  n}{\log^{k+1}n - \log^{k-1} n},
\end{align*}
we obtain for $k\in \N$
\begin{align}
  \label{eq:24}
  \begin{split}
    \bP^\joint \bigl(h(\log^{k-1}n) < H(\log^{k+1}n) \,\vert\, D_0=\log^k n\bigr)
    &\ge c\frac{\log^{k+1}n - \log^k n}{\log^{k+1}n - \log^{k-1} n}\\
    &=c\Big(1-\frac{\log n - 1}{\log^2n - 1}\Big).
  \end{split}
\end{align}
And similarly
\begin{align*}
  \bP^\joint (h(n^a) < H(n^{b'}) \,\vert\, D_0=n^a\log n)
  & \ge c \frac{n^{b'-a} -n^a\log n}{n^{b'-a} -n^a}\\
  & = c\Big(1-\frac{\log n -1}{n^{b'-a}-1}\Big).
\end{align*}
Therefore we have, with positive probability, $\frac{n^{b'-a}}{\log
  n}$ many tries for the process $D$ to reach $(-\infty,0]$ before it
hits $n^{b'}$. By the Markov property and using \eqref{eq:24} we need
$a\log n/\log\log n$ iterations for $D$ to reach $\varepsilon\log n$.
From $\varepsilon \log n$ we build corridors, as in Step~1 in the
proof of Lemma~\ref{lem:separ}, to achieve $D<0$. The probability for
such a corridor to exist is $\exp(-c\varepsilon\log n)$. Thus, for one
attempt to reach $(-\infty,0]$, the probability to be successful is
greater than $n^{ca\log n/\log \log n}n^{-c\varepsilon}$ and we have
$\frac{n^{b'-a}}{\log n}$ such attempts with positive probability.
Combining those two facts, we obtain a uniform lower bound on the
probability for $D$ to reach $(-\infty,0]$ away from zero. Which
concludes the proof.

\subsection{Proof of Lemma~\ref{lem:bound_for_A1_and_A2}}
Let $Y$ have the distribution
\begin{align}
  \label{eq:23}
  \bP(Y\geq \ell) = \hat{\Psi}^{\indi}\Big( \inf\{ m \geq 0 :
  \widehat{X}_m < \widehat{X}'_m \}\geq \ell \,\vert\,
  (\widehat{X}_0,\widehat{X}'_0) = (1,0) \Big), \quad \ell \in \N
\end{align}
and let $V$ be an independent Bernoulli$(1-1/n)$-distributed random
variable. A coupling based on
Proposition~\ref{prop:TVdistance-joint-ind-1step} shows that
$\mathcal{R}_{n,1}-\mathcal{D}_{n,1}$ is stochastically larger than
$((1-V)+VY)\wedge n$, since the distance between
$\widehat{X}_{D_{n-1}}$ and $\widehat{X}'_{D_{n-1}}$ is $n^{b'}$ and
$n^{b'}- n^a\gg 1$ and we can choose $a$ and $\beta$ in such a way
that the probability for the coupling between $\hat{\Psi}^\indi$ and
$\hat{\Psi}^\joint$ fails during the first $n$ steps is less than
$1/n$. By well known estimates of one-dimensional random walks (e.g.
see Theorem~12.17 in \cite{Kallenberg20213rdEdition} and Theorem~1.a.
in \cite{Feller1971} on page 415), there exist $c>0$ and $c_Y>0$ such
that uniformly in $n\ge 2$,
\begin{align}
  \label{eq:14}
  \bE[\mathrm{e}^{-\lambda((1-V)+VY)}]
  & \leq \exp(-c_Y\sqrt{\lambda}), \quad \lambda\geq 0 \quad \text{ and }\\
  \label{eq:tails_of_R_minus_D}
  \bP^\joint_{x_0,x'_0}(\mathcal{R}_{n,1}-\mathcal{D}_{n,1}\geq \ell)
  & \geq \frac{c}{\sqrt{\ell}}, \quad \ell=1,\dots,n.
\end{align}
Inequality \eqref{eq:14} is trivial for $\lambda\ge 1$ and
$\lambda=0$, for $\lambda\in(0,1)$ we have, using Theorem~12.17 from
\cite{Kallenberg20213rdEdition} with $u=0$ and
$s=\mathrm{e}^{-\lambda}$,
\begin{align*}
  \bE[\mathrm{e}^{-\lambda((1-V)+VY)}]
  & = \frac{1}{n}\mathrm{e}^{-\lambda} + \frac{n-1}{n}\bE[\mathrm{e}^{-\lambda Y}] \\
  & \leq \frac{1}{n}\mathrm{e}^{-\lambda} + \frac{n-1}{n}\Big(1 -
    \exp\Big\{ -\frac{1}{2}\sum_{m=1}^\infty
    \frac{\mathrm{e}^{-\lambda m}}{m} \Big\}  \Big)\\
  & \leq \frac{1}{n}\mathrm{e}^{-\lambda} + \frac{n-1}{n}\Big(1 -
    \exp\Big\{ \frac{1}{2}\log(1-\mathrm{e}^{-\lambda}) \Big\} \Big)\\
  & =\frac{1}{n}\mathrm{e}^{-\lambda} + \frac{n-1}{n}\Big(1 -
    (1-\mathrm{e}^{-\lambda})^{\frac{1}{2}}  \Big).
\end{align*}
Note that
\begin{align*}
        1 - \exp(-\sqrt{\lambda}) \leq (1-\exp(-\lambda))^{\frac{1}{2}}
\end{align*}
for all $\lambda \in (0,1)$. Furthermore we have
\begin{align}
  \label{eq:15}
  \frac{1}{n}\mathrm{e}^{-\lambda} \leq \exp(-\sqrt{\lambda})
\end{align}
if
\begin{align*}
  -\lambda - \log(n) \leq -\sqrt{\lambda},
\end{align*}
which holds for all $n\ge 2$. And thus \eqref{eq:14} holds uniformly
in $n\ge 2$ with $c_Y=1$. Let $I_n \coloneqq \max\{
i:\mathcal{R}_{n,i} \leq n \}$ be the number of ``black boxes'' that
we see up to time $n$. By equation \eqref{eq:tails_of_R_minus_D} we
have $I_n = \mathcal{O}(\sqrt{n})$ in probability and in fact
\begin{align}
  \label{eq:bound_second_moment_In}
  \bE^\joint_{x_0,x'_0}[I_n^2] \leq Cn.
\end{align}
This can be proven by using the lower bounds of equation
\eqref{eq:tails_of_R_minus_D}. Note that since $I_n$ is a
$\N_0$-valued random variable
\begin{align*}
  \bE^\joint_{x_0,x'_0}[I_n^2]
  & = \sum_{\ell =1}^\infty \bP^\joint_{x_0,x'_0}(I_n^2\ge \ell)
    =\sum_{\ell =1}^\infty \bP^\joint_{x_0,x'_0}(I_n\ge \sqrt{\ell})
    =\sum_{\ell =1}^\infty \bP^\joint_{x_0,x'_0}(\mathcal{R}_{n,\sqrt{\ell}}\le n)\\
  & \le \sum_{\ell =1}^\infty
    \bP^\joint_{x_0,x'_0}\Big(\sum_{i=1}^{\sqrt{\ell}}\mathcal{R}_{n,i}-\mathcal{D}_{n,i}\le n\Big)\\
  & \le \sum_{\ell =1}^\infty \bP^\joint_{x_0,x'_0}\Big(\text{for all }i\le
    \sqrt{\ell} \colon \mathcal{R}_{n,i}-\mathcal{D}_{n,i}\le n\Big)\\
  & \le \sum_{\ell =1}^\infty \bP^\joint_{x_0,x'_0}\Big(
    \mathcal{R}_{n,1}-\mathcal{D}_{n,1}\le n\Big)^{\sqrt{\ell}}\\
  & \le \sum_{\ell =1}^\infty
    \bigg(1-\frac{c}{\sqrt{n}}\bigg)^{\sqrt{\ell}}\\
  & \le n \sum_{k=1}^\infty    \exp(-c\sqrt{k-1}) \le Cn.
\end{align*}
The last line follows by grouping the first $n$ summands, the second
$n$ summands and so forth and bounding them together. For example for
$k n \le \ell \le (k+1)n$ we have
\begin{align*}
        \bigg(1-\frac{c}{\sqrt{n}}\bigg)^{\sqrt{\ell}} \le
        \bigg(1-\frac{c}{\sqrt{n}}\bigg)^{\sqrt{k n}} \le
        \exp(-c\sqrt{k}).
\end{align*}
More quantitatively, there exists $c>0$ such that for
$1\leq k \leq n$
\begin{align}
  \label{eq:22}
  \bP^\joint_{x_0,x'_0}(I_n \geq k) \leq \exp(-ck^2/n)
\end{align}
and so in particular
\begin{align}
  \label{eq:17}
  \bE^\joint_{x_0,x'_0}[I_n\indset{I_n\geq n^{3/4}}] = \sum_{k=\lceil n^{3/4}
  \rceil}^n\bP^\joint_{x_0,x'_0}(I_n \geq k) \leq n \mathrm{e}^{-c\sqrt{n}}.
\end{align}

The inequality in \eqref{eq:22} can be obtained by the following
arguments. Let $Y_1,Y_2,\dots$ be i.i.d.\ copies of
$\big((1-V)+VY\big)$ defined above in \eqref{eq:23}, then
\begin{align*}
  \bP\Big( \big((1-V)+VY\big)\wedge n  \ge \ell \Big) \le
  \bP^\joint_{x_0,x'_0}(\mathcal{R}_{n,1}-\mathcal{D}_{n,1}\ge \ell)
\end{align*}
and thus
\begin{align*}
  \bP^\joint\bigg( \sum_{i=1}^\ell (Y_i \wedge n) \le n\bigg) \ge \bP^\joint\bigg(
  \sum_{i=1}^\ell \mathcal{R}_{n,i}-\mathcal{D}_{n,i} \le n \bigg)
  \ge \bP^\joint\Big( \mathcal{R}_{n,\ell} \le n \Big) = \bP^\joint(I_n \ge \ell).
\end{align*}
Combining the two we obtain by \eqref{eq:14} for $\lambda >0$
\begin{align*}
  \bP^\joint_{x_0,x'_0}(I_n \ge \ell)
  & \le \bP\bigg( \sum_{i=1}^\ell (Y_i \wedge n) \le n \bigg) =
    \bP(Y_1+\cdots+Y_\ell \le n ) \\
  & \le \mathrm{e}^{\lambda n} \bE\Big[ \exp\big(-\lambda Y_1\big)
    \Big]^\ell \le \mathrm{e}^{\lambda n -c_Y\sqrt{\lambda} \ell}.
\end{align*}
Choosing $\lambda=(c_Yk/n)^2$ we see that \eqref{eq:22} holds.

Note that
\begin{align}
  \label{eq:10}
  R_n \leq \sum_{j=1}^{I_n+1}(\mathcal{D}_{n,j}-\mathcal{R}_{n,j-1}),
\end{align}
and using \eqref{eq:sep in d=1} we get $R_n=o(n)$ in probability.
Now using \eqref{eq:bound_second_moment_In} together with
\eqref{eq:10} and \eqref{eq:sep in d=1} implies
\eqref{eq:bound_R_n_moment}:
\begin{align}
  \notag
  \bE^\joint_{x_0,x'_0}[R_n^2]
  &= \bE^\joint_{x_0,x'_0}[R_n^2\indset{\{\exists j\leq
    n:\mathcal{D}_{n,j+1}-\mathcal{R}_{n,j} \geq n^{b_2}\}}] +
    \bE^\joint_{x_0,x'_0}[R_n^2\indset{\{\forall j\leq
    n:\mathcal{D}_{n,j+1}-\mathcal{R}_{n,j} < n^{b_2}\}}]\\
  \notag
  & \leq n^2\bP^\joint_{x_0,x'_0}(\exists j\leq
    n:\mathcal{D}_{n,j+1}-\mathcal{R}_{n,j} \geq n^{b_2})
    +n^{2b_2}\bE^\joint_{x_0,x'_0}[I_{n+1}^2]\\
  \label{eq:28}
  & \leq Cn^{1+2b_2}
\end{align}
and $\bE^\joint_{x_0,x'_0}[R_n^{3/2}] \leq
(\bE^\joint_{x_0,x'_0}[R_n^2])^{3/4}$.

For \eqref{eq:bound_for_A1_and_A2} we define
\begin{align*}
  D_{n,m} \coloneqq
  A^{(1)}_{\mathcal{D}_{n,m}}-A^{(1)}_{\mathcal{R}_{n,m-1}},\quad
  D_{n,m}' \coloneqq A^{(2)}_{\mathcal{D}_{n,m}}-A^{(2)}_{\mathcal{R}_{n,m-1}}.
\end{align*}
By symmetry we have
\begin{align}
  \label{eq:11}
  \bE^\joint[D_{n,j}]=0, \bE^\joint[D_{n,j}\,\vert\, W_{n,j}=1]
  & = -\bE^\joint[D_{n,j}\,\vert\,W_{n,j}=3],\\
  \bE^\joint[D_{n,j}\,\vert\, W_{n,j}=2]
  & = -\bE^\joint[D_{n,j}\,\vert\,W_{n,j}=4],
\end{align}
by Lemma~\ref{lem:separ in d=1} and \eqref{eq:boundAllPhi} we get that
for some $C<\infty$ and $b>0$ uniformly in $j,n\in\N$, for
$w\in\{1,2,3,4\}$
\begin{align}
  \notag
  \bE^\joint_{x_0,x'_0}
  &[\abs{D_{n,j}}\,\vert\, W_{n,j}=w]\\
  \notag
  & = \bE^\joint_{x_0,x'_0}\Bigg[\abs{\sum_{i=\mathcal{R}_{n,j-1}}^{\mathcal{D}_{n,j}}
    \phi_1(\widehat{X}_i,\widehat{X}'_i)}\,\Big\vert\, W_{n,j}=w \Bigg]\\
  \notag
  & \leq C_\phi \bE^\joint_{x_0,x'_0}\Big[ (\mathcal{D}_{n,j}-\mathcal{R}_{n,j-1}) \,\vert\, W_{n,j}=w\Big]\\
  \notag
  & \leq C_\phi n^{b_2} + C_\phi \sum_{\ell =2}^\infty n^{b_2 \ell}
    \bP^\joint_{x_0,x'_0}(\mathcal{D}_{n,j}-\mathcal{R}_{n,j-1} \geq n^{(\ell-1)b_2}\,\vert\, W_{n,j}=w)\\
  \label{eq:13}
  & \leq Cn^{b_2}
\end{align}
and analogously for $D'_{n,j}$. Set $\mathcal{G}_j\coloneqq
\hat{\mathcal{F}}_{\mathcal{D}_{n,j}}$ (the $\sigma$-field of the
$\mathcal{D}_{n,j}$-past) for $j\in \N$ and for $j\leq 0$ let
$\mathcal{G}_j$ be the trivial $\sigma$-algebra. Note that $D_{n,j}$
and $D'_{n,j}$ are $\mathcal{G}_j$-adapted for $j\in\N$. For $k< m$ we
have
\begin{align*}
  \bE^\joint_{x_0,x'_0}[D_{n,m}\,\vert\,
  \mathcal{G}_k]=\bE^\joint_{x_0,x'_0}\Big[\bE[D_{n,m}\,\vert\, 
  W_{n,m}]\,\vert\, \mathcal{G}_k\Big]
\end{align*}
by construction and $(W_{n,j})_j$ is (uniformly in $n$) exponentially
mixing, thus, observing \eqref{eq:11}, \eqref{eq:12}, and
\eqref{eq:13}
\begin{align}
  \bE^\joint_{x_0,x'_0}\Big[ (\bE[D_{n,m}\,\vert\, \mathcal{G}_{m-j}])^2 \Big] \leq
  Cn^{2b_2}\mathrm{e}^{-cj}, \quad m,j\in\N,n\in\N
\end{align}
for some $C,c\in(0,\infty)$ and analogous bounds for $D'_{n,m}$.
Indeed, abbreviating $\bE=\bE^\joint_{x_0,x'_0}$,
\begin{align*}
  \bE\Big[ (\bE[D_{n,m}\,\vert\, \mathcal{G}_{m-j}])^2 \Big]
  & = \bE\Big[ (\bE[\bE[D_{n,m}\,\vert\, W_{n,m}]\,\vert\, \mathcal{G}_{m-j}])^2 \Big]\\
  & =\bE\Bigg[ \Big(\bE\Big[\sum_{i=1}^4
    \indset{\{W_{n,m}=i\}}\bE[D_{n,m}\,\vert\, W_{n,m}=i]\,\vert\,
    \mathcal{G}_{m-j}\Big]\Big)^2 \Bigg]\\
  & = \sum_{\ell,i=1}^4 \bE[D_{n,m}\,\vert\, W_{n,m}=i]\bE[D_{n,m}\,\vert\, W_{n,m}=\ell]\\
  & \hspace{2cm}\times\bE\big[ \bP(W_{n,m}=i\,\vert\,
    \mathcal{G}_{m-j})\bP(W_{n,m}=\ell\,\vert\, \mathcal{G}_{m-j})
    \big]\\
  & \leq \sum_{\ell,i=1}^4 \vert\bE[D_{n,m}\,\vert\,
    W_{n,m}=i]\bE[D_{n,m}\,\vert\, W_{n,m}=\ell]\vert\\
  & \hspace{2cm}\times (\bP(W_{n,m}=i)+ \mathrm{e}^{-cj})(\bP(W_{n,m}=\ell)+ \mathrm{e}^{-cj})\\
  & = \sum_{\ell,i=1}^4\vert\bE[D_{n,m}\,\vert\,
    W_{n,m}=i]\bE[D_{n,m}\,\vert\, W_{n,m}=\ell]\vert
    \mathrm{e}^{-2cj}\\
  & \leq Cn^{2b_2}\mathrm{e}^{-2cj}
\end{align*}
Let $S_{n,m}\coloneqq \sum_{j=1}^{m}D_{n,j}$ and
$S'_{n,m}\coloneqq \sum_{j=1}^{m}D'_{n,j}$ then for each $n\in\N$,
$(S_{n,m})_m$ is a mixingale; see \cite{HallHeyde:1980}, p.~19.

Using McLeish's analogue of Doobs $\mathcal{L}^2$-inequality for
mixingales, we get
\begin{align}
  \bE^\joint_{x_0,x'_0}\Big[ \max_{m=1,\dots,n^{3/4}} S_{n,m}^2 \Big] \leq
  K\sum_{i=1}^{n^{3/4}} n^{2b_2} \leq
  Kn^{\frac{3}{4}+\frac{3}{2}b_2},
\end{align}
and thus
\begin{align}
  \label{eq:16}
  \bE^\joint_{x_0,x'_0}\Bigg[ \frac{\abs{S_{n,I_n}}}{\sqrt{n}}\ind{I_n\le n^{3/4} } \Bigg]
  \le \frac{1}{\sqrt{n}}
  \Bigg(\bE^\joint_{x_0,x'_0}\Big[\max_{m=1,\dots,n^{3/4}} S_{n,m}^2 \Big] \Bigg)^{1/2}
  \le K^{1/2}n^{-\frac{1}{8}+\frac{3}{4}b_2}.
\end{align}
Note that $b_2$ comes from Corollary~\ref{cor:separation corollary
  d=1} and can be chosen smaller than $1/8$ which results in the right
hand side converging to zero for $n\to\infty$. By
\eqref{eq:estimatesPhi} we have 
\begin{align*}
  A^{(1)}_n
  & = \sum_{j=0}^{n-1} \phi_1(\widehat{X}_j,\widehat{X}'_j)\\
  & =\sum_{j=1}^{I_n} D_{n,j} + \sum_{j=1}^{I_{n+1}}
    \sum_{i=\mathcal{D}_{n,j}\wedge n}^{\mathcal{R}_{n,j}\wedge n}
    \phi_1(\widehat{X}_i,\widehat{X}'_i)\\
  & \leq \sum_{j=1}^{I_n} D_{n,j} +
    \sum_{j=1}^{I_{n+1}}\big((\mathcal{R}_{n,j}\wedge n)-
    (\mathcal{D}_{n,j}\wedge n)\big)\frac{C_1}{n^2}\\
  & \leq \sum_{j=1}^{I_n} D_{n,j} + \frac{C_1}{n},
\end{align*}
and so
\begin{align*}
  \bE^\joint_{x_0,x'_0}\Bigg[ \frac{\vert A^{(1)}_n \vert}{\sqrt{n}} \Bigg]
  & \leq \frac{c}{\sqrt{n}}+ \frac{1}{\sqrt{n}}\bE^\joint_{x_0,x'_0}\Big[ \vert
    S_{n,I_n}\vert \ind{I_n \le n^{3/4}}\Big]
    +\frac{1}{\sqrt{n}}\bE^\joint_{x_0,x'_0}\Big[\vert S_{n,I_n}\vert\ind{I_n >
    n^{3/4}}\Big]\\
  & \leq \frac{c}{\sqrt{n}}+ \frac{1}{\sqrt{n}}\bE^\joint_{x_0,x'_0}\Big[ \vert
    S_{n,I_n}\vert \ind{I_n \le n^{3/4}}\Big]
    +\frac{1}{\sqrt{n}}Cn^{b_2}\bE^\joint_{x_0,x'_0}\Big[I_n\ind{I_n > n^{3/4}}\Big] \\
  & \hspace{2cm}+ \frac{1}{\sqrt{n}}Cn\bP^\joint_{x_0,x'_0}(\exists j\le n\colon
    \mathcal{D}_{n,j}-\mathcal{R}_{n,j-1}\geq n^{b_2}).
\end{align*}
Using \eqref{eq:16}, \eqref{eq:17} and \eqref{eq:sep in d=1}
respectively on the last three terms on the right hand side yields
\eqref{eq:bound_for_A1_and_A2} for $A^{(1)}_n$ and analogous
calculations for $D'$ instead of $D$ yield
\eqref{eq:bound_for_A1_and_A2} for $A^{(2)}_n$.

\subsection{Coupling in $d=1$ and its consequences}
In $d=1$ we have to calculate more carefully since we aren't able to
prove a coupling lemma of similar effect. The problem arises from the
fact that the random walks spend roughly $\sqrt{N}$ time close to each
other which, on first glance, should make the above estimates unusable
for $d=1$. As it turns out, we can prove the above estimates. The
price we have to pay for that is tuning $\beta$ even larger, as will
become obvious from the proofs below. In this section $\beta$ will
always refer to the constant from
Proposition~\ref{prop:TVdistance-joint-ind-1step}, which can be made
arbitrarily large by tuning the parameters of the model correctly.

We start with the $d=1$-version of the coupling lemma
\begin{lemma}
  \label{lem:coupl_d=1}
  For any $\varepsilon>0$ and $\gamma>0$ there exists $\beta$ such
  that for all $x,x'\in\bZ$ we have
  \begin{equation}
    \bP_{x,x'}\Big( \text{at most }N^{1/2+\varepsilon} \text{
      uncoupled steps before }N \Big) \ge 1-N^{-\gamma(\varepsilon)}. 
  \end{equation}
\end{lemma}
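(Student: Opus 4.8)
The plan is to mimic the proof of Lemma~\ref{lem:coupl} but replace the separation lemma for $d \ge 2$ by its one-dimensional counterpart, Lemma~\ref{lem:separ in d=1}, and to track the excursions of the distance process $Y^\joint_i = \norm{\widehat{X}^\joint_i - \widehat{X}'^\joint_i}$ between the ``close'' scale $N^\alpha$ and a ``far'' scale $N^{b'}$ for suitable small exponents $0 < \alpha \ll b' < 1/2$. First I would set $x_0 = x_0' = 0$ (this is the worst case, as in Lemma~\ref{lem:coupl}) and introduce, for $i \ge 1$, the stopping times
\begin{align*}
  \mathcal{D}_i &= \inf\{ k \ge \mathcal{R}_{i-1} : Y^\joint_k \ge N^{b'} \}, \\
  \mathcal{R}_i &= \inf\{ k \ge \mathcal{D}_i : Y^\joint_k \le N^\alpha \},
\end{align*}
with $\mathcal{R}_0 = 0$. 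The key point is that \emph{every} uncoupled step must occur inside one of the ``black-box'' intervals $[\mathcal{R}_{i-1}, \mathcal{D}_i)$, because outside them $Y^\joint_k > N^{b'} \gg 1$ and Remark~\ref{rem:TVdistance-joint-ind-1step} gives coupling-failure probability at most $C N^{-b'\beta}$, which can be made summable over the (at most $N$) steps by taking $\beta$ large. So it suffices to bound $R_N := \#\{ k \le N : Y^\joint_k \le N^\alpha \}$.

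Next I would control $R_N$ in two pieces. For the \emph{duration} of a single black-box interval, I would observe that $\mathcal{D}_i - \mathcal{R}_{i-1}$ is (up to a tiny correction since $N^{b'} - N^\alpha \le N^{b'+\varepsilon^*/2}$) dominated by the time for $Y^\joint$ to separate from $N^\alpha$ to $N^{b'}$, so by the separation bound \eqref{eq:25} of Lemma~\ref{lem:separ in d=1} we get $\bP^\joint(\mathcal{D}_i - \mathcal{R}_{i-1} \ge N^{b_2}) \le \exp(-C N^c)$ for some $0 < b_2 < 1/8$. For the \emph{number} of black-box intervals up to time $N$, I would couple with $\bP^\indi$ via Proposition~\ref{prop:TVdistance-joint-ind-1step} and use a standard one-dimensional random-walk excursion estimate (exactly as in the proof of Lemma~\ref{lem:bound_for_A1_and_A2}, where a Bernoulli$(1-1/N)$--thinned return time $((1-V)+VY)$ has Laplace transform $\le \exp(-c_Y\sqrt{\lambda})$, cf.\ \eqref{eq:14}): this gives that $I_N := \#\{ i : \mathcal{R}_i \le N \}$ satisfies $\bP^\joint(I_N \ge k) \le \exp(-ck^2/N)$, so $I_N \le N^{1/2+\varepsilon/2}$ except on an event of probability $\le \exp(-cN^{\varepsilon})$. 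Combining, on the good event we have
\begin{align*}
  R_N \le \sum_{i=1}^{I_N + 1} (\mathcal{D}_i - \mathcal{R}_{i-1}) \le (N^{1/2+\varepsilon/2} + 1) N^{b_2} \le N^{1/2+\varepsilon}
\end{align*}
for $N$ large, since $b_2 < 1/8 < \varepsilon/2$ can be arranged (or simply pick $\varepsilon > 2b_2$). Finally, since every uncoupled step lies in $\bigcup_i [\mathcal{R}_{i-1}, \mathcal{D}_i)$ which has total length $R_N$, and since within those steps coupling can still succeed, the number of \emph{uncoupled} steps is at most... well, we only need the bound on the number of steps where $Y^\joint$ is below $N^{b'}$; more carefully, uncoupled steps happen only when $Y^\joint_k < N^{b'}$, and a union bound over the $\le N^{b'+\varepsilon^*/2}$-controlled excursion lengths, or rather over $R_N \le N^{1/2+\varepsilon}$ candidate steps each failing with probability $\le C N^{-\alpha\beta}$, shows the expected number of uncoupled steps is $\le C N^{1/2+\varepsilon - \alpha\beta} \cdot$(length), which is $\le N^{-\gamma}$ after choosing $\beta$ large; I would instead simply note that all uncoupled steps are automatically among the $\le R_N \le N^{1/2+\varepsilon}$ steps in the black boxes together with an exceptional set of probability $\le CN^{1-b'\beta}$ from the steps outside, so that on the intersection of all good events — which has probability $\ge 1 - N^{-\gamma}$ by collecting the bounds $\exp(-cN^c)$, $\exp(-cN^\varepsilon)$, $CN^{1-b'\beta}$ and tuning $\beta$ — there are at most $N^{1/2+\varepsilon}$ uncoupled steps.

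The main obstacle, and the one requiring the most care, is the second piece: proving the sub-Gaussian tail $\bP^\joint(I_N \ge k) \le \exp(-ck^2/N)$ for the number of excursions \emph{under the joint law}, because the return-time distribution has only polynomial tails (the regeneration times do) and the comparison to the independent law via Proposition~\ref{prop:TVdistance-joint-ind-1step} is only polynomially good. The fix, as in the proof of Lemma~\ref{lem:bound_for_A1_and_A2}, is to compare the excursion durations $\mathcal{R}_i - \mathcal{D}_i$ to i.i.d.\ copies of a Bernoulli$(1-1/N)$--thinned one-dimensional first-passage time truncated at $N$, whose lower tail $\bP(\cdot \ge \ell) \ge c/\sqrt{\ell}$ (valid for $\ell \le N$) yields $\bP^\joint(I_N \ge \ell) \le \exp(\lambda N - c_Y \sqrt{\lambda}\,\ell)$ and hence, optimising $\lambda = (c_Y k/N)^2$, the stated Gaussian bound; this is exactly \eqref{eq:22} and I would cite that computation verbatim, only needing to re-verify that the coupling error $N \cdot N^{-b'\beta}$ incurred over the $\le N$ relevant steps is negligible, which forces $\beta$ large but is harmless. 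Everything else is a routine reshuffling of the $d=1$ machinery already developed in Section~\ref{sec:dimension1}.
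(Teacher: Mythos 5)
Your proposal is correct and follows essentially the same route as the paper's own proof: decompose time into the black-box intervals $[\mathcal{R}_{N,i-1},\mathcal{D}_{N,i})$, bound the number of excursions $I_N$ by the sub-Gaussian estimate \eqref{eq:22} coming from the Bernoulli-thinned one-dimensional return times, bound each black-box duration by the $d=1$ separation lemma, and kill uncoupled steps at larger distances by a union bound with $\beta$ large. Two small parameter points should be fixed when writing this up: outside the black boxes the distance is only guaranteed to exceed $N^{\alpha}$ (not $N^{b'}$), so the per-step coupling-failure bound there is $CN^{-\alpha\beta}$ — harmless since $\beta$ is tunable — and for an arbitrarily small given $\varepsilon$ you cannot "pick $\varepsilon>2b_2$"; instead you must choose $b'$ (and hence $b_2$) small enough that $b_2\le\varepsilon/2$, which the proof of Lemma~\ref{lem:separ in d=1} permits and which is what the paper implicitly does when it bounds $\bP(\mathcal{D}_{N,j}-\mathcal{R}_{N,j-1}>N^{\varepsilon/2})\le\exp(-CN^c)$.
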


\begin{proof}
  Recall the definitions of $R_N$, \eqref{eq:18}, $\mathcal{D}_{N,i},
  \mathcal{R}_{N,i}$, \eqref{eq:defn_MathcalDn} and
  \eqref{eq:defn_MathcalRn}, and $I_N = \max\{ i\colon
  \mathcal{R}_{N,i} \le N\}$. We use $R_N$ as the number of steps at
  which the random walks are too close to each other, and thus a
  coupling is not possible. Therefore we obtain
  \begin{align*}
    \bP_{x,x'}&\Big( \text{more than }N^{1/2+\varepsilon} \text{ uncoupled steps before }N \Big)\\
              &\le \bP_{x,x'}\Big( \text{more than }N^{1/2+\varepsilon} \text{ uncoupled steps before }N,R_N \le N^{1/2+\varepsilon} \Big)+ \bP_{x,x'} \Big( R_N > N^{1/2+\varepsilon} \Big)\\
              &\le \bP_{x,x'} \Big( \exists \text{ uncoupled step from a starting distance } N^a  \Big)+\bP_{x,x'} \Big( R_N > N^{1/2+\varepsilon} \Big)\\
              &\le N^{1-\beta a} +\bP_{x,x'} \Big( R_N > N^{1/2+\varepsilon} \Big).
  \end{align*}
  Where $1-\beta a <-\gamma$ for large enough $\beta$. Now we make use
  of properties we already obtained for the recalled random variables.
  Writing $\bP$ for $\bP_{x,x'}$ and noting that $R_N \le
  \sum_{j=1}^{I_N+1} \mathcal{D}_{N,j}-\mathcal{R}_{N,j-1}$
  \begin{align*}
    \bP\Big( R_N>N^{1/2+\varepsilon} \Big)
    &\le \bP\Big( I_N> N^{1/2+\varepsilon/2} \Big) + \bP\Big( R_N>N^{1/2+\varepsilon}, I_N\le N^{1/2+\varepsilon/2} \Big)\\
    &\le \exp\Big(-cN^{1+\varepsilon}/N \Big) + \bP\Big( \sum_{j=1}^{I_N+1} \mathcal{D}_{N,j}-\mathcal{R}_{N,j-1}>N^{1/2+\varepsilon}, I_N \le N^{1/2+\varepsilon/2} \Big)
  \end{align*}
  where we used \eqref{eq:22}. For the second term we get
  \begin{align*}
    \bP&\Big( \sum_{j=1}^{I_N+1} \mathcal{D}_{N,j}-\mathcal{R}_{N,j-1}>N^{1/2+\varepsilon}, I_N \le N^{1/2+\varepsilon/2} \Big)\\
       &\le \bP\Big( \exists j\le N^{1/2+\varepsilon/2}\colon \mathcal{D}_{N,j}-\mathcal{R}_{N,j-1}>N^{\varepsilon/2} \Big)\\
       &\le N^{1/2+\varepsilon/2}\exp\Big( -CN^c \Big)
  \end{align*}
  where we used the Separation lemma for $d=1$, see
  Lemma~\ref{lem:separ in d=1}. Combine the last two displays and we
  obtain the claim. Furthermore, since
  $(\mathcal{D}_{N,j}-\mathcal{R}_{N,j-1})_j$ is and i.i.d.\ sequence
  and by \eqref{eq:tails_of_R_minus_D} we can ``shift'' the starting
  time. That means, for
  \begin{equation*}
    A_N(\varepsilon,m)\coloneqq \text{at most }N^{1/2+\varepsilon}\text{ uncoupled steps from time }m \text{ to time } N+m
  \end{equation*}
  we have
  \begin{equation}
    \label{eq:B1}
    \bP(A_N(\varepsilon,m)) \ge  1- N^{-\gamma(\varepsilon)}.
  \end{equation}
\end{proof}
Next we tackle the fluctuations of the sequence $(T^\joint_m)_m$.
\begin{lemma}
  For any $\varepsilon^*>0$ and $\gamma>0$ there exists $\beta$ large
  enough such that for all $x,x'\in\bZ$
  \begin{equation}
    \bP^\joint_{x,x'}\Big( \exists m\le N\colon \abs{T_m -
      m\bE[\tau^\indi_2]}>N^{1/2+\varepsilon^*} \Big) \le
    CN^{-\gamma}. 
  \end{equation}
\end{lemma}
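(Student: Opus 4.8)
The statement to prove is the $d=1$ fluctuation bound for the joint regeneration times: for any $\varepsilon^*>0$ and $\gamma>0$, if $\beta$ is large enough, then uniformly in starting positions
\[
\bP^\joint_{x,x'}\bigl(\exists m\le N:\ \abs{T_m-m\bE[\tau^\indi_2]}>N^{1/2+\varepsilon^*}\bigr)\le CN^{-\gamma}.
\]

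My plan is to mimic the proof of Lemma~\ref{lem:ind-to-joint} (the $d\ge 2$ analogue, specifically \eqref{eq:Matthias4}), substituting the weaker $d=1$ coupling Lemma~\ref{lem:coupl_d=1} for Lemma~\ref{lem:coupl} wherever the latter was used. First I would condition on the event $A_N(\varepsilon,0)$ from \eqref{eq:B1} that there are at most $N^{1/2+\varepsilon}$ uncoupled steps before time $N$, which has probability at least $1-N^{-\gamma(\varepsilon)}$ by Lemma~\ref{lem:coupl_d=1}; here $\varepsilon>0$ is a small auxiliary constant to be chosen much smaller than $\varepsilon^*$. On this event I split
\[
\abs{T^\joint_m-m\bE[\tau^\indi_2]}\le \abs{T^\indi_m-m\bE[\tau^\indi_2]}+\abs{T^\joint_m-T^\indi_m},
\]
where $T^\indi$ and $T^\joint$ are coupled as in Remark~\ref{rem:TVdistance-joint-ind-1step} via the Markov chain \eqref{eq:bigchain}. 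The first term is controlled by the Burkholder/Markov estimate \eqref{eq:useBurkholder}: $M_m\coloneqq T^\indi_m-m\bE[\tau^\indi_2]$ is a sum of i.i.d.\ centred increments (Remark~\ref{rem:indMC}), it is a martingale, and by Proposition~\ref{prop:IndRegTimesBound} the increments have finite $q$-th moments for any $q<\beta$; taking $q$ close to $\beta$ gives $\bP(\exists m\le N:\abs{M_m}>N^{1/2+\varepsilon^*})\le CN^{-q\varepsilon^*}\le CN^{-\gamma}$ once $\beta$ (hence $q$) is large enough.

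For the second term I bound the discrepancy by the contribution of the uncoupled steps: $\abs{T^\joint_m-T^\indi_m}\le\sum_{i\in\mathcal I_N^\compl}\abs{(T^\joint_i-T^\joint_{i-1})-(T^\indi_i-T^\indi_{i-1})}$ where $\mathcal I_N^\compl$ is the set of uncoupled steps up to $N$. On $A_N(\varepsilon,0)$ we have $\abs{\mathcal I_N^\compl}\le N^{1/2+\varepsilon}$, so if this sum exceeds $N^{1/2+\varepsilon^*}$ then some single uncoupled increment (of $T^\joint$ or $T^\indi$) exceeds $\tfrac12 N^{\varepsilon^*-\varepsilon}$. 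A union bound over the (at most $N$) increments together with the tail bounds \eqref{eq:37} and \eqref{eq:ind-tails} gives
\[
\bP\bigl(\exists m\le N:\ \abs{T^\joint_m-T^\indi_m}>N^{1/2+\varepsilon^*},\ A_N(\varepsilon,0)\bigr)
\le CN\cdot N^{-\beta(\varepsilon^*-\varepsilon)},
\]
which is $\le CN^{-\gamma}$ provided $\beta(\varepsilon^*-\varepsilon)-1>\gamma$; this is arranged by first fixing $\varepsilon<\varepsilon^*/2$, say, and then taking $\beta$ large. Collecting the three contributions—$\bP(A_N(\varepsilon,0)^\compl)\le N^{-\gamma(\varepsilon)}$, the Burkholder term, and the uncoupled-step term—and choosing $\beta$ large enough that all three exponents exceed $\gamma$, completes the proof.

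The main obstacle, and the only place where the $d=1$ argument genuinely differs from Lemma~\ref{lem:ind-to-joint}, is that in $d=1$ we cannot control the number of uncoupled steps by $N^{b_1}$ with $b_1<1/2$; we only have $N^{1/2+\varepsilon}$ uncoupled steps (Lemma~\ref{lem:coupl_d=1}). This is why the exponent $\varepsilon^*-\varepsilon$ appearing in the single-increment tail bound is slightly worse than in the $d\ge2$ case, and why $\beta$ must be taken correspondingly larger; but since $\beta$ can be made arbitrarily large by tuning $p$ (equivalently $\varepsilon_{\wt\omega}$, $\varepsilon_\rf$) and since $\varepsilon$ can be taken as small as we like relative to $\varepsilon^*$, the estimate still goes through. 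One should also double-check that the coupling of $(T^\joint,T^\indi)$ via \eqref{eq:bigchain} and the martingale structure of $M_m$ under $\bP^\indi$ are unaffected by $d=1$—they are, since Proposition~\ref{prop:IndRegTimesBound}, Remark~\ref{rem:indMC} and Remark~\ref{rem:TVdistance-joint-ind-1step} all hold for every $d\ge1$.
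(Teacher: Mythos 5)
Your proposal is correct and follows essentially the same route as the paper: condition on the $d=1$ coupling event with at most $N^{1/2+\varepsilon}$ uncoupled steps (Lemma~\ref{lem:coupl_d=1}), control $\abs{T^\indi_m-m\bE[\tau^\indi_2]}$ by the Burkholder/Markov argument from the $d\ge 2$ case, and bound $\abs{T^\joint_m-T^\indi_m}$ by a union bound over the uncoupled increments using the $t^{-\beta}$ tails, finally taking $\varepsilon\ll\varepsilon^*$ and $\beta$ large. No substantive difference from the paper's proof.
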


\begin{proof}
  The steps are in spirit the same as for the $d\ge 2$ case in the
  proof of Lemma~\ref{lem:ind-to-joint}. The main difference is that
  we consider the event $A_N(\varepsilon) = \text{ at most }
  N^{1/2+\varepsilon}\text{ many uncoupled steps before } N$, with
  $\varepsilon<\varepsilon^*$ instead of $A_N$ since we have to allow
  for more uncoupled steps in $d=1$. Now following the same ideas as
  in the $d\ge 2$ case we obtain
  \begin{align*}
    \bP^\joint_{x,x'}
    &\Big( \exists m\le N\colon \abs{T_m - m\bE[\tau^\indi_2]}>N^{1/2+\varepsilon^*} \Big)\\
    &\le \bP_{x,x'} \Big( \exists m\le N\colon \abs{T^\joint_m - m\bE[\tau^\indi_2]}>N^{1/2+\varepsilon^*}, A_N(\varepsilon) \Big) + \bP_{x,x'}\Big( A^\compl_N(\varepsilon) \Big)\\
    &\le \bP_{x,x'} \Big( \exists m\le N\colon \abs{T^\indi_m - m\bE[\tau^\indi_2]}+\abs{T^\indi_m-T^\joint_m}>N^{1/2+\varepsilon^*}, A_N(\varepsilon) \Big) + N^{-\gamma}\\
    &\le \bP_{x,x'} \Big( \exists m\le N\colon \abs{T^\indi_m - m\bE[\tau^\indi_2]}>\frac{1}{2}N^{1/2+\varepsilon^*} \Big)\\
    &\hspace{2cm} + \bP_{x,x'} \Big( \exists m\le N\colon \abs{T^\indi_m - T^\joint_m}>\frac{1}{2}N^{1/2+\varepsilon^*},A_N(\varepsilon) \Big) + N^{-\gamma}\\
    &\le CN^{-\gamma} + \bP_{x,x'} \Big( \exists m\le N\colon \sum_{\substack{i\in \text{uncoupled steps}\\\text{before }m}}\abs{T^\indi_i - T^\indi_{i-1}}+\abs{T^\joint_i-T^\joint_{i-1}}>\frac{1}{2}N^{1/2+\varepsilon^*},A_N(\varepsilon) \Big)\\
    &= CN^{-\gamma} + \bP_{x,x'} \Big( \sum_{\substack{i\in \text{uncoupled steps}\\\text{before }N}}\abs{T^\indi_i - T^\indi_{i-1}}+\abs{T^\joint_i-T^\joint_{i-1}}>\frac{1}{2}N^{1/2+\varepsilon^*},A_N(\varepsilon) \Big)\\
    &\le CN^{-\gamma} + \bP_{x,x'} \Big( \exists i\in \text{uncoupled steps before }N\colon \abs{T^\indi_i - T^\indi_{i-1}}>\frac{1}{4}N^{1/2+\varepsilon^*-1/2-\varepsilon},A_N(\varepsilon) \Big)\\
    &\hspace{2cm}+\bP_{x,x'} \Big( \exists i\in \text{uncoupled steps before }N\colon \abs{T^\joint_i-T^\joint_{i-1}}>\frac{1}{4}N^{1/2+\varepsilon^*-1/2-\varepsilon},A_N(\varepsilon) \Big)\\
    &\le N^{-\gamma} + \sum_{i=1}^N \bP_{x,x'}\Big( \abs{T^\indi_i - T^\indi_{i-1}}>\frac{1}{4}N^{\varepsilon^*-\varepsilon} \Big) +\bP_{x,x'}\Big( \abs{T^\joint_i - T^\joint_{i-1}}>\frac{1}{4}N^{\varepsilon^*-\varepsilon} \Big)\\
    &\le N^{-\gamma} 2N^{2-\beta(\varepsilon^*-\varepsilon)}
  \end{align*}
  where the last line can be bound by $CN^{-\gamma}$ for large enough $\beta$.
\end{proof}

To make use of results for the random walk along the regeneration
times we need some tool to compare this random walk with the original.
To that end we define the last regeneration times before $N$ in both
dynamics, i.e.\ $V^\joint_N\coloneqq \max\{m\colon T^\joint_m \le N\}$
and analogous for $V^\indi_N$. The following lemma allows a comparison
of $V^\joint_N$ to a deterministic time
\begin{lemma}
  For any $\varepsilon>0$ and $\gamma>0$ there is $\beta$ large enough
  such that for all $x,x'\in\bZ$ we have 
  \begin{equation}
    \bP_{x,x'}\Big( \abs{V^\joint_N -N/\bE[\tau^\indi_2]}>N^{1/2+\varepsilon} \Big) \le N^{-\gamma}.
  \end{equation}
  Moreover, abbreviating $P_{x,x',\omega}=P_\omega$
  \begin{equation}
    P_\omega \Big( \abs{V^\joint_N -N/\bE[\tau^\indi_2]}>N^{1/2+\varepsilon} \Big) \to 0, \qquad \text{a.s.}
  \end{equation}
\end{lemma}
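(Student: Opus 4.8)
The plan is to deduce both assertions from the deviation estimate for the joint regeneration times $T^\joint_m$ established in the lemma immediately above (the one asserting $\bP^\joint_{x,x'}(\exists\, m\le N: |T_m - m\bE[\tau^\indi_2]|>N^{1/2+\varepsilon^*}) \le CN^{-\gamma}$), using the elementary inversion $\{V^\joint_N \ge m\} = \{T^\joint_m \le N\}$ and $\{V^\joint_N < m\} = \{T^\joint_m > N\}$. Write $\mu \coloneqq \bE[\tau^\indi_2]$, which is finite and positive once $\beta>1$ by Proposition~\ref{prop:IndRegTimesBound}, and satisfies $\mu \ge 1$ since $\tau^\indi_2$ is a strictly positive integer; recall also that under the coupled-chain measure $\bP_{x,x'}$ from \eqref{eq:bigchain} the marginal law of $(T^\joint_m)_m$, hence of $V^\joint_N$, coincides with the one under $\bP^\joint_{x,x'}$.

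First I would establish the annealed bound. If $V^\joint_N - N/\mu > N^{1/2+\varepsilon}$, then with $m_+ \coloneqq \lceil N/\mu + N^{1/2+\varepsilon}\rceil$ we have $T^\joint_{m_+} \le N$, whence $|T^\joint_{m_+} - m_+\mu| = m_+\mu - T^\joint_{m_+} \ge m_+\mu - N \ge \mu N^{1/2+\varepsilon}$. Symmetrically, if $N/\mu - V^\joint_N > N^{1/2+\varepsilon}$, then with $m_- \coloneqq \lceil N/\mu - N^{1/2+\varepsilon}\rceil$ we have $T^\joint_{m_-} > N$ and $|T^\joint_{m_-} - m_-\mu| \ge N - m_-\mu \ge \mu N^{1/2+\varepsilon} - \mu$. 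Both $m_\pm \le 2N$ for $N$ large, so
\begin{align*}
  \bP_{x,x'}\big(|V^\joint_N - N/\mu| > N^{1/2+\varepsilon}\big)
  \le \bP^\joint_{x,x'}\big(\exists\, m \le 2N : |T^\joint_m - m\mu| > \tfrac{1}{2}\mu N^{1/2+\varepsilon}\big).
\end{align*}
I would then invoke the preceding lemma with its ``$N$'' taken to be $2N$ and its ``$\varepsilon^*$'' taken to be $\varepsilon/2$: since $\tfrac12\mu N^{1/2+\varepsilon} \ge (2N)^{1/2+\varepsilon/2}$ for all large $N$, the right-hand side is at most $C(2N)^{-\gamma} \le C'N^{-\gamma}$ once $\beta$ is chosen large enough for that $\gamma$. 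Finitely many small $N$ are absorbed into the constant (or into the choice of $\beta$, as the conclusion's exponent may be taken arbitrarily large), giving the first claim.

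For the quenched statement I would set $A_N \coloneqq \{|V^\joint_N - N/\mu| > N^{1/2+\varepsilon}\}$, so that $\bP_{x,x'}(A_N) = \bE[P_\omega(A_N)]$, and choose $\beta$ large enough that the annealed bound just proved holds with some exponent $\gamma' > 2$. Markov's inequality then gives, exactly as in \eqref{eq:Matthias8a},
\begin{align*}
  \bP\big(\{\omega : P_\omega(A_N) > N^{-\gamma'/2}\}\big) \le N^{\gamma'/2}\,\bP_{x,x'}(A_N) \le C N^{-\gamma'/2},
\end{align*}
which is summable in $N$; by the Borel--Cantelli lemma, almost surely $P_\omega(A_N) \le N^{-\gamma'/2}$ for all large $N$, whence $P_\omega(A_N) \to 0$ a.s.

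There is no genuine obstacle here; the statement is essentially a renewal-type corollary of the preceding lemma. The only points needing attention are the exponent bookkeeping in the inversion step --- a deviation of order $N^{1/2+\varepsilon}$ of $V^\joint_N$ must be converted into a deviation of the same polynomial order of some $T^\joint_m$ with index $m$ of order $N$, which is why one applies the preceding lemma at time $2N$ with the slightly smaller exponent $\varepsilon/2$ --- and the routine passage from an annealed polynomial tail bound to an almost sure quenched statement, which merely forces $\beta$ to be large enough that the annealed rate exceeds $2$ so that Borel--Cantelli applies.
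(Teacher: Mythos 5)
Your proposal is correct, but it follows a different route from the paper's proof. You deduce the annealed bound from the preceding lemma (the uniform deviation estimate $\bP^\joint_{x,x'}(\exists\,m\le N: |T^\joint_m-m\bE[\tau^\indi_2]|>N^{1/2+\varepsilon^*})\le CN^{-\gamma}$) via the standard renewal inversion $\{V^\joint_N\ge m\}=\{T^\joint_m\le N\}$, and your bookkeeping (applying that lemma at time $2N$ with exponent $\varepsilon/2$, using $\mu=\bE[\tau^\indi_2]\ge 1$ and absorbing finitely many small $N$) is sound. The paper instead re-runs the coupling argument at the level of the counting processes: it first bounds $\bP(|V^\joint_N-V^\indi_N|>N^{1/2+\varepsilon/2})$ by controlling the uncoupled increments of $T^\joint$ and $T^\indi$ on the event of at most $N^{1/2+\varepsilon/4}$ uncoupled steps, and then handles $|V^\indi_N-N/\bE[\tau^\indi_2]|$ by a deviation estimate for the renewal counting process in the independent dynamics (invoked there as ``the CLT for renewal processes''). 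Your route is arguably more economical: it leans only on a result already proved two lemmas earlier and avoids having to supply a separate quantitative (polynomial-rate) deviation bound for $V^\indi_N$, which in the paper's version implicitly requires a Burkholder-type argument as in Lemma~\ref{lem:ind-to-joint} rather than a bare CLT. The quenched part — Markov's inequality to pass from the annealed bound to $P_\omega$, choosing the exponent large enough that the bound is summable, then Borel--Cantelli — coincides with the paper's argument.
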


\begin{proof}
  We start by comparing $V^\joint_N$ to $V^\indi_N$. Let
  $A_N(\varepsilon/4)=\text{at most }N^{1/2+\varepsilon/4}\text{
    uncoupled steps before }N$ (again $\bP=\bP_{x,x'}$) 
  \begin{align*}
    \bP&\Big( \abs{V^\joint_N-V^\indi_N}> N^{1/2+\varepsilon/2} \Big)\\
       &\le \bP(A^\compl_N(\varepsilon/4)) +\bP\Big( \abs{V^\joint_N-V^\indi_N}> N^{1/2+\varepsilon/2}, A_N(\varepsilon/4) \Big)\\
       &\le CN^{-\gamma} + \bP\Big( \sum_{\substack{i\in
         \text{uncoupled steps}\\\text{before }N}} \abs{T^\joint_i -
    T^\joint_{i-1} - T^\indi_i + T^\indi_{i-1}}>
    N^{1/2+\varepsilon/2}, A_N(\varepsilon/4) \Big)\\ 
       &\le CN^{-\gamma} + \bP\Big( \exists i\in \text{uncoupled steps before }N\colon\abs{T^\joint_i - T^\joint_{i-1} - T^\indi_i + T^\indi_{i-1}}> N^{\varepsilon/4},A_N(\varepsilon/4) \Big)\\
       &\le CN^{-\gamma} + \sum_{i=1}^N \bP\Big( \abs{T^\joint_i-T^\joint_{i-1}}>\frac{1}{2}N^{\varepsilon/4} \Big) + \bP\Big( \abs{T^\indi_i-T^\indi_{i-1}}>\frac{1}{2}N^{\varepsilon/4} \Big)\\
       &\le CN^{-\gamma} + CN^{1-\beta\varepsilon/4},
  \end{align*}
  where the last line can be bound by $CN^{-\gamma}$ for $\beta$ large enough.
  \begin{align*}
    \bP&\Big( \abs{V^\joint_N -N/\bE[\tau^\indi_2]}>N^{1/2+\varepsilon} \Big)\\
       &\le \bP\Big( \abs{V^\joint_N -V^\indi_N}>N^{1/2+\varepsilon} \Big)+ \bP\Big( \abs{V^\indi_N -N/\bE[\tau^\indi_2]}>N^{1/2+\varepsilon} \Big)\\
       &\le CN^{-\gamma},
  \end{align*}
  by the CLT for renewal processes and the above estimate.
  Furthermore,
  \begin{align*}
    \bP\Big( \{P_\omega\big( \abs{V^\joint_N -
    N/\bE[\tau^\indi_2]}>N^{1/2+\varepsilon} \big) > N^{-a} \}\Big)
    \le \frac{1}{N^{-a}} \bP\Big( \abs{V^\joint_N -
    N/\bE[\tau^\indi_2]}>N^{1/2+\varepsilon}\Big) \le CN^{-\gamma+a}, 
  \end{align*}
  which can be made summable since we can tune $\gamma$ arbitrarily large. Now the claim follows by the Borel--Cantelli lemma.
\end{proof}

Another important step is to control the fluctuations in $(\widehat{X}^\joint_n)_n$.
\begin{lemma}
  For any $\varepsilon>0$ and $\gamma>0$ there exists $\beta$ such that for all $x,x'\in\bZ$ and all $0<\theta<1$
  \begin{equation}
    \bP^\joint_{x,x'}\Big( \sup_{\abs{k-[\theta N]}\le N^\delta}
    \abs{\widehat{X}_k-\widehat{X}_{[\theta N]}}>N^{\delta/2 +
      \varepsilon}   \Big) \le CN^{-\gamma}. 
  \end{equation}
\end{lemma}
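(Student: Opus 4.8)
The plan is to follow the same template used for Lemma~\ref{lem:joint_fluctuations} in the case $d\ge 2$, but adapted to the weaker coupling available in $d=1$: instead of the event $A_n=\{|\mathcal I_n^\compl|\le n^{b_1}\}$ we will work on the event $A_N(\varepsilon/2)$ from Lemma~\ref{lem:coupl_d=1} that there are at most $N^{1/2+\varepsilon/2}$ uncoupled steps in the relevant time window, which by \eqref{eq:B1} (in the shifted form) has probability at least $1-N^{-\gamma}$ uniformly in the starting time. First I would reduce to controlling $(\widehat X^\indi_k-\widehat X^\indi_{[\theta N]})$ and the discrepancy $\sum_{i}\bigl((\widehat X^\joint_i-\widehat X^\joint_{i-1})-(\widehat X^\indi_i-\widehat X^\indi_{i-1})\bigr)$ over the uncoupled steps, exactly as in the decomposition displayed in the proof of Lemma~\ref{lem:joint_fluctuations}: splitting the event on $A_N(\varepsilon/2)$ and its complement and then splitting the fluctuation into an $\indi$-part and an uncoupled-part.

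Second, for the $\indi$-part I would note that $(\widehat X^\indi_k)_k$ has i.i.d., centred increments with $\gamma$-th moments for all $\gamma<\beta$ (Proposition~\ref{prop:IndRegTimesBound}, Remark~\ref{rem:indMC}), so a Burkholder/maximal inequality (as in \eqref{eq:useBurkholder}) gives, for $1\le q<\beta$,
\begin{align*}
  \bP^\indi\Bigl(\sup_{|k-[\theta N]|\le N^\delta}\norm{\widehat X_k-\widehat X_{[\theta N]}}>\tfrac12 N^{\delta/2+\varepsilon}\Bigr)
  \le \frac{C N^{\delta q/2}}{N^{q(\delta/2+\varepsilon)}}=C N^{-q\varepsilon},
\end{align*}
which is $\le C N^{-\gamma}$ once $\beta$ (hence $q$) is taken large. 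Third, for the uncoupled part, on $A_N(\varepsilon/2)$ there are at most $N^{1/2+\varepsilon/2}$ uncoupled increments in the window; each contributes $\norm{\widehat X^\joint_i-\widehat X^\joint_{i-1}}+\norm{\widehat X^\indi_i-\widehat X^\indi_{i-1}}\le R_\kappa(T^\joint_i-T^\joint_{i-1}+T^\indi_i-T^\indi_{i-1})$ by Assumption~\ref{ass:finite-range}, so the sum exceeding $\tfrac12 N^{\delta/2+\varepsilon}$ forces one of these at most $2N^\delta$ consecutive increments to exceed roughly $N^{\delta/2+\varepsilon-1/2-\varepsilon/2}=N^{\delta/2-1/2}$ (a positive power only if $\delta>1/2$ — here is where a complication appears; see below). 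Using the polynomial tail bounds \eqref{eq:37} and \eqref{eq:ind-tails} and a union bound over the $O(N^\delta)$ indices yields a bound $C N^{\delta}N^{-\beta(\delta/2-1/2)}\le CN^{-\gamma}$ for $\beta$ large.

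The main obstacle I anticipate is the interplay between the \emph{number} of uncoupled steps (of order $N^{1/2+\varepsilon/2}$, not $N^{b_1}$ with $b_1<1/2$) and the window length $N^\delta$: unlike the $d\ge2$ case, the gain per uncoupled increment is only $N^{\delta/2+\varepsilon}/N^{1/2+\varepsilon/2}$, which is a \emph{negative} power of $N$ when $\delta<1$ and small $\varepsilon$, so one cannot simply ask a single increment to be atypically large. The fix is to restrict attention to windows with $\delta$ close to $1$ (this is exactly the regime in which the lemma is invoked, namely $\delta=1/2+\varepsilon'$ in the proof of Theorem~\ref{thm:LLNuCLTmodel1}, cf.\ the sets $\widetilde C_n$ there), where $\delta/2+\varepsilon-1/2-\varepsilon/2>0$; alternatively one counts uncoupled steps within the window only, which by the shift-invariant version \eqref{eq:B1} applied to an interval of length $2N^\delta$ (not $N$) gives at most $N^{\delta(1/2+\varepsilon/2)}$ uncoupled steps there, restoring the positive gain. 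I would adopt the latter route, so that the uncoupled-part estimate reads $C N^\delta N^{-\beta(\delta/2+\varepsilon-\delta(1/2+\varepsilon/2))}$, which is $\le CN^{-\gamma}$ for $\beta$ large. Combining the three estimates and absorbing constants completes the proof; as in Lemma~\ref{lem:joint_fluctuations}, $\gamma$ can be made as large as desired by taking $\beta$ large.
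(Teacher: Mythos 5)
Your proposal is correct and, in its finally adopted form, is essentially the paper's own argument: the paper likewise bounds the $\indi$-fluctuation term separately and controls the discrepancy by applying the shifted coupling event $A_{2N^\delta}(\varepsilon/2,[\theta N]-N^\delta)$ to the window of length $2N^\delta$, so that on this event a single uncoupled increment must exceed roughly $N^{\varepsilon/2}$, after which a union bound over the $O(N^\delta)$ indices and the polynomial tail bounds give $CN^{\delta-\beta\varepsilon/2}\le CN^{-\gamma}$ for $\beta$ large. Your preliminary worry about using the global event $A_N$ (and the ensuing negative exponent) is exactly the pitfall the paper avoids by the same windowed choice, so no gap remains.
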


\begin{proof}
  Let $A_N(\varepsilon/2,m) = \text{at most }N^{1/2+\varepsilon/2}
  \text{ uncoupled steps from time }m \text{ to } m+N$. Note that this
  event was already introduced in the text above \eqref{eq:B1}.
  \begin{align*}
    \bP
    &\Big( \sup_{\abs{k-[\theta N]}\le N^\delta} \abs{\widehat{X}^\joint_k-\widehat{X}^\joint_{[\theta N]}}>N^{\delta/2 + \varepsilon} \Big)\\
    &\le \bP\Big( \sup_{\abs{k-[\theta N]}\le N^\delta} \abs{\widehat{X}^\indi_k-\widehat{X}^\indi_{[\theta N]}}>N^{\delta/2 + \varepsilon} \Big)\\
    &\hspace{1cm}+ \bP\Big( \sup_{\abs{k-[\theta N]}\le N^\delta} \abs{\sum_{i=\min\{k,[\theta N]\}+1}^{\max\{k,[\theta N]\}}\widehat{X}^\joint_i-\widehat{X}^\joint_{i-1}-\widehat{X}^\indi_i+\widehat{X}^\indi_{i-1}}>N^{\delta/2 + \varepsilon}, A_{2N^{\delta}(\varepsilon/2,[\theta N]-N^\delta)} \Big)\\
    &\hspace{2cm}+ CN^{-\gamma}\\
    &\le CN^{-\gamma} + \bP\Big( \sup_{\abs{k-[\theta N]}\le N^\delta} \sum_{\substack{i \in \text{uncoupled steps}\\\text{from }\min \text{ to } \max}}\abs{\widehat{X}^\joint_i-\widehat{X}^\joint_{i-1}}+\abs{\widehat{X}^\indi_i-\widehat{X}^\indi_{i-1}}>N^{\delta/2 + \varepsilon}, A_{2N^{\delta}(\varepsilon/2,[\theta N]-N^\delta)} \Big)\\
    &\le CN^{-\gamma} + \bP\Big( \sum_{i \in \text{uncoupled}\cap\{ [\theta N]-N^\delta,\dots,[\theta N]+N^\delta\}}\abs{\widehat{X}^\joint_i-\widehat{X}^\joint_{i-1}}+\abs{\widehat{X}^\indi_i-\widehat{X}^\indi_{i-1}}>N^{\varepsilon/2}, A_{2N^{\delta}(\varepsilon/2,[\theta N]-N^\delta)} \Big)\\
    &\le CN^{-\gamma} + CN^{\delta-\beta\varepsilon/2},
  \end{align*}
  which, again, can be bound by $CN^{-\gamma}$ for $\beta$ large enough.
\end{proof}

Furthermore we obtain the following lemma.
\begin{lemma}
  For any $\varepsilon>0$ and $\gamma>0$ there exists $\beta$ such that for all $x,x'\in\bZ$
  \begin{equation}
    \bP^\joint_{x,x'}\Big( \abs{X_N-\widehat{X}_{N/\bE[\tau^\indi_2]}}>N^{1/4+\varepsilon} \Big) \le CN^{-\gamma}.
  \end{equation}
\end{lemma}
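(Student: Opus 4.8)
The plan is to follow the proof of \eqref{eq:Matthias6} from the treatment of Proposition~\ref{prop:2nd-mom} in the case $d\ge 2$ almost verbatim, substituting the three $d=1$ estimates established earlier in this subsection---the fluctuation bound for $(T^\joint_m)_m$, the comparison of $V^\joint_N$ with $N/\bE[\tau^\indi_2]$, and the fluctuation bound for $(\widehat{X}^\joint_n)_n$---for \eqref{eq:Matthias4} and \eqref{eq:Matthias5}. Throughout, $\widehat{X}_{N/\bE[\tau^\indi_2]}$ is read with an integer part and I abbreviate $k_N\coloneqq[N/\bE[\tau^\indi_2]]$. First I would split, using Assumption~\ref{ass:finite-range} (the walk has range $R_\kappa$), the bound $N-T^\joint_{V^\joint_N}\le T^\joint_{V^\joint_N+1}-T^\joint_{V^\joint_N}$, and $V^\joint_N+1\le N+1$,
\begin{align*}
  \abs{X_N-\widehat{X}_{k_N}}
  &\le\abs{X_N-\widehat{X}_{V^\joint_N}}+\abs{\widehat{X}_{V^\joint_N}-\widehat{X}_{k_N}}\\
  &\le R_\kappa\max_{m\le N+1}\bigl(T^\joint_m-T^\joint_{m-1}\bigr)+\abs{\widehat{X}_{V^\joint_N}-\widehat{X}_{k_N}}.
\end{align*}

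For the first summand, a union bound over $m\le N+1$ together with the regeneration-time tail bound \eqref{eq:37} from Proposition~\ref{prop:JointRegTimesBound} gives, for $N$ large,
\begin{align*}
  \bP^\joint_{x,x'}\Bigl(R_\kappa\max_{m\le N+1}\bigl(T^\joint_m-T^\joint_{m-1}\bigr)>\tfrac12 N^{1/4+\varepsilon}\Bigr)
  \le(N+1)\,C\Bigl(\tfrac{N^{1/4+\varepsilon}}{2R_\kappa}\Bigr)^{-\beta}\le CN^{1-\beta(1/4+\varepsilon)},
\end{align*}
which is $\le\tfrac12 N^{-\gamma}$ as soon as $\beta>(\gamma+1)/(1/4+\varepsilon)$.

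For the second summand I would fix small $\varepsilon_1,\varepsilon_2>0$ with $\varepsilon_1/2+\varepsilon_2<\varepsilon$ and introduce the event $E_N\coloneqq\{\abs{V^\joint_N-k_N}\le N^{1/2+\varepsilon_1}\}$; by the $d=1$ comparison lemma for $V^\joint_N$ proved above, $\bP^\joint_{x,x'}(E_N^\compl)\le N^{-\gamma}$ once $\beta$ is large enough. On $E_N$ one has $\abs{\widehat{X}_{V^\joint_N}-\widehat{X}_{k_N}}\le\sup_{\abs{k-k_N}\le N^{1/2+\varepsilon_1}}\abs{\widehat{X}_k-\widehat{X}_{k_N}}$, and applying the $\widehat{X}^\joint$-fluctuation lemma with $\theta=1/\bE[\tau^\indi_2]\in(0,1)$ (so $[\theta N]=k_N$, using that $\bE[\tau^\indi_2]>1$ strictly, since the first regeneration attempt fails with positive probability) and $\delta=1/2+\varepsilon_1$ yields
\begin{align*}
  \bP^\joint_{x,x'}\Bigl(\sup_{\abs{k-k_N}\le N^{1/2+\varepsilon_1}}\abs{\widehat{X}_k-\widehat{X}_{k_N}}>N^{1/4+\varepsilon_1/2+\varepsilon_2}\Bigr)\le CN^{-\gamma},
\end{align*}
again for $\beta$ large; since $N^{1/4+\varepsilon_1/2+\varepsilon_2}\le\tfrac12 N^{1/4+\varepsilon}$ for $N$ large, this controls the second summand on $E_N$. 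Collecting the three contributions gives
\begin{align*}
  \bP^\joint_{x,x'}\bigl(\abs{X_N-\widehat{X}_{k_N}}>N^{1/4+\varepsilon}\bigr)
  \le CN^{1-\beta(1/4+\varepsilon)}+\bP^\joint_{x,x'}(E_N^\compl)+CN^{-\gamma}\le CN^{-\gamma}
\end{align*}
after choosing $\beta$ large enough (depending on $\varepsilon,\gamma$) so that all three terms are $\le N^{-\gamma}$ simultaneously.

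The argument presents no genuine obstacle: it is a routine assembly of the three preceding $d=1$ lemmas, each of which delivers an arbitrarily large polynomial rate once $\beta$ is large. The only points requiring a little care are the integer-part bookkeeping when matching $N/\bE[\tau^\indi_2]$ to the center $[\theta N]$ of the $\widehat{X}^\joint$-fluctuation lemma (handled by the choice $\theta=1/\bE[\tau^\indi_2]$, together with the observation that $\bE[\tau^\indi_2]>1$ so this $\theta$ lies in $(0,1)$), the bookkeeping of the small exponents $\varepsilon_1,\varepsilon_2$ so that their sum stays below the target $\varepsilon$, and the verification that one choice of $\beta$ (the maximum of the three thresholds) makes all estimates hold at once.
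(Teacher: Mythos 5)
Your proposal is correct and follows essentially the same route as the paper: the decomposition $\abs{X_N-\widehat{X}_{k_N}}\le R_\kappa\max_{m}(T^\joint_m-T^\joint_{m-1})+\abs{\widehat{X}_{V^\joint_N}-\widehat{X}_{k_N}}$, a union bound with the regeneration-tail estimate for the first term, and the event $\{\abs{V^\joint_N-k_N}\le N^{1/2+\varepsilon_1}\}$ combined with the $\widehat{X}^\joint$-fluctuation lemma for the second. Your version merely spells out the integer-part and exponent bookkeeping that the paper leaves implicit.
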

\begin{proof}
  As in the case $d\ge 2$, note that
  \begin{equation*}
    \abs{X_N-\widehat{X}^\joint_{N/\bE[\tau^\indi_2]}} \le R_\kappa
    \max\{T^\joint_m -T^\joint_{m-1}\colon m\le N \} +
    \abs{\widehat{X}^\joint_{N/\bE[\tau^\indi_2]}-\widehat{X}^\joint_{V_N}}. 
  \end{equation*}
  On $\{ \abs{V^\joint_N -N/\bE[\tau^\indi_2] }\le N^{1/2+\varepsilon}
  \}$, which is likely by the above estimates, we can bound the second
  term by $\sup_{\abs{k-N/\bE[\tau^\indi_2]}\le N^{1/2+\varepsilon}}
  \abs{\widehat{X}_k-\widehat{X}_{N/\bE[\tau^\indi_2]}}$ and use the
  last lemma. The upper bound for the first term is an easy
  consequence of the tail bounds of $T^\joint_k-T^\joint_{k-1}$.
\end{proof}
Now we have all ingredients to prove
\begin{align*}
  \bE[E_\omega[f(\widehat{X}^\joint_m/\sqrt{m})]E_\omega[f(\widehat{X}^{',\joint}_m/\sqrt{m})]]=
  \bE^\joint[f(\widehat{X}^\joint_m/\sqrt{m})f(\widehat{X}^{',\joint}_m/\sqrt{m})
  ] + O(m^{-1/4+\varepsilon}). 
\end{align*}
From which we now can prove Proposition~\ref{prop:2nd-mom} for $d=1$.

\section{Proofs for Section~\ref{sec:abstract_proofs}}
\label{sec:abst_detailed_proofs}

\subsection{Proof of Lemma~\ref{lem:a priori bound for tilde X}}

\begin{proof}[Proof of Lemma~\ref{lem:a priori bound for tilde X}]
  The proof follows the ideas from the proof of Lemma~2.16 from
  \cite{BirknerCernyDepperschmidt2016}. The main difference is that we
  consider the paths for $\wt{X}$ and $X$ together to be able to apply
  Assumption~\ref{ass:abstract_closeness_to_symmetric_transition_kernel}.
  Let $\wt{\Gamma}_{\tilde{n}}$ be the set of $\tilde{n}$-step paths
  $\wt{\gamma}$ satisfying $\norm{\wt{\gamma}_i-\wt{\gamma}_{i-1}}\le
  R_XL_{\mathrm{t}}/L_{\mathrm{s}}$. Note that $\wt{\gamma}$ is a path
  of $\wt{X}$ on the coarse-grained grid. Furthermore, let
  \begin{align*}
    D^{\wt{\gamma}}_{i_1,\dots,i_k}
    & \coloneqq \Big\{ \wt{G}(\wt{\gamma}_\ell,-\ell)=0 \text{ for all
      } \ell\in\{i_1,\dots,i_k \}  \Big\},\\
    W^{\wt{\gamma}}_{i_1,\dots,i_k}
    & \coloneqq \Big\{ \wt{G}(\wt{\gamma}_\ell,-\ell)=1 \text{ for all
      } \ell\in[\tilde{n}]\setminus\{i_1,\dots,i_k \} \Big\}
  \end{align*}
  be the sets of sites where $\wt{X}$ finds $\wt{G}=0$, respectively
  $\wt{G}=1$, where $\wt{G}$ is the random field introduced in
  Lemma~\ref{lem:abstr-OCcoupl} in equation~\eqref{e:tildeG}. Let
  $H_{\tilde{n}} \coloneqq \#\{ 0\le i \le \tilde{n} \colon
  \wt{G}(\wt{X}_i,-i)=0 \}$. Let $\Gamma_{\tilde{n}}(\wt{\gamma})$ be
  the set of possible positions for
  $(X_0,X_{L_{\mathrm{t}}},\dots,X_{\tilde{n}L_{\mathrm{t}}})$ that
  can have resulted in the path $\wt{\gamma}$ for $\wt{X}$. Set
  $K=\varepsilon_{\mathrm{symm}} + \max \kappa_{L_{\mathrm{t}}}$ and
  abbreviate $\bP=\bP_0$
  \begin{align*}
    &\bP(H_{\tilde{n}}=k)\\
    &= \sum_{0\le i_1<\dots<i_k\le \tilde{n}} \sum_{\wt{\gamma} \in
      \wt{\Gamma}_{\tilde{n}}} \bP\Big((\wt{X}_0,\wt{X}_1,\dots,\wt{X}_{\tilde{n}})
      = \wt{\gamma},D^{\wt{\gamma}}_{i_1,\dots,i_k},W^{\wt{\gamma}}_{i_1,\dots,i_k} \Big)\\
    & = \sum_{0\le i_1<\dots<i_k\le \tilde{n}} \sum_{\wt{\gamma} \in
      \wt{\Gamma}_{\tilde{n}}} \sum_{\gamma\in\Gamma_{\tilde{n}}({\wt{\gamma}})}
      \bP\Big((\wt{X}_0,\wt{X}_1,\dots,\wt{X}_{\tilde{n}})
      =\wt{\gamma},(X_0,X_{L_{\mathrm{t}}},\dots,X_{\tilde{n}L_{\mathrm{t}}})=\gamma
      ,D^{\wt{\gamma}}_{i_1,\dots,i_k},W^{\wt{\gamma}}_{i_1,\dots,i_k} \Big)\\
    & \le \sum_{0\le i_1<\dots<i_k\le \tilde{n}} \sum_{\wt{\gamma} \in
      \wt{\Gamma}_{\tilde{n}}}
      \sum_{\gamma\in\Gamma_{\tilde{n}}({\wt{\gamma}})}\bP\Big(
      (X_0,X_{L_{\mathrm{t}}},\dots,X_{\tilde{n}L_{\mathrm{t}}})=\gamma
      ,D^{\wt{\gamma}}_{i_1,\dots,i_k},W^{\wt{\gamma}}_{i_1,\dots,i_k} \Big)\\
    & \le  \sum_{0\le i_1<\dots<i_k\le \tilde{n}} \sum_{\wt{\gamma} \in
      \wt{\Gamma}_{\tilde{n}}}
      \sum_{\gamma\in\Gamma_{\tilde{n}}({\wt{\gamma}})} K^{\tilde{n}-k}
      \bP\Big(D^{\wt{\gamma}}_{i_1,\dots,i_k} \Big).
  \end{align*}
  Now we use the fact that by Lemma~\ref{lem:abstr-OCcoupl} we can
  couple $\wt{G}$ with a contact process build on a Bernoulli-field
  with success probability at least $1-\varepsilon_{\wt{\omega}}$ and
  therefore we can use Lemma~2.11 from
  \cite{BirknerCernyDepperschmidt2016} to obtain
  \begin{align*}
    \bP\Big(D^{\wt{\gamma}}_{i_1,\dots,i_k} \Big) \le
    \varepsilon(1-\varepsilon_{\wt{\omega}})^k,
  \end{align*}
  where $\varepsilon(\cdot)$ is the function from Lemma~2.11 and we
  can see in its proof that $\lim_{p\to 1}\varepsilon(p)=0$. Note
  that, if $\varepsilon_U$ is small, we can choose
  $\varepsilon_{\wt{\omega}}$ from Lemma~\ref{lem:abstr-OCcoupl} small
  and therefore $\varepsilon(1-\varepsilon_{\wt{\omega}})$ can be made
  arbitrarily small. Combining the last two equations we conclude
  \begin{align*}
    \bP(H_{\tilde{n}}=k)
    & \le \sum_{0\le i_1<\dots<i_k\le \tilde{n}} \sum_{\wt{\gamma} \in
      \wt{\Gamma}_{\tilde{n}}}
      \sum_{\gamma\in\Gamma_{\tilde{n}}({\wt{\gamma}})}
      K^{\tilde{n}-k}\varepsilon(1-\varepsilon_{\wt{\omega}})^k\\
    & \le \sum_{0\le i_1<\dots<i_k\le \tilde{n}}
      \abs{\wt{\Gamma}_{\tilde{n}}} L_{\mathrm{s}}^{d\tilde{n}}
      K^{\tilde{n}-k}\varepsilon(1-\varepsilon_{\wt{\omega}})^k\\
    & \le \binom{\tilde{n}}{k} (R_XL_{\mathrm{t}})^{d\tilde{n}}K^{\tilde{n}-k}\varepsilon(1-\varepsilon_{\wt{\omega}})^k,
  \end{align*}
  where we used $\abs{\Gamma_{\tilde{n}}(\wt{\gamma})}\le
  L_{\mathrm{s}}^{d\tilde{n}}$ and $\abs{\wt{\Gamma}_{\tilde{n}}}\le
  (R_XL_{\mathrm{t}}/L_{\mathrm{s}})^{d\tilde{n}}$. Consequently we
  have for $\delta >0$
  \begin{align*}
    \bP(H_{\tilde{n}}\ge \delta \tilde{n})
    & \le \sum_{k=\lfloor \delta \tilde{n} \rfloor}^{\tilde{n}}
      \binom{\tilde{n}}{k} (R_XL_{\mathrm{t}})^{d\tilde{n}}K^{\tilde{n}-k}\varepsilon(1-\varepsilon_{\wt{\omega}})^k\\
    & \le 2^{\tilde{n}}(R_XL_{\mathrm{t}})^{d\tilde{n}} K^{\tilde{n}} \sum_{k=\lfloor
      \delta \tilde{n} \rfloor}^{\tilde{n}}
      K^{-k}\varepsilon(1-\varepsilon_{\wt{\omega}})^k\\
    & \le 2^{\tilde{n}}(R_XL_{\mathrm{t}})^{d\tilde{n}} K^{\tilde{n}}
      \frac{(\varepsilon(1-\varepsilon_{\wt{\omega}})/K)^{\delta \tilde{n}}}{1-\varepsilon(1-\varepsilon_{\wt{\omega}})/K}\\
    &\le c_1\mathrm{e}^{-c_2 \tilde{n}},
  \end{align*}
  since we have $\varepsilon(1-\varepsilon_{\wt{\omega}})$ small
  independent of $\delta$ and $\varepsilon_{\mathrm{symm}}$. The
  displacement at the start is at most of size $L_{\mathrm{s}}^d$ and
  therefore of constant order. Thus this doesn't effect the speed. Now
  we are in the setting of the proof of Lemma~2.16 again and the rest
  follows by the same arguments.
\end{proof}


\newcommand{\etalchar}[1]{$^{#1}$}
\providecommand{\bysame}{\leavevmode\hbox to3em{\hrulefill}\thinspace}
\providecommand{\MR}{\relax\ifhmode\unskip\space\fi MR }
\providecommand{\MRhref}[2]{%
  \href{http://www.ams.org/mathscinet-getitem?mr=#1}{#2}
}
\providecommand{\href}[2]{#2}

\end{document}